\DeclareFontFamily{OT1}{pzc}{}
\DeclareFontShape{OT1}{pzc}{m}{it}{<-> s * [1.10] pzcmi7t}{}
\DeclareMathAlphabet{\mathpzc}{OT1}{pzc}{m}{it}
\renewcommand*{\backref}[1]{}
\renewcommand*{\backrefalt}[4]{
  \ifcase #1
  [No citations.]
  \or [#2]
  \else [#2]
  \fi }
   \def\MR#1{}
\newcommand{\To}{\longrightarrow}
\newcommand{\C}{\mathbb{C}}
\newcommand{\CP}{\mathbb{CP}}
\newcommand{\D}{\mathcal{D}}
\newcommand{\Disc}{\mathbb{D}}
\newcommand{\f}{\mathbf{f}}
\newcommand{\F}{\mathbf{F}}
\newcommand{\g}{\mathbf{g}}
\newcommand{\G}{\mathbf{G}}
\newcommand{\h}{\mathbf{h}}
\renewcommand{\H}{\mathbf{H}}
\newcommand{\horo}{\mathpzc{h}}
\newcommand{\horos}{\mathfrak{H}}
\newcommand{\HH}{\mathcal{H}}
\newcommand{\hyp}{\mathbb{H}}
\renewcommand{\i}{\mathbf{i}}
\newcommand{\I}{\mathbf{I}}
\renewcommand{\j}{\mathbf{j}}
\newcommand{\J}{\mathbf{J}}
\renewcommand{\k}{\mathbf{k}}
\newcommand{\K}{\mathbf{K}}
\newcommand{\M}{\mathcal{M}}
\newcommand{\p}{\mathbf{p}}
\newcommand{\Q}{\mathbb{Q}}
\newcommand{\R}{\mathbb{R}}
\newcommand{\RP}{\mathbb{RP}}
\renewcommand{\S}{\mathcal{S}}
\newcommand{\U}{\mathbb{U}}
\newcommand{\V}{\mathcal{V}}
\newcommand{\Z}{\mathbb{Z}}
\newcommand{\ZZ}{\mathcal{Z}}
\DeclareMathOperator{\Fr}{Fr}
\DeclareMathOperator{\Gr}{Gr}
\DeclareMathOperator{\Hopf}{Hopf}
\let\Im\relax
\DeclareMathOperator{\Im}{Im}
\let\Re\relax
\DeclareMathOperator{\Re}{Re}
\DeclareMathOperator{\Isom}{Isom}
\DeclareMathOperator{\Span}{Span}
\DeclareMathOperator{\Spin}{Spin}
\DeclareMathOperator{\Stereo}{Stereo}
\DeclareMathOperator{\Trace}{Trace}
\numberwithin{equation}{section}
\newtheorem{theorem}[equation]{Theorem}
\newtheorem{thm}{Theorem}
\newtheorem{lem}[equation]{Lemma}
\newtheorem{prop}[equation]{Proposition}
\newtheorem{defn}[equation]{Definition}
\theoremstyle{definition}
\newtheorem{eg}[equation]{Example}
\newcommand{\refsec}[1]{Section~\ref{Sec:#1}}
\newcommand{\refdef}[1]{Definition~\ref{Def:#1}}
\newcommand{\refeg}[1]{Example~\ref{Eg:#1}}
\newcommand{\reffig}[1]{Figure~\ref{Fig:#1}}
\newcommand{\refeqn}[1]{\eqref{Eqn:#1}}
\newcommand{\reflem}[1]{Lemma~\ref{Lem:#1}}
\newcommand{\refprop}[1]{Proposition~\ref{Prop:#1}}
\newcommand{\refthm}[1]{Theorem~\ref{Thm:#1}}
\begin{document}

\title{From Spinors to Horospheres: A Geometric Tour} 

\author{Daniel V. Mathews}
\affil{School of Mathematics, Monash University \\
School of Physical and Mathematical Sciences, Nanyang Technological University \\ 
\texttt{dan.v.mathews@gmail.com}}

\author{Varsha}
\affil{Department of Mathematics, 
University College London \\ 
\texttt{varsha.varsha.24@ucl.ac.uk}}

\maketitle

\begin{abstract}
This article is an exposition and elaboration of recent work of the first author on spinors and horospheres. It presents the main results in detail, and includes numerous subsidiary observations and calculations. It is intended to be accessible to graduate and advanced undergraduate students with some background in hyperbolic geometry.

The main result is the spinor--horosphere correspondence, which is a smooth, $SL(2,\C)$-equivariant bijection between two-component complex spin vectors and spin-decorated horospheres in  three-dimensional hyperbolic space. The correspondence includes constructions of Penrose--Rindler and Penner, which respectively associate null flags in Minkowski spacetime to spinors, and associate horospheres to points on the future light cone. The construction is presented step by step, proceeding from spin vectors, through spaces of Hermitian matrices and Minkowski space, to various models of 3-dimensional hyperbolic geometry. Under this correspondence, we show that the natural inner product on spinors corresponds to a 3-dimensional, complex version of lambda lengths, describing a distance between horospheres and their decorations.

We also discuss various applications of these results. An ideal hyperbolic tetrahedron with spin-decorations at its vertices obeys a Ptolemy equation, generalising the Ptolemy equation obeyed by 2-dimensional ideal quadrilaterals. More generally we discuss how  real spinors describe 2-dimensional hyperbolic geometry. We also discuss the relationships between spinors, horospheres, and various sets of matrices.
\end{abstract}

\tableofcontents

\section{Introduction}

\subsection{Overview}

At least since Descartes, mathematics has sought ways to describe geometry using algebra --- usually, though perhaps not always, in the hope that complicated geometric problems can be reduced to simpler algebraic calculations. 

In this paper we discuss a way to describe certain objects in 3-dimensional \emph{hyperbolic} geometry, called \emph{horospheres}, using pairs of complex numbers. Our use of pairs of complex numbers builds on that of Roger Penrose and Wolfgang Rindler in their book \cite{Penrose_Rindler84}, where they were considered as \emph{spinors}. Our results build on their work, so we follow their terminology.

Spinors arise in various contexts in physics. At least since Einstein, physics has sought ways to describe physical objects geometrically. From this perspective, this paper discusses how to describe spinors in terms of the geometry of horospheres. 

Horospheres are standard objects in hyperbolic geometry. Though we define them below, we do assume some background in hyperbolic geometry. However, this paper is designed to be broadly accessible, and we hope that, for readers with a little knowledge of hyperbolic geometry, reading this paper may strengthen that knowledge, and inspire them to learn more.

The goal of this paper is to explain in detail the following theorem of the first author in \cite{Mathews_Spinors_horospheres}, and some of its ramifications. The theorem says that pairs of complex numbers correspond to horospheres with some decorations on them, which we will define in due course.
\begin{thm} 
\label{Thm:spinors_to_horospheres}
There exists an explicit, smooth, bijective, $SL(2,\C)$-equivariant correspondence between nonzero spinors, and horospheres in hyperbolic 3-space $\hyp^3$ with spin decorations.
\end{thm}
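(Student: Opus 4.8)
The plan is to realise the correspondence as a composite of explicit, manifestly smooth geometric maps, in the order spinors $\to$ null flags in Minkowski space $\to$ spin-decorated horospheres, following and refining the constructions of Penrose--Rindler and of Penner. The first ingredient is the standard dictionary between $2\times 2$ Hermitian matrices and Minkowski space: identify $\R^{1,3}$ with the real vector space $\HH$ of Hermitian matrices by sending $(T,X,Y,Z)$ to the Hermitian matrix with diagonal entries $T+Z$ and $T-Z$ and upper-right entry $X+iY$, so that the Lorentzian quadratic form becomes $\det$, the future light cone becomes the set of rank-one positive semidefinite matrices, and $SL(2,\C)$ acts on $\HH$ by $H \mapsto A H A^{*}$, covering the action of the identity component $SO^+(1,3)$ of the Lorentz group on $\R^{1,3}$ --- hence, on the hyperboloid, by orientation-preserving isometries of $\hyp^3$. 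To a nonzero spinor $\kappa \in \C^2$, viewed as a column vector, one assigns the rank-one Hermitian matrix $\kappa\kappa^{*}$; it has determinant $0$ and positive trace, so it lies on the future light cone, and $\kappa \mapsto \kappa\kappa^{*}$ is a smooth surjection onto the punctured future light cone with fibre $\{e^{i\theta}\kappa : \theta \in \R\}$ over each null vector.

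I would then attach to $\kappa$ the rest of its Penrose--Rindler null flag: beyond the flagpole $\kappa\kappa^{*}$, the spinor determines a null two-plane through the flagpole, built from $\kappa$ together with an infinitesimal variation of its phase. The crucial classical facts are that the flag determines $\kappa$ only up to sign, and that rotating the phase of $\kappa$ by $\theta$ rotates the flag plane by $2\theta$; thus a null flag is exactly a spinor modulo $\pm 1$. Feeding this into Penner's construction, a future-pointing null vector $\ell$ determines a horosphere in the hyperboloid model of $\hyp^3$ as a level set $\{x \in \hyp^3 : \langle x, \ell \rangle = \text{const}\}$, with rescaling $\ell$ sliding the horosphere toward or away from its fixed ideal centre, while the flag plane cuts out a parallel line (or vector) field along the horosphere, which is intrinsically flat. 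Composing, a spinor maps to a decorated horosphere, the map being two-to-one with fibre $\{\kappa,-\kappa\}$; conversely, from a decorated horosphere one recovers the flagpole (as the null vector attached to the horosphere), the flag plane (from the decoration), and hence $\kappa$ up to sign, so that the composite is onto the space of decorated horospheres.

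The last and most delicate step is to upgrade ``decorated horosphere'' to ``spin-decorated horosphere'' so that this residual sign is recorded and the composite becomes a genuine bijection. The point is that $-I \in SL(2,\C)$ acts nontrivially on spinors ($\kappa \mapsto -\kappa$) but trivially on decorated horospheres, since the $SL(2,\C)$-action on the latter factors through $PSL(2,\C) = \Isom^{+}(\hyp^3)$; a spin decoration is precisely the extra $\Z/2$ worth of data --- a lift across the relevant double cover --- on which $-I$ acts as the deck transformation, so that spin-decorated horospheres carry an honest $SL(2,\C)$-action and the forgetful map to decorated horospheres is two-to-one. I expect this bookkeeping to be the main obstacle: one must define the spin decoration intrinsically, check that it is well defined (independent of the path used to transport the lift, equivalently that the relevant $\Z/2$-covering is the one determined by the spin double cover $SL(2,\C)\to PSL(2,\C)$), and verify naturality. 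Granting this, $SL(2,\C)$-equivariance of each constituent map --- $H \mapsto AHA^{*}$ on $\HH$, Penner's map, and the flag-to-decoration map --- is a direct computation, and smoothness is immediate because every map, and its inverse, is given by polynomial or rational formulae in the entries of $\kappa$; reversing these formulae produces the explicit smooth inverse and completes the proof.
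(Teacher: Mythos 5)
Your proposal follows essentially the same route as the paper: spinors to rank-one Hermitian matrices $\kappa\kappa^*$ on the future light cone, Penrose--Rindler null flags to capture the phase (with the same $2$--$1$ ambiguity and the phase-to-flag-rotation doubling), Penner's plane $\langle x,p\rangle = \text{const}$ to produce the horosphere and the flag plane to produce the parallel decoration, and finally a lift through the double cover $SL(2,\C)\to PSL(2,\C)$ to define spin decorations and promote the $2$--$1$ map to an equivariant bijection. This is the paper's argument in outline, so the proposal is correct and not materially different.
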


So, given a pair of complex numbers $(\xi, \eta)$, what is the corresponding horosphere, and what is the decoration? We give an explicit answer in \refthm{explicit_spinor_horosphere_decoration}.

Having a bijective correspondence between two mathematical objects is good, but it is even better when that correspondence preserves various structures on each side. A particularly nice aspect the  correspondence in \refthm{spinors_to_horospheres} is that it can tell us the \emph{distance} between horospheres, and more, from some elementary operations on complex numbers. \refthm{main_thm} tells us how to do this.

A bijective correspondence between two mathematical objects is also nice when structures on one side can illuminate structures on the other. We will see various instances of this throughout the paper. One example is that, when we have four pairs of complex numbers, they obey certain equations called \emph{Pl\"{u}cker relations}. These correspond to equations relating distances between horospheres which we call \emph{Ptolemy equations}, as they have the same form as Ptolemy's theorem from classical Euclidean geometry \cite{Ptolemy_Almagest}.

The full proof of \refthm{spinors_to_horospheres} takes us on a tour through various interesting mathematical constructions. Along the way we will see, for instance, Pauli matrices from quantum mechanics, Minkowski space from relativity theory, the Hopf fibration, stereographic projection, and the hyperboloid, conformal disc, and upper half space models of hyperbolic space. It is quite a journey and in this paper we take the time to explain each step along the way, making various observations as we proceed. In this sense, this paper is a fuller exposition of \cite{Mathews_Spinors_horospheres}, with some further details, pictures, and calculations. The proof brings together several existing constructions in relativity theory and hyperbolic geometry, including the null flag construction of Penrose--Rindler in \cite{Penrose_Rindler84} and the relation of the light cone to horocycles given by Penner in \cite{Penner87}. 

It is perhaps worth noting that part of the motivation for Penrose--Rindler's work \cite{Penrose_Rindler84} was that, using their constructions, complex numbers describe structures from both quantum mechanics, and relativity theory. Such phenomena arise here where, as we will see, for instance, the Pauli matrices of quantum mechanics arise in a relativistic context, and the group $SL(2,\C)$ plays several roles, simultaneously describing linear transformations of spinors, conformal transformations of the celestial sphere (regarded as $\CP^1$), and isometries of Minkowski space (i.e. Lorentz transformations). The potential for these mathematical ideas to describe physics has been taken up in the program of \emph{twistor theory} (see e.g. \cite{Huggett_Tod94, Penrose21}). In that context, the results of this paper give a further, very concrete and explicit, geometric interpretation of spinors, that may be of relevance elsewhere. However, the constructions we consider here are prior to the notion of twistors; they only concern spinors. As far as relativity theory is concerned, it is the special theory, not the general theory.

Whatever the case, the spinor--horosphere correspondence of \refthm{spinors_to_horospheres} has already found several applications within geometry and topology, from generalising Descartes' circle theorem \cite{me_Zymaris}, to finding hyperbolic structures \cite{Mathews_Purcell_Ptolemy}, and inter-cusp distances in knot complements \cite{Howie_Mathews_et_al}.

\subsection{Horospheres and their decorations}
\label{Sec:intro_horospheres_decorations}

So, what is a horosphere? 
\begin{defn} \
\label{Def:intro_horosphere}
\begin{enumerate}
\item
A \emph{horoball} is the limit of increasing hyperbolic balls tangent to a given plane in $\hyp^3$ at a given point on a given side, as their radius tends to infinity. 
\item
A \emph{horosphere} is the boundary of a horoball.
\end{enumerate}
\end{defn}
See \reffig{horospheres_defn} for a picture of this construction. It may not be particularly informative at first instance, but horospheres appear distinctively in the various standard models of hyperbolic 3-space $\hyp^3$. In this paper we consider the hyperboloid model, which we denote $\hyp$; the conformal ball model, which we denote $\Disc$; and the upper half space model, which we denote $\U$. These are discussed in texts on hyperbolic geometry such as \cite{Anderson05, CFKP97, Iversen92, Ramsay_Richtmyer95, Ratcliffe19, Thurston97}.

\begin{center}
\begin{tabular}{cc}
 \begin{tikzpicture}[scale=0.8]
    \draw[green] (0,0) ellipse (2cm and 0.4cm);
    \fill[white] (-2,0)--(2,0)--(2,0.5)--(-2,0.5);
    \shade[ball color = green!40, opacity = 0.2] (0,0) circle (2cm);
    \draw[green] (0,0) circle (2cm);
    \draw[dashed,green] (0,0) ellipse (2cm and 0.4cm);
    \shade[ball color = red!40, opacity = 0.1] (0,1) circle (1cm);
    \draw (0,1) circle (1cm);
    \fill (0,0) circle (0.055cm);
    \shade[ball color = red!40, opacity = 0.1] (0,0.75) circle (0.75cm);
    \draw (0,0.75) circle (0.75cm);
    \shade[ball color = red!40, opacity = 0.1] (0,0.5) circle (0.5cm);
    \draw (0,0.5) circle (0.5cm);
    \shade[ball color = red!40, opacity = 0.1] (0,0.25) circle (0.25cm);
    \draw (0,0.25) circle (0.25cm);
    \fill (0,2) circle (0.055cm);
    \node[black] at (0,-1.5) {$\Disc$};
    \node at (-0.75,1.4){$\horo$};
\end{tikzpicture} 
&
\begin{tikzpicture}[scale=0.8]
    \draw[green] (-2,-0.5)--(2,-0.5)--(3,0.5)--(-1,0.5)--(-2,-0.5);
    \draw (-1,-0.5)--(0,0.5)--(0,3.5)--(-1,2.5)--(-1,-0.5);
    \fill[white] (0.5,1) circle (1cm);
    \shade[ball color = red!40, opacity = 0.1] (0.5,1) circle (1cm);
    \draw (0.5,1) circle (1cm);
    \shade[ball color = red!40, opacity = 0.1] (0.25,1) circle (0.75cm);
    \draw (0.25,1) circle (0.75cm);
    \shade[ball color = red!40, opacity = 0.1] (0,1) circle (0.5cm);
    \draw (0,1) circle (0.5cm);
    \shade[ball color = red!40, opacity = 0.1] (-0.25,1) circle (0.25cm);
    \draw (-0.25,1) circle (0.25cm);
	\fill[black] (0.5,0) circle (0.07cm);
    \fill[black] (-0.5,1) circle (0.07cm);
		\node[black] at (3,1.5) {$\U$};
		\node[black] at (1.8,-0.2) {$\C$};
        \node at (0.4,2){$\horo$};
\end{tikzpicture}\\
(a) & (b)
\end{tabular}
    
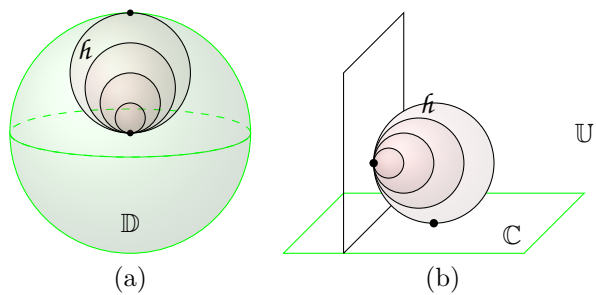
\captionof{figure}{Horosphere definition in the (a) disc model and (b) upper half space model.}
    \label{Fig:horospheres_defn}
    \end{center}

In the hyperboloid model $\hyp$, a horosphere $\horo$ appears as the intersection of the hyperboloid with an affine 3-plane whose normal lies in the light cone. Roughly speaking, such planes are ``on a 45 degree angle"; in the context of conic sections, they are the planes which intersect the cone in parabolic sections. In the conformal ball model $\Disc$, a horosphere appears as a sphere tangent to the sphere at infinity. This point at infinity is called the \emph{centre} of the horosphere. In the upper half space model $\U$, with the boundary at infinity regarded as $\C \cup \{\infty\}$ in the usual way, a horosphere appears either as a horizontal plane, if its centre is $\infty$, and otherwise a sphere tangent to $\C$ at its centre. See \reffig{horospheres}.

\begin{center}
\begin{tabular}{ccc}
\begin{tikzpicture}[scale=0.8]
  \draw (-0.2,3.7) .. controls (-1,0.25) .. (1.8,4.27);
  \fill[white] (-4,3.7)--(0,0)--(4,3.7)--(-4,3.7);
  \fill[white] (4,4)--(0,0)--(-0.75,0.75)--(1.9,4.3)--(4,4.3);
  \draw[blue] (-4,4)--(0,0)--(4,4);
  \draw[dashed, thick] plot[variable=\t,samples=1000,domain=-75.5:75.5] ({tan(\t)},{sec(\t)});
  \fill[white] (2,3)--(2.2,2.3)--(1.33,2);
  \draw[blue] (0,4) ellipse (4cm and 0.4cm);
  \draw[dotted, thick] (-0.2,3.7) .. controls (-1,0.25) .. (1.8,4.27);
  \draw (0,4) ellipse (3.85cm and 0.3cm);
  \node[blue] at (-3.5,3){$L^+$};
  \draw[dashed] (0,4) ellipse (4cm and 0.4cm);
  \draw[dashed] (0,4) ellipse (3.85cm and 0.3cm);
  \draw[dashed] (-4,4)--(0,0)--(4,4);
  \node at (-0.75,2.5){$\mathpzc{h}$};
  \node at (-2.25,3){$\hyp$};
\end{tikzpicture} & \begin{tikzpicture}[scale=0.8]
    \draw[green] (0,0) ellipse (2cm and 0.4cm);
    \fill[white] (-2,0)--(2,0)--(2,0.5)--(-2,0.5);
    \shade[ball color = green!40, opacity = 0.2] (0,0) circle (2cm);
    \draw[green] (0,0) circle (2cm);
    \draw[dashed,green] (0,0) ellipse (2cm and 0.4cm);
    \shade[ball color = red!40, opacity = 0.1] (-0.8,0.1) circle (1cm);
    \draw (-0.8,0.1) circle (1cm);
    \fill (-1.7,0.1) circle (0.055cm);
    \shade[ball color = red!40, opacity = 0.1] (1.1,-0.2) circle (0.8cm);
    \draw (1.1,-0.2) circle (0.8cm);
    \fill (1.5,-0.2) circle (0.055cm);
    \node[black] at (0,-1.5) {$\Disc$};
    \node at (-0.75,1.4){$\horo_1$};
    \node[black] at (1.1, 0.9) {$\horo_2$};
\end{tikzpicture} & \begin{tikzpicture}[scale=0.8]
    \draw[green] (-2,-0.5)--(2,-0.5)--(3,0.5)--(-1,0.5)--(-2,-0.5);
    \fill[white] (-0.1,0.5) circle (0.5cm);
    \shade[ball color = red!40, opacity = 0.1] (-0.1,0.5) circle (0.5cm);
    \draw (-0.1,0.5) circle (0.5cm);
    \draw (-2,1.5)--(2,1.5)--(3,2.5)--(-1,2.5)--(-2,1.5);
		\node[black] at (3,1.5) {$\U$};
		\node[black] at (1.8,-0.2) {$\C$};
        \node at (0.4,2){$\horo_1$};
        \node[black] at (0.7, 0.8) {$\horo_2$};

\end{tikzpicture}\\
(a) & (b) & (c)
    \end{tabular}
    
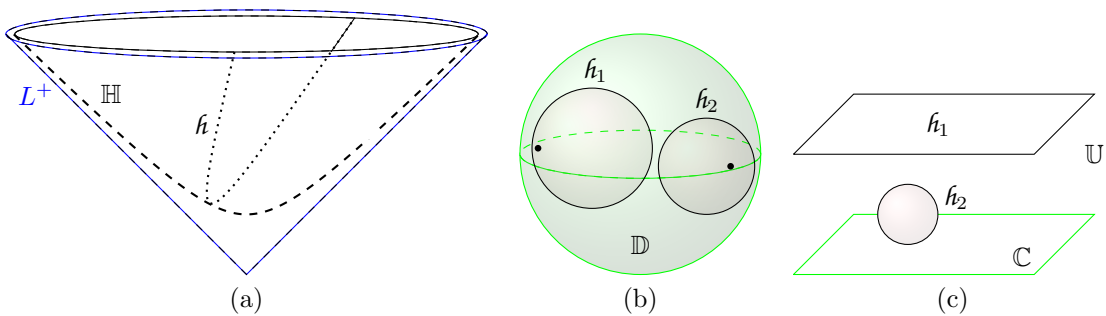
\captionof{figure}{Horospheres $\horo, \horo_1, \horo_2$ in the (a) hyperboloid model (drawn schematically, one dimension down), (b) conformal ball model and (c) upper half space model.}
    \label{Fig:horospheres}
\end{center}

As it turns out, a horosphere is isometric to the Euclidean plane. Even though hyperbolic 3-space $\hyp^3$ is negatively curved, horospheres are flat surfaces living inside $\hyp^3$. Perhaps this is most easily seen for those horospheres which appear as horizontal planes in the upper half space model $\U$. Using the standard description of $\U$ as
\begin{equation}
\label{Eqn:upper_half_space}
\U = \left\{ (x,y,z) \in \R^3 \, \mid \, z > 0 \right\}
\quad \text{with Riemannian metric} \quad
ds^2 = \frac{dx^2 + dy^2 + dz^2}{z^2},
\end{equation}
fixing $z$ to be a constant $z_0$ shows that the hyperbolic metric on the horosphere $z=z_0$ is a constant multiple of the Euclidean metric on the $xy$-plane.

The \emph{decorations} we consider on horospheres take advantage of their Euclidean geometry. If we place a tangent vector at a point on a horosphere $\horo$, we may transport it around $\horo$ by parallel translation, to obtain a \emph{parallel tangent vector field} on $\horo$. Note this cannot be done on surfaces with nonzero curvature: parallel transport of a vector around a loop will in general not result in the same vector. By the Gauss--Bonnet theorem, the vector will be rotated by an angle equal to the curvature inside the loop.

In a horosphere decoration, we are only interested in the direction of the vector, not its length. So a decoration is a \emph{parallel oriented line field}. (Alternatively, we could consider it as a parallel unit vector field.) Some decorated horospheres in the disc model and upper half space models are shown in \reffig{decorated_horospheres}.

\begin{center}
\begin{tabular}{ccc}
\begin{tikzpicture}[scale=0.8]
    \draw[green] (0,0) ellipse (2cm and 0.4cm);
    \fill[white] (-2,0)--(2,0)--(2,0.5)--(-2,0.5);
    \shade[ball color = green!40, opacity = 0.2] (0,0) circle (2cm);
    \draw[green] (0,0) circle (2cm);
    \draw[dashed,green] (0,0) ellipse (2cm and 0.4cm);
    \shade[ball color = red!40, opacity = 0.1] (-0.8,0.1) circle (1cm);
    \draw (-0.8,0.1) circle (1cm);
    \fill (-1.7,0.1) circle (0.055cm);
    \draw[->, red] (-1.7,0.1) to[out=90,in=180] (-0.7,1);
    \draw[->, red] (-1.7,0.1) to[out=60,in=180] (-0.2,0.7);
    \draw[->, red] (-1.7,0.1) to[out=30,in=150] (-0.1,0.2);
    \draw[->, red] (-1.7,0.1) to[out=0,in=135] (-0.1,-0.2);
    \draw[->, red] (-1.7,0.1) to[out=-15,in=110] (-0.4,-0.6);
    \draw[->, red] (-1.7,0.1) to[out=-30,in=90] (-0.8,-0.8);
    \draw[->, red] (-1.7,0.1) to[out=-45,in=90] (-1.3,-0.7);
\end{tikzpicture} & \begin{tikzpicture}[scale=0.8]
    \draw[green] (-2,-0.5)--(2,-0.5)--(3,0.5)--(-1,0.5)--(-2,-0.5);
    \fill[white] (-0.1,0.5) circle (0.5cm);
    \shade[ball color = red!40, opacity = 0.1] (-0.1,0.5) circle (0.5cm);
    \draw (-0.1,0.5) circle (0.5cm);
		\fill[red] (-0.1,0) circle (0.07cm);
    \draw[->, red] (-0.1,0) to[out=135,in=0] (-0.4,0.2);
    \draw[->, red] (-0.1,0) to[out=120,in=0] (-0.5,0.4);
    \draw[->, red] (-0.1,0) to[out=90,in=-45] (-0.4,0.7);
    \draw[->, red] (-0.1,0) to[out=60,in=-60] (-0.2,0.9);
    \draw[->, red] (-0.1,0) to[out=45,in=-45] (0.1,0.8);
    \draw[->, red] (-0.1,0) to[out=30,in=-90] (0.3,0.4);
    \draw (-2,1.5)--(2,1.5)--(3,2.5)--(-1,2.5)--(-2,1.5);
    \begin{scope}[xshift=0.5cm]
		\draw[red,->] (-1.1,1.7)--(-1.4,2);
    \draw[red,->] (-0.4,1.7)--(-1,2.4);
    \draw[red,->] (0.2,1.7)--(-0.4,2.4);
    \draw[red,->] (0.8,1.7)--(0.2,2.4);
    \draw[red,->] (1.2,2)--(0.8,2.4);
		\end{scope}
		\node[black] at (3,1.5) {$\U$};
		\node[black] at (1.8,-0.2) {$\C$};
\end{tikzpicture}\\
(a) & (b)
    \end{tabular}
    
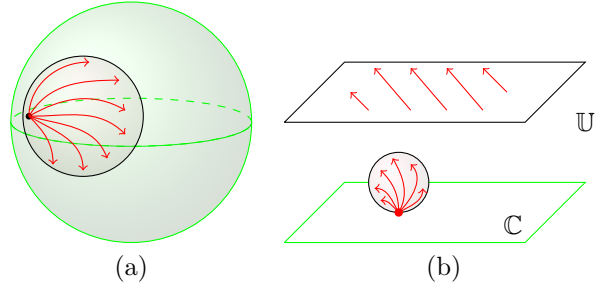
\captionof{figure}{Decorated horospheres in the (a) conformal  ball and (b) upper half space models.}
    \label{Fig:decorated_horospheres}
\end{center}

A decoration on a horosphere can be rotated through any angle. If we rotate it through an angle of $2\pi$, it returns to the same decoration. It turns out that it is possible to define a \emph{spin decoration}, which \emph{does not} return to the same decoration after rotating through $2\pi$, but \emph{does} return to the same decoration after rotation through $4\pi$. A rigorous definition is given in \refdef{spin_decoration}. It requires some technical details relating to the geometry of \emph{spin}, the same geometry that allows an electron to return to its initial state after rotating through $4\pi$, but not $2\pi$.

If we do not worry about spin, then \refthm{spinors_to_horospheres} also gives a smooth, bijective, $SL(2,\C)$-equivariant correspondence between nonzero spinors \emph{up to sign}, and decorated horospheres. The $SL(2,\C)$ action then factors through $PSL(2,\C)$. We prove this in \refprop{main_thm_up_to_sign}.

It is most convenient to describe a decorated horosphere explicitly in the upper half space model $\U$. It is common to think of the horizontal, $xy$-plane in $\U$ as the complex plane, and introduce a complex coordinate $z = x+yi$. The boundary at infinity of hyperbolic space can then be regarded as $\partial \U = \C \cup \{\infty\}$. Thus, $\U$ can alternately be described as
\[
\U = \{ (z,h) \in \C \times \R \, \mid \, h > 0 \} = \C \times \R^+.
\]
A horosphere $\horo$ in $\U$ thus has its centre in $\C \cup \{\infty\}$. If $\horo$ has centre $\infty$ then it appears as a  horizontal plane in $\U$ at some height, and because it is parallel to $\C$, directions along $\horo$ may be specified by complex numbers. If $\horo$ has centre at $z \neq \infty$, then it appears as a Euclidean sphere in $\U$, with some diameter; and at its highest point, or \emph{north pole}, its tangent space is again parallel to $\C$, so directions along $\horo$ may be specified by complex numbers. (Two complex numbers which are positive multiples of each other specify the same direction.) Because a decoration  is a \emph{parallel} oriented line field on $\horo$, if suffices to describe a decoration on $\horo$ at one point, and the north pole will suffice. Further details are given in \refsec{U_horospheres_decorations}. 

\begin{thm}
\label{Thm:explicit_spinor_horosphere_decoration}
Under the correspondence of \refthm{spinors_to_horospheres}, a  nonzero spinor $(\xi, \eta) \in \C^2$ corresponds to a horosphere $\horo$ in $\U$, centred at $\xi/\eta$, with a spin-decoration.
\begin{enumerate}
\item
If $\eta \neq 0$, then $\horo$ appears in $\U$ as a sphere with  Euclidean diameter $|\eta|^{-2}$, and its decoration is specified at the north pole by $i \eta^{-2}$.
\item 
If $\eta = 0$ then $\horo$ appears in $\U$ as a plane at height $|\xi|^2$, and its decoration is specified by $i \xi^2$.
\end{enumerate}
\end{thm}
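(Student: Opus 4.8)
The plan is to exploit the fact that the correspondence of \refthm{spinors_to_horospheres} is assembled, in the course of its proof, as an explicit composition of maps: from spin vectors to Hermitian matrices, thence into Minkowski space, and then through the hyperboloid model $\hyp$, the conformal ball model $\Disc$, and finally the upper half space model $\U$. One could in principle simply apply all of these maps to a general $(\xi,\eta)$ and read off the answer, but that is painful. Instead I would use $SL(2,\C)$-equivariance to reduce to a single orbit representative. Since $(\xi,\eta)\neq\mathbf{0}$, there is some matrix in $SL(2,\C)$ whose first column is $(\xi,\eta)^{T}$, so $SL(2,\C)$ acts transitively on nonzero spinors; thus it suffices to (a) verify the claimed description on one convenient spinor, and (b) check that both sides of the asserted formula transform compatibly under the $SL(2,\C)$-action.

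For the transformation law (step (b)): $SL(2,\C)$ acts on spinors by matrix multiplication, $(\xi,\eta)\mapsto(a\xi+b\eta,\,c\xi+d\eta)$, and on $\partial\U=\C\cup\{\infty\}$ by the M\"obius transformation $m(z)=(az+b)/(cz+d)$, which extends to an isometry of $\U$. The centre $\xi/\eta$ transforms to $(a\xi+b\eta)/(c\xi+d\eta)=m(\xi/\eta)$, as required. It then remains to check that a horosphere of Euclidean diameter $|\eta|^{-2}$ centred at $\xi/\eta$ with north-pole decoration $i\eta^{-2}$ is carried by $m$ to a horosphere of diameter $|c\xi+d\eta|^{-2}$ with decoration $i(c\xi+d\eta)^{-2}$: this is a short computation using $c(\xi/\eta)+d=(c\xi+d\eta)/\eta$ together with the fact that near a point $z_{0}$ the isometry $m$ scales Euclidean lengths on $\partial\U$ by $|m'(z_{0})|=|cz_{0}+d|^{-2}$ and rotates tangent directions at $z_{0}$ by $\arg m'(z_{0})$. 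The quadratic (rather than linear) dependence on $c\xi+d\eta$ is exactly what makes this consistent, and reflects that the decoration is a \emph{spin} decoration; I would also confirm, using \refdef{spin_decoration}, that the lift to spin decorations carries the correct sign conventions, which I expect to be immediate from the $SL(2,\C)$-equivariance already built into \refthm{spinors_to_horospheres}.

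For the base case (step (a)) I would take the representative $(\xi,\eta)=(1,0)$, whose predicted horosphere is the horizontal plane at height $1$ with constant decoration $i$ --- the simplest possible case, since a horizontal plane in $\U$ is literally a Euclidean plane and its parallel line fields are genuinely constant. Tracking $(1,0)$ through the chain of constructions: it maps to the Hermitian matrix $\begin{pmatrix}1&0\\0&0\end{pmatrix}$, i.e. a specific null vector on the future light cone; Penner's construction then attaches a horosphere, and following it through $\hyp$, $\Disc$ and into $\U$ with the normalisations fixed in the proof of \refthm{spinors_to_horospheres} should land on the plane at height $1$. The decoration is obtained by carrying the associated null flag direction --- the part of the spinor data not seen by the bilinear map $(\xi,\eta)\mapsto\begin{pmatrix}\xi\\\eta\end{pmatrix}\begin{pmatrix}\bar\xi&\bar\eta\end{pmatrix}$ --- along the same chain to the north pole, which here is the plane itself, and checking that it points in the direction $i$. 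Part (ii) of the statement is then precisely this computation (all spinors $(\xi,0)$ can be handled at once using the subgroup of upper-triangular matrices fixing $\infty$), and part (i) follows for every spinor with $\eta\neq0$ by applying $\begin{pmatrix}0&-1\\1&0\end{pmatrix}$ followed by a suitable upper-triangular matrix and invoking step (b).

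The main obstacle I anticipate is the bookkeeping of the decoration --- not the horosphere itself, which is essentially Penner's construction, but the flag/spin-decoration direction --- as it passes through stereographic projection and the $\Disc\to\U$ isometry, together with pinning down the precise constant (the factor $i$) and the quadratic dependence ($\eta^{-2}$, not $\eta^{-1}$). This requires being scrupulous about orientation conventions, about the identification of the tangent space to a horosphere at its north pole with $\C$, and about the $2\pi$-versus-$4\pi$ behaviour that distinguishes a spin decoration from an ordinary one. Getting all the signs and factors of $i$ to agree across the models is where the real work lies.
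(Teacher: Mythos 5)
Your proposal is correct and follows essentially the same route as the paper: the paper also computes the base case $(1,0)$ explicitly through the chain $\f,\g,\h,\i,\j$ (Examples \ref{Eg:horosphere_of_10_at_point}--\ref{Eg:decorated_horosphere_of_10_Disc} and \reflem{JIHGF10}), passes to $(0,1)$ via $\begin{pmatrix}0&-1\\1&0\end{pmatrix}$, and then reaches general $(\xi,0)$ and $(\xi,\eta)$ by equivariance under diagonal and upper-triangular matrices, reading off the dilation by $|\eta|^{-2}$ and rotation by $-2\arg\eta$ from the corresponding M\"obius transformations (\refprop{JIHGF_general_spin_vector}). The quadratic transformation law via $m'(z)=(cz+d)^{-2}$ that you highlight is exactly the consistency check the paper records in the remark following that proposition.
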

This theorem makes \refthm{spinors_to_horospheres} explicit, and in particular locates precisely the horosphere corresponding to a spinor. See \reffig{upper_half_space_decorated_horosphere}. However, it only describes decorations, rather than spin decorations. Indeed, in \refthm{explicit_spinor_horosphere_decoration}, the spinors $\pm (\xi, \eta)$ both yield the same decorated horosphere. When spin is fully taken into account, the two spinors $(\xi,\eta)$ and $-(\xi,\eta)$ correspond to spin-decorations on the same horosphere which differ by a $2\pi$ rotation.

\begin{center}
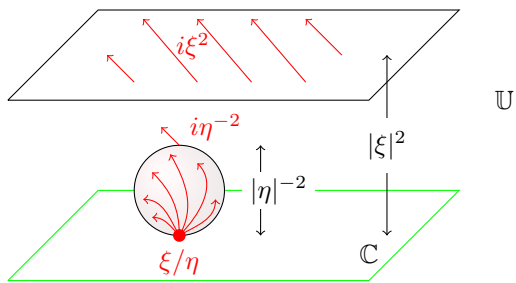

\begin{tikzpicture}[scale=1.2]
    \draw[green] (-2,-0.5)--(2,-0.5)--(3,0.5)--(-1,0.5)--(-2,-0.5);
    \fill[white] (-0.1,0.5) circle (0.5cm);
    \shade[ball color = red!40, opacity = 0.1] (-0.1,0.5) circle (0.5cm);
    \draw (-0.1,0.5) circle (0.5cm);
		\fill[red] (-0.1,0) circle (0.07cm);
    \draw[->, red] (-0.1,0) to[out=135,in=0] (-0.4,0.2);
    \draw[->, red] (-0.1,0) to[out=120,in=0] (-0.5,0.4);
    \draw[->, red] (-0.1,0) to[out=90,in=-45] (-0.4,0.7);
    \draw[->, red] (-0.1,0) to[out=60,in=-60] (-0.2,0.9);
    \draw[->, red] (-0.1,0) to[out=45,in=-45] (0.1,0.8);
    \draw[->, red] (-0.1,0) to[out=30,in=-90] (0.3,0.4);
		\draw[red, ->] (-0.1,1)--(-0.3,1.2);
		\node[red] at (0.3,1.2) {$i \eta^{-2}$};
		\node[red] at (-0.1,-0.3) {$\xi/\eta$};
		\draw[<->] (0.8,0)--(0.8,1);
		\fill[white] (0.6,0.3)--(1.4,0.3)--(1.4,0.7)--(0.6,0.7)--cycle;
		\node[black] at (1,0.5) {$|\eta|^{-2}$};
    \draw (-2,1.5)--(2,1.5)--(3,2.5)--(-1,2.5)--(-2,1.5);
    \begin{scope}[xshift=0.5cm]
		\draw[red,->] (-1.1,1.7)--(-1.4,2);
    \draw[red,->] (-0.4,1.7)--(-1,2.4);
    \draw[red,->] (0.2,1.7)--(-0.4,2.4);
    \draw[red,->] (0.8,1.7)--(0.2,2.4);
    \draw[red,->] (1.2,2)--(0.8,2.4);
		\node[red] at (-0.45,2.1) {$i \xi^2$};
		\end{scope}
		\draw[<->] (2.2,0)--(2.2,2);
		\fill[white] (1.8,0.7)--(2.6,0.7)--(2.6,1.3)--(1.8,1.3)--cycle;
		\node[black] at (2.2,1) {$|\xi|^2$};
		\node[black] at (3.5,1.5) {$\U$};
		\node[black] at (2,-0.2) {$\C$};
\end{tikzpicture}
    \captionof{figure}{Decorated horospheres in the upper half space model corresponding to spinors $\kappa = (\xi, \eta)$.}
    \label{Fig:upper_half_space_decorated_horosphere}
\end{center}

\subsection{Spinor inner product and distances between horospheres}

How can we describe the distance between two horospheres --- or even better, between two spin-decorated horospheres?

Consider two horospheres $\horo_1, \horo_2$, with centres $p_1, p_2$. Then the geodesic $\gamma$ from $p_1$ to $p_2$ intersects both horospheres orthogonally. Let the intersection points of $\gamma$ with $\horo_1, \horo_2$ be $q_1, q_2$ respectively. Assuming $\horo_1, \horo_2$ are disjoint, the shortest path from $\horo_1$ and $\horo_2$ is given by $\gamma$ from $q_1$ to $q_2$. Denote this shortest distance between the horospheres by $\rho$. 

If $\horo_1, \horo_2$ have decorations, then we can say more --- there is also an \emph{angle} between them. Precisely, the decoration on $\horo_1$ describes a direction at $q_1$, and if we parallel translate this direction along $\gamma$ to $q_2$, then there is some angle $\theta$, such that rotating the direction at $q_2$ by $\theta$ around $\gamma$ aligns the two decorations.

The angle $\theta$ between the two decorations is well defined modulo $2\pi$. If we consider \emph{spin} decorations, then the angle is well defined modulo $4\pi$. Rigorous definitions are given in \refsec{complex_lambda_lengths}. See \reffig{3}.

\begin{figure}[h]
\def\svgwidth{0.5\columnwidth}
\begin{center}
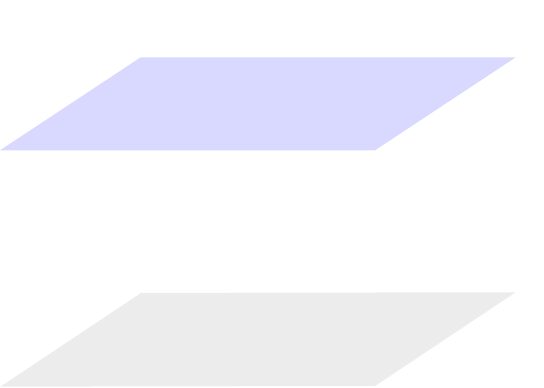 
\caption{Complex translation distance between decorated horospheres.}
\label{Fig:3}
\end{center}
\end{figure}

In this way, we can define a \emph{complex distance} $d$ between spin-decorated horospheres, given by
\[
d = \rho + i \theta.
\]
Our next theorem shows us that we can find the complex distance between two spin-decorated horospheres, from an elementary operation on the corresponding spinors.
\begin{thm}
\label{Thm:main_thm_2}
\label{Thm:main_thm}
Given two spinors $\kappa_1, \kappa_2$, with corresponding spin-decorated horospheres $\mathpzc{h}_1, \mathpzc{h}_2$, 
\[
\{\kappa_1, \kappa_2\} = \exp\left(\frac{d}{2}\right),
\]
where $\{ \cdot, \cdot \}$ is the inner product of spinors, and $d$ is the complex distance between $\mathpzc{h}_1$ and $\mathpzc{h}_2$.
\end{thm}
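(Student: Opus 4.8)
The plan is to exploit $SL(2,\C)$-equivariance to reduce the statement to a one-complex-parameter family of explicit examples, and then to verify it there using \refthm{explicit_spinor_horosphere_decoration}.

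First I would observe that both sides are $SL(2,\C)$-invariant functions of the ordered pair $(\kappa_1,\kappa_2)$. The spinor inner product is the standard symplectic form $\{(\xi_1,\eta_1),(\xi_2,\eta_2)\} = \xi_1\eta_2 - \eta_1\xi_2$, i.e.\ the $2\times 2$ determinant of the pair, which is preserved by $SL(2,\C)$; and by \refthm{spinors_to_horospheres}, $SL(2,\C)$ acts compatibly on spin-decorated horospheres, by spin-isometries, which preserve the signed distance $\rho$ and the transported angle $\theta$ (including its lift modulo $4\pi$), hence preserve $d = \rho + i\theta$ and so $\exp(d/2)$. Next I would note that $SL(2,\C)$ acts transitively on ordered pairs of spinors with a fixed nonzero inner product: two such pairs are both bases of $\C^2$, and the change-of-basis matrix between them has determinant $1$. (When $\{\kappa_1,\kappa_2\}=0$ the two horospheres are concentric and $d$ is undefined, so we assume $\{\kappa_1,\kappa_2\}\neq 0$ throughout.) Hence it suffices to check the identity on one representative pair realising each value of the inner product.

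For this I would take $\kappa_2 = (1,0)$ and $\kappa_1 = (0,\eta)$ with $\eta\in\C\setminus\{0\}$, so that $\{\kappa_1,\kappa_2\}$ runs over all of $\C\setminus\{0\}$ as $\eta$ does. By \refthm{explicit_spinor_horosphere_decoration}, $\kappa_2$ gives the horizontal plane $\horo_2$ at height $1$ with decoration direction $i$, while $\kappa_1$ gives the sphere $\horo_1$ of Euclidean diameter $|\eta|^{-2}$ centred at $0$, with decoration direction $i\eta^{-2}$ at its north pole $(0,0,|\eta|^{-2})$. The geodesic $\gamma$ joining the centres $0$ and $\infty$ is the vertical line over $0$; it meets $\horo_1$ at its north pole and $\horo_2$ at $(0,0,1)$, so the signed distance is $\rho = \ln\!\big(1/|\eta|^{-2}\big) = 2\log|\eta|$, positive exactly when the horoballs are disjoint. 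Parallel transport along a vertical geodesic in $\U$ preserves direction in $\C$ (the parallel field along $\gamma$ carrying a horizontal vector has constant direction), so after transport to $(0,0,1)$ the decoration of $\horo_1$ still points along $i\eta^{-2}$; the rotation aligning it with $i$ is by $\arg(\eta^2) = 2\arg\eta$, whence $\theta \equiv 2\arg\eta$ and $d = 2\log|\eta| + 2i\arg\eta = 2\log\eta$ for a suitable branch. Thus $\exp(d/2) = \eta$, matching $\{\kappa_1,\kappa_2\}$ up to the fixed sign and orientation conventions, which a short direct computation in this family then fixes.

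The step I expect to be the real obstacle is upgrading this from a statement about decorations to one about spin decorations, i.e.\ pinning down the remaining sign. Once $\exp(d/2)$ is known to be a well-defined, continuous $\C^*$-valued function on the set $\{(\kappa_1,\kappa_2)\in\C^2\times\C^2 : \{\kappa_1,\kappa_2\}\neq 0\}$ --- which follows from smoothness of the correspondence in \refthm{spinors_to_horospheres} together with the fact that a spin decoration determines $\theta$ modulo $4\pi$ (an ordinary decoration only modulo $2\pi$), so that $\exp(d/2)$ is unambiguous rather than defined only up to sign --- the ratio $\{\kappa_1,\kappa_2\}/\exp(d/2)$ is a continuous $\{\pm 1\}$-valued function on a connected set (the complement of a real-codimension-$2$ subvariety in $\R^8$), hence constant, and it is enough to evaluate it on the family above. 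Verifying the sign there requires carefully unwinding \refdef{spin_decoration}: one follows the loop $\eta = e^{it}$, $t\in[0,2\pi]$, and checks that the spin decoration on $\horo_1$ returns to itself only after $\theta$ has increased by $4\pi$, not $2\pi$. (An alternative route, closer to \cite{Mathews_Spinors_horospheres}, goes through the chain of constructions directly: under $\kappa\mapsto\kappa\kappa^*$ the Minkowski inner product of the two associated null vectors is a fixed multiple of $|\{\kappa_1,\kappa_2\}|^2$, which recovers $e^{\rho} = |\exp(d/2)|^2$ by Penner's light-cone/horocycle formula, while the phase of $\{\kappa_1,\kappa_2\}$ is matched to $\theta$ by comparing the associated null flags; this cleanly separates the ``modulus'' content, essentially Penner's, from the genuinely spinorial ``phase'' content.)
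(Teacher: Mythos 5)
Your overall strategy is the paper's own: both sides are $SL(2,\C)$-invariant (the bracket by \reflem{SL2C_by_symplectomorphisms}, the lambda length by \reflem{lambda_length_invariant_under_isometry}), and $SL(2,\C)$ acts transitively on ordered pairs with a given nonzero bracket, so one reduces to the explicit family containing $(1,0)$ and $(0,D)$ and computes there using \refthm{explicit_spinor_horosphere_decoration}; this is exactly the reduction in \refthm{main_thm_2_precise}. Your continuity argument (the ratio $\{\kappa_1,\kappa_2\}/\exp(d/2)$ is a locally constant $\pm 1$-valued function on the connected set where the bracket is nonzero) is a reasonable substitute for the paper's explicit loxodromic path in \reflem{main_thm_for_10_and_0D}, though it assumes continuity of $d$ mod $4\pi i$ in the spinors, which itself rests on the spin correspondence.

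The genuine gap is the base-point sign, which is precisely the spin content of the theorem and is not actually established by what you propose. The non-spin computation only determines $\theta$ modulo $2\pi$, hence $\exp(d/2)$ up to sign; indeed with your ordering $\kappa_1=(0,\eta)$, $\kappa_2=(1,0)$ the bracket is $-\eta$, not $\eta$, so the ``fixed sign and orientation conventions'' you defer are exactly what is to be proved (and the direction matters, by \reflem{lambda_antisymmetric}). Your proposed check --- following the loop $\eta=e^{it}$, $t\in[0,2\pi]$, and observing that the spin decoration closes up only after $\theta$ increases by $4\pi$ --- only confirms the winding, i.e.\ the coefficient $2$ in $\theta=2\arg\eta$ and the $4\pi$-periodicity; it cannot distinguish $\theta\equiv 0$ from $\theta\equiv 2\pi$ (mod $4\pi$) at the base point, which is the difference between the ratio being $+1$ and $-1$. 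That constant genuinely depends on the conventions of \refdef{associated_inward_outward_spindec} (the paper remarks that the choice of $\pi$ versus $-\pi$ there is exactly what makes the theorem hold), and the paper pins it down in \reflem{main_thm_for_10_and_01} by an explicit spin-frame computation: lifting the rotation path $M_t$ about the geodesic from $-i$ to $i$ to $SL(2,\C)$, using equivariance to identify $W_2^{in}(q)$ as the image of $W_1^{in}(q)$ under a $\pi$-rotation, and then applying the $-\pi$ convention to get $W_2^{out}(q)=W_1^{in}(q)$, so $\theta\equiv 0$ mod $4\pi$. Without carrying out some such computation at one point, your argument proves the identity only up to an undetermined global sign.
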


Thus, the complex distance --- including both the distance between horospheres, and angle between decorations --- can be calculated simply from the inner product of spinors. But what is this inner product? As it turns out, it just amounts to arranging the two complex numbers of $\kappa_1$, and the two complex numbers of $\kappa_2$, as the columns of a matrix, and taking the determinant.
\begin{defn}
\label{Def:bilinear_form_defn}
The \emph{spinor inner product} $\{ \cdot, \cdot \} \colon \C^2 \times \C^2 \To \C$ is defined for $\kappa_1 = (\xi_1,\eta_1)$ and $\kappa_2 = (\xi_2, \eta_2)$ by
\[
\left\{ \kappa_1 , \kappa_2 \right\} 
= \det (\kappa_1, \kappa_2)
= \det \begin{pmatrix} \xi_1 & \xi_2 \\  \eta_1 & \eta_2 \end{pmatrix}
= \xi_1 \eta_2 - \xi_2 \eta_1.
\]
\end{defn}
Equivalently, $\{ \cdot, \cdot \}$ can be regarded as the standard complex symplectic form on $\C^2$. If $\C^2$ has coordinates $(z_1, z_2)$, then the inner product above is (up to conventions about constants) just $dz_1 \wedge dz_2$.

We call the quantity $\exp(d/2)$ the \emph{complex lambda length} between spin-decorated horospheres, denoted $\lambda$.
\[
\lambda = \exp \left( \frac{d}{2} \right).
\]
It generalises the notion of \emph{lambda length}, defined by Penner in \cite{Penner87} as a real quantity in the 2-dimensional context. In two dimensions, one can define a distance between horocycles, but there is no angle involved. Our $\lambda$ here is a generalised, 3-dimensional, complex version of the lambda lengths from \cite{Penner87}. It is worth pointing out that the case when our spinors have \emph{real} coordinates essentially reduces to 2-dimensional geometry, though with some technicalities; and when the spinors are \emph{integers}, we can recover Ford circles: we discuss this in \refsec{real_spinors_H2}.

Note that as $\theta$ is well defined modulo $4\pi$, $d$ is well defined modulo $4\pi i$, so $d/2$ is well defined modulo $2\pi i$, and hence $\lambda = \exp (d/2)$ is well defined. However, if we drop spin and only consider decorations, then $\theta$ is only well defined modulo $2\pi$, so $d$ is only well defined modulo $2\pi i$, and $\lambda$ is then only well defined up to sign. The spinors $\kappa_1, \kappa_2$ are then also only well defined up to sign, so \refthm{main_thm_2} still holds, but with a sign ambiguity.

Although we have assumed the two horospheres $\horo_1, \horo_2$ are disjoint, in fact \refthm{main_thm} applies to any two spin-decorated horospheres. When horospheres overlap, the distance $\rho$ is well defined and negative; when they have the same centre, $\rho \rightarrow -\infty$ and $\lambda = 0$. We discuss this in \refsec{complex_lambda_lengths}. 

Taken together, \refthm{explicit_spinor_horosphere_decoration} and \refthm{main_thm} provide a powerful method for computations involving horospheres. Given a spinor, we can say precisely where the corresponding horosphere is, and what its decoration looks like. Conversely, given decorated horospheres, it is not difficult to find corresponding spinors. And given two spin-decorated horospheres, we can find the complex distance, or lambda length, between them, simply by taking a determinant.

{\flushleft \textbf{Example.} }
Consider the spinor $\kappa_1 = (1,0)$. By  \refthm{explicit_spinor_horosphere_decoration} it corresponds to the horosphere $\horo_1$ in $\U$, centred at $\infty$ --- hence a horizontal plane --- at height $1$, with decoration specified by $i$.

Similarly, $\kappa_2 = (0,1)$ corresponds to the horosphere $\horo_2$ in $\U$, centred at $0$, with Euclidean diameter $1$, and decoration specified at the north pole by $i$.

These two horospheres are tangent at $(0,0,1) \in \U$, and their decorations agree there. It turns out that their spin decorations agree too, so their complex distance is given by $d = \rho + i \theta$ where $\rho = 0$ and $\theta = 0$, i.e. $d=1$. Hence their lambda length is $\lambda = \exp(d/2) = 1$. We verify \refthm{main_thm} by checking that $\{\kappa_1, \kappa_2\} = 1$ also, given by taking the determinant of the identity matrix.

Multiplying $\kappa_1$ by $re^{i \theta}$ with $r>0$ and $\theta$ real moves the plane $\horo_1$ to height $r^2$ in $\U$, i.e. upwards by $2 \log r$, and rotates its decoration by $2\theta$. The complex distance between $\horo_1, \horo_2$ becomes $d = 2 \log r + 2 \theta i$, and we then find $\lambda = \exp(d/2) = r e^{i \theta}$, which again agrees with $\{\kappa_1, \kappa_2\}$. The situation is as in \reffig{3}.

\subsection{Equivariance}
\label{Sec:intro_equivariance}

\refthm{spinors_to_horospheres} includes a statement that the spinor--horosphere correspondence is $SL(2,\C)$-equivariant. This means that there are actions of $SL(2,\C)$ on the space $\C^2$ of spinors, and on the space of spin-decorated horospheres, and that the correspondence respects those actions.

The action of $SL(2,\C)$ on $\C^2$ is not complicated: it is just matrix-vector multiplication! It is easily computable.

The action of $SL(2,\C)$ on spin-decorated horospheres, on the other hand, is a little more subtle. The orientation-preserving isometry group of $\hyp^3$ is well known to be $PSL(2,\C)$, and this isomorphism can be made quite explicit in the upper half space model, where elements of $PSL(2,\C)$ describe M\"{o}bius transformations. Thus, $PSL(2,\C)$ acts on $\hyp^3$ by isometries, and hence also on horospheres and decorated horospheres.

However, spin decorations on horospheres live in a more complicated space. The group $SL(2,\C)$ is the double and universal cover of $PSL(2,\C)$, and can be regarded as the group of orientation-preserving isometries of $\hyp^3$ which also preserve spin structures. It is then possible to define an action of $SL(2,\C)$ on spin-decorated horospheres, and we do this precisely in \refsec{lifts_of_maps_spaces}.

The equivariance of \refthm{spinors_to_horospheres} thus means that applying an $SL(2,\C)$ linear transformation to a spinor corresponds to applying the corresponding isometry to a spin-decorated horosphere. This can be useful.

\subsection{Ptolemy equation and matrices}
\label{Sec:Ptolemy_matrices}

First appearing in Ptolemy's 2nd century \emph{Almagest} \cite{Ptolemy_Almagest}
is \emph{Ptolemy's theorem}, that in a cyclic quadrilateral $ABCD$ in the Euclidean plane one has
\[
AC \cdot BD = AB \cdot CD + AD \cdot BC.
\]

\begin{center}
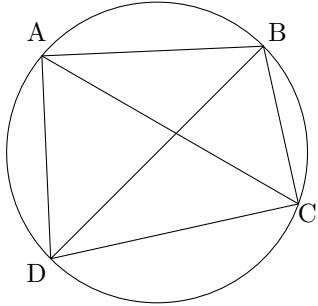

\begin{tikzpicture}
    \draw (0,0) circle (2cm);
    \draw (1.414,1.414)--(-1.532,1.285)--(-1.414,-1.414)--(1.879,-0.684)--(1.414,1.414)--(-1.414,-1.414);
    \draw (-1.532,1.285)--(1.879,-0.684);
    \node at (-1.6,1.6){A};
    \node at (1.6,1.6){B};
    \node at (2.0,-0.8){C};
    \node at (-1.6,-1.6){D};
\end{tikzpicture}\\
\captionof{figure}{Ptolemy's theorem.}
\label{Fig:Ptolemys_thm}
\end{center}

See \reffig{Ptolemys_thm}. Similar \emph{Ptolemy equations} arise in various mathematical contexts, such as representations of 3-manifold groups, e.g. \cite{GGZ15, Zickert16}, and more generally in \emph{cluster algebras}, see e.g. \cite{Fomin_Shapiro_Thurston08, Fomin_Thurston18, Williams14}. As part of their spinor algebra, Penrose--Rindler in \cite{Penrose_Rindler84} discuss an antisymmetric quantity $\varepsilon_{AB}$ describing the inner product $\{ \cdot , \cdot \}$. In particular, it obeys a Ptolemy-like equation (e.g. \cite[eq. 2.5.21]{Penrose_Rindler84}
\[
\varepsilon_{AC} \varepsilon_{BD} 
= \varepsilon_{AB} \varepsilon_{CD} 
+ \varepsilon_{AD} \varepsilon_{BC}.
\]
In our context, we obtain a Ptolemy equation as follows.
\begin{thm}
\label{Thm:main_thm_Ptolemy}
For any ideal tetrahedron in $\hyp^3$, with spin-decorated horospheres $\mathpzc{h}_i$ ($i=0,1,2,3$) about its vertices, and $\lambda_{ij}$ the lambda length between $\mathpzc{h}_i$ and $\mathpzc{h}_j$,
\begin{equation}
\label{Eqn:ptolemy}
\lambda_{02} \lambda_{13}
= \lambda_{01} \lambda_{23} + \lambda_{12} \lambda_{03}.
\end{equation}
\end{thm}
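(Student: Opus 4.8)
The plan is to reduce the Ptolemy equation \refeqn{ptolemy} to the Pl\"ucker relation among the four spinors, using \refthm{main_thm} to translate lambda lengths into spinor inner products. First I would invoke \refthm{spinors_to_horospheres} (or rather its explicit form) to choose, for each vertex $v_i$ of the ideal tetrahedron together with its spin-decorated horosphere $\mathpzc{h}_i$, a spinor $\kappa_i \in \C^2 \setminus \{\0\}$ corresponding to $\mathpzc{h}_i$. Since the four ideal vertices are distinct points of $\partial \hyp^3 = \CP^1$, the four spinors are pairwise linearly independent; in particular no two are proportional, so all the $\{\kappa_i, \kappa_j\}$ are nonzero. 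By \refthm{main_thm}, $\{\kappa_i, \kappa_j\} = \exp(d_{ij}/2) = \lambda_{ij}$, so it suffices to prove
\[
\{\kappa_0, \kappa_2\}\{\kappa_1, \kappa_3\}
= \{\kappa_0, \kappa_1\}\{\kappa_2, \kappa_3\} + \{\kappa_1, \kappa_2\}\{\kappa_0, \kappa_3\}.
\]

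The second step is to establish this identity purely algebraically. Writing $\kappa_i = (\xi_i, \eta_i)$, each $\{\kappa_i, \kappa_j\} = \xi_i \eta_j - \xi_j \eta_i$ is a $2\times 2$ determinant, and the displayed equation is exactly the classical Pl\"ucker relation (equivalently, the three-term Grassmann--Pl\"ucker identity) for $2\times 4$ matrices. I would prove it either by direct expansion — both sides are bilinear in each pair and a short computation shows the monomials cancel — or, more conceptually, by noting that any three vectors in $\C^2$ are linearly dependent: writing $\kappa_2 = a\kappa_0 + b\kappa_1$ and using bilinearity and antisymmetry of $\{\cdot,\cdot\}$, one finds $\{\kappa_0,\kappa_2\} = b\{\kappa_0,\kappa_1\}$, $\{\kappa_1,\kappa_2\} = -a\{\kappa_0,\kappa_1\}$, and substituting into both sides reduces the identity to $\{\kappa_0,\kappa_1\}\big(b\{\kappa_1,\kappa_3\} + a\{\kappa_0,\kappa_3\}\big) = \{\kappa_0,\kappa_1\}\{\kappa_2,\kappa_3\}$, which holds since $\{\kappa_2,\kappa_3\} = \{a\kappa_0 + b\kappa_1, \kappa_3\} = a\{\kappa_0,\kappa_3\} + b\{\kappa_1,\kappa_3\}$. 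This is essentially the Penrose--Rindler relation for $\varepsilon_{AB}$ quoted above.

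The main subtlety — and the step I expect to require the most care — is the consistent handling of \emph{spin}. Each $\lambda_{ij}$ is only determined once a spin decoration is fixed, and a priori \refthm{main_thm} gives $\lambda_{ij} = \{\kappa_i,\kappa_j\}$ only up to an overall sign depending on the choice of lift $\kappa_i$ versus $-\kappa_i$. Replacing $\kappa_i$ by $-\kappa_i$ flips the sign of $\{\kappa_i,\kappa_j\}$ for all $j \neq i$; one checks that every monomial in \refeqn{ptolemy} contains each index $0,1,2,3$ exactly once, so such a sign change multiplies every term of the equation by the same factor $-1$, and the relation is preserved. Hence the identity is independent of the choice of lifts. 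What genuinely needs justification is that there \emph{exist} compatible choices of spin decorations realizing the $\kappa_i$ simultaneously — but this is immediate from \refthm{spinors_to_horospheres}, which furnishes an honest bijection between nonzero spinors and spin-decorated horospheres, so starting from arbitrary spin decorations $\mathpzc{h}_i$ we simply let $\kappa_i$ be the corresponding spinors and the argument goes through verbatim.

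Finally I would remark that the equation is symmetric under the $S_4$ action on the vertices only up to the evident relabelling, and that the same computation shows the more general statement: for \emph{any} four spin-decorated horospheres in $\hyp^3$ (not necessarily the vertices of an embedded ideal tetrahedron, and allowing overlaps where $\lambda$ may be negative or zero), the three-term relation \refeqn{ptolemy} holds, since the algebraic identity among the $\{\kappa_i,\kappa_j\}$ holds unconditionally, even when some spinors are proportional and the corresponding $\lambda_{ij}$ vanishes.
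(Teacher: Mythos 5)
Your proposal is correct and follows essentially the same route as the paper: pass to the spinors $\kappa_i$ via the bijection of \refthm{spinors_to_horospheres}, identify each $\lambda_{ij}$ with $\{\kappa_i,\kappa_j\}$ by \refthm{main_thm}, and observe that the resulting identity is the three-term Pl\"ucker relation for the $2\times 4$ matrix with columns $\kappa_i$ (the paper even states the same strengthening to arbitrary quadruples of spin-decorated horospheres that you note at the end). Your worry about sign ambiguity is already resolved by the fact that the spinor--horosphere correspondence is an honest bijection, as you yourself point out, so the extra discussion is harmless but unnecessary.
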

See \reffig{4}. Penner in \cite{Penner87} gave a similar equation for real lambda lengths in an ideal quadrilateral in the hyperbolic plane. \refthm{main_thm_Ptolemy} extends this result into 3 dimensions, using complex lambda lengths.

\begin{center}
    \begin{tikzpicture}[scale=2,>=stealth',pos=.8,photon/.style={decorate,decoration={snake,post length=1mm}}]
    \draw (-1,0)--(1.5,0.5);
    \fill[white] (0.75,0.35) circle (0.1 cm);
    \draw (0,1.5)--(-1,0)--(1,0)--(0,1.5)--(1.5,0.5)--(1,0);
    \draw[blue] (-0.83,0.1) circle (0.2);
    \draw[blue] (0.85,0.12) circle (0.2);
    \draw[blue] (0,1.3) circle (0.2);
    \draw[blue] (1.3,0.5) circle (0.2);
    \shade[ball color = blue!40, opacity = 0.1] (-0.83,0.1) circle (0.2cm);
    \shade[ball color = blue!40, opacity = 0.1] (0.85,0.12) circle (0.2cm);
    \shade[ball color = blue!40, opacity = 0.1] (0,1.3) circle (0.2cm);
    \shade[ball color = blue!40, opacity = 0.1] (1.3,0.5) circle (0.2cm);
    \draw[red,->] (-1,0) to[out=90,in=225] (-0.9,0.25);
    \draw[red,->] (-1,0) to[out=60,in=180] (-0.75,0.2);
    \draw[red,->] (-1,0) to[out=45,in=150] (-0.7,0.08);
    \draw[red,->] (-1,0) to[out=30,in=135] (-0.75,-0.05);
    \draw[red,->] (1,0) to[out=90,in=-45] (0.9,0.25);
    \draw[red,->] (1,0) to[out=130,in=0] (0.75,0.2);
    \draw[red,->] (1,0) to[out=135,in=60] (0.7,0.08);
    \draw[red,->] (1,0) to[out=150,in=45] (0.75,-0.05);
    \draw[red,->] (1.5,0.5) to[out=120,in=0] (1.2,0.6);
    \draw[red,->] (1.5,0.5) to[out=150,in=15] (1.15,0.5);
    \draw[red,->] (1.5,0.5) to[out=180,in=60] (1.2,0.35);
    \draw[red,->] (1.5,0.5) to[out=200,in=60] (1.3,0.34);
    \draw[red,->] (0,1.5) to[out=210,in=90] (-0.15,1.3);
    \draw[red,->] (0,1.5) to[out=225,in=90] (-0.1,1.2);
    \draw[red,->] (0,1.5) to[out=260,in=120] (0,1.15);
    \draw[red,->] (0,1.5) to[out=290,in=120] (0.1,1.2);
    \node at (-1,-0.25){1};
    \node at (1,-0.25){2};
    \node at (1.7,0.5){3};
    \node at (0,1.7){0};
    \draw [black!50!green, ultra thick, ->] (-0.5,-0.1) to [out=0, in=180] (0.5,0.1);
		\draw [black!50!green] (0,-0.2) node {$\lambda_{12}$};
		\draw [black!50!green, ultra thick, ->] (-0.4,1.1) to [out=240, in=60] (-0.6,0.4);
		\draw [black!50!green] (-0.7,0.75) node {$\lambda_{01}$};
		\draw [black!50!green, ultra thick, ->] (0.22,1) to [out=-60, in=120] (0.78,0.5);
		\draw [black!50!green] (0.4,0.65) node {$\lambda_{02}$};
		\draw [black!50!green, ultra thick, ->] (1.15,0.05) to [out=45, in=250] (1.18,0.27);
		\draw [black!50!green] (1.365,0.16) node {$\lambda_{23}$};
		\draw [black!50!green, ultra thick, ->] (0.35,1.17) to [out=-33, in=147] (1.15,0.85);
		\draw [black!50!green] (0.85,1.11) node {$\lambda_{03}$};
\end{tikzpicture}

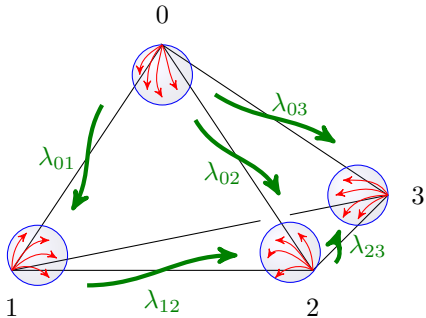
\captionof{figure}{Decorated horospheres and complex lambda lengths along the edges of an ideal tetrahedron.}
    \label{Fig:4}
\end{center}

It is perhaps more standard in 3-dimensional geometry and topology to describe hyperbolic ideal tetrahedra using \emph{shape parameters}, which are also \emph{cross-ratios} of the four ideal vertices. Shape parameters were used famously by Thurston to develop gluing and completeness equations for hyperbolic 3-manifolds \cite{Thurston_notes}. As we discuss in \refsec{shape_parameters}, from the lambda lengths of an ideal tetrahedron, one can recover the shape parameters.

The spinor--horosphere correspondence allows us to consider horospheres and their decorations via spinors, which are vectors in $\C^2$. So if we have \emph{several} spin-decorated horospheres, we then have \emph{several} vectors in $\C^2$, which can be arranged as the columns of a \emph{matrix}. We can then approach problems involving multiple horospheres, or ideal \emph{polygons} or \emph{polyhedra} by using the algebra of matrices. In a sense, \refthm{main_thm_Ptolemy} is the first result in this regard.

An ideal polyhedron in $\hyp^3$ has some number $d$ of ideal vertices. Decorating each ideal vertex with a spin-decorated horosphere, we obtain a bijective correspondence between suitably decorated ideal polyhedra, and $2 \times d$ complex matrices satisfying certain conditions. Moreover, if we want to consider such polyhedra up to \emph{isometry}, we can take a quotient by the $SL(2,\C)$ action. Taking a quotient of a space of $2 \times d$ matrices by a left action of $2 \times 2$ matrices is well known to produce \emph{Grassmannians}. So the spinor--horosphere correspondence allows us to relate spaces of polyhedra to Grassmannian-like objects built from matrices. We explore these ideas in \refsec{polygons_polyhedra_matrices}; they are also developed in \cite{Mathews_Spinors_horospheres}.
Similarly, we can relate \emph{ideal polygons} in $\hyp^2$ with $d$ ideal vertices to $2 \times d$  \emph{real} matrices. Lambda lengths are then real, and their sign can then be related to cyclic ordering around the circle at infinity; we discuss this in \refsec{spin_coherent_positivity}.

\subsection{The journey ahead: overview of proofs and constructions}

As we have mentioned, proving our main theorems involves a journey through several areas of mathematics. Let us now give an overview of where this journey will take us. Essentially, the proof of \refthm{spinors_to_horospheres} consists of carefully tracking spinors through various constructions. In \cite{Mathews_Spinors_horospheres} several steps are elided, and various spaces are implicitly identified. Here here we treat them  separately.

The journey proceeds in two stages, in \refsec{spin_vectors_to_decorated_horospheres} and \refsec{spin}. The first stage, in \refsec{spin_vectors_to_decorated_horospheres}, goes from spinors to decorated horospheres, but does not incorporate spin. The second stage, in \refsec{spin}, upgrades the spaces and maps of the first stage, to incorporate spin.

Once these two stages are complete, in \refsec{applications} we consider some applications.

\subsubsection{Pre-spin stage}

The first, or ``pre-spin" stage, in  \refsec{spin_vectors_to_decorated_horospheres}, has five steps. (In \cite{Mathews_Spinors_horospheres} they are elided to two.)

The first step goes from \emph{spinors} to \emph{Hermitian matrices}, and it is implicit when Penrose--Rindler form the expression
\[
\kappa^A \; \overline{\kappa}^{A'}.
\]
This corresponds to taking a spinor $\kappa = (\xi, \eta)$,  regarding it as a column vector, and multiplying it by its conjugate transpose $\kappa^*$. The result is a $2 \times 2$ Hermitian matrix.
\[
\kappa \kappa^* = \begin{pmatrix} \xi \\ \eta \end{pmatrix} \begin{pmatrix} \overline{\xi} & \overline{\eta} \end{pmatrix}.
\]

The second step goes from \emph{Hermitian matrices} to \emph{Minkowski space} $\R^{1,3}$, which has coordinates $(T,X,Y,Z)$ and metric $g = dT^2 - dX^2 - dY^2 - dZ^2$. The key fact is that $2 \times 2$ Hermitian matrices are precisely those which can be written in the form
\begin{equation}
\label{Eqn:spinvec_to_Hermitian}
\frac{1}{2} 
\begin{pmatrix} T+Z & X+iY \\ X-iY & T-Z \end{pmatrix}
= \frac{1}{2} \left( T \sigma_T + X \sigma_X + Y \sigma_Y + Z \sigma_Z \right)
\end{equation}
and hence such matrices can be \emph{identified} with points in $\R^{1,3}$. Here we observe the appearance of the \emph{Pauli matrices} of quantum mechanics,
\[
\sigma_T = \begin{pmatrix} 1 & 0 \\ 0 & 1 \end{pmatrix}, \quad
\sigma_X = \begin{pmatrix} 0 & 1 \\ 1 & 0 \end{pmatrix}, \quad
\sigma_Y = \begin{pmatrix} 0 & i \\ -i & 0 \end{pmatrix}, \quad
\sigma_Z = \begin{pmatrix} 1 & 0 \\ 0 & -1 \end{pmatrix}.
\]

Putting these two steps together, from a nonzero spinor we obtain a $2 \times 2$ Hermitian matrix, and then a point of $\R^{1,3}$. This construction arguably goes back much further than Penrose--Rindler, to the first uses of spinors in quantum theory. In any case, it turns out that the resulting point in Minkowski space always lies on the \emph{positive} or \emph{future light cone} $L^+$, which is given by
\[
T^2 - X^2 - Y^2 - Z^2 = 0 \quad \text{and} \quad T>0.
\]
Thus, to a spinor, our first two steps associate a point in $L^+$. This association, however, is not bijective, indeed far from it. After all, $\C^2$ is 4-dimensional, but $L^+$ is 3-dimensional. Thus Penrose--Rindler consider not just points on the light cone, but \emph{flags}. Roughly speaking, a flag consists of a \emph{point} on $L^+$, the \emph{ray} through that point, and a \emph{2-plane} containing the ray. The possible 2-planes provide an extra dimension of flexibility, and eventually provides the direction of a spin-decoration. So as it turns out, we must associate to a spinor not just a point on the light cone, but a \emph{flag}. Roughly, a flag consists of a point on the light cone (0-dimensional), the ray through it (1-dimensional), and a tangent plane (2-dimensional). See \reffig{flag}. We think of the ray as the flagpole, and the 2-plane as a flag unfurled from it!

\begin{center}
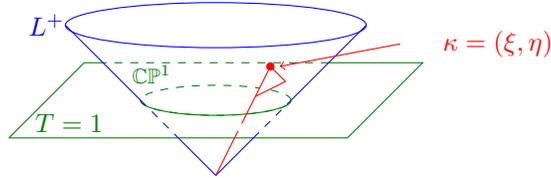

  \begin{tikzpicture}
  \draw[blue] (3.75,1.5) ellipse (2cm and 0.3cm);
  \draw[green!50!black] (3.75,0.5) ellipse (1cm and 0.2cm);
  \fill[white] (2.75,0.5)--(4.75,0.5)--(4.75,0.72)--(2.75,0.72);
  \draw[dashed, green!50!black] (3.75,0.5) ellipse (1cm and 0.2cm);
  \draw[green!50!black] (1,0)--(5.5,0)--(6.5,1)--(5.25,1);
  \draw[green!50!black] (2.25,1)--(2,1)--(1,0);
  \draw[dashed,green!50!black] (5.25,1)--(2.25,1);
  \draw[dashed,blue] (2.75,0.5)--(3.25,0);
  \draw[blue] (2.75,0.5)--(1.75,1.5);
  \draw[dashed, blue] (4.25,0)--(4.75,0.5);
  \draw[blue] (4.75,0.5)--(5.75,1.5);
  \draw[blue] (3.25,0)--(3.75,-0.5)--(4.25,0.0);
  \draw[red] (3.75,-0.5)--(4,0);
  \draw[dashed,red] (4,0)--(4.1875,0.375);
  \fill[white] (4.475,0.95)--(4.675,0.75)--(4.275,0.55);
  \draw[red] (4.1375,0.275)--(4.475,0.95)--(4.675,0.75)--(4.275,0.55);
  \node[blue] at (1.5,1.5){$L^+$};
  \fill[red] (4.475,0.95) circle (0.055cm);
  \node[red] at (7.5,1.25){$\kappa=(\xi,\eta)$};
  \draw[->,red](6.2,1.25)--(4.6,0.95);
  \node[green!50!black] at (1.8,0.2){$T=1$};
  \node[green!50!black] at (2.9,0.85){\footnotesize$\mathbb{CP}^1$};
\end{tikzpicture} 
    \captionof{figure}{A flag in Minkowski space (drawn a dimension down).}
    \label{Fig:flag}
\end{center}

However, if we are to proceed carefully and step by step, then flags in Minkowski space must come from spinors via an intermediate step in Hermitian matrices. As it turns out, we must consider flags in the space of Hermitian matrices. So the first two steps of our construction produce maps
\[
\{ \text{Spinors} \} \stackrel{\f}{\To} \{ \text{Hermitian matrices} \} \stackrel{\g}{\To} \{ \text{Future light cone in $\R^{1,3}$} \}
\]
which are then upgraded to maps
\[
\{ \text{Spinors} \} \stackrel{\F}{\To} \{ \text{Flags in Hermitian matrices} \} \stackrel{\G}{\To} \{ \text{Flags in $\R^{1,3}$} \}.
\]
These steps are carried out in \refsec{spin_vectors_to_Hermitian} to \refsec{flags}, making various observations along the way. (The composition $\g \circ \f$ is essentially the Hopf fibration under stereographic projection!) Roughly, \refsec{spin_vectors_to_Hermitian} considers the map $\f$, \refsec{hermitian_to_minkowski} considers the map $\g$, and \refsec{flags} considers flags and upgrades the maps to $\F$ and $\G$. As it turns out, each step has a ``lower case" version, which considers simpler structures, and an ``upper case" version, which includes some sort of tangent structure such as a flag or decoration. (In \cite{Mathews_Spinors_horospheres}, these two steps are elided into one, with $\f$ and $\g$ becoming $\phi_1$, and $\F, \G$ becoming $\Phi_1$.) These ideas are all in \cite{Penrose_Rindler84}; we give them a slightly different,  detailed and explicit treatment.

The third step, covered in \refsec{Minkowski_to_hyperboloid}, goes from the \emph{light cone} to \emph{horospheres in the hyperboloid model $\hyp$} of hyperbolic space, and from \emph{flags} to \emph{decorated horospheres in $\hyp$}. This step builds on a construction of Penner \cite{Penner87}, one dimension down. Given a point $p \in L^+$, we consider the 3-plane in $\R^{1,3}$ consisting of $x$ satisfying the linear equation
\begin{equation}
\label{Eqn:horosphere_eqn}
\langle p,x \rangle = 1
\end{equation}
in the Minkowski inner product. This is exactly the type of plane that intersects the hyperboloid $\hyp$ in a horosphere, and indeed it yields a map
\[
\{ \text{Future light cone in $\R^{1,3}$} \} \stackrel{\h}{\To} \{ \text{Horospheres in $\hyp$} \}.
\]
See \reffig{flag_horosphere}. It turns out that, if we also have a \emph{flag} based at the point $w$, then that flag intersects the horosphere in a way that precisely gives a decoration, and so this map can be upgraded to a map
\[
\{ \text{Flags in $\R^{1,3}$} \} \stackrel{\H}{\To} \{ \text{Decorated horospheres in $\hyp$} \}.
\]

\begin{center}
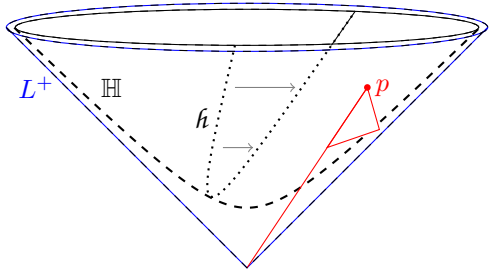

\begin{tikzpicture}[scale=0.8]
  \draw (-0.2,3.7) .. controls (-1,0.25) .. (1.8,4.27);
  \fill[white] (-4,3.7)--(0,0)--(4,3.7)--(-4,3.7);
  \fill[white] (4,4)--(0,0)--(-0.75,0.75)--(1.9,4.3)--(4,4.3);
  \draw[blue] (-4,4)--(0,0)--(4,4);
  \draw[dashed, thick] plot[variable=\t,samples=1000,domain=-75.5:75.5] ({tan(\t)},{sec(\t)});
  \fill[white] (2,3)--(2.2,2.3)--(1.33,2);
  \draw[blue] (0,4) ellipse (4cm and 0.4cm);
  \draw[dotted, thick] (-0.2,3.7) .. controls (-1,0.25) .. (1.8,4.27);
  \draw (0,4) ellipse (3.85cm and 0.3cm);
  \draw[red] (0,0)--(2,3);
  \fill[red] (2,3) circle (0.055cm);
  \node[blue] at (-3.5,3){$L^+$};
  \node[red] at (2.25,3){$p$};
  \draw[red] (2,3)--(2.2,2.3)--(1.33,2)--(2,3);
  \draw[dashed] (0,4) ellipse (4cm and 0.4cm);
  \draw[dashed] (0,4) ellipse (3.85cm and 0.3cm);
  \draw[dashed] (-4,4)--(0,0)--(4,4);
  \node at (-0.75,2.5){$\mathpzc{h}$};
  \node at (-2.25,3){$\hyp$};
  \draw[gray, ->] (-0.2,3)--(0.8,3);
  \draw[gray, ->] (-0.4,2)--(0.1,2);
\end{tikzpicture} 
    \captionof{figure}{Decorated horosphere in $\hyp$ arising from a flag (drawn a dimension down).}
    \label{Fig:flag_horosphere}
\end{center}

The fourth and fifth steps, covered in \refsec{hyperboloid_to_disc} and \refsec{Disc_to_U} respectively, are standard isometries between models of $\hyp^3$. As it turns out, for us the most straightforward route from the hyperboloid model $\hyp$ to the upper half space model $\U$ is via the conformal disc model $\Disc$. Our maps  transfer various structures between models,
\[
\{ \text{Horospheres in $\hyp$} \}
\stackrel{\i}{\To}
\{ \text{Horospheres in $\Disc$} \}
\stackrel{\j}{\To}
\{ \text{Horospheres in $\U$} \},
\]
the latter involving stereographic projection. The upper-case  versions handle decorations,
\[
\{ \text{Decorated horospheres in $\hyp$} \}
\stackrel{\I}{\To}
\{ \text{Decorated horospheres in $\Disc$} \}
\stackrel{\J}{\To}
\{ \text{Decorated Horospheres in $\U$} \}.
\]
(In \cite{Mathews_Spinors_horospheres}, all models of $\hyp^3$ are identified, so $\h, \i, \j$ are elided into $\phi_2$ and $\H, \I, \J$  into $\Phi_2$.)

Having completed these five steps, in \refsec{putting_maps_together} we put them together. We have a sequence of maps which start from a spinor, proceed to obtain a flag at a point on $L^+$, and then eventually finish up at a horosphere with a decoration. In \refprop{JIHGF_general_spin_vector} we prove \refthm{explicit_spinor_horosphere_decoration} for decorated horospheres.

Much of this story already appears in \cite{Penrose_Rindler84}, if we forget horospheres. The point $p$ on $L^+$ obtained from the spinor $\kappa = (\xi, \eta)$ yields a point on the celestial sphere $\S^+$, which is also the boundary at infinity of hyperbolic space $\partial \hyp^3$. Regarding this sphere as $\CP^1$ via stereographic projection, the point $p$ is at $\xi/\eta$; it is  the centre of the corresponding horosphere. The flag and/or decoration yields a tangent direction to $\CP^1$ at $\xi/\eta$, as discussed in \cite[ch. 1]{Penrose_Rindler84}. See \reffig{1}.

\begin{center}
\begin{tabular}{cc}
    \begin{tikzpicture}
  \draw[blue] (3.75,1.5) ellipse (2cm and 0.3cm);
  \draw[green] (3.75,0.5) ellipse (1cm and 0.2cm);
  \fill[white] (2.75,0.5)--(4.75,0.5)--(4.75,0.72)--(2.75,0.72);
  \draw[dashed, green!50!black] (3.75,0.5) ellipse (1cm and 0.2cm);
  \draw[green!50!black] (1,0)--(5.5,0)--(6.5,1)--(5.25,1);
  \draw[green!50!black] (2.25,1)--(2,1)--(1,0);
  \draw[dashed,green!50!black] (5.25,1)--(2.25,1);
  \draw[dashed,blue] (2.75,0.5)--(3.25,0);
  \draw[blue] (2.75,0.5)--(1.75,1.5);
  \draw[dashed, blue] (4.25,0)--(4.75,0.5);
  \draw[blue] (4.75,0.5)--(5.75,1.5);
  \draw[blue] (3.25,0)--(3.75,-0.5)--(4.25,0.0);
  \draw[red] (3.75,-0.5)--(4,0);
  \draw[dashed,red] (4,0)--(4.1875,0.375);
  \fill[white] (4.475,0.95)--(4.675,0.75)--(4.275,0.55);
  \draw[red] (4.1375,0.275)--(4.475,0.95)--(4.675,0.75)--(4.275,0.55);
  \node[blue] at (1.5,1.5){$L^+$};
  \fill[red] (4.475,0.95) circle (0.055cm);
  \node[red] at (7.5,1.25){$\kappa=(\xi,\eta)$};
  \draw[->,red](6.2,1.25)--(4.6,0.95);
  \node[green!50!black] at (1.8,0.2){$T=1$};
  \node[green!50!black] at (2.9,0.85){\footnotesize$\mathbb{CP}^1$};
\end{tikzpicture} & \begin{tikzpicture}
       \draw[green!50!black] (0,-0.25) ellipse (1.45cm and 0.25cm);
       \fill[white] (-1.45,-0.25)--(1.45,-0.25)--(1.45,0.05)--(-1.45,0.05);
       \draw[dashed,green!50!black] (0,-0.25) ellipse (1.45cm and 0.25cm);
       \shade[ball color = green!40, opacity = 0.1] (0,0) circle (1.5cm);
       \draw[green] (0,0) circle (1.5cm);
       \draw[dashed,green] (0,1.5)--(1,0.375);
       \draw[green!50!black] (1,0.375)--(2,-0.75);
       \fill (1,0.375) circle (0.055cm);
       \draw[->,red] (1,0.375)--(1.3,0.6);
       \draw[->,red] (2,-0.75)--(2.4,-0.7);
       \draw (-3,-0.9)--(3,-0.9)--(4,0.1)--(1.48,0.1);
       \draw[dashed] (1.48,0.1) -- (-1.48,0.1);
       \draw (-1.48,0.1)--(-2,0.1)--(-3,-0.9);
       \node[green!50!black] at (-1.4,1.2){$\mathbb{CP}^1$};
       \fill (2,-0.75) circle (0.055cm);
       \draw[<-,red] (0.9,0.375)--(-3,0.3);
       \node[red] at (2,-1.2){$\frac{\xi}{\eta}$};
       \node[red] at (2.4,-0.4){$\frac{i}{\eta^2}$};
    \end{tikzpicture}\\
    (a) & (b) 
    \end{tabular}
    
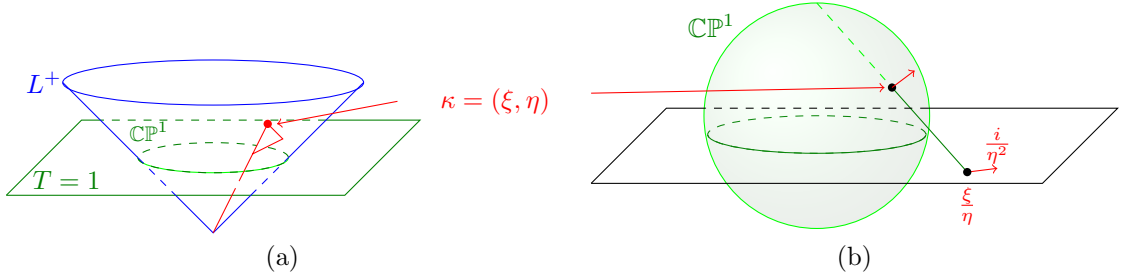
\captionof{figure}{Spinor $\kappa$ with (a) corresponding null flag, and (b) projection to $\CP^1$.}
    \label{Fig:1}
\end{center}

\subsubsection{Spin cycle}

In the second stage of our constructions, having completed the five steps of maps $\f,\g,\h,\i,\j$ and their upgrades to flags and decorations $\F,\G,\H,\I,\J$, we do need to go through the five steps in detail again. In \refsec{spin} we just upcycle them to include spin!

First there are the technicalities: we must define spin-decorated horospheres and various related notions. We do this in \refsec{spin-decorated_horospheres}.

Once this is done, in \refsec{topology_of_spaces_and_maps} we consider the topology of the maps $\F,\G,\H,\I,\J$ and spaces involved. Upcycling our maps to spin versions is essentially just lifting to universal covers, and we obtain
\begin{align*}
\{ \text{Spinors} \} &\stackrel{\widetilde{\F}}{\To} \{ \text{Spin flags in Hermitian matrices} \} \stackrel{\widetilde{\G}}{\To} \{ \text{Spin flags in $\R^{1,3}$} \} \\
& \stackrel{\widetilde{\H}}{\To} \{ \text{Spin-decorated horospheres in $\hyp$} \} 
\stackrel{\widetilde{\I}}{\To}
\{ \text{Spin-decorated horospheres in $\Disc$} \} \\
&\stackrel{\widetilde{\J}}{\To}
\{ \text{Spin-decorated Horospheres in $\U$} \}.
\end{align*}
We can then prove \refthm{spinors_to_horospheres} and \refthm{explicit_spinor_horosphere_decoration}.

It remains to prove \refthm{main_thm}. In \refsec{complex_lambda_lengths} we properly define lambda lengths, and in \refsec{proof_main_thm} we prove the theorem. 

\subsubsection{Post-spin cycle}

Having completed the spin cycle, we then examine a few applications in \refsec{applications}. \refsec{3d_hyp_geom} considers three-dimensional hyperbolic geometry, including the Ptolemy equation of \refthm{main_thm_Ptolemy}. \refsec{real_spinors_H2} considers what happens when spinors are real; we obtain some 2-dimensional hyperbolic geometry, and relations to positivity, triangulated polygons, and Ford circles and Farey fractions. \refsec{polygons_polyhedra_matrices} considers generalising to ideal hyperbolic polygons and polyhedra, and matrices built out of spinors.

\subsection{Notation}
\label{Sec:notation}

In the careful calculations and step-by-step approach of this paper, there is  unavoidably much notation. We have tried to be consistent throughout and avoid duplication of notation. We have followed some notation of Penrose--Rindler \cite{Penrose_Rindler84}, some that is standard in Minkowski geometry, and some that is standard in hyperbolic geometry; some however is probably not standard.

Throughout, complex numbers are denoted by lower case Greek letters, matrices are denoted by upper case Latin letters, and real numbers usually by lower case Latin letters. (These letters however can also denote other things.) The set of $m\times n$ matrices with entries from a set $\mathbb{F}$, is denoted $\mathcal{M}_{m\times n}(\mathbb{F})$. A ring, field or vector space $\mathbb{F}$ without its zero element is denoted $\mathbb{F}_\times$. In particular, the space of nonzero spinors $\C^2 \setminus \{(0,0)\}$ is abbreviated to $\C^2_\times$. Hyperbolic 3-space (independent of model) is denoted $\hyp^3$ and we use $\hyp, \Disc, \U$ to refer to various models.

An overline $\overline{x}$ is common to denote both complex conjugates, and elements of quotient spaces. We use both in close proximity, so to avoid potential confusion, we denote the latter by underlines. That is, $\overline{\alpha}$ is the complex conjugate of $\alpha$, and $\underline{S}$ is an element of a quotient space.

In Appendix \ref{Sec:Notation} there is a table of notation for the reader's convenience.

Unfortunately for our notation, the letter H is ubiquitous in this subject. Already in this introduction we have seen hyperbolic, hyperboloid, horospheres, Hermitian, height, $\hyp$, $\horo$, $h$, $\h$, $\H$ and $\widetilde{\H}$. There will also be $\HH$, $\mathfrak{H}$, and $\h_\partial$. We can only apologise.

\subsection{Acknowledgments}

The first author is supported by Australian Research Council grant DP210103136.

\section{From spinors to null flags to decorated horospheres}
\label{Sec:spin_vectors_to_decorated_horospheres}

In this section we establish the necessary constructions for the main theorems (without spin). We start with a definition following the terminology of \cite{Penrose_Rindler84} as we need it.
\begin{defn}
A \emph{spin vector}, or \emph{two-component spinor}, or just \emph{spinor}, is a pair of complex numbers. 
\end{defn}

\subsection{From spin vectors to Hermitian matrices}
\label{Sec:spin_vectors_to_Hermitian}

The first step in our journey goes from spin vectors to Hermitian matrices via the map $\f$. In \refsec{Hermitian_matrices_and_properties} we introduce various families of Hermitian matrices; they may seem obscure but we will see in \refsec{hermitian_to_minkowski} that they correspond to standard objects in Minkowski space. In \refsec{map_f} we define and discuss the map $\f$. In \refsec{SL2C_and_f} we discuss $SL(2,\C)$ actions and show $\f$ is $SL(2,\C)$-equivariant. Finally in \refsec{derivatives_of_f} we consider some derivatives of $\f$, motivating the need for flags.

\subsubsection{Hermitian matrices and their properties}
\label{Sec:Hermitian_matrices_and_properties}

\begin{defn} \
\begin{enumerate}
\item
The set of Hermitian matrices in $\mathcal{M}_{2\times2}(\C)$ is denoted $\HH$.
\item
$\HH_0=\{S\in\HH \, \mid \, \det S=0\}$ is the set of elements of $\HH$ with determinant zero.
\item
$\HH_0^{0+}=\{S\in\HH_0 \, \mid \, \Trace S \geq 0 \}$ is the set of elements of $\HH_0$ with non-negative trace.
\item
$\HH_0^+=\{S\in\HH_0 \, \mid \, \Trace(S)> 0 \}$ is the set of elements of $\HH_0$ with positive trace.
\end{enumerate}
\end{defn}

Observe that $\HH$ is a 4-dimensional real vector space with respect to, for instance,  the Pauli basis
\[
\sigma_T = \begin{pmatrix} 1 & 0 \\ 0 & 1 \end{pmatrix}, \quad
\sigma_X = \begin{pmatrix} 0 & 1 \\ 1 & 0 \end{pmatrix}, \quad
\sigma_Y = \begin{pmatrix} 0 & i \\ -i & 0 \end{pmatrix}, \quad
\sigma_Z = \begin{pmatrix} 1 & 0 \\ 0 & -1 \end{pmatrix}.
\]
Note however that none of  $\HH_0$, $\HH_0^{0+}$ or $\HH_0^+$ is closed under addition, hence none is a a vector space.  However, $\R$ acts on $\HH_0$ by multiplication: a real multiple of an element of $\HH_0$ again lies in $\HH_0$. Similarly,  the non-negative reals $\R^{0+}$ act on $\HH_0^{0+}$ by multiplication, and the positive reals $\R^+$ act on $\HH_0^+$ by multiplication.

We observe some basic facts about Hermitian matrices of determinant zero.
\begin{lem}
\label{Lem:H0_trace_diagonal}
For $S \in \HH_0$:
\begin{enumerate}
\item
The diagonal elements are both $\geq 0$, or both $\leq 0$.
\item
$S\in\HH_0^{0+}$ iff both diagonal entries are non-negative.
\item
$S\in\HH_0^{+}$ iff at least one diagonal entry is positive.
\item
$\HH_0^+ \subset \HH_0^{0+}$, with $\HH_0^{0+} \setminus \HH_0^+=\{0\}$.
\end{enumerate}
\end{lem}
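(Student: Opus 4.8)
The plan is to work directly with the entries of a Hermitian matrix of determinant zero. Write a general element $S \in \HH$ as
\[
S = \begin{pmatrix} a & \beta \\ \overline{\beta} & c \end{pmatrix},
\]
where $a, c \in \R$ (diagonal entries of a Hermitian matrix are real) and $\beta \in \C$. The condition $\det S = 0$ reads $ac - \beta\overline{\beta} = ac - |\beta|^2 = 0$, so $ac = |\beta|^2 \geq 0$. This single identity is the engine for everything.

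First I would prove (i): since $ac = |\beta|^2 \geq 0$, the real numbers $a$ and $c$ cannot have opposite signs, so either both are $\geq 0$ or both are $\leq 0$. Next, for (ii) and (iii), recall $\Trace S = a + c$. If both $a, c \geq 0$ then $a + c \geq 0$, giving $S \in \HH_0^{0+}$; conversely, if $S \in \HH_0^{0+}$, then by (i) the entries are both $\geq 0$ or both $\leq 0$, and the latter would force $a + c \leq 0$, hence $a = c = 0$ (so in particular both are non-negative) unless $a+c>0$ which already excludes the all-nonpositive case — so in either case both diagonal entries are non-negative. This proves (ii). For (iii): if at least one of $a, c$ is positive, then by (i) neither is negative, so both are $\geq 0$ and the positive one makes $a + c > 0$, giving $S \in \HH_0^+$; conversely if $S \in \HH_0^+$ then $a + c > 0$, and since (by (i) and the argument just given, or directly) they cannot be of opposite sign, at least one must be positive. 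Finally (iv) is essentially a corollary: $\HH_0^+ \subseteq \HH_0^{0+}$ is immediate from the definitions since $\Trace S > 0$ implies $\Trace S \geq 0$; and if $S \in \HH_0^{0+} \setminus \HH_0^+$ then $\Trace S = 0$, so $a + c = 0$ with $a, c \geq 0$ by (ii), forcing $a = c = 0$, whence $|\beta|^2 = ac = 0$, so $\beta = 0$ and $S = 0$. Conversely $0 \in \HH_0^{0+} \setminus \HH_0^+$, so $\HH_0^{0+} \setminus \HH_0^+ = \{0\}$.

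There is no real obstacle here; the only point requiring slight care is the converse direction of (ii), where one must rule out the "both diagonal entries $\leq 0$ but not both zero" scenario — this is handled by observing that such a matrix has $\Trace S \leq 0$ with equality only when both entries vanish, so membership in $\HH_0^{0+}$ already forces non-negativity. I would present the four parts in the order (i), (ii), (iii), (iv), since each builds on the previous, with (i) (the determinant identity $ac = |\beta|^2$) doing essentially all the work.
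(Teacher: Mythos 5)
Your proposal is correct and follows essentially the same route as the paper: write out the entries, use $\det S=0$ to get that the product of the (real) diagonal entries equals $|\beta|^2\geq 0$, and then read off (i)--(iv) from the trace. The only difference is cosmetic — your handling of the converse of (ii) is slightly more roundabout than the paper's one-line deduction from (i), but it is sound.
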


\begin{proof}
Letting $S = \begin{pmatrix} a & b+ci \\ b-ci & d\end{pmatrix}$ where $a,b,c,d\in\R$, we observe that $\det S = ad - b^2 - c^2=0$. 
\begin{enumerate}
\item Since $ad = b^2 + c^2 \geq 0$, either $a,d \geq 0$ or $a,d \leq 0$. 
\item From (i), $\Trace S = a+d \geq0$ iff $a,d\geq 0$.
\item From (i) $\Trace S = a+d >0$ iff at least one of $a,d$ is positive. 
\item It is immediate from the definition that $\HH_0^+ \subseteq \HH_0^{0+}$. If $S \in \HH_0^{0+} \setminus \HH_0^+$ then $\det S=0=\Trace S$, so from (ii) $a=d=0$, thus $b^2+c^2 = 0$, so $b=c=0$, i.e., $S=0$.
\end{enumerate}
\end{proof}

Thus $\HH_0^{0+}$ can be defined as all $S\in\HH_0$ with both diagonal entries non-negative. Similarly $\HH_0^+$ can be defined as all $S\in\HH_0$ with one diagonal entry positive.

\subsubsection{The map from spin vectors to Hermitian matrices}
\label{Sec:map_f}

\begin{defn}
\label{Def:f}
The map $\f$ from spin vectors to Hermitian matrices is given by
\[
\f \colon \C^2 \To \HH, \quad
\f (\kappa) = \kappa \, \kappa^*.
\]
\end{defn}
Here we view $\kappa$ as a column vector, regarding $\C^2$ as $\M_{2 \times 1}(\C)$.

\begin{lem}
\label{Lem:f_surjectivity}
The map $\f$ is smooth and has the following properties:
\begin{enumerate}
\item
$\f(\C^2)=\HH_0^{0+}$.
\item
$\f(\kappa)=0$ iff $\kappa = 0$.
\item
The map $\f$ restricts surjectively to a map $\C^2_\times \To \HH_0^+$ (which we also  denote $\f$).
\end{enumerate}
\end{lem}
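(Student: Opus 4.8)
The plan is to write $\f(\kappa)$ explicitly in the Pauli basis and read off all three properties from the resulting formula. Writing $\kappa = (\xi,\eta)$ as a column vector, we compute
\[
\f(\kappa) = \kappa \kappa^* = \begin{pmatrix} \xi \\ \eta \end{pmatrix} \begin{pmatrix} \overline{\xi} & \overline{\eta} \end{pmatrix} = \begin{pmatrix} |\xi|^2 & \xi \overline{\eta} \\ \overline{\xi} \eta & |\eta|^2 \end{pmatrix}.
\]
Smoothness is then immediate: the entries are polynomial in the real and imaginary parts of $\xi,\eta$. This formula will be reused throughout, so it is worth recording at the start of the proof.

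For part (i), I would argue both inclusions. For $\f(\C^2) \subseteq \HH_0^{0+}$: the matrix above is visibly Hermitian, its determinant is $|\xi|^2 |\eta|^2 - (\xi\overline{\eta})(\overline{\xi}\eta) = 0$, and its trace $|\xi|^2 + |\eta|^2 \geq 0$, so $\f(\kappa) \in \HH_0^{0+}$. For the reverse inclusion $\HH_0^{0+} \subseteq \f(\C^2)$, take $S = \begin{pmatrix} a & b+ci \\ b-ci & d \end{pmatrix} \in \HH_0^{0+}$, so $ad = b^2+c^2$ and (by \reflem{H0_trace_diagonal}(ii)) $a,d \geq 0$. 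I would split into cases. If $a > 0$, set $\xi = \sqrt{a}$ and $\eta = (b-ci)/\sqrt{a}$; then $|\xi|^2 = a$, $|\eta|^2 = (b^2+c^2)/a = d$, and $\xi\overline{\eta} = (b+ci)$, so $\f(\xi,\eta) = S$. Symmetrically, if $d>0$ one sets $\eta = \sqrt{d}$, $\xi = (b+ci)/\sqrt d$. If $a = d = 0$ then $b^2 + c^2 = 0$, forcing $b=c=0$, so $S = 0 = \f(0)$. This also makes part (ii) transparent in the forward direction: $\f(\kappa) = 0$ forces $|\xi|^2 = |\eta|^2 = 0$, hence $\kappa = 0$; the converse is trivial.

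Part (iii) then follows by combining the above with \reflem{H0_trace_diagonal}(iv). If $\kappa \in \C^2_\times$, then $\f(\kappa) \in \HH_0^{0+}$ by (i), and $\f(\kappa) \neq 0$ by (ii), so $\f(\kappa) \in \HH_0^{0+} \setminus \{0\} = \HH_0^+$. Conversely, given $S \in \HH_0^+ \subseteq \HH_0^{0+}$, the case analysis above produces a $\kappa$ with $\f(\kappa) = S$, and $\kappa \neq 0$ since $S \neq 0$; hence the restriction $\C^2_\times \To \HH_0^+$ is surjective.

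I do not anticipate a genuine obstacle here — the whole lemma is essentially bookkeeping around the explicit matrix formula. The one point requiring a little care is the surjectivity argument in part (i): one must resist the temptation to write $\xi = \sqrt{a}$, $\eta = \sqrt{d}$ and instead get the off-diagonal phase right, which is why the case split on which diagonal entry is positive (guaranteed by \reflem{H0_trace_diagonal}(iii)) is the natural device. Everything else is a direct consequence of properties of the modulus on $\C$.
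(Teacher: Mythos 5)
Your proof is correct and follows essentially the same route as the paper: write out $\f(\xi,\eta)$ explicitly, check determinant and trace for the forward inclusion, construct an explicit preimage for surjectivity, and deduce (iii) from (i), (ii) and \reflem{H0_trace_diagonal}. The only (cosmetic) difference is in the preimage: the paper writes the off-diagonal entry in polar form and takes $(\sqrt{a}\,e^{i\theta},\sqrt{d})$, while you keep Cartesian form and take $\xi=\sqrt{a}$, $\eta=(b-ci)/\sqrt{a}$ with a case split on which diagonal entry is positive — both work equally well.
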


\begin{proof}
For general $\kappa = (\xi, \eta)$ we describe $\f$ explicitly; it is manifestly smooth.
\begin{equation}
\label{Eqn:f_formula}
\f(\xi, \eta) = 
\begin{pmatrix} \xi \\ \eta \end{pmatrix}
\begin{pmatrix} \overline{\xi} & \overline{\eta} 
\end{pmatrix}
=
\begin{pmatrix}
\xi  \overline{\xi} & \xi \overline{\eta} \\
\eta \overline{\xi} & \eta \overline{\eta}.
\end{pmatrix}
= \begin{pmatrix} 
|\xi|^2 & \xi \overline{\eta} \\
\eta \overline{\xi} & |\eta|^2
\end{pmatrix}
\end{equation}
\begin{enumerate}
\item
Observe $\f(\kappa)$ has determinant zero and trace $|\xi|^2 + |\eta|^2 \geq 0$. Thus the image of $\f$ lies in $\HH_0^{0+}$.

To see that the image is $\HH_0^{0+}$, take $S = \begin{pmatrix} a & re^{i\theta} \\ re^{-i\theta} & b \end{pmatrix} \in \HH_0^{0+}$, where  $r \geq 0$ and $a,b,\theta\in\R$. Then $ab=r^2$, and by  \reflem{H0_trace_diagonal}(ii) we have $a,b \geq 0$. Letting $\sqrt{\cdot}$ denote the non-negative square root of a non-negative real number, we may take, for example, $(\xi, \eta) = \left( \sqrt{k} e^{i\theta}, \sqrt{l} \right)$ or $\left( \sqrt{k}, \sqrt{l} e^{-i\theta} \right)$, and then $\f(\xi, \eta) = S$. 

\item Clearly $\f(0) = 0$. If $\f(\kappa) = 0$ then the diagonal elements of $\f(\kappa)$ are $|\xi|^2 = |\eta|^2 = 0$, so $\kappa=0$.

\item If $\kappa \neq 0$ then at least one of the diagonal entries of $\f(\kappa)$ is positive, so by \reflem{H0_trace_diagonal}(iii), $\f(\kappa) \in \HH_0^+$. For surjectivity, take $S \in \HH_0^+$, which by \reflem{H0_trace_diagonal}(iv) is equivalent to $S \in \HH_0^{0+}$ and $S \neq 0$. By (i) there exists  $\kappa \in \C^2$ such that $\f(\kappa) = S$. By (ii), $\kappa \neq 0$, i.e. $\kappa \in \C^2_\times$.
\end{enumerate}
\end{proof}

The map $\f$ is not injective; the next lemma describes precisely the failure of injectivity.
\begin{lem}
\label{Lem:when_f_equal}
$\f(\kappa) = \f(\kappa')$ iff $\kappa = e^{i\theta} \kappa'$ for some $\theta\in\R$.
\end{lem}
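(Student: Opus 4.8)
The plan is to prove both directions of the biconditional by direct computation using the explicit formula for $\f$ from \refeqn{f_formula}.

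\textbf{The easy direction.} Suppose $\kappa = e^{i\theta}\kappa'$ for some $\theta\in\R$. Then $\kappa^* = \overline{e^{i\theta}}\,(\kappa')^* = e^{-i\theta}(\kappa')^*$, so
\[
\f(\kappa) = \kappa\kappa^* = e^{i\theta}\kappa'\,e^{-i\theta}(\kappa')^* = \kappa'(\kappa')^* = \f(\kappa').
\]
This uses only that multiplication by a unit complex number is cancelled by its conjugate, and requires no case analysis.

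\textbf{The forward direction.} Suppose $\f(\kappa) = \f(\kappa')$, writing $\kappa = (\xi,\eta)$ and $\kappa' = (\xi',\eta')$. First I would dispose of the zero case: by \reflem{f_surjectivity}(ii), $\f(\kappa)=0$ forces $\kappa = \kappa' = 0$, and then $\kappa = e^{i0}\kappa'$ trivially. So assume $\kappa,\kappa'\neq 0$. Comparing the two matrices entrywise via \refeqn{f_formula} gives $|\xi|^2 = |\xi'|^2$, $|\eta|^2 = |\eta'|^2$, and $\xi\overline{\eta} = \xi'\overline{\eta'}$ (the lower-left entry is the conjugate of this). Thus $|\xi| = |\xi'|$ and $|\eta| = |\eta'|$. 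Since $\kappa\neq 0$, at least one of $\xi,\eta$ is nonzero; say $\eta \neq 0$ (the case $\xi\neq 0$ is symmetric). Then $|\eta'| = |\eta| \neq 0$, so $\eta' \neq 0$ as well, and we may set $e^{i\theta} := \eta/\eta'$, which has modulus $1$. From $\xi\overline{\eta} = \xi'\overline{\eta'}$ we get $\xi = \xi'\overline{\eta'}/\overline{\eta} = \xi'\cdot\overline{(\eta'/\eta)} = \xi' e^{i\theta}$, while $\eta = e^{i\theta}\eta'$ by construction; hence $\kappa = e^{i\theta}\kappa'$. In the symmetric case where instead $\xi \neq 0$, one sets $e^{i\theta} := \xi/\xi'$ and recovers $\eta = e^{i\theta}\eta'$ from the off-diagonal entry by the same manipulation.

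\textbf{Main obstacle.} There is no serious obstacle here; the only thing to be careful about is organising the case split so that one always divides by a nonzero coordinate, and checking that the phase extracted from one coordinate is forced to be the same phase relating the other coordinate — this is exactly what the off-diagonal equation $\xi\overline{\eta} = \xi'\overline{\eta'}$ guarantees. One could also phrase the argument more conceptually: $\f(\kappa)$ is the orthogonal projection onto the line $\C\kappa$ scaled by $|\kappa|^2$, so $\f(\kappa) = \f(\kappa')$ forces $\C\kappa = \C\kappa'$ and $|\kappa| = |\kappa'|$, whence $\kappa$ and $\kappa'$ differ by a unit scalar; but the entrywise computation is shorter to write out and self-contained.
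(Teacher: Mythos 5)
Your proof is correct and follows essentially the same route as the paper: both directions are handled by direct entrywise comparison of $\kappa\kappa^*$ using \refeqn{f_formula}, with the zero case dispatched via \reflem{f_surjectivity}(ii), equal moduli read off the diagonal, and the common phase extracted from the off-diagonal entry. Your explicit case split on which coordinate is nonzero is a slightly more careful organisation of the same argument (the paper instead writes $\xi=e^{i\theta}\xi'$, $\eta=e^{i\phi}\eta'$ and equates the phases), but the substance is identical.
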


\begin{proof}
If $\kappa = e^{i \theta} \kappa'$ then we have $\f(\kappa) = \kappa \kappa^* = \left( \kappa' e^{i\theta} \right) \left( e^{-i\theta} \kappa'^* \right) = \kappa' \kappa'^* = \f(\kappa')$. 

For the converse, suppose $\f(\kappa) = \f(\kappa')$.
If $\f(\kappa) = \f(\kappa')=0$ then by \reflem{f_surjectivity}(ii) we have $\kappa = \kappa' = 0$ so the result holds trivially. Thus  we assume $\f(\kappa) = \f(\kappa')\neq0$, and hence, again using \reflem{f_surjectivity}(ii), $\kappa, \kappa' \neq (0,0)$. Let $\kappa = (\xi, \eta)$ and $\kappa' = (\xi', \eta')$. 

Considering \refeqn{f_formula} and equating diagonal entries gives $|\xi| = |\xi'|$ and $|\eta| = |\eta'|$.
We then
have $\xi = e^{i \theta} \xi'$ and $\eta = e^{i \phi} \eta'$ for some $\theta,\phi\in\R$. Thus
\[
\f(\kappa) = \begin{pmatrix} \xi \overline{\xi} & \xi \overline{\eta} \\ \eta \overline{\xi} & \eta \overline{\eta} \end{pmatrix}
= \begin{pmatrix} \xi' \overline{\xi'} & e^{i(\theta - \phi)} \xi' \overline{\eta'} \\ e^{i(\phi - \theta)} \eta' \overline{\xi'} & \eta' \overline{\eta'} \end{pmatrix}
\quad \text{while} \quad
\f(\kappa') = 
\begin{pmatrix} \xi' \overline{\xi'} & \xi' \overline{\eta'} \\ \eta' \overline{\xi'} & \eta' \overline{\eta'}
\end{pmatrix},
\]
therefore $\theta = \phi$ (mod $2\pi)$, and we have $(\xi,\eta) = e^{i\theta}(\xi',\eta')$ as desired.
\end{proof}

{\flushleft \textbf{Remark: $\f$ is the cone on the Hopf fibration.} } The \emph{Hopf fibration} is a fibration of $S^3$ as an $S^1$ bundle over $S^2$. We will discuss it in more detail in \refsec{f_compose_g} and \refsec{Hopf}, but we can see it already. 

The restriction of $\f$ to $S^3 = \{(\xi,\eta) \in \C^2 \, \mid \, |\xi|^2 + |\eta|^2 =1\}$, since it is smooth and identifies precisely those pairs $(\xi, \eta), (\xi', \eta')$ such that $(\xi, \eta) = e^{i\theta}(\xi', \eta')$, must topologically be the Hopf fibration $S^3 \To S^2$. Similarly, the restriction of $\f$ to $\C_\times^2 \cong S^3 \times \R$ is topologically the product of the Hopf fibration with the identity map on $\R$, $S^3 \times \R \To S^2 \times \R$. Extending to the full domain $\C^2$ then cones off both these spaces with the addition of a single extra point, extending $S^3 \times \R$ to $\C^2$ (the cone on $S^3$) and extending $S^2 \times \R$ to the cone on $S^2$. In other words, $\f$ is the cone on the Hopf fibration. The topology of $\HH$ and various subspaces will become clearer in \refsec{hermitian_to_minkowski} when we consider Minkowski space; see \reflem{Hermitian_topology} and surrounding discussion.


\subsubsection{$SL(2,\C)$ actions and equivariance}
\label{Sec:SL2C_and_f}

We now define $SL(2,\C)$ actions on $\C^2$ and $\HH$. We denote a general element of $SL(2,\C)$ by $A$ and a general element of $\HH$ by $S$. We denote both actions by a dot where necessary.
We already mentioned the action on $\C^2$ in the introductory \refsec{intro_equivariance}.
\begin{defn}
\label{Def:SL2C_action_on_C2}
$SL(2,\C)$ acts from the left on $\C^2$ by usual matrix-vector multiplication, $A\cdot\kappa = A \kappa$.
\end{defn}

\begin{lem}
\label{Lem:SL2C_by_symplectomorphisms}
For any $\kappa_1, \kappa_2 \in \C^2$ and $A \in SL(2,\C)$, we have
\[
\{A \cdot \kappa_1, A \cdot \kappa_2 \} = \{ \kappa_1, \kappa_2 \}.
\]
\end{lem}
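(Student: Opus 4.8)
The plan is to compute both sides directly using the determinant description of the inner product from \refdef{bilinear_form_defn}. Recall that $\{\kappa_1,\kappa_2\} = \det(\kappa_1,\kappa_2)$, where $(\kappa_1,\kappa_2)$ denotes the $2\times 2$ matrix whose columns are $\kappa_1$ and $\kappa_2$. The key observation is that left-multiplication by $A$ acts columnwise: the matrix with columns $A\kappa_1$ and $A\kappa_2$ is exactly $A\,(\kappa_1,\kappa_2)$.

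First I would write $\{A\cdot\kappa_1, A\cdot\kappa_2\} = \det(A\kappa_1, A\kappa_2)$ using \refdef{SL2C_action_on_C2} and \refdef{bilinear_form_defn}. Next I would observe the columnwise identity $(A\kappa_1, A\kappa_2) = A(\kappa_1,\kappa_2)$, which is immediate from the definition of matrix multiplication. Then by multiplicativity of the determinant, $\det\bigl(A(\kappa_1,\kappa_2)\bigr) = \det(A)\det(\kappa_1,\kappa_2)$. Finally, since $A \in SL(2,\C)$ we have $\det A = 1$, so this equals $\det(\kappa_1,\kappa_2) = \{\kappa_1,\kappa_2\}$, completing the argument.

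Alternatively, one can phrase this coordinate-free: the inner product $\{\cdot,\cdot\}$ is, up to normalization, the standard complex symplectic form $dz_1 \wedge dz_2$ on $\C^2$ (as noted after \refdef{bilinear_form_defn}), and a linear map pulls back a top-degree alternating form by multiplication by its determinant; membership in $SL(2,\C)$ forces that factor to be $1$. Either formulation is a two-line computation.

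There is essentially no obstacle here — the statement is a direct consequence of the multiplicativity of the determinant together with the defining property $\det A = 1$ of $SL(2,\C)$. The only thing worth stating carefully is the columnwise compatibility $(A\kappa_1, A\kappa_2) = A(\kappa_1,\kappa_2)$, which is what lets the determinant factor cleanly; once that is in place the conclusion is immediate.
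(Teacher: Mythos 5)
Your proof is correct and is essentially the paper's own argument: both form the matrix with columns $\kappa_1, \kappa_2$, note that acting by $A$ gives the matrix $A(\kappa_1,\kappa_2)$, and conclude by multiplicativity of the determinant together with $\det A = 1$. Nothing further is needed.
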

In other words, the action of $SL(2,\C)$ on $\C^2$ is by symplectomorphisms, preserving the complex symplectic form $\{ \cdot, \cdot \}$.
\begin{proof}
Let $M\in\mathcal{M}_{2\times2}(\C)$ have columns $\kappa_1, \kappa_2$. Then by definition $\{ \kappa_1, \kappa_2 \} = \det M$. Further, $AM\in\mathcal{M}_{2 \times 2}(\C)$ has columns $A \kappa_1$ and $A \kappa_2$, so that $\{ A \kappa_1, A \kappa_2 \} = \det (AM)$. Since $A \in SL(2,\C)$ we have $\det A = 1$ so $\det(AM) = \det M$.
\end{proof}

\begin{defn}
\label{Def:SL2C_actions_on_C2_H}
\label{Def:standard_SL2C_actions}
$SL(2,\C)$ acts from the left on 
$\HH$ by $A\cdot S = ASA^*$.
\end{defn}
To see that we indeed have an action on $\HH$ note that $(ASA^*)^* = ASA^*$ and, for $A,A' \in SL(2,\C)$, we have 
\begin{equation}
\label{Eqn:group_action_on_Hermitian}
(AA')\cdot S = AA'S(AA')^* = AA'SA'^*A^* = A(A'SA'^*)A^* = A \cdot (A' \cdot S).
\end{equation}
Note also that, for $S,S' \in \HH$ and $a, a' \in \R$ we have
\begin{equation}
\label{Eqn:linear_action_on_Hermitian}
A \cdot \left( a S + a S' \right) 
= A \left( a S + a' S' \right) A^*
= a  ASA^* + a' AS'A^*.
= a A \cdot S + a' A \cdot S'
\end{equation}
so $SL(2,\C)$ acts by real linear maps on $\HH$.

Observe that
\begin{equation}
\label{Eqn:basic_equivariance}
\f (A\cdot\kappa) = (A\cdot\kappa)(A\cdot\kappa)^* = A \, \kappa \, \kappa^* \, A^* = A \f(\kappa) A^* = A\cdot \f(\kappa).
\end{equation}

\begin{lem}
\label{Lem:SL2C_preerves_Hs}
The action of $SL(2,\C)$ on $\HH$ restricts to actions on $\HH_0$, $\HH_0^{0+}$ and $\HH_0^+$.
\end{lem}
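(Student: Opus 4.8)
The statement to prove is that the $SL(2,\C)$ action on $\HH$ by $A \cdot S = ASA^*$ restricts to actions on $\HH_0$, $\HH_0^{0+}$, and $\HH_0^+$.

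The plan is to show that each of the three defining conditions — vanishing determinant, non-negative trace (together with zero determinant), positive trace (together with zero determinant) — is preserved by the map $S \mapsto ASA^*$ when $A \in SL(2,\C)$.

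For $\HH_0$: the determinant condition is the easiest. For any $S \in \HH$ and $A \in SL(2,\C)$ we have $\det(ASA^*) = \det A \cdot \det S \cdot \det A^* = \det S$, since $\det A = 1$ and hence $\det A^* = \overline{\det A} = 1$. So $\det S = 0$ implies $\det(ASA^*) = 0$, giving an action on $\HH_0$.

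For $\HH_0^{0+}$ and $\HH_0^+$, the determinant is already handled, so it remains to control the trace. This is the one step requiring a genuine idea rather than a one-line computation. I would use the surjectivity description from the pre-spin stage: by \reflem{f_surjectivity}, every $S \in \HH_0^{0+}$ equals $\f(\kappa) = \kappa\kappa^*$ for some $\kappa \in \C^2$, and $S \in \HH_0^+$ exactly when $\kappa \neq 0$. Then by the equivariance identity \refeqn{basic_equivariance}, $A \cdot S = A \cdot \f(\kappa) = \f(A\kappa)$, which lies in $\HH_0^{0+} = \f(\C^2)$ again by \reflem{f_surjectivity}(i); and if $S \in \HH_0^+$ then $\kappa \neq 0$, so $A\kappa \neq 0$ (as $A$ is invertible), hence $\f(A\kappa) \in \HH_0^+$ by \reflem{f_surjectivity}(iii). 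This shows the action restricts to $\HH_0^{0+}$ and $\HH_0^+$. Since the action is already known to be an action on $\HH$ (equation \refeqn{group_action_on_Hermitian}), restricting the underlying set is all that is needed. The main (minor) obstacle is simply recognizing that the trace positivity is cleanest to track through the factorization $S = \kappa\kappa^*$ rather than by manipulating trace inequalities directly — an alternative would be to note $\Trace(ASA^*) = \Trace(A^*AS)$ and argue $A^*A$ is positive definite so conjugating a rank-one positive semidefinite $S$ keeps it positive semidefinite, but the $\f$-equivariance route reuses machinery already in hand and is shorter.
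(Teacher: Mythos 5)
Your proof is correct and follows essentially the same route as the paper: the determinant computation for $\HH_0$, then the factorisation $S = \f(\kappa)$ combined with equivariance \refeqn{basic_equivariance} and \reflem{f_surjectivity}(i) and (iii) to handle $\HH_0^{0+}$ and $\HH_0^+$. The alternative via $\Trace(ASA^*) = \Trace(A^*AS)$ that you mention in passing is a fine option, but the argument you actually give matches the paper's.
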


\begin{proof}
If $\det S = 0$ then $\det(A\cdot S) = \det(ASA^*) = \det(A) \det(S) \det(A^*) = 0$, so $\HH_0$ is preserved.

If $S \in \HH_0^{0+}$ then by \reflem{f_surjectivity}(i), $S = \f(\kappa)$ for some $\kappa$; by \refeqn{basic_equivariance} then $A \cdot S = A\cdot \f(\kappa) = \f(A\cdot\kappa)$, which by \reflem{f_surjectivity}(i) again lies in $\HH_0^{0+}$. Thus $\HH_0^{0+}$ is preserved.

If $S \in \HH_0^+$ then the same argument applies,  using \reflem{f_surjectivity}(iii) instead of (i). If $S \in \HH_0^+$ then $S = \f(\kappa)$ for some $\kappa \neq 0$. Since $A \in SL(2,\C)$, $\kappa \neq 0$ implies $A\cdot\kappa \neq 0$. Thus $A \cdot S = A \cdot \f(\kappa) = \f(A\cdot\kappa) \in \HH_0^+$ as desired.
\end{proof}

\begin{lem} \
\label{Lem:restricted_actions_on_H}
\begin{enumerate}
\item
The actions of $SL(2,\C)$ on $\C^2$ and $\HH_0^{0+}$ are equivariant with respect to $\f$.
\item
The actions of $SL(2,\C)$ on $\C^2_\times$ and $\HH_0^+$ are equivariant with respect to $\f$.
\end{enumerate}
\end{lem}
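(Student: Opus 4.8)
The statement to prove is \reflem{restricted_actions_on_H}: the actions of $SL(2,\C)$ on $\C^2$ and $\HH_0^{0+}$ are equivariant with respect to $\f$, and similarly for the restricted actions on $\C^2_\times$ and $\HH_0^+$.

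The plan is to observe that essentially all the work has already been done. Equivariance of $\f$ in the broadest sense — as a map $\C^2 \to \HH$ — is exactly the content of the computation in \refeqn{basic_equivariance}, which established $\f(A \cdot \kappa) = A \cdot \f(\kappa)$ directly from the definitions $A \cdot \kappa = A\kappa$, $A \cdot S = ASA^*$, and $\f(\kappa) = \kappa\kappa^*$, using only $(A\kappa)(A\kappa)^* = A\kappa\kappa^*A^*$. So the only remaining issue is to check that this identity, already known on all of $\C^2$, descends to the relevant restricted domains and codomains, i.e. that the $SL(2,\C)$ actions actually restrict to those subspaces so that the equation makes sense there.

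\medskip

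For part (i), I would argue as follows. By \refdef{SL2C_action_on_C2}, $SL(2,\C)$ acts on $\C^2$; by \reflem{SL2C_preerves_Hs}, the action on $\HH$ restricts to an action on $\HH_0^{0+}$; and by \reflem{f_surjectivity}(i), $\f$ maps $\C^2$ onto $\HH_0^{0+}$. Hence $\f \colon \C^2 \to \HH_0^{0+}$ is a well-defined map between two $SL(2,\C)$-spaces, and the identity $\f(A\cdot\kappa) = A \cdot \f(\kappa)$ from \refeqn{basic_equivariance} holds for all $\kappa \in \C^2$ and all $A \in SL(2,\C)$, which is precisely the asserted equivariance. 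For part (ii), the same reasoning applies with $\C^2$ replaced by $\C^2_\times$ and $\HH_0^{0+}$ replaced by $\HH_0^+$: the $SL(2,\C)$ action on $\C^2$ restricts to $\C^2_\times$ since $A \in SL(2,\C)$ is invertible, hence carries nonzero vectors to nonzero vectors; the action on $\HH$ restricts to $\HH_0^+$ by \reflem{SL2C_preerves_Hs}; and $\f$ restricts surjectively to a map $\C^2_\times \to \HH_0^+$ by \reflem{f_surjectivity}(iii). Again \refeqn{basic_equivariance} gives the equivariance on this restricted domain.

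\medskip

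There is no real obstacle here; the lemma is a bookkeeping consequence of \refeqn{basic_equivariance} together with \reflem{f_surjectivity} and \reflem{SL2C_preerves_Hs}. The only point requiring the slightest care is making sure the domains and codomains of the restricted maps are genuinely preserved by the two group actions, so that the equivariance identity is not merely a formal manipulation but an equation between elements of the stated spaces — and this is exactly what \reflem{SL2C_preerves_Hs} supplies. In fact one could note that \reflem{SL2C_preerves_Hs} was itself proved using \refeqn{basic_equivariance}, so the present lemma is really just recording, for later reference, the equivariance statement in the form most convenient for the constructions to follow.
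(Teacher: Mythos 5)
Your proof is correct and takes essentially the same route as the paper, whose entire proof is the one-line observation that the equivariance is precisely the identity in \refeqn{basic_equivariance}; your additional bookkeeping about the actions restricting to $\C^2_\times$, $\HH_0^{0+}$ and $\HH_0^+$ (via \reflem{SL2C_preerves_Hs} and \reflem{f_surjectivity}) is sound and, if anything, slightly more careful than the original.
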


\begin{proof}
The equivariance is precisely expressed by \refeqn{basic_equivariance}.
\end{proof}

\begin{lem}
\label{Lem:SL2C_on_C2_transitive}
The action of $SL(2,\C)$ on $\C^2_\times$ is transitive. That is, for any $\kappa, \kappa' \in \C^2_\times$ there exists $A \in SL(2,\C)$ such that $A \cdot \kappa = \kappa'$.
\end{lem}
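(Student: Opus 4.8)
The plan is to prove transitivity by the standard orbit argument: exhibit a convenient base point in $\C^2_\times$ and show every element can be moved to it by an element of $SL(2,\C)$. Since the action is by a group, it then follows that any two elements lie in the same orbit. The natural base point is $\e_1 = (1,0)$, so it suffices to show that for every $\kappa = (\xi,\eta) \in \C^2_\times$ there is $A \in SL(2,\C)$ with $A \cdot \e_1 = A\e_1 = \kappa$. Observing that $A\e_1$ is precisely the first column of $A$, the task reduces to the purely linear-algebraic fact that any nonzero vector in $\C^2$ occurs as the first column of a determinant-one matrix.

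The key steps, in order, are as follows. First, reduce ``transitive'' to ``the orbit of $\e_1$ is everything'': given $\kappa, \kappa'$, find $A, A'$ with $A \cdot \e_1 = \kappa$ and $A' \cdot \e_1 = \kappa'$, then $A' A^{-1} \in SL(2,\C)$ sends $\kappa$ to $\kappa'$. Second, given $\kappa = (\xi, \eta) \neq (0,0)$, construct the matrix explicitly. If $\xi \neq 0$, one can take $A = \begin{pmatrix} \xi & 0 \\ \eta & \xi^{-1} \end{pmatrix}$, which has determinant $1$ and first column $(\xi, \eta)$. If $\xi = 0$, then necessarily $\eta \neq 0$, and one can take $A = \begin{pmatrix} 0 & -\eta^{-1} \\ \eta & 0 \end{pmatrix}$, again with determinant $1$ and first column $(0, \eta)$. (Alternatively, a single formula covering both cases: pick any $(\mu,\nu)$ with $\{\kappa, (\mu,\nu)\} = \xi \nu - \mu \eta = 1$, which is possible since $(\xi,\eta) \neq 0$, and let $A$ have columns $\kappa$ and $(\mu,\nu)$; then $\det A = 1$ by \refdef{bilinear_form_defn}.) Third, conclude.

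There is essentially no obstacle here: the only thing to be careful about is the case division when one coordinate vanishes, or equivalently packaging the argument via the symplectic form so that no case division is needed. The cleaner writeup uses the observation that $\{\cdot,\cdot\}$ is a nondegenerate pairing, so a complement vector $(\mu,\nu)$ with $\{\kappa,(\mu,\nu)\}=1$ always exists, and then $\det(\kappa,(\mu,\nu)) = \{\kappa,(\mu,\nu)\} = 1$ places the constructed matrix in $SL(2,\C)$. This also makes transparent the compatibility with \reflem{SL2C_by_symplectomorphisms}: the fibre of $A \mapsto A\e_1$ over $\kappa$ is exactly the set of valid second columns, an affine line.
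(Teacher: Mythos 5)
Your proof is correct and follows essentially the same route as the paper: reduce to the orbit of $(1,0)$ via $A'A^{-1}$, then exhibit the explicit determinant-one matrices $\begin{pmatrix} \xi & 0 \\ \eta & \xi^{-1} \end{pmatrix}$ or $\begin{pmatrix} \xi & -\eta^{-1} \\ \eta & 0 \end{pmatrix}$ depending on which coordinate is nonzero, exactly as the paper does. The symplectic-form packaging you mention is a pleasant uniform alternative but not a different argument in substance.
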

(Note the $A$ here is not unique.)

\begin{proof}
For an example of a matrix in $SL(2,\C)$ taking $(1,0)$ to $\kappa = (\xi, \eta) \in \C^2_\times$, consider
\[
A_\kappa
= \begin{pmatrix}
\xi & 0 \\
\eta & \xi^{-1}
\end{pmatrix}
\quad \text{or} \quad
\begin{pmatrix}
\xi & - \eta^{-1} \\
\eta & 0
\end{pmatrix}.
\]
As $\kappa \in \C^2_\times$, at least one of $\xi, \eta$ is nonzero, hence at least one of these matrices is well defined. Then the matrix $A_{\kappa'} A_\kappa^{-1}$ takes $\kappa$ to $\kappa'$.
\end{proof}

\subsubsection{Derivatives of $\f$}
\label{Sec:derivatives_of_f}

So far, we have associated to a spinor $\kappa\in\C^2$ a Hermitian matrix $\f(\kappa)$. We now proceed to associate to it some tangent information.

Consider the derivative of $\f$, as a \emph{real} smooth function, by regarding both $\C^2$ and $\HH$ as $\R^4$. The derivative of $\f$ at a point $\kappa = (\xi, \eta) = (a+bi,c+di) \in \C^2$ (corresponding to $(a,b,c,d) \in \R^4$) in the direction $\nu \in T_\kappa \C^2 \cong \C^2$ is given by
\[
D_\kappa \f (\nu) = \left. \frac{d}{ds} \f(\kappa+\nu s) \right|_{s=0}
\]
where $s$ is a real variable.
Regarding $\kappa,\nu\in\mathcal{M}_{2\times 1}(\C)$, we have 
\[
\f(\kappa+ \nu s) = (\kappa + \nu s)(\kappa+\nu s)^* = \kappa \kappa^* + \left( \kappa \nu^* + \nu  \kappa^* \right) s + \nu  \nu^* s^2
\]
so that
\begin{equation}
\label{Eqn:derivative_formula}
D_\kappa \f(\nu) = \kappa \nu^* + \nu\kappa^*.
\end{equation}
Since $\f$ has image in $\HH_0^{0+}\subset\HH$, and since the  tangent space to a real vector space is the space itself, this derivative lies in $\HH$, which is readily seen via the expression $\kappa \nu^* + \nu \kappa^*$.  However, while tangent vectors to $\HH_0^{0+}$ can be regarded as Hermitian matrices, these matrices do not generally lie in $\HH_0^{0+}$, and similar remarks apply to $\HH_0$ and $\HH_0^+$. Indeed, it is straightforward to check that in general $\kappa \nu^* + \nu \kappa^*$ does not lie in $\HH_0$.

Derivatives of $\f$ will be useful in the sequel and we note derivatives in some directions here.
\begin{lem}
\label{Lem:derivatives_of_f_in_easy_directions}
For any $\kappa \in C^2_\times$ we have
\[
D_\kappa \f(\kappa) = 2 \f(\kappa)
\quad \text{and} \quad
D_\kappa \f (i \kappa) = 0.
\]
\end{lem}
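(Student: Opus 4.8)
The statement to prove is \reflem{derivatives_of_f_in_easy_directions}: that $D_\kappa \f(\kappa) = 2\f(\kappa)$ and $D_\kappa \f(i\kappa) = 0$.

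The plan is to apply the general derivative formula \refeqn{derivative_formula}, namely $D_\kappa \f(\nu) = \kappa \nu^* + \nu \kappa^*$, directly with the two specific tangent directions $\nu = \kappa$ and $\nu = i\kappa$. Both are elements of $T_\kappa \C^2 \cong \C^2$, so the formula applies verbatim. There is essentially no obstacle here; the whole content is two one-line substitutions together with a correct handling of the conjugate-transpose operation on a scalar multiple.

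For the first identity, I would set $\nu = \kappa$ in \refeqn{derivative_formula} to get $D_\kappa \f(\kappa) = \kappa \kappa^* + \kappa \kappa^* = 2\kappa\kappa^* = 2\f(\kappa)$, using \refdef{f}. For the second, I would set $\nu = i\kappa$; the only subtlety is that $(i\kappa)^* = \overline{i}\,\kappa^* = -i\kappa^*$, since taking the conjugate transpose conjugates the scalar. Hence
\[
D_\kappa \f(i\kappa) = \kappa (i\kappa)^* + (i\kappa)\kappa^* = \kappa(-i\kappa^*) + i\kappa\kappa^* = -i\kappa\kappa^* + i\kappa\kappa^* = 0.
\]
This also matches the conceptual picture: scaling $\kappa$ by $e^{i\theta}$ does not change $\f(\kappa)$ by \reflem{when_f_equal}, so the derivative in the direction $i\kappa$ (the velocity of the curve $s \mapsto e^{is}\kappa$ at $s=0$) must vanish; and scaling $\kappa$ by $e^s$ multiplies $\f(\kappa)$ by $e^{2s}$, whose derivative at $s=0$ is $2\f(\kappa)$. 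One could alternatively derive both identities from these curves directly, but invoking \refeqn{derivative_formula} is cleanest. The main (very minor) point to be careful about is the sign coming from conjugating $i$ in the adjoint; everything else is immediate.
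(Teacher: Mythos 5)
Your proof is correct and follows exactly the paper's route: substitute $\nu = \kappa$ and $\nu = i\kappa$ into \refeqn{derivative_formula} and simplify, with the sign from $(i\kappa)^* = -i\kappa^*$ handled correctly. No further comment needed.
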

The first of these says that as $\kappa$ increases along a (real) ray from the origin, $\f(\kappa)$ also increases along a (real) ray from the origin. The second is equivalent to the fact from \reflem{when_f_equal} that $\f$ is constant along the circle fibres $e^{i\theta} \kappa$ over $\theta \in \R$, and $i\kappa$ is the fibre direction.
\begin{proof}
Using equation \refeqn{derivative_formula} we obtain
\begin{align*}
D_\kappa \f (\kappa) &= 2 \kappa \kappa^* = 2 \f(\kappa) \\
\D_\kappa \f (i \kappa) &= \kappa (i \kappa)^* + i \kappa \kappa^*
= \kappa \kappa^* (-i) + i \kappa \kappa^* = 0.
\end{align*}
\end{proof}

We observe that the action of $SL(2,\C)$ on $\C^2$ extends to tangent vectors $\nu$ in a standard way. If $\nu$ is tangent to $\C^2$ ($\cong \R^4$) at a point $\kappa$, and $A$ lies in  $SL(2,\C)$ (or indeed in $GL(4,\R)$), then $A\nu$ is a tangent vector to $\C^2$ at $A \kappa$. This is just the standard fact that the derivative of a linear map on a vector space is itself. Precisely, differentiating \refeqn{basic_equivariance}, we obtain
\begin{equation}
\label{Eqn:equivariance_of_derivative_of_f}
D_{A \kappa} \f ( A \nu) = A\cdot D_\kappa \f(\nu),
\end{equation}
so that the resulting action of $SL(2,\C)$ on tangent vectors is also equivariant. (Equation \refeqn{equivariance_of_derivative_of_f} also follows immediately from \refeqn{derivative_formula} and \refdef{SL2C_actions_on_C2_H}.)

Thus, to a spinor $\kappa$ and a ``tangent spinor" $\nu$ we associate a Hermitian matrix $\f(\kappa)$ and a tangent $D_\kappa \f(\nu)$. However, we want to obtain information from $\kappa$ only; and we do not want to lose any information in passing from $\kappa$ to $\f(\kappa)$ together with tangent data. We are thus interested in $\nu$ being a \emph{function} of $\kappa$. Letting
\[
\nu = \ZZ(\kappa)
\quad \text{for some real smooth function} \quad
\ZZ \colon \R^4 \To \R^4,
\]
we might then try to associate to a spinor $\kappa$ the Hermitian matrix $\f(\kappa)$ and its tangent $D_\kappa \f ( \ZZ(\kappa)) = \kappa \ZZ(\kappa)^* + \ZZ(\kappa) \kappa^*$. However, $\kappa$ is a four (real) dimensional object, and $\f$ has image in the three-dimensional space $\HH_0^{0+}$, so we can only reasonably expect one extra coordinate's worth of information from tangent data. Moreover, it will be difficult to obtain equivariance under $SL(2,\C)$. On the one hand, applying $A \in SL(2,\C)$ to $D_\kappa \f( \ZZ(\kappa) )$, we would associate to $A\kappa$ the tangent direction
\[
A \cdot D_\kappa \f(\ZZ(\kappa)) = A \left( \kappa \ZZ(\kappa)^* + \ZZ(\kappa) \kappa^* \right) A^*
\]
at $\f(A\kappa)$; but on the other hand, we would associate to $A \kappa$ the tangent direction
\[
D_{A \kappa} \f( \ZZ(A\kappa) ) = A \kappa \ZZ(A\kappa)^* + \ZZ(A\kappa) (A \kappa)^*.
\]

Penrose and Rindler describe a neat solution, providing the extra coordinate's worth of information equivariantly via a certain \emph{flag} based on $\f(\kappa)$. Such flags, however, are more easily seen in Minkowski space, and so we first introduce the map to Minkowski space.

\subsection{From Hermitian matrices to the positive light cone in Minkowski space}
\label{Sec:hermitian_to_minkowski}

Our second step is from Hermitian matrices to Minkowski space via the map $\g$ which, as mentioned in the introduction, may be described by Pauli matrices. The isomorphism $\g$ allows us to regard Hermitian matrices and Minkowski space as the same thing: for us, Hermitian matrices essentially \emph{are} points in Minkowski space. 

In \refsec{Minkowski_space_and_g} we discuss various notions in Minkowski space and the map $\g$. In \refsec{f_compose_g} we consider the composition $\g \circ \f$. In \refsec{Hopf} we discuss how $\g \circ \f$ is related to stereographic projection and the Hopf fibration. Finally, in \refsec{inner_products_spinors-Minkowski} we discuss a relationship between the inner products on spinors and Minkowski space. 

\subsubsection{Minkowski space and the map $\g$}
\label{Sec:Minkowski_space_and_g}

We start with definitions. Write points in Minkowski space as $p = (T,X,Y,Z)$, $p' = (T',X',Y',Z')$.
\begin{defn} \
\label{Def:light_cones}
\begin{enumerate}
\item
Minkowski space $\R^{1,3}$ is the 4-dimensional vector space $\R^4$, with inner product 
\[
\langle p,p' \rangle = TT' - XX' - YY' - ZZ',
\]
and the $(3+1)$-dimensional Lorentzian manifold structure on $\R^4$ with metric $ds^2 = dT^2 - dX^2 - dY^2 - dZ^2$.
\item
The \emph{light cone} $L \subset \R^{1,3}$ is
$L=\{(T,X,Y,Z) \in \R^{1,3} \, \mid \, T^2 - X^2 - Y^2 - Z^2 = 0\}$.
\item
The \emph{non-negative light cone} $L^{0+} \subset \R^{1,3}$ is $L^{0+}=\{(T,X,Y,Z) \in L \, \mid \, T \geq 0\}$.
\item
The \emph{positive light cone} $L^+ \subset \R^{1,3}$ is $L^+=\{(T,X,Y,Z) \in L \, \mid \, T>0\}$.
\end{enumerate}
\end{defn}

Clearly $L^+ \subset L^{0+} \subset L \subset \R^{1,3}$. As usual, we refer to vectors/points $p$ as \emph{timelike}, \emph{lightlike/null}, or \emph{spacelike}  accordingly as $T^2 - X^2 - Y^2 - Z^2$ is positive, zero, or negative.

\begin{defn}
\label{Def:celestial_sphere}
The \emph{(future) celestial sphere} $\S^+$ is either
\begin{enumerate}
\item
the projectivisation of $L^+$, or
\item
the intersection of the future light cone $L^+$ with the plane $T=1$ in $\R^{1,3}$.
\end{enumerate}
\end{defn}
In other words, the celestial sphere is the set of rays of $L^+$; projectivising identifies points along rays from the origin. Alternatively, we may take a subset of $L^+$ containing a single point from each ray; a standard subset given by intersecting with the 3-plane $T=1$. The two versions of $\S^+$ are related by the diffeomorphism sending each ray of $L^+$ to its point at $T=1$. We will need both versions; whenever we mention $\S^+$ we will specify which version we mean.

Since the equations $T=1$ and $T^2 - X^2 - Y^2 - Z^2 = 0$ imply $X^2 + Y^2 + Z^2 = 1$, we see $\S^+$ is diffeomorphic to $S^2$.

The isomorphism between $\HH$ and $\R^{1,3}$ is already given by  \refeqn{spinvec_to_Hermitian}. Any Hermitian matrix can be uniquely written as
\[
\begin{pmatrix}
a & b+ci \\ b-ci & d
\end{pmatrix}
\quad \text{or} \quad
\frac{1}{2} \begin{pmatrix} T+Z & X+Yi \\ X-Yi & T-Z \end{pmatrix} \]
where $a,b,c,d$ or $T,X,Y,Z$ are real, and we map to Minkowski space accordingly.
\begin{defn}
\label{Def:g_H_to_R31}
The map $\g$ from Hermitian matrices to Minkowski space is given by
\[
\g \colon \HH \To \R^{1,3}, \quad
\g \begin{pmatrix} a & b+ci \\ b-ci & d \end{pmatrix} = \left( a+d, 2b, 2c, a-d \right).
\]
\end{defn}
Since 
\[
\g^{-1} (T,X,Y,Z) = \frac{1}{2} \begin{pmatrix} T+Z & X+iY \\ X-iY & T-Z \end{pmatrix},
\]
it is  clear that $\g$ is a linear isomorphism of vector spaces, and diffeomorphism of smooth manifolds.

Under $\g$, determinant and trace become familiar expressions in Minkowski space. Our conventions perhaps produce some slightly unorthodox constants.
\begin{lem}
\label{Lem:det_trace_formulas}
Suppose $S \in \HH$ and $\g(S) = (T,X,Y,Z)$.
\begin{enumerate}
\item $4 \det S = T^2 - X^2 - Y^2 - Z^2$.
\item $\Trace S = T$.
\end{enumerate}
\end{lem}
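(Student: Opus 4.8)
The plan is to reduce everything to a direct computation in coordinates, since both sides of each identity are explicit polynomial expressions in the real entries of $S$. Write a general Hermitian matrix as $S = \begin{pmatrix} a & b+ci \\ b-ci & d \end{pmatrix}$ with $a,b,c,d \in \R$, which is the form already used in \refsec{Hermitian_matrices_and_properties} and in the statement of \refdef{g_H_to_R31}. By that definition, $\g(S) = (T,X,Y,Z) = (a+d,\, 2b,\, 2c,\, a-d)$, so $T = a+d$, $X = 2b$, $Y = 2c$, $Z = a-d$.

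For part (ii), I would simply observe $\Trace S = a + d = T$, which is immediate.

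For part (i), the plan is to expand both sides separately and compare. On the matrix side, $\det S = ad - (b+ci)(b-ci) = ad - (b^2 + c^2)$, so $4\det S = 4ad - 4b^2 - 4c^2$. On the Minkowski side,
\[
T^2 - X^2 - Y^2 - Z^2 = (a+d)^2 - (2b)^2 - (2c)^2 - (a-d)^2 = \bigl[(a+d)^2 - (a-d)^2\bigr] - 4b^2 - 4c^2 = 4ad - 4b^2 - 4c^2,
\]
using the identity $(a+d)^2 - (a-d)^2 = 4ad$. The two expressions agree, which establishes (i).

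There is no real obstacle here; the only thing to be careful about is the placement of the factors of $2$ coming from the conventions in \refdef{g_H_to_R31} (the off-diagonal entries of $S$ contribute $X/2$ and $Y/2$, and the diagonal splits as $(T \pm Z)/2$), which is exactly what produces the clean factor of $4$ rather than a $1$ in $4\det S = T^2 - X^2 - Y^2 - Z^2$. One could alternatively phrase the proof invariantly — noting that $\g$ identifies the quadratic form $4\det$ on $\HH$ with the Minkowski form and the linear functional $\Trace$ with $T$ — but the coordinate check is shortest and self-contained.
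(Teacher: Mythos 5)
Your proof is correct and is exactly the computation the paper leaves implicit (its proof is simply ``Immediate calculation''): apply $\g$ from \refdef{g_H_to_R31} to a general Hermitian matrix and expand both sides. Nothing further is needed.
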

\begin{proof}
Immediate calculation.
\end{proof}

\begin{lem}
\label{Lem:det0_lightcone_correspondence}
The isomorphism $\g \colon \HH \To \R^{1,3}$ restricts to bijections
\[
\text{(i) } \HH_0 \To L, \quad 
\text{(ii) } \HH_0^{0+} \To L^{0+}, \quad
\text{(iii) } \HH_0^+ \To L^+.
\]
\end{lem}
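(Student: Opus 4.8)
The plan is to leverage \reflem{det_trace_formulas}, which already translates the determinant and trace of a Hermitian matrix $S$ into the Minkowski quantities $T^2-X^2-Y^2-Z^2$ and $T$ of $\g(S)$. Since $\g$ is a linear isomorphism (hence a bijection) by \refdef{g_H_to_R31}, each of the three statements reduces to checking that $\g$ carries the defining conditions of the domain exactly onto the defining conditions of the codomain.

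For (i): $S \in \HH_0$ means $\det S = 0$, which by \reflem{det_trace_formulas}(i) is equivalent to $T^2 - X^2 - Y^2 - Z^2 = 0$, i.e.\ $\g(S) \in L$. So $\g$ maps $\HH_0$ into $L$, and conversely any point of $L$ is $\g(S)$ for a unique $S$, which then lies in $\HH_0$; thus $\g$ restricts to a bijection $\HH_0 \to L$. For (ii): on $\HH_0$, the extra condition $S \in \HH_0^{0+}$ is $\Trace S \geq 0$, which by \reflem{det_trace_formulas}(ii) is $T \geq 0$, the extra condition defining $L^{0+}$ inside $L$; combined with (i) this gives the bijection $\HH_0^{0+} \to L^{0+}$. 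For (iii): identically, $\Trace S > 0$ corresponds to $T > 0$, giving $\HH_0^+ \to L^+$.

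The only mild subtlety — and the closest thing to an obstacle, though it is routine — is the bookkeeping that a bijection between ambient spaces restricts to a bijection between subsets precisely cut out by matching conditions: one should note that $\g$ injective on $\HH$ forces injectivity of each restriction, and that surjectivity onto each target holds because every element of the target pulls back (under the global inverse $\g^{-1}$ given in \refdef{g_H_to_R31}) to an element satisfying the corresponding matrix condition. I would simply state this once and apply it three times. No genuine difficulty is expected; the whole proof is a one-line appeal to \reflem{det_trace_formulas} together with the linearity of $\g$.
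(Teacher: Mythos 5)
Your proposal is correct and follows the same route as the paper: both use \reflem{det_trace_formulas} to translate the conditions $\det S = 0$, $\Trace S \geq 0$, $\Trace S > 0$ into $T^2-X^2-Y^2-Z^2=0$, $T \geq 0$, $T > 0$, and then invoke the fact that $\g$ is a bijection on the ambient spaces. The extra bookkeeping remark about restrictions of bijections is harmless but not needed beyond what the paper states.
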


\begin{proof}
For (i), \reflem{det_trace_formulas}(i) shows that $\det S = 0$ iff $T^2 - X^2 - Y^2 - Z^2 = 0$. So $S \in \HH_0$ iff $\g(S) \in L$.

Suppose now that $S \in \HH_0$ and $\g(S) \in L$. By \reflem{det_trace_formulas}(ii), $\Trace S \geq 0$ iff $T \geq 0$, proving (ii). Similarly, $\Trace S > 0$ iff $T > 0$, proving  (iii).
\end{proof}

The positive light cone $L^+$ is diffeomorphic to $S^2 \times \R$;  the slice at constant $T$ is an $S^2$ with equation $X^2 + Y^2 + Z^2 = T^2$. The non-negative light cone is obtained by adding a singular point at the origin, and is the topological cone on $S^2$. The light cone $L$ is a double cone formed by joining two copies of the non-negative cone at the singular point; or alternatively by taking $S^2 \times \R$ and collapsing $S^2 \times \{0\}$ to a point. So we immediately have the following.
\begin{lem}
\label{Lem:Hermitian_topology}
$\HH_0^+ \cong L^+$ is diffeomorphic to $S^2 \times \R$, $\HH_0^{0+} \cong L^{0+}$ is a cone on $S^2$, and $\HH_0 \cong L$ is a double cone on $S^2$.
\qed
\end{lem}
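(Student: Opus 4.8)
The plan is to reduce the statement entirely to the topology of the light cones in $\R^{1,3}$, using \reflem{det0_lightcone_correspondence}. Since $\g \colon \HH \To \R^{1,3}$ is a diffeomorphism and, by \reflem{det0_lightcone_correspondence}, restricts to bijections $\HH_0 \To L$, $\HH_0^{0+} \To L^{0+}$, $\HH_0^+ \To L^+$, each of these restrictions is a homeomorphism onto its image (indeed a diffeomorphism of smooth submanifolds in the case of $\HH_0^+ \cong L^+$, since $L^+$ is a regular level set intersected with the open set $T>0$). So it suffices to exhibit the asserted structures on $L^+$, $L^{0+}$, $L$ themselves.

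For $L^+$, I would write down the explicit map $L^+ \To S^2 \times \R$ sending $(T,X,Y,Z) \mapsto \bigl( (X/T, Y/T, Z/T), \log T \bigr)$; since $T > 0$ and $X^2 + Y^2 + Z^2 = T^2$ on $L^+$, the first component does lie on $S^2$. Its inverse is $\bigl( (x,y,z), s \bigr) \mapsto (e^s, e^s x, e^s y, e^s z)$, and both maps are manifestly smooth, giving $L^+ \cong S^2 \times \R$.

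For $L^{0+}$, note that $L^{0+} = L^+ \cup \{\0\}$, with $\0$ the unique point of $L^{0+}$ having $T = 0$. Using instead the radial parametrisation $\bigl( (x,y,z), t \bigr) \mapsto (t, tx, ty, tz)$ for $t \geq 0$, one obtains a continuous bijection from the cone $C(S^2) = \bigl( S^2 \times [0,\infty) \bigr) / \bigl( S^2 \times \{0\} \bigr)$ onto $L^{0+}$. Its inverse is continuous: away from $\0$ it is $(T,X,Y,Z) \mapsto \bigl[ (X/T, Y/T, Z/T), T \bigr]$, and at $\0$ continuity is immediate from the compactness of $S^2$ --- as a point of $L^{0+}$ approaches $\0$ its $S^2$-coordinate stays confined to $S^2$ while $t = T \to 0$, so its image tends to the cone point. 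Hence $L^{0+}$ is the cone on $S^2$. Finally $L = L^{0+} \cup (-L^{0+})$, the future and past non-negative cones, and these two subsets meet precisely at $\0$; thus $L$ is two cones on $S^2$ identified at their cone points, i.e. the double cone on $S^2$ (equivalently $L \cong (S^2 \times \R)/(S^2 \times \{0\})$).

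There is no genuine obstacle here: the whole argument is direct manipulation of the defining equations, which is why the lemma is essentially immediate from the discussion preceding it. The only point deserving a word of care is that the cone-point identifications are honest homeomorphisms rather than merely continuous bijections, and that is settled by the compactness of $S^2$ as indicated above.
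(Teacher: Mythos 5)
Your proof is correct and follows the same route as the paper: reduce via $\g$ and \reflem{det0_lightcone_correspondence} to the light cones, then identify $L^+ \cong S^2 \times \R$, $L^{0+}$ as the cone on $S^2$, and $L$ as the double cone. The paper treats this as immediate from the discussion preceding the lemma; you have simply supplied the explicit maps and the continuity check at the cone point, which is a harmless elaboration rather than a different argument.
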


The action of $SL(2,\C)$ on $\HH$ naturally gives an action on $\R^{1,3}$, defining it to be equivariant under the linear diffeomorphism $\g$. This is a standard action.
\begin{defn}
\label{Def:SL2C_on_R31}
$SL(2,\C)$ acts on $\R^{1,3}$ by
\[
A\cdot p = \g \left( A\cdot (\g^{-1} (p)) \right) \quad \text{for $A \in SL(2,\C)$ and $p \in \R^{1,3}$.}
\]
\end{defn}
Thus by definition $A\cdot \g(p) = \g (A\cdot p)$ and explicitly, for $p = (T,X,Y,Z)$,
\begin{equation}
\label{Eqn:SL2C_action_on_R31}
A\cdot (T,X,Y,Z)
=
\g \left( A\cdot \frac{1}{2} \begin{pmatrix} T+Z & X+iY \\ X-iY & T-Z \end{pmatrix} \right)
= \frac{1}{2} \, \g \left( A \begin{pmatrix} T+Z & X+iY \\ X-iY & T-Z \end{pmatrix}  A^* \right)
\end{equation}

\begin{lem}
\label{Lem:SL2C_action_on_light_cones}
For any $A \in SL(2,\C)$, the action of $A$ on $\R^{1,3}$ is a linear map $T_A \colon \R^{1,3} \To \R^{1,3}$ which preserves $L$, $L^{0+}$ and $L^+$.
\end{lem}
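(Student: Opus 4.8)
The plan is to combine the equivariance already established between $\f$ and the $SL(2,\C)$ action on $\HH$ (\reflem{restricted_actions_on_H}) with the fact that $\g$ is, by construction, an equivariant linear isomorphism (\refdef{SL2C_on_R31}), and then transport \reflem{SL2C_preerves_Hs} across. First I would note that for fixed $A\in SL(2,\C)$ the map $p\mapsto A\cdot p$ on $\R^{1,3}$ is linear: by \refeqn{SL2C_action_on_R31} it equals $\g\circ(S\mapsto ASA^*)\circ\g^{-1}$, and all three factors are real-linear ($\g$ and $\g^{-1}$ are linear isomorphisms, and $S\mapsto ASA^*$ is real-linear by \refeqn{linear_action_on_Hermitian}). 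This gives the asserted linear map $T_A$, and moreover $T_{AA'}=T_A T_{A'}$ by \refeqn{group_action_on_Hermitian}, so we really have a linear representation.

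Next I would check the three invariance claims by pulling back through $\g$. Since $\g$ restricts to bijections $\HH_0\to L$, $\HH_0^{0+}\to L^{0+}$, $\HH_0^+\to L^+$ by \reflem{det0_lightcone_correspondence}, and since $SL(2,\C)$ preserves each of $\HH_0$, $\HH_0^{0+}$, $\HH_0^+$ by \reflem{SL2C_preerves_Hs}, the equivariance relation $A\cdot\g(S)=\g(A\cdot S)$ immediately transfers each invariance: if $p\in L$ then $\g^{-1}(p)\in\HH_0$, hence $A\cdot\g^{-1}(p)\in\HH_0$, hence $A\cdot p=\g(A\cdot\g^{-1}(p))\in L$, and identically for $L^{0+}$ and $L^+$. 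Alternatively, and perhaps more cleanly for the reader, one can argue directly with \reflem{det_trace_formulas}: writing $\g(A\cdot p)=(T',X',Y',Z')$, we have $4\det(A\cdot\g^{-1}(p)) = (T')^2-(X')^2-(Y')^2-(Z')^2$ and $4\det(A\cdot\g^{-1}(p))=\det A\,\det(\g^{-1}(p))\,\det A^{*}=\det\g^{-1}(p)$ since $\det A=1$, so the Minkowski norm is preserved; likewise $T'=\Trace(A\cdot\g^{-1}(p))\geq 0$ (resp. $>0$) whenever $T=\Trace(\g^{-1}(p))\geq 0$ (resp. $>0$), again using \reflem{SL2C_preerves_Hs}. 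Either route closes the argument.

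There is no serious obstacle here: the statement is essentially a repackaging of facts already proved on the Hermitian-matrix side, transported across the tautological isomorphism $\g$. The only point requiring a word of care is the preservation of the \emph{trace/time} condition distinguishing $L^{0+}$ and $L^+$ from $L$ — i.e. that $A\cdot$ does not flip the two nappes of the cone. This is exactly the content of \reflem{SL2C_preerves_Hs} (equivalently, that $A\cdot S=\f(A\cdot\kappa)$ when $S=\f(\kappa)$, so positivity of trace persists because $A\kappa\neq 0$ when $\kappa\neq 0$); I would simply cite it rather than reprove it. If one wanted a self-contained proof of that point one could observe that $L^+$ is a connected component of $L\setminus\{0\}$ and $T_A$ is a homeomorphism fixing $0$, hence permutes the two components, and since $T_A$ depends continuously on $A$ with $T_{\mathrm{Id}}=\Id$ fixing $L^+$, connectedness of $SL(2,\C)$ forces $T_A(L^+)=L^+$ for all $A$.
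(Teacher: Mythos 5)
Your proof is correct and follows essentially the same route as the paper: linearity via $T_A=\g\circ(S\mapsto ASA^*)\circ\g^{-1}$ using \refeqn{linear_action_on_Hermitian}, and invariance of $L$, $L^{0+}$, $L^+$ by transporting \reflem{SL2C_preerves_Hs} across $\g$ via \reflem{det0_lightcone_correspondence}. (Only a cosmetic slip in your optional alternative: the factor $4$ in $4\det(A\cdot\g^{-1}(p))=4\det(\g^{-1}(p))$ should appear on both sides.)
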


\begin{proof}
We have already seen in \refeqn{linear_action_on_Hermitian} that, for given $A \in SL(2,\C)$ the action of $A$ on $\HH$ is a linear map $\HH \To \HH$; since $\g$ and $\g^{-1}$ are  linear, $T_A$ is also a linear map $\R^{1,3} \To \R^{1,3}$.

By \reflem{SL2C_preerves_Hs}, the action of $A$ on $\HH$ preserves $\HH_0$, $\HH_0^{0+}$ and $\HH_0^+$; thus, applying the linear diffeomorphism $\g$ and \reflem{det0_lightcone_correspondence}, the action of $A$ on $\R^{1,3}$ preserves $L, L^{0+}$ and $L^+$.
\end{proof}

The linear maps on $\R^{1,3}$ preserving $L^+$ are precisely those in $O(1,3)^+$, i.e. those which preserve the Lorentzian inner product and are orthochronous (preserve the direction of time). 

The linear maps $T_A$ in fact lie in $SO(1,3)^+$, i.e. are also orientation-preserving. We can observe this directly by noting that the generators of $SL(2,\C)$
\[
\begin{pmatrix} re^{i\theta} & 0 \\ 0 & \frac{1}{r} e^{-i\theta} \end{pmatrix},
\quad
\begin{pmatrix} 1 & a+bi \\ 0 & 1 \end{pmatrix}, 
\quad
\begin{pmatrix} 1 & 0 \\ a+bi & 1 \end{pmatrix}
\]
(where $a,b,r,\theta\in\R$) map to $T_A$ given respectively by
\[
\begin{pmatrix}
\frac{r^2+r^{-2}}{2} & 0 & 0 & \frac{r^2-r^{-2}}{2} \\
0 & \cos 2\theta & -\sin 2\theta & 0 \\
0 & \sin 2\theta & \cos 2\theta & 0 \\
\frac{r^2-r^{-2}}{2} & 0 & 0 & \frac{r^2+r^{-2}}{2}
\end{pmatrix}, 
\quad
\begin{pmatrix}
1+\frac{a^2+b^2}{2} & a & b & -\frac{a^2+b^2}{2} \\
a & 1 & 0 & -a \\
b & 0 & 1 & -b \\
\frac{a^2+b^2}{2} & a & b & 1-\frac{a^2+b^2}{2}
\end{pmatrix},
\quad
\begin{pmatrix}
1+\frac{a^2+b^2}{2} & -a & -b & \frac{a^2+b^2}{2} \\
a & 1 & 0 & a \\
-b & 0 & 1 & -b \\
-\frac{a^2+b^2}{2} & a & b & 1-\frac{a^2+b^2}{2}
\end{pmatrix}
\]
which all have determinant $1$.

\subsubsection{Putting $\f$ and $\g$ together}
\label{Sec:f_compose_g}

We now compose $\f$ and $\g$,
\[
\C^2 \stackrel{\f}{\To} \HH \stackrel{\g}{\To} \R^{1,3}.
\]
This composition sends a spinor $\kappa$ to the point $(T,X,Y,Z) \in \R^{1,3}$ such that
\begin{equation}
\label{Eqn:Pauli_Hermitian}
\kappa \, \kappa^* = \frac{1}{2} \left( T \sigma_T + X \sigma_X + Y \sigma_Y + Z \sigma_Z \right).
\end{equation}
We consider some properties of this composition, and perform some calculations.
\begin{lem}
\label{Lem:gof_properties}
The map $\g \circ \f \colon \C^2 \To \R^{1,3}$ is smooth and has the following properties.
\begin{enumerate}
\item
$\g \circ \f (\kappa) = 0$ precisely when $\kappa = 0$.
\item
The image of $\g \circ \f$ is $L^{0+}$.
\item
$\g \circ \f$ restricts to a surjective map $\C_\times^2 \To L^+$.
\item
$\g \circ \f(\kappa) = \g \circ \f(\kappa')$ iff $\kappa = e^{i\theta} \kappa'$ for some real $\theta$.
\item
The actions of $SL(2,\C)$ on $\C^2$ and $\R^{1,3}$ are equivariant with respect to $\g \circ \f$. These actions restrict to actions on $\C_\times^2$ and $L, L^+, L^{0+}$ which are also appropriately equivariant.
\end{enumerate}
\end{lem}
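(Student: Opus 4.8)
The plan is to derive all five claims as formal consequences of facts already established about $\f$ and $\g$ separately. Recall that $\g \colon \HH \To \R^{1,3}$ is a linear isomorphism and diffeomorphism (Definition \ref{Def:g_H_to_R31}), that $\f$ is smooth (\reflem{f_surjectivity}), and that $\f$ intertwines the two $SL(2,\C)$ actions via \refeqn{basic_equivariance}. Because composing with the isomorphism $\g$ changes nothing essential, each assertion about $\g \circ \f$ will follow by feeding the corresponding property of $\f$ through $\g$. Smoothness of $\g \circ \f$ is immediate: both factors are smooth.

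For (i), injectivity of $\g$ and $\g(0)=0$ give $\g\circ\f(\kappa)=0 \iff \f(\kappa)=0$, which by \reflem{f_surjectivity}(ii) holds iff $\kappa=0$. For (ii) and (iii), use \reflem{f_surjectivity}(i),(iii) to identify $\f(\C^2)=\HH_0^{0+}$ and $\f(\C^2_\times)=\HH_0^+$, then apply $\g$ and invoke \reflem{det0_lightcone_correspondence}(ii),(iii), which say $\g$ carries $\HH_0^{0+}$ bijectively to $L^{0+}$ and $\HH_0^+$ bijectively to $L^+$. For (iv), injectivity of $\g$ reduces the equality $\g\circ\f(\kappa)=\g\circ\f(\kappa')$ to $\f(\kappa)=\f(\kappa')$, and \reflem{when_f_equal} characterises exactly this as $\kappa = e^{i\theta}\kappa'$ for some real $\theta$.

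For (v), start from \refeqn{basic_equivariance}, $\f(A\cdot\kappa)=A\cdot\f(\kappa)$, and apply $\g$; by \refdef{SL2C_on_R31} the action on $\R^{1,3}$ was defined precisely so that $\g(A\cdot S)=A\cdot\g(S)$, whence $\g\circ\f(A\cdot\kappa)=A\cdot(\g\circ\f(\kappa))$, i.e. $\g\circ\f$ is equivariant. That these actions restrict appropriately to $\C^2_\times$, $L$, $L^+$, and $L^{0+}$ follows by combining \reflem{SL2C_action_on_light_cones} (the action preserves $L$, $L^{0+}$, $L^+$), the evident fact that $SL(2,\C)$ preserves $\C^2_\times$, and parts (ii)--(iii) already proved; equivariance of the restricted maps is then just the restriction of the identity just derived.

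The honest assessment is that there is no real obstacle here: the lemma is a bookkeeping corollary assembling \reflem{f_surjectivity}, \reflem{when_f_equal}, \refeqn{basic_equivariance}, \reflem{det0_lightcone_correspondence}, \reflem{SL2C_action_on_light_cones}, and \refdef{SL2C_on_R31}. The only point demanding a little care is keeping straight which subset of $\HH$ maps to which cone under $\g$, and noting that the $SL(2,\C)$ action on $\R^{1,3}$ was \emph{defined} to make $\g$ equivariant, so (v) is true essentially by construction.
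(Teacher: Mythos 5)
Your proposal is correct and follows essentially the same route as the paper, whose proof simply cites \reflem{f_surjectivity}, \reflem{when_f_equal}, \reflem{restricted_actions_on_H} and \reflem{det0_lightcone_correspondence} as making the lemma immediate; you have merely written out the bookkeeping (feeding each property of $\f$ through the linear isomorphism $\g$, and noting the action on $\R^{1,3}$ is defined to make $\g$ equivariant) in more detail. No gaps.
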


\begin{proof}
Immediate from \reflem{f_surjectivity}, \reflem{when_f_equal}, \reflem{restricted_actions_on_H} and \reflem{det0_lightcone_correspondence}.
\end{proof}

We can calculate $\g \circ \f$ explicitly, and prove some of its properties. For the rest of this subsection, let $\kappa = (\xi, \eta) = (a+bi,c+di) \in \C^2$, where $a,b,c,d \in \R$.
\begin{lem}
\label{Lem:spin_vector_to_TXYZ}
Let $\g \circ \f(\kappa) = (T,X,Y,Z)$. Then
\begin{align*}
T &= |\xi|^2 + |\eta|^2 = a^2 + b^2 + c^2 + d^2 \\
X &= 2 \Re \left( \xi \overline{\eta} \right) = 2 \, |\eta|^2 \, \Re (\xi/\eta) = 2(ac+bd) \\
Y &= 2 \Im \left( \xi \overline{\eta} \right) = 2 \, |\eta|^2 \, \Im (\xi/\eta) = 2(bc-ad) \\
Z &= |\xi|^2 - |\eta|^2 = a^2+b^2-c^2-d^2.
\end{align*}
\end{lem}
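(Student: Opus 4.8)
The plan is to prove the lemma by direct computation, chaining the explicit descriptions of $\f$ and $\g$ already established. First I would recall from \refeqn{f_formula} that
\[
\f(\xi,\eta) = \begin{pmatrix} |\xi|^2 & \xi\overline{\eta} \\ \eta\overline{\xi} & |\eta|^2 \end{pmatrix},
\]
which is Hermitian, so writing it in the normal form $\begin{pmatrix} a' & b'+c'i \\ b'-c'i & d' \end{pmatrix}$ with $a',b',c',d' \in \R$ we read off $a' = |\xi|^2$, $d' = |\eta|^2$, $b' = \Re(\xi\overline{\eta})$ and $c' = \Im(\xi\overline{\eta})$ (using $\eta\overline{\xi} = \overline{\xi\overline{\eta}}$, so the off-diagonal entry is consistent).

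Next I would apply \refdef{g_H_to_R31}, which sends such a matrix to $(a'+d', 2b', 2c', a'-d') \in \R^{1,3}$. Substituting the entries above gives at once
\[
T = |\xi|^2 + |\eta|^2, \quad X = 2\Re(\xi\overline{\eta}), \quad Y = 2\Im(\xi\overline{\eta}), \quad Z = |\xi|^2 - |\eta|^2,
\]
which are the first expressions on each line of the statement. (Equivalently, one can simply read off $\g \circ \f$ from \refeqn{Pauli_Hermitian}.)

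Finally I would obtain the remaining two forms by elementary algebra. For the $a,b,c,d$ versions, use $|\xi|^2 = a^2+b^2$ and $|\eta|^2 = c^2+d^2$, and expand $\xi\overline{\eta} = (a+bi)(c-di) = (ac+bd) + (bc-ad)i$ to read off its real and imaginary parts. For the versions in terms of $\xi/\eta$, valid when $\eta \neq 0$, write $\xi\overline{\eta} = (\xi/\eta)\,\eta\overline{\eta} = (\xi/\eta)\,|\eta|^2$, so that $\Re(\xi\overline{\eta}) = |\eta|^2\,\Re(\xi/\eta)$ and likewise for the imaginary part. There is no genuine obstacle; the only thing that needs care is tracking the factors of $2$ and $\tfrac12$ introduced by the Pauli-matrix conventions of \refeqn{spinvec_to_Hermitian} and \refdef{g_H_to_R31}, which is exactly what makes this bookkeeping lemma worth recording explicitly for later use.
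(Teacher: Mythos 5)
Your proof is correct and follows essentially the same route as the paper's: read off the entries of $\f(\kappa)$ from \refeqn{f_formula}, apply the explicit formula for $\g$ from \refdef{g_H_to_R31}, and expand $\xi\overline{\eta}$ (respectively factor out $|\eta|^2$) to get the remaining forms. The only difference is cosmetic — you isolate the Hermitian normal form as an intermediate step, and you correctly flag that the $\xi/\eta$ expressions require $\eta \neq 0$, which the paper leaves implicit.
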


\begin{proof}
From \refeqn{f_formula} we have 
\begin{equation}
\label{Eqn:f_kappa_in_real_coords}
\f(\kappa) = 
\begin{pmatrix} 
\xi \overline{\xi} & \xi \overline{\eta} \\ 
\eta \overline{\xi} & \eta \overline{\eta}.
\end{pmatrix}
=
\begin{pmatrix}
a^2 + b^2 & (ac+bd)+(bc-ad)i \\
(ac+bd)-(bc-ad)i & c^2 + d^2
\end{pmatrix}
\end{equation}
Applying the definition of $\g$ from \refdef{g_H_to_R31} and the fact $\overline{\eta} = \eta^{-1} \, |\eta|^2$ then gives the claim.
\end{proof}

We already noted in \refsec{map_f} that $\f$ is the cone on the Hopf fibration. In Minkowski space, the picture is perhaps a little more intuitive, and we can add some explicit details.

\begin{lem} 
\label{Lem:C2_to_R31_Hopf_fibrations}
Let $S^3_r = \{ \kappa \in \C^2 \, \mid \, |\xi|^2 + |\eta|^2 = r^2 \}$ be the 3-sphere of radius $r>0$ in $\C^2 \cong \R^4$, and let $S^3 = S^3_1$.
\begin{enumerate}
\item
The restriction of $\g \circ \f$ to each $S^3_r$ yields a surjective map from $S^3_r$ onto the 2-sphere $L^+ \cap \{ T=r^2 \} = r^2 \S^+ \cong S^2$ which is the Hopf fibration. In particular, the restriction to $S^3$ yields a Hopf fibration onto the celestial sphere $S^3 \To \S^+ \cong S^2$.
\item
The map $\g \circ \f \colon \C^2 \To L^{0+}$ is the cone on the Hopf fibration.
\end{enumerate}
\end{lem}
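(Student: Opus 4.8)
The strategy is to push everything through the linear diffeomorphism $\g$ (which carries $L$, $L^{0+}$, $L^+$ to $\HH_0$, $\HH_0^{0+}$, $\HH_0^+$ by \reflem{det0_lightcone_correspondence}), so that it suffices to understand $\f$, and then to recognise the explicit formulas of \reflem{spin_vector_to_TXYZ} as the classical Hopf map. For (i), note first that on $S^3_r$ we have $|\xi|^2+|\eta|^2 = r^2$, so by \reflem{spin_vector_to_TXYZ} the time coordinate is constant, $T = r^2$, and the image of $\g\circ\f|_{S^3_r}$ lies in $L^+\cap\{T=r^2\}$, which is precisely the $2$-sphere $\{X^2+Y^2+Z^2 = r^4\} = r^2\S^+$. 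Surjectivity onto this sphere is immediate from \reflem{gof_properties}(iii): any point of $L^+\cap\{T=r^2\}$ equals $\g\circ\f(\kappa)$ for some $\kappa$, and reading off the $T$-coordinate via \reflem{spin_vector_to_TXYZ} forces $|\xi|^2+|\eta|^2 = r^2$, i.e. $\kappa\in S^3_r$. Since $\f(r\kappa') = r^2\f(\kappa')$, everything rescales and it suffices to treat $r=1$.

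For $r=1$, \reflem{spin_vector_to_TXYZ} gives $(X,Y,Z) = \left(2\Re(\xi\overline{\eta}),\,2\Im(\xi\overline{\eta}),\,|\xi|^2-|\eta|^2\right)$, which under the usual identification $\R^3 = \C\times\R$ is exactly the classical Hopf map $(\xi,\eta)\mapsto\left(2\xi\overline{\eta},\,|\xi|^2-|\eta|^2\right)$ from $S^3$ onto the unit $2$-sphere; the coordinate $T\equiv 1$ merely records that we sit on $S^3$. Thus, composing with the spatial projection $\R^{1,3}\To\R^3$ (a diffeomorphism of $L^+\cap\{T=1\} = \S^+$ onto the unit $S^2$), the map $\g\circ\f|_{S^3}$ is, up to this diffeomorphism, the standard Hopf fibration, and the case of general $r$ follows by rescaling. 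One can also see this conceptually: $U(1)$ acts freely on $S^3_r$ by $e^{i\theta}\cdot\kappa = e^{i\theta}\kappa$, and by \reflem{when_f_equal} (equivalently \reflem{gof_properties}(iv)) $\g\circ\f|_{S^3_r}$ is constant on orbits and separates distinct orbits; a short computation from \refeqn{derivative_formula} together with \reflem{derivatives_of_f_in_easy_directions} shows $\ker D_\kappa\f = \R\, i\kappa$, so the restriction to $S^3_r$ is a submersion, hence an $S^1$-bundle over $r^2\S^+\cong S^2$ whose fibres are exactly the Hopf circles. The only mildly technical point in the whole argument is this kernel computation, needed to promote ``topologically the Hopf fibration'' to an honest submersion/bundle statement; it can in fact be sidestepped entirely by the direct identification with the classical Hopf formula above. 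So there is no serious obstacle.

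For (ii), recall from \reflem{Hermitian_topology} that $L^{0+}\cong\HH_0^{0+}$ is the cone on $S^2$; concretely $L^{0+} = C(\S^+)$ via $p=(T,X,Y,Z)\mapsto\left[(T^{-1}X,T^{-1}Y,T^{-1}Z),\,T\right]$ on $L^+$ with $0$ mapping to the cone point, while $\C^2 = C(S^3)$ via $\kappa\mapsto\left[\kappa/|\kappa|,\,|\kappa|\right]$ with $0$ the cone point. Under these identifications part (i) says that on the sphere of radius $r$ the map $\g\circ\f$ is the Hopf fibration onto the slice $\{T=r^2\}$, so it is fibrewise the Hopf map and radially the homeomorphism $r\mapsto r^2$ of $[0,\infty)$; since $\g\circ\f$ is smooth (\reflem{gof_properties}) and sends $0$ to $0$ (\reflem{gof_properties}(i)), it respects the cone points. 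Hence $\g\circ\f$ is, up to the evident radial reparametrisation, the cone on the Hopf fibration; if one instead uses $\sqrt{T}$ as the radial coordinate on $L^{0+}$ it is literally $C(h)$ for $h$ the Hopf map.
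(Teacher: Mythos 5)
Your proof is correct and follows essentially the same route as the paper's: the $T$-coordinate computation places the image of $S^3_r$ in the slice $L^+\cap\{T=r^2\}$, surjectivity comes from \reflem{gof_properties}, the fibres are exactly the $e^{i\theta}$-orbits by \reflem{when_f_equal}, and the cone statement follows by adding the origin. One small caveat: your optional submersion argument asserts $\ker D_\kappa \f = \R\, i\kappa$ on the strength of \refeqn{derivative_formula} and \reflem{derivatives_of_f_in_easy_directions} alone, which only give the inclusion $\R\, i\kappa \subseteq \ker D_\kappa \f$ (the reverse inclusion is \reflem{structure_of_derivative_of_f}, proved later via \reflem{flag_well_defined}); but since you explicitly sidestep this by identifying the map with the classical Hopf formula $(\xi,\eta)\mapsto(2\xi\overline{\eta},\,|\xi|^2-|\eta|^2)$, nothing is lost.
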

In (i) we regard $\S^+$ as $L^+ \cap \{T=1\}$, i.e. \refdef{celestial_sphere}(ii).

\begin{proof}
In \refsec{map_f} we saw that, since $\f(\kappa) = \f(\kappa')$ iff $\kappa = e^{i \theta} \kappa'$, $\f$ is a smooth map on each $S^3_r$ collapsing each fibre of the Hopf fibration to a point, so is the Hopf fibration. As $\g$ is a diffeomorphism, the same is true for $\g \circ \f$.

By \reflem{spin_vector_to_TXYZ}, $\g \circ \f (\xi, \eta)$ has $T$-coordinate $|\xi|^2 + |\eta|^2 = r^2$, and by \reflem{gof_properties}(iii), $\g \circ \f (\C^2_\times) = L^{+}$. So the image of $S^3_r$ under $\g \circ \f$ is the intersection of $L^{+}$ with $T=r^2$, as claimed. 

Thus, the family of $3$-spheres $S^3_r$ foliating $\C^2_\times$ are mapped under $\g \circ \f$ by Hopf fibrations to the family of $2$-spheres $L^+ \cap \{T=1\}$ foliating $L^+$. See \reffig{cone_on_Hopf}. Hence we can regard the restriction of $\g \circ \f$ to $\C_\times^2$ as the product of the Hopf fibration with the identity map, $\C^2_\times \cong S^3 \times \R \To S^2 \times \R \cong L^+$.

\begin{center}
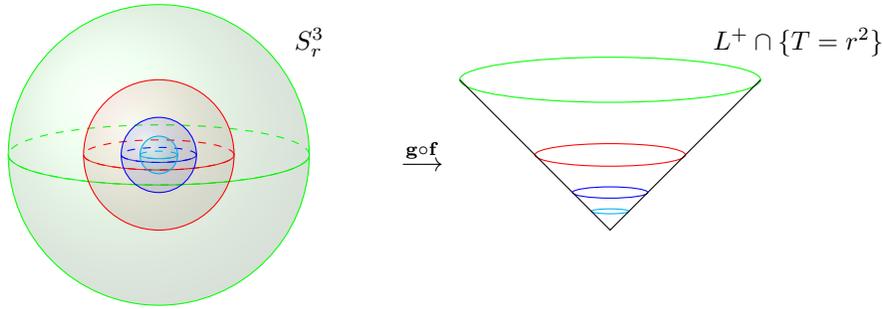

\begin{tikzpicture}
    \draw[green] (0,0) ellipse (2cm and 0.4cm);
    \fill[white] (-2,0)--(2,0)--(2,0.5)--(-2,0.5);
    \draw[red] (0,0) ellipse (1cm and 0.2cm);
    \fill[white] (-1,0)--(1,0)--(1,0.5)--(-1,0.5);
    \draw[blue] (0,0) ellipse (0.5cm and 0.1cm);
    \fill[white] (-0.5,0)--(0.5,0)--(0.5,0.5)--(-0.5,0.5);
    \draw[cyan] (0,0) ellipse (0.25cm and 0.05cm);
    \fill[white] (-0.25,0)--(0.25,0)--(0.25,0.5)--(-0.25,0.5);
    \shade[ball color = green!40, opacity = 0.2] (0,0) circle (2cm);
    \draw[green] (0,0) circle (2cm);
    \draw[dashed,green] (0,0) ellipse (2cm and 0.4cm);
    \shade[ball color = red!80, opacity = 0.1] (0,0) circle (1cm);
    \draw[red] (0,0) circle (1cm);
    \draw[dashed,red] (0,0) ellipse (1cm and 0.2cm);
    \shade[ball color = blue!160, opacity = 0.1] (0,0) circle (0.5cm);
    \draw[blue] (0,0) circle (0.5cm);
    \draw[dashed,blue] (0,0) ellipse (0.5cm and 0.1cm);
    \shade[ball color = cyan!320, opacity = 0.1] (0,0) circle (0.25cm);
    \draw[dashed,cyan] (0,0) ellipse (0.25cm and 0.05cm);
    \draw[cyan] (0,0) circle (0.25cm);
    \node[black] at (2,1.5) {$S_r^3$};
    \draw[green] (6,1) ellipse (2cm and 0.3cm);
  \draw[red] (6,0) ellipse (1cm and 0.15cm);
  \draw[blue] (6,-0.5) ellipse (0.5cm and 0.075cm);
  \draw[cyan] (6,-0.75) ellipse (0.25cm and 0.0325cm);
  \draw (4,1)--(6,-1)--(8,1);
  \node at (3.5,0){$\stackrel{\g\circ\f}{\To}$};
  \node at (8.5,1.5){$L^+\cap \{T=r^2$\}};
\end{tikzpicture}
\captionof{figure}{The map $\g \circ \f$ as the cone on the Hopf fibration (drawn one dimension down).}
\label{Fig:cone_on_Hopf}
\end{center}

Adding the $0$ into $\C^2$ and $L^+$, since $\g \circ \f (0)= 0$, $\g \circ \f$ is the cone on the Hopf fibration.
\end{proof}

The following computation will be useful when we consider lines and planes containing $\g \circ \f (\kappa)$.
\begin{lem}
\label{Lem:gof_celestial_sphere}
For any $\kappa \in \C_\times^2$, the line $\R (\g \circ \f (\kappa))$ intersects $\S^+$ in the unique point
\[
\left( 1, \frac{2(ac+bd)}{a^2+b^2+c^2+d^2}, \frac{2(bc-ad)}{a^2+b^2+c^2+d^2}, 
\frac{a^2+b^2-c^2-d^2}{a^2+b^2+c^2+d^2} \right).
\]
\end{lem}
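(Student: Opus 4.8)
The plan is to reduce the claim to a direct computation using the coordinates already obtained in \reflem{spin_vector_to_TXYZ}. By \reflem{gof_properties}(iii), since $\kappa \in \C^2_\times$ we have $\g \circ \f(\kappa) \in L^+$, so its $T$-coordinate $T = a^2+b^2+c^2+d^2$ is strictly positive. A line $\R p$ through a point $p = (T,X,Y,Z) \in L^+$ with $T > 0$ meets the plane $\{T=1\}$ in exactly one point, namely $\frac{1}{T} p$, because scaling by any nonzero real $t$ sends the $T$-coordinate to $tT$, and $tT = 1$ forces $t = 1/T$.

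First I would write down $\g \circ \f(\kappa) = (T,X,Y,Z)$ explicitly using \reflem{spin_vector_to_TXYZ}, namely $T = a^2+b^2+c^2+d^2$, $X = 2(ac+bd)$, $Y = 2(bc-ad)$, $Z = a^2+b^2-c^2-d^2$. Then I would divide through by $T$, which is legitimate since $T > 0$, to get the unique intersection point $\frac{1}{T}(T,X,Y,Z) = \left(1, \tfrac{2(ac+bd)}{a^2+b^2+c^2+d^2}, \tfrac{2(bc-ad)}{a^2+b^2+c^2+d^2}, \tfrac{a^2+b^2-c^2-d^2}{a^2+b^2+c^2+d^2}\right)$, which is exactly the claimed point. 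For completeness I would note that this point indeed lies on $\S^+$ as defined in \refdef{celestial_sphere}(ii): it has $T=1$ and lies on $\R(\g \circ \f(\kappa)) \subset L^+ \cup \{0\}$, hence on $L^+ \cap \{T=1\} = \S^+$; alternatively one can check directly that the sum of squares of the last three coordinates equals $1$, which follows because $\g \circ \f(\kappa)$ is null and $T$-scaling preserves nullity.

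There is essentially no obstacle here; the only things to be careful about are (a) invoking $T > 0$, which needs $\kappa \neq 0$ and is supplied by \reflem{gof_properties}(iii) together with \reflem{spin_vector_to_TXYZ}, and (b) uniqueness of the intersection point, which is the elementary observation that a ray from the origin not lying in the hyperplane $\{T=0\}$ crosses any affine hyperplane $\{T = \text{const}\}$ at most once. I would present this as a short paragraph rather than a displayed multi-line computation, since the arithmetic is just the substitution from \reflem{spin_vector_to_TXYZ} followed by division by $T$.
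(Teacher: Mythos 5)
Your proposal is correct and matches the paper's proof, which likewise obtains the point by taking the coordinates from \reflem{spin_vector_to_TXYZ} and scaling $\g \circ \f(\kappa)$ so its $T$-coordinate equals $1$. Your extra remarks on $T>0$ and uniqueness are just explicit versions of what the paper leaves implicit.
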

Here we regard $\S^+$ as $L^+ \cap \{T=1\}$, i.e \refdef{celestial_sphere}(ii).

\begin{proof}
This follows immediately from \reflem{spin_vector_to_TXYZ}, scaling $\g \circ \f(\kappa)$ to have $T$-coordinate $1$.
\end{proof}

\subsubsection{The Hopf fibration and stereographic projection}
\label{Sec:Hopf}

We have seen the Hopf fibration in $\g \circ \f$; we can also describe this directly and explicitly.
Perhaps the most standard definition of the Hopf fibration is as follows.
\begin{defn}
The \emph{Hopf fibration} is the map 
\[
\text{Hopf} \colon S^3 \To S^2 \cong \CP^1, \quad
(\xi, \eta) \mapsto \frac{\xi}{\eta}.
\]
\end{defn}
Here we regard $S^3$ as $\{(\xi, \eta) \; \mid \; |\xi|^2 + |\eta|^2 = 1 \} \subset \C^2$, and $\CP^1 = \C \cup \{\infty\} $ as $S^2$.
We can translate from the Riemann sphere to the unit 2-sphere in $\R^3$ by stereographic projection; again, perhaps the most standard definition is as follows. It is the map obtained from projecting the $xy$-plane in $\R^3$, viewed as $\C$, to the unit sphere, as in \reffig{1}. It extends to a map from $\CP^1 = \C \cup \{\infty\}$.
\begin{defn}
\label{Def:stereographic_projection}
\emph{Stereographic projection} is the map
\[
\text{Stereo} \colon \CP^1  \To S^2, \quad
a+bi \mapsto \left( \frac{2a}{1+a^2+b^2}, \frac{2b}{1+a^2+b^2}, \frac{-1+a^2+b^2}{1+a^2+b^2} \right), \quad
\infty \mapsto (0,0,1).
\]
\end{defn}
If we compute the Hopf fibration from the standard $S^3 \subset \CP^1$, to the standard Euclidean $S^2 \subset \R^3$ using stereographic projection, we obtain expressions we have seen before!
\begin{lem}
\label{Lem:gof_Hopf}
Let $\pi_{XYZ} \colon \R^{1,3} \To \R^3$ be the projection onto the $XYZ$ 3-plane in Minkowski space. Then the composition $\Stereo \circ \Hopf \colon S^3 \To S^2$ is given by
\[
\Stereo \circ \Hopf  = \pi_{XYZ} \circ \g \circ \f|_{S^3}.
\]
\end{lem}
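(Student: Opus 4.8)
The plan is to verify the identity by direct computation on both sides, using the explicit formula for $\g \circ \f$ recorded in \reflem{spin_vector_to_TXYZ} and the formula for $\Stereo$ in \refdef{stereographic_projection}. Fix $\kappa = (\xi, \eta) \in S^3$, and write $\xi = a+bi$, $\eta = c+di$ with $a,b,c,d \in \R$, so that $a^2+b^2+c^2+d^2 = 1$.

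First I would compute the right-hand side. By \reflem{spin_vector_to_TXYZ}, $\g \circ \f(\kappa) = (T,X,Y,Z)$ with $T = a^2+b^2+c^2+d^2$, $X = 2(ac+bd)$, $Y = 2(bc-ad)$, $Z = a^2+b^2-c^2-d^2$. On $S^3$ we have $T = 1$, so $\pi_{XYZ} \circ \g \circ \f(\kappa) = \left( 2(ac+bd),\ 2(bc-ad),\ a^2+b^2-c^2-d^2 \right)$.

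Next I would compute the left-hand side, splitting into two cases. If $\eta = 0$, then $\Hopf(\kappa) = \infty$ and $\Stereo(\infty) = (0,0,1)$; on the other hand $\eta = 0$ forces $c = d = 0$ and $a^2+b^2 = 1$, so the right-hand side computed above is also $(0,0,1)$. If $\eta \neq 0$, then $\Hopf(\kappa) = \xi/\eta = \xi \overline{\eta}/|\eta|^2 = \frac{(ac+bd) + (bc-ad)i}{c^2+d^2} =: u + vi$. Applying \refdef{stereographic_projection}, I need to simplify $1 + u^2 + v^2$. The key algebraic identity is $(ac+bd)^2 + (bc-ad)^2 = |\xi\overline{\eta}|^2 = |\xi|^2 |\eta|^2 = (a^2+b^2)(c^2+d^2)$, which together with $a^2+b^2+c^2+d^2 = 1$ gives $1 + u^2 + v^2 = 1 + \frac{a^2+b^2}{c^2+d^2} = \frac{1}{c^2+d^2}$. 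Substituting this into the three coordinates of $\Stereo(u+vi)$ then yields exactly $\left( 2(ac+bd),\ 2(bc-ad),\ a^2+b^2-c^2-d^2 \right)$, matching the right-hand side. Since the two maps agree at every point of $S^3$, they are equal.

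I do not expect a genuine obstacle here beyond bookkeeping. The only subtlety is remembering to treat the point $\eta = 0$ (the Hopf fibre over $\infty$) separately, since $\Hopf$ and $\Stereo$ are defined piecewise there, and using the normalisation $|\xi|^2 + |\eta|^2 = 1$ at precisely the right moment to collapse $1 + u^2 + v^2$ into $1/(c^2+d^2)$. Everything else is elementary manipulation of formulas already available in the excerpt.
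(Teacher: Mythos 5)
Your proposal is correct and follows essentially the same route as the paper: a direct coordinate computation of $\Hopf$ followed by $\Stereo$, simplified using $|\xi|^2+|\eta|^2=1$ and compared with \reflem{spin_vector_to_TXYZ}. Your explicit treatment of the $\eta=0$ fibre (where $\Hopf(\kappa)=\infty$) is a small point of extra care that the paper's generic computation leaves implicit, but it does not change the argument.
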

Here the projection $\pi_{XYZ}$ simply maps $(X,Y,Z,T) \mapsto (X,Y,Z)$.
In other words, the $X,Y,Z$ coordinates of $\g \circ \f$ are precisely the Hopf fibration computed with stereographic projection.

\begin{proof}
Let $(\xi, \eta) = (a+bi, c+di) \in S^3$ where $a,b,c,d \in \R$.
We compute
\[
\Hopf (\xi,\eta) = \frac{a+bi}{c+di} = \frac{ac+bd}{c^2+d^2} + i \frac{bc-ad}{c^2+d^2}
\]
and then applying $\Stereo$ yields
\[
\left( \frac{ 2 \left( \frac{ac+bd}{c^2+d^2} \right) }{1 + \left( \frac{ac+bd}{c^2+d^2} \right)^2 + \left( \frac{bc-ad}{c^2+d^2} \right)^2 }, \;
\frac{ 2 \left( \frac{bc-ad}{c^2+d^2} \right) }{1 + \left( \frac{ac+bd}{c^2+d^2} \right)^2 + \left( \frac{bc-ad}{c^2+d^2} \right)^2 }, \;
\frac{ -1 + \left( \frac{ac+bd}{c^2+d^2} \right)^2 + \left( \frac{bc-ad}{c^2+d^2} \right)^2 }{ 1 + \left( \frac{ac+bd}{c^2+d^2} \right)^2 + \left( \frac{bc-ad}{c^2+d^2} \right)^2 }
\right)
\]
which, fortunately enough, simplifies to
\[
\frac{1}{a^2+b^2+c^2+d^2} 
\left( 2(ac+bd), \;
2 (bc-ad), \;
a^2+b^2 - c^2 - d^2 \right).
\]
Since $a^2+b^2+c^2+d^2 = |\xi|^2 + |\eta|^2 = 1$, comparison with \reflem{spin_vector_to_TXYZ} gives the desired result.
\end{proof}

\subsubsection{Inner products on spinors and Minkowski space}
\label{Sec:inner_products_spinors-Minkowski}

Two spinors $\kappa, \kappa' \in \C^2$ have an inner product $\{\kappa, \kappa'\}$; we also now have the two points in the light cone $\g \circ \f (\kappa), \, \g \circ \f (\kappa')$, on which we can consider the Lorentzian inner product $\langle \g \circ \f(\kappa), \, \g \circ \f(\kappa') \rangle$. If one of $\kappa,\kappa'$ is a real multiple of the other, then $\{\kappa, \kappa'\} = 0$, and equally, $\g \circ \f(\kappa)$ and $\g \circ \f(\kappa')$ are proportional lightlike vectors, so $\langle \g \circ \f(\kappa), \g \circ \f (\kappa') \rangle = 0$. In fact, we have the following. Compare \cite[lem. 4.5]{Penner12}.
\begin{prop}
\label{Prop:complex_Minkowski_inner_products}
For $\kappa, \kappa' \in \C^2_\times$,
\[
2 \left| \left\{ \kappa, \kappa' \right\} \right|^2
= \langle  \g \circ \f (\kappa), \, \g \circ \f(\kappa') \rangle.
\]
\end{prop}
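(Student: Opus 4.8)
The plan is to avoid a brute-force coordinate expansion by working with determinants. First I would record that, by \reflem{det_trace_formulas}(i), the linear isomorphism $\g$ carries the determinant on $\HH$ to one quarter of the Minkowski quadratic form: for $S \in \HH$ with $\g(S) = (T,X,Y,Z)$ one has $4 \det S = T^2 - X^2 - Y^2 - Z^2 = \langle \g(S), \g(S) \rangle$. Since $\det$ is a genuine real quadratic form on the $4$-dimensional real vector space $\HH$, and $\g$ is linear (so $\g(S)+\g(S') = \g(S+S')$), polarising this identity yields, for all $S, S' \in \HH$,
\[
\langle \g(S), \g(S') \rangle = 2 \bigl( \det(S+S') - \det S - \det S' \bigr).
\]

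Next I would specialise to $S = \f(\kappa) = \kappa \kappa^*$ and $S' = \f(\kappa') = \kappa' \kappa'^*$. By \reflem{f_surjectivity}(i) both $\f(\kappa)$ and $\f(\kappa')$ lie in $\HH_0$, so $\det \f(\kappa) = \det \f(\kappa') = 0$, and the polarisation identity collapses to
\[
\langle \g \circ \f(\kappa), \g \circ \f(\kappa') \rangle = 2 \det\bigl( \kappa \kappa^* + \kappa' \kappa'^* \bigr).
\]

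The last step is to identify the matrix inside the determinant. Writing $M \in \M_{2\times 2}(\C)$ for the matrix with columns $\kappa$ and $\kappa'$, block multiplication gives $M M^* = \kappa \kappa^* + \kappa' \kappa'^*$, hence $\det(\kappa \kappa^* + \kappa' \kappa'^*) = \det M \cdot \det M^* = \det M \cdot \overline{\det M} = |\det M|^2$. Since $\det M = \xi \eta' - \xi' \eta = \{\kappa, \kappa'\}$ by \refdef{bilinear_form_defn}, this equals $|\{\kappa, \kappa'\}|^2$, which proves the claim. As a cross-check, or as an alternative route, one may instead substitute the coordinate formulas of \reflem{spin_vector_to_TXYZ} for $\g \circ \f(\kappa)$ and $\g \circ \f(\kappa')$ directly into $TT' - XX' - YY' - ZZ'$ and expand; this is more laborious but yields the same answer.

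I do not expect a serious obstacle here. The only points needing a little care are the constant in the polarisation identity (the factor $2$, traceable to the $\tfrac14$ in \reflem{det_trace_formulas}) and the observation that the cross term in $\det(S+S')$ is bilinear in $S, S'$, which is what licenses polarisation over $\R$. The efficiency of the argument rests entirely on the two facts that $\f(\kappa)$ is the rank-one matrix $\kappa \kappa^*$ and that $\det$ corresponds to the Minkowski form; everything else is linear algebra.
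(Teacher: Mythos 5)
Your argument is correct: the polarisation identity $\langle \g(S),\g(S')\rangle = 2(\det(S+S')-\det S-\det S')$ follows from \reflem{det_trace_formulas}(i) and the linearity of $\g$, both $\f(\kappa)$ and $\f(\kappa')$ are singular, and $\kappa\kappa^* + \kappa'\kappa'^* = MM^*$ with $M$ the $2\times 2$ matrix with columns $\kappa,\kappa'$, so multiplicativity of the determinant gives $\det(MM^*) = |\det M|^2 = |\{\kappa,\kappa'\}|^2$ and the constant works out. This is, however, a genuinely different route from the paper's. The paper proves \refprop{complex_Minkowski_inner_products} by writing $\kappa,\kappa'$ as a $2\times 4$ real matrix, expressing both $\{\kappa,\kappa'\}$ and $\langle \g\circ\f(\kappa),\g\circ\f(\kappa')\rangle$ in terms of its $2\times 2$ subdeterminants (\reflem{complex_inner_product_subdeterminants}, \reflem{Minkowski_inner_product_subdeterminants}), and reducing the claimed equality to the Pl\"ucker relation \refeqn{Plucker_24}. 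Your proof is shorter, coordinate-free, and isolates the two structural facts doing the work (that $\det$ corresponds to the Minkowski form under $\g$, and that $\f(\kappa)$ is the rank-one matrix $\kappa\kappa^*$); it could also be phrased directly as $\langle\g(S),\g(S')\rangle = 2(\det(S+S')-\det S - \det S')$ for all singular Hermitian $S,S'$. What the paper's more laborious computation buys is the explicit subdeterminant formulas and the Pl\"ucker relation itself, which are reused later: the same relation underlies the Ptolemy equation of \refthm{main_thm_Ptolemy}, and the coordinate expressions feed the remark at the end of \refsec{F_surjectivity} about the argument (not just the modulus) of $\{\kappa,\kappa'\}$. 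So your proof would happily replace the paper's for this proposition alone, but the paper's scaffolding is not wasted effort in context.
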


Let $\kappa = (\xi, \eta)$, $\kappa' = (\xi', \eta')$, and $\xi = a+bi,\ \eta = c+di,\ \xi' = a'+b'i,\ \eta' = c'+d'i$ where $a,b,c,d,a',b',c',d'$ are all real. It is convenient for the proof to think of $\kappa, \kappa'$ as real vectors $(a,b,c,d)$, $(a',b',c',d')$, and consider the $2 \times 4$ matrix
\[
M = \begin{pmatrix} a & b & c & d \\ a' & b' & c' & d' \end{pmatrix}
\]
with those vectors as its rows. We denote by $M_{ij}$ the submatrix of $M$ formed from its $i$ and $j$ columns. Thus, for instance,
\[
M_{34} = \begin{pmatrix} c & d \\ c' & d' \end{pmatrix},
\quad
\det M_{13} = ac' - ca',
\quad \text{etc.}
\]
It is then true that
\begin{equation}
\label{Eqn:Plucker_24}
\det M_{13} \det M_{24} = \det M_{12} \det M_{34} + \det M_{14} \det M_{23}.
\end{equation}
This can be checked directly; it is a Pl\"{u}cker relation, which arises in the theory of Grassmannians (see e.g. \cite[ch. 1.5]{Griffiths_Harris94}). We will use it later in \refsec{3d_hyp_geom} to prove our Ptolemy equation. The strategy of the proof of \refprop{complex_Minkowski_inner_products} is to write all quantities in terms of the $M_{ij}$.
\begin{lem}
\label{Lem:complex_inner_product_subdeterminants}
With $\kappa,\kappa'$ as above,
\[
\left\{\kappa,\kappa'\right\}
= \left( \det M_{13} - \det M_{24} \right) + \left( \det M_{14} + \det M_{23} \right) i.
\]
\end{lem}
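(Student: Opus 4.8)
The plan is a routine direct computation: expand the definition of the spinor inner product into real and imaginary parts, and match the resulting monomials against the $2\times 2$ subdeterminants of $M$.

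First I would write out $\{\kappa,\kappa'\} = \xi\eta' - \xi'\eta$ using $\xi = a+bi$, $\eta = c+di$, $\xi' = a'+b'i$, $\eta' = c'+d'i$. Multiplying out the two products gives $\xi\eta' = (ac'-bd') + (ad'+bc')i$ and $\xi'\eta = (a'c-b'd) + (a'd+b'c)i$, so that
\[
\{\kappa,\kappa'\} = (ac' - bd' - a'c + b'd) + (ad' + bc' - a'd - b'c)\, i .
\]
Next I would record the four relevant subdeterminants straight from the definition of $M$: $\det M_{13} = ac' - ca'$, $\det M_{24} = bd' - db'$, $\det M_{14} = ad' - da'$, and $\det M_{23} = bc' - cb'$. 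Then $\det M_{13} - \det M_{24} = ac' - a'c - bd' + b'd$, which is exactly the real part above, while $\det M_{14} + \det M_{23} = ad' - a'd + bc' - b'c$, which is exactly the imaginary part. Combining these two identifications yields the claim.

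There is no real obstacle here; the only point requiring care is the sign bookkeeping. The appearance of $-\det M_{24}$ (rather than $+$) and of $+\det M_{23}$ (rather than $-$) comes precisely from the factor $i^2 = -1$ produced when the imaginary part of $\xi$ meets the imaginary part of $\eta'$ (and likewise for $\xi'$ and $\eta$); tracking which of the eight monomials carries a sign from this source is the whole content of the proof. One could alternatively phrase the computation by writing $\kappa, \kappa'$ as complex combinations of real vectors and expanding $\det(\kappa,\kappa')$ multilinearly, but the brute-force expansion above is the most transparent.
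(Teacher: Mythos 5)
Your computation is correct and matches the paper's proof, which likewise expands $\{\kappa,\kappa'\} = (a+bi)(c'+d'i)-(a'+b'i)(c+di)$ into real and imaginary parts and identifies them with $\det M_{13}-\det M_{24}$ and $\det M_{14}+\det M_{23}$. No differences worth noting beyond the level of written-out detail.
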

This lemma is really a general fact about $2 \times 2$ complex matrices $N$: if we make its entries into $1 \times 2$ real matrices, and obtain a $2 \times 4$ real matrix $M$, then $\det N$ is given by the right hand side above.

\begin{proof}
\begin{align*}
\det \begin{pmatrix} a+bi & a'+b'i \\ c+di & c'+d'i \end{pmatrix}
&= (a+bi)(c'+d' i)-(a'+b'i)(c+di) \\
&= \left( ac' - ca' + db'-bd' \right) + \left( ad'-da' + bc'-cb' \right)i,
\end{align*}
which is the desired combination of determinants.
\end{proof}

\begin{lem}
\label{Lem:Minkowski_inner_product_subdeterminants}
With $\kappa,\kappa'$ as above,
\[
\frac{1}{2} \langle \g \circ \f (\kappa), \, \g \circ \f (\kappa') \rangle
=
\det M_{13}^2 + \det M_{14}^2 + \det M_{23}^2 + \det M_{24}^2 - 2 \det M_{12} \det M_{34}.
\]
\end{lem}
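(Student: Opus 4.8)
The plan is to reduce the asserted identity to a short algebraic computation; there are two natural ways to set it up, and I would use the cleaner one.

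The slick route starts from the observation that, by \reflem{det_trace_formulas}(i), the symmetric bilinear form $(S,S')\mapsto\langle\g(S),\g(S')\rangle$ on $\HH$ has associated quadratic form $S\mapsto 4\det S$, and so is its polarization. Since for $2\times 2$ matrices $2\det S=(\Trace S)^2-\Trace(S^2)$, that polarization is $(S,S')\mapsto 2\big(\Trace S\,\Trace S'-\Trace(SS')\big)$. Taking $S=\kappa\kappa^*$ and $S'=\kappa'\kappa'^*$, and using $\Trace(\kappa\kappa^*)=\kappa^*\kappa$ together with $\Trace(\kappa\kappa^*\kappa'\kappa'^*)=(\kappa^*\kappa')(\kappa'^*\kappa)=|\kappa^*\kappa'|^2$, one obtains
\[
\tfrac12\big\langle \g\circ\f(\kappa),\,\g\circ\f(\kappa')\big\rangle
=\big(|\xi|^2+|\eta|^2\big)\big(|\xi'|^2+|\eta'|^2\big)-\big|\,\overline{\xi}\xi'+\overline{\eta}\eta'\,\big|^2 .
\]
(The more pedestrian alternative is to substitute the formulas of \reflem{spin_vector_to_TXYZ} for both $\kappa$ and $\kappa'$ directly into $\tfrac12(TT'-XX'-YY'-ZZ')$ and expand; this works, but is a longer degree-four computation.)

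It then remains to rewrite the right-hand side in terms of the minors $\det M_{ij}$. In the real coordinates $a,b,c,d,a',b',c',d'$ one has $\overline{\xi}\xi'+\overline{\eta}\eta'=(aa'+bb'+cc'+dd')+(\det M_{12}+\det M_{34})\,i$, while the (real) Lagrange identity applied to the two rows of $M$ gives
\[
(a^2+b^2+c^2+d^2)(a'^2+b'^2+c'^2+d'^2)-(aa'+bb'+cc'+dd')^2=\sum_{1\le i<j\le 4}\det M_{ij}^{\,2}.
\]
Subtracting $(\det M_{12}+\det M_{34})^2$ from this and cancelling the $\det M_{12}^{\,2}$ and $\det M_{34}^{\,2}$ terms leaves exactly $\det M_{13}^{\,2}+\det M_{14}^{\,2}+\det M_{23}^{\,2}+\det M_{24}^{\,2}-2\det M_{12}\det M_{34}$, as claimed.

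Given these shortcuts, no step is a real obstacle; the only thing to watch is the bookkeeping, and in particular the choice of cross term. Squaring the expression for $\{\kappa,\kappa'\}$ from \reflem{complex_inner_product_subdeterminants} would instead produce $-2\det M_{13}\det M_{24}+2\det M_{14}\det M_{23}$ in place of $-2\det M_{12}\det M_{34}$; the two agree by the Pl\"ucker relation \refeqn{Plucker_24}, but it is the latter form that falls out of the computation above, so I would record that form here and leave the reconciliation to the proof of \refprop{complex_Minkowski_inner_products}.
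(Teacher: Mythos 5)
Your proof is correct, but it takes a genuinely different route from the paper's. The paper proves this lemma by brute force: it substitutes the explicit real-coordinate formulas of \reflem{spin_vector_to_TXYZ} for both $\g \circ \f(\kappa)$ and $\g \circ \f(\kappa')$ into the Minkowski inner product and expands the resulting degree-four expression until it collapses to $2(ac'-ca')^2 + 2(ad'-da')^2 + 2(bc'-cb')^2 + 2(bd'-db')^2 - 4(ab'-ba')(cd'-dc')$. You instead polarize: since $\langle \g(S), \g(S) \rangle = 4 \det S = 2\left( (\Trace S)^2 - \Trace(S^2) \right)$ by \reflem{det_trace_formulas}, the two symmetric bilinear forms agree, giving $\tfrac12 \langle \g \circ \f(\kappa), \g \circ \f(\kappa') \rangle = \Trace(\kappa\kappa^*)\Trace(\kappa'\kappa'^*) - \Trace(\kappa\kappa^*\kappa'\kappa'^*) = (|\xi|^2+|\eta|^2)(|\xi'|^2+|\eta'|^2) - |\overline{\xi}\xi' + \overline{\eta}\eta'|^2$, and then the Lagrange identity converts this to the stated combination of minors. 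Both arguments are sound; the paper's has the virtue of being a single self-contained expansion requiring no auxiliary identities, while yours is shorter, less error-prone, and exposes the intermediate Hermitian-inner-product formula, with the Lagrange identity doing the combinatorial work that the paper handles by raw cancellation. Your closing remark about the cross term is also apt: the form with $-2\det M_{12}\det M_{34}$ is what both computations naturally produce, and the reconciliation with $\{\kappa,\kappa'\}$ via the Pl\"ucker relation \refeqn{Plucker_24} indeed belongs to the proof of \refprop{complex_Minkowski_inner_products}, exactly as the paper arranges it.
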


\begin{proof}
Using \reflem{spin_vector_to_TXYZ} we have
\begin{align*}
\g \circ \f(\kappa)
&= \left( a^2 + b^2 + c^2 + d^2, \, 2(ac+bd), \, 2(bc-ad), \, a^2 + b^2 - c^2 - d^2 \right) \\
\g \circ \f(\kappa')
&= \left( a'^2 + b'^2 + c'^2 + d'^2, \, 2(a'c'+b'd'), \, 2(b'c'-a'd'), \, a'^2 + b'^2 - c'^2 - d'^2 \right)
\end{align*}
so applying $\langle \cdot, \cdot \rangle$ yields $\langle \g \circ \f (\kappa), \, \g \circ \f (\kappa') \rangle$ as
\begin{align*}
\left( a^2 + b^2 + c^2 + d^2 \right)
\left( a'^2 + b'^2 + c'^2 + d'^2 \right)
& - 4 (ac+bd)(a'c'+b'd')
- 4 (bc-ad)(b'c'-a'd') \\
&- \left(a^2 + b^2 - c^2 - d^2 \right) 
\left( a'^2 + b'^2 - c'^2 - d'^2 \right)
\end{align*}
This simplifies to
\[
2(ac'-ca')^2 + 2(ad'-da')^2 + 2(bc'-cb')^2 + 2(bd'-db')^2 
- 4(ab'-ba')(cd'-dc')
\]
giving the desired equality.
\end{proof}

\begin{proof}[Proof of \refprop{complex_Minkowski_inner_products}]
By \reflem{complex_inner_product_subdeterminants} and \reflem{Minkowski_inner_product_subdeterminants}, it remains to show that the following equation holds:
\[
\left( \det M_{13} - \det M_{24} \right)^2 + \left( \det M_{14} + \det M_{23} \right)^2
=
\det M_{13}^2 + \det M_{14}^2 + \det M_{23}^2 + \det M_{24}^2 - 2 \det M_{12} \det M_{34}.
\]
Upon expanding and simplifying, this reduces to the Pl\"{u}cker equation \refeqn{Plucker_24}.
\end{proof}

\subsection{Flags}
\label{Sec:flags}

We now pick up the idea, left off in \refsec{derivatives_of_f}, of defining a flag using the map $\f$ and its derivative in a certain direction $\ZZ(\kappa)$ at each point $\kappa \in \C^2_\times$. 

\begin{defn}
A \emph{flag} in a vector space $V$ is an ascending sequence of subspaces
\[
V_1 \subset \cdots \subset V_k.
\]
Letting $d_i = \dim V_i$, the $k$-tuple $(d_1, \ldots, d_k)$ is called the \emph{signature} of the flag.
\end{defn}

We will use the map $\f$ to span a 1-dimensional subspace of $\HH$, and then use its derivative as described by $\ZZ$ to span a 2-plane. Thus, the flag involved will be
\[
\R \f(\kappa) \subset \R \f(\kappa) \oplus \R D_\kappa \f(\ZZ(\kappa)),
\]
and this assignment of flags to spin vectors turns out to be equivariant under the action of $SL(2,\C)$.

Such flags are flags in $\HH$, but as seen in \refsec{hermitian_to_minkowski}, there is a linear isomorphism $\g$ between $\HH$ and $\R^{1,3}$ preserving all relevant structure, so these flags can also be considered in $\R^{1,3}$, after applying $\g$ appropriately.

The flags we consider all have signature $(1,2)$, but not every such flag arises by this construction. There are certain geometric constraints on the subspaces, relating to the \emph{light cone} $L$ of \emph{null vectors} in $\R^{1,3}$, or the space of singular Hermitian matrices $\HH_0$. Moreover, in order to obtain our desired bijections, we need further structure in our flags of a distinguished point, and orientations. Hence we call the flag structures we need \emph{pointed oriented null flags}.

To most readers, we suspect geometric constraints are more easily understood in terms of the light cone in Minkowski space, than in terms of singular Hermitian matrices. On the other hand, the map $\f$ maps directly into Hermitian matrices, while the map $\g$ then applies a further linear transformation, so the algebra of flags is simpler in terms of Hermitian matrices. 

Thus, we discuss flags both in $\HH$ and $\R^{1,3}$, but prefer $\HH$ for simpler algebra, and $\R^{1,3}$ for geometric intuition.  We will define flags in $\HH$ and $\R^{1,3}$ simultaneously. 

In \refsec{Z} and we introduce the map $\ZZ$, needed for defining the flag direction. In \refsec{PNF} we introduce \emph{pointed null flags}, with ``null" having its usual meaning in $\R^{1,3}$, and then in \refsec{PONF} we introduce \emph{pointed oriented null flags}, the precise type of flag structure we need, which also have some orientation in their structure. In \refsec{describing_flags} we develop notation for describing flags. Then in \refsec{map_F} we can define the map $\F$ from spin vectors to flags. In \refsec{SL2c_action_on_flags_HH} we discuss the $SL(2,\C)$ action on flags, and in \refsec{equivariance_of_F} prove equivariance of the action. This discussion of the $SL(2,\C)$ action is in terms of Hermitian matrices $\HH$, so in \refsec{flags_Minkowski_space} we translate these results into Minkowski space. In \refsec{calculating_flags_Minkowski} we explicitly calculate details of flags in Minkowski space corresponding to spin vectors, and in \refsec{rotating_flags} we consider rotating them. This allows us to show in \refsec{F_surjectivity} that the maps $\F$ and $\G \circ \F$ are surjective, more precisely 2--1 maps.

\subsubsection{The map $\ZZ$}
\label{Sec:Z}

\begin{defn}
\label{Def:Z_C2_to_C2_and_J}
Define $\ZZ \colon \C^2 \To \C^2$ by
\[
\ZZ \begin{pmatrix}\alpha\\ \beta\end{pmatrix}  = \begin{pmatrix}
    
 \overline{\beta} \, i\\ \, -\overline{\alpha} \, i \end{pmatrix}
\quad \text{i.e.} \quad
\ZZ (\kappa) = J \, \overline{\kappa}
\quad \text{where} \quad
J = \begin{pmatrix} 0 & i \\ -i & 0 \end{pmatrix}.
\]
\end{defn}
With this definition of $\ZZ$, using \refeqn{derivative_formula}, we obtain
\begin{equation}
\label{Eqn:derivative_flag_dirn}
D_\kappa f(\ZZ(\kappa)) = \kappa \ZZ(\kappa)^* + \ZZ(\kappa) \kappa^*
= \kappa \kappa^T J + J \overline{\kappa} \kappa^*.
\end{equation}
The following observations are significant in the sequel and help to motivate the definition of $\ZZ$.
\begin{lem}
\label{Lem:bilinear_Z_negative_imaginary}
\label{Lem:Z_forms_basis}
For any $\kappa \in \C^2_\times$,
\begin{enumerate}
\item $\{\kappa, \ZZ(\kappa)\}$ is negative imaginary;
\item
$\kappa$ and $\ZZ(\kappa)$ form a basis for $\C^2$ as a complex vector space.
\end{enumerate}
\end{lem}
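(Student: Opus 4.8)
The plan is to compute $\{\kappa, \ZZ(\kappa)\}$ directly from the definitions and then read off both claims from the resulting formula. Write $\kappa = (\alpha, \beta) \in \C^2_\times$, so that by \refdef{Z_C2_to_C2_and_J} we have $\ZZ(\kappa) = (\overline{\beta}\,i, -\overline{\alpha}\,i)$. Applying \refdef{bilinear_form_defn} gives
\[
\{\kappa, \ZZ(\kappa)\}
= \det \begin{pmatrix} \alpha & \overline{\beta}\,i \\ \beta & -\overline{\alpha}\,i \end{pmatrix}
= \alpha \left( -\overline{\alpha}\,i \right) - \left( \overline{\beta}\,i \right) \beta
= -i \left( |\alpha|^2 + |\beta|^2 \right).
\]
Since $\kappa \in \C^2_\times$, at least one of $\alpha, \beta$ is nonzero, so $|\alpha|^2 + |\beta|^2 > 0$; hence $\{\kappa, \ZZ(\kappa)\} = -i\left(|\alpha|^2 + |\beta|^2\right)$ is a negative multiple of $i$, i.e. negative imaginary. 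This proves (i).

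For (ii), recall that two vectors in the $2$-dimensional complex vector space $\C^2$ form a basis if and only if they are linearly independent, which for a pair of vectors is equivalent to the determinant of the matrix with those vectors as columns being nonzero. But that determinant is precisely $\{\kappa, \ZZ(\kappa)\}$, which by the computation above equals $-i\left(|\alpha|^2 + |\beta|^2\right) \neq 0$. Hence $\kappa$ and $\ZZ(\kappa)$ are linearly independent over $\C$, and therefore form a basis.

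I do not anticipate any real obstacle here: the entire statement reduces to a single $2\times 2$ determinant evaluation, and the positivity $|\alpha|^2 + |\beta|^2 > 0$ on $\C^2_\times$ does all the work. The only point requiring a word of care is that $\{\cdot,\cdot\}$ is $\C$-bilinear (not sesquilinear), so the conjugates in $\ZZ(\kappa)$ are simply carried along as scalars in the determinant; this is exactly what produces the clean factor $|\alpha|^2 + |\beta|^2$ rather than something involving $\alpha^2$ or $\beta^2$.
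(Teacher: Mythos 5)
Your proof is correct and follows essentially the same route as the paper: compute $\{\kappa,\ZZ(\kappa)\}$ as a $2\times 2$ determinant to get $-i(|\alpha|^2+|\beta|^2)$, which is negative imaginary and nonzero, whence the columns are linearly independent over $\C$. No issues.
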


\begin{proof}
Let $\kappa=(\xi,\eta) \in \C^2_\times$, then from \refdef{bilinear_form_defn},
\[
\{\kappa,\ZZ(\kappa)\}=
\det 
\begin{pmatrix} 
\xi & \overline{\eta} \, i \\
\eta & - \overline{\xi} \, i
\end{pmatrix}
=
\xi(-\overline{\xi}i)-\eta(\overline{\eta}i)
=- \left( |\xi|^2+|\eta|^2 \right) i,
\]
which is negative imaginary. Being nonzero, the matrix columns are linearly independent over $\C$.
\end{proof}

For another, possibly motivating, perspective on $\ZZ$, identify $(\xi,\eta)=(a+bi,c+di)$ with the quaternion $q=a+b\pmb{i}+c\pmb{j}+d\pmb{k}$, where 
$1, \pmb{i}, \pmb{j}, \pmb{k}$
are the elementary quaternions. Then, as a map on quaternions, $\ZZ$ is given by
\[
\ZZ(q)=-\pmb{k} q=-\pmb{k}(a+b\pmb{i}+c\pmb{j}+d\pmb{k})=(d+c\pmb{i}-b\pmb{j}-a\pmb{k})\leftrightarrow(d+ci,-b-ai).
\]
Thus, in the Euclidean metric on $\C^2 \cong \R^4$, $\ZZ (q)$ is orthogonal to $q$. 

On the unit $S^3$ centred at the origin in the quaternions, the tangent space to $S^3$ at $\kappa$ has basis $\pmb{i} \kappa, \pmb{j} \kappa, \pmb{k} \kappa$. The $\pmb{i}\kappa$ direction is the direction of the fibre of the Hopf fibration, and $\f$ is constant in that direction. This perhaps motivates why we take the $\pmb{k} \kappa$ direction. (The choice of $-$ rather than $+$, and $\pmb{k}$ rather than $\pmb{j}$, is somewhat arbitrary.)

\subsubsection{Pointed null flags}
\label{Sec:PNF}

All the flags we consider will be of signature $(1,2)$ in $\HH \cong \R^{1,3}$. By \reflem{det0_lightcone_correspondence}, the subset $\HH_0^+ \subset \HH$ corresponds under $\g$ to the positive light cone $L^+ \subset \R^{1,3}$. Vectors on $L^+$ are null, hence the name.

\begin{defn}
\label{Def:null_flag_in_Minkowski}
A \emph{null flag} in $\R^{1,3}$ (resp. $\HH$) is a flag of signature $(1,2)$ in $\R^{1,3}$ (resp. $\HH$)
\[
V_1 \subset V_2
\]
where 
\begin{enumerate}
\item $V_1$ is spanned by some $p \in L^+$ (resp. $S \in \HH_0^+$).
\item $V_2$ is spanned by the same $p$ (resp. $S$), together with some $v \in T_p L^+$ (resp. $U \in T_S \HH_0^+$).
\end{enumerate}
\end{defn}
Thus in a null flag $V_1 \subset V_2$ in $\R^{1,3}$, the first space $V_1$ is a line in the light cone, and the second space $V_2$ is a 2-plane tangent to the light cone. Although $p$ in the above definition is null (indeed, has future-pointing lightlike position vector), the tangent vector $v$ to $L^+$ at $p$ is not null. See \reffig{flag}.

The definitions of null flags in $\HH$ and $\R^{1,3}$ correspond under the isomorphism $\g$: $V_1 \subset V_2$ is a null flag in $\HH$ iff $\g(V_1) \subset \g(V_2)$ is a null flag in $\R^{1,3}$. Thus $\g$ provides a bijection between null flags in $\HH$ and null flags in $\R^{1,3}$. 

From a spinor $\kappa$, we already have a point $\f(\kappa) \in \HH_0^+$ or $\g \circ \f(\kappa) \in L^+$, so our flags come with a distinguished basepoint, as in the following definition.
\begin{defn}
\label{Def:pointed_null_flag}
A \emph{pointed null flag} in $\R^{1,3}$ (resp. $\HH$) is a point $p \in L^+$ (resp. $S \in \HH_0^+$) together with a null flag $\R p \subset V$ (resp. $\R S \subset V$).

We denote the set of pointed null flags in $\R^{1,3}$ (resp. $\HH$) by $\mathcal{F_P}(\R^{1,3})$ (resp. $\mathcal{F_P}(\HH)$ ).
\end{defn}
When the distinction between $\HH$ and $\R^{1,3}$ is unimportant we simply write $\mathcal{F_P}$.

We denote a pointed null flag as above in
\begin{itemize}
    \item  $\R^{1,3}$ by $(p,V)$ or $[[p,v]]$, where $v \in T_p L^+$ and $V$ is spanned by $p$ and $v$;
    \item  $\HH$ by $(S, V)$ or $[[S,U]]$, where $U \in T_S \HH_0^+$ and $V$ is spanned by $S$ and $U$.
\end{itemize}

All the notions in $\HH$ and $\R^{1,3}$ in the definition of pointed null flags correspond under the isomorphism $\g$:  $(S,V)\in\mathcal{F_P}(\HH)$ iff $(\g(S), \g(V))\in\mathcal{F_P}(\R^{1,3})$. So $\g$ yields a bijection $\mathcal{F_P}(\HH) \To \mathcal{F_P}(\R^{3,1})$, given by $(S,V) \mapsto (\g(S),\g(V))$ or $[[S,U]] \mapsto [[\g(S), \g(U)]]$. 

The notation $(p,V)$ is unique: if $(p,V) = (p',V')$ then $p=p'$ and $V=V'$. However the same is not true for the notation $[[p,v]]$: a given pointed null flag may be described by different pairs $p,v$. The following lemma clarifies when two descriptions are equal.
\begin{lem}
\label{Lem:characterise_equal_PNFs}
Suppose $p,p' \in L^+$ and $v,v' \in \R^{1,3}$.
The following are equivalent:
\begin{enumerate}
\item $[[p,v]]$ and $[[p',v']]$ describe the same pointed null flag.
\item $p=p'$, and $v,v'$ both lie in $T_p L^+$, and the real spans of $(p,v)$ and $(p',v')$ are 2-dimensional and equal.
\item $p=p'$, and $v,v'$ both lie in $T_p L^+$, and $v,v'$ are not real multiples of $p$, and there exist real numbers $a,b,c$, not all zero, such that $ap+bv+cv'=0$.
\end{enumerate}
\end{lem}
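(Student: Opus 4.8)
The plan is to prove the cycle of implications (i) $\Rightarrow$ (ii) $\Rightarrow$ (iii) $\Rightarrow$ (i), unwinding the definition of pointed null flag (Definition~\ref{Def:pointed_null_flag}) at each step. Recall that a pointed null flag in $\R^{1,3}$ is a pair $(p,V)$ with $p\in L^+$ and $V$ a $2$-plane spanned by $p$ together with some $v\in T_p L^+$; the notation $[[p,v]]$ denotes exactly this flag $(p, \R p\oplus\R v)$. So the content of the lemma is a concrete description of when two such spanning descriptions coincide.

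First, (i) $\Rightarrow$ (ii): if $[[p,v]]$ and $[[p',v']]$ are the same pointed null flag $(p'',V)$, then by the uniqueness of the $(p,V)$ notation we must have $p=p''=p'$, and both $v,v'$ are, by definition of the bracket notation, required to lie in $T_p L^+$. Moreover $V$ is by definition a $2$-dimensional subspace, equal to $\R p\oplus \R v = \R p\oplus\R v'$, so the real spans of $(p,v)$ and $(p',v')$ are both equal to $V$ and hence $2$-dimensional and equal. Next, (ii) $\Rightarrow$ (iii): since $\Span_{\R}(p,v)$ is $2$-dimensional, $v$ is not a real multiple of $p$, and likewise $v'$ is not a real multiple of $p=p'$; and since $v'$ lies in the $2$-plane $\Span_{\R}(p,v)$, there is a linear dependence $ap+bv+cv'=0$ with $(a,b,c)\neq(0,0,0)$ (indeed $v' = \alpha p + \beta v$ for some reals, giving $c=-1$). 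Finally, (iii) $\Rightarrow$ (i): from $ap+bv+cv'=0$ with not all of $a,b,c$ zero, I claim $b,c$ are not both zero --- if they were, then $ap=0$ with $a\neq 0$ forces $p=0$, contradicting $p\in L^+$. If $c\neq 0$ then $v'\in\Span_{\R}(p,v)$, and since $v'$ is not a real multiple of $p$, the space $\Span_{\R}(p,v')$ is $2$-dimensional and contained in $\Span_{\R}(p,v)$, hence equal to it (both $2$-dimensional); together with $p=p'$ this gives $[[p',v']]=(p,\Span_{\R}(p,v))=[[p,v]]$. The case $b\neq 0$ is symmetric, writing $v\in\Span_{\R}(p,v')$.

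The only genuinely delicate point --- and the one I would state carefully rather than wave at --- is the handling of degenerate coefficient patterns in passing from the bare linear relation in (iii) back to an equality of $2$-planes: one must use that $p\neq 0$ to rule out $b=c=0$, and one must use that $v,v'$ are not real multiples of $p$ to promote "$v'\in\Span_{\R}(p,v)$'' to "$\Span_{\R}(p,v')=\Span_{\R}(p,v)$''. Everything else is a routine unwinding of definitions; no computation with the Minkowski form itself is needed, since $v,v'\in T_pL^+$ is carried along as a hypothesis throughout. The analogous statement in $\HH$ follows at once by applying the linear isomorphism $\g$, which matches pointed null flags in $\HH$ with those in $\R^{1,3}$ and preserves all the linear-algebraic notions in play.
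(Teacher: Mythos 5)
Your proof is correct and follows essentially the same route as the paper: (i)$\Leftrightarrow$(ii) is a direct unwinding of \refdef{pointed_null_flag}, and the passage between (ii) and (iii) is the same elementary linear algebra about when $(p,v)$ and $(p,v')$ span equal $2$-planes. Your extra care in ruling out the degenerate coefficient patterns (using $p\neq 0$ and that $v,v'$ are not multiples of $p$) simply makes explicit what the paper leaves as routine.
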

A similar statement applies for pointed null flags in $\HH$, if we replace $p,p' \in L^+$ with $S,S' \in \HH_0^+$, $v,v' \in \R^{1,3}$ with $U,U' \in \HH$, and $T_p L^+$ with $T_S \HH_0^+$.

\begin{proof}
That (i) is equivalent to (ii) is immediate from the definition: the points $p,p'$ must be equal, and the planes spanned by $(p,v)$ and $(p',v')$ must be tangent to $L^+$ (resp. $\HH_0^+$) and equal.  That (ii) is equivalent to (iii) is elementary linear algebra: $(p,v)$ and $(p,v')$ span equal 2-dimensional planes iff $(p,v)$ and $(p,v')$ are linearly independent but $(p,v,v')$ is linearly dependent.
\end{proof}

\subsubsection{Pointed oriented null flags}
\label{Sec:PONF}

In general, an \emph{oriented flag} is a flag
\[
\{0\} = V_0 \subset V_1  \subset \cdots \subset V_k
\]
where each quotient $V_i/V_{i-1}$, for $i=1, \ldots, k$, is endowed with an orientation. Equivalently, these orientations amount to orienting $V_1$, and then orienting each quotient $V_2/V_1, V_3/V_2, \ldots, V_k/V_{k-1}$.

We regard an \emph{orientation} of a vector space $V$, in standard fashion, as an equivalence class of ordered bases of $V$, where two ordered bases are equivalent when they are related by a linear map with positive determinant.

A pointed null flag $(p,V)\in\mathcal{F_P}$ already naturally contains some orientation data: the 1-dimensional space $\R p$ can be oriented in the direction of $p$. Thus it remains to orient the quotient $V/\R p$, as per the following definition.

\begin{defn}
\label{Def:pointed_oriented_null_flag}
A \emph{pointed oriented null flag} in $\R^{1,3}$  is the data $(p, V, o)$ where:
\begin{enumerate}
\item $(p,V)\in\mathcal{F_P}(\R^{1,3})$, with $\R p$ is oriented in the direction of $p$;
\item $o$ is an orientation of $V/\R p$.
\end{enumerate}
The set of pointed oriented  null flags in $\R^{1,3}$ is denoted $\mathcal{F_P^O}(\R^{1,3})$.
\end{defn}

Similarly, a pointed oriented null flag in $\HH$ consists of $(S, V, o)$, where $(S,V) \in \mathcal{F_P}(\HH)$, $\R S$ is oriented in the direction of $S$, and $o$ is an orientation of $V/\R S$.  Since $(S,V)$ is a pointed null flag, $S \in \HH_0^+$, and $V$ is a 2-dimensional subspace containing $S$ and tangent to $\HH_0^+$.
The set of pointed oriented null flags in $\HH$ is denoted $\mathcal{F_P^O}(\HH)$.

When the distinction between $\HH$ and $\R^{1,3}$ is unimportant we simply write $\mathcal{F_P^O}$.

Pointed oriented null flags are the structure we need to describe spinors. Henceforth we will simply refer to them as \emph{flags}.

The space $\mathcal{F_P^O}(\R^{1,3})$ of pointed null flags is 4-dimensional. To see this, note that $p$ lies in the 3-dimensional positive light cone $L^+$. The tangent space $T_p L^+$ is 3-dimensional and contains $\R p$ as a subspace. The set of relatively oriented 2-planes $V$ in the 3-dimensional vector space $T_p L^+$ containing $\R p$ is 1-dimensional; there is an $S^1$ worth of such 2-planes, rotating around $\R p$. In fact, we will see later in \refsec{topology_of_spaces} that $\mathcal{F_P^O}$ naturally has the topology of $\textnormal{UT}S^2 \times \R$, the product of the unit tangent bundle of $S^2$ with $\R$.

Just as for pointed null flags, there is a bijection $\mathcal{F_P^O}(\HH) \To \mathcal{F_P^O}(\R^{1,3})$, as we now show. Let $(S,V,o) \in \mathcal{F_P^O}(\HH)$, consisting of subspaces $\R S \subset V$. Just as for pointed null flags, we can directly apply $\g$ to $S \in \HH_0^+$ and $V \subset \HH$ to obtain $\g(S)$, and $\g(V)$. We can also apply $\g$ to the orientation $o$ as follows. The orientation $o$ is represented by an equivalence class of ordered bases of $V/\R S$.
(As $V/\R S$ is 1-dimensional, such an ordered basis consists of just one element.)
The isomorphism $\g \colon \HH \To \R^{1,3}$ restricts to isomorphisms $V \To \g(V)$ and $\R S \To \R \g(S)$, and hence provides an isomorphism of quotient spaces $\underline{\g} \colon V / \R S \To \g(V) / \R \g(S)$.
Taking $\underline{B}$ to be an ordered basis of $V/\R S$ representing $o$, then we define $\g(o)$ to the the orientation represented by $\g(\underline{B})$. 
\begin{defn}
\label{Def:G}
The map $\G$ from (pointed oriented null) flags in $\HH$, to (pointed oriented null) flags in $\R^{1,3}$, is given by
\[
\G \colon \mathcal{F_P^O}(\HH) \To \mathcal{F_P^O}(\R^{1,3}), \quad
\G(S,V,o) = (\g(S),\g(V),\g(o)).
\]
\end{defn}

\begin{lem}
\label{Lem:G_bijection}
$\G$ is well defined and a bijection.
\end{lem}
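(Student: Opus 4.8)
$\G$ is well defined and a bijection.

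The plan is to break the claim into two independent parts: well-definedness of $\G$ (which amounts to checking that $\G(S,V,o)$ is genuinely a pointed oriented null flag in $\R^{1,3}$, and that it does not depend on the choices made in defining $\g(o)$), and bijectivity (for which the cleanest route is to exhibit an explicit two-sided inverse, namely the analogous construction using $\g^{-1}$).

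First I would verify well-definedness. Since $\g \colon \HH \To \R^{1,3}$ is a linear isomorphism (shown just after \refdef{g_H_to_R31}), it carries the 1-dimensional subspace $\R S$ to the 1-dimensional subspace $\R\g(S)$, and the 2-dimensional subspace $V$ to a 2-dimensional subspace $\g(V)$, with $\R\g(S) \subset \g(V)$; so the underlying flag $\g(V_1)\subset\g(V_2)$ has signature $(1,2)$. Moreover $S \in \HH_0^+$ implies $\g(S) \in L^+$ by \reflem{det0_lightcone_correspondence}(iii), and because $\g$ is a linear diffeomorphism it maps $T_S\HH_0^+$ isomorphically onto $T_{\g(S)}L^+$; hence $\g(V)$ is spanned by $\g(S)$ together with a vector tangent to $L^+$ at $\g(S)$, so $(\g(S),\g(V))$ is a pointed null flag in $\R^{1,3}$ in the sense of \refdef{pointed_null_flag}, with $\R\g(S)$ oriented by $\g(S)$ exactly as required. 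It then remains to check that $\g(o)$ is a well-defined orientation of $\g(V)/\R\g(S)$: because $\g$ restricts to isomorphisms $V \To \g(V)$ and $\R S \To \R\g(S)$, it descends to the quotient isomorphism $\underline{\g} \colon V/\R S \To \g(V)/\R\g(S)$; if $\underline{B}, \underline{B'}$ are two ordered bases of $V/\R S$ representing $o$, they differ by a linear map of positive determinant, and since $\underline{\g}$ is an isomorphism, $\underline{\g}(\underline{B})$ and $\underline{\g}(\underline{B'})$ differ by a conjugate map, which has the same determinant, hence the same sign; so $\g(o)$ is independent of the representative. This establishes $\G$ is well defined.

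Next I would prove bijectivity by constructing the inverse. Since $\g^{-1} \colon \R^{1,3} \To \HH$ is also a linear isomorphism, and by \reflem{det0_lightcone_correspondence} it restricts to a bijection $L^+ \To \HH_0^+$, the identical construction defines a map $\G' \colon \mathcal{F_P^O}(\R^{1,3}) \To \mathcal{F_P^O}(\HH)$ by $\G'(p,W,o') = (\g^{-1}(p), \g^{-1}(W), \g^{-1}(o'))$, where $\g^{-1}(o')$ is the orientation obtained via the quotient isomorphism induced by $\g^{-1}$. One then checks $\G' \circ \G = \Id$ and $\G \circ \G' = \Id$: on the point and on the 2-plane this is immediate from $\g^{-1}\circ\g = \Id_\HH$ and $\g\circ\g^{-1} = \Id_{\R^{1,3}}$; on orientations it follows because the quotient isomorphism induced by $\g^{-1}$ is the inverse of the quotient isomorphism induced by $\g$ (functoriality of passing to quotients), so applying them in succession returns the original ordered basis and hence the original orientation. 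Therefore $\G$ is a bijection with inverse $\G'$.

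The only mildly delicate point — and the step I would be most careful about — is the orientation bookkeeping: confirming that ``apply $\g$ to a representative ordered basis of the quotient'' is independent of the representative, and that the two quotient-level maps are genuinely mutually inverse. Everything else is an immediate consequence of $\g$ being a linear isomorphism compatible with the light-cone stratification (via \reflem{det0_lightcone_correspondence}) and with tangent spaces. No computation is required; the argument is purely formal linear algebra plus the already-established properties of $\g$.
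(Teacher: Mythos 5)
Your proof is correct and follows essentially the same route as the paper: check that the linear isomorphism $\g$ (together with \reflem{det0_lightcone_correspondence}) carries the pointed null flag structure from $\HH$ to $\R^{1,3}$, verify that the induced orientation on the quotient is independent of the representative basis, and obtain bijectivity by running the same construction with $\g^{-1}$ to produce a two-sided inverse. The only cosmetic difference is that the paper exploits the fact that $V/\R S$ is 1-dimensional (so representative bases differ by a positive scalar), whereas you argue via positivity of the determinant of the change-of-basis map, which amounts to the same thing.
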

In other words, $(S,V,o)\in\mathcal{F_P^O}(\HH)$ iff $(\g(S),\g(V),\g(o))\in\mathcal{F_P^O}(\R^{1,3})$

\begin{proof}
The isomorphism $\g$ maps $S \in \HH_0^+$ to a point $\g(S) \in L^+$ (\reflem{det0_lightcone_correspondence}). The 2-plane $V$ is spanned by $S$ and an element of $T_S \HH_0^+$, so $\g(V)$ is a 2-plane spanned by $\g(S)$ and an element of $T_{\g(S)} L^+$. Thus $\R \g(S) \subset \g(V)$ is a null flag in $\R^{1,3}$ and in fact $(\g(S), \g(V)) \in \mathcal{F_P} (\R^{1,3})$.

Considering orientations, since $\g(S) \in L^+$, the 1-dimensional space $\R \g(S)$ is oriented towards the future, in the direction of $\g(S)$. To see that $\g(o)$ is well defined, let $\underline{B}, \underline{B'}$ be two ordered bases of $V/\R S$ representing $o$ (in fact each basis consists of one vector); we show that $\g(\underline{B}), \g(\underline{B'})$ represent the same orientation of $\g(V)/\R \g(S)$. Since $\underline{B}, \underline{B'}$ represent $o$ and consist of single vectors, then $\underline{B'} = m \underline{B}$ where $m$ is positive real, so $\g(\underline{B'}) = M \g (\underline{B})$. As $m > 0$ then $\g(\underline{B'})$ and $\g(\underline{B})$ represent the same orientation $\g(V)/\R \g(S)$. So $\g(o)$ is well defined, and indeed $\G$ is well defined.

The same arguments applied to the isomorphism $\g^{-1}$ show that $\G^{-1}$ is a well defined inverse to $\G$, so $\G$ is a bijection.
\end{proof}

\subsubsection{Describing flags}
\label{Sec:describing_flags}

Above we introduced notation $[[p,v]]$ for pointed null flags. We now extend this notation to (pointed oriented null) flags.

\begin{defn}
\label{Def:pv_notation_PONF}
Let $p \in L^+$ and $v \in T_p L^+$, such that $p,v$ are linearly independent. Then $[[p,v]]$ denotes $(p,V,o)\in\mathcal{F_P^O}(\R^{1,3})$, where $V$ is the span of $p$ and $v$, and $o$ is the orientation on $V/\R p$ represented by $v + \R p$.
\end{defn}
The definition works similarly in $\mathcal{F_P^O}(\HH)$: for $S \in \HH_0^+$ and $U \in T_S \HH_0^+$, such that $S,U$ are linearly independent, $[[S,U]]$ denotes $(S,V,o)\in\mathcal{F_P^O}(\HH)$ where $V$ is the span of $S$ and $U$, and $o$ is the orientation on $V/\R S$ given by $U + \R S$.

Intuitively, the orientations can be understood as follows. The 2-plane $V$ is spanned by $p$ and $v$; $p$ gives an orientation on the line $\R p$, which is towards the future in $\R^{1,3}$ since $p \in L^+$. Choosing an orientation on $V/\R p$ amounts to choosing one of the two sides of the line $\R p$ on the plane $V$; we choose the side to which $v$ points.

We have seen that flags in $\HH$ and $\R^{1,3}$ are related by the bijection $\G$, which has a simple description in this notation.
\begin{lem}
\label{Lem:G_in_pv_notation}
For $[[S,U]] \in \mathcal{F_P^O}(\HH)$, we have $\G [[S,U]] = [[\g(S), \g(U)]]$.
\end{lem}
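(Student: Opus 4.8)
The statement $\G[[S,U]] = [[\g(S),\g(U)]]$ is essentially a matter of unwinding definitions, so the proof will be short. The plan is to start from the left-hand side, expand $[[S,U]]$ into its underlying triple $(S,V,o)$ using \refdef{pv_notation_PONF}, apply the definition of $\G$ from \refdef{G}, and then recognise the resulting triple as $[[\g(S),\g(U)]]$, again via \refdef{pv_notation_PONF}.

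First I would recall that, by \refdef{pv_notation_PONF}, the symbol $[[S,U]]$ denotes the flag $(S, V, o) \in \mathcal{F_P^O}(\HH)$, where $V = \R S \oplus \R U$ is the span of $S$ and $U$, and $o$ is the orientation on $V/\R S$ represented by the single-element ordered basis $(U + \R S)$. Applying \refdef{G} gives $\G[[S,U]] = \G(S,V,o) = (\g(S), \g(V), \g(o))$. By \reflem{G_bijection} (or just linearity of $\g$), this is indeed an element of $\mathcal{F_P^O}(\R^{1,3})$: $\g(S) \in L^+$ by \reflem{det0_lightcone_correspondence}, and since $\g$ is a linear isomorphism carrying $S,U$ to $\g(S),\g(U)$, it carries the 2-plane $V = \Span(S,U)$ to $\g(V) = \Span(\g(S),\g(U))$, with $\g(U) \in T_{\g(S)} L^+$.

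Next I would identify $\g(o)$. By the construction of $\g(o)$ preceding \refdef{G}, $\g(o)$ is the orientation of $\g(V)/\R\g(S)$ represented by $\g(\underline{B})$, where $\underline{B}$ is an ordered basis of $V/\R S$ representing $o$; taking $\underline{B} = (U + \R S)$, the image under the induced isomorphism $\underline{\g} \colon V/\R S \To \g(V)/\R\g(S)$ is $\g(U) + \R\g(S)$. Hence $\g(o)$ is represented by $(\g(U) + \R\g(S))$. But this is precisely the orientation that \refdef{pv_notation_PONF} attaches to the symbol $[[\g(S),\g(U)]]$, whose underlying 2-plane is $\Span(\g(S),\g(U)) = \g(V)$. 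Therefore $(\g(S),\g(V),\g(o)) = [[\g(S),\g(U)]]$, which completes the proof.

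There is no real obstacle here; the only point requiring a word of care is that $[[S,U]]$ is only defined when $S$ and $U$ are linearly independent, and one must note that $\g$, being a linear isomorphism, preserves linear independence, so $\g(S)$ and $\g(U)$ are linearly independent and $[[\g(S),\g(U)]]$ is legitimately defined. All the well-definedness of $\g(o)$ was already checked in the proof of \reflem{G_bijection}, so it can simply be cited.
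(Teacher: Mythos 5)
Your proof is correct and follows essentially the same route as the paper's: unwind $[[S,U]]$ into $(S,V,o)$, apply \refdef{G}, observe that $\g(V)=\Span(\g(S),\g(U))$ and that $\g(o)$ is the orientation induced by $\g(U)+\R\g(S)$, and conclude. The extra remarks on linear independence and on citing \reflem{G_bijection} for well-definedness are harmless elaborations of the same argument.
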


\begin{proof}
Let $V$ be the 2-plane spanned by $S,U$ and $o$ the orientation on $V/\R S$ given by $U$, so $[[S,U]] = (S,V,o)$. Applying $\G$ to this flag, by \refdef{G}, yields $(\g(S),\g(V),\g(o))$. Now $\g(V)$ is the span of $\g(S)$ and $\g(U)$, and $\g(o)$ is the orientation on $\g(V)/\R \g(S)$ induced by $\g(U)$, so $(\g(S),\g(V),\g(o)) = [[\g(S),\g(U)]]$.
\end{proof}

Just as for pointed null flags, a given $(p,V,o)\in\mathcal{F_P^O}(\R^{1,3})$ can be described by many different $[[p,v]]$, and the following lemma, refining \reflem{characterise_equal_PNFs}, describes when they are equal.
\begin{lem}
\label{Lem:characterise_equal_PONFs}
Suppose $p,p' \in L^+$ and $v,v' \in \R^{1,3}$. The following are equivalent.  
\begin{enumerate}
\item $[[p,v]]$ and $[[p',v']]$ describe the same (pointed oriented null) flag.
\item $p=p'$, and $v,v'$ both lie in $T_p L^+$, and the sets
\[
\R p + \R^+ v = \left\{ ap+bv \mid a,b \in \R, b > 0 \right\}, \quad
\R p' + \R^+ v' = \left\{ ap'+b v' \mid a,b \in \R, b > 0 \right\}
\]
are equal 2-dimensional half-planes.
\item $p=p'$, and $v,v'$ both lie in $T_p L^+$, and $v,v'$  are not real multiples of $p$, and  there exist real numbers $a,b,c$ such that $ap+bv+cv'=0$, where $b,c$ are nonzero and have opposite sign.
\end{enumerate} 
\end{lem}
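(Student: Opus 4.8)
The plan is to follow the pattern of the proof of \reflem{characterise_equal_PNFs}, which already disposes of the underlying pointed null flag, and then to track separately the one extra piece of data carried by a pointed oriented null flag, namely the orientation of the $1$-dimensional quotient $V/\R p$. First I would unwind \refdef{pv_notation_PONF}: the description $[[p,v]]$ presupposes that $p$ and $v$ are linearly independent, and denotes $(p,V,o)$ where $V = \R p + \R v$ and $o$ is the orientation of $V/\R p$ represented by the coset $v + \R p$; likewise $[[p',v']] = (p',V',o')$. Since two elements of $\mathcal{F_P^O}(\R^{1,3})$ agree exactly when all three components agree, (i) is equivalent to the conjunction $p = p'$, $V = V'$, and $o = o'$.

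For (i) $\Leftrightarrow$ (ii): if $[[p,v]]$ and $[[p',v']]$ describe the same flag, then forgetting orientations they describe the same pointed null flag, so \reflem{characterise_equal_PNFs} gives $p = p'$, both $v,v' \in T_p L^+$, and the spans $\R p + \R v$, $\R p + \R v'$ are $2$-dimensional and equal; call this common plane $V$. As $V/\R p$ is $1$-dimensional, an orientation of it is simply a choice of one of its two rays, so $o = o'$ means exactly that $v + \R p$ and $v' + \R p$ lie on the same ray, i.e. $v' = \lambda v + \mu p$ for some $\lambda > 0$ and $\mu \in \R$. This says precisely that $v'$ lies in the open half-plane $\R p + \R^+ v$, hence that $\R p + \R^+ v' = \R p + \R^+ v$, and both are $2$-dimensional since $v,v'$ are not real multiples of $p$; this is (ii). Conversely, if (ii) holds then $v,v'$ both avoid $\R p$, the planes $\R p + \R v$ and $\R p + \R v'$ coincide (each is the linear span of the common half-plane), so the underlying pointed null flags agree by \reflem{characterise_equal_PNFs}; and $v' \in \R p + \R^+ v$ gives $v' = \lambda v + \mu p$ with $\lambda > 0$, which as above forces $o = o'$, so (i) holds.

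The equivalence (ii) $\Leftrightarrow$ (iii) is then elementary linear algebra inside the $2$-plane $V$. Given $p = p'$ and $v,v' \in T_p L^+$ not real multiples of $p$: if the half-planes are equal then $v' = \lambda v + \mu p$ with $\lambda > 0$, so $\mu p + \lambda v + (-1)v' = 0$ is a relation in which the coefficient $\lambda$ of $v$ and the coefficient $-1$ of $v'$ are nonzero and of opposite sign. Conversely, a relation $ap + bv + cv' = 0$ with $b,c$ nonzero and of opposite sign rearranges to $v' = -(a/c)p - (b/c)v$ with $-b/c > 0$, placing $v'$ in $\R p + \R^+ v$, so the two half-planes coincide.

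I don't expect a genuine obstacle here; it is essentially bookkeeping on top of \reflem{characterise_equal_PNFs}. The only points needing care are the translation of ``the $1$-dimensional quotient $V/\R p$ carries the orientation represented by the coset $v + \R p$'' into the positive-scalar condition $v' = \lambda v + \mu p$ with $\lambda > 0$, and keeping straight that once the underlying pointed null flags are known to agree, all of $p, v, v'$ lie in the single $2$-plane $V$, so that ``same side of the line $\R p$'' and ``equal open half-plane'' are interchangeable.
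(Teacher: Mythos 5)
Your proposal is correct and follows essentially the same route as the paper: invoke \reflem{characterise_equal_PNFs} to settle the underlying pointed null flag, then translate equality of orientations on the line $V/\R p$ into the condition $v' = \lambda v + \mu p$ with $\lambda > 0$, i.e.\ equality of the half-planes $\R p + \R^+ v$ and $\R p + \R^+ v'$, and finally rearrange that linear relation to get the opposite-sign condition of (iii). No gaps; the bookkeeping on the orientation is exactly what the paper does.
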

As usual, a similar statement applies to flags in $\HH$, replacing $\R^{1,3}$ with $\HH$, $p,p' \in L^+$ with $S,S' \in \HH_0^+$, $v,v' \in \R^{1,3}$ with $U,U' \in \HH$, and $T_p L^+$ with $T_S \HH_0^+$.

Note that when $v,v'$ are not real multiples of $p$, then an equation $ap+bv+cv'=0$ with $a,b,c$ not all zero must have $b$ and $c$ nonzero, and so can be rewritten as $v' = dv+ep$ or $v = d'v'+e'p$, expressing $v'$ in terms of the basis $\{v,p\}$, or $v$ in terms of the basis $\{v',p\}$ respectively.  Having $b$ and $c$ of opposite sign is then equivalent to $d$ and $d'$ being positive, since $d = -b/c$ and $d'=-c/b$. In other words, $v$ is a positive multiple of $v'$, modulo multiples of $p$; and equivalently, $v'$ is a positive multiple of $v$ modulo multiples of $p$.

\begin{proof}
First we show the equivalence of (i) and (ii). By \reflem{characterise_equal_PNFs}, $[[p,v]]$ and $[[p',v']]$ describe the same pointed null flag if and only if $p=p'$, $v,v'$ both lie in $T_p L^+$, and the real spans of $(p,v)$ and $(p',v')$ are 2-dimensional and equal; let this span be $V$. It remains to show that the orientations on $V/\R p$ given by $v+\R p$ and $v'+\R p$ are equal if and only if $\R p + \R^+ v = \R p + \R^+ v'$.

Now $V$ is divided into two half planes by the line $\R p$. They are respectively given by
\[
\R p + \R^+ v = \left\{ ap+bv \mid a,b \in \R, b > 0 \right\} \quad \text{and} \quad
\R p - \R^+ v = \left\{ ap-bv \mid a,b \in \R, b > 0 \right\}.
\]
These two half-planes map down to the 1-dimensional quotient space $V/\R p$ to give the two components of the complement of the origin: the first half-plane yields the positive real span of $v+\R p$; the second yields the negative real span of $v+\R p$. The first defines the co-orientation given by $v+\R p$. For $(p,v')$ we have a similar description of two half-planes $\R p + \R^+ v'$ and $\R p - \R^+ v'$, and we see that the half-plane $\R p + \R^+ v'$ yields the positive real span of $v'+ \R p$ in $V/\R p$, corresponding to the orientation given by $v' + \R p$. Thus, the two orientations are equal if and only if the two claimed sets are equal.

Now we show that (ii) is equivalent to (iii). We note that if the two sets in (ii) are equal, then $v' = ap+bv$ for some real $a,b$ with $b$ positive. Then $ap+bv-v'=0$ provides the equation required for (iii). Conversely, if $ap+bv+cv'=0$ with $b,c$ of opposite sign, then we may write $v'=dv+ep$ where $d$ is positive. Thus $v' \in \R p + \R^+ v$, so the half-plane $\R p + \R^+ v$ must coincide with the half-plane $\R p + \R^+ v'$.
\end{proof}

\subsubsection{The map from spin vectors to flags}
\label{Sec:map_F}

We now upgrade the map $\f$ to $\F$. Whereas $\f$ associates to a spinor $\kappa$ a matrix in $\HH_0^{0+}$, the map $\F$ associates to $\kappa$ a flag in $\HH$. The point in the pointed flag is just $\f(\kappa)$. As discussed at the beginning of \refsec{flags}, the 2-plane incorporates tangent data, using the derivative of $\f$ in a direction specified by the map $\ZZ$. We will see that the resulting construction is equivariant.

\begin{defn}
\label{Def:spinors_to_PNF}
The map $\F$ from nonzero spin vectors to (pointed oriented null) flags is given by
\[
\F \colon \C_\times^2 \To  \mathcal{F_P^O}(\HH), \quad
\F(\kappa) = [[ \f(\kappa), \; D_\kappa \f(\ZZ(\kappa)) ]].
\]
\end{defn} 
Using \refeqn{derivative_flag_dirn} we thus have, for $\kappa \in \C^2_\times$,
\begin{equation}
\label{Eqn:F_explicitly}
\F(\kappa) = [[ \f(\kappa), \; \kappa \kappa^T J + J \, \overline{\kappa} \kappa^* ]].
\end{equation}
Although $\F$ as stated could equally well map to less elaborate structures, for instance dropping the ``pointed or ``oriented" details, we need the full data of a pointed oriented null flag for our construction.

The domain of $\F$ is $\C_\times^2$ rather than $\C^2$, since $\f(0)=0$, which does not span a 1-dimensional subspace in $\HH$; moreover there is no well defined tangent space to $\HH_0^+$ or $\HH_0^{0+}$ there. For $\kappa \neq 0$ we have $0 \neq \f(\kappa) \in \HH_0^+$, so we obtain a well defined 1-dimensional subspace for our null flag. Although it is clear $D_\kappa \f(\ZZ(\kappa)) \in T_{\f(\kappa)} \HH_0^+$, it is perhaps not so clear that, with $\f(\kappa)$, it spans a 2-dimensional vector space. We verify this, and in fact prove something stronger, in \reflem{flag_well_defined} below.

We saw in \reflem{G_bijection}, that the linear isomorphism $\g \colon \HH \To \R^{1,3}$ induces a bijection $\G$ on flags; this immediately allows us to transport the flags on $\HH$, constructed by $\F$, over to Minkowski space. 

Before proving \reflem{flag_well_defined} to verify that $\F$ is well defined, we first prove a general observation in linear algebra about factorisation of spin vectors. Statements equivalent to this first lemma appear in Penrose and Rindler \cite{Penrose_Rindler84}, and probably elsewhere. 
Recall (\refsec{notation}) that $\M_{m \times n}(\mathbb{F})$ denotes $m \times n$ matrices with entries in $\mathbb{F}$, and $\M_{m \times n}(\mathbb{F})_\times$ denotes such matrices which are nonzero.
\begin{lem}
\label{Lem:spinor_factorisation}
Suppose $M,M'\in\mathcal{M}_{2\times 1}(\C)_\times$, and $N,N'\in\mathcal{M}_{1\times 2}(\C)_\times$. If $MN = M'N'$ then there exists $\mu\in\C_\times$ such that $M = \mu M'$ and $N = \mu^{-1} N'$.
\end{lem}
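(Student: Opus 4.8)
The plan is to exploit the fact that $MN$ and $M'N'$ are rank-one $2\times 2$ matrices, and that a rank-one matrix determines its column space and row space, which in turn pin down $M$ and $N$ up to scalars. First I would observe that since $M, N$ are both nonzero, the product $MN$ is a nonzero $2 \times 2$ matrix of rank exactly one: its column space is the line $\C M \subseteq \C^2$ and its row space is the line $\C N$. The hypothesis $MN = M'N'$ forces these to agree with the column and row spaces of $M'N'$, so $\C M = \C M'$ and $\C N = \C N'$. Hence there exist $\mu, \nu \in \C_\times$ with $M = \mu M'$ and $N = \nu N'$.

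Next I would substitute back: $MN = (\mu M')(\nu N') = \mu\nu\, M'N'$, and since $MN = M'N' \neq 0$ we get $\mu\nu = 1$, i.e.\ $\nu = \mu^{-1}$, which is exactly the claimed relation. To make the ``column space'' argument fully elementary and avoid any appeal to rank theory, one can argue by hand with coordinates: write $M' = (p, q)^T$ with, say, $p \neq 0$ (the case $q \neq 0$ is symmetric), and $N' = (r, s)$ with $r \neq 0$ say. Then the $(1,1)$ and $(2,1)$ entries of $MN = M'N'$ read $M_1 r' = p r$-type equations where $M = (M_1, M_2)^T$, $N = (r', s')$; comparing the first column of both sides gives $(M_1 r', M_2 r') = (pr, qr)$, and since $r \neq 0$ one solves for the ratio $M_1 : M_2 = p : q$, giving $M = \mu M'$ for a unique $\mu \in \C_\times$ (nonzero because $M \neq 0$). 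The analogous comparison of rows gives $N = \nu N'$, and then $\mu \nu = 1$ as before.

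I do not anticipate a serious obstacle here; the only point requiring a little care is the bookkeeping of which entries of $M'$ and $N'$ are nonzero, so that the divisions used to extract the scalars $\mu$ and $\nu$ are legitimate. Since $M' \in \mathcal{M}_{2\times 1}(\C)_\times$ has at least one nonzero entry and similarly for $N'$, splitting into the (at most four, and by symmetry essentially one) cases handles this cleanly. The cleanest writeup is probably the coordinate-free one: $MN$ rank one $\Rightarrow$ its image is $\C M = \C M'$ and its coimage is $\C N = \C N'$ $\Rightarrow$ $M = \mu M'$, $N = \mu^{-1} N'$ after rescaling; I would present that as the main argument and leave the coordinate verification implicit.
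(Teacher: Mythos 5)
Your proposal is correct and takes essentially the same route as the paper: both deduce that $M,M'$ and $N,N'$ are each proportional, and then conclude the two scalars are mutually inverse because $MN = M'N' \neq 0$. The only cosmetic difference is that the paper extracts the proportionality of $N$ and $N'$ by applying both sides to an explicit kernel vector $v = (-\delta, \gamma)^T$ with $Nv = 0$ (and symmetrically for $M,M'$), whereas you phrase it via the column and row spaces of the rank-one product; these are the same observation in different clothing.
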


\begin{proof}
Let
\[
M = \begin{pmatrix} \alpha \\ \beta \end{pmatrix}, \quad
M' = \begin{pmatrix} \alpha' \\ \beta' \end{pmatrix}, \quad
N= \begin{pmatrix} \gamma & \delta \end{pmatrix}, \quad
N' = \begin{pmatrix} \gamma' & \delta' \end{pmatrix}.
\quad \text{Also let} \quad
v = \begin{pmatrix} -\delta \\ \gamma \end{pmatrix}
\]
so that $Nv=0$. Then $M'N'v = MNv=0$, which can be written out as
\[
M'N' v = 
M' \begin{pmatrix} \gamma' & \delta' \end{pmatrix}
\begin{pmatrix} -\delta \\ \gamma \end{pmatrix}
= M' (-\gamma' \delta + \delta' \gamma)
=
\begin{pmatrix} 0 \\ 0 \end{pmatrix}.
\]
Since $M'$ is nonzero, we have $-\gamma' \delta + \delta' \gamma = 0$, so that $N$ and $N'$ are (complex) proportional. A similar argument shows that $M$ and $M'$ are (complex) proportional. Since $MN=M'N'$, these proportions are inverses. Thus $M = \mu M'$ and $N = \mu^{-1} N'$ for some complex $\mu$.
\end{proof}

\begin{lem}
\label{Lem:flag_well_defined}
For any $\kappa \neq 0$, the three Hermitian matrices
\[
\f(\kappa), \quad
D_\kappa \f(\ZZ(\kappa)), \quad
D_\kappa \f (i \ZZ(\kappa))
\]
are linearly independent over $\R$.
\end{lem}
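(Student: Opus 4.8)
The plan is to compute all three matrices explicitly in terms of $\kappa$ and then exhibit a linear combination argument, or equivalently to pass through the isomorphism $\g$ to Minkowski space where independence of three vectors can be checked by a $4 \times 3$ rank computation (or by producing a covector pairing nontrivially against one while annihilating the other two). Write $\kappa = (\xi, \eta)$ and recall $\ZZ(\kappa) = J \overline{\kappa}$, so that $\ZZ(\kappa) = (\overline{\eta} i, -\overline{\xi} i)$. Using \refeqn{derivative_formula}, $D_\kappa \f(\nu) = \kappa \nu^* + \nu \kappa^*$, we have three matrices: $\f(\kappa) = \kappa \kappa^*$; $D_\kappa \f(\ZZ(\kappa)) = \kappa \ZZ(\kappa)^* + \ZZ(\kappa) \kappa^*$; and $D_\kappa \f(i\ZZ(\kappa)) = \kappa (i\ZZ(\kappa))^* + (i\ZZ(\kappa))\kappa^* = -i \kappa \ZZ(\kappa)^* + i \ZZ(\kappa)\kappa^*$. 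The last two are the ``real'' and ``imaginary'' parts of $\kappa \ZZ(\kappa)^*$ up to sign: indeed $\kappa \ZZ(\kappa)^* = \tfrac12\bigl(D_\kappa\f(\ZZ(\kappa)) + i\, D_\kappa \f(i\ZZ(\kappa))\bigr)$ and $\ZZ(\kappa)\kappa^* = \tfrac12\bigl(D_\kappa\f(\ZZ(\kappa)) - i\, D_\kappa \f(i\ZZ(\kappa))\bigr)$. Hence the real span of the three matrices equals the real span of $\kappa\kappa^*$, $\kappa \ZZ(\kappa)^*$ and $\ZZ(\kappa)\kappa^*$ — or rather, a real linear dependence among the three $D$-matrices and $\f(\kappa)$ translates into one among $\{\kappa\kappa^*, \kappa\ZZ(\kappa)^*, \ZZ(\kappa)\kappa^*\}$ allowing complex coefficients on the latter two subject to a conjugation-symmetry constraint.

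The key step is then a rank-$3$ factorization argument using \reflem{spinor_factorisation}. By \reflem{Z_forms_basis}, the vectors $\kappa$ and $\ZZ(\kappa)$ form a complex basis of $\C^2$; write $M = \kappa$, $M' = \ZZ(\kappa)$ as column vectors, and $N = \kappa^*$, $N' = \ZZ(\kappa)^*$ as rows. Suppose a real linear dependence $a\,\kappa\kappa^* + b\,D_\kappa\f(\ZZ(\kappa)) + c\,D_\kappa\f(i\ZZ(\kappa)) = 0$ holds. Substituting the expressions above, this becomes
\[
a\, M N + (b - ic)\, M N' + (b + ic)\, M' N = 0,
\]
i.e. $M\bigl(aN + (b-ic)N'\bigr) = -(b+ic) M' N$. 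Since $M = \kappa$ and $M' = \ZZ(\kappa)$ are linearly independent over $\C$, the matrix $M' N$ cannot be a scalar multiple of $M(\cdots)$ unless both sides vanish; more carefully, apply a covector $w^T$ with $w^T M' = 0$ but $w^T M \neq 0$ (possible by independence) to get $w^T M\,(aN + (b-ic)N') = 0$, forcing $aN + (b-ic)N' = 0$. But $N = \kappa^*$ and $N' = \ZZ(\kappa)^*$ are conjugates of a complex basis, hence linearly independent over $\C$, so $a = 0$ and $b - ic = 0$; since $a,b,c$ are real this gives $b = c = 0$ as well. This establishes independence.

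I expect the main obstacle to be bookkeeping: correctly tracking the conjugations and the factors of $i$ when rewriting $D_\kappa\f(i\ZZ(\kappa))$ in terms of $\kappa\ZZ(\kappa)^*$ and $\ZZ(\kappa)\kappa^*$, and ensuring the reality constraints on the coefficients are handled consistently (a complex-coefficient dependence among $\{\kappa\kappa^*, \kappa\ZZ(\kappa)^*, \ZZ(\kappa)\kappa^*\}$ exists, e.g. the Hermitian-conjugate pairing, so one genuinely needs the real-coefficient hypothesis). An alternative, more computational route — writing all three as explicit $2\times 2$ matrices in $\xi, \eta$ via \refeqn{f_formula} and \refeqn{derivative_flag_dirn}, applying $\g$ to land in $\R^{1,3}$, and checking that the resulting $4 \times 3$ matrix has a nonvanishing $3 \times 3$ minor (a polynomial in $|\xi|^2, |\eta|^2$ that is a power of $|\xi|^2 + |\eta|^2 > 0$) — is longer but entirely mechanical and could serve as a cross-check. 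Either way, the nonvanishing hypothesis $\kappa \neq 0$ enters precisely through \reflem{Z_forms_basis}, which needs $\kappa \in \C^2_\times$.
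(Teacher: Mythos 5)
Your argument is correct, but it takes a genuinely different route from the paper's. Both proofs begin the same way, expanding a putative real dependence $a\,\f(\kappa)+b\,D_\kappa\f(\ZZ(\kappa))+c\,D_\kappa\f(i\ZZ(\kappa))=0$ into rank-one products via \refeqn{derivative_formula}; but the paper then groups the terms into a single identity of the form $\kappa\left(a\kappa^*+\overline{\beta}\kappa^T J\right)=(J\overline{\kappa})(-\beta\kappa^*)$ with $\beta=b+ci$, invokes the factorisation \reflem{spinor_factorisation} to conclude $\kappa=\mu J\overline{\kappa}$, and reaches a contradiction by an explicit coordinate computation ($\xi=-|\mu|^2\xi$). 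You instead write the dependence as $M\left(aN+(b-ic)N'\right)+(b+ic)M'N=0$ with $M=\kappa$, $M'=\ZZ(\kappa)$, $N=\kappa^*$, $N'=\ZZ(\kappa)^*$, kill the second term with a covector annihilating $\ZZ(\kappa)$ but not $\kappa$ (legitimate by \reflem{Z_forms_basis}), and finish using independence of the conjugate rows $\kappa^*,\ZZ(\kappa)^*$. So your key input is \reflem{Z_forms_basis} rather than \reflem{spinor_factorisation} (despite your framing, your argument never actually uses the factorisation lemma), and you avoid the coordinate endgame entirely; what the paper's route buys is that it never needs the duality/covector step, everything being a direct matrix manipulation plus one computation. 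One small inaccuracy in your closing aside: there is in fact no complex-linear dependence among $\kappa\kappa^*$, $\kappa\ZZ(\kappa)^*$, $\ZZ(\kappa)\kappa^*$ (the Hermitian-conjugate pairing is antilinear, not a linear relation), and indeed your own covector argument shows these three are independent over $\C$; the reality of $a,b,c$ is used only in the harmless final step $b-ic=0\Rightarrow b=c=0$. This does not affect the validity of the proof.
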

It follows that $D_\kappa \f(\ZZ(\kappa))$ is not a real multiple of $\f(\kappa)$, and hence $\F$ is well defined.

\begin{proof}
Applying \refeqn{derivative_flag_dirn}, we must show that for all $\kappa \neq 0$, the Hermitian matrices
\[
\kappa \kappa^*, \quad
\kappa \kappa^T J + J \overline{\kappa} \kappa^*, \quad
-i \left( \kappa \kappa^T J - J \overline{\kappa} \kappa^* \right)
\]
are linearly independent over $\R$. Suppose to the contrary that they are not: then we have
\[
a \kappa \kappa^* + b \left( \kappa \kappa^T J + J \overline{\kappa} \kappa^* \right) - ci \left(\kappa \kappa^T J - J \overline{\kappa} \kappa^* \right) = 0,
\]
for some real $a,b,c$, not all zero. We may rewrite this as 
\[
\kappa \left( a \kappa^* + b \kappa^T J - c i \kappa^T J \right) = \left( b J \overline{\kappa} + c i J \overline{\kappa} \right) \left( - \kappa^* \right).
\]
Let $\beta = b + ci$. Note $\beta = 0$ implies $a \kappa \kappa^* = 0$, a contradiction since $\kappa \in \C^2_\times$ and $a,b,c$ are not all zero; so $\beta \neq 0$. The equation can be written as
\[
\kappa \left( a \kappa^* + \overline{\beta} \kappa^T J \right) = \left( J \overline{\kappa} \right) \left( - \beta \kappa^* \right),
\]
where both sides are a product of a $2 \times 1$ and $1 \times 2$ complex matrix. On the right hand side, both factors are nonzero, hence the same must be true on the left hand side.

Applying \reflem{spinor_factorisation} we have $\kappa = \mu J \overline{\kappa}$ for some $\mu\neq0\in\C$. Letting $\kappa = (\xi, \eta)$ we thus have
\[
\begin{pmatrix} \xi \\ \eta \end{pmatrix}
= \mu
\begin{pmatrix} 0 & i \\ -i & 0 \end{pmatrix}
\begin{pmatrix} \overline{\xi} \\ \overline{\eta} \end{pmatrix}
= \mu \begin{pmatrix} \overline{\eta} \, i \\ - \overline{\xi} \, i \end{pmatrix},
\]
so that $\xi = \mu \overline{\eta} i$ and $\eta = -\mu \overline{\xi} i$, hence  $\overline{\eta} = \overline{\mu} \xi i$. But putting these together yields
\[
\xi 
= \mu \overline{\eta} i
= \mu (\overline{\mu} \xi i) i
= -|\mu|^2 \xi.
\]
Thus $\xi = 0$, which implies $\eta = 0$, contradicting $\kappa \neq 0$.
\end{proof}

After \reflem{flag_well_defined}, we can give quite a precise description of the derivative of $\f$. At a point $\kappa$, the derivative $D_\kappa \f$ is a real linear map between tangent spaces $T_\kappa \C^2 \To T_{\f(\kappa)} \HH$. As both $\C^2$ and $\HH$ are real vector spaces, we may identify these tangent spaces with $\C^2$ and $\HH$ respectively.
\begin{lem}
\label{Lem:structure_of_derivative_of_f}
For any $\kappa \in \C^2_\times$, the derivative
$D_\kappa \f$, considered as a real linear map $\C^2 \To \HH$, has the following properties.
\begin{enumerate}
\item The kernel of $D_\kappa \f$ is 1-dimensional, spanned by $i \kappa$.
\item $\kappa, \ZZ(\kappa), i \ZZ(\kappa) \in \C^2$ are linearly independent over $\R$, and their 3-dimensional span maps isomorphically onto the image of $D_\kappa \f$.
\end{enumerate}
\end{lem}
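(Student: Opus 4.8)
The plan is to reduce everything to a dimension count, using the explicit formula \refeqn{derivative_formula} for $D_\kappa \f$ together with \reflem{derivatives_of_f_in_easy_directions} and \reflem{flag_well_defined}.

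For part (i), I would first note from \reflem{derivatives_of_f_in_easy_directions} that $D_\kappa \f(i\kappa) = 0$; since $\kappa \neq 0$ we have $i\kappa \neq 0$, so $\R(i\kappa) \subseteq \ker D_\kappa\f$ and the kernel is at least one-dimensional. On the other side, \reflem{flag_well_defined} says that $\f(\kappa) = \tfrac12 D_\kappa\f(\kappa)$, $D_\kappa\f(\ZZ(\kappa))$ and $D_\kappa\f(i\ZZ(\kappa))$ are linearly independent over $\R$ and all lie in the image of $D_\kappa\f$, so that image has dimension at least $3$. Since $D_\kappa\f$ is a real linear map $\C^2 \cong \R^4 \To \HH \cong \R^4$, the rank--nullity theorem then forces $\ker D_\kappa\f$ to be exactly one-dimensional, hence equal to $\R(i\kappa)$, and the image to be exactly three-dimensional. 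A self-contained alternative to this kernel computation: if $\kappa\nu^* + \nu\kappa^* = 0$ with $\nu \neq 0$, then $\kappa\nu^* = (-\nu)\kappa^*$ exhibits one matrix as a product of nonzero $2 \times 1$ and $1 \times 2$ complex matrices in two ways, so \reflem{spinor_factorisation} gives $\kappa = -\mu\nu$ and $\nu^* = \mu^{-1}\kappa^*$ for some $\mu \in \C_\times$; comparing the two resulting expressions for $\nu$ forces $\mu$ to be purely imaginary, and then $\nu \in \R(i\kappa)$.

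For part (ii), I would first check that $\kappa, \ZZ(\kappa), i\ZZ(\kappa)$ are linearly independent over $\R$: a real dependence $a\kappa + b\ZZ(\kappa) + c\,i\ZZ(\kappa) = 0$ is the complex equation $a\kappa + (b+ci)\ZZ(\kappa) = 0$, and since $\kappa$ and $\ZZ(\kappa)$ form a $\C$-basis of $\C^2$ by \reflem{Z_forms_basis}(ii), this gives $a = 0$ and $b+ci = 0$, so $a = b = c = 0$. Write $W$ for their three-dimensional real span. To see that $D_\kappa\f|_W$ is injective, it suffices, by part (i), to verify that $i\kappa \notin W$: an expression $i\kappa = a\kappa + b\ZZ(\kappa) + c\,i\ZZ(\kappa)$ would yield $(a-i)\kappa + (b+ci)\ZZ(\kappa) = 0$ and hence $a - i = 0$, impossible for real $a$. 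So $W$ meets $\ker D_\kappa\f = \R(i\kappa)$ only at the origin, $D_\kappa\f|_W$ is injective, and $D_\kappa\f(W)$ is a three-dimensional subspace of the (three-dimensional, by part (i)) image of $D_\kappa\f$; hence $D_\kappa\f(W)$ equals the whole image and $D_\kappa\f|_W$ is the asserted isomorphism.

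I do not expect any real obstacle here: the two genuine inputs are \reflem{derivatives_of_f_in_easy_directions} (supplying one kernel vector) and \reflem{flag_well_defined} (supplying the rank), and the rest is linear-algebra bookkeeping. The only point that takes a moment of thought is recognising that the three-plane $W$ is \emph{complementary} to the kernel $\R(i\kappa)$ — this is exactly what lets the injective restriction $D_\kappa\f|_W$ be surjective onto the image purely for dimension reasons, rather than requiring a separate surjectivity computation.
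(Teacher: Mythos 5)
Your proof is correct and follows essentially the same route as the paper: both rest on \reflem{derivatives_of_f_in_easy_directions} for the kernel vector $i\kappa$, \reflem{flag_well_defined} for the rank, and the complex basis $\{\kappa,\ZZ(\kappa)\}$ from \reflem{Z_forms_basis}, with the paper tracking the real basis $\{\kappa, i\kappa, \ZZ(\kappa), i\ZZ(\kappa)\}$ directly where you invoke rank--nullity and a complementarity check. Your alternative kernel computation via \reflem{spinor_factorisation} is a valid self-contained extra, but not needed.
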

We will see later in \reflem{orthonormal_basis_from_spinor} some nice properties of the three vectors in (ii) and their images.

\begin{proof}
By \reflem{Z_forms_basis}, $\{ \kappa, \ZZ(\kappa)\}$ is a complex basis for $\C^2$, hence $\{ \kappa, i \kappa, \ZZ(\kappa), i \ZZ(\kappa) \}$ is a real basis for $\C^2$. We consider the effect of $D_\kappa \f$ on this basis.

We saw in \reflem{derivatives_of_f_in_easy_directions} that $i \kappa \in \ker D_\kappa \f$, so the kernel of $D_\kappa \f$ has dimension $\geq 1$ and the image of $D_\kappa \f$ has dimension $\leq 3$.

Since $D_\kappa \f (\kappa) = 2 \f(\kappa)$ (\reflem{derivatives_of_f_in_easy_directions}), \reflem{flag_well_defined} tells us that the images of $\kappa, \ZZ(\kappa), i \ZZ(\kappa)$ under $D_\kappa \f$ are linearly independent. So the image of $D_\kappa \f$ has dimension exactly $3$, spanned by the image of these 3 vectors, and the kernel has dimension has exactly $1$, spanned by $i \kappa$.
\end{proof}

Combining \refdef{spinors_to_PNF}, equation \refeqn{F_explicitly} and \reflem{G_in_pv_notation}, we immediately obtain the following description of $\G \circ \F \colon \C_\times^2 \To \mathcal{F_P^O}(\R^{1,3})$. This shows how to associate a flag in Minkowski space to a spin vector.
\begin{lem}
\label{Lem:GoF_in_pv_form}
\[
\G \circ \F (\kappa) = [[ \g \circ \f (\kappa), \g \left( D_\kappa \f (\ZZ(\kappa)) \right) ]]
= [[ \g \left( \kappa \kappa^* \right) , \g \left( \kappa \kappa^T J + J \overline{\kappa} \kappa^* \right) ]].
\]
\qed
\end{lem}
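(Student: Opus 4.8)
The plan is to derive the identity by unwinding definitions and chaining together results already established, since no new geometric content is required. First I would recall from \refdef{spinors_to_PNF} that $\F(\kappa) = [[\f(\kappa),\, D_\kappa \f(\ZZ(\kappa))]]$, and note that this is a genuine element of $\mathcal{F_P^O}(\HH)$: for $\kappa \neq 0$ we have $\f(\kappa) \in \HH_0^+$ by \reflem{f_surjectivity}(iii), the vector $D_\kappa \f(\ZZ(\kappa))$ lies in $T_{\f(\kappa)}\HH_0^+$ since $\f$ maps into $\HH_0^{0+}$, and $\f(\kappa)$ together with $D_\kappa \f(\ZZ(\kappa))$ are linearly independent over $\R$ by \reflem{flag_well_defined}. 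Using $\f(\kappa) = \kappa\kappa^*$ (\refdef{f}) and the computation \refeqn{derivative_flag_dirn}, equivalently \refeqn{F_explicitly}, this reads $\F(\kappa) = [[\kappa\kappa^*,\, \kappa\kappa^T J + J\overline{\kappa}\kappa^*]]$.

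Next I would apply $\G$ and invoke \reflem{G_in_pv_notation}, which says $\G[[S,U]] = [[\g(S),\g(U)]]$ for any flag $[[S,U]] \in \mathcal{F_P^O}(\HH)$. Taking $S = \f(\kappa)$ and $U = D_\kappa \f(\ZZ(\kappa))$ gives
\[
\G \circ \F(\kappa) = \G[[\f(\kappa),\, D_\kappa \f(\ZZ(\kappa))]] = [[\g\circ\f(\kappa),\, \g(D_\kappa \f(\ZZ(\kappa)))]],
\]
which is the first asserted equality. Substituting $\f(\kappa) = \kappa\kappa^*$ and $D_\kappa \f(\ZZ(\kappa)) = \kappa\kappa^T J + J\overline{\kappa}\kappa^*$ then yields the second expression $[[\g(\kappa\kappa^*),\, \g(\kappa\kappa^T J + J\overline{\kappa}\kappa^*)]]$, completing the argument.

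I do not expect any real obstacle here. The only ingredient beyond bare substitution is \reflem{flag_well_defined}, which guarantees that $[[\f(\kappa),\, D_\kappa \f(\ZZ(\kappa))]]$ is an honest flag so that \reflem{G_in_pv_notation} is applicable; everything else is a matter of notation. Accordingly I anticipate the final write-up to be a single short paragraph citing \refdef{spinors_to_PNF}, \refeqn{F_explicitly}, \reflem{flag_well_defined}, and \reflem{G_in_pv_notation}.
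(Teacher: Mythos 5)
Your proposal is correct and matches the paper's own treatment: the paper states this lemma as an immediate consequence of \refdef{spinors_to_PNF}, equation \refeqn{F_explicitly}, and \reflem{G_in_pv_notation}, exactly the chain of substitutions you give. Your extra remark that \reflem{flag_well_defined} guarantees the expression is a bona fide flag is also how the paper justifies that $\F$ is well defined, so nothing is missing.
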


\subsubsection{$SL(2,\C)$ action on flags in $\HH$}
\label{Sec:SL2c_action_on_flags_HH}

We now explain how $SL(2,\C)$ acts on flags in $\HH$. In \refsec{equivariance_of_F} we consider equivariance of $\F$ with respect to this action.

We have considered flags both in $\HH$ and $\R^{1,3}$, but the isomorphism $\G$ shows that it is equivalent to consider either space of flags. Although $\R^{1,3}$ is perhaps easier to understand geometrically, it is more straightforward algebraically to consider the action on flags in $\HH$, and so we will consider $\HH$ first. From \refsec{flags_Minkowski_space} onwards we will consider $\R^{1,3}$.

To define the action of $SL(2,\C)$ on the space of flags $\mathcal{F_P^O}(\HH)$, we need to consider its actions on subspaces of $\HH$, their quotient spaces, and their orientations. We start with subspaces, extending the action on $\HH$ from \refdef{standard_SL2C_actions}.
\begin{defn}
\label{Def:matrix_on_Hermitian_subspace}
Let $V$ be a real vector subspace of $\HH$, and $A \in SL(2,\C$). Then the action of $A$ on $V$ is given by
\[
A\cdot V = \left\{ A\cdot S \mid S \in V \right\}
= \left\{ ASA^* \mid S \in V \right\} = AVA^*.
\]
\end{defn}
The same calculation as for $\HH$ \refeqn{group_action_on_Hermitian} shows that, for $A,A' \in SL(2,\C)$, we have $(AA') \cdot V = A \cdot (A' \cdot V)$, so we indeed have an action of $SL(2,\C)$ on the set of subspaces of $\HH$.

In fact, as we now see, this action is by linear isomorphisms. 
\begin{lem}
Let $V$ be a real $k$-dimensional subspace of $\HH$ and $A \in SL(2,\C)$.
\label{Lem:SL2C_action_preserves_dimension}
\begin{enumerate}
\item
The map $V \To A \cdot V$ defined by $S \mapsto A \cdot S$ for $S \in V$ is a linear isomorphism. In particular, $A\cdot V$ is also a $k$-dimensional subspace of $\HH$.
\item 
\refdef{matrix_on_Hermitian_subspace}
defines an action of $SL(2,\C)$ on the set of real $k$-dimensional subspaces of $\HH$.
\end{enumerate}
\end{lem}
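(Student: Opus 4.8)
The plan is to reduce everything to the single observation that, for a fixed $A \in SL(2,\C)$, the map $\phi_A \colon \HH \To \HH$, $S \mapsto A \cdot S = ASA^*$, is a \emph{linear automorphism} of $\HH$; once that is in hand, both parts follow by elementary linear algebra and the general nonsense about group actions.

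First I would record that $\phi_A$ is real-linear: this is exactly \refeqn{linear_action_on_Hermitian}. Next I would show $\phi_A$ is invertible. Since $A \in SL(2,\C)$ we have $A^{-1} \in SL(2,\C)$, and the group-action identity \refeqn{group_action_on_Hermitian} gives $\phi_{A^{-1}} \circ \phi_A = \phi_{A^{-1}A} = \phi_{\Id} = \Id_\HH$ and likewise $\phi_A \circ \phi_{A^{-1}} = \Id_\HH$. Hence $\phi_A$ is a real-linear automorphism of $\HH$ with inverse $\phi_{A^{-1}}$.

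For part (i), I would then simply invoke that the restriction of a linear automorphism to a subspace $V \subseteq \HH$ is a linear isomorphism onto its image, and that image is precisely $A \cdot V = \phi_A(V)$ by \refdef{matrix_on_Hermitian_subspace}. Therefore $S \mapsto A \cdot S$ is a linear isomorphism $V \To A \cdot V$, and in particular $A \cdot V$ is a subspace of $\HH$ with $\dim(A \cdot V) = \dim V = k$. For part (ii), I would note that the relations $(AA') \cdot V = A \cdot (A' \cdot V)$ (already established immediately after \refdef{matrix_on_Hermitian_subspace}, or directly from \refeqn{group_action_on_Hermitian} applied elementwise) and $\Id \cdot V = V$ make $SL(2,\C)$ act on the set of all real subspaces of $\HH$; by part (i) this action carries $k$-dimensional subspaces to $k$-dimensional subspaces, so it restricts to an action on the set of real $k$-dimensional subspaces, as claimed.

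There is no genuine obstacle here; the only point that really uses the hypotheses (rather than being pure bookkeeping) is the invertibility of $\phi_A$, which is where $A \in SL(2,\C)$ — in particular $A$ being invertible — enters, and which is what upgrades the linear map $\phi_A$ to a linear \emph{isomorphism} and hence guarantees that dimension is preserved.
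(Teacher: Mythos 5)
Your proof is correct and takes essentially the same route as the paper: both establish that $S \mapsto ASA^*$ is real-linear (via \refeqn{linear_action_on_Hermitian}) and invertible with inverse given by the action of $A^{-1}$, so that restricting to $V$ gives a linear isomorphism onto $A \cdot V$ and preserves dimension. Your treatment of part (ii) via the identity $(AA') \cdot V = A \cdot (A' \cdot V)$ is exactly the remark the paper makes immediately after \refdef{matrix_on_Hermitian_subspace}.
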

The set of $k$-dimensional subspaces of $\HH$ forms the \emph{Grassmannian} $\Gr(k,\HH)$, so the above lemma says that $SL(2,\C)$ acts on $\Gr(k,\HH)$ by linear isomorphisms. 

\begin{proof}
The map $V \To A \cdot V$ is given by the action of $A$ on individual elements $S$ of $\HH$, i.e. $S \mapsto A \cdot S = A S A^*$. This is a real linear map, as shown explicitly in \refeqn{linear_action_on_Hermitian}. It is also invertible, with inverse given by the action of $A^{-1}$. Thus $V$ and $A \cdot V$ must have the same dimension.
\end{proof}

Next we consider the action of $SL(2,\C)$ on quotients of subspaces of $\HH$, and their bases.

For the rest of this subsection, $V \subset W$ are real subspaces of $\HH$, and $A \in SL(2,\C)$.
\begin{lem} \
\label{Lem:SL2C_action_subspaces_facts}
\begin{enumerate}
\item
$A \cdot V \subset A \cdot W$, so the quotient $(A \cdot W) / (A \cdot V)$ is well defined.
\item
Let $\underline{S} = S + V \in W/V$, i.e. $S \in W$ represents $\underline{S}$.
Then $A \underline{S} A^*$ is a well-defined element of $(A\cdot W)/(A\cdot V)$, represented by $A\cdot S = A S A^* \in A\cdot W$. 
\item
The map $W/V \To (A \cdot W) / (A \cdot V)$ defined by $\underline{S} \mapsto A \underline{S} A^*$ is a linear isomorphism. 
\item
\label{Lem:action_on_ordered_bases}
If  $\underline{S}_1, \ldots, \underline{S}_k$ is a basis of of $W/V$, then $A \underline{S}_1 A^*, \ldots, A \underline{S}_k A^*$ is a basis of $(A\cdot W)/(A\cdot V)$.
\end{enumerate}
\end{lem}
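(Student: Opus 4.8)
The plan is to reduce all four parts to the single structural fact, already available from \refeqn{linear_action_on_Hermitian} and \reflem{SL2C_action_preserves_dimension}, that for a fixed $A\in SL(2,\C)$ the map $\HH\To\HH$ given by $S\mapsto A\cdot S=ASA^*$ is a real linear automorphism of $\HH$, whose inverse is $S\mapsto A^{-1}\cdot S$ by the group-action identity \refeqn{group_action_on_Hermitian}. Everything else is bookkeeping with this automorphism and with quotient spaces.

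For (i) I would simply note that if $S\in V$ then $S\in W$, so $A\cdot S\in A\cdot W$; since $A\cdot V=\{A\cdot S\mid S\in V\}$ this gives $A\cdot V\subseteq A\cdot W$, and both are linear subspaces by \reflem{SL2C_action_preserves_dimension}, so the quotient $(A\cdot W)/(A\cdot V)$ is defined. For (ii), the element $A\cdot S=ASA^*$ lies in $A\cdot W$ because $S\in W$; and the class is independent of the representative because if $S,S'$ both represent $\underline S$, i.e. $S-S'\in V$, then by real linearity of the action $ASA^*-AS'A^*=A(S-S')A^*\in A\cdot V$, so $ASA^*$ and $AS'A^*$ determine the same coset. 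For (iii), the assignment $\underline S\mapsto A\underline S A^*$ is precisely the map on quotients induced by the linear automorphism $S\mapsto ASA^*$ of $\HH$, which carries $W$ onto $A\cdot W$ and $V$ onto $A\cdot V$; an automorphism respecting a pair of subspaces induces a linear isomorphism on the quotients, with inverse induced by $S\mapsto A^{-1}\cdot S$ (the composite being the identity on $W/V$ since $(A^{-1}A)\cdot S=S$). Finally (iv) is immediate: a linear isomorphism sends any basis to a basis, so applying the isomorphism of (iii) to the basis $\underline S_1,\dots,\underline S_k$ of $W/V$ produces the basis $A\underline S_1A^*,\dots,A\underline S_kA^*$ of $(A\cdot W)/(A\cdot V)$.

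I do not expect any genuine obstacle in this lemma; it is entirely routine linear algebra. The only point deserving a moment's care is the well-definedness assertion in (ii), and even that is a one-line consequence of the real linearity of $S\mapsto ASA^*$ recorded in \refeqn{linear_action_on_Hermitian}. Accordingly I would keep the write-up short, citing \reflem{SL2C_action_preserves_dimension} for linearity and invertibility and \refeqn{group_action_on_Hermitian} for the inverse, rather than re-deriving these facts.
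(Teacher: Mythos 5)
Your proposal is correct and follows essentially the same route as the paper's proof: part (i) by unwinding the definition, part (ii) by real linearity applied to $S-S'\in V$, part (iii) by noting the induced map on quotients is linear with inverse given by the action of $A^{-1}$, and part (iv) as an immediate consequence. No gaps; the level of detail and the citations you propose match the paper's treatment.
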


In (ii) above, we think of $A \underline{S} A^*$ as the action of $A$ on $\underline{S} \in W/V$, and define $A \cdot \underline{S} = A \underline{S} A^* \in (A \cdot W)/(A \cdot V)$. If $A,A' \in SL(2,\C)$ then for $\underline{S}$ an element of $W/V$, we have a similar calculation as \refeqn{group_action_on_Hermitian} 
\begin{equation}
\label{Eqn:group_action_on_quotient}
(AA') \cdot \underline{S} = (AA') \underline{S} (AA')^* = A A' \underline{S} A'^* A^* = A \cdot (A' \underline{S} A'^*) = A \cdot (A' \cdot \underline{S}),
\end{equation}
showing that we have a group action of $SL(2,\C)$ on quotients of subspaces of $\HH$.

\begin{proof} \
\begin{enumerate}
\item
An element of $A \cdot V$ can be written as $A \cdot S$ for some $S \in V$; as $V \subset W$ then $S \in W$, so $A \cdot S \in A \cdot W$. Thus $A \cdot V \subset A \cdot W$.
\item
If $S' \in [S]$ is another representative of $\underline{S}$, then $S-S'  \in V$, so  $A\cdot S - A\cdot S' = A\cdot (S - S') \in A\cdot V$.
\item
The same calculation as in \refeqn{linear_action_on_Hermitian}
shows that $\underline{S} \mapsto A \underline{S} A^*$ is linear in $\underline{S}$. And as in \reflem{SL2C_action_preserves_dimension}, this linear map is invertible, with inverse given by the action of $A^{-1}$.
\item
Immediate from the previous part, since a linear isomorphism sends a basis to a basis.
\end{enumerate}
\end{proof}

In (iv) above, we think of the basis $A \underline{S}_i A^*$ as the action of $A$ on the basis $\underline{S}_i$. Writing  $\underline{B} = (\underline{S}_1, \ldots, \underline{S}_k)$ for the ordered basis, we define $A \cdot \underline{B} = (A \cdot \underline{S}_1, \ldots, A \cdot \underline{S}_k)$.

For $A,A' \in SL(2,\C)$ and $\underline{B}$ an ordered basis, we then have $(AA') \cdot \underline{B} = A \cdot (A' \cdot \underline{B})$, by a similar calculation as \refeqn{group_action_on_quotient}.
Thus, we have a group action of $SL(2,\C)$ on ordered bases of quotients of subspaces of $\HH$.

Next, consider \emph{two} ordered bases $\underline{B} = (\underline{S}_1, \ldots, \underline{S}_k)$ and $\underline{B}' = (\underline{S}'_1, \ldots, \underline{S}'_k)$, and their orientations. By \reflem{SL2C_action_subspaces_facts}(iv) then $A \cdot \underline{B}$ and $A \cdot \underline{B}'$ are ordered bases of $(A \cdot W)/(A \cdot V)$.
\begin{lem}
\label{Lem:change_of_basis_matrix_after_action}
\label{Lem:action_on_coorientation}
Let $\underline{B}, \underline{B}'$ be two ordered bases of $W/V$ as above.
\begin{enumerate}
\item 
Let $M$ be the linear map of $W/V$ taking the ordered basis $\underline{B}$ to $\underline{B}'$, and $N$ the linear map of $(A \cdot W)/(A \cdot V)$ taking the ordered basis $A \cdot \underline{B}$ to $A \cdot \underline{B}'$. Then $\det M= \det N$.
\item 
If $\underline{B}$ and $\underline{B}'$ are ordered bases of $W/V$ representing the same orientation, then $A\cdot \underline{B}$ and $A\cdot \underline{B}'$ represent the same orientation of $(A\cdot W)/(A\cdot V)$.
\end{enumerate}
\end{lem}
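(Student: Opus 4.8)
The plan is to deduce both parts from the fact, already established in \reflem{SL2C_action_subspaces_facts}(iii), that the action of $A$ furnishes a linear isomorphism $\Psi \colon W/V \To (A\cdot W)/(A\cdot V)$, namely $\Psi(\underline{S}) = A \cdot \underline{S} = A \underline{S} A^*$. The key observation is that passing from an ordered basis $\underline{B}$ to $A\cdot\underline{B}$ is just applying $\Psi$ coordinatewise, so the two change-of-basis maps $M$ and $N$ are \emph{conjugate} via $\Psi$; once that is in place, both statements are immediate.

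For part (i), I would argue as follows. By definition $M \colon W/V \To W/V$ is the linear map with $M(\underline{S}_j) = \underline{S}'_j$ for each $j$, where $\underline{B} = (\underline{S}_1,\dots,\underline{S}_k)$ and $\underline{B}' = (\underline{S}'_1,\dots,\underline{S}'_k)$; since the determinant of an endomorphism does not depend on the chosen basis, $\det M$ is well defined, and likewise $\det N$. Now $A\cdot\underline{B} = (\Psi\underline{S}_1,\dots,\Psi\underline{S}_k)$ and $A\cdot\underline{B}' = (\Psi\underline{S}'_1,\dots,\Psi\underline{S}'_k)$ are ordered bases of $(A\cdot W)/(A\cdot V)$ by \reflem{SL2C_action_subspaces_facts}(iv), and $N$ is characterised by $N(\Psi\underline{S}_j) = \Psi\underline{S}'_j = \Psi\bigl(M\underline{S}_j\bigr)$ for each $j$. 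Since the $\Psi\underline{S}_j$ span, this forces $N\circ\Psi = \Psi\circ M$, i.e.\ $N = \Psi M \Psi^{-1}$. Conjugate endomorphisms have equal determinant, so $\det N = \det M$.

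For part (ii), I would recall that two ordered bases of a real vector space represent the same orientation precisely when the linear map carrying the first to the second has positive determinant. Thus $\underline{B},\underline{B}'$ representing the same orientation of $W/V$ means $\det M > 0$; by part (i) then $\det N > 0$, which says exactly that $A\cdot\underline{B}$ and $A\cdot\underline{B}'$ represent the same orientation of $(A\cdot W)/(A\cdot V)$.

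There is essentially no serious obstacle here; the only point requiring care is bookkeeping --- keeping straight the distinction between an ordered basis, the endomorphism carrying one ordered basis to another, and the isomorphism $\Psi$ induced by the group action --- and noticing that the relation between $M$ and $N$ is honest conjugation $N = \Psi M\Psi^{-1}$, not merely similarity of coordinate matrices. Once that identity is recorded, both parts follow in a line.
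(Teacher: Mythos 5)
Your proposal is correct and follows essentially the same route as the paper: both invoke the isomorphism $\underline{S} \mapsto A\cdot\underline{S}$ from \reflem{SL2C_action_subspaces_facts}(iii), observe that it intertwines $M$ and $N$ (your $N = \Psi M \Psi^{-1}$ is the paper's $T_A M = N T_A$), and conclude $\det M = \det N$, with part (ii) then following from positivity of the determinant.
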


\begin{proof}
By \reflem{SL2C_action_subspaces_facts}(iii), the map $T_A \colon W/V \To (A \cdot W)/(A \cdot V)$ given by $\underline{S} \mapsto A \cdot \underline{S}$ is a linear isomorphism, and by definition it sends the ordered basis $\underline{B}$ to $A \cdot \underline{B}$ and $\underline{B}'$ to $A \cdot \underline{B}'$. Thus $T_A M = N T_A$, and the matrix of $M$ with respect to $\underline{B}$ (or $\underline{B}'$) is equal to the matrix of $N$ with respect to $A \cdot \underline{B}$ (or $A \cdot \underline{B}'$). Thus $\det M = \det N$.

If $\underline{B}, \underline{B}'$ represent the same orientation, then $\det M > 0$, so $\det N = \det M > 0$. Thus $A \cdot \underline{B}$ and $A \cdot \underline{B}'$ represent the same orientation.
\end{proof}

Recall from \refdef{pointed_oriented_null_flag} that the orientations in flags are orientations on quotients of subspaces. For an orientation $o$ on $W/V$ then we can define $A \cdot o$ to be the orientation on $(A \cdot W)/(A \cdot V)$ represented by $A \cdot \underline{B}$, where $\underline{B}$ is any ordered basis of $W/V$ representing $o$. By the above lemma, $A \cdot o$ is well defined.

For $A,A' \in SL(2,\C)$, we observe that $(AA')\cdot o = A\cdot (A' \cdot o)$. Indeed, taking a basis $\underline{B}$ representing $o$, we saw that $(AA') \cdot \underline{B} = A \cdot (A' \cdot \underline{B})$, which are bases representing the orientations $(AA') \cdot o$ and $A \cdot (A' \cdot o)$ respectively. Thus we have a group action of $SL(2,\C)$ on orientations of quotients of subspaces of $\HH$.

We can now define an action of $SL(2,\C)$ on flags in $\HH$.
\begin{defn}
\label{Def:matrix_on_PONF}
Consider $(S,V,o)\in\mathcal{F_P^O}(\HH)$ and let $A \in SL(2,\C)$. Define $A$ to act on $(S,V,o)$ by
\[
A\cdot (S,V,o) = (A\cdot S, A\cdot V, A\cdot o).
\]
\end{defn}

\begin{lem}
\label{Lem:SL2C_act_on_PONF_H}
\refdef{matrix_on_PONF} defines an action of $SL(2,\C)$ on $\mathcal{F_P^O}(\HH)$.
\end{lem}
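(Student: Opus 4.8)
The plan is to verify the three defining properties of a group action: that the data $A\cdot(S,V,o)$ produced by \refdef{matrix_on_PONF} genuinely lies in $\mathcal{F_P^O}(\HH)$, that the identity matrix acts trivially, and that $(AA')\cdot(S,V,o) = A\cdot(A'\cdot(S,V,o))$. Almost all of the ingredients have been assembled in the preceding subsection, so the proof is mainly a matter of stitching them together.

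First I would check well-definedness, i.e.\ that $(A\cdot S, A\cdot V, A\cdot o)\in\mathcal{F_P^O}(\HH)$. Writing $(S,V,o) = [[S,U]]$ with $U\in T_S\HH_0^+$, so that $V = \R S\oplus\R U$: by \reflem{SL2C_preerves_Hs} we have $A\cdot S\in\HH_0^+$, so $\R(A\cdot S)$ is a line spanned by a point of $\HH_0^+$, oriented towards $A\cdot S$. Since the action of $A$ on $\HH$ is a linear isomorphism (\refeqn{linear_action_on_Hermitian}) which by \reflem{SL2C_preerves_Hs} restricts to a diffeomorphism of $\HH_0^+$ onto itself, and since a linear self-map of $\HH$ equals its own derivative, the map $A\cdot$ carries $T_S\HH_0^+$ isomorphically onto $T_{A\cdot S}\HH_0^+$; hence $A\cdot U\in T_{A\cdot S}\HH_0^+$. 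By \reflem{SL2C_action_preserves_dimension}, $A\cdot V$ is a $2$-dimensional subspace of $\HH$, spanned by $A\cdot S$ and $A\cdot U$, so $(A\cdot S, A\cdot V)\in\mathcal{F_P}(\HH)$, indeed $[[A\cdot S, A\cdot U]]$. Finally $A\cdot o$ is a well-defined orientation of $(A\cdot V)/\R(A\cdot S)$ by the discussion preceding \refdef{matrix_on_PONF}, which rests on \reflem{change_of_basis_matrix_after_action}. Thus $A\cdot(S,V,o)\in\mathcal{F_P^O}(\HH)$.

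For the identity $I\in SL(2,\C)$ one has $I\cdot S = ISI^* = S$, $I\cdot V = IVI^* = V$, and $I\cdot\underline{B} = \underline{B}$ for any ordered basis $\underline{B}$ representing $o$, so $I\cdot o = o$; hence $I$ acts trivially. For compatibility, the three componentwise identities $(AA')\cdot S = A\cdot(A'\cdot S)$, $(AA')\cdot V = A\cdot(A'\cdot V)$ and $(AA')\cdot o = A\cdot(A'\cdot o)$ were already established in, respectively, \refeqn{group_action_on_Hermitian}, the remark following \refdef{matrix_on_Hermitian_subspace}, and the discussion following \reflem{change_of_basis_matrix_after_action}; applying \refdef{matrix_on_PONF} then yields $(AA')\cdot(S,V,o) = A\cdot(A'\cdot(S,V,o))$. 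The only step that is not a verbatim citation of an earlier result is the tangency observation in the first paragraph --- that $A\cdot V$ remains tangent to $\HH_0^+$ at the new basepoint $A\cdot S$ --- and even that reduces immediately to the fact that a linear endomorphism equals its own derivative, so I expect no genuine obstacle here, only bookkeeping.
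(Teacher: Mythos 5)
Your proposal is correct and follows essentially the same route as the paper's proof: well-definedness via \reflem{SL2C_preerves_Hs}, \reflem{SL2C_action_preserves_dimension}, the observation that a linear map is its own derivative (so tangency to $\HH_0^+$ is preserved), and \reflem{action_on_coorientation}, followed by the compatibility $(AA')\cdot(S,V,o) = A\cdot(A'\cdot(S,V,o))$ deduced from the componentwise group actions already established. The only difference is your explicit (and harmless) verification that the identity matrix acts trivially, which the paper leaves implicit.
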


\begin{proof}
First we check that $(A\cdot S, A\cdot V, A \cdot o)$ is indeed a pointed oriented null flag. We know that $SL(2,\C)$ acts on $\HH_0^+$ (\reflem{SL2C_preerves_Hs}), so $A \cdot S \in \HH_0^+$. As the $SL(2,\C)$ action preserves 2-dimensional subspaces (\reflem{SL2C_action_preserves_dimension}), $A \cdot V$ is 2-dimensional. We also observe that $\R S \subset V$ implies $\R(A\cdot S) = \R(ASA^*) = A(\R S)A^* \subset AVA^* = A \cdot V$. 

As $(S,V) \in \mathcal{F_P}(\HH)$, by definition there exists $v \in T_S \HH_0^+$ such that $S$ and $v$ span $V$. Since the action of $A$ on subspaces is by linear isomorphisms (\reflem{SL2C_action_preserves_dimension}), then $A\cdot S$ and $A\cdot v$ span $A\cdot V$, and moreover, since $\HH_0^+$ lies in the vector space $\HH$, on which the action of $A$ is linear, we have $A\cdot v \in T_{A\cdot S} \HH_0^+$. Thus $\R(A\cdot S) \subset A\cdot V$ is a null flag and $(A\cdot S,A\cdot V) \in \mathcal{F_P}(\HH)$.

By \reflem{action_on_coorientation} and subsequent remarks, $A\cdot o$ is an orientation on $(A \cdot V) / (A\cdot \R S)$. Thus $(A \cdot S, A \cdot V, A \cdot o)$ is a pointed oriented null flag.

The actions of $SL(2,\C)$ on $\HH$, subspaces of $\HH$, and orientations are all group actions, by \refdef{SL2C_actions_on_C2_H}, \refdef{matrix_on_Hermitian_subspace}, and \reflem{action_on_coorientation} (and subsequent comments) respectively. So for $A,A' \in SL(2,\C)$ we have $(AA')\cdot (S,V,o) = A\cdot (A' \cdot (S, V, o))$, yielding the desired group action.
\end{proof}

The action of $SL(2,\C)$ on $\mathcal{F_P^O}(\HH)$ is described naturally in the notation $[[S,U]]$ of \refdef{pv_notation_PONF}. 
\begin{lem}
\label{Lem:action_on_pv_notation}
\label{Lem:action_on_pv_notation_PONF}
Let $[[S,U]] \in \mathcal{F_P^O}(\HH)$, and $A \in SL(2,\C)$, then
\[
A\cdot [[S,U]] = [[A\cdot S, A\cdot U]] = [[ASA^*, AUA^*]].
\]
\end{lem}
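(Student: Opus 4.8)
The plan is to unwind the definitions and check that $A \cdot [[S,U]]$ and $[[A\cdot S, A\cdot U]]$ agree coordinate by coordinate. Recall from \refdef{pv_notation_PONF} that $[[S,U]]$ denotes the flag $(S,V,o)$, where $V = \Span_\R(S,U)$ and $o$ is the orientation on $V/\R S$ represented by the (single-element) ordered basis $U + \R S$; implicit here is that $S,U$ are linearly independent. By \refdef{matrix_on_PONF}, $A\cdot[[S,U]] = A\cdot(S,V,o) = (A\cdot S,\, A\cdot V,\, A\cdot o)$, and by \refdef{standard_SL2C_actions} together with \refdef{matrix_on_Hermitian_subspace} we have $A\cdot S = ASA^*$ and $A\cdot V = AVA^*$. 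So it remains to recognise $(A\cdot S, A\cdot V, A\cdot o)$ as the flag $[[A\cdot S, A\cdot U]]$ of \refdef{pv_notation_PONF}.

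First I would check the three components. The point: $A\cdot S = ASA^* \in \HH_0^+$ by \reflem{SL2C_preerves_Hs}. The plane: since the action of $A$ on subspaces is by linear isomorphisms (\reflem{SL2C_action_preserves_dimension}), the image of $V = \Span_\R(S,U)$ is $\Span_\R(A\cdot S,\, A\cdot U)$, which is exactly the plane attached to $[[A\cdot S, A\cdot U]]$. The orientation: by the definition of the $SL(2,\C)$-action on orientations, $A\cdot o$ is the orientation on $(A\cdot V)/(A\cdot \R S) = (A\cdot V)/\R(A\cdot S)$ represented by $A\cdot\underline{B}$ for any $\underline{B}$ representing $o$; taking $\underline{B}$ to be $U + \R S$, its image is $A\cdot(U + \R S) = (AUA^*) + A(\R S)A^* = (A\cdot U) + \R(A\cdot S)$, which is precisely the orientation that \refdef{pv_notation_PONF} attaches to $[[A\cdot S, A\cdot U]]$. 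Combining the three identifications gives $A\cdot(S,V,o) = [[A\cdot S, A\cdot U]] = [[ASA^*, AUA^*]]$.

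The only step needing a little care — and the one I would flag as the main (minor) obstacle — is confirming that $[[A\cdot S, A\cdot U]]$ is even well-formed, i.e. that $A\cdot S$ and $A\cdot U$ are still linearly independent and $A\cdot U$ still lies in $T_{A\cdot S}\HH_0^+$. Both follow from the fact that $A$ acts linearly and invertibly on the ambient $\HH$ while preserving $\HH_0^+$ (\reflem{SL2C_action_preserves_dimension} and \reflem{SL2C_preerves_Hs}): linear independence of $S,U$ is preserved by an invertible linear map, and differentiating the linear action shows $A\cdot U \in T_{A\cdot S}\HH_0^+$. In fact both of these observations were already made inside the proof of \reflem{SL2C_act_on_PONF_H}, so once they are cited the remainder is a purely formal translation through \refdef{pv_notation_PONF}, \refdef{matrix_on_PONF}, and \refdef{matrix_on_Hermitian_subspace}.
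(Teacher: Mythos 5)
Your proof is correct and follows essentially the same route as the paper: unwind $[[S,U]]=(S,V,o)$, apply \refdef{matrix_on_PONF}, use \reflem{SL2C_action_preserves_dimension} to identify $A\cdot V$ with the span of $A\cdot S, A\cdot U$, and use the induced quotient isomorphism (\reflem{SL2C_action_subspaces_facts}) to see $A\cdot o$ is the orientation given by $A\cdot U$. Your extra check that $[[A\cdot S, A\cdot U]]$ is well-formed is the content of \reflem{SL2C_act_on_PONF_H}, which the paper relies on implicitly, so citing it as you do is exactly right.
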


\begin{proof}
Letting $V$ be the real span of $S$ and $U$, and $o$ the orientation induced by $U$ on $V/\R S$, we have $[[S,U]] = (S, V, o)$. In particular, $\underline{U} = U + \R S \in V / \R S$ is an (ordered!) basis of the 1-dimensional quotient space $V / \R S$, and $o$ is the orientation given by $\underline{U}$.

By \refdef{matrix_on_PONF}, $A \cdot (S,V,o) = (A \cdot S, A \cdot V, A \cdot o)$. As $S,U$ is a basis of $V$, and $A$ acts by linear isomorphisms (\reflem{SL2C_action_preserves_dimension}), then $A \cdot S, A \cdot U$ is basis of $A \cdot V$. Moreover, the action of $A$ induces an isomorphism of quotient spaces $V / \R S \To (A \cdot V) / (A \cdot \R S)$ sending $\underline{U}$ to $A \cdot \underline{U}$ (\reflem{SL2C_action_subspaces_facts}), and $A \cdot o$ is the orientation given by $A \cdot \underline{U}$.

In other words, $A \cdot o$ is the orientation induced by $A \cdot U$ on $(A \cdot V)/(A \cdot \R S)$. Thus $(A \cdot S, A \cdot V, A \cdot o) = [[A \cdot S, A \cdot U]]$.
\end{proof}

\subsubsection{Equivariance of actions on spin vectors and flags in  $\HH$}
\label{Sec:equivariance_of_F}

In this section prove equivariance of $\F$ , as follows.
\begin{prop}
\label{Prop:SL2C_spinors_PNF_H_equivariant}
The actions of $SL(2,\C)$ on $\C_\times^2$ and $\mathcal{F_P^O}(\HH)$ are equivariant with respect to $\F$. In other words, for $\kappa \in \C_\times^2$ and $A \in SL(2,\C)$,
\[
A\cdot \F(\kappa) = \F(A\cdot\kappa).
\]
\end{prop}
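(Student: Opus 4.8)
The plan is to reduce the statement to the equivariance already established for $\f$ and its derivative, together with the description of the $SL(2,\C)$-action on flags in the bracket notation from \reflem{action_on_pv_notation_PONF}. Recall from \refdef{spinors_to_PNF} that $\F(\kappa) = [[\f(\kappa),\, D_\kappa\f(\ZZ(\kappa))]]$, and from \reflem{action_on_pv_notation_PONF} that $A\cdot[[S,U]] = [[A\cdot S,\, A\cdot U]] = [[ASA^*, AUA^*]]$. So the left-hand side is
\[
A\cdot\F(\kappa) = \bigl[\!\bigl[\, A\cdot\f(\kappa),\; A\cdot D_\kappa\f(\ZZ(\kappa)) \,\bigr]\!\bigr],
\]
while the right-hand side is
\[
\F(A\cdot\kappa) = \bigl[\!\bigl[\, \f(A\cdot\kappa),\; D_{A\cdot\kappa}\f(\ZZ(A\cdot\kappa)) \,\bigr]\!\bigr].
\]
The first entries agree immediately: $A\cdot\f(\kappa) = \f(A\cdot\kappa)$ is exactly \refeqn{basic_equivariance}.

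For the second entries, the key point is that $\ZZ$ fails to be equivariant on the nose --- it involves a complex conjugation --- so $\ZZ(A\cdot\kappa) \neq A\cdot\ZZ(\kappa)$ in general. Instead I would compute both sides modulo the span of $\f(\kappa)$ (resp.\ $\f(A\cdot\kappa)$), since a pointed oriented null flag $[[p,v]]$ only depends on $v$ up to adding multiples of $p$ and up to positive scaling (\reflem{characterise_equal_PONFs}). Concretely: write $\ZZ(A\kappa) = J\overline{A\kappa} = J\overline{A}\,\overline{\kappa}$. Using that $A \in SL(2,\C)$ satisfies $A^{-1} = -JA^TJ$ (equivalently $J\overline{A} = (A^*)^{-1}J = \overline{(A^{-1})}^{T}J$ after the appropriate manipulation), one finds $J\overline{A}\,\overline{\kappa}$ differs from $A\,\ZZ(\kappa) = AJ\overline{\kappa}$ by a controlled discrepancy. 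The cleanest route is to invoke \reflem{SL2C_by_symplectomorphisms}: since $\{\kappa,\ZZ(\kappa)\}$ is negative imaginary (\reflem{bilinear_Z_negative_imaginary}) and $\{A\kappa, A\ZZ(\kappa)\} = \{\kappa,\ZZ(\kappa)\}$, while also $\{A\kappa, \ZZ(A\kappa)\}$ is negative imaginary by the same lemma applied to $A\kappa$, the two vectors $\ZZ(A\kappa)$ and $A\ZZ(\kappa)$ have the same symplectic pairing with $A\kappa$; since $\{A\kappa, \cdot\}$ has kernel exactly $\C(A\kappa)$, we get $\ZZ(A\kappa) = A\ZZ(\kappa) + \lambda\, A\kappa$ for some $\lambda \in \C$, and comparing imaginary parts of the pairings shows $\lambda$ need not vanish but $\ZZ(A\kappa) - A\ZZ(\kappa) \in \C(A\kappa)$.

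Now I would push this through the derivative. Using $D_\kappa\f(\nu) = \kappa\nu^* + \nu\kappa^*$ (equation \refeqn{derivative_formula}) and the equivariance of the derivative \refeqn{equivariance_of_derivative_of_f}, namely $D_{A\kappa}\f(A\nu) = A\cdot D_\kappa\f(\nu)$, I get
\[
D_{A\kappa}\f(\ZZ(A\kappa)) = D_{A\kappa}\f\bigl(A\ZZ(\kappa) + \lambda A\kappa\bigr)
= A\cdot D_\kappa\f(\ZZ(\kappa)) + D_{A\kappa}\f(\lambda A\kappa).
\]
The correction term $D_{A\kappa}\f(\lambda A\kappa)$ lies in the real span of $\f(A\kappa)$: indeed, writing $\lambda = s + ti$ with $s,t$ real, $D_{A\kappa}\f(\lambda A\kappa) = s\,D_{A\kappa}\f(A\kappa) + t\,D_{A\kappa}\f(iA\kappa) = 2s\,\f(A\kappa) + 0$ by \reflem{derivatives_of_f_in_easy_directions}. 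Hence $D_{A\kappa}\f(\ZZ(A\kappa))$ and $A\cdot D_\kappa\f(\ZZ(\kappa))$ differ by a real multiple of $\f(A\kappa) = A\cdot\f(\kappa)$. By \reflem{characterise_equal_PONFs} (the half-plane characterization), adding a multiple of the base point $\f(A\kappa)$ does not change the flag $[[\f(A\kappa), \cdot]]$ --- and crucially the orientation is preserved because no negative rescaling of the flag direction occurs, only a shift by the base point. Therefore $\F(A\kappa) = A\cdot\F(\kappa)$.

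The main obstacle I anticipate is precisely pinning down the relationship between $\ZZ(A\kappa)$ and $A\ZZ(\kappa)$ --- since $\ZZ$ is conjugate-linear composed with $J$, naive equivariance fails, and one must verify that the failure is "invisible to the flag," i.e.\ lands in $\C\cdot(A\kappa)$ and contributes only a base-point shift (not an orientation reversal) after applying $D_{A\kappa}\f$. Using the symplectic-form argument via \reflem{SL2C_by_symplectomorphisms} and \reflem{bilinear_Z_negative_imaginary} sidesteps a messy direct matrix computation with $J$ and $A^*$; alternatively one can verify $J\overline{A} = (A^{-1})^*J$ directly from $\det A = 1$ and feed that in. Everything else is routine bookkeeping with the already-established equivariance identities.
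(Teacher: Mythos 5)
Your strategy is the paper's own first, ``conceptual'' proof: rewrite $A\cdot \F(\kappa) = [[\f(A\kappa),\, D_{A\kappa}\f(A\ZZ(\kappa))]]$ via equivariance of $\f$ and of its derivative, then use invariance of the symplectic form to show that the failure of $\ZZ$ to be equivariant is invisible to the flag. However, one step as written is false. You claim that $\ZZ(A\kappa)$ and $A\ZZ(\kappa)$ have the \emph{same} pairing with $A\kappa$, and hence that $\ZZ(A\kappa) - A\ZZ(\kappa) \in \C\,(A\kappa)$. In fact $\{A\kappa, A\ZZ(\kappa)\} = \{\kappa,\ZZ(\kappa)\} = -(|\xi|^2+|\eta|^2)i$ by \reflem{SL2C_by_symplectomorphisms} and \reflem{bilinear_Z_negative_imaginary}, whereas $\{A\kappa, \ZZ(A\kappa)\} = -\|A\kappa\|^2 i$; these agree only when $A$ happens to preserve the norm of $\kappa$. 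Concretely, for $\kappa = (1,0)$ and $A = \mathrm{diag}(2,\tfrac12) \in SL(2,\C)$ one has $\ZZ(A\kappa) = (0,-2i)$ and $A\ZZ(\kappa) = (0,-\tfrac{i}{2})$, whose difference $(0,-\tfrac{3i}{2})$ is not a complex multiple of $A\kappa = (2,0)$. So the decomposition $\ZZ(A\kappa) = A\ZZ(\kappa) + \lambda A\kappa$ on which your derivative computation rests does not hold in general.

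The gap is local and repairable, and the repair is exactly what the paper isolates as \reflem{conceptual}. Since both pairings are negative imaginary, they differ by a \emph{positive real} factor, so one gets $A\ZZ(\kappa) = \alpha\,(A\kappa) + b\,\ZZ(A\kappa)$ with $\alpha \in \C$ and $b > 0$ (this is (i)$\Rightarrow$(ii) of \reflem{conceptual} applied to the spinor $A\kappa$ and the vector $A\ZZ(\kappa)$). Pushing this through \refeqn{derivative_formula} and \reflem{derivatives_of_f_in_easy_directions}, as you do, shows that $A\cdot D_\kappa\f(\ZZ(\kappa)) = D_{A\kappa}\f(A\ZZ(\kappa))$ equals a real multiple of $\f(A\kappa)$ plus a \emph{positive} multiple of $D_{A\kappa}\f(\ZZ(A\kappa))$; by \reflem{characterise_equal_PONFs}, which tolerates both base-point shifts and positive rescaling of the direction vector, the two flags coincide and equivariance follows. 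With that one-line correction your argument is essentially the paper's Proof~1, which simply cites \reflem{conceptual} at this point; the paper also records a second, purely computational proof via the explicit identity of \refprop{crazy_identity}, which your approach does not need.
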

The proof of \refprop{SL2C_spinors_PNF_H_equivariant} is essentially the first time we actually use $A \in SL(2,\C)$: the actions of $SL(2,\C)$ in \refdef{standard_SL2C_actions}, \reflem{restricted_actions_on_H}, and \refdef{matrix_on_Hermitian_subspace}--\reflem{action_on_pv_notation}
all work for $A \in GL(2,\C)$.

We will give two proofs of \refprop{SL2C_spinors_PNF_H_equivariant}, one conceptual, and one explicit. The first, conceptual proof is based on the following lemma.
\begin{lem}
\label{Lem:conceptual} 
For two spinors $\kappa,\nu\in\C^2_\times$, the following are equivalent:
\begin{enumerate}
\item $\{\kappa,\nu\}$ is negative imaginary,
\item $\nu=\alpha\kappa+b\ZZ(\kappa)$, where $\alpha\in\C,b\in\R^+$,
\item $[[\f(\kappa),D_\kappa \f(\nu)]]=\F(\kappa)$.
\end{enumerate}
\end{lem}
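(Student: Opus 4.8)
The plan is to prove the cycle of implications (i)$\Rightarrow$(ii)$\Rightarrow$(iii)$\Rightarrow$(i), using the spinor inner product $\{\cdot,\cdot\}$ as the organising tool throughout. The key facts I would lean on are: $\{\kappa,\ZZ(\kappa)\}$ is negative imaginary (indeed equals $-(|\xi|^2+|\eta|^2)i$, by \reflem{bilinear_Z_negative_imaginary}); that $\kappa,\ZZ(\kappa)$ form a complex basis of $\C^2$ (\reflem{Z_forms_basis}); that $D_\kappa\f(i\kappa)=0$ and $D_\kappa\f(\kappa)=2\f(\kappa)$ (\reflem{derivatives_of_f_in_easy_directions}); that $\f(\kappa),D_\kappa\f(\ZZ(\kappa)),D_\kappa\f(i\ZZ(\kappa))$ are $\R$-linearly independent (\reflem{flag_well_defined}); and the characterisation of when two pairs describe the same flag, \reflem{characterise_equal_PONFs}.

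For (i)$\Rightarrow$(ii): since $\{\kappa,\ZZ(\kappa)\}$ is a complex basis, write $\nu = \alpha\kappa + \gamma\ZZ(\kappa)$ with $\alpha,\gamma\in\C$. Using bilinearity and antisymmetry of $\{\cdot,\cdot\}$ together with \reflem{SL2C_by_symplectomorphisms}-style formal properties, compute $\{\kappa,\nu\} = \gamma\{\kappa,\ZZ(\kappa)\}$. Since $\{\kappa,\ZZ(\kappa)\}$ is negative imaginary and nonzero, $\{\kappa,\nu\}$ is negative imaginary precisely when $\gamma$ is a positive real; set $b=\gamma\in\R^+$, giving (ii). Conversely this same computation gives (ii)$\Rightarrow$(i) immediately, so in fact (i)$\Leftrightarrow$(ii) is cheap.

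For (ii)$\Rightarrow$(iii): given $\nu = \alpha\kappa + b\ZZ(\kappa)$ with $b>0$, expand $D_\kappa\f(\nu) = D_\kappa\f(\alpha\kappa) + b\,D_\kappa\f(\ZZ(\kappa))$ by $\R$-linearity of the derivative. Writing $\alpha = r + si$ with $r,s\in\R$, we have $D_\kappa\f(\alpha\kappa) = r\,D_\kappa\f(\kappa) + s\,D_\kappa\f(i\kappa) = 2r\,\f(\kappa) + 0 = 2r\,\f(\kappa)$ by \reflem{derivatives_of_f_in_easy_directions}. Hence $D_\kappa\f(\nu) = 2r\,\f(\kappa) + b\,D_\kappa\f(\ZZ(\kappa))$, i.e. $D_\kappa\f(\nu)$ equals $b$ times $D_\kappa\f(\ZZ(\kappa))$ modulo a real multiple of $\f(\kappa)$, with $b>0$. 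By \reflem{flag_well_defined}, $D_\kappa\f(\ZZ(\kappa))$ is not a real multiple of $\f(\kappa)$ (and neither is $D_\kappa\f(\nu)$, since $b\ne0$), so the criterion in \reflem{characterise_equal_PONFs}(iii) (with $p=\f(\kappa)$, $v = D_\kappa\f(\ZZ(\kappa))$, $v' = D_\kappa\f(\nu)$, and the relation $2r\,\f(\kappa) + b\,v - v' = 0$ having the coefficients of $v$ and $v'$ of opposite sign) shows $[[\f(\kappa),D_\kappa\f(\nu)]] = [[\f(\kappa),D_\kappa\f(\ZZ(\kappa))]] = \F(\kappa)$, which is (iii). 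For (iii)$\Rightarrow$(ii): conversely, if $[[\f(\kappa),D_\kappa\f(\nu)]] = \F(\kappa) = [[\f(\kappa),D_\kappa\f(\ZZ(\kappa))]]$, then \reflem{characterise_equal_PONFs} forces $D_\kappa\f(\nu)$ to be a positive multiple of $D_\kappa\f(\ZZ(\kappa))$ modulo $\R\f(\kappa)$; pulling back through the description of $D_\kappa\f$ in \reflem{structure_of_derivative_of_f} (its kernel is $\R(i\kappa)$ and $\kappa,\ZZ(\kappa),i\ZZ(\kappa)$ map to an independent triple) lets us solve for $\nu$ modulo $\ker D_\kappa\f = \R(i\kappa)$ in the form $\nu = \alpha\kappa + b\ZZ(\kappa)$ with $b>0$.

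I expect the main obstacle to be purely bookkeeping: carefully tracking which linear combinations stay inside $T_{\f(\kappa)}\HH_0^+$ and correctly matching the sign conventions in \reflem{characterise_equal_PONFs}(iii) (the "opposite sign" versus "positive multiple modulo $p$" dictionary) so that $b>0$ comes out on the correct side. There is no deep difficulty — every needed structural fact is already established — but one must be scrupulous about the real-versus-complex scalar distinction, since $\f$ and its derivative are only $\R$-linear, not $\C$-linear.
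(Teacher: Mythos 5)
Your proposal is correct and takes essentially the same route as the paper's proof: you establish (i)$\Leftrightarrow$(ii) by expanding $\nu$ in the complex basis $\{\kappa,\ZZ(\kappa)\}$ and reading off the coefficient of $\ZZ(\kappa)$ (the paper phrases this via nondegeneracy of the symplectic form, which is the same computation), and (ii)$\Leftrightarrow$(iii) exactly as the paper does, using real-linearity of $D_\kappa\f$ with \reflem{derivatives_of_f_in_easy_directions}, the flag-equality criterion \reflem{characterise_equal_PONFs}(iii), and the kernel description \reflem{structure_of_derivative_of_f} to solve for $\nu$ modulo $\R\,i\kappa$. No gaps.
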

To motivate this lemma, note that all three equivalent conditions say, in various senses, that ``$\nu$ is like $\ZZ(\kappa)$". \reflem{bilinear_Z_negative_imaginary}
tells us that $\{ \kappa, \ZZ(\kappa) \}$ is negative imaginary, so (i) says that $\{\kappa, \nu\}$ is like $\{\kappa_, \ZZ(\kappa)\}$. Condition (ii) says that $\nu$ is, up to multiples of $\kappa$, a positive multiple of $\ZZ(\kappa)$. And \refeqn{F_explicitly} tells us that $\F(\kappa) =
[[\f(\kappa),D_\kappa \f(\ZZ(\kappa))]]$, so (iii) says that using the directional derivative of $\f$ in the direction $\nu$ yields the same flag as $\F$, which uses the direction $\ZZ(\kappa)$.

\begin{proof}
We first show (i) and (ii) are equivalent. Since $\{\cdot, \cdot\}$ is complex bilinear, if (ii) holds then
\[
\{\kappa, \nu\} 
= \alpha \{ \kappa, \kappa \} + b \{ \kappa, \ZZ(\kappa) \}
= b \{ \kappa, \ZZ(\kappa) \}
\]
which is negative imaginary by \reflem{bilinear_Z_negative_imaginary}, so (i) holds. For the converse, if $\{\kappa, \nu\}$ is negative imaginary then $\{\kappa, b\ZZ(\kappa)\} = \{\kappa, \nu\}$ for some positive $b$. As $\{\cdot,\cdot\}$ is a complex symplectic form on a complex 2-dimensional vector space, any two vectors yielding the same value for $\{\kappa,\cdot\}$ differ by a complex multiple of $\kappa$, so (ii) holds.

Next we show (ii) and (iii) are equivalent. For convenience, let $S = \f(\kappa)$, $U = D_\kappa \f(\nu)$ and $U' = D_\kappa \f(\ZZ(\kappa))$. Suppose (ii) holds, so that $\nu = \alpha \kappa + b \ZZ(\kappa)$, and we show that 
\[
[[\f(\kappa),D_\kappa \f(\nu)]]=[[\f(\kappa), D_\kappa \f(\ZZ(\kappa))]], \quad \text{i.e.} \quad [[S,U]] = [[S,U']].
\]

Let $\alpha = c + di$, where $c,d \in \R$. Then by the (real) linearity of the derivative of $\f$, and using the calculations of derivatives in the $\kappa$ direction (proportional to $\f(\kappa)$  and $i \kappa$ directions (the fibre direction) from \reflem{derivatives_of_f_in_easy_directions},
we have
\begin{align*}
U &= D_\kappa \f(\nu)
= D_\kappa \f ( c \kappa + d i \kappa + b \ZZ(\kappa) ) \\
&= c D_\kappa \f(\kappa) + d D_\kappa \f (i \kappa) + b D_\kappa \f (\ZZ(\kappa)) \\
&= 2 c \f(\kappa) + b D_\kappa \f(\ZZ(\kappa))
= 2 c S + b U'.
\end{align*}
We now apply \reflem{characterise_equal_PONFs}. Since $\F(\kappa) = [[S,U']]$ is a bona fide flag, $U'$ is not a real multiple of $S$. Since $U = 2cS + bU'$, we see that $U$ is not a real multiple of $S$ either. The equation $-2c S + U - bU' = 0$ above is a linear dependency between $S,U,U'$ with coefficients of opposite sign on $U$ and $U'$. Thus the flags are equal. Alternatively, one can observe that $\R S + \R^+ U = \R S + \R^+ U'$.

For the converse, suppose $[[S,U]] = [[S,U']]$. By \reflem{characterise_equal_PONFs}, we have a linear dependency and rearranging it, we have $U = a S + b U'$ where $a,b$ are real and $b>0$. Thus 
\[
D_\kappa \f(\nu) = a \f(\kappa) + b D_\kappa \f(\ZZ(\kappa)).
\]
Since $D_\kappa \f(\kappa) = 2 \f(\kappa)$ (\reflem{derivatives_of_f_in_easy_directions}), using the real linearity of $D_\kappa \f$, we have
\[
D_\kappa \f \left( \nu - \frac{a}{2} \kappa - b \ZZ(\kappa) \right) = 0.
\]
By \reflem{structure_of_derivative_of_f}, $D_\kappa \f$ has kernel spanned by $i \kappa$. Thus we have 
$\nu - \frac{a}{2} \kappa - b \ZZ(\kappa)  = c i \kappa$
for some real $c$. Letting $\alpha = a/2 + ci$, we have $\nu = \alpha \kappa + b \ZZ(\kappa)$, as required for (ii).
\end{proof}

\begin{proof}[Proof 1 of \refprop{SL2C_spinors_PNF_H_equivariant}]
We have $\F(\kappa)=[[\f(\kappa), D_\kappa \f(\ZZ(\kappa)]]$ so
\[
A\cdot \F(\kappa) = [[A \cdot \f(\kappa), A\cdot D_\kappa \f(\ZZ(\kappa))]] = [[\f(A\kappa), D_{A\kappa} \f(A(\ZZ(\kappa)))]],
\]
applying \reflem{action_on_pv_notation},  equivariance of $\f$ (\reflem{restricted_actions_on_H}) and its derivative \refeqn{equivariance_of_derivative_of_f}. Now as $A \in SL(2,\C)$, by \reflem{SL2C_by_symplectomorphisms} it acts on $\C^2$ by symplectomorphisms, so $\{A\kappa,A(\ZZ(\kappa))\} = \{\kappa,\ZZ(\kappa)\}$. But $\{\kappa, \ZZ(\kappa)\}$ is negative imaginary (\reflem{bilinear_Z_negative_imaginary}), so by \reflem{conceptual} then $[[ \f(A\kappa), D_{A\kappa} \f(A(\ZZ(\kappa)))]] = \F(A\kappa)$.
\end{proof}

The second, explicit proof of \refprop{SL2C_spinors_PNF_H_equivariant} is based on the following, perhaps surprising, identity.

\begin{prop}
\label{Prop:crazy_identity}
For any spin vector $\kappa \in \C^2$ and $A \in SL(2,\C)$,
\begin{align*}
\left[ A \kappa \kappa^T J A^* + A J \overline{\kappa} \kappa^* A^* \right] 
\left( \kappa^* A^* A \kappa \right)
=
\left[ A \kappa \kappa^T A^T J + J \overline{A} \overline{\kappa} \kappa^* A^* \right] \left( \kappa^* \kappa \right) ,
+
\left[ A \kappa \kappa^* A^* \right]
\left( \kappa^T J A^* A \kappa + \kappa^* A^* A J \overline{\kappa} \right).
\end{align*}
\end{prop}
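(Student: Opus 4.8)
The plan is to see that, despite its forbidding look, this identity merely repackages the three ingredients of $\F$ together with the relation $\det A=1$, and that once the bookkeeping is set up the actual computation is short. First I would match up the pieces. By \refeqn{derivative_flag_dirn} we have $D_\kappa\f(\ZZ(\kappa))=\kappa\kappa^T J+J\overline{\kappa}\kappa^*$, and since $SL(2,\C)$ acts on $\HH$ by $A\cdot S=ASA^*$ (\refdef{standard_SL2C_actions}), the bracket on the left of the claimed identity is $A\cdot D_\kappa\f(\ZZ(\kappa))$, while $\kappa^*A^*A\kappa=(A\kappa)^*(A\kappa)$ is a positive real (the case $\kappa=0$ being trivial). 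Putting $M=A\kappa$ and using $\ZZ(M)=J\overline{M}$ with \refeqn{derivative_flag_dirn} again, the first bracket on the right is $D_M\f(\ZZ(M))=MM^TJ+J\overline{M}M^*$, multiplied by $\kappa^*\kappa>0$, and the last bracket on the right is $MM^*=\f(M)=\f(A\kappa)$, multiplied by the scalar $c=\kappa^T J A^*A\kappa+\kappa^*A^*AJ\overline{\kappa}$. So the assertion is exactly the linear relation
\[
(\kappa^*A^*A\kappa)\,\bigl(A\cdot D_\kappa\f(\ZZ(\kappa))\bigr)=(\kappa^*\kappa)\,D_M\f(\ZZ(M))+c\,\f(M),\qquad M=A\kappa,
\]
among the Hermitian matrices $A\cdot D_\kappa\f(\ZZ(\kappa))$, $D_M\f(\ZZ(M))$, $\f(M)$ which, via \reflem{characterise_equal_PONFs}, will yield $A\cdot\F(\kappa)=\F(A\kappa)$ in \refprop{SL2C_spinors_PNF_H_equivariant}.

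Second, I would reparametrise by $M=A\kappa$ (which ranges over all of $\C^2$, as $A$ is invertible) and $P=AA^*$ (positive-definite Hermitian, $\det P=1$). The purpose of $M$ and $P$ is that every loose factor of $A$ or $A^*$ can be pushed past $J$ and absorbed, using only $\det A=1$ in the equivalent forms $A^T J A=J$ and $A^*J\overline{A}=J$ (equivalently $(A^T)^{-1}J=JA$ and $J\overline{A}^{-1}=A^*J$), which are instances of $B^T J B=(\det B)\,J$, valid for every $2\times2$ matrix $B$ by bilinearity of the determinant (compare the proof of \reflem{SL2C_by_symplectomorphisms}). In this way $A\cdot D_\kappa\f(\ZZ(\kappa))=MM^TJP+PJ\overline{M}M^*$, $\kappa^*\kappa=M^*P^{-1}M$, $\kappa^T J A^*A\kappa=M^T J P M$ and $\kappa^*A^*AJ\overline{\kappa}=M^*PJ\overline{M}$; the two summands of $c$ are then complex conjugates of one another, so $c=M^T J P M+M^*PJ\overline{M}$ is real. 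The identity becomes one purely in $M\in\C^2$ and $P$:
\[
(M^*M)\bigl(MM^TJP+PJ\overline{M}M^*\bigr)=(M^*P^{-1}M)\bigl(MM^TJ+J\overline{M}M^*\bigr)+\bigl(M^T J P M+M^*PJ\overline{M}\bigr)MM^*.
\]
A count of the degrees in the entries of $A$ shows the original identity is false for general $A\in GL(2,\C)$, so $\det A=1$ is genuinely used here.

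Third, I would prove this reduced identity. Both sides are Hermitian $2\times2$ matrices (each is $X+X^*$ plus a real multiple of $MM^*$), so it is enough to check that $v^*(\cdot)v$ agrees for all $v\in\C^2$. Writing $s=v^*M$ and $[x,y]:=x^T J y$ (a complex-bilinear antisymmetric form, so $[M,M]=0$), this reduces to the real scalar identity
\[
(M^*M)\,\Re\bigl(s\,[M,Pv]\bigr)=(M^*P^{-1}M)\,\Re\bigl(s\,[M,v]\bigr)+\Re\bigl([M,PM]\bigr)\,|s|^2,
\]
an equality of real quadratic forms in $v\in\R^4$, so it suffices to verify it on the real basis $\{M,\,iM,\,J\overline{M},\,iJ\overline{M}\}$ of $\C^2$ (a basis by \reflem{Z_forms_basis}, since $J\overline{M}=\ZZ(M)$) and on sums of pairs of these. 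On the complex line $\C J\overline{M}$ one has $s=\overline{\lambda}[M,M]=0$, so both sides vanish; on $\C M$, $[M,v]$ is a multiple of $[M,M]=0$ and both sides collapse to $|\mu|^2(M^*M)^2\Re([M,PM])$; and the mixed terms $v=M+J\overline{M}$, $v=M+iJ\overline{M}$, $v=iM+J\overline{M}$ all come down to the single sub-identity $M^T J P J\overline{M}=M^*P^{-1}M$, which follows from $JPJ=(P^{-1})^T$ (a further consequence of $\det P=1$), $J^2=I$, and $[M,J\overline{M}]=M^T J^2\overline{M}=M^*M$.

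I expect the main obstacle to be the second step: the whole content lies in recognising that the $\det A=1$ relations $A^T J A=J$ and $A^*J\overline{A}=J$ are precisely what is needed to rewrite everything in terms of $M=A\kappa$ and $P=AA^*$, after which the reduced identity is genuinely short. Expanding the original identity directly in coordinates on $\kappa$ and $A$, with $\det A=1$ imposed, is a valid but unilluminating alternative; the reformulation above is what keeps the proof honest and brief.
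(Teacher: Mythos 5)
Your proof is correct, but it takes a genuinely different route from the paper's: the paper's entire argument for \refprop{crazy_identity} is the one-line instruction to write $A$ and $\kappa$ out in coordinates, expand both sides, and simplify using $\alpha\delta-\beta\gamma=1$. You instead absorb the determinant condition into the symplectic relations $A^TJA=J$ and $\overline{A}^TJ\overline{A}=J$ (instances of $B^TJB=(\det B)J$), substitute $M=A\kappa$, $P=AA^*$, and reduce to an identity in $M$ and $P$ alone; I checked the translations ($A\kappa\kappa^TJA^*=MM^TJP$, $AJ\overline{\kappa}\kappa^*A^*=PJ\overline{M}M^*$, $\kappa^*\kappa=M^*P^{-1}M$, $\kappa^TJA^*A\kappa=M^TJPM$, $\kappa^*A^*AJ\overline{\kappa}=M^*PJ\overline{M}$), the facts $JPJ=(P^{-1})^T$ and $[M,J\overline{M}]=M^*M$, and the three mixed evaluations resting on $M^TJPJ\overline{M}=M^*P^{-1}M$; all are right, and since both sides of the reduced identity are Hermitian, testing $v\mapsto v^*(\cdot)v$ on a real basis and pairwise sums does determine them. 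One small bookkeeping point: real polarization also needs the pair $v=iM+iJ\overline{M}$, which you do not list, but it is covered because $v^*Hv$ is invariant under $v\mapsto iv$, so it reduces to your case $v=M+J\overline{M}$ --- worth a sentence. What your route buys is structure: it isolates exactly where $\det A=1$ enters (your bidegree count showing failure over $GL(2,\C)$ is correct), it keeps the computation to a few two-term evaluations, and it exhibits the identity as precisely the linear dependency with correctly signed coefficients among $A\cdot D_\kappa\f(\ZZ(\kappa))$, $D_{A\kappa}\f(\ZZ(A\kappa))$ and $\f(A\kappa)$ that Proof 2 of \refprop{SL2C_spinors_PNF_H_equivariant} requires. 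What the paper's route buys is that it needs no setup at all, at the cost of being unilluminating --- the trade-off you yourself acknowledge.
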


\begin{proof}
Let $A = \begin{pmatrix} \alpha & \beta \\ \gamma & \delta \end{pmatrix}$ and $\kappa = \begin{pmatrix} \xi \\ \eta \end{pmatrix}$, and expand and simplify, using $\alpha \delta - \beta \gamma = 1$.
\end{proof}

\begin{proof}[Proof 2 of \refprop{SL2C_spinors_PNF_H_equivariant}]
From \refdef{spinors_to_PNF}  we have $\F(\kappa) = [[ \f(\kappa), D_\kappa \f(\ZZ(\kappa)) ]]$, and by \reflem{action_on_pv_notation_PONF} we have
\[
A\cdot \F(\kappa) = [[A\cdot \f(\kappa), A\cdot D_\kappa \f(\ZZ(\kappa)) ]].
\]
On the other hand, $A$ acts on $\kappa$ simply by matrix-vector multiplication, and we have
\begin{align*}
\F(A\cdot\kappa) &= \F(A\kappa) = [[ \f(A\kappa), D_{A\kappa} \f(\ZZ(A \kappa)) ]] 
\end{align*}

We now use \reflem{characterise_equal_PONFs} to show the two claimed pointed flags are equal, verifying (iii) there, which has three conditions.

The first condition is $A\cdot \f(\kappa) = \f(A \kappa)$; call this point $p$. This follows from equivariance of $\f$ (\reflem{restricted_actions_on_H}).  

The second condition is that $A\cdot D_\kappa \f(\ZZ(\kappa))$ and $D_{A \kappa} \f(\ZZ(A \kappa))$ both lie in the tangent space to $\HH_0^+$ at $p$, and are not real multiples of $p$.

Since $\f$ has image in $\HH_0^+$, the image of the derivative $D_\kappa \f$ lies in $T_{\f(\kappa)} \HH_0^+$, and hence $D_\kappa \f (\ZZ(\kappa)) \in T_{\f(\kappa)} \HH_0^+$. Moreover, by \reflem{flag_well_defined}, $D_\kappa \f(\ZZ(\kappa))$ is not a real multiple of $\f(\kappa)$. As $A$ acts linearly on $\HH$ preserving $\HH_0^+$, then $A\cdot D_\kappa \f(\ZZ(\kappa)) \in T_{p} \HH_0^+$. Similarly, the image of the derivative of $\f$ at $A \kappa$ lies in $T_{\f(A\kappa)} \HH_0^+$, so $D_{A \kappa} \f(\ZZ(A \kappa)) \in T_p \HH_0^+$.

Applying $A$, which acts linearly on $\HH$, sends $\f(\kappa)$ to $A\cdot \f(\kappa) = p$ and $D_\kappa \f(\ZZ(\kappa))$ to $A\cdot D_\kappa \f(\ZZ(\kappa))$. If these two did not span a plane, then the action of $A$ would send a 2-plane to a smaller dimensional subspace, contradicting \reflem{SL2C_action_preserves_dimension}. Thus $A\cdot D_\kappa \f(\ZZ(\kappa))$ is not a real multiple of $p$. Applying \reflem{flag_well_defined} to $A \kappa$ gives that $D_{A \kappa} \f(\ZZ(A \kappa))$ is not a real multiple of $\f(A \kappa) = p$ either.

The third condition is that there exist real numbers $a,b,c$ such that 
\begin{equation}
\label{Eqn:want_these_abc}
a \left( p \right) + b \left( A\cdot D_\kappa \f(\ZZ(\kappa)) \right)
+ c \left( D_{A \kappa} \f(\ZZ(A \kappa)) \right) = 0,
\end{equation}
where $b$ and $c$ have opposite signs. We calculate $p = A\cdot \f(\kappa) = A \kappa \kappa^* A^*$, and from \refeqn{F_explicitly} we have $D_\kappa \f(\ZZ(\kappa)) = \kappa \kappa^T J + J \overline{\kappa} \kappa^*$ so 
\[
A\cdot D_\kappa \f(\ZZ(\kappa)) 
= A\cdot \left( \kappa \kappa^T J + J \overline{\kappa} \kappa^* \right) 
= A \left( \kappa \kappa^T J + J \overline{\kappa} \kappa^* \right) A^*.
\]
and 
\[
D_{A\kappa} \f(\ZZ(A \kappa))
= (A\kappa) (A\kappa)^T J + J \overline{(A \kappa)} (A\kappa)^* 
= A \kappa \kappa^T A^T J + J \overline{A} \, \overline{\kappa} \kappa^* A^*.
\]
We can then rewrite \refprop{crazy_identity} as
\[
\left[ A\cdot D_\kappa \f(\ZZ(\kappa))  \right] 
\left( \kappa^* A^* A \kappa \right)
-
\left[ D_{A\kappa} \f(\ZZ(A \kappa)) \right] \left( \kappa^* \kappa \right) 
-
\left[ p \right]
\left( \kappa^T J A^* A \kappa + \kappa^* A^* A J \overline{\kappa} \right)
=
0,
\]
where the expressions in parentheses are real numbers. For any $\tau \in \C^2_\times$ written as a column vector, $\tau^* \tau$ is positive real; taking $\tau$ to be $A \kappa$ and $\kappa$ respectively, we see that 
$\kappa^* A^* A \kappa > 0$ and $-\kappa^* \kappa < 0$. Thus we have the required $a,b,c$ for \refeqn{want_these_abc}.
\end{proof}

\subsubsection{$SL(2,\C)$ action on flags in Minkowski space}
\label{Sec:flags_Minkowski_space}

We now translate all the above results on flags in $\HH$ into Minkowski space, using the maps $\g \colon \HH \To \R^{1,3}$ (\refdef{g_H_to_R31}) and $\G \colon \mathcal{F_P^O}(\HH) \To \mathcal{F_P^O}(\R^{1,3})$ (\refdef{G}). Essentially, $\g$ and $\G$ preserve all the structure required, so statements about flags in $\HH$ translate immediately to Minkowski space.

We have already defined a null flag (\refdef{null_flag_in_Minkowski}), pointed null flag (\refdef{pointed_null_flag}), pointed oriented null flag (\refdef{pointed_oriented_null_flag}), and $[[p,v]]$ notation for flags (\refdef{pv_notation_PONF}) in both $\HH$ and $\R^{1,3}$, and observed that $\g$ sends each object in $\HH$ to the corresponding object in $\R^{1,3}$, giving rise to the bijection $\G$.

We now define the $SL(2,\C)$ action on $\mathcal{F_P^O}(\R^{1,3})$ and show $\G$ is equivariant. We extend the action of $SL(2,\C)$ on $\R^{1,3}$ (\refdef{SL2C_on_R31}) to subspaces of $\R^{1,3}$, quotient spaces, and orientations. As in \refdef{SL2C_on_R31}, these actions are  imported directly from the corresponding actions in $\HH$. 

Throughout this section, $V \subset W$ are subspaces of $\R^{1,3}$, and $A \in SL(2,\C)$.
\begin{defn}
\label{Def:SL2C_on_R31_subspace}
\label{Def:SL2C_on_R31_orientations}
\label{Def:SL2C_on_PONF_R31}
The action of $A$ on:
\begin{enumerate}
\item a vector subspace $V$ of $\R^{1,3}$ is given by
\[
A\cdot V = \{A\cdot v \mid v \in V \} 
= \left\{ \g \left( A\cdot  \left( \g^{-1} v \right) \right) \mid v \in V \right\} 
= \g \left( A\cdot  \left( \g^{-1} (V) \right) \right)
= \g \left( A \left( \g^{-1} V \right) A^* \right);
\]
\item a quotient space $W/V$ is given by $A \cdot (W/V) = A \cdot W/A \cdot V$;
\item
an orientation $o$ on $W/V$ is given by 
$A \cdot o = \g \left( A\cdot \g^{-1} (o) \right)$;
\item
a flag $(p,V,o)\in\mathcal{F_P^O}(\R^{1,3})$, is given by
$A\cdot (p,V,o) = (A\cdot p, A\cdot V, A\cdot o)$.
\end{enumerate}
\end{defn}

Note that as $V \subset W$, then $A \cdot V \subset A \cdot W$, so (ii) above makes sense.

All these actions essentially derive from the action of $SL(2,\C)$ on $\R^{1,3}$. If $A \in SL(2,\C)$ acts on $\R^{1,3}$ via a linear map $M \in SO(1,3)^+$, then all of the actions above essentially just apply $M$. In particular, for a flag $(p,V,o)$, we have $A\cdot (p,V,o)=(Mp,MV,Mo)$.

It follows immediately from the fact that $\g$ is a linear isomorphism, and the results of  \refsec{SL2c_action_on_flags_HH}, that these definitions give actions of $SL(2,\C)$ on the following sets.
\begin{enumerate}
\item 
The set of subspaces of $\R^{1,3}$, acting by linear isomorphisms, using \reflem{SL2C_action_preserves_dimension}; also on each Grassmannian $\Gr(k,\R^{1,3})$.
\item
The set of quotients of subspaces of $\R^{1,3}$, acting by linear isomorphisms, using \reflem{SL2C_action_subspaces_facts} and subsequent comment.
\item
The set of orientations of quotients of subspaces of $\R^{1,3}$, using \reflem{action_on_coorientation} and subsequent comment.
\item the set of flags $\mathcal{F_P}(\R^{1,3})$, using \reflem{SL2C_act_on_PONF_H} and subsequent comment.
\end{enumerate}

Similarly we obtain the following immediate translation of \reflem{action_on_pv_notation}
\begin{lem}
\label{Lem:SL2c_action_on_PONF_R31_works}
For $[[p,v]] \in \mathcal{F_P^O}(\R^{1,3})$, we have
\[
A\cdot [[p,v]] = [[A\cdot p,A\cdot v]] 
\]
\qed
\end{lem}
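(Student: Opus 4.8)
The plan is to deduce this from its $\HH$-counterpart, \reflem{action_on_pv_notation_PONF}, by transporting everything across the linear isomorphism $\g \colon \HH \To \R^{1,3}$ and the induced bijection $\G$. The key preliminary observation is that $\G$ is itself $SL(2,\C)$-equivariant: for $(S,V,o) \in \mathcal{F_P^O}(\HH)$, both $\G(A\cdot(S,V,o))$ and $A\cdot\G(S,V,o)$ unwind componentwise to $(\g(A\cdot S),\, \g(A\cdot V),\, \g(A\cdot o))$, since \refdef{SL2C_on_R31} and its extensions to subspaces and orientations in \refdef{SL2C_on_R31_subspace} were designed precisely so that $\g$ intertwines the two actions.

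Granting this, the argument is short. Given $[[p,v]] \in \mathcal{F_P^O}(\R^{1,3})$, set $S = \g^{-1}(p) \in \HH_0^+$ and $U = \g^{-1}(v) \in T_S\HH_0^+$; as $\g$ is a linear isomorphism carrying $L^+$ to $\HH_0^+$ and preserving linear independence and tangency, $[[S,U]]$ is a genuine flag with $\G[[S,U]] = [[p,v]]$ by \reflem{G_in_pv_notation}. Then
\[
A\cdot[[p,v]] = A\cdot\G[[S,U]] = \G\bigl(A\cdot[[S,U]]\bigr) = \G[[A\cdot S,\, A\cdot U]] = [[\g(A\cdot S),\, \g(A\cdot U)]],
\]
using equivariance of $\G$, then \reflem{action_on_pv_notation_PONF}, then \reflem{G_in_pv_notation} again. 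Finally $\g(A\cdot S) = A\cdot p$ and $\g(A\cdot U) = A\cdot v$ by \refdef{SL2C_on_R31}, giving $A\cdot[[p,v]] = [[A\cdot p,\, A\cdot v]]$.

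Alternatively, and perhaps more cleanly, one can argue directly, mimicking the proof of \reflem{action_on_pv_notation_PONF}: write $[[p,v]] = (p,V,o)$ with $V = \Span_{\R}(p,v)$ and $o$ the orientation of $V/\R p$ given by $v + \R p$; then \refdef{SL2C_on_PONF_R31} gives $A\cdot(p,V,o) = (A\cdot p, A\cdot V, A\cdot o)$, and since $A$ acts on subspaces of $\R^{1,3}$ by linear isomorphisms (the analogue of \reflem{SL2C_action_preserves_dimension}), $A\cdot V$ is the span of $A\cdot p$ and $A\cdot v$ while $A\cdot o$ is the orientation represented by $(A\cdot v) + \R(A\cdot p)$ --- exactly the data encoded by $[[A\cdot p, A\cdot v]]$.

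I do not expect any genuine obstacle here: every ingredient ($\g$ is a structure-preserving linear isomorphism, the $\R^{1,3}$-actions are $\g$-conjugates of the $\HH$-actions, and the $\HH$-version of the statement) is already in hand, so the content is purely bookkeeping. The one point deserving a line of care is that $A$ carries the $2$-plane $V$ onto the span of $A\cdot p$ and $A\cdot v$ rather than collapsing it, but this is immediate from invertibility of the action.
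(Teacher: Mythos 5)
Your proposal is correct and follows essentially the same route as the paper: since every $SL(2,\C)$ action on objects in $\R^{1,3}$ is defined by conjugating the corresponding action on $\HH$ by the linear isomorphism $\g$, the statement is an immediate translation of \reflem{action_on_pv_notation_PONF} via $\G$ (equivalently, one can repeat the $\HH$ argument verbatim, as in your second paragraph), which is exactly why the paper records it with no further proof. The only cosmetic difference is that you invoke the equivariance of $\G$ explicitly before the paper formally states it (as \refprop{FG_equivariant}), but the justification you give is the same one the paper uses.
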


All the actions of $SL(2,\C)$ on objects in $\R^{1,3}$ are defined by applying $\g^{-1}$, then apply the action in $\HH$, then applying $\g$. Hence they are all equivariant. In particular, We obtain the following  statement.
\begin{prop}
\label{Prop:FG_equivariant}
The actions of $SL(2,\C)$ on $\mathcal{F_P^O}(\HH)$ and $\mathcal{F_P^O}(\R^{1,3})$ are equivariant with respect to  $\G$. In other words, for any $A \in SL(2,\C)$ and any $(S,V,o) \in \mathcal{F_P^O}(\HH)$,
\[
\G( A \cdot (S,V,o)) = A \cdot \G(S,V,o),
\quad \text{i.e.} \quad
\begin{array}{ccc}
\mathcal{F_P^O}(\HH) & \stackrel{\G}{\To} & \mathcal{F_P^O}(\R^{1,3}) \\
\downarrow A && \downarrow A \\
\mathcal{F_P^O}(\HH) & \stackrel{\G}{\To} & \mathcal{F_P^O}(\R^{1,3})
\end{array}
\quad \text{commutes}.
\]
\qed
\end{prop}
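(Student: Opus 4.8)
The plan is to observe that the equivariance is purely formal, once one recalls how the two sides are assembled. The map $\G$ of \refdef{G} is obtained by applying the linear isomorphism $\g$ to each piece of a flag, while the $SL(2,\C)$-actions on points, subspaces, quotients and orientations in $\R^{1,3}$ were set up in \refdef{SL2C_on_R31} and \refdef{SL2C_on_R31_subspace} precisely by conjugating the corresponding actions on $\HH$ with $\g$ and $\g^{-1}$. So I expect the square to commute by a one-line diagram chase.

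Concretely, I would fix $A \in SL(2,\C)$ and $(S,V,o) \in \mathcal{F_P^O}(\HH)$, and first expand $A \cdot \G(S,V,o)$: by \refdef{G} it equals $A \cdot (\g(S), \g(V), \g(o))$, and by \refdef{SL2C_on_PONF_R31}(iv) this is $(A \cdot \g(S),\, A \cdot \g(V),\, A \cdot \g(o))$. Next I would rewrite each component with its defining formula, each time cancelling $\g^{-1} \circ \g = \Id$: by \refdef{SL2C_on_R31}, $A \cdot \g(S) = \g\bigl(A \cdot \g^{-1}(\g(S))\bigr) = \g(A \cdot S)$; by \refdef{SL2C_on_R31_subspace}(i), $A \cdot \g(V) = \g(A \cdot V)$; and by \refdef{SL2C_on_R31_orientations}(iii), $A \cdot \g(o) = \g(A \cdot o)$. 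Hence $A \cdot \G(S,V,o) = (\g(A \cdot S), \g(A \cdot V), \g(A \cdot o))$, which by \refdef{G} is $\G(A \cdot S, A \cdot V, A \cdot o)$, and this equals $\G(A \cdot (S,V,o))$ by \refdef{matrix_on_PONF}. That closes the argument.

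I do not anticipate a genuine obstacle; the only step deserving a moment's care is the orientation component, where one must confirm that $\g(o)$ really is an orientation of the quotient $\g(V)/\R\g(S)$ so that the formula $A \cdot o = \g(A\cdot\g^{-1}(o))$ is applicable with $o$ replaced by $\g(o)$. This is exactly the content of the verification already carried out in the proof of \reflem{G_bijection}, where $\g$ was shown to induce an isomorphism $V/\R S \To \g(V)/\R\g(S)$, and hence a well-defined bijection on orientations. As an alternative presentation, one could work entirely in the $[[\cdot,\cdot]]$ notation: every flag in $\mathcal{F_P^O}(\HH)$ has the form $[[S,U]]$, and then \reflem{G_in_pv_notation}, \reflem{action_on_pv_notation_PONF} and \reflem{SL2c_action_on_PONF_R31_works} give $A \cdot \G[[S,U]] = [[A\cdot\g(S), A\cdot\g(U)]]$ and $\G(A \cdot [[S,U]]) = [[\g(A\cdot S), \g(A\cdot U)]]$, which coincide by equivariance of $\g$ on $\HH$ (\reflem{restricted_actions_on_H}, \reflem{SL2C_action_preserves_dimension}).
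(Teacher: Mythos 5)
Your proposal is correct and follows the paper's own route: the paper likewise treats the proposition as immediate, since the $SL(2,\C)$-actions on points, subspaces, quotients and orientations in $\R^{1,3}$ were defined precisely by conjugating the corresponding actions on $\HH$ with $\g$ and $\g^{-1}$, so the square commutes by exactly the componentwise cancellation you carry out. Your extra care about the orientation component (via the verification in \reflem{G_bijection}) and the alternative $[[\cdot,\cdot]]$ presentation are harmless elaborations of the same argument.
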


\subsubsection{Flag intersection with the celestial sphere}
\label{Sec:calculating_flags_Minkowski}

Let us calculate some details of the flag of a spin vector. In particular, it will be useful to  describe its intersections with the celestial sphere $\S^+ = L^+ \cap \{T=1\}$ (\refdef{celestial_sphere}(ii))

Given a flag $(p,V,o) \in \mathcal{F_P^O}(\R^{1,3})$, the line $\R p$ intersects $\S^+$ in a point $q$. The 2-plane $V$ contains $\R p$, so is transverse to the 3-plane $T = 1$, and intersects this 3-plane in a 1-dimensional line. Because $V$ is tangent to the light cone, the line $V \cap \{T=1\}$ is tangent to $\S^+$ at $q$. The orientation $o$ on $V/\R p$ yields an orientation on this line $V \cap \{T=1\}$. 

Now, given a spin vector $\kappa = (\xi, \eta)$, by \reflem{GoF_in_pv_form} the associated flag $\G \circ \F(\kappa)$ in $\R^{1,3}$ is $[[p,v]]$, where $p = \g \circ \f (\kappa)$, and $v = \g (D_\kappa \f(\ZZ(\kappa)))$. The 2-plane $V$ is the span of $p$ and $v$, with orientation on $V/\R p$ given by $v$. In \refsec{f_compose_g} we gave explicit descriptions of $p$ (\reflem{spin_vector_to_TXYZ}), and the intersection point $q$ of the line $\R p$ with $\S^+$ (\reflem{gof_celestial_sphere}):
\begin{align*}
p &= \g \circ \f (\kappa)
= \left( a^2 + b^2 + c^2 + d^2, 2(ac+bd), 2(bc-ad), a^2 + b^2 - c^2 - d^2 \right) \\
q &= \left( 1, \frac{2(ac+bd)}{a^2+b^2+c^2+d^2}, \frac{2(bc-ad)}{a^2+b^2+c^2+d^2}, \frac{a^2+b^2-c^2-d^2}{a^2+b^2+c^2+d^2} \right).
\end{align*}

As we now see, $v$ has no $T$-component, and so gives a tangent vector to $\S^+$ at $q$, which is the oriented direction of the line $V \cap \{T=1\}$. See \reffig{flag_intersect_celestial_sphere}.

\begin{center}
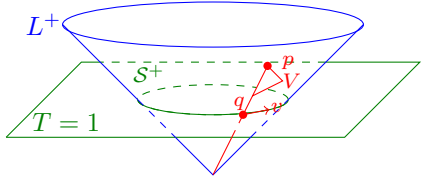

  \begin{tikzpicture}
  \draw[blue] (3.75,1.5) ellipse (2cm and 0.3cm);
  \draw[green!50!black] (3.75,0.5) ellipse (1cm and 0.2cm);
  \fill[white] (2.75,0.5)--(4.75,0.5)--(4.75,0.72)--(2.75,0.72);
  \draw[dashed, green!50!black] (3.75,0.5) ellipse (1cm and 0.2cm);
  \draw[green!50!black] (1,0)--(5.5,0)--(6.5,1)--(5.25,1);
  \draw[green!50!black] (2.25,1)--(2,1)--(1,0);
  \draw[dashed,green!50!black] (5.25,1)--(2.25,1);
  \draw[dashed,blue] (2.75,0.5)--(3.25,0);
  \draw[blue] (2.75,0.5)--(1.75,1.5);
  \draw[dashed, blue] (4.25,0)--(4.75,0.5);
  \draw[blue] (4.75,0.5)--(5.75,1.5);
  \draw[blue] (3.25,0)--(3.75,-0.5)--(4.25,0.0);
  \draw[red] (3.75,-0.5)--(4,0);
  \draw[dashed,red] (4,0)--(4.1875,0.375);
  \fill[white] (4.475,0.95)--(4.675,0.75)--(4.275,0.55);
  \draw[red] (4.1375,0.275)--(4.475,0.95)--(4.675,0.75)--(4.275,0.55);
  \node[blue] at (1.5,1.5){$L^+$};
  \fill[red] (4.475,0.95) circle (0.055cm);
  \fill[red] (4.15,0.3) circle (0.055cm);
  \node[red] at (4.75,1){\footnotesize$p$};
  \node[red] at (4.8,0.75){\footnotesize$V$};
  \node[red] at (4.1,0.45){\footnotesize$q$};
  \node[red] at (4.6,0.4){\footnotesize$v$};
  \draw[->,red](4.15,0.3)--(4.5,0.37);
  \node[green!50!black] at (1.8,0.2){$T=1$};
  \node[green!50!black] at (2.9,0.85){\footnotesize$\mathcal{S}^+$};
\end{tikzpicture} 
    \captionof{figure}{The intersection of a flag with the celestial sphere.}
    \label{Fig:flag_intersect_celestial_sphere}
\end{center}

For the rest of this section, we let $\kappa = (\xi, \eta) = (a+bi, c+di) \in \C^2_\times$ where $a,b,c,d \in \R$.

\begin{lem}
\label{Lem:null_flag_tricky_vector}
\label{Lem:null_flag_tricky_vector_PONF}
The 2-plane of the flag $\G \circ \F (\kappa)$ intersects any 3-plane of constant $T$ in a 1-dimensional line, and the orientation on the flag yields an orientation on this line. The oriented line's direction is
\[
v = \g (D_\kappa \f(\ZZ(\kappa))) = 2 \left( 0, 2(cd-ab), a^2 - b^2 + c^2 - d^2, 2(ad+bc) \right).
\]
\end{lem}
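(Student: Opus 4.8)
The plan is to verify the displayed formula by a direct computation in Hermitian matrices, and then read off the geometric statement. By \reflem{GoF_in_pv_form}, the flag $\G \circ \F(\kappa)$ is $[[p,v]]$ with $p = \g \circ \f(\kappa)$ and $v = \g(D_\kappa \f(\ZZ(\kappa)))$, so the $2$-plane $V$ is the real span of $p$ and $v$, and the orientation on $V/\R p$ is the one represented by $v + \R p$ (by \refdef{pv_notation_PONF}). Thus everything reduces to computing $v$ and checking that its $T$-coordinate vanishes.

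First I would compute the Hermitian matrix $D_\kappa \f(\ZZ(\kappa))$. Using $\ZZ(\kappa) = J\overline{\kappa}$ (\refdef{Z_C2_to_C2_and_J}) together with the derivative formula \refeqn{derivative_formula}, equivalently \refeqn{derivative_flag_dirn}, a short matrix multiplication with $\kappa = (\xi,\eta)$ gives
\[
D_\kappa \f(\ZZ(\kappa)) = \begin{pmatrix} i(\overline{\xi\eta} - \xi\eta) & i(\xi^2 + \overline{\eta}^2) \\ -i(\eta^2 + \overline{\xi}^2) & i(\xi\eta - \overline{\xi\eta}) \end{pmatrix},
\]
which is manifestly Hermitian: the diagonal entries are $\pm 2\Im(\xi\eta) \in \R$, and the off-diagonal entries are negative conjugates in the required way. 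Substituting $\xi = a+bi$, $\eta = c+di$ and simplifying, the diagonal entries become $\pm 2(ad+bc)$ and the upper-right entry becomes $2(cd-ab) + (a^2 - b^2 + c^2 - d^2)i$. Applying $\g$ (\refdef{g_H_to_R31}), the $T$-coordinate is the sum of the diagonal entries, which is $0$, and the remaining three coordinates work out to $4(cd-ab)$, $2(a^2-b^2+c^2-d^2)$, $4(ad+bc)$. Factoring out $2$ gives precisely $v = 2\bigl(0,\, 2(cd-ab),\, a^2-b^2+c^2-d^2,\, 2(ad+bc)\bigr)$, as claimed.

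For the geometric assertion, I would argue as follows. By \reflem{spin_vector_to_TXYZ} the $T$-coordinate of $p$ is $a^2+b^2+c^2+d^2 > 0$ (since $\kappa \neq 0$), while the $T$-coordinate of $v$ is $0$. Hence, for any $t \in \R$, a vector $sp + uv \in V$ lies in the $3$-plane $\{T = t\}$ exactly when $s = t/(a^2+b^2+c^2+d^2)$; this fixes $s$ while leaving $u$ free, so $V \cap \{T=t\}$ is the affine line through $s_0 p$ with direction $\R v$, where $s_0 = t/(a^2+b^2+c^2+d^2)$, and in particular it is $1$-dimensional. Finally, $p$ and $v$ are linearly independent (by \reflem{flag_well_defined}, or simply because the $T$-coordinate of $v$ vanishes but that of $p$ does not), so the quotient map $V \to V/\R p$ restricts to an isomorphism on $\R v$, sending $v$ to the generator $v + \R p$ representing the flag's orientation $o$; therefore the induced orientation on $V \cap \{T=t\}$ points in the direction $v$. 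The only substantive step is the computation of $D_\kappa \f(\ZZ(\kappa))$ and its image under $\g$, which is routine; the rest is immediate once the $T$-coordinate is seen to vanish, so I do not anticipate any real obstacle.
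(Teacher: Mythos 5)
Your proposal is correct and follows essentially the same route as the paper: compute $D_\kappa \f(\ZZ(\kappa)) = \kappa\kappa^T J + J\overline{\kappa}\kappa^*$ as an explicit Hermitian matrix, apply $\g$, substitute $\xi = a+bi$, $\eta = c+di$ to get the stated $v$ with vanishing $T$-coordinate, and then deduce the statement about the intersection with constant-$T$ planes and the induced orientation. Your explicit solving of $sp+uv \in \{T=t\}$ for $s$ just spells out the step the paper states in one line, so there is no substantive difference.
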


To see why $v$ has $T$-component zero, observe that $\kappa$ lies in a $3$-sphere $S^3_r$ of radius $r = |\xi|^2 + |\eta|^2 > 0$, and by \reflem{C2_to_R31_Hopf_fibrations}, each such 3-sphere maps under $\g \circ \f$ to a constant-$T$ slice of $L^+$, namely $L^+ \cap \{T=r^2\}$. Now the tangent vector $\ZZ(\kappa)$ at $\kappa$ in $\C^2$ is in fact tangent to $S^3_r$. Indeed, as discussed in \refsec{Z}, regarding $\kappa$ as a quaternion, $\ZZ(\kappa) = - \pmb{k} \kappa$, so that $\ZZ(\kappa)$ is orthogonal to the position vector of $\kappa$. Thus, under $D_\kappa (\g \circ \f) = \g \circ D_\kappa \f$, the vector $\ZZ(\kappa)$ tangent to $S^3_r$ is mapped to a tangent vector to $L^+ \cap \{ T = r^2 \}$, hence has $T$-component zero.

The expressions for $p$ and $v$ look quite similar. Indeed, their $X,Y,Z$ coordinates can be obtained from each other by permuting variables,  coordinates, and signs. As we see in the next section, this is not a coincidence.

In any case, we now calculate this vector.
\begin{proof}
Using 
\refdef{Z_C2_to_C2_and_J} and \refeqn{derivative_flag_dirn}, we calculate
\begin{align*}
D_\kappa \f (\ZZ(\kappa)) &= \kappa \kappa^T J + J \overline{\kappa} \kappa^* 
= \begin{pmatrix} \xi \\ \eta \end{pmatrix}
\begin{pmatrix} \xi & \eta \end{pmatrix}
\begin{pmatrix} 0 & i \\ -i & 0 \end{pmatrix}
+
\begin{pmatrix} 0 & i \\ -i & 0 \end{pmatrix}
\begin{pmatrix} \overline{\xi} \\ \overline{\eta} \end{pmatrix}
\begin{pmatrix} \overline{\xi} & \overline{\eta} \end{pmatrix} \\
&=
\begin{pmatrix} -i \xi \eta & i \xi^2 \\ -i \eta^2 & i \xi \eta \end{pmatrix}
+
\begin{pmatrix} i \overline{\xi \eta} & i \overline{\eta}^2 \\ -i \overline{\xi^2} & -i \overline{\xi \eta} \end{pmatrix} 
=
\begin{pmatrix}
i \left( \overline{\xi \eta} - \xi \eta \right) 
& i \left( \xi^2 + \overline{\eta}^2 \right) \\
-i \left( \overline{\xi}^2 + \eta^2 \right)
& i \left( \xi \eta - \overline{\xi \eta} \right)
\end{pmatrix}
\end{align*}
Thus, applying \refdef{g_H_to_R31},
\begin{align}
v = \g \left( D_\kappa \f(\ZZ(\kappa)) \right)
&=
\left( 0, 
2 \Re \left( i \left( \xi^2 + \overline{\eta}^2 \right) \right), 
2 \Im \left( i \left( \xi^2 + \overline{\eta}^2 \right) \right),
2i \left( \overline{\xi \eta} - \xi \eta \right)
 \right) \nonumber \\ 
\label{Eqn:flag_direction_in_terms_of_alpha_beta}
&=
 \left( 0, 
-2 \Im \left( \xi^2 + \overline{\eta}^2 \right), 
2 \Re \left( \xi^2 + \overline{\eta}^2 \right), 
4 \Im \left( \xi \eta \right)
\right),
\end{align}
using the identities $i(\overline{z}-z) = 2 \Im z$,  $\Re(iz) = -\Im(z)$ and $\Im(iz) = \Re(z)$.
We then directly calculate 
\begin{align*}
\xi^2 + \overline{\eta}^2  &= (a+bi)^2 + (c-di)^2 = a^2 - b^2 +c^2 - d^2 + 2(ab-cd)i, \\
\xi \eta &= (a+bi)(c+di) = ac-bd + (ad+bc)i
\end{align*}
and substituting real and imaginary parts give the desired expression for $v$.

Since $v$ has $T$-coordinate $0$, when we intersect $V$ with a 3-plane $T = $ constant, $V$ yields a line in the direction of $v$. The orientation on $V/\R p$ given by $v$ yields the orientation on this line given by $v$.
\end{proof}

\begin{eg}
\label{Eg:flag_of_simple_spinors}
Let us compute the flag of the spinor $\kappa_0 = (1,0)$.
By direct calculation, or using \reflem{spin_vector_to_TXYZ}, we have $\g \circ \f (\kappa_0) = (1, 0, 0, 1)$; let this point be $p_0$.
From \reflem{null_flag_tricky_vector} we have
\[
\G \circ \F (\kappa_0) = [[p_0, (0,0,1,0)]]
\]
i.e. the flag points in the  $Y$-direction. The quotient $V/\R p_0$ is spanned and oriented by $(0,0,1,0)$.

More generally, if we take $\kappa = (e^{i\theta}, 0)$, we obtain $\g \circ \f (\kappa_0) = (1,0,0,1) = p_0$ again, but now (again using \reflem{null_flag_tricky_vector} with $a=\cos \theta$, $b = \sin \theta$), we have 
\[
\G \circ \F(\kappa) = [[p_0, (0, -\sin 2\theta, \cos 2\theta, 0)]].
\]
Now $V/\R p_0$ is spanned and oriented by the vector $(0,-\sin2\theta, \cos 2\theta, 0)$. 

Thus as $\kappa$ rotates from $(1,0)$ by an angle of $\theta$, multiplying $\kappa$ by $e^{i\theta}$, $p$ remains constant, but the flag rotates by an angle of $2\theta$. Indeed, as the direction is $(0,\sin(-2\theta),\cos(-2\theta),0)$, it may be better to say that the flag rotates by an angle of $-2\theta$. 
\end{eg}

We will next see that this principle applies to spinors generally: multiplying a spinor by $e^{i\theta}$ rotates a flag by $-2\theta$, in an appropriate sense.

\subsubsection{Rotating flags}
\label{Sec:rotating_flags}

Given $p\in L^+$, we now consider the set of flags $(p,V,o)$ based at $p$. We first consider which 2-planes $V$ may arise, and for this we need a description of the tangent space to the light cone.
\begin{lem}
\label{Lem:light_cone_orthogonal_complement}
At any $p \in L^+$, the tangent space to $L^+$ is the orthogonal complement $p^\perp$ with respect to the Minkowski inner product:
\[
T_p L^+ = \{ v \in \R^{1,3} \mid \langle p,v \rangle = 0 \} = p^\perp.
\]
\end{lem}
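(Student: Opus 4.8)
The plan is to realise $L^+$ as a regular level set and read off its tangent space. Let $Q \colon \R^{1,3} \To \R$ be the quadratic form $Q(p) = \langle p,p \rangle = T^2 - X^2 - Y^2 - Z^2$. By \refdef{light_cones}, $L$ is the level set $Q^{-1}(0)$, and $L^+$ is the open subset of $L$ on which $T > 0$; so $L^+$ is an open subset of $Q^{-1}(0)$, and $T_p L^+ = T_p \bigl(Q^{-1}(0)\bigr)$ for $p \in L^+$. It therefore suffices to compute $T_p \bigl(Q^{-1}(0)\bigr)$.

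First I would compute the derivative of $Q$. Since $\langle \cdot, \cdot \rangle$ is a symmetric $\R$-bilinear form, for $p, v \in \R^{1,3}$ we have
\[
D_p Q(v) = \left. \frac{d}{ds} \right|_{s=0} \langle p + sv, p+sv \rangle
= \left. \frac{d}{ds}\right|_{s=0} \left( \langle p,p\rangle + 2s\langle p,v\rangle + s^2 \langle v,v \rangle \right)
= 2 \langle p, v \rangle .
\]
Now for $p \in L^+$ we have $T > 0$, hence $p \neq 0$; and since $\langle \cdot, \cdot \rangle$ is nondegenerate, the linear functional $v \mapsto \langle p, v \rangle$ is not identically zero. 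Thus $D_p Q \neq 0$, so $p$ is a regular point of $Q$, and by the regular level set theorem $Q^{-1}(0)$ is a smooth submanifold near $p$ with $T_p\bigl(Q^{-1}(0)\bigr) = \ker D_p Q = \{ v \in \R^{1,3} \mid \langle p, v \rangle = 0 \} = p^\perp$. Combining with the first paragraph gives $T_p L^+ = p^\perp$.

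If one prefers to avoid invoking the level set theorem as a black box, the same conclusion follows from two observations: (a) the inclusion $T_p L^+ \subseteq p^\perp$, obtained by differentiating $\langle \gamma(s), \gamma(s) \rangle \equiv 0$ along any smooth curve $\gamma$ in $L^+$ with $\gamma(0) = p$, which gives $2\langle p, \gamma'(0)\rangle = 0$; and (b) a dimension count, namely $\dim T_p L^+ = 3$ (e.g.\ from \reflem{Hermitian_topology}, $L^+ \cong S^2 \times \R$) and $\dim p^\perp = 3$ (the kernel of the nonzero functional $\langle p, \cdot\rangle$), whence the inclusion is an equality. Either way, the argument is short; the only point requiring care — and the closest thing to an obstacle — is checking that $p$ is a regular point, which rests on $p \neq 0$ (forced by $T > 0$ on $L^+$) together with the nondegeneracy of the Minkowski inner product, and on correctly identifying $\dim p^\perp = 3$.
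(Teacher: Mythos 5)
Your proposal is correct. The paper's own proof is precisely your fallback argument (a)+(b): differentiate $\langle p(s),p(s)\rangle = 0$ along a curve in $L^+$ through $p$ to get $T_p L^+ \subseteq p^\perp$, then note both are 3-dimensional linear subspaces, hence equal. Your primary route via the regular level set theorem is a standard repackaging of the same computation — the derivative $D_pQ(v) = 2\langle p,v\rangle$ and the regularity check $p \neq 0$ are exactly what makes the dimension count in the paper's version work — so the two arguments buy essentially the same thing; the level-set phrasing has the mild advantage of simultaneously certifying that $L^+$ is a smooth 3-manifold (which the paper's curve argument implicitly assumes), at the cost of invoking the theorem as a black box.
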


\begin{proof}
A smooth curve $p(s)$ on $L^+$ passing through $p(0) = p$ satisfies $\langle p(s),p(s) \rangle = 0$ for all $s$. Differentiating and setting $s=0$ yields $\langle p, p'(0) \rangle = 0$ Thus $T_p L^+ \subseteq p^\perp$. As both are 3-dimensional linear subspaces they are equal.
\end{proof}

Thus, the 2-planes $V$ which may arise in a flag based at $p \in L^+$ are precisely those satisfying   $\R p \subset V \subset p^\perp = T_p L^+$. Since $p \in L^+$, $p$ has positive $T$-coordinate, so the ray $\R p$ is transverse to any 3-plane $T =$ constant; moreover, $V$ and $p^\perp$ are also transverse to $T=$ constant. Thus such a $V$ intersects a 3-plane $T=$ constant in a line, which also lies in $p^\perp$. Conversely, a line in a 3-plane $T=$ constant, which also lies in $p^\perp$ spans, together with $p$, a 2-plane $V$ such that $\R p\subset V \subset p^\perp$. So the 2-planes $V$ arising in pointed null flags starting from $p$ can be characterised via their 1-dimensional intersections with 3-planes of constant $T$. The intersections of such 2-planes $V$ with the 3-plane $T=0$ are precisely the 1-dimensional subspaces of the 2-plane $\{T=0\} \cap p^\perp$.

A flag also includes an orientation $o$ on $V/\R p$. As $p$ has positive $T$-coordinate, each vector in $V/\R p$ has a unique representative with $T$-coordinate zero, giving an isomorphism $V/\R p \cong V \cap \{T=0\}$. The orientation $o$ on $V/\R p$ is thus equivalent to an orientation on the 1-dimensional subspace $V \cap \{T=0\}$.

Thus, the flags based at $p$ can be characterised by their oriented intersections with $\{T=0\}$, and correspond precisely to the oriented 1-dimensional subspaces of the 2-plane $\{T=0\} \cap p^\perp$. There is an $S^1$ family of oriented lines through the origin in a 2-plane, and so there is an $S^1$ family of flags based at $p$. 

To investigate how flags rotate, we set up a useful basis.

Let $\kappa = (\xi, \eta) = (a+bi, c+di) \in \C^2_\times$ where $a,b,c,d \in \R$, and let $|\xi|^2+|\eta|^2=r^2$, where $r>0$. Also let $S^3_r = \{ \kappa \in \C^2 \, \mid \, |\xi|^2 + |\eta|^2 = r^2 \}$ be the 3-sphere of radius $r>0$ in $\C^2$. The corresponding flag $\G \circ \F(\kappa)$ is $[[p,v]]$ where $p = \g \circ \f (\kappa) \in L^+$ and $v = \g \circ D_\kappa \f (\ZZ(\kappa)) \in T_p L^+$ (\reflem{GoF_in_pv_form}).  We calculated $p$ and $v$ explicitly in \reflem{spin_vector_to_TXYZ} and \reflem{null_flag_tricky_vector}. In \refsec{calculating_flags_Minkowski} we observed the algebraic similarity between the expressions for $p$ and $v$. We now extend them to provide a useful basis of the $XYZ$ 3-plane.

The $T$-coordinate of $p$ is $r^2$, so $p \in L^+ \cap \{T=r^2\}$, which is a 2-sphere of Euclidean radius $r$ in the 3-plane $T=r^2$ in Minkowski space. Indeed $L^+ \cap \{T=r^2\} = r^2 \S^+$, where the celestial sphere $\S^+ = L^+ \cap \{T=1\}$ is the unit sphere in the plane $T=1$ (\refdef{celestial_sphere}(ii)). Indeed, as observed in in \reflem{C2_to_R31_Hopf_fibrations}, $\g \circ \f$ restricts to a Hopf fibration $S^3_r \To r^2 \S^+$.

Thus the projection of $p$ to the $XYZ$ 3-plane has Euclidean length $r$. Similarly, (because of the algebraic similarity of $p$ and $v$), one can check that the $XYZ$-projection of $v$ also has length $r$. Since $v \in T_p L^+ = p^\perp$ we have $\langle p, v \rangle = 0$, and since the $T$-coordinate of $v$ is $0$ (\reflem{null_flag_tricky_vector} and discussed in \refsec{calculating_flags_Minkowski}), we deduce that the $XYZ$-projections of $p$ and $v$ are orthogonal in $\R^3$. Thus, they extend naturally to an orthogonal basis where all vectors have length $r$. 

When $r=1$, i.e. $\kappa \in S^3$, we saw in \reflem{gof_Hopf} that the $XYZ$-projection of $\g \circ \f$ is the Hopf fibration composed with stereographic projection. And in this case we obtain an orthonormal basis.

\begin{lem}
\label{Lem:orthonormal_basis_from_spinor}
For any $\kappa \in \C^2_\times$, the vectors $e_1(\kappa), e_2(\kappa), e_3(\kappa)$ below all have length $r$ and form a right-handed orthogonal basis of $\R^3$. Moreover, identifying $\R^3$ with the $T=0$ plane in $\R^{1,3}$, $e_1(\kappa)$ and $e_2 (\kappa)$ form an orthogonal basis for the 2-plane $\{T=0\} \cap p^\perp$.
\[ \begin{array}{rll}
    e_1 (\kappa) &= \left( a^2 - b^2 - c^2 + d^2, \; 2(ab+cd), 2(bd-ac) \right) &= \frac{1}{2} \pi_{XYZ} \circ \g \circ D_\kappa \f \left( i \ZZ(\kappa) \right) \\
    e_2 (\kappa) &= \left( 2(cd-ab), \; a^2 - b^2 + c^2 - d^2, \; 2(ad+bc) \right) &= \frac{1}{2} \pi_{XYZ} (v) = \frac{1}{2} \pi_{XYZ} \circ \g \circ D_\kappa \f \left( \ZZ(\kappa) \right)\\
    e_3(\kappa) &= \left( 2(ac+bd), \; 2(bc-ad), \; a^2 + b^2 - c^2 - d^2 \right) &= \pi_{XYZ} (p) 
    = \frac{1}{2} \pi_{XYZ} \circ \g \circ D_\kappa \f (\kappa) \\
\end{array} \]
\end{lem}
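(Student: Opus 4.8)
The plan is to prove the lemma by combining the light-cone geometry already assembled with one explicit polynomial identity. I would first verify the three identifications of $e_i(\kappa)$ with projected directional derivatives. For $e_3$ this is immediate: $D_\kappa\f(\kappa)=2\f(\kappa)$ by \reflem{derivatives_of_f_in_easy_directions}, so $\tfrac12\pi_{XYZ}\circ\g\circ D_\kappa\f(\kappa)=\pi_{XYZ}(\g\circ\f(\kappa))=\pi_{XYZ}(p)=e_3$ by \reflem{spin_vector_to_TXYZ}. For $e_2$ it is exactly the vector $v$ computed in \reflem{null_flag_tricky_vector}, whose $XYZ$-part halved is $e_2$. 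For $e_1$ one computes $D_\kappa\f(i\ZZ(\kappa))$: from \refeqn{derivative_formula} and \refdef{Z_C2_to_C2_and_J}, using $(i\ZZ(\kappa))^*=-i\kappa^T J$, one gets $D_\kappa\f(i\ZZ(\kappa))=-i(\kappa\kappa^T J-J\overline{\kappa}\kappa^*)$ (this expression already appears in the proof of \reflem{flag_well_defined}); substituting the matrices $\kappa\kappa^T J$ and $J\overline{\kappa}\kappa^*$ evaluated in the proof of \reflem{null_flag_tricky_vector}, applying $\g$ (\refdef{g_H_to_R31}), and expanding in the real coordinates $(a,b,c,d)$ gives $\g\circ D_\kappa\f(i\ZZ(\kappa))=(0,\,2(a^2-b^2-c^2+d^2),\,4(ab+cd),\,-4(ac-bd))$, whose halved $XYZ$-part is $e_1$.

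Step 1 immediately yields several structural facts. The vectors $\g\circ D_\kappa\f(\ZZ(\kappa))$ and $\g\circ D_\kappa\f(i\ZZ(\kappa))$ have zero $T$-coordinate, whereas $\g\circ D_\kappa\f(\kappa)=2p$ does not. Since $\f$ maps into $\HH_0^+$, both of those vectors lie in $T_{\f(\kappa)}\HH_0^+$, hence their $\g$-images lie in $T_pL^+=p^\perp$ by \reflem{light_cone_orthogonal_complement}; combining $\langle p,\cdot\rangle=0$ with the vanishing $T$-coordinate converts Minkowski-orthogonality into Euclidean-orthogonality, so $e_3\cdot e_1=e_3\cdot e_2=0$ in $\R^3$, and $(0,e_1),(0,e_2)$ both lie in the $2$-plane $\{T=0\}\cap p^\perp$. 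Next I would compute the norms directly: expanding and using factorisations such as $(ac+bd)^2+(bc-ad)^2=(a^2+b^2)(c^2+d^2)$ (with the analogous identities for $e_1$ and $e_2$), one finds $\|e_1\|^2=\|e_2\|^2=\|e_3\|^2=(a^2+b^2+c^2+d^2)^2$, so the three vectors have equal positive length; for $e_3$ one can alternatively just note that $p$ is null, so $\|\pi_{XYZ}(p)\|$ equals the $T$-coordinate of $p$.

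It remains to check $e_1\cdot e_2=0$ and right-handedness. The vanishing of $e_1\cdot e_2$ is the single genuinely computational point: in the expanded dot product the monomials cancel in matching pairs (the sum of the first two coordinatewise products is $4[cd(a^2-b^2)+ab(c^2-d^2)]$ and the third coordinatewise product is its negative). Conceptually this reflects the quaternionic picture of \refsec{Z}: $\ZZ(\kappa)$ and $i\ZZ(\kappa)$ correspond to the orthogonal horizontal vectors $-\pmb{k}\kappa,\pmb{j}\kappa$ and the scaled Hopf fibration $\g\circ\f|_{S^3_r}$ of \reflem{C2_to_R31_Hopf_fibrations} is conformal on horizontal subspaces, but the direct cancellation is self-contained. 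Having shown that $e_1(\kappa),e_2(\kappa),e_3(\kappa)$ are mutually orthogonal and nonzero for every $\kappa\in\C^2_\times$, the function $\kappa\mapsto\det(e_1(\kappa)\mid e_2(\kappa)\mid e_3(\kappa))$ is continuous and nowhere vanishing on the connected set $\C^2_\times$, hence has constant sign; at $\kappa_0=(1,0)$ the triple is the standard ordered basis of $\R^3$ with determinant $1>0$, so the basis is right-handed for all $\kappa$. Therefore $e_1,e_2,e_3$ form a right-handed orthogonal basis of $\R^3$, and since $\{T=0\}\cap p^\perp$ is $2$-dimensional and contains $e_1,e_2$, these form an orthogonal basis of it.

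The main obstacle is purely bookkeeping: getting the computation of $D_\kappa\f(i\ZZ(\kappa))$ (hence the formula for $e_1$) exactly right, and verifying the cancellation in $e_1\cdot e_2$. There is no conceptual difficulty, since every structural claim — orthogonality to $e_3$, membership in $\{T=0\}\cap p^\perp$, equal length, right-handedness — follows from the light-cone geometry and the connectedness of $\C^2_\times$ already established.
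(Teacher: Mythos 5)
Your proof is correct, and the identifications of $e_1,e_2,e_3$ with the projected derivatives, the norm computations, and the direct verification of $e_1\cdot e_2=0$ all check out (I verified the claimed formula for $\g\circ D_\kappa\f(i\ZZ(\kappa))$ and the pairwise cancellation in $e_1\cdot e_2$). Your route differs from the paper's in two places, both defensible. First, for the orthogonality $e_3\cdot e_1=e_3\cdot e_2=0$ you argue conceptually: the derivative vectors are tangent to $L^+$, hence Minkowski-orthogonal to $p$ by \reflem{light_cone_orthogonal_complement}, and the vanishing $T$-coordinate converts this to Euclidean orthogonality. The paper runs this implication in the opposite direction: it verifies the cross-product identity $e_1\times e_2=r^2e_3$ by direct computation (which packages orthogonality of $e_3$ to $e_1,e_2$, the orientation, and the lengths all at once), and then uses $\langle p,e_i\rangle=-e_3\cdot e_i$ to conclude $e_1,e_2\in\{T=0\}\cap p^\perp$. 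Second, for right-handedness you use continuity of the determinant on the connected set $\C^2_\times$ together with evaluation at $\kappa=(1,0)$, where the paper just reads the orientation off the cross product. Your version trades one polynomial identity (the cross product) for a softer topological argument plus the separate check of $e_1\cdot e_2=0$; the paper's version is more computational but self-contained in a single identity. One small point worth flagging: your computation gives $\|e_i\|=a^2+b^2+c^2+d^2=r^2$, not $r$ as the lemma states; since $p$ is null with $T$-coordinate $r^2$, the value $r^2$ is the correct one, and the discrepancy lies in the paper's statement (and the surrounding text of \refsec{rotating_flags}), not in your argument.
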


In \reflem{structure_of_derivative_of_f} we identified 3 vectors $\kappa, \ZZ(\kappa), i \ZZ(\kappa) \in \C^2$, which are orthogonal and have equal length $r$; at $\kappa$ they consist of  a radial vector and two tangent vectors to $S^3_r$. We showed that their images under the the derivative of $\f$ spanned the image of $D_\kappa \f$. Here we calculate that their images under the derivative of $\g \circ \f$ are also orthogonal and have equal length $r$.

\begin{proof}
These are direct calculations. In addition to the preceding lemmas mentioned above giving $e_2(\kappa)$ and $e_3 (\kappa)$, we can also use \reflem{derivatives_of_f_in_easy_directions} that $D_\kappa \f (\kappa) = 2 \f(\kappa)$.
A similar method as in the proof of  \reflem{null_flag_tricky_vector}, using \refeqn{derivative_formula}, gives $e_1 (\kappa)$.
One can check that the cross product of the first and second vectors yields $a^2 + b^2 + c^2 + d^2 = r^2$ times the third, so we have the correct orientation.

Now $p = (r^2, e_3(\kappa))$, using \reflem{spin_vector_to_TXYZ}. When regarded in $\R^{1,3}$, the $e_i$ have $T$-coordinate zero, so $\langle p, e_i \rangle = - e_3 \cdot e_i$, which is zero for $i=1,2$. Thus $e_1, e_2 \in \{T=0\} \cap p^\perp$. Since $e_1, e_2$ are orthogonal, and since as argued above $\{T=0\} \cap p^\perp$ is 2-dimensional, we have an orthogonal basis.
\end{proof}
We now have an explicit picture of the intersection of the flag of $\kappa$ in the 3-plane $T=r^2$ of Minkowski space. In this 3-plane, the light cone appears as a 2-sphere of radius $r^2$, $p$ appears at $e_3 (\kappa)$, and the tangent space to the light cone $T_p L^+ = p^\perp$ appears as the tangent 2-plane to the 2-sphere at $p$. The flag 2-plane appears as an oriented line through $p$ in the direction of $e_2 \sim v$; the possible flag 2-planes based at $p$ appear as oriented lines through $p$ tangent to the 2-sphere. See \reffig{flag_intersect_T_r_squared}.

\begin{center}
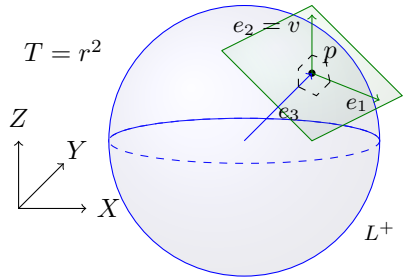

    \begin{tikzpicture}[scale=1.2]
    \draw[blue] (0,0) ellipse (1.5cm and 0.25cm);
       \fill[white] (-1.5,-0.25)--(1.5,-0.25)--(1.5,0.05)--(-1.5,0.05);
       \draw[dashed,blue] (0,0) ellipse (1.5cm and 0.25cm);
        \shade[ball color = blue!40, opacity = 0.1] (0,0) circle (1.5cm);
        \draw[blue] (0,0) circle (1.5cm);
        \shade[ball color=green!40,opacity=0.1] (-0.25,1)--(0.75,0)--(1.75,0.5)--(0.75,1.5)--(-0.25,1);
        \draw[green!50!black] (-0.25,1)--(0.75,0)--(1.75,0.5)--(0.75,1.5)--(-0.25,1);
        \fill (0.75,0.75) circle (0.04cm);
        \draw[blue, ->] (0,0)--(0.75,0.75);
        \draw[green!50!black,->](0.75,0.75)--(1.5,0.45);
        \draw[green!50!black,->] (0.75,0.75)--(0.75,1.4);
        \node at (-2,1){$T=r^2$};
        \node at (-2.5,0.25){$Z$};
        \node at (-1.5,-0.75){$X$};
        \node at (-1.85,-0.1){$Y$};
        \draw[<->](-2.5,0)--(-2.5,-0.75)--(-1.75,-0.75);
        \draw[->](-2.5,-0.75)--(-2,-0.25);
        \node at (0.95,0.95){$p$};
        \node at (0.5,0.3){\small$e_3$};
        \node at (0.25,1.25){\small$e_2=v$};
        \node at (1.25,0.4){\small$e_1$};
        \node at (1.5,-1){\footnotesize$L^+$};
        \draw[dashed] (0.6,0.6)--(0.8,0.5)--(0.95,0.65);
        \draw[dashed] (0.6,0.6)--(0.6,0.8)--(0.75,0.95);
        \draw[dashed] (0.95,0.65)--(0.9,0.9)--(0.75,0.95);
    \end{tikzpicture}
    \captionof{figure}{The intersection of the light cone, tangent space, and flag with the plane $T = r^2$.}
    \label{Fig:flag_intersect_T_r_squared}
\end{center}

As an aside, we note that
\[
\kappa = (\xi, \eta) \in S^3
\quad \text{corresponds to a matrix} \quad
\begin{pmatrix} \xi & - \overline{\eta} \\ \eta & \overline{\xi} \end{pmatrix}
\in SU(2),
\]
which in turn corresponds to a rotation of $\R^3$,  under the standard double covering map $SU(2) \To SO(3)$ (a subset of the double cover $SL(2,\C) \To SO(1,3)^+$ considered at length here). The images of the standard basis vectors in $\R^3$ under this rotation are precisely the $e_i (\kappa)$ here.

When $\kappa = (1,0)$, from \refeg{flag_of_simple_spinors}, $e_1, e_2, e_3$ are just unit vectors in the $X,Y,Z$ directions respectively, and we calculated that multiplying $\kappa$ by $e^{i\theta}$ preserved $e_3$ ($= \g \circ \f(\kappa)$) but rotated the flag direction $e_2$ by $-2\theta$ about $e_3$.

We now show this holds in general. In general, a rotation of $\R^3$ about $e_3$ by angle $\theta$ fixes $e_3$, sends $e_1 \mapsto e_1 \cos \theta  + e_2 \sin \theta$, and $e_2 \mapsto -e_1 \sin \theta + e_2 \cos \theta$. 
\begin{lem}
\label{Lem:flag_basis_rotation}
Each $e_i (e^{i\theta} \kappa)$ is obtained from $e_i (\kappa)$ by a rotation of angle $-2\theta$ about $e_3 (\kappa)$.
\end{lem}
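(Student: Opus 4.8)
The plan is to split the claim into two parts: the axis $e_3(\kappa)$ is unchanged, and the pair $(e_1(\kappa),e_2(\kappa))$ is rotated within its plane by $-2\theta$. For the axis, note that $e^{i\theta}\kappa$ and $\kappa$ differ by the unit scalar $e^{i\theta}$, so \reflem{when_f_equal} gives $\f(e^{i\theta}\kappa)=\f(\kappa)$, hence $\g\circ\f(e^{i\theta}\kappa)=\g\circ\f(\kappa)$. Since $e_3(\kappa)=\pi_{XYZ}(\g\circ\f(\kappa))$ depends only on $\f(\kappa)$, we get $e_3(e^{i\theta}\kappa)=e_3(\kappa)$ at once, and consequently $p=\g\circ\f(\kappa)$, its orthogonal complement $p^\perp=T_p L^+$, and the $2$-plane $\{T=0\}\cap p^\perp$ containing $e_1,e_2$ are all unchanged. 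It therefore only remains to see how $(e_1,e_2)$ moves inside this fixed plane.

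The key device is to package the two vectors into a single element $E(\kappa):=e_1(\kappa)+i\,e_2(\kappa)\in\C^3$ and establish the compact closed form
\[
E(\kappa)=\bigl(\ \overline{\xi^2-\eta^2}\ ,\ \ i\,\overline{\xi^2+\eta^2}\ ,\ \ -2\,\overline{\xi\eta}\ \bigr),
\qquad \kappa=(\xi,\eta).
\]
This is a short algebraic regrouping. Equation \refeqn{flag_direction_in_terms_of_alpha_beta} already expresses $v$, hence $e_2=\tfrac12\pi_{XYZ}(v)$, through $\Re$ and $\Im$ of $\xi^2+\overline{\eta}^2$ together with $\Im(\xi\eta)$; applying the same computation (as in the proof of \reflem{null_flag_tricky_vector}) to $D_\kappa\f(i\ZZ(\kappa))=-i\bigl(\kappa\ZZ(\kappa)^*-\ZZ(\kappa)\kappa^*\bigr)$ expresses $e_1=\tfrac12\pi_{XYZ}\circ\g\circ D_\kappa\f(i\ZZ(\kappa))$ through $\Re$ and $\Im$ of $\xi^2-\overline{\eta}^2$ together with $\Re(\xi\eta)$. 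Combining real and imaginary parts coordinate by coordinate collapses to the displayed formula. (Alternatively one simply substitutes the explicit real expressions for $e_1,e_2$ from \reflem{orthonormal_basis_from_spinor} into $e_1+ie_2$.)

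With this closed form the conclusion is immediate. Replacing $\kappa=(\xi,\eta)$ by $e^{i\theta}\kappa=(e^{i\theta}\xi,e^{i\theta}\eta)$ multiplies each of $\xi^2$, $\eta^2$, $\xi\eta$ by $e^{2i\theta}$, hence multiplies each entry of $E(\kappa)$ by $\overline{e^{2i\theta}}=e^{-2i\theta}$, giving
\[
E(e^{i\theta}\kappa)=e^{-2i\theta}\,E(\kappa).
\]
Separating real and imaginary parts against the unchanged basis $e_1(\kappa),e_2(\kappa)$ realises $e_1(e^{i\theta}\kappa),e_2(e^{i\theta}\kappa)$ as the image of $e_1(\kappa),e_2(\kappa)$ under the orthogonal self-map of the plane $\{T=0\}\cap p^\perp$ that multiplies $e_1+ie_2$ by $e^{-2i\theta}$; since $e_3(\kappa)$ is fixed and $(e_1,e_2,e_3)$ is right-handed, this is exactly a rotation about $e_3(\kappa)$, and the orientation convention fixed just before the lemma — equivalently, the one used at the end of \refeg{flag_of_simple_spinors} — identifies its angle as $-2\theta$. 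As a sanity check this recovers that example: for $\kappa=(1,0)$ one has $E(\kappa)=(1,i,0)$, so $E(e^{i\theta}\kappa)=(e^{-2i\theta},ie^{-2i\theta},0)$, i.e. $e_3$ is fixed while the flag direction $e_2$ turns from the $Y$-axis to $(-\sin 2\theta,\cos 2\theta,0)$.

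There is no conceptual obstacle here; the only things requiring care are (i) the bookkeeping in deriving the closed form for $E(\kappa)$ — routine, but one must track that $e_1$ carries $\xi^2-\overline{\eta}^2$ and $\Re(\xi\eta)$ while $e_2$ carries $\xi^2+\overline{\eta}^2$ and $\Im(\xi\eta)$ — and (ii) reading off the sign of the rotation angle, which is entirely a matter of the orientation convention recorded before the lemma and consistent with \refeg{flag_of_simple_spinors}.
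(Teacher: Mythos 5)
Your proposal is correct and takes essentially the same route as the paper: $e_3$ is unchanged because $\f(e^{i\theta}\kappa)=\f(\kappa)$, and the effect on $e_1,e_2$ is then read off from the explicit spinor formulas, with the sign of the angle interpreted exactly as the paper does in \refeg{flag_of_simple_spinors}. The only real difference is organizational: you package $e_1+ie_2$ into one complex vector whose closed form (which I checked against \reflem{orthonormal_basis_from_spinor} and is correct) exhibits the factor $e^{-2i\theta}$ in a single stroke, whereas the paper expands $e_2(e^{i\theta}\kappa)$ in $\cos 2\theta,\sin 2\theta$ directly and recovers the behaviour of $e_1$ from right-handedness of the basis.
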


\begin{proof}
We first observe that $\f(\kappa) = \f(e^{i\theta} \kappa)$ (\reflem{when_f_equal}) implies $e_3 (\kappa) = e_3 (e^{i \theta} \kappa)$.
We now calculate $e_2 (e^{i\theta} \kappa)$ directly. In \refeqn{flag_direction_in_terms_of_alpha_beta} we calculated an expression for $\g \circ D_\kappa \f (\ZZ(\kappa))$ in terms of $(\xi, \eta)$; replacing them with $e^{i\theta} (\xi, \eta)$ we obtain
\[
\g \circ D_\kappa \f (\ZZ (e^{i \theta} \kappa))
=
\left( 0, 
-2 \Im \left( e^{2 i \theta} \xi^2 + e^{-2i\theta} \overline{\eta}^2 \right), 
2 \Re \left( e^{2 i \theta} \xi^2 + e^{-2i\theta} \overline{\eta}^2 \right), 
4 \Im \left( e^{2 i \theta} \xi \eta \right)
\right).
\]
Now direct computations yield
\begin{align*}
e^{2 i \theta} \xi^2 + e^{-2i\theta} \overline{\eta}^2
&= \left( (a^2-b^2+c^2-d^2) \cos 2\theta - 2(ab+cd) \sin 2\theta \right) \\ & \quad \quad  + i \left( 2(ab-cd) \cos 2\theta + (a^2 - b^2 - c^2 + d^2) \sin 2\theta \right) \\
e^{2i\theta} \xi \eta &= \left( (ac-bd) \cos 2\theta - (ad+bc) \sin 2\theta \right) + i \left( (ad+bc) \cos 2\theta + (ac-bd) \sin 2\theta \right)
\end{align*}
so that $\pi_{XYZ} \circ \g \circ D_\kappa \f (\ZZ (e^{i \theta} \kappa))$ is given by
\begin{align*}
2 \Big( 2(cd-ab) \cos 2\theta &+ (-a^2 + b^2 + c^2 - d^2) \sin 2\theta, 
\;  (a^2 - b^2 + c^2 - d^2) \cos 2\theta - 2(ab+cd) \sin 2\theta, \\
& \quad \quad \quad 2(ad+bc) \cos 2\theta + 2(ac-bd) \sin 2\theta \Big)
\end{align*}
hence $e_2 (e^{i \theta} \kappa) = \frac{1}{2} \pi_{XYZ} \circ \g \circ D_\kappa \f (\ZZ (e^{i \theta} \kappa))$ is given by
\begin{align*}
\cos 2\theta & \left( 2(cd-ab), a^2 - b^2 + c^2 - d^2, 2(ad+bc) \right)
+ \sin 2\theta \left( -a^2 + b^2 + c^2 - d^2, -2(ab+cd), 2(ac-bd) \right) \\
&= e_2 (\kappa) \cos (-2\theta) + e_1 (\kappa) \sin (-2\theta)
\end{align*}
Thus both $e_2$ and $e_3$ behave as claimed. Since $e_1 (e^{i\theta} \kappa)$ forms a right-handed orthonormal basis with $e_2 (e^{i\theta} \kappa)$ and $e_3 (e^{i\theta} \kappa)$, the same must be true of $e_1$.
\end{proof}

\subsubsection{Surjectivity of maps to flags}
\label{Sec:F_surjectivity}

We now show that all flags arise via the maps $\F$ and $\G$. 
\begin{prop}
\label{Prop:F_G_surjective}
The maps $\F$ and $\G \circ \F$ are surjective.
\end{prop}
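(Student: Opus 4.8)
The plan is to exploit the $SL(2,\C)$-equivariance of $\F$ (\refprop{SL2C_spinors_PNF_H_equivariant}) together with the transitivity of $SL(2,\C)$ on $\C^2_\times$ (\reflem{SL2C_on_C2_transitive}), reducing surjectivity to hitting a single ``test flag" and then showing $SL(2,\C)$ acts transitively on the relevant space of flags. Since $\G$ is a bijection (\reflem{G_bijection}) it suffices to prove $\F \colon \C^2_\times \To \mathcal{F_P^O}(\HH)$ is surjective, or equivalently, working in Minkowski space via $\G$, that $\G \circ \F \colon \C^2_\times \To \mathcal{F_P^O}(\R^{1,3})$ is surjective; I will phrase the argument in $\R^{1,3}$ where the geometry is transparent.

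First I would record the image of the base spinor: by \refeg{flag_of_simple_spinors}, $\G \circ \F(\kappa_0) = [[p_0, (0,0,1,0)]]$ where $\kappa_0 = (1,0)$ and $p_0 = (1,0,0,1)$. So it is enough to show that every flag in $\mathcal{F_P^O}(\R^{1,3})$ lies in the $SL(2,\C)$-orbit of $[[p_0,(0,0,1,0)]]$; indeed if $(p,V,o) = A \cdot \G\circ\F(\kappa_0) = \G\circ\F(A\cdot \kappa_0)$ for some $A \in SL(2,\C)$, then $A\cdot\kappa_0$ is a preimage. To prove transitivity of the $SL(2,\C)$-action on $\mathcal{F_P^O}(\R^{1,3})$, I would argue in two stages. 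Stage one: given any $(p,V,o)$, since $SL(2,\C)$ acts on $L^+$ and this action is (via $SO(1,3)^+$) transitive on $L^+$ — which follows from \reflem{SL2C_on_C2_transitive}, \reflem{gof_properties}(iii), and equivariance of $\g\circ\f$, as any $p\in L^+$ equals $\g\circ\f(\kappa)$ for some $\kappa\in\C^2_\times$, and $SL(2,\C)$ is transitive on such $\kappa$ — we may apply some $A_1$ carrying $p$ to $p_0$. This reduces us to flags based at $p_0$. Stage two: the flags based at $p_0$ form an $S^1$-family (as discussed in \refsec{rotating_flags}: oriented lines in the $2$-plane $\{T=0\}\cap p_0^\perp$), and by \reflem{flag_basis_rotation}, multiplying $\kappa_0$ by $e^{i\theta}$ rotates the flag direction $e_2$ by $-2\theta$ about $e_3 = \pi_{XYZ}(p_0)$, while fixing $p_0$. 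As $\theta$ ranges over $\R$, $-2\theta$ ranges over all of $\R$, so every oriented line in $\{T=0\}\cap p_0^\perp$ — hence every flag based at $p_0$ — is realized. The stabilizer of $p_0$ in $SL(2,\C)$ contains the diagonal matrices $\operatorname{diag}(e^{i\theta},e^{-i\theta})$ (which act as $\kappa_0 \mapsto e^{i\theta}\kappa_0$), so composing the rotating element with $A_1$ from stage one completes the transitivity argument.

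Assembling: given an arbitrary flag $\Psi \in \mathcal{F_P^O}(\R^{1,3})$, pick $A_1$ with $A_1^{-1}\cdot\Psi$ based at $p_0$, then pick $\theta$ so that $\operatorname{diag}(e^{i\theta},e^{-i\theta})\cdot\G\circ\F(\kappa_0)$ equals $A_1^{-1}\cdot\Psi$; then $\Psi = \G\circ\F\big(A_1\operatorname{diag}(e^{i\theta},e^{-i\theta})\cdot\kappa_0\big)$, giving surjectivity of $\G\circ\F$, and hence of $\F = \G^{-1}\circ(\G\circ\F)$ since $\G$ is a bijection. The main obstacle I anticipate is making stage two fully rigorous: one must be careful that the identification of flags based at $p_0$ with oriented lines in $\{T=0\}\cap p_0^\perp$ (established in \refsec{rotating_flags}) genuinely matches up with the rotation computed in \reflem{flag_basis_rotation} — i.e. that $e_2(\kappa_0)$ and its rotations exhaust the oriented directions and that the orientation $o$ is correctly tracked under the rotation — rather than, say, covering only half the circle. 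This is exactly the content of \reflem{flag_basis_rotation} combined with \refeg{flag_of_simple_spinors}, so the bookkeeping should go through, but it is the step where an off-by-sign or an orientation error would hide.
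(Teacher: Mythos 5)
Your argument is correct, but it is structured differently from the paper's proof. The paper argues directly at an arbitrary basepoint: given any flag $(p,V,o)$, surjectivity of $\g \circ \f$ onto $L^+$ (\reflem{gof_properties}) supplies some $\kappa$ with $\g \circ \f(\kappa) = p$, and then \reflem{flag_basis_rotation} --- which is proved for \emph{arbitrary} $\kappa$, not just $\kappa_0 = (1,0)$ --- shows that the spinors $e^{i\theta}\kappa$ sweep out the whole $S^1$-family of flags based at $p$; no equivariance or transitivity of $SL(2,\C)$ is invoked at all. You instead run an orbit argument: compute the flag of the base spinor $(1,0)$, use transitivity of $SL(2,\C)$ on $\C^2_\times$ (\reflem{SL2C_on_C2_transitive}) together with equivariance of $\g\circ\f$ and of $\G\circ\F$ (\refprop{SL2C_spinors_PNF_H_equivariant}, \refprop{FG_equivariant}) to move any basepoint to $p_0$, and then exhaust the flags at $p_0$ by the diagonal one-parameter subgroup, which acts on $\kappa_0$ by $e^{i\theta}$. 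Both proofs rest on the same two facts --- the identification of the flags over a fixed $p$ with oriented lines in $\{T=0\}\cap p^\perp$, and the $-2\theta$ rotation under multiplication by $e^{i\theta}$ --- so the mathematical core is shared; the paper's version is leaner because the general form of \reflem{flag_basis_rotation} makes the reduction to a base point unnecessary, while your version only needs the explicit computation at $p_0$ (\refeg{flag_of_simple_spinors}) and yields, as a byproduct, the stronger statement that $\mathcal{F_P^O}(\R^{1,3})$ is a single $SL(2,\C)$-orbit, a fact the paper exploits in this reduce-to-$(1,0)$ style elsewhere (e.g.\ \reflem{image_of_H_parallel}). Your worry about covering only half the circle is unfounded: the flag direction at $p_0$ is $(0,-\sin 2\theta, \cos 2\theta, 0)$, so as $\theta$ runs over $[0,\pi)$ the oriented direction already runs over the full circle, and the orientation is carried correctly by the $[[p,v]]$ notation.
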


\begin{proof}
Since $\G$ is a bijection, it suffices to prove $\G \circ \F$ is a surjection $\C_\times^2 \To \mathcal{F_P^O}(\R^{1,3})$. 
As explained in \refsec{rotating_flags} above, there is an $S^1$ family of flags at a given basepoint $p \in L^+$, which can be characterised by their oriented 1-dimensional intersections with $\{T=0\}$, and these intersections are precisely the oriented 1-dimensional subspaces of the 2-plane $\{T=0\} \cap p^\perp$.

\refsec{rotating_flags} essentially shows that multiplying a spinor by $e^{i\theta}$ fixes the basepoint of a flag, but rotates through this $S^1$ family of flags based at $p$ by an angle of $-2\theta$. 

To see this explicitly, take $\kappa \in \C^2_\times$, which  yields the flag $\G \circ \F (\kappa) = [[p , \g \circ D_\kappa \f (\ZZ(\kappa))]]$ based at $p$, where $p = \g \circ \f (\kappa)$ (\reflem{GoF_in_pv_form}). Since $\g \circ D_\kappa \f (\ZZ(\kappa))$ has $T$-coordinate zero (\reflem{null_flag_tricky_vector}), the 2-plane of the flag intersects $\{T=0\}$ along $\g \circ D_\kappa \f (\ZZ(\kappa))$. So the flag $\G \circ \F (\kappa)$ corresponds to the oriented 1-dimensional subspace of $\{T=0\} \cap p^\perp$ given by $\g \circ D_\kappa \f (\ZZ(\kappa))$ or, if we regard $\R^3$ as the $T=0$ subset of Minkowski space, by $e_2 (\kappa)$. By \reflem{orthonormal_basis_from_spinor}, $e_1 (\kappa)$ and $e_2(\kappa) $ span the 2-plane $\{T=0\} \cap p^\perp$. By \reflem{flag_basis_rotation}, multiplying $\kappa$ by $e^{i\theta}$ rotates this plane in $\R^3$ by an angle of $-2\theta$, about the orthogonal vector $e_3 (\kappa)$. Thus as $\theta$ ranges through $[0,2\pi]$ (or even just $[0,\pi)$), all flags based at $p$ are obtained.

Thus, if $\G \circ \F$ contains in its image a flag based at a point $p \in L^+$, then it contains all flags based at $p$. It thus remains to show that all points of $L^+$ arise in the image of $\g \circ \f$. But we showed this in \reflem{gof_properties}.
\end{proof}

\begin{lem}
\label{Lem:F_G_2-1}
The maps $\F$ and $\G \circ \F$ are 2--1. More precisely, 
$\F(\kappa) = \F(\kappa')$ iff  $\G \circ \F (\kappa) = \G \circ \F (\kappa')$ iff $\kappa = \pm \kappa'$.
\end{lem}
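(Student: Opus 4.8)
The plan is to reduce the whole statement to the basepoint comparison of \reflem{when_f_equal}, and then use the rotation behaviour of flags in \reflem{flag_basis_rotation} (equivalently the explicit computation of \refeg{flag_of_simple_spinors}) to eliminate the residual phase ambiguity. First, since $\G$ is a bijection (\reflem{G_bijection}), we have $\F(\kappa) = \F(\kappa')$ iff $\G \circ \F(\kappa) = \G \circ \F(\kappa')$, so it suffices to show that either equality holds iff $\kappa = \pm \kappa'$.

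For the ``if'' direction, suppose $\kappa' = -\kappa$. Then $\f(-\kappa) = \f(\kappa)$ by \reflem{when_f_equal} (taking $\theta = \pi$), and $\ZZ(-\kappa) = J\overline{-\kappa} = -\ZZ(\kappa)$, so the formula $D_\kappa \f(\nu) = \kappa \nu^* + \nu \kappa^*$ of \refeqn{derivative_formula} gives $D_{-\kappa}\f(\ZZ(-\kappa)) = (-\kappa)(-\ZZ(\kappa))^* + (-\ZZ(\kappa))(-\kappa)^* = \kappa \ZZ(\kappa)^* + \ZZ(\kappa)\kappa^* = D_\kappa \f(\ZZ(\kappa))$. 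Hence $\F(-\kappa) = [[\f(\kappa), D_\kappa \f(\ZZ(\kappa))]] = \F(\kappa)$.

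For the ``only if'' direction, suppose $\F(\kappa) = \F(\kappa')$. Comparing the basepoints of the two flags gives $\f(\kappa) = \f(\kappa')$, so by \reflem{when_f_equal} there is $\theta \in \R$ with $\kappa = e^{i\theta}\kappa'$; it remains to show $e^{i\theta} = \pm 1$. By \reflem{GoF_in_pv_form} together with \reflem{null_flag_tricky_vector}, $\G\circ\F(\kappa') = [[p, v']]$ where $v'$ has $T$-coordinate zero and $\pi_{XYZ}(v') = 2\,e_2(\kappa')$, and likewise $\G\circ\F(\kappa) = [[p, v]]$ with $\pi_{XYZ}(v) = 2\,e_2(e^{i\theta}\kappa')$. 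By \reflem{characterise_equal_PONFs}, equality $[[p,v]] = [[p,v']]$ forces $v = a p + b v'$ with $b > 0$; taking $T$-coordinates (which are $0$ for $v, v'$ and positive for $p$) gives $a = 0$, so $v = b v'$ with $b > 0$, and since $\kappa$ and $\kappa'$ have equal norm we have $|v| = |v'|$, hence $v = v'$, i.e. $e_2(e^{i\theta}\kappa') = e_2(\kappa')$. But by \reflem{flag_basis_rotation}, $e_2(e^{i\theta}\kappa')$ is obtained from $e_2(\kappa')$ by a rotation of angle $-2\theta$ about $e_3(\kappa')$, and a nonzero vector orthogonal to the rotation axis is fixed by such a rotation iff $2\theta \in 2\pi\Z$. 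Thus $\theta \in \pi\Z$, so $e^{i\theta} = \pm 1$ and $\kappa = \pm\kappa'$. (Alternatively, one may use the $SL(2,\C)$-equivariance of $\G\circ\F$ from \refprop{SL2C_spinors_PNF_H_equivariant} and \refprop{FG_equivariant} together with transitivity of the action on $\C^2_\times$ (\reflem{SL2C_on_C2_transitive}) to reduce to $\kappa' = (1,0)$, where the claim follows directly from \refeg{flag_of_simple_spinors}.)

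\textbf{Main obstacle.} There is no serious difficulty, as all the needed machinery is already in place; the one point requiring care is that a flag records an oriented \emph{line}, not a vector, so matching flag directions only yields equality of $v$ and $v'$ up to a positive scalar. The crucial observation that collapses this ambiguity is that the flag-direction vectors attached to $\kappa$ and $e^{i\theta}\kappa'$ have the same Euclidean length (both equal to $2r^2$ where $r^2 = |\xi'|^2 + |\eta'|^2$), which follows because $\kappa = e^{i\theta}\kappa'$ preserves norms and because the rotation in \reflem{flag_basis_rotation} is an isometry; once lengths agree, the positive-scalar freedom disappears and $\theta$ is pinned to $\pi\Z$.
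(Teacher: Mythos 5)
Your proof is correct and takes essentially the same route as the paper: reduce to equal basepoints via \reflem{when_f_equal} (giving $\kappa = e^{i\theta}\kappa'$), then use the $-2\theta$ rotation of flag directions from \reflem{flag_basis_rotation} to force $\theta \in \pi\Z$, with the converse coming from a $2\pi$ rotation. Your extra care with the oriented-line-versus-vector ambiguity (resolved by comparing lengths) and the direct computation showing $\F(-\kappa)=\F(\kappa)$ just make explicit what the paper handles by noting that the $S^1$ family of flags at $p$ is parametrised by oriented lines in $\{T=0\}\cap p^\perp$.
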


\begin{proof}
Again as $\G$ is a bijection it suffices to show that $\G \circ \F$ is 2--1. Suppose two spinors $\kappa, \kappa'$ yield the same flag. Then in particular these flags have the same basepoint $p$, i.e. $\g \circ \f (\kappa) = \g \circ \f (\kappa') = p$. Hence $\kappa' = e^{i \theta} \kappa$ (\reflem{gof_properties}). We have seen (\reflem{flag_basis_rotation}) that the flag of $e^{i \theta} \kappa$ is is obtained from that of $\kappa$ by rotation by an angle of $-2\theta$ through the $S^1$ family of flags based at $p$. This $S^1$ family is characterised by the family of oriented lines in a 2-dimensional Euclidean plane, namely $\{T=0\} \cap p^\perp$. Thus, rotating a flag, we obtain the same flag when the rotation angle is an integer multiple of $2\pi$. Thus $\kappa = \pm \kappa'$. The converse follows equally from these observations: $-\kappa = e^{i\pi} \kappa$ has flag obtained from that of $\kappa$ by a rotation of $-2\pi$, hence yields the same flag.
\end{proof}
(If we ignore orientations, and consider only pointed null flags as per \refdef{pointed_null_flag}, then flags coincide when they are rotated by $\pi$ rather than $2\pi$, yielding 4--1 rather than 2--1 maps.)

We point out that there should be an extension of \refprop{complex_Minkowski_inner_products} using rotations between flags. There we found that for two spinors $\kappa, \kappa'$, the magnitude of $\{\kappa, \kappa'\}$ gave the Minkowski inner product of $p = \g \circ \f (\kappa)$ and $p' = \g \circ \f (\kappa')$. The argument of $\{\kappa, \kappa'\}$ should be related to the angles between the geodesic connecting $p$ to $p'$, and the flag directions of $\G \circ \F(\kappa), \G \circ \F (\kappa')$ at $p,p'$ respectively (or indeed, the directions $e_2(\kappa), e_2 (\kappa')$.

\subsection{From Minkowski space to the hyperboloid model}
\label{Sec:Minkowski_to_hyperboloid}

The third step in our journey is from Minkowski space to the hyperboloid model; we now finally enter hyperbolic space. We define the map $\h$ from the light cone to horospheres, and the map $\H$ from flags to decorated horospheres.

We proceed as follows. We first introduce and discuss the hyperboloid model (\refsec{hyperboloid_model}) and horospheres (\refsec{horospheres}). In \refsec{light_cone_to_horosphere} we define and discuss the map $\h$; in \refsec{SL2C_on_hyperboloid} we prove it is $SL(2,\C)$-equivariant. We briefly digress in \refsec{distances_between_horospheres} to discuss distances between horospheres, and how they can be found from spinors. In \refsec{flags_and_horospheres} we introduce the map $\H$, which produces an oriented line field on a horosphere; however at this stage we do not know that the line field is parallel. In \refsec{examples_from_10} we compute in detail flags and horospheres and decorations from the single spinor $(1,0)$; this work then pays off in \refsec{parallel_line_fields} when we show that oriented line fields obtained from $\H$ are parallel. In \refsec{decorated_horospheres} we define decorated horospheres and show $\H$ is a bijection. Finally, in \refsec{SL2c_on_decorated_horospheres} we show $\H$ is $SL(2,\C)$-equivariant.

\subsubsection{The hyperboloid model}
\label{Sec:hyperboloid_model}

\begin{defn}
The \emph{hyperboloid model} $\hyp$ is the Riemannian submanifold of $\R^{1,3}$ consisting of $x = (T,X,Y,Z) \in \R^{1,3}$ such that 
\[
T>0 \quad \text{and} \quad
\langle x,x \rangle = T^2 - X^2 - Y^2 - Z^2 = 1,
\]
with metric $ds^2 = dX^2 + dY^2 + dZ^2 - dT^2$.
\end{defn}

To see that $\hyp$ is a Riemannian (not Lorentzian or semi-Riemannian) manifold, observe that, by essentially the same proof as \reflem{light_cone_orthogonal_complement} for the light cone (which, like the hyperboloid, is part of a level set of the Minkowski norm function), we have, for any $q \in \hyp$,
\begin{equation}
\label{Eqn:hyperboloid_tangent_space}
T_q \hyp = q^\perp.
\end{equation}
As $q$ by definition has timelike position vector, all nonzero vectors in $q^\perp$ are spacelike. Thus all nonzero tangent vectors to $\hyp$ are spacelike. Reversing the sign of the metric on $\R^{1,3}$, we have a positive definite Riemannian metric on $\hyp$.

The cross section of $\hyp$ with a 3-plane of constant $T \geq 1$ is a Euclidean 2-sphere (of radius $\sqrt{T^2-1}$). The cross section of $L^+$ with such a 3-plane is also a Euclidean 2-sphere (of radius $T$). When $T$ becomes large, these 2-spheres become arbitrarily close and represent the possible directions of geodesics from a point in $\hyp$. Thus we may regard the \emph{sphere at infinity} of $\hyp$, which we write as $\partial \hyp$, as the celestial sphere $\S^+$ (the projectivisation of $L^+$, \refdef{celestial_sphere}(i)).

We denote the isometry group of $\hyp$ by $\Isom \hyp$, and its subgroup of orientation-preserving isometries by $\Isom^+ \hyp$. It is well known that $\Isom \hyp \cong O(1,3)^+$ and $\Isom^+ \hyp \cong SO(1,3)^+$, acting by linear transformations on $\R^{1,3}$. We saw a few examples in \refsec{Minkowski_space_and_g} of how the action of $SL(2,\C)$ gives rise to linear transformations of $\R^{1,3}$ in $SO(1,3)^+$. It is well known that this  map $SL(2,\C) \To SO(1,3)^+$ is a surjective homomorphism which is 2--1, with kernel $\pm I$.

\subsubsection{Horospheres}
\label{Sec:horospheres}

Horospheres in $\hyp$ are given by intersection with certain 3-planes $\Pi$ in $\R^{1,3}$; we now say precisely which.
As mentioned in \refsec{intro_horospheres_decorations}, they are analogous to 2-planes which cut out parabolic conic sections.

\begin{lem}
Let $\Pi$ be an affine 3-plane in $\R^{1,3}$. The following are equivalent.
\begin{enumerate}
\item
$\Pi$ has a lightlike tangent vector, and no timelike tangent vector.
\item
There exist a lightlike vector $n$ and $c \in \R$ so that 
$\Pi=\{x \in \R^{1,3}|\langle x, n \rangle = c \}$.
\item
$\Pi$ is parallel to $n^\perp$ where $n$ is lightlike.
\end{enumerate}
We call such a plane a \emph{lightlike 3-plane}.
\end{lem}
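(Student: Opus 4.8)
The plan is to reduce the statement to a fact about $3$-dimensional linear subspaces of $\R^{1,3}$. An affine $3$-plane $\Pi$ is a translate $x_0 + V$ of a unique $3$-dimensional linear subspace $V \subseteq \R^{1,3}$ — its \emph{direction space} — and for every $p \in \Pi$ the tangent space $T_p \Pi$ is exactly $V$. Since the Minkowski form is nondegenerate, $V = (V^\perp)^\perp = n^\perp$ for a vector $n$ spanning the line $V^\perp$, unique up to a nonzero real scalar. Thus conditions (i) and (iii) are both really conditions on the single subspace $V$, hence on $n$, and the heart of the matter is the following linear-algebra claim: \emph{for nonzero $n \in \R^{1,3}$, the hyperplane $V = n^\perp$ contains a nonzero lightlike vector and contains no timelike vector if and only if $n$ is lightlike.}

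To prove this claim, I would split according to the causal type of $n$. If $n$ is non-null, then $\R^{1,3} = \R n \oplus n^\perp$ is an orthogonal direct sum, so the Minkowski form restricted to $n^\perp$ is negative definite (signature $(0,3)$) when $n$ is timelike, and indefinite (signature $(1,2)$) when $n$ is spacelike; in the first case $n^\perp$ has no nonzero lightlike vector, and in the second it has timelike vectors, so in either case the claim's condition fails. If $n$ is lightlike, then $\langle n,n\rangle = 0$ shows $n \in n^\perp$, so $n^\perp$ contains a nonzero lightlike vector; and the form on $n^\perp$ is negative semidefinite with radical exactly $\R n$ — as one sees, e.g., after normalising $n$ to $(1,0,0,1)$, where $V = \{T=Z\}$ and the form restricts to $-(X^2+Y^2)$ — so $n^\perp$ contains no timelike vector. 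This establishes the claim.

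With the claim in hand, (i) $\Leftrightarrow$ (iii) is immediate: letting $V$ be the direction space of $\Pi$ and $n$ a spanning vector of $V^\perp$, condition (i) is precisely the statement that $V = n^\perp$ contains a nonzero lightlike vector and no timelike vector, which by the claim holds iff $n$ is lightlike, i.e. iff $\Pi$ is parallel to $n^\perp$ for a lightlike $n$, which is (iii). For (ii) $\Leftrightarrow$ (iii): if $\Pi = \{x \mid \langle x,n\rangle = c\}$ with $n$ lightlike, then choosing any $x_0 \in \Pi$ gives $\Pi = x_0 + \{v \mid \langle v,n\rangle = 0\} = x_0 + n^\perp$, so $\Pi$ is parallel to $n^\perp$; conversely, if $\Pi$ is parallel to $n^\perp$ with $n$ lightlike, choose $x_0 \in \Pi$ and set $c = \langle x_0,n\rangle$, so that $\Pi = x_0 + n^\perp = \{x \mid \langle x,n\rangle = c\}$. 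Here one uses that an affine $3$-plane is nonempty, and that $n \neq 0$ since $n^\perp$ is a hyperplane.

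The main obstacle is the linear-algebra claim, and within it the point that the \emph{two} requirements in (i) are jointly needed: the existence of a lightlike tangent vector rules out timelike $n$ (whose orthogonal complement is negative definite), while the absence of timelike tangent vectors rules out spacelike $n$ (whose orthogonal complement has signature $(1,2)$), leaving exactly the lightlike $n$. The remaining content — passing between an affine plane and its direction space, and between a defining linear equation and a basepoint — is routine.
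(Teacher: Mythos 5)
Your proposal is correct and takes essentially the same route as the paper's proof: both identify the Minkowski normal $n$ of $\Pi$, observe that the tangent space of $\Pi$ is $n^\perp$, and case-split on whether $n$ is timelike, spacelike or lightlike, with the lightlike case settled by the same computation (after normalising $n$, the hyperplane is $\{T=Z\}$ and the form restricts to $-(X^2+Y^2)$). The only cosmetic difference is that you dispatch the two non-null cases abstractly via the orthogonal decomposition $\R n \oplus n^\perp$ and a signature count, whereas the paper normalises $n$ by an explicit isometry and exhibits explicit timelike or spacelike vectors in $n^\perp$.
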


\begin{proof}
Let $n$ be a Minkowski normal vector to $\Pi$, so that $\Pi=\{x\in\R^{1,3}|\langle x, n \rangle = c\}$ for some $c\in\R$. Such $n$ is unique up to a nonzero real scalar; we take it to be future pointing, i.e. have non-negative $T$-coordinate. The tangent space to $\Pi$ is then the orthogonal complement $n^\perp$, and $\Pi$ is parallel to $n^\perp$.

If $n$ is timelike, after changing basis by a rotation in the $XYZ$ 3-plane (which is an isometry in $SO(1,3)^+$), we may arrange that $n = (T,X,0,0)$ where $T,X>0$. Similarly, if $n$ is spacelike (resp. timelike) then by a change of basis by boost in the $XT$ 2-plane, we may assume $n = (0,X,0,0)$ and $X>0$  (resp. $(T,0,0,0)$ and $T>0$).

If $n$ is spacelike, $n=(0,X,0,0)$ then $n^\perp$ contains $(1,0,0,0)$, which is timelike. Thus none of (i)--(iii) hold. Similarly, if $n$ is timelike, $n=(T,0,0,0)$, then $n^\perp=\{p=(T,X,Y,Z)|\ T=0\}$, so every nonzero vector in $n^\perp$ is spacelike, and again none of (i)--(iii) hold.

If $n$ is lightlike, $n=(T,X,0,0)$ with $T,X>0$, then $n^\perp=\{x = (T,X,Y,Z)|\ T=X\}$. Any such $x$ satisfies $\langle x,x \rangle = -Y^2-Z^2 \leq 0$ so is lightlike or spacelike. Thus all of (i)--(iii) hold.
\end{proof}
Not all lightlike 3-planes intersect $\hyp$; some pass below (in the past of) the positive light cone. 
\begin{lem}
\label{Lem:plane_intersect_hyperboloid}
A lightlike 3-plane $\Pi$ satisfies $\Pi\cap\hyp\neq\emptyset$ iff $\Pi=\{x\in\R^{1,3}|\langle x, n \rangle = c,\ n \in L^+,\ c>0\}$ for some $n$ and $c$.
\end{lem}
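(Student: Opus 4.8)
The plan is to analyze when a lightlike 3-plane $\Pi = \{x \in \R^{1,3} \mid \langle x, n \rangle = c\}$, with $n$ lightlike and future-pointing, meets the hyperboloid $\hyp$. By the preceding lemma we may as well normalize $n$ to a convenient form. Since all future-pointing lightlike vectors are related by an element of $SO(1,3)^+ \cong \Isom^+ \hyp$, and isometries preserve $\hyp$, it suffices to check the statement for one representative null vector $n$, say $n = (1,1,0,0)$ (or $n = (1,0,0,1)$, whichever makes the algebra cleanest) together with arbitrary $c \in \R$; the general case follows by applying the isometry carrying this $n$ to the given one, noting that such an isometry preserves $L^+$, preserves the sign of $c = \langle x, n\rangle$ for $x \in \hyp$, and preserves $\hyp \cap \Pi$.

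First I would set $n = (1,0,0,1)$, so $\langle x, n \rangle = T - Z$, and ask: for which $c$ does there exist $x = (T,X,Y,Z) \in \R^{1,3}$ with $T > 0$, $T^2 - X^2 - Y^2 - Z^2 = 1$, and $T - Z = c$? Substituting $Z = T - c$ into the hyperboloid equation gives $T^2 - X^2 - Y^2 - (T-c)^2 = 1$, i.e. $2cT - c^2 - X^2 - Y^2 = 1$, hence $2cT = 1 + c^2 + X^2 + Y^2$. If $c = 0$ this forces $0 = 1 + X^2 + Y^2$, which is impossible, so $\Pi \cap \hyp = \emptyset$. If $c < 0$, then the right-hand side $1 + c^2 + X^2 + Y^2$ is strictly positive while $2cT$ would need $T < 0$, contradicting $T > 0$; again $\Pi \cap \hyp = \emptyset$. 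If $c > 0$, then for any $X, Y$ we may solve $T = (1 + c^2 + X^2 + Y^2)/(2c) > 0$ and set $Z = T - c$, producing a point of $\Pi \cap \hyp$ (indeed a whole horosphere's worth). Thus $\Pi \cap \hyp \neq \emptyset$ iff $c > 0$.

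To finish I would package the reduction: given an arbitrary lightlike 3-plane $\Pi$ meeting $\hyp$, write $\Pi = \{x \mid \langle x, m\rangle = c'\}$ with $m$ future-pointing lightlike (as in the previous lemma), pick $B \in SO(1,3)^+$ with $B n = m$ where $n = (1,0,0,1)$, and observe $B^{-1}\Pi = \{x \mid \langle x, n \rangle = c'\}$ since $B$ preserves the Minkowski form; as $B^{-1}$ is an isometry of $\hyp$, $B^{-1}\Pi$ meets $\hyp$ iff $\Pi$ does, and by the computation above this happens iff $c' > 0$. Conversely any $\Pi$ of the stated form with $n \in L^+$, $c > 0$ meets $\hyp$, by transporting the explicit solution back. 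The only mild subtlety — the ``hard part'' — is bookkeeping the normalization of $n$: one must confirm that the scalar $c$ attached to a plane is only well defined once a future-pointing normalization of the null normal $n$ is fixed (scaling $n$ by a positive real scales $c$ by the same positive real, so the \emph{sign} of $c$ is well defined, which is all the statement needs), and that the chosen isometry $B$ can indeed be taken orientation-preserving and orthochronous, which holds because $SO(1,3)^+$ acts transitively on future-pointing null rays.
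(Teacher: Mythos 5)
Your proof is correct and follows essentially the same route as the paper's: normalise the null normal to a standard form via an element of $SO(1,3)^+$, substitute the plane equation into the hyperboloid equation, and read off the sign condition on $c$ (the paper uses $n=(1,1,0,0)$ and the explicit solution $((1+c^2)/2c,(1-c^2)/2c,0,0)$, but this is only a cosmetic difference). Your extra remark that only the sign of $c$ is well defined under positive rescaling of $n$ is a sensible clarification but does not change the argument.
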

Any lightlike 3-plane has an equation $\langle x,n \rangle = c$ where $n \in L^+$; the point here is that only those with $c>0$ intersect $\hyp$.
\begin{proof}
Let $\Pi$ have equation $\langle x,n \rangle = c$ with $n \in L^+$. By a change of basis in $SO(1,3)^+$, we may assume $n = (1,1,0,0)$. Such a change of basis preserves $\langle \cdot, \cdot \rangle$ and $L^+$, hence $\Pi$ is given by an equation of the desired form iff its equation satisfies the desired form after this change of basis. The 3-plane $\Pi$ then has equation $T-X=c$. The plane intersects $\hyp$ iff there exist $(T,X,Y,Z)$ such that $T-X=c$, $T>0$ and $T^2 - X^2 - Y^2 - Z^2 = 1$.

Substituting the former into the latter yields $T^2 - (T-c)^2 -Y^2-Z^2=1 = 2cT-c^2-Y^2-Z^2=1$. If $c \leq 0$ then, as $T>0$, every term on the left is non-positive and we have a contradiction. If $c>0$ then there certainly are solutions, for instance $(T,X,Y,Z) = ((1+c^2)/2c, (1-c^2)/2c,0,0)$.
\end{proof}

\begin{defn}
\label{Def:set_of_horospheres}
A \emph{horosphere} in $\hyp$ is a non-empty intersection of $\hyp$ with a lightlike 3-plane. The set of all horospheres in $\hyp$ is denoted $\mathfrak{H}(\hyp)$.
\end{defn}
It is perhaps not obvious that this definition agrees with \refdef{intro_horosphere}; it is better seen via other models. In any case, a lightlike 3-plane $\Pi$ intersecting $\hyp$ determines a horosphere $\mathpzc{h}$; and conversely, $\mathpzc{h}$ determines the plane $\Pi$ as the unique affine 3-plane containing $\mathpzc{h}$. So there is a bijection
\[
\{ \text{Lightlike 3-planes $\Pi$ such that $\Pi \cap \hyp \neq \emptyset$} \}
\To \mathfrak{H}(\hyp), 
\]
given by intersection with $\hyp$.

A horosphere determines a distinguished point at infinity, i.e. ray on the light cone, as follows.
\begin{lem}
\label{Lem:horosphere_centre_exists}
Let $\mathpzc{h} \in \mathfrak{H}(\hyp)$ be the intersection of $\hyp$ with the lightlike 3-plane $\Pi$ with equation $\langle x,n \rangle = c$, where $n \in L^+$ and $c>0$. Then $\Pi$ intersects every ray of $L^+$ except the ray containing $n$.
\end{lem}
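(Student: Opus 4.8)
The plan is to reduce, as in the proof of \reflem{plane_intersect_hyperboloid}, to the normalised case $n = (1,1,0,0)$ by a change of basis in $SO(1,3)^+$. Such a change of basis is a linear isometry of $\R^{1,3}$ preserving $\langle \cdot,\cdot\rangle$, $L^+$, and $\hyp$, and it sends rays of $L^+$ to rays of $L^+$; so $\Pi$ meets a given ray iff its image meets the image ray, and it suffices to prove the statement for $n = (1,1,0,0)$, where $\Pi$ has equation $T - X = c$ with $c > 0$.

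Next I would parametrise the rays of $L^+$. A point $x = (T,X,Y,Z) \in L^+$ satisfies $T > 0$ and $T^2 = X^2 + Y^2 + Z^2$; scaling so that $T = 1$, each ray of $L^+$ meets the plane $T=1$ in a unique point $(1, X, Y, Z)$ with $X^2 + Y^2 + Z^2 = 1$, i.e.\ rays of $L^+$ correspond bijectively to points of the unit sphere $S^2 \subset \{T=1\}$ (this is just \refdef{celestial_sphere}). For the ray through such a point, the general point is $t(1,X,Y,Z)$ with $t > 0$, and it lies on $\Pi$ iff $t(1 - X) = c$. Since $c > 0$ and $t > 0$, this has a (unique, positive) solution $t = c/(1-X)$ precisely when $1 - X > 0$, i.e.\ $X < 1$; and it has no solution when $1 - X = 0$, i.e.\ $X = 1$, which forces $Y = Z = 0$, the single point $(1,1,0,0)$ spanning the ray $\R n$. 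Thus $\Pi$ meets every ray of $L^+$ except $\R n$, as claimed.

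There is really no serious obstacle here; the only point requiring a little care is the case analysis on the sign of $1 - X$ and checking that the excluded ray is exactly $\R n$ rather than, say, some other ray or no ray at all — but this falls out immediately from $X^2 + Y^2 + Z^2 = 1$ together with $X = 1$. One should also remark that for $X < 1$ the intersection point $t(1,X,Y,Z)$ automatically has $T = t > 0$, so it genuinely lies on $L^+$ (not merely on the linear span), completing the verification. This lemma then justifies calling the ray $\R n$ the \emph{centre} of the horosphere $\horo$: it is the unique point at infinity not ``seen'' by the plane $\Pi$.
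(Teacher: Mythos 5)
Your proof is correct and follows essentially the same route as the paper: normalise to $n=(1,1,0,0)$ via $SO(1,3)^+$ and solve the linear equation $t(1-X)=c$ along each ray, the only cosmetic difference being that the paper rules out the ray through $n$ by noting $\Pi$ is parallel to and disjoint from $n^\perp$, while you observe directly that the equation has no solution when $X=1$.
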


\begin{proof}
The 3-plane $\Pi$ is parallel to, and disjoint from, the 3-plane $n^\perp$, which contains the ray of $L^+$ through $n$. Thus $\Pi$ does not intersect the ray containing $n$.

To see that $\Pi$ intersects every other ray, let $p \in L^+$ be a point not on the ray through $n$. By a change of basis as in \reflem{plane_intersect_hyperboloid}, we may assume $n=(1,1,0,0)$, so $\Pi$ has equation $T-X=c$. Let $p = (T_0, X_0, Y_0, Z_0)$. Note that $T_0 > X_0$, for if $T_0 \leq X_0$ then $T_0^2 \leq X_0^2$ so $0 = \langle p,p \rangle = T_0^2 - X_0^2 - Y_0^2 - Z_0^2 \leq -Y_0^2 - Z_0^2$, so $Y_0 = Z_0 = 0$, so $p$ is on the ray through $n$. We then observe that the point $cp/(T_0 - X_0)$ lies on both the ray through $p$ (since it is a positive multiple of $p$), and $\Pi$ (since the $T$-coordinate $cT_0/(T_0 - X_0)$ and $X$-coordinate $cX_0/(T_0-X_0)$ differ by $c$).
\end{proof}

\begin{defn}
Let $\mathpzc{h} \in \mathfrak{H}(\hyp)$, corresponding to the lightlike 3-plane $\Pi$. The \emph{centre} of $\mathpzc{h}$ is the unique point of $\partial \hyp \cong \S^+$ such that $\Pi$ does not intersect the corresponding ray of $L^+$.
\end{defn}
Here we regard $\S^+$ as the projectivisation of $L^+$, \refdef{celestial_sphere}(i). By \reflem{horosphere_centre_exists}, if $\Pi$ has equation $\langle x, n \rangle = c$ where $n \in L^+$ and $c>0$, then the centre of $\mathpzc{h}$ is the point of $\S^+$ corresponding to the ray through the normal vector $n$.

\begin{defn}
Let $\mathpzc{h}$ be a horosphere, corresponding to the 3-plane $\Pi$. The \emph{horoball} bounded by $\mathpzc{h}$ is the subset of $\hyp$ bounded by $\h$, on the same side of $\Pi$ as its centre. The \emph{centre} of a horoball is the centre of its bounding horosphere.
\end{defn}
We may regard a horoball as a neighbourhood in $\hyp$ of its centre, a point at infinity in $\partial \hyp$.

{\flushleft \textbf{Remark.} } A horosphere appears in the hyperboloid model as a 2-dimensional paraboloid. To see this, again as in \reflem{plane_intersect_hyperboloid} we may change basis in $SO(1,3)^+$ and assume the lightlike 3-plane has equation $T-X=c$ where $c>0$ (we could in fact obtain equation $T-X=1$). Eliminating $T$ from $T-X=c$ and $T^2-X^2-Y^2-Z^2=1$ yields $(X+c)^2-X^2-Y^2-Z^2=1$, so $2cX-Y^2-Z^2=1-c^2$, hence $X=\frac{1}{2c} \left( Y^2 +Z^2 + 1-c^2 \right)$, which is the equation of a 2-dimensional paraboloid in $\R^3$. Thus the horosphere is the image of the paraboloid $X=\frac{1}{2c} \left( Y^2 +Z^2 + 1-c^2 \right)$ in $\R^3$ under the injective linear map $\R^3 \To \R^{1,3}$ given by $(X,Y,Z) \mapsto (X+c,X,Y,Z)$.

This remark makes clear that a horosphere has the topology of a 2-plane. In fact, a horosphere is isometric to the Euclidean plane; this is easier to see in other models of hyperbolic space.

\subsubsection{The map from the light cone to horospheres}
\label{Sec:light_cone_to_horosphere}

The following idea, assigning horospheres to points of $L^+$, goes back at least to Penner \cite{Penner87}, at least in 2-dimensional hyperbolic space.
\begin{defn}
\label{Def:h}
There is a bijection
\[
\h \colon L^+ \To \horos(\hyp)
\]
which sends $p \in L^+$ to the horosphere $\mathpzc{h}$ given by the intersection of $\hyp$ with the lightlike 3-plane with equation $\langle x, p \rangle = 1$.
\end{defn}

\begin{proof}
If $p \in L^+$ then by \reflem{plane_intersect_hyperboloid} the 3-plane $\langle x, p \rangle = 1$ is lightlike and intersects $\hyp$ nontrivially, yielding a horosphere, so the map is well defined.

To show $\h$ is bijective, we construct its inverse. So let $\mathpzc{h}$ be a horosphere, with corresponding lightlike 3-plane $\Pi$. By \reflem{plane_intersect_hyperboloid}, $\Pi$ has an equation of the form $\langle x, n \rangle = c$ where $n \in L^+$ and $c>0$. Dividing through by $c$, $\Pi$ has equivalent equation $\langle x, n/c \rangle = 1$. Now $n/c \in L^+$, and with the constant normalised to $1$, $\Pi$ has a unique equation of this form. Thus $n/c$ is the unique point in $L^+$ such that $\h(n/c) = \horo$.
\end{proof}

By \reflem{horosphere_centre_exists}, the horosphere $\h(p)$ has centre given by the ray through $p$.

Let us consider the geometry of the map $\h$. As $p$ is scaled up or down by multiples of $c>0$, the 3-plane $\langle x, p \rangle = 1$ is translated through a family of lightlike 3-planes with common normal, namely the ray through $p$. This is because $\langle x, cp \rangle = 1$ is equivalent to $\langle x, p \rangle = \frac{1}{c}$. The family of lightlike 3-planes are disjoint, and their intersections with $\hyp$ yield a family of horospheres with common centre foliating $\hyp$.
As $p$ goes to infinity, the 3-planes approach tangency with the light cone, and the corresponding horospheres also ``go to infinity", bounding decreasing horoballs, and eventually becoming arbitrarily far from any given point in $\hyp$.

The set $\horos(\hyp)$ naturally has the topology of $S^2 \times \R$. For instance, a horosphere is uniquely specified by its centre, a point of $\partial \hyp \cong \S^+ \cong S^2$, and a real parameter  specifying the position of $\horo$ in the foliation of $\hyp$ by horospheres about $p$. With this topology, $\h$ is a diffeomorphism.

Forgetting everything about the horosphere except its centre, we obtain the following, which is useful in the sequel.
\begin{defn}
\label{Def:h_partial_light_cone_to_hyp}
The map from the positive light cone to the boundary at infinity of $\hyp$
\[
\h_\partial \colon L^+ \To \partial \hyp = \S^+
\]
sends $p$ to the centre of $\h(p)$.
\end{defn}
Since the centre of $\h(p)$ is the ray through $p$, $\h_\partial$ is just the projectivisation map collapsing each ray of $L^+ \cong S^2 \times \R$ to a point, producing $\S^+ = \partial \hyp$. 

The map $\h$ also provides a nice description of the tangent spaces of a horosphere. We demonstrate this after giving a straightforward lemma that will be useful in the sequel.
\begin{lem}
\label{Lem:lightlike_intersection}
Let $q \in \hyp$ and $1 \leq k \leq 4$ be an integer. The intersection of the 3-plane $T_q \hyp = q^\perp$ with a $k$-plane $V \subset \R^{1,3}$ containing a lightlike or timelike vector is transverse, and hence $T_q \hyp \cap V$ has dimension $k-1$.
\end{lem}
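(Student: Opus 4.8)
The plan is to reduce the statement to one elementary fact about the Minkowski form restricted to $q^\perp$, and then to apply the standard dimension count for transverse linear subspaces. Recall first, from \refeqn{hyperboloid_tangent_space} and the discussion following it, that since $q\in\hyp$ has timelike position vector, every nonzero vector of $T_q\hyp=q^\perp$ is spacelike; in particular $q^\perp$ contains no timelike vector and no nonzero lightlike vector. This is the only ``geometric'' input needed.

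Next I would record a small piece of linear algebra: for linear subspaces $V,W$ of a vector space $U$ with $\dim W=\dim U-1$, one has $V+W=U$ (i.e. $V$ and $W$ are transverse) if and only if $V\not\subseteq W$. Indeed, if $V\subseteq W$ then $V+W=W\neq U$; and if $V\not\subseteq W$ then $V+W$ strictly contains the codimension-one subspace $W$, so $\dim(V+W)=\dim U$. Applying this with $U=\R^{1,3}$ and $W=q^\perp$ (which is $3$-dimensional, hence of codimension one), transversality of $T_q\hyp$ and $V$ is equivalent to $V\not\subseteq q^\perp$. But by hypothesis $V$ contains a lightlike or timelike vector $v$; such a $v$ is nonzero and is not spacelike, so by the first step $v\notin q^\perp$. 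Hence $V\not\subseteq q^\perp$, and the intersection $T_q\hyp\cap V$ is transverse.

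Finally, transversality of linear subspaces gives the dimension directly: $\dim(T_q\hyp\cap V)=\dim T_q\hyp+\dim V-\dim\R^{1,3}=3+k-4=k-1$. The extreme cases are automatically included: when $k=1$, $V$ is a line through a lightlike or timelike vector and $T_q\hyp\cap V=\{0\}$ has dimension $0$; when $k=4$, $V=\R^{1,3}$ and the intersection is all of $q^\perp$, of dimension $3$. There is no real obstacle here — the ``hard'' step is simply remembering that the Minkowski form is negative definite on $q^\perp$, after which everything is formal; some care is only needed to phrase transversality correctly so that the codimension-addition formula applies across the whole range $1\le k\le 4$.
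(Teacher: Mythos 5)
Your proof is correct and follows essentially the same route as the paper: both arguments note that $T_q\hyp = q^\perp$ is spacelike so the lightlike or timelike vector in $V$ forces $T_q\hyp + V$ to be all of $\R^{1,3}$, and then apply the dimension formula. You have merely made explicit the codimension-one observation that the paper leaves implicit.
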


\begin{proof}
As $T_q \hyp$ is spacelike, but $V$ contains a lightlike or timelike vector, $T_q \hyp + V$ has dimension more than $3$, hence $4$. Thus the intersection is transverse, and the intersection is as claimed.
\end{proof}

\begin{lem}
\label{Lem:tangent_space_of_horosphere}
Let $p \in L^+$ and let $q$ be a point on the horosphere $\h(p)$. Then the tangent space $T_q \h(p)$ is the 2-plane given by the following transverse intersection of 3-planes:
\[
T_q \h(p) = p^\perp \cap q^\perp.
\]
\end{lem}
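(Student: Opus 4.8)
The plan is to identify the tangent space $T_q \h(p)$ by recognising that the horosphere $\h(p)$ sits inside $\hyp$ as the intersection with the affine lightlike 3-plane $\Pi = \{ x \mid \langle x, p \rangle = 1 \}$, and then to compute the tangent space of such an intersection at $q$. First I would observe that $\h(p) = \hyp \cap \Pi$, where $\hyp$ is a level set of the Minkowski quadratic form and $\Pi$ is a level set of the linear functional $x \mapsto \langle x, p \rangle$. Differentiating the defining equation $\langle x, x \rangle = 1$ of $\hyp$ along a curve through $q$ gives, as in \refeqn{hyperboloid_tangent_space}, that $T_q \hyp = q^\perp$; differentiating the (already linear) equation $\langle x, p \rangle = 1$ of $\Pi$ along a curve through $q$ gives that the tangent space to $\Pi$ at $q$ is $p^\perp$. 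Hence any smooth curve lying on $\h(p)$ through $q$ has velocity in both $q^\perp$ and $p^\perp$, so $T_q \h(p) \subseteq p^\perp \cap q^\perp$.

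Next I would check the reverse inclusion and the dimension count, which is where the transversality statement does the work. Since $q \in \hyp$ has timelike position vector, $q^\perp = T_q \hyp$ is a 3-plane consisting entirely of spacelike vectors. On the other hand $p^\perp$ is a lightlike 3-plane: by \reflem{light_cone_orthogonal_complement} it is the tangent space $T_p L^+$ to the light cone, and it contains the lightlike vector $p$ itself. So $p^\perp$ contains a lightlike vector while $q^\perp$ contains none, hence $p^\perp \neq q^\perp$, and \reflem{lightlike_intersection} (applied with $V = p^\perp$, which is a 3-plane containing a lightlike vector) tells us the intersection $q^\perp \cap p^\perp$ is transverse and 2-dimensional. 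Meanwhile $\h(p) = \hyp \cap \Pi$ is a 2-dimensional submanifold of $\hyp$ (a horosphere is a surface, as noted after \refdef{set_of_horospheres} and in the remark of \refsec{horospheres}), so $T_q \h(p)$ is a 2-dimensional subspace contained in the 2-dimensional space $p^\perp \cap q^\perp$; therefore they are equal.

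The one point requiring a little care — and the main (minor) obstacle — is justifying that $T_q\h(p)$ genuinely has dimension $2$, i.e. that $\h(p)$ is a smooth 2-manifold near $q$, so that the inclusion $T_q\h(p) \subseteq p^\perp \cap q^\perp$ between a priori-unequal-dimension spaces can be upgraded to equality. This follows because $\hyp$ and $\Pi$ meet transversally in $\R^{1,3}$: their tangent spaces $q^\perp$ and $p^\perp$ are distinct 3-planes (shown above via the spacelike-versus-lightlike distinction), so they span all of $\R^{1,3}$, making $\hyp \cap \Pi$ a codimension-$2$ submanifold, i.e. a surface. Alternatively one can invoke the explicit paraboloid parametrisation of a horosphere given in the remark of \refsec{horospheres}, which exhibits $\h(p)$ as the image of an injective smooth immersion of $\R^2$, and then the two tangent 2-planes coincide by dimension. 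Either route closes the argument.
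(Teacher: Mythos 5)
Your proof is correct and follows essentially the same route as the paper: identify $T_q\hyp = q^\perp$ and the tangent space of the cutting plane $\Pi$ as $p^\perp$, then settle the dimension/transversality either via the explicit paraboloid description of horospheres or via \reflem{lightlike_intersection}, exactly the two justifications the paper offers. The only difference is your added remark that $q^\perp \neq p^\perp$ forces $\hyp$ and $\Pi$ to meet transversally in $\R^{1,3}$, which is a harmless (and valid) elaboration of the same dimension count.
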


\begin{proof}
Observe that $p^\perp$ is the tangent space to the 3-plane $\langle x,p \rangle = 1$ cutting out $\h(p)$, and $q^\perp$ is the tangent 3-plane to $\hyp$ at $q$, by \refeqn{hyperboloid_tangent_space}. So $T_q \h(p)$ is given as claimed. We explicitly calculated that horospheres are paraboloids, hence 2-dimensional manifolds, so the intersection must be transverse to obtain a 2-dimensional result. This can also be seen directly from \reflem{lightlike_intersection}, since $p^\perp$ contains the lightlike vector $p$.
\end{proof}

\subsubsection{$SL(2,\C)$ action on hyperboloid model}
\label{Sec:SL2C_on_hyperboloid}

We have seen that $SL(2,\C)$ acts on $\R^{1,3}$ in \refdef{SL2C_on_R31}, by linear maps in $SO(1,3)^+$. Linear maps in $SO(1,3)^+$ preserve the Minkowski metric, the positive light cone $L^+$, the hyperboloid $\hyp$, and lightlike 3-planes. They also send rays of $L^+$ to rays of $L^+$, send horospheres to horospheres, and act as orientation-preserving isometries on $\hyp$. Thus we can make the following definitions.
\begin{defn} \
\label{Def:SL2C_action_on_hyperboloid_model}
\begin{enumerate}
\item
$SL(2,\C)$ acts on $\hyp$ by restriction of its action on $\R^{1,3}$.
\item
$SL(2,\C)$ acts on $\partial \hyp$ by restriction of its action to $L^+$ and projectivisation to $\S^+ = \partial \hyp$.
\item
$SL(2,\C)$ acts on $\horos(\hyp)$ via its action on $\hyp$.
\end{enumerate}
\end{defn}

\begin{lem} \
\label{Lem:h_equivariance}
\begin{enumerate}
\item
The actions of $SL(2,\C)$ on $L^+$ and $\horos(\hyp)$ are equivariant with respect to $\h$. 
\item
The actions of $SL(2,\C)$ on $L^+$ and $\partial \hyp$ are equivariant with respect to $\h_\partial$. 
\end{enumerate}
That is, for $A \in SL(2,\C)$ and $p \in L^+$,
\[
\h(A\cdot p) = A\cdot (\h(p)) 
\quad \text{and} \quad
\h_\partial (A\cdot p) = A\cdot \h_\partial(p).
\]
\end{lem}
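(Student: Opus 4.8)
The plan is to reduce both statements to the single fact, established in \reflem{SL2C_action_on_light_cones} and the discussion around it, that each $A \in SL(2,\C)$ acts on $\R^{1,3}$ through a linear map $T_A \in SO(1,3)^+$, which is invertible, preserves the Minkowski inner product $\langle\cdot,\cdot\rangle$, and preserves both $L^+$ and $\hyp$. Once this is in hand, everything is a matter of unwinding the three actions in \refdef{SL2C_action_on_hyperboloid_model}.

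For (i), first I would unwind definitions: $\h(p) = \hyp \cap \Pi_p$ where $\Pi_p = \{x \in \R^{1,3} \mid \langle x,p \rangle = 1\}$, and by \refdef{SL2C_action_on_hyperboloid_model} the action on $\horos(\hyp)$ is via the isometry $T_A$ of $\hyp$, so $A \cdot \h(p) = T_A(\hyp \cap \Pi_p) = \hyp \cap T_A(\Pi_p)$, using that $T_A$ is a bijection fixing $\hyp$ setwise. The crux is then the identity $T_A(\Pi_p) = \Pi_{A \cdot p}$, which I would verify by the chain $y \in T_A(\Pi_p) \iff \langle T_A^{-1} y, p \rangle = 1 \iff \langle y, T_A p \rangle = 1 \iff \langle y, A \cdot p \rangle = 1$, the middle step being exactly the isometry property of $T_A$ for the Minkowski form (equivalently, $T_A^{-1}$ is the adjoint of $T_A$ for $\langle\cdot,\cdot\rangle$). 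Together with the observation, from \reflem{SL2C_action_on_light_cones}, that $A \cdot p \in L^+$ (so that $\Pi_{A \cdot p}$ is a genuine lightlike $3$-plane and $\h(A \cdot p)$ is defined), this gives $A \cdot \h(p) = \hyp \cap \Pi_{A \cdot p} = \h(A \cdot p)$.

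For (ii) I would give the quickest argument: by \refdef{h_partial_light_cone_to_hyp} and the remark following it, $\h_\partial$ is simply the projectivisation map $L^+ \to \S^+$ collapsing each ray to a point, while the action on $\partial \hyp = \S^+$ is \emph{defined} in \refdef{SL2C_action_on_hyperboloid_model}(ii) to be the action on $L^+$ followed by that same projectivisation. Since $T_A$ permutes the rays of $L^+$ it descends to the quotient, so the square
\[
\begin{array}{ccc}
L^+ & \stackrel{\h_\partial}{\To} & \S^+ \\
\downarrow A && \downarrow A \\
L^+ & \stackrel{\h_\partial}{\To} & \S^+
\end{array}
\]
commutes by construction. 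Alternatively, one can deduce (ii) from (i), using that an isometry of $\hyp$ carries a horosphere to a horosphere and carries its centre (the unique point of $\partial\hyp$ it determines) to the centre of the image, whence $\h_\partial(A \cdot p) = \text{centre of } \h(A \cdot p) = \text{centre of } A \cdot \h(p) = A \cdot \h_\partial(p)$.

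I do not expect a genuine obstacle here; the proof is short. The only point requiring care is the direction of the inner-product identity $\langle T_A^{-1} y, p \rangle = \langle y, T_A p \rangle$ — that is, remembering precisely that membership of $T_A$ in $SO(1,3)^+ \subset O(1,3)$ is exactly the statement that $T_A$ preserves $\langle\cdot,\cdot\rangle$. Everything else is routine bookkeeping with the definitions of the actions on $\hyp$, $\partial\hyp$, and $\horos(\hyp)$.
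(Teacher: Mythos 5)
Your proof is correct and is essentially the paper's argument: the paper likewise applies $A$ to the defining equation $\langle x,p\rangle = 1$ and uses invariance of the Minkowski form to rewrite $\langle A^{-1}\cdot x, p\rangle = 1$ as $\langle x, A\cdot p\rangle = 1$, then obtains (ii) by "forgetting everything but points at infinity," i.e. your projectivisation argument.
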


\begin{proof} 
The horosphere $\h(p)$ is cut out of $\hyp$ by the 3-plane $\langle x,p \rangle = 1$. Upon applying $A$, we see that $A\cdot \h(p)$ is cut out of $\hyp$ by the equation $\langle A^{-1}\cdot x, p \rangle = 1$, which is equivalent to $\langle x, A\cdot p \rangle = 1$, and this equation cuts out $\h(A\cdot p)$. Thus $A\cdot \h(p) = \h(A\cdot p)$ as desired for (i). Forgetting everything but points at infinity, we obtain (ii).
\end{proof}

We will need the following in the sequel. To those familiar with hyperbolic geometry it will be known or a simple exercise, but we can give an argument using spinors, which may be of interest.
\begin{lem}
The action of $SL(2,\C)$ on $\mathfrak{H}(\hyp)$ is transitive.
\end{lem}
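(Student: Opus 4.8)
The plan is to show that $SL(2,\C)$ acts transitively on $\mathfrak{H}(\hyp)$ by combining the transitivity of the $SL(2,\C)$ action on $\C^2_\times$ (\reflem{SL2C_on_C2_transitive}) with the surjectivity of the map $\g \circ \f \colon \C^2_\times \To L^+$ (\reflem{gof_properties}(iii)) and the bijectivity and equivariance of $\h \colon L^+ \To \mathfrak{H}(\hyp)$ (\refdef{h} and \reflem{h_equivariance}(i)). Since every horosphere is $\h(p)$ for a unique $p \in L^+$, and every such $p$ equals $\g \circ \f(\kappa)$ for some $\kappa \in \C^2_\times$, it suffices to move one spinor to another by $SL(2,\C)$ and then transport the conclusion along $\g\circ\f$ and $\h$, both of which are equivariant.

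Concretely, I would proceed as follows. Given two horospheres $\horo, \horo' \in \mathfrak{H}(\hyp)$, use the bijection $\h$ to write $\horo = \h(p)$ and $\horo' = \h(p')$ for unique $p, p' \in L^+$. By \reflem{gof_properties}(iii), choose $\kappa, \kappa' \in \C^2_\times$ with $\g\circ\f(\kappa) = p$ and $\g\circ\f(\kappa') = p'$. By \reflem{SL2C_on_C2_transitive}, there is $A \in SL(2,\C)$ with $A \cdot \kappa = \kappa'$. Now apply equivariance of $\g\circ\f$ (\reflem{gof_properties}(v)) to get $A \cdot p = A\cdot(\g\circ\f(\kappa)) = \g\circ\f(A\cdot\kappa) = \g\circ\f(\kappa') = p'$, and then equivariance of $\h$ (\reflem{h_equivariance}(i)) to get $A \cdot \horo = A\cdot \h(p) = \h(A\cdot p) = \h(p') = \horo'$. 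This exhibits $A$ carrying $\horo$ to $\horo'$, proving transitivity.

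There is no real obstacle here: every ingredient is already established in the excerpt, and the argument is a short diagram chase through equivariant maps. The only point requiring a moment's care is to confirm that the $SL(2,\C)$ action on $\mathfrak{H}(\hyp)$ referenced in the statement is precisely the one of \refdef{SL2C_action_on_hyperboloid_model}(iii), so that \reflem{h_equivariance}(i) applies verbatim; this is immediate since that is the only action on horospheres defined so far. One could alternatively give a direct geometric proof — using the normal-form reduction of \reflem{plane_intersect_hyperboloid} to bring any lightlike $3$-plane to the equation $T - X = 1$ by an element of $SO(1,3)^+$, hence by an element of $SL(2,\C)$ via the surjection $SL(2,\C) \To SO(1,3)^+$ — but the spinor route is cleaner and is, as the authors note, "of interest" in its own right.
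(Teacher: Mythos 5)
Your proposal is correct and is essentially identical to the paper's own proof: both lift the two horospheres to spinors via the surjectivity of $\g \circ \f$ and the bijectivity of $\h$, apply \reflem{SL2C_on_C2_transitive} to move one spinor to the other, and transport the conclusion back through the equivariance of $\g \circ \f$ and $\h$. No gaps; the remark about the alternative $SO(1,3)^+$ normal-form argument matches the paper's own comment that a direct hyperbolic-geometry proof exists but the spinor route is preferred.
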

In other words, if $\mathpzc{h}, \mathpzc{h}'$ are horospheres then there exists $A \in SL(2,\C)$ such that $A \cdot \mathpzc{h} = \mathpzc{h}'$. This $A$ is not unique.
\begin{proof}
As $\h$ is bijective (\refdef{h}) and $\g \circ \f\colon \C^2_\times \To L^+$ is surjective (\reflem{gof_properties}), there exist $\kappa, \kappa' \in \C^2_\times$ such that $\h \circ \g \circ f (\kappa) = \mathpzc{h}$ and $\h \circ \g \circ f (\kappa') = \mathpzc{h'}$. Now by \reflem{SL2C_on_C2_transitive} the action of $SL(2,\C)$ on $\C^2_\times$ is transitive, so there exists $A \in SL(2,\C)$ such that $A \cdot \kappa = \kappa'$. Then by equivariance of $\h$ (\reflem{h_equivariance}) and $\g \circ \f$ (\reflem{gof_properties}) we have
\[
A \cdot \mathpzc{h} 
= A \cdot \left( \h \circ \g \circ \f (\kappa) \right) 
= \h \circ \g \circ \f \left( A \cdot \kappa \right)
= \h \circ \g \circ \f (\kappa')
= \mathpzc{h'}
\]
as desired.
\end{proof}

\subsubsection{Distances between horospheres}
\label{Sec:distances_between_horospheres}

We now consider distances between horospheres and points in $\hyp^3$. Later, in \refsec{complex_lambda_lengths}, we will define \emph{complex} and \emph{directed} distances between horospheres with decorations, but for now we only need a simpler, undirected notion of distance. The arguments of this subsection are based on  \cite{Penner87}.

Let $\mathpzc{h}, \mathpzc{h}'$ be two horospheres, with centres $p \neq p'$ respectively. Let $\gamma$ be the geodesic with endpoints $p,p'$, and let $q = \gamma \cap \mathpzc{h}$ and $q' = \gamma \cap \mathpzc{h}'$. If $\mathpzc{h}$ and $\mathpzc{h}'$ are disjoint, then the shortest arc from $\mathpzc{h}$ to $\mathpzc{h'}$ is the segment $\gamma_{q,q'}$ of the geodesic $\gamma$ between $q$ and $q'$. When $\mathpzc{h}, \mathpzc{h'}$ overlap, one might think their distance should be zero, but instead we it turns out to be useful to use the same segment $\gamma_{q,q'}$, but count the distance negatively. When $\horo, \horo'$ have the same centre, there is no distinguished geodesic $\gamma$, we define a distance of $-\infty$ (see \refsec{complex_lambda_lengths} for justification). 
\begin{defn}
\label{Def:signed_undirected_distance}
The \emph{signed (undirected) distance} $\rho$ between $\mathpzc{h}$ and $\mathpzc{h'}$ is defined as follows.
\begin{enumerate}
\item 
If $p = p'$ then $\rho = - \infty$.
\item
If $p \neq p'$ and
\begin{enumerate}
\item
$\mathpzc{h}, \mathpzc{h}'$ are disjoint, then $\rho$ is the length of $\gamma_{q,q'}$;
\item 
$\mathpzc{h}, \mathpzc{h}'$ are tangent, then $\rho=0$; 
\item 
$\mathpzc{h}, \mathpzc{h}'$ overlap, then $\rho$ is the negative length of $\gamma_{q,q'}$.
\end{enumerate}
\end{enumerate}
\end{defn}

We can apply a similar idea for the distance between a horosphere $\horo$ and a point $q$. Let  $p$ be the centre of $\horo$, let $\gamma$ the geodesic with an endpoint at $p$ passing through $q$, and let $q' = \horo \cap \gamma$. let $\gamma_{q,q'}$ be the segment of $\gamma$ between $q$ and $q'$. This segment provides the shortest path between $\horo$ and $q$.
\begin{defn}
The \emph{signed distance} $\rho$ between $\horo$ and $q$ is defined as follow.
\begin{enumerate}
\item 
If $q$ lies outside the horoball bounded by $\horo$, then $\rho$ is the length of $\gamma_{q,q'}$.
\item
If $q$ lies on $\horo$, then $\rho = 0$.
\item
If $q$ lies inside the horoball bounded by $\horo$, then $\rho$ is the negative length of $\gamma_{q,q'}$.
\end{enumerate}
\end{defn}

\begin{lem}
\label{Lem:geodesic}
Let $q_0 = (1,0,0,0) \in \hyp$ and $p = (T,X,Y,Z) \in L^+$. Then the signed distance $\rho$ between $\h(p) \in\mathfrak{H}(\hyp)$ and $q_0$ is $\log T$.
\end{lem}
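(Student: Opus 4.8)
The plan is to write down explicitly the geodesic from $q_0$ to the centre of $\h(p)$, locate its intersection with the horosphere $\h(p)$, and read off the length and the sign directly.

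First I would use the standard description of geodesics of $\hyp$ emanating from $q_0 = (1,0,0,0)$: they are the curves $\gamma(t) = (\cosh t)\,q_0 + (\sinh t)\,u$, where $u = (0,u_1,u_2,u_3)$ is a spacelike unit vector, so that $\langle u,u\rangle = -1$ and $\langle q_0,u\rangle = 0$; the parameter $t$ is hyperbolic arc length, since $\langle \gamma'(t),\gamma'(t)\rangle = \sinh^2 t - \cosh^2 t = -1$ and the metric on $\hyp$ is $-\langle\cdot,\cdot\rangle$; and $\gamma(t)$ tends in $\partial\hyp = \S^+$ to the ray through $(1,u_1,u_2,u_3)$ as $t\to+\infty$. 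Since $p = (T,X,Y,Z)\in L^+$ has $X^2+Y^2+Z^2 = T^2$ with $T>0$, the vector $u$ with spatial part $(X/T,Y/T,Z/T)$ is genuinely a unit vector, and the resulting geodesic $\gamma$ has $\gamma(t)$ approaching the ray through $\frac{1}{T} p$, i.e. the centre of $\h(p)$, as $t\to+\infty$. Thus $\gamma$ is precisely the geodesic through $q_0$ with an endpoint at the centre of $\h(p)$ appearing in the definition of signed distance between a horosphere and a point, and increasing $t$ moves towards that centre.

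Next I would compute $\langle\gamma(t),p\rangle$. Bilinearity together with $\langle q_0,p\rangle = T$ and $\langle u,p\rangle = -(X^2+Y^2+Z^2)/T = -T$ gives $\langle\gamma(t),p\rangle = T\cosh t - T\sinh t = T e^{-t}$. By \refdef{h}, $\h(p) = \hyp \cap \{x : \langle x,p\rangle = 1\}$, and since $t\mapsto Te^{-t}$ is a strictly decreasing bijection $\R\to\R^+$, the geodesic $\gamma$ meets $\h(p)$ in the single point $q' = \gamma(\log T)$ (which indeed lies in $\hyp$ and on the plane $\langle x,p\rangle = 1$); hence the segment $\gamma_{q_0,q'}$ of $\gamma$ between $q_0 = \gamma(0)$ and $q'$ has length $|\log T|$.

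It remains to fix the sign. The horoball bounded by $\h(p)$ is the side of the plane $\langle x,p\rangle = 1$ containing the centre of $\h(p)$; since $\langle\gamma(t),p\rangle = Te^{-t}\to 0$ along $\gamma$ in the direction of that centre, the horoball is $\{x\in\hyp : \langle x,p\rangle < 1\}$. As $\langle q_0,p\rangle = T$, the point $q_0$ lies outside the horoball precisely when $T>1$ (i.e. $\log T > 0$), on $\h(p)$ when $T = 1$, and inside the horoball when $T<1$ (i.e. $\log T < 0$); in these three cases the sign convention in the definition of signed distance gives $\rho$ equal to $+|\log T|$, $0$, and $-|\log T|$ respectively, hence $\rho = \log T$ in all cases. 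The only delicate point is this final sign bookkeeping --- matching ``inside/outside the horoball'' to the sign of $\log T$; the geodesic computation itself is routine. (One could instead first apply a spatial rotation in $SO(1,3)^+$ fixing $q_0$ to normalise $p$ to $(T,T,0,0)$ before computing, but this is not necessary.)
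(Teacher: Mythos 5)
Your proof is correct, but it takes a genuinely different route from the paper's. The paper works with the affine line $x(s)=sp+(1-s)q_0$ in $\R^{1,3}$ (not the geodesic itself), characterises the union of the cone on $\h(p)$ and the ray through $p$ by the quadratic equation $\langle x,p\rangle^2=\langle x,x\rangle$, solves for the intersection, rescales the solution onto $\hyp$ to obtain the closest point $q'$ explicitly, and then extracts the distance from $\cosh d=\langle q_0,q'\rangle$, with a separate argument (examining on which side of $q_0$ the point $q'$ lies) to fix the sign. You instead parametrise the actual unit-speed geodesic $\gamma(t)=\cosh t\,q_0+\sinh t\,u$ from $q_0$ towards the centre of $\h(p)$, observe $\langle\gamma(t),p\rangle=Te^{-t}$, and read off both the intersection time $t=\log T$ and, since $t$ is arc length, the distance directly; the sign then falls out of comparing $\langle q_0,p\rangle=T$ with $1$, after correctly identifying the horoball as $\{x\in\hyp:\langle x,p\rangle<1\}$, which is consistent with the paper's own description in the proof of \reflem{geodesic2}. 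Your computation is shorter, avoids both the quadratic and the inverse-$\cosh$ step, and handles the sign bookkeeping more cleanly; what the paper's method buys is a reusable template --- the ``cone on the horosphere'' equation together with the chordal-line trick --- which it applies essentially verbatim in \reflem{geodesic2} to the distance between two horospheres, where there is no distinguished finite point from which to launch a unit-speed geodesic (though your approach could also be adapted there by parametrising the common perpendicular).
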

Here $q_0$ can be regarded as ``the centre of $\hyp$", the unique point with $X,Y,Z$-coordinates all zero. 

\begin{proof}
The strategy is as follows: consider the affine line in $\R^{1,3}$  from $p$ to $q_0$; calculate where this line intersects the cone on the horosphere $\h(p)$; this intersection point will be on the ray through the the point of $\h(p)$ closest to $q_0$; then we find the desired distance.

As the horosphere $\h(p)$ consists of the points $x \in \hyp$ (which satisfy $\langle x,x \rangle = 1$) with $\langle x,p \rangle = 1$, the \emph{cone} on $\h(p)$ consists of constant multiples $cx$ ($c \in \R$) of such points, which satisfy $\langle cx, p \rangle = c$ and $\langle cx,cx \rangle = c^2$, hence $\langle cx, p \rangle = \langle cx, cx \rangle^2$.

Recall that the centre of $\h(p)$ is the point of $\partial \hyp$ represented by $p$, i.e. the ray through $p$. Note $\langle p,p \rangle = 0$. For points $x$ on this ray we have $\langle x,x \rangle^2 = 0 = \langle x, p \rangle^2$.

From the previous two paragraphs, we observe that points $x$ in the cone on $\h(p)$ and on the ray through $p$ satisfy $\langle x, p \rangle^2 = \langle x,x \rangle$. Conversely, if a point $x$ satisfies $\langle x,p \rangle^2 = \langle x,x \rangle$ then we claim it is either on this cone or this ray. To see this, note the equation implies $\langle x,x \rangle \geq 0$. If $\langle x,x \rangle = 0$, we have $\langle x, p \rangle = 0$, so that $x$ lies on the ray through $p$;. If $\langle x,x \rangle > 0$ then there is a real multiple $x'$ of $x$ on $\hyp$, and then we have $\langle x', x' \rangle = 1$ and $\langle p, x' \rangle^2 = 1$. But as $p \in L^+$ and $x' \in \hyp$ we cannot have $\langle p, x' \rangle < 0$; thus $\langle p, x' \rangle = 1$, so $x' \in \h(p)$ and $x$ lies on the cone on $\h(p)$. 

Therefore, the equation
\begin{equation}
\label{Eqn:cone_on_horosphere}
\langle x,p \rangle^2 = \langle x,x \rangle
\end{equation}
characterises points in the cone on $\h(p)$ and the ray through $p$. We now parametrise the affine line from $p$ to $q_0$ by $x(s) = sp+(1-s)q_0$ and find where $x(s)$ satisfies \refeqn{cone_on_horosphere}.
We calculate
\begin{align*}
\langle x,p \rangle = \langle sp+(1-s)q_0 ,p \rangle
= s \langle p,p \rangle + (1-s) \langle q_0 , p \rangle
= (1-s)T,
\end{align*}
using $p= (T,X,Y,Z)$, $q_0 = (1,0,0,0)$, and since $p \in L^+$ so that $\langle p,p \rangle = 0$. Similarly, 
\begin{align*}
\langle x,x \rangle 
&= s^2 \langle p,p \rangle + 2s(1-s) \langle p, q_0 \rangle + (1-s)^2 \langle q_0, q_0 \rangle \\
&= 2s(1-s)T + (1-s)^2 = (1-s) \left( 2sT + 1-s \right).
\end{align*}
The equation $\langle x,p \rangle^2 = \langle x,x \rangle$ then yields
\[
(1-s)^2 T^2 = (1-s) \left( 2sT + 1-s \right) 
\]
The solution $s=1$ corresponds to $x=p$, the other solution is $s = \frac{T^2-1}{T^2+2T-1}$. For this $s$, $x(s)$ lies on the cone above $\h(p)$ at the point closest to $q_0$, and normalising its length gives the closest point in $\h(p)$ to $q_0$ as
\[
q' = \left( \frac{T^2 + 1}{2T^2}T, \frac{T^2-1}{2T^2} X, \frac{T^2-1}{2T^2} Y, \frac{T^2-1}{2T^2} Z \right),
\]
When $T>1$, the $X,Y,Z$ coordinates of $q'$ are positive multiples of $X,Y,Z$, so $q'$ lies on the geodesic from $q_0$ to the point at infinity represented by $p$, on the same side of $q_0$ as $p$. The horoball bounded by $\h(p)$ is thus disjoint from $q_0$, so  $\rho>0$. Conversely, when $T<1$, $\rho<0$.

The distance $d$ from $q'$ to $q_0$ can now be found from the formula $\cosh d =  \langle x,y \rangle$, where $d$ is the hyperbolic distance between points $x,y \in \hyp$. (Note $d = \pm \rho$.) Thus
\[
\cosh d = \langle q', q_0  \rangle
= \frac{T^2+1}{2T} = \frac{1}{2} \left( T + \frac{1}{T} \right).
\]
Since $\cosh d = \frac{1}{2} \left( e^d + e^{-d} \right)$, we have $e^d = T$ or $e^d = \frac{1}{T}$, i.e. $d = \pm \log T$. We just saw that when $T>1$, $\rho>0$ and when $T<1$, $\rho<0$. Thus $\rho = \log T$.
\end{proof}

\begin{prop}
\label{Prop:point_horosphere_distance_hyp}
Let $q \in \hyp$ and $p \in L^+$. Then the signed distance between $q$ and the horosphere $\h(p)$ is $\log \langle q,p \rangle$.
\end{prop}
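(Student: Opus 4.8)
The plan is to reduce the general statement to the special case $q=q_0=(1,0,0,0)$ already proved in \reflem{geodesic}, by exploiting the transitive action of $SL(2,\C)$ on $\hyp$ by orientation-preserving isometries, together with the equivariance of $\h$ and the $SL(2,\C)$-invariance of the Minkowski inner product.

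First I would note that $SL(2,\C)$ acts transitively on $\hyp$: it acts on $\hyp$ by linear maps in $SO(1,3)^+$ (\refdef{SL2C_action_on_hyperboloid_model}), and $SO(1,3)^+$ is the full group of orientation-preserving isometries of $\hyp$, which acts transitively. (Alternatively, transitivity can be seen from the generators of $SL(2,\C)$ computed in \refsec{Minkowski_space_and_g}, or deduced spinorially as in the transitivity lemmas already proved.) So, given $q \in \hyp$, choose $A \in SL(2,\C)$ with $A \cdot q_0 = q$, equivalently $A^{-1} \cdot q = q_0$.

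Next, the key computation: signed distance is an isometry invariant, so the signed distance between $q$ and $\h(p)$ equals the signed distance between $A^{-1} \cdot q = q_0$ and $A^{-1} \cdot \h(p)$. By equivariance of $\h$ (\reflem{h_equivariance}), $A^{-1} \cdot \h(p) = \h(A^{-1} \cdot p)$. Applying \reflem{geodesic} with the light cone point $A^{-1} \cdot p \in L^+$ (which lies in $L^+$ since the action preserves $L^+$, \reflem{SL2C_action_on_light_cones}), the signed distance between $q_0$ and $\h(A^{-1}\cdot p)$ is $\log T'$, where $T'$ is the $T$-coordinate of $A^{-1}\cdot p$. Now one identifies $T'$: since $q_0 = (1,0,0,0)$, we have $T' = \langle q_0, A^{-1} \cdot p \rangle$, because the Minkowski inner product of $q_0$ with any vector returns its $T$-coordinate. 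Since the $SL(2,\C)$ action on $\R^{1,3}$ is by maps in $SO(1,3)^+$, which preserve $\langle \cdot, \cdot \rangle$, we get $\langle q_0, A^{-1} \cdot p \rangle = \langle A \cdot q_0, A \cdot (A^{-1} \cdot p) \rangle = \langle q, p \rangle$. Hence the signed distance is $\log \langle q, p \rangle$, as claimed.

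The only real obstacle is making sure the two facts being invoked are legitimately available: that signed distance between a horosphere and a point is preserved by orientation-preserving isometries (this is immediate from \refdef{signed_undirected_distance} and the preceding definition, since geodesics, horoballs, lengths, and the inside/outside dichotomy are all isometry-invariant), and that the $SL(2,\C)$ action on $\R^{1,3}$ preserves $\langle \cdot, \cdot \rangle$ — which follows from the action being through $SO(1,3)^+$, established in \reflem{SL2C_action_on_light_cones} and the surrounding discussion. Everything else is a two-line manipulation. I would also remark in passing that one could instead give a direct computation mimicking the proof of \reflem{geodesic}: parametrise the affine segment from $p$ to $q$, intersect with the cone \refeqn{cone_on_horosphere} on $\h(p)$, and compute $\cosh$ of the resulting hyperbolic distance; but the equivariance argument is cleaner and avoids repeating the calculation.
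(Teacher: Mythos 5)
Your proposal is correct and is essentially the paper's own argument: reduce to \reflem{geodesic} by applying an isometry carrying $q$ to $q_0$, then use invariance of the Minkowski inner product to identify the resulting $T$-coordinate with $\langle q,p\rangle$. The only cosmetic difference is that you route the isometry through the $SL(2,\C)$ action and the equivariance of $\h$, whereas the paper simply takes $M \in SO(1,3)^+$ with $M(q)=q_0$ directly.
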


\begin{proof}
We reduce to the previous lemma. Let $M \in SO(1,3)^+$ be an isometry which sends $q$ to $q_0$, and let $M(p) = (T,X,Y,Z) \in L^+$. By \reflem{geodesic}, the signed distance $\rho$ between $q_0$ and $\h(M(p))$ is given by $\rho = \log T = \log \langle q_0, (T,X,Y,Z) \rangle$. Now as $M$ is an isometry, we have $\langle q_0, (T,X,Y,Z) \rangle = \langle M(q), M(p) \rangle = \langle q,p \rangle$. Thus $\rho = \log \langle q,p \rangle$.
\end{proof}

\begin{lem}
\label{Lem:geodesic2}
Let $p_0 = (1,0,0,1)$ and  $p = (T,X,Y,Z)$ be points on $L^+$. Then the signed distance between the two horospheres $\h(p)$ and $\mathpzc{h}_0 = \h(p_0)$ is $\log \frac{T-Z}{2}$.
\end{lem}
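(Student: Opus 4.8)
Looking at this statement, I want to prove that for $p_0 = (1,0,0,1)$ and $p = (T,X,Y,Z)$ on $L^+$, the signed distance between $\h(p)$ and $\h(p_0)$ equals $\log\frac{T-Z}{2}$.

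The plan is to reduce this to the already-established case of the distance between a horosphere and a \emph{point}, namely \refprop{point_horosphere_distance_hyp}, which says the signed distance between $q \in \hyp$ and $\h(p)$ is $\log\langle q,p\rangle$. Of course, $\mathpzc{h}_0 = \h(p_0)$ is not a point, so the reduction must go through a symmetry argument. The key observation is that the signed distance between two horospheres with distinct centres is a symmetric, additive quantity along the common geodesic $\gamma$: if $q_0 = \gamma \cap \mathpzc{h}_0$, then the signed distance from $\h(p)$ to $\mathpzc{h}_0$ equals the signed distance from $\h(p)$ to the \emph{point} $q_0$, provided we verify that $q_0$ lies on $\gamma$ at the correct location and that the sign conventions of \refdef{signed_undirected_distance} match those of the point-to-horosphere distance. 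So first I would identify the point $q_0 = \gamma \cap \mathpzc{h}_0$ explicitly.

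Concretely, I would proceed as follows. First, note $\langle p_0, p_0 \rangle = 1 - 1 = 0$, so $p_0 \in L^+$ indeed. The geodesic $\gamma$ from the centre of $\mathpzc{h}_0$ (the ray through $p_0$) to the centre of $\h(p)$ (the ray through $p$) lies in the 2-plane $\Span(p_0, p)$ intersected with $\hyp$. The point $q_0 = \gamma \cap \mathpzc{h}_0$ is characterised by $q_0 \in \hyp$, $\langle q_0, p_0 \rangle = 1$, and $q_0 \in \Span(p_0, p)$. Writing $q_0 = \alpha p_0 + \beta p$ and using $\langle p_0,p_0\rangle = \langle p,p \rangle = 0$ together with $\langle p_0, p \rangle = \frac{1}{2}((T+Z)\cdot 1 + (T-Z)\cdot 1) - 0 = \cdots$ — actually computing $\langle p_0, p\rangle = T\cdot 1 - 0 - 0 - Z \cdot 1 = T - Z$ — one solves $\langle q_0, p_0\rangle = \beta(T-Z) = 1$ and $\langle q_0,q_0\rangle = 2\alpha\beta(T-Z) = 1$, giving $\beta = \frac{1}{T-Z}$ and $\alpha = \frac{1}{2}$, so $q_0 = \frac{1}{2}p_0 + \frac{1}{T-Z}p$. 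Then by \refprop{point_horosphere_distance_hyp}, the signed distance from $q_0$ to $\h(p)$ is $\log\langle q_0, p\rangle = \log\left(\frac12\langle p_0,p\rangle + \frac{1}{T-Z}\langle p,p\rangle\right) = \log\left(\frac12(T-Z)\right) = \log\frac{T-Z}{2}$.

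The last step is to confirm that this point-to-horosphere signed distance equals the horosphere-to-horosphere signed distance of \refdef{signed_undirected_distance}. This is where the main (though minor) obstacle lies: one must check the sign conventions agree. Since $q_0$ is exactly the intersection of the common geodesic $\gamma$ with $\mathpzc{h}_0$, the shortest arc from $\h(p)$ to $\mathpzc{h}_0$ is the sub-segment of $\gamma$ from $q_0$ to $q' = \gamma \cap \h(p)$, which is precisely the segment realising the point-to-horosphere distance between $q_0$ and $\h(p)$. Moreover $q_0$ lies outside (resp. on, resp. inside) the horoball bounded by $\h(p)$ exactly when $\mathpzc{h}_0$ and $\h(p)$ are disjoint (resp. tangent, resp. overlapping) — because the horoballs about distinct centres along $\gamma$ are nested or disjoint according to the position of $q_0$ relative to $\h(p)$. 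Hence both sign conventions coincide. When the centres coincide, i.e.\ $p$ is a positive multiple of $p_0$, we have $X = Y = 0$ and $T = Z$, so $T - Z = 0$ and $\log\frac{T-Z}{2} = -\infty$, matching \refdef{signed_undirected_distance}(i); this degenerate case should be noted separately. I expect the computation to be short and the conceptual content to be entirely in matching the two distance conventions via the point $q_0$.
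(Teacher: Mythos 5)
Your proof is correct, but it takes a genuinely different route from the paper's. The paper parametrises the affine line $x(s)=sp+(1-s)p_0$ in $\R^{1,3}$, uses the ``cone equation'' $\langle x,p\rangle^2=\langle x,x\rangle$ to locate the closest points on \emph{both} horospheres (its $q_0$ is exactly your $\tfrac12 p_0+\tfrac{1}{T-Z}p$, rescaled), determines the sign by checking when the point of $\h(p)$ satisfies $T-Z>1$, and finally extracts the distance from $\cosh d=\langle q,q_0\rangle$. You instead reduce to the already-proved \refprop{point_horosphere_distance_hyp}: you solve for $q_0=\gamma\cap\mathpzc{h}_0$ as a linear combination $\alpha p_0+\beta p$ using only $\langle p_0,p_0\rangle=\langle p,p\rangle=0$ and $\langle p_0,p\rangle=T-Z$, then read off $\log\langle q_0,p\rangle=\log\frac{T-Z}{2}$, so you avoid the second closest point and the $\cosh$ computation entirely; the cost is the bookkeeping step identifying the point-to-horosphere and horosphere-to-horosphere conventions, which you correctly isolate as the real content. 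That matching does hold: the geodesic through $q_0$ ending at the centre of $\h(p)$ is $\gamma$ itself, so the two definitions use the same segment $\gamma_{q_0,q'}$, and ``$q_0$ outside/on/inside the horoball of $\h(p)$'' is equivalent to ``$\mathpzc{h}_0,\h(p)$ disjoint/tangent/overlapping'' (easiest seen by normalising the centres to $0$ and $\infty$ in $\U$, where it is immediate by comparing heights). Two small blemishes in your write-up: the justification ``horoballs \ldots are nested or disjoint'' is off (horoballs with distinct centres are never nested), and ``shortest arc'' is not quite the right description when the horospheres overlap — what matters is only that the defining segments coincide; neither affects the validity of the argument. Your degenerate-case remark ($T=Z$ forcing $p\in\R^+p_0$ and $\rho=-\infty$) matches the paper's convention stated after the lemma.
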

Note that for any point $(T,X,Y,Z) \in L^+$, $T \geq Z$, with equality iff the point is a multiple of $p_0$. The case $T=Z$ arises when $p_0$ and $p$ lie on the same ray of $L^+$, and we regard $\log 0 $ as $-\infty$.

\begin{proof}
We follow a similar strategy to the previous lemma. The two horospheres have centres on $\partial \hyp$ given by rays through $p_0$ and $p$. We consider the affine line between $p$ and $p_0$, parametrised as $x(s) = sp+(1-s)p_0$, and find which points on this line lie on the cones of $\h(p)$ and $\mathpzc{h}_0$. The cone on $\h(p)$ is defined again by $\langle x,p \rangle^2 =  \langle x,x \rangle$, and the cone on $\mathpzc{h}_0$ is defined by $\langle x, p_0 \rangle^2 = \langle x,x \rangle$. We find that the closest points on $\h(p)$ and $\mathpzc{h}_0$ to each other are
\[
q = \left( \frac{T}{2} + \frac{1}{T-Z}, \frac{X}{2}, \frac{Y}{2}, \frac{Z}{2} + \frac{1}{T-Z} \right)
\quad \text{and} \quad
q_0 = \frac{1}{2(T-Z)} \left( 3T-Z, 2X, 2Y, T+Z \right).
\]
respectively.

Now $\mathpzc{h}_0$ is cut out of $\hyp$ by the equation $T-Z=1$, and $T-Z=0$ contains its centre $p_0$. So the horoball bounded by $\mathpzc{h}_0$ consists of points in $\hyp$ satisfying $T-Z<1$. Thus the two horoballs are disjoint iff $q$ lies outside the horoball of $\mathpzc{h}_0$, which occurs iff $q$ satisfies $T-Z>1$. This happens precisely when
\[
\left( \frac{T}{2} + \frac{1}{T-Z} \right) - \left( \frac{Z}{2} + \frac{1}{T-Z} \right) = \frac{T-Z}{2} > 1.
\]
Thus the horoballs are disjoint precisely when $T-Z>2$. We then find the distance $d$ between the closest points using $\cosh d =  \langle q, q_0 \rangle$, which reduces to
\[
\frac{1}{2} \left( e^d + e^{-d} \right) = \frac{1}{2} \left( \frac{T-Z}{2} + \frac{2}{T-Z} \right).
\]
Thus $e^d = \frac{T-Z}{2}$ or $\frac{2}{T-Z}$, i.e. $d = \pm \log \frac{T-Z}{2}$. As we have seen, when $T-Z>2$ the horoballs are disjoint, so that $d>0$. Hence $\rho = \log \frac{T-Z}{2}$ as desired.
\end{proof}

\begin{prop}[Cf. \cite{Penner87} lemma 2.1]
\label{Prop:horosphere_distance_hyp}
Let $p, p' \in L^+$. Then the signed distance $\rho$ between the horospheres $\h(p), \h(p')$ satisfies
\begin{equation}
\label{Eqn:horosphere_distance_from_Minkowski_inner_product}
\langle p, p' \rangle = 2 e^{\rho}.
\end{equation}
Further, suppose $\kappa, \kappa' \in \C^2_\times$ satisfy $\g \circ \f(\kappa) = p$ and $\g \circ \f(\kappa') = p'$. Then
\begin{equation}
\label{Eqn:horosphere_distance_from_spinor_inner_product}
\left| \{ \kappa, \kappa' \} \right|^2 = e^\rho
\end{equation}
\end{prop}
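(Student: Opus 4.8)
The plan is to prove \refeqn{horosphere_distance_from_Minkowski_inner_product} first, and then obtain \refeqn{horosphere_distance_from_spinor_inner_product} as an immediate consequence by combining it with \refprop{complex_Minkowski_inner_products}.

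For \refeqn{horosphere_distance_from_Minkowski_inner_product}, I would reduce to the already-established \reflem{geodesic2} by an equivariance argument, exactly as \refprop{point_horosphere_distance_hyp} was deduced from \reflem{geodesic}. Given $p, p' \in L^+$, the idea is to apply an isometry $M \in SO(1,3)^+$ that sends $p'$ to the standard null vector $p_0 = (1,0,0,1)$. Such an $M$ exists because $SL(2,\C)$ acts transitively on $\C_\times^2$ (\reflem{SL2C_on_C2_transitive}) and $\g \circ \f$ surjects onto $L^+$ (\reflem{gof_properties}), so the induced action on $L^+$ is transitive; alternatively this is a standard fact about $SO(1,3)^+$. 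Writing $M(p) = (T,X,Y,Z) \in L^+$, \reflem{geodesic2} gives that the signed distance between $\h(M(p))$ and $\h(p_0) = \h(M(p'))$ is $\log \frac{T-Z}{2}$. Since $M$ is an isometry it preserves signed distances between horospheres (here I use that $SL(2,\C)$, hence $SO(1,3)^+$, acts on $\horos(\hyp)$ compatibly with $\h$ via \reflem{h_equivariance}, and isometries preserve the geodesics, intersection points and lengths defining $\rho$ in \refdef{signed_undirected_distance}), so $\rho$ between $\h(p)$ and $\h(p')$ equals $\log \frac{T-Z}{2}$ as well. It remains to identify $T - Z$ intrinsically: $T - Z = \langle (1,0,0,1), (T,X,Y,Z) \rangle = \langle p_0, M(p) \rangle = \langle M(p'), M(p) \rangle = \langle p, p' \rangle$, using that $M$ preserves the Minkowski inner product. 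Hence $\langle p, p' \rangle = 2 e^\rho$, which also gracefully covers the degenerate case $p, p'$ on the same ray, where $\langle p,p'\rangle = 0$ and $\rho = -\infty$.

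For \refeqn{horosphere_distance_from_spinor_inner_product}, suppose $\kappa, \kappa' \in \C_\times^2$ with $\g \circ \f(\kappa) = p$ and $\g \circ \f(\kappa') = p'$. By \refprop{complex_Minkowski_inner_products} we have $2 |\{\kappa,\kappa'\}|^2 = \langle \g \circ \f(\kappa), \g \circ \f(\kappa') \rangle = \langle p, p' \rangle$, and by the first part $\langle p, p' \rangle = 2 e^\rho$. Dividing by $2$ gives $|\{\kappa,\kappa'\}|^2 = e^\rho$ as claimed.

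The main obstacle is the justification that $\rho$ is genuinely invariant under the isometry $M$ in all three cases of \refdef{signed_undirected_distance} (disjoint, tangent, overlapping horospheres), and that it behaves correctly as $p \to$ a multiple of $p'$; this is conceptually routine — isometries carry geodesics to geodesics and preserve lengths and incidence of horoballs — but it is the one place where one must be careful that the sign conventions and the $-\infty$ convention match up, rather than just transporting a formula. Everything else is bookkeeping with the Minkowski inner product and the already-proven lemmas.
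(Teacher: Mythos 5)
Your proof is correct, and it uses the same core ingredients as the paper — reduction to \reflem{geodesic2} by transitivity and equivariance, plus \refprop{complex_Minkowski_inner_products} — but it runs the logic in the opposite order. The paper proves \refeqn{horosphere_distance_from_spinor_inner_product} first: it applies $A \in SL(2,\C)$ at the spinor level to move $\kappa$ to $(1,0)$, uses invariance of $\{\cdot,\cdot\}$ under $SL(2,\C)$ (\reflem{SL2C_by_symplectomorphisms}) together with \reflem{geodesic2} and \refprop{complex_Minkowski_inner_products} to get $e^\rho = |\{\kappa,\kappa'\}|^2$, and only then deduces $\langle p,p'\rangle = 2e^\rho$ by choosing spinors over $p, p'$ via surjectivity of $\g \circ \f$ (\reflem{gof_properties}). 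You instead prove the Minkowski equation first, working entirely in $\R^{1,3}$: you use transitivity of the isometry group on $L^+$, invariance of $\langle \cdot,\cdot\rangle$ under $SO(1,3)^+$, and invariance of the signed distance under isometries, and then the spinor equation drops out by dividing \refprop{complex_Minkowski_inner_products} by $2$. What your route buys is that \refeqn{horosphere_distance_from_Minkowski_inner_product} becomes a self-contained statement about Minkowski geometry, not needing spinors or surjectivity of $\g \circ \f$ at all; what the paper's route buys is that it stays at the spinor level throughout, needing only the symplectic invariance \reflem{SL2C_by_symplectomorphisms} rather than an explicit appeal to $SO(1,3)^+$ preserving the inner product (though the paper has established that fact too). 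Your handling of the degenerate case $p' \in \R^+ p$ via the $\log 0 = -\infty$ convention matches the paper's conventions, and your cited justification of transitivity on $L^+$ (via \reflem{SL2C_on_C2_transitive}, \reflem{gof_properties} and \reflem{h_equivariance}) is sound.
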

Equation \refeqn{horosphere_distance_from_spinor_inner_product} is equivalent to the modulus of the equation in \refthm{main_thm}. It is perhaps interesting that we can obtain this result without yet having considered spin at all. This proposition is closely related to \refprop{complex_Minkowski_inner_products}.

\begin{proof}
We begin with equation \refeqn{horosphere_distance_from_spinor_inner_product}, reducing it to the previous lemma. By \reflem{SL2C_on_C2_transitive}, there exists $A \in SL(2,\C)$ such that $A(\kappa) = (1,0)$. Let $A(\kappa') = \kappa''$. Then by \reflem{SL2C_by_symplectomorphisms},
\begin{equation}
\label{Eqn:reduction_to_10}
\{\kappa, \kappa'\} = \{A \kappa, A \kappa'\} = \{ (1,0), \kappa''\}.
\end{equation}
As $A$ acts by an isometry of hyperbolic space, the signed distance between the horospheres $A \cdot \h \circ \g \circ \f (\kappa)$ and $A \cdot \h \circ \g \circ \f (\kappa')$ is also $\rho$. By equivariance of $\f,\g,\h$ these horospheres can also be written as $\h \circ \g \circ \f (1,0)$ and $\h \circ \g \circ \f (\kappa'')$. Now  $\g \circ \f (1,0) = p_0 = (1,0,0,1)$. Let $\g \circ \f (\kappa'') = (T,X,Y,Z)$. By \reflem{geodesic2}, $\rho = \log \frac{T-Z}{2}$. Rearranging this and noting that $\langle p_0, (T,X,Y,Z) \rangle = T-Z$, we have
\[
e^\rho = \frac{1}{2} \left\langle p_0, (T,X,Y,Z) \right\rangle
= \frac{1}{2} \langle \g \circ \f (1,0), \g \circ \f (\kappa'') \rangle.
\]
Applying \refprop{complex_Minkowski_inner_products} we then obtain
\[
e^\rho = \left| \{ (1,0), \kappa'' \} \right|^2,
\]
which by \refeqn{reduction_to_10} is equal to $| \{ \kappa, \kappa' \} |^2$ as desired.

To obtain equation \refeqn{horosphere_distance_from_Minkowski_inner_product}, note that as $\g \circ \f$ is surjective, there exist $\kappa, \kappa'$ such that $\g \circ \f (\kappa) = p$ and $\g \circ \f (\kappa') = p'$. Then the first equation follows directly from the second, using \refprop{complex_Minkowski_inner_products}.
\end{proof}

\subsubsection{The map from flags to horospheres}
\label{Sec:flags_and_horospheres}

We consider how flags behave under $\h$ and how to obtain corresponding tangent data on a horosphere. So, let $(p,V, o)\in\mathcal{F_P^O}(\R^{1,3})$ and consider the effect of $\h$. The situation is schematically depicted in \reffig{flag_horosphere}.

First, consider the point $p$. Under  $\h$, $p$ corresponds to a horosphere $\h(p)\in\mathfrak{H}$. At a point $q$ of $\h(p)$, by \reflem{tangent_space_of_horosphere} we have $T_q \h(p) = p^\perp \cap q^\perp$

Second, consider the 2-plane $V$; recall $\R p \subset V \subset p^\perp$ (\reflem{light_cone_orthogonal_complement}). Consider how $V$ intersects the tangent space to $\h(p)$ at $q$.  We have 
\[
T_q \h(p) \cap V = ( q^\perp \cap p^\perp) \cap V = q^\perp \cap V,
\]
where the latter equality used $V \subset p^\perp$. Now as $\R p \subset V$, $V$ contains the the lightlike vector $p$, so by \reflem{lightlike_intersection} the latter intersection is transverse and the result is 1-dimensional.

Third, consider the  orientation $o$; recall $o$ is an orientation on the 1-dimensional space $V / \R p$. We will try to use $o$ to provide an orientation on the 1-dimensional space $T_q \h(p) \cap V$. We can regard $o$ as singling out as positive one the two sides of the origin in the line $V/\R p$ (the other side being negative). Then, any vector $w \in V$ which does not lie in $\R p$ obtains a sign, depending on the side of $\R p$ to which it lies; these two sides of $\R p$ project to the two sides of the origin in $V/\R p$.
\begin{lem}
If $p \in L^+$, $q \in \h(p)$ and $\R p \subset V \subset p^\perp$ (as above), then $T_q \h(p) \cap V \neq \R p$.
\end{lem}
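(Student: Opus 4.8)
The plan is to reduce the claim to the single numerical observation that $\langle p, q \rangle = 1$. First I would recall what has already been established in the surrounding discussion: since $V \subset p^\perp$ (by \reflem{light_cone_orthogonal_complement}, as $V$ sits inside $p^\perp = T_pL^+$), and $T_q\h(p) = p^\perp \cap q^\perp$ by \reflem{tangent_space_of_horosphere}, we have
\[
T_q \h(p) \cap V = (p^\perp \cap q^\perp) \cap V = q^\perp \cap V,
\]
and this intersection is transverse and $1$-dimensional because $V$ contains the lightlike vector $p$ (by \reflem{lightlike_intersection}). So the content to be proved is simply that this $1$-dimensional line $q^\perp \cap V$ is not the line $\R p$.

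Next I would invoke the definition of $\h(p)$ from \refdef{h}: the horosphere $\h(p)$ is the intersection of $\hyp$ with the $3$-plane $\langle x, p \rangle = 1$. Since $q \in \h(p)$, this gives $\langle q, p \rangle = 1 \neq 0$, and hence $p \notin q^\perp$. Consequently $\R p \not\subset q^\perp$. But $q^\perp \cap V$ is contained in $q^\perp$, so if we had $T_q\h(p) \cap V = q^\perp \cap V = \R p$, then $\R p$ would lie in $q^\perp$, a contradiction. Therefore $T_q\h(p) \cap V \neq \R p$, as claimed.

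I do not anticipate any real obstacle here; the only thing to be careful about is using the \emph{defining linear equation} $\langle x, p\rangle = 1$ of the horosphere rather than just the fact that $q$ lies on $\hyp$ — it is precisely the normalisation constant $1$ (as opposed to $0$) that forces $p \notin q^\perp$ and makes the statement true. One could also phrase this geometrically: $\R p$ is the direction pointing toward the centre of the horosphere, which is transverse to $\h(p)$ at every point, so it can never be tangent to $\h(p)$; but the one-line inner product computation above is the cleanest route.
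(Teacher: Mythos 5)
Your proof is correct, but it takes a slightly different route from the paper's. The paper's argument is a one-line causal-character observation: $T_q \h(p) \cap V \subset T_q \hyp = q^\perp$, and since $q$ is timelike this subspace is spacelike, so it cannot contain the lightlike vector $p$. Your argument instead invokes the defining equation of the horosphere, $\langle q, p \rangle = 1 \neq 0$, to conclude $p \notin q^\perp$ and hence $\R p \not\subset q^\perp \supset T_q\h(p) \cap V$. Both are sound and equally short. The paper's version is marginally more robust: it never uses the normalisation constant at all, and in fact shows that $\langle q, p \rangle \neq 0$ holds automatically for \emph{any} $q \in \hyp$ and nonzero $p \in L^+$, simply because a spacelike hyperplane $q^\perp$ contains no lightlike vectors. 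So your closing remark that it is "precisely the normalisation constant $1$ (as opposed to $0$)" that makes the statement true overstates the role of that constant — any nonzero value would do, and the spacelikeness argument needs no value at all — though this does not affect the validity of your proof. Your parenthetical geometric rephrasing (that $\R p$ points toward the centre and is transverse to the horosphere) is essentially the paper's viewpoint in disguise.
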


\begin{proof}
As $T_q \h(p) \cap V \subset T_q \hyp$, it  is spacelike, so  cannot contain the lightlike vector $p$.
\end{proof}
Thus the 1-dimensional subspace $T_q \h(p) \cap V$ is a line in the 2-plane $V$ transverse to $\R p$. So $o$ singles out one side of the origin in this line; or equivalently, induces an orientation on this line.

To summarise: given a flag $(p,V,o)$, the point $p \in L^+$ singles out a horosphere $\h(p)$; at a point $q$ on this horosphere, $V$ singles out a distinguished 1-dimensional subspace $T_q \h(p) \cap V$ of the tangent space $T_q \h(p)$ to the horosphere; and $o$ induces an orientation on the 1-dimensional space $V \cap T_q \h(p)$.

Considering the above construction  over all $q \in h(p)$, the 1-dimensional spaces $T_q \h(p) \cap V$ form a \emph{tangent line field} on the horosphere $\h(p)$, and with the orientation from $o$ we in fact have an \emph{oriented tangent line field}  on the horosphere $\h(p)$, i.e. a smoothly varying choice of oriented 1-dimensional subspace of each tangent space $T_q \h(p)$. We denote this oriented tangent line field by $V \cap T\h(p)$, as it is given by intersections with the various fibres in the tangent bundle to $\h(p)$.

We can then make the following definitions.
\begin{defn}
\label{Def:overly_decorated_horosphere}
An \emph{overly decorated horosphere} is a pair $(\mathpzc{h},L^O)$ consisting of $\mathpzc{h}\in\horos(\hyp)$ together with an oriented tangent line field $L^O$ on $\mathpzc{h}$. The set of overly decorated horospheres is denoted $\mathfrak{H_D^O}(\hyp)$.
\end{defn}

\begin{defn}
\label{Def:H_PONF_to_decorated_horospheres}
The map $\H$ sends (pointed oriented null) flags in $\R^{1,3}$ to overly decorated horospheres 
\[
\H \colon \mathcal{F_P^O}(\R^{1,3}) \To \mathfrak{H_D^O}(\hyp), \quad
\H(p,V,o) = \left( \h(p), V \cap T \h(p) \right),
\]
where $V \cap T \h(p)$ is endowed with the orientation induced from $o$.
\end{defn}

We say the horospheres are ``overly" decorated, because it turns out that the oriented line fields $V \cap T\h(p)$ are of a very specific type: they are \emph{parallel}. A parallel oriented line field is determined by the single oriented line at one point; keeping track of an entire oriented line field is overkill.

\subsubsection{Illustrative examples from the spinor $(1,0)$}
\label{Sec:examples_from_10}

Let us return to the spinor $\kappa_0 = (1,0)$. In \refeg{flag_of_simple_spinors} we calculated that, in Minkowski space, the flag $\G \circ \F (\kappa_0)$ is based at $\g \circ \f (\kappa_0) = (1,0,0,1)$; let this point by $p_0$. We also calculated that the flag has 2-plane $V$ spanned by $p_0$ and the vector $(0,0,1,0)$ in the $Y$-direction, which we denote $\partial_Y$. This flag has $V/\R p_0$ is oriented in the direction of $\partial_Y$. In other words, the flag is $[[p_0, \partial_Y]]$

\begin{eg}[The horosphere of $(1,0)$ and oriented line field at a point]
\label{Eg:horosphere_of_10_at_point}
Let us now find the corresponding horosphere, which we denote $\horo_0$, i.e. $\horo_0 = \h(p_0) = \h \circ \g \circ \f (\kappa_0)$. It is cut out of $\hyp$ by the 3-plane $\Pi$ with equation $\langle x, p_0 \rangle = 1$, i.e. $T-Z=1$. Thus, $\mathpzc{h}_0$ is the paraboloid defined by equations $T^2-X^2-Y^2-Z^2=1$ and $T-Z=1$. By the comment after \refdef{h}, the centre of $\mathpzc{h}_0$ is the ray of $L^+$ through $p_0$.

A useful perspective on this horosphere $\mathpzc{h}_0$ may be obtained by noting that $\Pi$, with equation  $T-Z=1$, is foliated by lines in the direction $(1,0,0,1)$ (i.e. the direction of the position vector of $p_0$). Each such line contains exactly one point with $T=0$, i.e. in the $XYZ$ 3-plane. Since $T-Z=1$, when $T=0$ we have $Z=-1$. This $\Pi$ intersects the $XYZ$ 3-plane in the 2-plane consisting of points of the form $(0,X,Y,-1)$. Denote this 2-plane $\Pi_{XY}$. It is a Euclidean 2-plane.

Each of the lines parallel to $p_0$ foliating $\Pi$ intersects the horosphere $\mathpzc{h}_0$ exactly once. To see this, note that such a line has parametrisation $(0,X,Y,-1) + s(1,0,0,1) = (s,X,Y,s-1)$, and intersects $\horo_0$ when it intersects $\hyp$, i.e. when $s^2 - X^2 - Y^2 - (s-1)^2 = 1$. This equation is linear in the parameter $s$ and has a unique solution, giving the unique intersection point with $\mathpzc{h}_0$.

Thus the projection $\Pi \To \Pi_{XY}$, projecting along the lines in the direction of $p_0$, restricts to a bijection $\mathpzc{h}_0 \To \Pi_{XY}$. In fact, as $p_0$ is a lightlike direction and the tangent planes to $\Pi$ are precisely the orthogonal complement $p_0^\perp$, this bijection is an isometry. This shows the horosphere $\mathpzc{h}_0$ is isometric to a Euclidean 2-plane. It also shows that a point of $\mathpzc{h}_0$ is determined by its $X$ and $Y$ coordinates, and that all $(X,Y) \in \R^2$ arise as $X,Y$ coordinates of points on $\mathpzc{h}_0$. See \reffig{plane_Pi_projection}.

\begin{center}
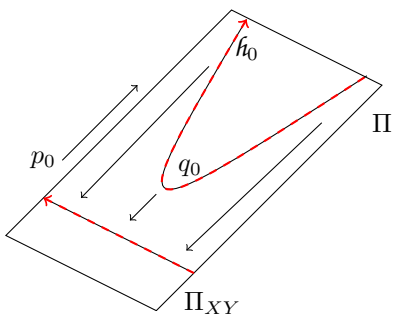

    \begin{tikzpicture}
        \draw(0,0)--(3,3)--(1,4)--(-2,1)--(0,0);
        \draw(0.5,0.5)--(-1.5,1.5);
        \draw (1.2,3.875) .. controls (-0.5,1) .. (2.8,3.125);
        \draw[red, dashed, thick, ->](0.5,0.5)--(-1.5,1.5);
        \draw[red, dashed, thick, <-](1.2,3.875) .. controls (-0.5,1) .. (2.8,3.125);
        \draw[->](0.7,3.25)--(-1,1.5);
        \draw[->](2.2,2.5)--(0.4,0.8);
        \draw[->](0,1.55)--(-0.35,1.2);
        \node at (0.75,0.1){$\Pi_{XY}$};
        \node at (3,2.5){$\Pi$};
        \node at (0.45,1.9){$q_0$};
        \node at (1.2,3.5){$\mathpzc{h}_0$};
        \node at (-1.5,2){$p_0$};    
        \draw[->](-1.25,2)--(-0.25,3);
    \end{tikzpicture}
    \captionof{figure}{Projection of the plane $\Pi$ to $\Pi_{XY}$ (schematically drawn a dimension down).}
    \label{Fig:plane_Pi_projection}
\end{center}

Let us examine the horosphere $\horo_0$ at a particular point. One can verify that $(1,0,0,0) \in \mathpzc{h}_0$; let this point be $q_0$. 

The tangent space of $\hyp$ at $q_0$ is  $q_0^\perp$ by \refeqn{hyperboloid_tangent_space}, which has equation $T=0$. So $T_{q_0} \hyp$ is the $XYZ$ 3-plane. 

The tangent space of $\mathpzc{h}_0$ at $q_0$ is $p_0^\perp \cap q_0^\perp$ by \reflem{tangent_space_of_horosphere}, thus is defined by equations $T-Z=0$ and $T=0$. So $T_{q_0} \mathpzc{h}_0$ is the $XY$ 2-plane.

The decoration, or oriented line, obtained on the horosphere in $\G \circ \F (\kappa_0)$, at $q_0$, by \refdef{H_PONF_to_decorated_horospheres} is given by $V \cap T_{q_0} \mathpzc{h}_0$. We have calculated that $V$ is spanned by $p_0$ and $\partial_Y$, while $T_{q_0} \mathpzc{h}_0$ is the $XY$-plane, so the intersection is the line in the $Y$ direction. Since the flag $V / \R p_0$ is oriented in the direction of $\partial_Y$, this line is oriented in the $\partial_Y$ direction.

Note that a quotient by $\R p_0$, when restricted to the 3-plane $\Pi$, is essentially the same as the projection along the lines in the $p_0$ direction discussed above. At each point of $\Pi$ (given by $T-Z=1$), the tangent space is given by $p_0^\perp = \{T-Z=0\}$, and $V$ is a 2-dimensional subspace of this tangent space. When we project $\Pi \To \Pi_{XY}$, the 2-plane $V$ of the flag projects to a 1-dimensional subspace of $\Pi_{XY}$, which we may regard as $V/\R p_0$. Since $V$ is spanned by $p_0$ and $\partial_Y$, the projection along $p_0$ is spanned by $\partial_Y$.
\end{eg}

\begin{eg}[Action of parabolic matrices on flag and horosphere of $(1,0)$]
\label{Eg:parabolic_action_on_h0}
Consider the following matrices in $SL(2,\C)$:
\begin{equation}
\label{Eqn:P}
P_\alpha = \begin{pmatrix} 1 & \alpha \\ 0 & 1 \end{pmatrix}
\text{  for $\alpha \in \C$}, \quad
P = \left\{ P_\alpha \; \mid \; \alpha \in \C \right\} .
\end{equation}
It is not difficult to see that $P$ is a subgroup $P$ of $SL(2,\C)$. Indeed, for $\alpha,\alpha' \in \C$ we have $P_\alpha P_{\alpha'} = P_{\alpha'} P_\alpha = P_{\alpha+\alpha'}$, and the correspondence $\alpha \mapsto P_\alpha$ gives an isomorphism from $\C$, as an additive group, to $P$. Thus $P \cong \C \cong \R^2$.

The matrices $P_\alpha$ are all \emph{parabolic} in the sense that  they have trace $2$. They are also \emph{parabolic} in the sense that, at least when $\alpha \neq 0$, as complex linear maps on $\C^2$, they have only one 2-dimensional eigenspace (i.e. their Jordan block decomposition consists of a single 2-dimensional block). The word parabolic can have other meanings too, which do not concern us here.

As a subgroup of $SL(2,\C)$, $P$ acts on all the spaces that $SL(2,\C)$ does. It will be useful to consider its action on various objects deriving from the spinor $\kappa_0 = (1,0)$ of the previous example.

Each $P_\alpha$ acts on $\C^2$ by complex linear maps preserving $\kappa_0$. In fact, for the action of $SL(2,\C)$ on $\C^2$ of \refdef{SL2C_action_on_C2}, $P$ is precisely the stabiliser of $\kappa_0$.

Under the map $\g \circ \f$ from $\C^2$ to $\R^{1,3}$, $\kappa_0$ maps to $p_0$. As $P$ preserves $\kappa_0$, by equivariance of $\g \circ \f$ (\reflem{gof_properties}), the action of $P$ on $\R^{1,3}$ preserves $p_0$. Precisely, for any $P_\alpha \in P$ we have
\begin{equation}
\label{Eqn:parabolics_fix_p0}
P_\alpha \cdot p_0 
= P_\alpha \cdot \left( (\g \circ \f) (\kappa_0) \right)
= (\g \circ \f ) \left( P_\alpha \cdot (\kappa_0) \right)
= (\g \circ \f) (\kappa_0)
= p_0
\end{equation}
Thus, each $P_\alpha$ acts on $\R^{1,3}$ by a real linear map in $SO(1,3)^+$ (\reflem{SL2C_action_on_light_cones} and subsequent comments) which preserves $p_0$, and hence also $p_0^\perp$. So, it can't be ``too bad"; we compute it explicitly.

On the Hermitian matrix $S$ corresponding to the point $2(T,X,Y,Z) \in \R^{1,3}$ (see \refdef{g_H_to_R31}), $P_\alpha$ acts by
\begin{align*}
P_\alpha \cdot S
&= P_\alpha S P_\alpha^*
= \begin{pmatrix} 1 & \alpha \\ 0 & 1 \end{pmatrix}
\begin{pmatrix} T+Z & X+iY \\ X-iY & T-Z \end{pmatrix}
\begin{pmatrix} 1 & 0 \\ \overline{\alpha} & 1 \end{pmatrix} \\
&=
\begin{pmatrix} 
T+Z + \alpha(X-iY) + \overline{\alpha}(X+iY) + |\alpha|^2 (T-Z)
& 
X+iY+\alpha(T-Z) \\
X-iY+\overline{\alpha}(T-Z) 
&
T-Z
\end{pmatrix}.
\end{align*}
This is equal to the Hermitian matrix corresponding to a point $2(T',X',Y',Z') \in \R^{1,3}$
\[
\begin{pmatrix} T'+Z' & X'+iY' \\ X'-iY' & T'-Z' \end{pmatrix}
\]
where, letting $\alpha = a+bi$ with $a,b \in \R$,
\begin{equation}
\begin{array}{cc}
\label{Eqn:transform_TXYZ_under_simple_parabolic_first}
T' 
= T + a X + b Y + \frac{|\alpha|^2}{2} (T-Z), &
X' = X + a  (T-Z), \\
Y' = Y + b  (T-Z), &
Z' 
= Z + a  X + b Y + \frac{|\alpha|^2}{2} (T-Z)
\end{array}
\end{equation}
Indeed, one can verify that $(T,X,Y,Z) = p_0$ implies $(T',X',Y',Z') = p_0$. This describes the action of $P$ on $\R^{1,3}$.

Now consider the action of $P$ on the flag $\G \circ \F(\kappa_0) = [[p_0, \partial_Y]] \in \mathcal{F_P^O}(\R^{1,3})$ from \refeg{flag_of_simple_spinors} and the previous \refeg{horosphere_of_10_at_point}. Using equivariance again (of $\G \circ \F$ this time, \refprop{SL2C_spinors_PNF_H_equivariant} and \refprop{FG_equivariant}), as $P$ stabilises $\kappa_0$, it also stabilises $[[p_0, \partial_Y]]$. Precisely,
for $P_\alpha \in P$ we have
\[
P_\alpha \cdot [[p_0, \partial_Y]]
= P_\alpha \cdot \left( \G \circ \F \right) (\kappa_0)
= \left( \G \circ \F \right) \left( P_\alpha \cdot (\kappa_0) \right)
= \left( \G \circ \F \right) (\kappa_0)
= [[p_0, \partial_Y]]
\]
Thus each $P_\alpha$ must fix the flag 2-plane $V$ spanned by $p_0$ and $\partial_Y$; we saw in \refeqn{parabolics_fix_p0} that $P_\alpha$ fixes $p_0$; we compute  $P_\alpha \cdot \partial_Y$  explicitly to see how $P$ acts on $V$. Using \refeqn{transform_TXYZ_under_simple_parabolic_first} gives
\[
P_\alpha \cdot \partial_Y = P_\alpha \cdot (0,0,1,0)
= (b, 0, 1, b)
= \partial_Y + b p_0.
\]
Thus indeed each $P_\alpha$ preserves the plane $V$ spanned by $p_0$ and $\partial_Y$. In fact, it acts as the identity on $V/\R p_0$, so definitely preserves the orientation in the flag.

Each $P_\alpha$ fixes $p_0^\perp$, the 3-dimensional orthogonal complement of $p_0$, which has a basis given by $p_0, \partial_Y$ and $\partial_X = (0,1,0,0)$. We have already computed $P_\alpha$ on the first two of these; the third is no more difficult, and we find that $P_\alpha$ acts on $p_0^\perp$ by
\begin{equation}
\label{Eqn:parabolic_on_p0_perp}
P_\alpha \cdot p_0 = p_0, \quad
P_\alpha \cdot \partial_X = \partial_X + a p_0, \quad
P_\alpha \cdot \partial_Y = \partial_Y + b p_0,
\end{equation}
adding multiples of $p_0$ to $\partial_X$ and $\partial_Y$ according to the real and imaginary parts of $\alpha$.

Having considered both $p_0$ and $p_0^\perp$, we observe that $\R p_0 \subset p_0^\perp$ and so we can consider their quotient $p_0^\perp / \R p_0$. This is a 2-dimensional vector space, and has a basis represented by $\partial_X$ and $\partial_Y$. From \refeqn{parabolic_on_p0_perp} we observe that each $P_\alpha$ acts on $p_0^\perp / \R p_0$ as the identity.

Next we turn to horospheres. \refeg{horosphere_of_10_at_point} above calculated $\h(p_0) = \h \circ \g \circ \f (\kappa_0)$ to be the horosphere $\mathpzc{h}_0$ cut out of $\hyp$ by the plane $\Pi$ with equation $T-Z=1$. We found that the point $q_0 = (1,0,0,0)$ was on this horosphere. At this point we have $T_{q_0} \hyp$ equal to the $XYZ$ 3-plane,  $T_{q_0} \h(p_0)$ equal to the the $XY$ 2-plane, and the oriented decoration $V \cap T_{q_0} \h(p_0)$ given by $\partial_Y$. Again by equivariance (\reflem{gof_properties}, \reflem{h_equivariance}), $P$ must fix $\mathpzc{h}_0$: for any $P_\alpha \in P$ we have
\[
P_\alpha \cdot \mathpzc{h}_0
= P_\alpha \cdot \left( \h \circ \g \circ \f \right) (\kappa_0)
= \left( \h \circ \g \circ \f \right) \left( P_\alpha \cdot (\kappa_0) \right)
= \h \circ \g \circ \f (\kappa_0)
= \mathpzc{h}_0.
\]

Let us see explicitly how $P_\alpha$ acts on the horosphere $\mathpzc{h}_0$, starting from the point $q_0$. Using \refeqn{transform_TXYZ_under_simple_parabolic_first}, and recalling that every point of $\mathpzc{h}_0$ satisfies $T-Z=1$, we obtain
\begin{equation}
\label{Eqn:general_point_on_h0}
P_\alpha \cdot q_0 = \left( 1 + \frac{|\alpha|^2}{2}, a, b, \frac{|\alpha|^2}{2} \right)
= \left( 1 + \frac{a^2 + b^2}{2}, a, b, \frac{a^2+b^2}{2} \right).
\end{equation}
The $X$ and $Y$ coordinates of $P_\alpha \cdot q_0$ are the real and imaginary parts of $\alpha$, and as mentioned in \refeg{horosphere_of_10_at_point}, $X$ and $Y$ coordinates determine points of $\horo_0$. Thus for any point $q \in \mathpzc{h}_0$ there is precisely one $\alpha \in \C$ such that $P_\alpha \cdot q_0 = q$, namely $\alpha=X+Yi$. In other words, the action of $P$ on $\mathpzc{h}_0$ is simply  transitive. The expression in \refeqn{general_point_on_h0} is a parametrisation of $\mathpzc{h}_0$ by $(a,b) \in \R^2$ or $\alpha\in \C$. If we project $\mathpzc{h}_0$ to $\Pi_{XY}$ as in \refeg{horosphere_of_10_at_point}, then $P_\alpha$ acts by addition by $(0,a,b,0)$.
\end{eg}

\begin{eg}[Oriented line field on the horosphere of $(1,0)$]
\label{Eg:horosphere_of_10_generally}
We again consider the horosphere $\mathpzc{h}_0 = \h(p_0) = \h \circ \g \circ \f (\kappa_0)$. In \refeg{horosphere_of_10_at_point} we found the tangent space to $\mathpzc{h}_0$ at a specific point $q_0$, and its intersection with the flag $\G \circ \F(\kappa_0)$. In \refeg{parabolic_action_on_h0} we found that the group $P$ acts simply transitively on $\mathpzc{h}_0$, so each point $q \in \mathpzc{h}_0$ can be written as $P_\alpha \cdot q_0$ for a unique $\alpha = a+bi$. We now find the tangent space to $\mathpzc{h}_0$ at $q$ explicitly, and its decoration, given by intersection with the flag $\G \circ \F (\kappa_0)$. 

Having calculated $q$ explicitly in \refeqn{general_point_on_h0}, using \refeqn{hyperboloid_tangent_space} we  have
\begin{equation}
\label{Eqn:tangent_space_general_point_on_h0}
T_q \hyp = q^\perp = \left\{ (T,X,Y,Z) \mid 
\left( 1 + \frac{|\alpha|^2}{2} \right) T - a X - b Y -  \frac{|\alpha|^2}{2}  Z = 0 \right\}
\end{equation}
The tangent space to the horosphere $\mathpzc{h}_0$ at $q$ is given by the intersection of $T_q \hyp$ with $p_0^\perp$ (\reflem{tangent_space_of_horosphere}). As in \refeg{horosphere_of_10_at_point}, the 3-plane $p_0^\perp$ has equation $T-Z=0$. 
Substituting  $T=Z$ into \refeqn{tangent_space_general_point_on_h0}  simplifies the equation to
\[
Z = a X  + b  Y
\]
and so we can obtain various descriptions of the tangent space to $\mathpzc{h}_0$ at $q$,
\begin{align*}
T_q \mathpzc{h}_0 &= q^\perp \cap p_0^\perp = \left\{ (T,X,Y,Z) \; \mid \; T=Z, \; Z = a  X  + b  Y \right\} \\
&= \left\{ \left( aX+bY, X, Y, aX+bY \right) \; \mid \; X,Y \in \R \right\} \\
&= \Span \left\{  (a,1,0,a), (b,0,1,b) \right\}
= \Span \left\{ \partial_X + a p_0, \partial_Y + b p_0 \right\}
\end{align*}
As in \refeg{flag_of_simple_spinors} and \refeg{horosphere_of_10_at_point}, the flag 2-plane $V$ of $\G \circ \F (\kappa_0)$ is spanned by $p_0$ and $\partial_Y$, with $V/\R p_0$ oriented by $\partial_Y$. One of the generators of $T_q \mathpzc{h}_0$ identified above already lies in this subspace, so  the line field on $\mathpzc{h}_0$ at $q$ is given by
\[
V \cap T_{q} \mathpzc{h}_0 = 
\Span \left\{  (b,0,1,b) \right\}
= \Span \left\{  \partial_Y + b p_0 \right\}
\]
The orientation on $V/\R p_0$ given by $\partial_Y + \R p_0$  induces the orientation on the 1-dimensional space $V \cap T_q \mathpzc{h}_0$ given by $\partial_Y + b p_0$. In other words, the oriented line field of $\H \circ \G \circ \F (\kappa_0)$ at $q = P_\alpha \cdot p_0$ is spanned and oriented by $\partial_Y + b p_0$. Denote this oriented line field by $L^O$, so that its value at $q$ is given by
\[
L^O_q = \Span \left\{ \partial_Y + b p_0 \right\}.
\]
In the parametrisation of \refeqn{general_point_on_h0} by $(a,b) \in \R^2$, $L_q^O$ points in the direction of constant $a$ and increasing $b$, i.e. the partial derivative with respect to $b$.

Since the action of $P$ on $\R^{1,3}$ is linear and preserves $\hyp$, $V$, and $\mathpzc{h}_0$, it also preserves tangent spaces of $\horo_0$: for any $\alpha \in \C$, we have $P_\alpha \cdot T_q \mathpzc{h}_0 = T_{P_\alpha \cdot q} \mathpzc{h}_0$. Hence the action of $P$ must preserve the intersections $V \cap T_q \mathpzc{h}_0$ which form the decoration on $\mathpzc{h}_0$:
\[
P_\alpha \cdot \left( V \cap T_q \mathpzc{h}_0 \right)
= V \cap T_{P_\alpha \cdot q} \mathpzc{h}_0
\]
Indeed, we can check this explicitly at any $q \in \mathpzc{h}_0$. Letting $q = P_\alpha \cdot q_0$, we just saw that the oriented line field at $q$ is  spanned and oriented by $\partial_Y + b p_0$. Applying $P_{\alpha'}$, where $\alpha' = a'+b' i$ with $a',b' \in \R$, from \refeqn{transform_TXYZ_under_simple_parabolic_first} we obtain
\[
P_{\alpha'} \cdot \left( \partial_Y + b p_0 \right)
= P_{\alpha'} \cdot (b,0,1,b)
= (b+b', 0, 1, b+b')
= \partial_Y + (b+b') p_0,
\]
the same vector spanning and orienting $L^O_{q'}$ where $q' = P_{\alpha'} \cdot q = P_{\alpha+\alpha'} q_0$.  So, for any $q \in \mathpzc{h}_0$ and any $A \in P$,
\[
A \cdot L^O_q = L^O_{A \cdot q}
\]
Thus, the oriented line field $L^O$ on $\mathpzc{h}_0$ given by $\H \circ \G \circ \F (\kappa_0)$ is a quite special type of oriented line field: it is parallel. Its value at any one point determines all the others, by applying the isometries given by $P$. The group $P$ of isometries of $\hyp$ is precisely the set of translations of $\mathpzc{h}_0$, which acts simply transitively on $\mathpzc{h}_0$ and carries with it the oriented line field $L^O$.

It is worth noting what happens if we project $\mathpzc{h}_0$ to the plane $\Pi_{XY}$ from \refeg{horosphere_of_10_at_point}. As discussed there, this projection is an isometry, and is effectively a quotient by $\R p_0$, expressing $\mathpzc{h}_0$ as a Euclidean 2-plane. Under this projection, $V$ becomes an oriented  line field in the direction $\partial_Y$. We saw in \refeg{parabolic_action_on_h0} that after applying this projection, $P_\alpha$ acts by translation by $(0,a,b,0)$. Thus in particular it preserves the oriented line field in the direction $\partial_Y$, which is the oriented line field of $\H \circ \G \circ \F(\kappa_0)$.
\end{eg}

\subsubsection{Parallel line fields}
\label{Sec:parallel_line_fields}

The type of oriented line field found as $\H \circ \G \circ \F(1,0)$ is known as \emph{parallel}, which we now define.

\begin{defn}
An element $A \in SL(2,\C)$, or the corresponding element $M \in SO(1,3)^+$, is called
\begin{enumerate}
\item
\emph{parabolic} if $\Trace A = \pm 2$;
\item 
\emph{elliptic} if $\Trace A \in (-2,2)$.
\item 
\emph{loxodromic} if $\Trace A \in \C \setminus [-2,2] = \pm 2$.
\end{enumerate}
\end{defn}
(There are other characterisations of these types of elements, but this is all we need.) It follows that the type of $A$ and any conjugate $MAM^{-1}$ are the same.

All the matrices $P_\alpha$ of the previous section are parabolic. (Their negatives $-P_\alpha$ are also parabolic, but a matrix $A \in SL(2,\C)$ and its negative $-A$ produce the same element of $SO(1,3)^+$, so these do not produce any new isometries of $\hyp$).

The oriented line field calculated on $\mathpzc{h}_0$ in the previous section thus satisfies the following definition.
\begin{defn}
Let $\mathpzc{h}\in\mathfrak{H}(\hyp)$. An oriented line field  on $\mathpzc{h}$ is \emph{parallel} if it is invariant under the  parabolic isometries of $\hyp$ fixing $\mathpzc{h}$.
\end{defn}
Thus, to describe a parallel oriented line field on a horosphere $\horo$, it suffices to describe it at one point: the oriented lines at other points can be found by applying parabolic isometries. Indeed, a horosphere is isometric to the Euclidean plane, and the parabolic isometries preserving $\mathpzc{h}$ act by Euclidean translations. A parallel oriented line field is therefore parallel in the sense of ``invariant under parallel translation". By the Gauss--Bonnet theorem no such line field exists on a surface of nonzero curvature.

As we now see, all oriented line fields produced by $\H$ (\refdef{H_PONF_to_decorated_horospheres}) are parallel.
\begin{lem}
\label{Lem:image_of_H_parallel}
Let  $(p,V,o) \in \mathcal{F_P^O}(\R^{1,3})$ be a flag, and let $\H(p,V,o) = (\h(p), L^O) \in \mathfrak{H_D^O}(\hyp)$ the corresponding overly decorated horosphere. Then the oriented line field $L^O$ on $\h(p)$ is parallel.
\end{lem}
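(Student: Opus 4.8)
The plan is to reduce the statement to the single spinor $\kappa_0 = (1,0)$, for which it has essentially already been checked. Indeed, Examples~\ref{Eg:horosphere_of_10_at_point}--\ref{Eg:horosphere_of_10_generally} compute the oriented line field $L^O_0$ of $\H \circ \G \circ \F(\kappa_0)$ on the horosphere $\mathpzc{h}_0 = \h(p_0)$, where $p_0 = \g \circ \f(\kappa_0) = (1,0,0,1)$, and show it is parallel. Everything else will follow from equivariance.

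First I would use the surjectivity of $\G \circ \F$ (\refprop{F_G_surjective}) to write the given flag as $(p,V,o) = \G \circ \F(\kappa)$ for some $\kappa \in \C^2_\times$, and the transitivity of the $SL(2,\C)$-action on $\C^2_\times$ (\reflem{SL2C_on_C2_transitive}) to choose $A \in SL(2,\C)$ with $A \cdot \kappa_0 = \kappa$. Writing $M = T_A \in SO(1,3)^+$ for the corresponding isometry of $\hyp$, and combining the equivariance of $\F$ (\refprop{SL2C_spinors_PNF_H_equivariant}) and of $\G$ (\refprop{FG_equivariant}), we get $(p,V,o) = A \cdot (\G \circ \F)(\kappa_0) = A \cdot [[p_0, (0,0,1,0)]]$, recalling from \refeg{flag_of_simple_spinors} that $(\G \circ \F)(\kappa_0) = [[p_0, (0,0,1,0)]]$.

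The crux is to check that $\H$ intertwines the two actions: if $\H(p',V',o') = (\h(p'), L')$ then $\H\!\left(A \cdot (p',V',o')\right) = \left(M \cdot \h(p'),\, M \cdot L'\right)$, where $M \cdot L'$ is the oriented line field with value $M(L'_q)$ at the point $Mq$. This is immediate from \refdef{H_PONF_to_decorated_horospheres} once one notes that $\h(A \cdot p') = A \cdot \h(p')$ (\reflem{h_equivariance}), that $M$ preserves the Minkowski inner product so that $M(p'^{\perp}) = (Mp')^{\perp}$ and $M(q^{\perp}) = (Mq)^{\perp}$, whence $M\!\left(T_q \h(p')\right) = (Mp')^{\perp} \cap (Mq)^{\perp} = T_{Mq}\h(Mp')$ by \reflem{tangent_space_of_horosphere} and therefore $M\!\left(V' \cap T_q\h(p')\right) = MV' \cap T_{Mq}\h(Mp')$, and that the induced orientation is natural under the linear isomorphism $M$. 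Applying this with $(p',V',o') = [[p_0,(0,0,1,0)]]$ identifies the oriented line field $L^O$ of $\H(p,V,o)$ with $M \cdot L^O_0$ and shows $\h(p) = M \cdot \mathpzc{h}_0$.

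Finally I would deduce parallelness of $L^O$ from that of $L^O_0$. Let $\gamma$ be any parabolic isometry of $\hyp$ fixing $\h(p)$, and lift it to $B \in SL(2,\C)$ with $\Trace B = \pm 2$. Since $A \mapsto T_A$ is a homomorphism onto $SO(1,3)^+$, we have $M^{-1} \gamma M = T_{A^{-1}BA}$, which is again parabolic (the trace is unchanged by conjugation) and fixes $M^{-1}\h(p) = \mathpzc{h}_0$. Parallelness of $L^O_0$ gives $(M^{-1}\gamma M) \cdot L^O_0 = L^O_0$, hence
\[
\gamma \cdot L^O = \gamma \cdot (M \cdot L^O_0) = M \cdot \big((M^{-1}\gamma M)\cdot L^O_0\big) = M \cdot L^O_0 = L^O ,
\]
and as $\gamma$ was an arbitrary parabolic fixing $\h(p)$, the field $L^O$ is parallel. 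I do not expect a real obstacle here; the one place needing genuine care is the intertwining property of $\H$, especially tracking that the orientation carried by the line field transforms correctly under $M$. (A more direct route avoiding it: using transitivity of $SL(2,\C)$ on horospheres to conjugate $\gamma$ to the standard parabolics $P_\alpha$, one sees as in \refeg{parabolic_action_on_h0} that any parabolic fixing $\h(p)$ actually fixes the whole flag $(p,V,o)$---it fixes $p$ and acts trivially on $p^{\perp}/\R p$---and hence fixes $\H(p,V,o)$ outright.)
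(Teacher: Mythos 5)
Your proposal is correct and takes essentially the same route as the paper: reduce to the spinor $(1,0)$ via surjectivity of $\G\circ\F$, transitivity of $SL(2,\C)$ on $\C^2_\times$, and equivariance, then transfer parallelness using the fact that conjugation by the isometry carries parabolics fixing $\h(p)$ to parabolics fixing $\mathpzc{h}_0$ (the paper packages this as a bijection between parallel oriented line fields on the two horospheres). Your ``crux'' intertwining computation $M\left(V'\cap T_q\h(p')\right)=MV'\cap T_{Mq}\h(Mp')$ is exactly the inline computation $A L^O = A\cdot V\cap T(A\cdot\h(p)) = V_0\cap T\mathpzc{h}_0$ in the paper's proof, so there is no substantive difference.
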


\begin{proof}
The proof proceeds by reducing to the examples of the previous \refsec{examples_from_10}.

As $\G \circ \F$ is surjective (\refprop{F_G_surjective}), there exists $\kappa \in \C_\times^2$ such that $(p,V,o) = \G \circ \F(\kappa)$. As the action of $SL(2,\C)$ on $\C^2_\times$ is transitive (\reflem{SL2C_on_C2_transitive}), there exists  $A \in SL(2,\C)$ be a matrix such that $A \cdot \kappa = (1,0)$. Then by equivariance of $\f,\g,\h$ (\reflem{gof_properties}, \reflem{h_equivariance})  $A$ sends the given horosphere $\h(p)$ to $\horo_0 = \h(p_0) = \h \circ \g \circ \f (1,0)$ from \refsec{examples_from_10}:
\[
A \cdot \h(p) 
= A \cdot \left( \h \circ \g \circ \f  (\kappa) \right)
= \h \circ \g \circ \f \left( A \cdot \kappa \right)
= \h \circ \g \circ \f (1,0)
= \mathpzc{h}_0.
\]
Similarly, by equivariance of $\F$ and $\G$, $A$ sends the flag $(p,V,o)$ to the standard one $\G \circ \F(1,0)$ from \refsec{examples_from_10}, which we denote $(p_0, V_0, o_0)$:
\[
A (p,V,o) 
= A \cdot \left( \G \circ \F (\kappa) \right)
=  \G \circ \F \left(A \cdot \kappa \right)
= \G \circ \F (1,0)
= (p_0, V_0, o_0).
\]

Consider now the action of $A$ on oriented line fields. Recall that $SL(2,\C)$ acts on $\R^{1,3}$ via linear maps in $SO(1,3)^+$. If there is an oriented line field $L^O$ on $\h(p)$, then $A$ (via its derivative; but $A$ acts on $\R^{1,3}$ by a linear map) takes $L^O$ to an oriented line field on $\h(p_0)$, and $A^{-1}$ does the opposite. Thus $A$ and $A^{-1}$ provide a bijection
\begin{equation}
\label{Eqn:oriented_line_field_bijection}
\left\{ \text{Oriented line fields on $\h(p)$} \right\}
\cong
\left\{ \text{Oriented line fields on $\mathpzc{h}_0$} \right\}.
\end{equation}
Now, if $P$ is a parabolic isometry fixing $\h(p)$ then $A P A^{-1}$ is a parabolic isometry fixing $\mathpzc{h}_0 = A \cdot \h(p)$. This conjugation operation $P \mapsto A P A^{-1}$ has inverse $P \mapsto A^{-1} P A$, and provides a bijection between parabolic isometries fixing $\h(p)$ and parabolic isometries fixing $\mathpzc{h}_0 = A \cdot \h(p)$.

Thus, if we have a parallel oriented line field $L^O$ on $\h(p)$, then it is preserved under all parabolics $P$ fixing $\h(p)$, $P \cdot L^O = L^O$. Then the corresponding line field $A L^O$ on $\mathpzc{h}_0 = A \cdot \h(p)$ is preserved by all parabolics $A P A^{-1}$ fixing $\mathpzc{h}_0$, so $A \cdot L^O$ is parallel. In other words, the bijection \refeqn{oriented_line_field_bijection} above restricts to a bijection
\begin{equation}
\label{Eqn:parallel_oriented_line_field_bijection}
\left\{ \text{Parallel oriented line fields on $\h(p)$} \right\}
\cong
\left\{ \text{Parallel oriented line fields on $\mathpzc{h}_0$} \right\}.
\end{equation}

Now taking the given oriented line field $L^O$ from $\H(p,V,o)$ and applying $A$ gives an oriented lie field on $\mathpzc{h}_0$. We compute
\[
A L^O 
= A \left( V \cap T \h(p)) \right)
=  A \cdot V \cap T \left( A \cdot \h(p) \right)
=  V_0 \cap T \mathpzc{h}_0 
\]
which is precisely the oriented line field from $\H \circ \G \circ \F (1,0)$ in \refsec{examples_from_10}, which we calculated to be parallel. As $A$ sends $L^O$ to a parallel oriented line field, by \refeqn{parallel_oriented_line_field_bijection} $L^O$ is also parallel.
\end{proof}

The proof above essentially shows that any horosphere $\mathpzc{h}$, and the group of parabolics preserving it, behave like any other. The group of parabolics preserving a horosphere is isomorphic to the additive group $\C$ and acts by Euclidean translations on the horosphere. By a similar argument as above, one can show that if $A$ is parabolic and fixes $p \in L^+$, then $A$ fixes the horosphere $\h(p)$, the line $\R p$, the orthogonal complement $p^\perp$, and the quotient $p^\perp / \R p$, where it acts by translations.

\subsubsection{Decorated horospheres}
\label{Sec:decorated_horospheres}

Parallel oriented line fields are precisely the type of decoration we want on horospheres (at least, until we introduce spin in \refsec{spin}). As we see now, they make $\H$ into a bijection.
\begin{defn}
\label{Def:decorated_horosphere}
An \emph{decorated horosphere} is a pair $(\mathpzc{h}, L^O_P)$ consisting of $\mathpzc{h}\in\mathfrak{H}$ together with an oriented parallel line field $L^O_P$ on $\mathpzc{h}$. The set of all decorated horospheres is denoted $\mathfrak{H_D}$.
\end{defn}
We often refer to the oriented parallel line field on a horosphere as its \emph{decoration}. By definition, $\mathfrak{H_D} \subset \mathfrak{H_D^O}$.

Note that \refdef{decorated_horosphere} does not refer to any particular model of hyperbolic space. When we refer to decorated horospheres in a particular model we add it in brackets, e.g. $\mathfrak{H_D}(\hyp)$. 

Although $\H$ was originally defined (\refdef{H_PONF_to_decorated_horospheres}) as a map $\mathcal{F_P^O}(\R^{1,3}) \To \mathfrak{H_D^O}(\hyp)$, by  \reflem{image_of_H_parallel} $\H$ in fact has image $\mathfrak{H_D}(\hyp)$. Thus, we henceforth regard $\H$ as a map to the set of decorated horospheres, i.e.
\[
\H \colon \mathcal{F_P^O} (\R^{1,3}) \To \mathfrak{H_D}(\hyp).
\]
We will no longer need to refer to arbitrary line fields or overly decorated horospheres.

\begin{lem}
\label{Lem:H_bijection}
$\H \colon \mathcal{F_P^O}(\R^{1,3}) \To \mathfrak{H_D}(\hyp)$ is a bijection.
\end{lem}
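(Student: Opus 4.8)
The plan is to show that $\H \colon \mathcal{F_P^O}(\R^{1,3}) \To \mathfrak{H_D}(\hyp)$ is a bijection by constructing an explicit inverse, using the bijectivity of $\h$ (\refdef{h}) to recover the basepoint $p$, and then recovering the flag 2-plane $V$ and orientation $o$ from a decoration. First I would address injectivity: suppose $\H(p,V,o) = \H(p',V',o') = (\horo, L^O_P)$. Since $\h \colon L^+ \To \horos(\hyp)$ is a bijection and the first component of $\H(p,V,o)$ is $\h(p)$, we get $\h(p) = \h(p') = \horo$, hence $p = p'$. It then remains to recover $V$ and $o$ from the decoration $L^O_P$ on $\horo = \h(p)$. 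Pick any $q \in \h(p)$; then $L^O_P$ at $q$ is an oriented $1$-dimensional subspace $\ell \subset T_q \h(p) = p^\perp \cap q^\perp$ (\reflem{tangent_space_of_horosphere}). I claim $V$ is the $2$-plane spanned by $p$ and $\ell$ — equivalently, $V = \R p + \ell$ — and $o$ is the orientation on $V/\R p$ induced by the oriented direction of $\ell$. This holds because $V \cap T_q \h(p) = V \cap q^\perp = \ell$ by construction of $\H$ (using $V \subset p^\perp$, as in \refsec{flags_and_horospheres}), and since $\ell \neq \R p$ (a spacelike line cannot contain the lightlike $p$), $p$ and $\ell$ together span the $2$-plane $V$; the orientation statement is \refdef{H_PONF_to_decorated_horospheres} read backwards. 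This recovers $(p,V,o)$ uniquely from $(\horo, L^O_P)$, giving injectivity.

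For surjectivity, let $(\horo, L^O_P) \in \mathfrak{H_D}(\hyp)$ be any decorated horosphere. Since $\h$ is a bijection, there is a unique $p \in L^+$ with $\h(p) = \horo$. Fix a point $q \in \horo$, and let $\ell \subset T_q\h(p)$ be the oriented line of the parallel line field $L^O_P$ at $q$; as above $\ell \neq \R p$. Set $V = \R p + \ell$, a $2$-plane with $\R p \subset V \subset p^\perp$ (the latter because $\ell \subset T_q \h(p) = p^\perp \cap q^\perp \subset p^\perp$), so $(p,V)$ is a pointed null flag, and let $o$ be the orientation on $V/\R p$ induced by the oriented direction of $\ell$. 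Then $(p,V,o) \in \mathcal{F_P^O}(\R^{1,3})$, and by \refdef{H_PONF_to_decorated_horospheres}, $\H(p,V,o) = (\h(p), V \cap T\h(p))$, whose line at $q$ is $V \cap q^\perp = \ell$ with the correct orientation. The subtlety — and what I expect to be the main obstacle — is that $\H(p,V,o)$ must agree with $(\horo, L^O_P)$ not merely at the single point $q$ but as oriented line fields over all of $\horo$. This is where \reflem{image_of_H_parallel} does the essential work: the line field $V \cap T\h(p)$ produced by $\H$ is parallel, and by hypothesis $L^O_P$ is also parallel; two parallel oriented line fields on a horosphere that agree at one point agree everywhere, since a parallel line field is determined by its value at a single point (the parabolic isometries fixing $\horo$ act transitively on $\horo$ and carry the line field with them, by the definition of parallel in \refsec{parallel_line_fields}). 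Hence $\H(p,V,o) = (\horo, L^O_P)$, proving surjectivity.

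I would also note that the choice of $q$ in both parts is immaterial precisely because of parallelism: a different choice of $q$ gives a line $\ell'$ that is the image of $\ell$ under the parabolic translation carrying $q$ to $q'$, and one checks this translation (acting linearly on $\R^{1,3}$ and fixing $p$, as noted at the end of \refsec{parallel_line_fields}) sends the $2$-plane $\R p + \ell$ to $\R p + \ell'$ and preserves the induced orientation on the quotient by $\R p$ — so the recovered $(p,V,o)$ does not depend on $q$. Assembling these observations: the map $(\horo, L^O_P) \mapsto (p, \R p + \ell, o)$ just described is a well-defined two-sided inverse to $\H$, so $\H$ is a bijection.
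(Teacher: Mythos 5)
Your proof is correct, and it is organised differently from the paper's. The paper proves surjectivity onto horospheres via the bijectivity of $\h$, and then, for a fixed $p \in L^+$, argues that the $S^1$ family of flags based at $p$ (rotating about $\R p$ inside $T_p L^+$, as in \refsec{rotating_flags}) transversely and bijectively cuts out the $S^1$ family of parallel oriented directions on $\h(p)$; bijectivity is thus established fibrewise by this rotational correspondence. You instead write down an explicit two-sided inverse: recover $p = \h^{-1}(\horo)$, take the oriented line $\ell$ of the decoration at any point $q$, and set $V = \R p + \ell$ with $o$ induced by the direction of $\ell$. Both arguments rest on the same three ingredients — bijectivity of $\h$, the transversality statement giving $\dim\bigl(V \cap T_q \h(p)\bigr) = 1$ together with $\ell \neq \R p$, and \reflem{image_of_H_parallel} plus the fact that a parallel oriented line field is determined by its value at a single point — and you correctly identify the last of these as the crux of surjectivity (agreement at one point propagates over all of $\horo$). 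What your route buys is explicitness: injectivity and well-definedness of the inverse become transparent algebra ($V$ is forced to be the span of $p$ and $\ell$, and $o$ is forced by the orientation of $\ell$), whereas the paper's ``bijectively cut out'' is quicker but less detailed. What the paper's route buys is that the rotation picture connects directly to the later discussion in \refsec{topology_of_maps}, where $\H$ is upgraded to a bundle isomorphism and diffeomorphism of $S^1$-bundles; your explicit inverse would also yield smoothness, but only after an extra check.
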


\begin{proof}
From \refdef{h}, $\h \colon L^+ \To \mathfrak{H}(\hyp)$ is a bijection. Since the horosphere of $\H(p,V,o)$ is just $\h(p)$, every horosphere is obtained in the image of $\H$.

As explained in \refsec{rotating_flags}, there is an $S^1$ family of flags at any given basepoint $p \in L^+$. The 2-planes $V$ in this family all contain the line $\R p$, and rotate in the $3$-dimensional subspace $T_p L^+$ of $\R^{1,3}$. In defining the map $\H$, the horosphere $\h(p)$ is cut out of $\hyp$ by the 3-plane $\Pi$  with equation $\langle x, p \rangle = 1$. This 3-plane is parallel to the 3-plane $\langle x,p \rangle = 0$, which is $p^\perp = T_p L^+$. So in fact the tangent space to $\Pi$ at any point is just $T_p L^+$. We saw in \refsec{flags_and_horospheres} that $V$ always intersects the tangent space to $\h(p)$ in a 1-dimensional set, i.e. transversely in $\Pi$, and we saw in \reflem{image_of_H_parallel} that the resulting oriented line field is always parallel, hence determined by its value at one point. Moreover, the horosphere (being a spacelike surface) is transverse to the lightlike direction $\R p$. So as the flags based at $p$ rotate about $\R p$, they can also be considered to rotate in $T_p L^+ \cong T \Pi$, and transversely and bijectively cut out the $S^1$ family of oriented parallel directions on the 2-dimensional horosphere $\h(p)$ at each point.
\end{proof}

\subsubsection{$SL(2,\C)$ action on decorated horospheres}
\label{Sec:SL2c_on_decorated_horospheres}

\begin{defn} \
\label{Def:SL2C_action_UODHOR_hyp}
$SL(2,\C)$ acts on $\mathfrak{H_D}(\hyp)$ via its action on $\mathfrak{H}(\hyp)$ and its derivative.
\end{defn}
This action of $A \in SL(2,\C)$ derives from its action on $\R^{1,3}$ (\refdef{SL2C_on_R31}) via linear maps in $SO(1,3)^+$, the orientation-preserving isometries of $\hyp$. A horosphere $\mathpzc{h}$ is sent to $A \cdot \mathpzc{h}$ as in \refdef{SL2C_action_on_hyperboloid_model}. The derivative of this linear map (which is the same linear map, on the tangent space to the horosphere) applies to the decoration. Thus if $(\mathpzc{h}, L_P^O)$ is a decorated horosphere then $A \cdot (\mathpzc{h}, L_P^O) = (A \cdot \mathpzc{h}, A \cdot L_P^O)$ where both $A \cdot \mathpzc{h}$ and $A \cdot L_P^O$ mean to apply $A$ as a linear map in $SO(1,3)^+$.

\begin{lem}
\label{Lem:H_equivariant}
The actions of $SL(2,\C)$ on $\mathcal{F_P^O}(\R^{1,3})$ (\refdef{SL2C_on_PONF_R31}), and $\mathfrak{H_D}(\hyp)$ are equivariant with respect to $\H$.
\end{lem}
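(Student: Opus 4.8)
The plan is to show equivariance of $\H$ by tracing each piece of the data in a flag $(p,V,o) \in \mathcal{F_P^O}(\R^{1,3})$ through the construction of \refdef{H_PONF_to_decorated_horospheres}, and checking that applying $A \in SL(2,\C)$ commutes with each piece. Write $M \in SO(1,3)^+$ for the linear map by which $A$ acts on $\R^{1,3}$ (\reflem{SL2C_action_on_light_cones} and subsequent comments). The horosphere part is already handled: $\H(p,V,o)$ has underlying horosphere $\h(p)$, and by equivariance of $\h$ (\reflem{h_equivariance}) we have $\h(A\cdot p) = A\cdot \h(p)$, i.e. $M$ carries $\h(p)$ to $\h(Mp)$. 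So the only thing to verify is that $M$ carries the oriented line field $V \cap T\h(p)$ on $\h(p)$ to the oriented line field $(A\cdot V) \cap T\h(Mp)$ on $\h(Mp)$ — i.e. that $M$ commutes with the operation of intersecting the flag 2-plane with the tangent planes of the horosphere, and with the induced orientation.

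First I would record the pointwise statement. Fix $q \in \h(p)$, so that $Mq \in \h(Mp)$. By \reflem{tangent_space_of_horosphere}, $T_q \h(p) = p^\perp \cap q^\perp$, and since $M$ is a Lorentz transformation it preserves the Minkowski form, hence $M(p^\perp) = (Mp)^\perp$ and $M(q^\perp) = (Mq)^\perp$; therefore $M(T_q\h(p)) = T_{Mq}\h(Mp)$. Since $M$ is linear and $A\cdot V = MV$ by definition, we get
\[
M\left( V \cap T_q \h(p) \right) = (MV) \cap M(T_q \h(p)) = (A\cdot V) \cap T_{Mq} \h(Mp),
\]
using that $M$ is a bijection so commutes with intersection. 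This shows the \emph{unoriented} line fields correspond. For the orientation: the orientation $o$ on $V/\R p$ is carried by $A$ to $A\cdot o$ on $(A\cdot V)/\R(Mp)$ (\refdef{SL2C_on_R31_orientations}), and the induced orientation on the 1-dimensional space $V \cap T_q\h(p)$ is, by the discussion preceding \refdef{H_PONF_to_decorated_horospheres}, determined by which side of $\R p$ in $V$ a vector lies; since $M$ maps $\R p$ to $\R(Mp)$ preserving the $p$-direction (as $p \in L^+$ and $M \in SO(1,3)^+$ is orthochronous, $Mp$ is a positive multiple of nothing but still future-pointing, and the orientation of $\R p$ toward $p$ goes to that of $\R(Mp)$ toward $Mp$), $M$ respects this side-of-the-line data. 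Hence $M$ carries the oriented line field of $\H(p,V,o)$ to that of $\H(A\cdot(p,V,o))$.

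Assembling: $\H(A\cdot(p,V,o)) = \H(Mp, MV, M o) = (\h(Mp),\, (MV)\cap T\h(Mp))$ with induced orientation, which by the above equals $(M\cdot\h(p),\, M\cdot(V\cap T\h(p)))$ with the transported orientation, and this is exactly $A\cdot\H(p,V,o)$ by \refdef{SL2C_action_UODHOR_hyp}. So $\H(A\cdot(p,V,o)) = A\cdot\H(p,V,o)$, as required. The only mildly delicate point — and the one I would write out most carefully — is the bookkeeping of orientations: one must check that $\g$ and $A$ interacting with orientations on quotient spaces behaves exactly as catalogued in \reflem{action_on_coorientation} and \refdef{SL2C_on_R31_orientations}, so that ``the orientation induced from $o$'' and ``the orientation induced from $A\cdot o$'' are matched by $M$. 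This is not hard but is the step where sign errors would hide; everything else is the observation that a linear isometry commutes with taking orthogonal complements, intersections, and spans. Alternatively, one can shortcut the entire argument by invoking surjectivity of $\G\circ\F$ (\refprop{F_G_surjective}) and transitivity of the $SL(2,\C)$-action on $\C^2_\times$ (\reflem{SL2C_on_C2_transitive}) to reduce to the single flag $\G\circ\F(1,0)$ computed in \refsec{examples_from_10}, exactly as in the proof of \reflem{image_of_H_parallel}; but the direct verification above is cleaner and avoids a reduction step.
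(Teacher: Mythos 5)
Your proposal is correct and follows essentially the same route as the paper's proof: both rest on the observation that $A$ acts on flags and on decorated horospheres via the same linear map $M \in SO(1,3)^+$, invoke equivariance of $\h$ (\reflem{h_equivariance}), and note that a linear bijection commutes with the intersection $V \cap T\h(p)$ and carries the induced orientation along. Your pointwise check via $T_q\h(p) = p^\perp \cap q^\perp$ and preservation of the Minkowski form is just a slightly more explicit version of the paper's assertion that $M(T\h(p)) = T(M\h(p))$.
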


\begin{proof}
The equivariance basically follows from the fact that $A$ acts via a linear map in $SO(1,3)^+$ on both spaces. Explicitly, let $A \in SL(2,\C)$, and let $M \in SO(1,3)^+$ be the induced map on $\R^{1,3}$. For a flag $(p,V,o) \in \mathcal{F_P^O}(\R^{1,3})$, the action of $A$ on $p, V$ and $o$ is via the linear map $M$ on $\R^{1,3}$, and we have $A\cdot (p,V,o)=(Mp,MV,Mo)$ where $M$ acts linearly in the usual way.

Now $\H(p,V,o) = (\h(p), V \cap T\h(p))$ where the horosphere $\h(p)\in\mathfrak{H}(\hyp)$ is cut out of $\hyp$ by the plane with equation $\langle x,p \rangle = 1$, and $V \cap T \h(p)$ is a line which obtains an orientation from $o$.

Thus, $A\cdot \H(p,V,o) = (M\h(p), M(V \cap T\h(p)))$ is simply obtained by applying the linear map $M$ to the situation. 

On the other hand, $\H(Mp,MV,Mo)) = (\h(Mp), MV \cap M(T\h(p)))$. By equivariance of $\h$ (\reflem{h_equivariance}),  $\h(Mp)=M \h(p)$. And $M(V \cap T\h(p)) = MV \cap M(T\h(p)) = MV \cap TM\h(p)$: the image under $M$ of the intersection of 2-plane $V$ with the tangent space of $\h(p)$ is the intersection of $MV$ with the tangent space of $M\h(p) = \h(Mp)$.
\end{proof}

\subsection{From the hyperboloid model to the disc model}
\label{Sec:hyperboloid_to_disc}

The fourth step of our journey is from the hyperboloid model $\hyp$ to the disc model $\Disc$, via the maps $\i$ (and $\I$) from horospheres (with decorations) in $\hyp$ to horospheres (with decorations) in $\Disc$. The map from $\hyp$ to $\Disc$ is a standard isometry and we discuss it briefly. All constructions in $\hyp$ translate directly to $\Disc$, but we only consider the model briefly here. In \refsec{disc_model} we introduce the model and the maps $\i$ and $\I$; in \refsec{SL2C_disc_model} we discuss $SL(2,\C)$ actions and equivariance; in \refsec{examples_computations_disc_model} we discuss some examples and computations.

\subsubsection{The disc model}
\label{Sec:disc_model}

For a point $(X,Y,Z) \in \R^3$ let $r$ be its Euclidean length, i.e. $r > 0$ is such that $r^2 = X^2 + Y^2 + Z^2$.
\begin{defn}
The \emph{disc model} $\Disc$ of $\hyp^3$ is the set 
\[
\{(X,Y,Z) \in \R^3 \, \mid \, r < 1 \}
\quad \text{with Riemannian metric} \quad
ds^2 = \frac{4 \left( dX^2 + dY^2 + dZ^2 \right)}{\left( 1-r^2 \right)^2}.
\]
The boundary at infinity $\partial \Disc$ of $\Disc$ is $\{(X,Y,Z) \in \R^3 \, \mid r = 1 \}$.
\end{defn}

\begin{center}
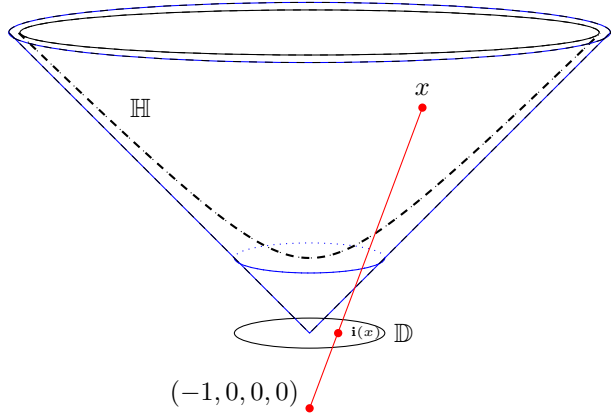

    \begin{tikzpicture}
  \draw[blue] (0,1) ellipse (1cm and 0.2cm);
  \fill[white] (-1,1)--(1,1)--(1,1.5)--(-1,1.5);
  \draw[blue,dotted] (0,1) ellipse (1cm and 0.2cm);
  \draw (0,0) ellipse (1cm and 0.2cm);
  \draw[blue] (-4,4)--(0,0)--(4,4);
  \draw[dashed, thick] plot[variable=\t,samples=1000,domain=-75.5:75.5] ({tan(\t)},{sec(\t)});
  \draw[blue] (0,4) ellipse (4cm and 0.4cm);
  \draw (0,4) ellipse (3.85cm and 0.3cm);
  \fill[red] (1.5,3) circle (0.055cm);
  \node at (1.5,3.25){$x$};
  \fill[red] (0.38,0) circle (0.055cm);
  \node at (0.75,0){\tiny$\i(x)$};
  \fill[red] (0,-1) circle (0.055cm);
  \node at (-1,-0.8){$(-1,0,0,0)$};
  \draw[dotted, thin] plot[variable=\t,samples=1000,domain=-75.5:75.5] ({tan(\t)},{sec(\t)});
  \draw[dashed] (0,4) ellipse (4cm and 0.4cm);
  \draw[dashed] (0,4) ellipse (3.85cm and 0.3cm);
  \draw[dashed] (-4,4)--(0,0)--(4,4);
  \node at (-2.25,3){$\hyp$};
  \draw[red] (1.5,3)--(0,-1);
  \node at (1.25,0){$\Disc$};
\end{tikzpicture} 
\label{Fig:hyperboloid_to_disc}
\captionof{figure}{From the hyperboloid $\hyp$ to the disc $\Disc$ (drawn a dimension down).}
\end{center}

The standard isometry from the hyperboloid model $\hyp$ to the disc model $\Disc$ regards  $\Disc$ as the unit 3-disc in the 3-plane $T=0$, i.e.
\[
\Disc = \{ (0,X,Y,Z) \mid X^2 + Y^2 + Z^2 < 1 \},
\]
and is given by straight-line projection from $(-1,0,0,0)$. See  \reffig{hyperboloid_to_disc}.
This gives the following map.
\begin{defn}
\label{Def:isometry_hyp_disc}
The isometry $\i$ from the hyperboloid model $\hyp$ to the disc model $\Disc$ is given by
\[
\i \colon \hyp \To \Disc, \quad
\i (T,X,Y,Z) = \frac{1}{1+T} (X,Y,Z).
\]
The map $\i$ extends to a map on spheres at infinity, which is essentially the identity on $\S^+$, but the domain can be taken to be $L^+$,
\[
\i \colon \partial \hyp = \S^+ \To \partial \Disc \text{ or } L^+ \To \partial \Disc, \quad
\i (T,X,Y,Z) = \left( \frac{X}{T}, \frac{Y}{T}, \frac{Z}{T} \right).
\]
The map $\i$ yields a map on horospheres, which we also denote $\i$,
\[
\i \colon \mathfrak{H}(\hyp) \To \mathfrak{H}(\Disc).
\]
\end{defn}

Horospheres in $\Disc$ appear as Euclidean spheres tangent to the boundary sphere $\partial \Disc$. The point of tangency with $\partial \Disc$ is the centre of the horosphere. The horoball bounded by the horosphere is the interior of the Euclidean sphere.

If a horosphere in $\hyp$ has an oriented tangent line field, we can transport it to $\Disc$ using the derivative of $\i$. One of these oriented tangent line fields is parallel if and only if the other is. So we obtain the following. 
\begin{defn}
\label{Def:I}
The map
\[
\I \colon \mathfrak{H_D}(\hyp) \To \mathfrak{H_D}(\Disc).
\]
is given by $\i$ and its derivative.
\end{defn}
It is clear that $\i$ and $\I$ are both bijections.

\subsubsection{$SL(2,\C)$ action on disc model}
\label{Sec:SL2C_disc_model}

The action of $SL(2,\C)$ extends to $\Disc$ and $\partial \Disc$, $\mathfrak{H}(\Disc)$, as follows:
\begin{defn}
The action of $A \in SL(2,\C)$ on 
\label{Def:SL2C_action_disc_model}
\label{Def:SL2C_action_UODHOR_Disc}
\begin{enumerate}
\item
$\Disc$ sends each $x \in \Disc$ to $A\cdot x =  \i \left( A\cdot  \left( \i^{-1} x \right) \right)$.
\item
$\partial \Disc$ sends each $x \in \partial \Disc$ to $
A\cdot x = \i \left( A\cdot  \left( \i^{-1} x \right) \right)$.
\item
$\mathfrak{H}(\Disc)$ is induced by the action on $\Disc$, which sends $\mathfrak{H}(\Disc)$ to $\mathfrak{H}(\Disc)$.
\item
$\mathfrak{H_D}(\Disc)$ is induced by its action on $\mathfrak{H}(\Disc)$ and its derivative.
\end{enumerate}
\end{defn}
Note that in (i), $\i^{-1} x \in \hyp$, so $A \cdot \i^{-1}(x)$ uses the action on $\hyp$, and in (ii), $\i^{-1} (x) \in \partial \hyp$, so $A \cdot \i^{-1}(x)$ uses the  action on $\partial \hyp$ (\refdef{SL2C_action_on_hyperboloid_model}).

The actions on $\Disc$ and $\partial \Disc$ are equivariant by definition: if we take a point $p \in \hyp$ or $\partial \hyp$, then $\i(p) \in \Disc$ or $\partial \Disc$, and by definition
\[
A \cdot \i (p) = \i \left( A \cdot p \right).
\]
The action on $\horos(\Disc)$ is induced by the pointwise action on $\Disc$, immediately giving the following.
\begin{lem} 
The actions of $SL(2,\C)$ on 
\label{Lem:SL2C_actions_on_Hyp_Disc_equivariant}
\[
\text{(i) } \hyp \text{ and } \Disc, \quad
\text{(ii) } \partial \hyp \text{ and } \partial \Disc, \quad
\text{(iii) } \mathfrak{H}(\hyp) \text{ and } \mathfrak{H}(\Disc)
\]
are equivariant with respect to $\i$.
\qed
\end{lem}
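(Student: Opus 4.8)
The plan is to observe that all three equivariance statements are unwindings of \refdef{SL2C_action_disc_model}, in which the $SL(2,\C)$ actions on $\Disc$, $\partial\Disc$ and $\mathfrak{H}(\Disc)$ were \emph{defined} precisely by conjugating the corresponding actions on $\hyp$, $\partial\hyp$ and $\mathfrak{H}(\hyp)$ by the bijection $\i$. So I expect no real content beyond chasing definitions; this is why the statement was flagged with $\qed$ in the text.

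For (i), I would take $p \in \hyp$, set $x = \i(p) \in \Disc$, and apply \refdef{SL2C_action_disc_model}(i) directly: $A \cdot x = \i\left(A \cdot (\i^{-1} x)\right) = \i(A \cdot p)$, which is exactly the assertion $A \cdot \i(p) = \i(A \cdot p)$. For (ii), the same one-line computation works using \refdef{SL2C_action_disc_model}(ii) together with the action of $SL(2,\C)$ on $\partial\hyp$ from \refdef{SL2C_action_on_hyperboloid_model}(ii); here one only needs to note that the extension of $\i$ to $\partial\hyp \to \partial\Disc$ (as in \refdef{isometry_hyp_disc}) is exactly the bijection used in \refdef{SL2C_action_disc_model}(ii) to transport the action, so the argument of (i) applies verbatim.

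For (iii), I would unwind both sides as set-level actions: by \refdef{SL2C_action_on_hyperboloid_model}(iii) the action on $\mathfrak{H}(\hyp)$ is induced pointwise by the action on $\hyp$, and by \refdef{SL2C_action_disc_model}(iii) the action on $\mathfrak{H}(\Disc)$ is induced pointwise by the action on $\Disc$. Given $\mathpzc{h} \in \mathfrak{H}(\hyp)$, the horosphere $\i(\mathpzc{h})$ is the set $\{\i(q) \mid q \in \mathpzc{h}\}$, and applying $A$ pointwise while invoking part (i) at each point gives
$A \cdot \i(\mathpzc{h}) = \{A \cdot \i(q) \mid q \in \mathpzc{h}\} = \{\i(A \cdot q) \mid q \in \mathpzc{h}\} = \i\bigl(\{A \cdot q \mid q \in \mathpzc{h}\}\bigr) = \i(A \cdot \mathpzc{h})$.
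One should also record, for this to make sense, that $A$ acts on $\hyp$ by an isometry so $A \cdot \mathpzc{h}$ is again a horosphere, and that $\i$ carries horospheres to horospheres (\refdef{isometry_hyp_disc}), so both sides are genuine elements of $\mathfrak{H}(\Disc)$.

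The only thing resembling an obstacle is bookkeeping — confirming that ``the action on horospheres via the action on the space'' literally means the image under the space-level map, so that the diagram chase closes up — but this is immediate from the definitions. The same template (conjugation by the defining isometry) will then be reused unchanged for the map $\j$ from $\Disc$ to $\U$.
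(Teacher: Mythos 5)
Your proof is correct and follows the paper's own route exactly: the actions on $\Disc$, $\partial\Disc$ and $\mathfrak{H}(\Disc)$ are defined by transporting the hyperboloid-model actions through $\i$, so equivariance for (i) and (ii) holds by definition, and (iii) follows by applying this pointwise to horospheres. This is precisely why the paper states the lemma with no further proof.
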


\begin{lem} 
\label{Lem:I_equivariant}
The actions of $SL(2,\C)$ on $\mathfrak{H_D}(\hyp)$ and $\mathfrak{H_D}(\Disc)$ are equivariant with respect to $\I$.
\end{lem}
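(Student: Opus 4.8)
The plan is to exploit the fact that both the map $\I$ and the two relevant $SL(2,\C)$ actions are assembled from the single isometry $\i \colon \hyp \To \Disc$ (and its conjugates) together with derivatives, so that the desired equivariance follows formally from the chain rule together with the equivariance of $\i$ on horospheres already recorded in \reflem{SL2C_actions_on_Hyp_Disc_equivariant}(iii). Well-definedness of $\I$ — that a decorated horosphere is sent to a decorated horosphere — was settled when $\I$ was defined in \refdef{I}, and I would not revisit it.

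First I would fix $A \in SL(2,\C)$ and write $M \in SO(1,3)^+$ for the linear map by which $A$ acts on $\R^{1,3}$; by \refdef{SL2C_action_on_hyperboloid_model}, $A$ acts on $\hyp$ by the restriction of $M$, hence on $\mathfrak{H}(\hyp)$ by $\horo \mapsto M\horo$ and on $\mathfrak{H_D}(\hyp)$ by $(\horo, L^O_P) \mapsto (M\horo, M\cdot L^O_P)$, the line field being transported by the derivative of $M$, which is $M$ itself applied pointwise. On the disc side, $A$ acts on $\Disc$ by the conjugated diffeomorphism $\psi_A := \i \circ M \circ \i^{-1}$, hence on $\mathfrak{H}(\Disc)$ by $\horo' \mapsto \psi_A(\horo')$ and on $\mathfrak{H_D}(\Disc)$ by $(\horo', L^O_P) \mapsto (\psi_A(\horo'),\, D\psi_A \cdot L^O_P)$, again with $D\psi_A$ applied pointwise. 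By construction $\psi_A \circ \i = \i \circ M$ as maps $\hyp \To \Disc$.

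Then, given $(\horo, L^O_P) \in \mathfrak{H_D}(\hyp)$, I would compute the two composites. On one side
\[
\I\bigl(A\cdot(\horo,L^O_P)\bigr) = \I\bigl(M\horo,\, M\cdot L^O_P\bigr) = \bigl(\i(M\horo),\, D\i\cdot(M\cdot L^O_P)\bigr),
\]
and on the other
\[
A\cdot\I(\horo,L^O_P) = A\cdot\bigl(\i(\horo),\, D\i\cdot L^O_P\bigr) = \bigl(\psi_A(\i\horo),\, D\psi_A\cdot(D\i\cdot L^O_P)\bigr).
\]
The horosphere components coincide because $\i(M\horo) = \psi_A(\i(\horo))$, which is precisely \reflem{SL2C_actions_on_Hyp_Disc_equivariant}(iii) (equivalently, $\psi_A\circ\i = \i\circ M$). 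For the decorations, the chain rule applied to $\psi_A = \i\circ M\circ\i^{-1}$ gives $D\psi_A = D\i\circ DM\circ(D\i)^{-1}$ at the matching points, and $DM = M$ since $M$ is linear; hence $D\psi_A\cdot(D\i\cdot L^O_P) = D\i\cdot DM\cdot(D\i)^{-1}\cdot D\i\cdot L^O_P = D\i\cdot(M\cdot L^O_P)$, matching the first line. This yields the claimed equivariance $\I\circ A = A\circ\I$.

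The only genuine care needed is bookkeeping of base points: a decoration is a field of oriented lines $L^O_{P,q}\subset T_q\horo$, one for each $q\in\horo$, and one must verify that the pointwise derivatives above are all taken at corresponding points along $q\mapsto\i(q)\mapsto\psi_A(\i(q))=\i(Mq)$, using that $D_{\i(q)}(\i^{-1})\circ D_q\i = \mathrm{id}$. This is routine, and I do not expect any real obstacle — the lemma is a formal consequence of functoriality of the derivative together with the equivariance of $\i$ from \reflem{SL2C_actions_on_Hyp_Disc_equivariant}.
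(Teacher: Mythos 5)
Your proposal is correct and is essentially the paper's own argument, written out in full: the paper's proof simply cites the equivariance of $\i$ on horospheres (\reflem{SL2C_actions_on_Hyp_Disc_equivariant}) and observes that both $A$ and $\I$ transport line fields by derivatives, so they commute — which is exactly your chain-rule computation with $\psi_A = \i \circ M \circ \i^{-1}$ and $DM = M$. No gaps; you have just made the bookkeeping explicit.
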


\begin{proof}
We just saw the action of $A \in SL(2,\C)$ on $\mathfrak{H}(\hyp)$ and $\mathfrak{H}(\Disc)$ are equivariant with respect to $\i$. Both $A$ and $\I$ transport tangent line fields using the derivative, so they commute.
\end{proof}

\subsubsection{Examples and computations}
\label{Sec:examples_computations_disc_model}

We give some facts about the isometry $\i$.
\begin{lem}
\label{Lem:i_facts}
Under the map $\i \colon \hyp \To \Disc$,
\begin{enumerate}
\item
$q_0 = (1,0,0,0) \in \hyp$ maps to the origin $(0,0,0) \in \Disc$.
\item 
The point in $\partial \hyp$ represented by the ray in $L^+$ through $(1,X,Y,Z)$, maps to $(X,Y,Z) \in \partial \Disc$.
\item
In particular, the point of $\partial \hyp$ represented by the ray of $L^+$ through $p_0 = (1,0,0,1)$, maps to the north pole $(0,0,1) \in \partial \Disc$.
\end{enumerate}
\end{lem}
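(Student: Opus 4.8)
All three parts are immediate substitutions into the formulas of \refdef{isometry_hyp_disc}, so the plan is simply to record these computations and note one small well-definedness point.

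For (i), the plan is to apply the interior formula $\i(T,X,Y,Z) = \frac{1}{1+T}(X,Y,Z)$ directly to $q_0 = (1,0,0,0)$, giving $\i(q_0) = \frac12 (0,0,0) = (0,0,0)$, which is the origin of $\Disc$.

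For (ii), the plan is to use the boundary version $\i(T,X,Y,Z) = \left( \frac{X}{T}, \frac{Y}{T}, \frac{Z}{T} \right)$, whose domain may be taken to be $L^+$. Here one first observes that a point of $\partial\hyp$ represented by a ray of $L^+$ has a representative of the form $(1,X,Y,Z)$, and that since this lies on $L^+$ one has $X^2+Y^2+Z^2 = 1$. Evaluating the boundary map on this representative gives $(X,Y,Z)$, which has Euclidean norm $r=1$ and hence lies in $\partial\Disc$. The only point worth checking is that the value is independent of the chosen representative of the ray, which holds because the formula $(X/T,Y/T,Z/T)$ is invariant under positive rescaling of $(T,X,Y,Z)$; this was already implicit in \refdef{isometry_hyp_disc}, so nothing new is needed. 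Part (iii) is then the special case $(X,Y,Z) = (0,0,1)$ of part (ii), noting that $p_0 = (1,0,0,1) \in L^+$ and that $(0,0,1) \in \partial\Disc$ is by definition the north pole.

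I do not anticipate any genuine obstacle here: the lemma is a bookkeeping statement recording where the basepoint $q_0$, the point at infinity of $p_0$, and a general boundary point go under the standard isometry. The most that needs care is the ray-invariance of the boundary formula in part (ii), and that is a one-line observation.
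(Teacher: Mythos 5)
Your proposal is correct and matches the paper's approach: the paper's proof is simply ``immediate from \refdef{isometry_hyp_disc}'', and your explicit substitutions (plus the one-line observation on ray-invariance of the boundary formula) are exactly the computations that remark elides.
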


\begin{proof}
These are immediate from \refdef{isometry_hyp_disc}.
\end{proof}

\begin{eg}[Decorated horosphere in $\Disc$ of spinor $(1,0)$]
\label{Eg:decorated_horosphere_of_10_Disc}
Let $\kappa_0 = (1,0)$. The horosphere $\mathpzc{h}_0 =\h(p_0) = \h \circ \g \circ \f (\kappa_0)$ in $\hyp$, considered at length in the examples of \refsec{examples_from_10}, corresponds to a horosphere $\mathpzc{h}'_0 = \i(\mathpzc{h}_0)$ in $\Disc$. Since $\mathpzc{h}_0$ has centre the ray through $p_0 = (1,0,0,1)$ and passes through $q_0 = (1,0,0,0)$, using \reflem{i_facts}, $\mathpzc{h}'_0$ has centre $(0,0,1)$ and passes through the origin. Thus it is a Euclidean sphere of diameter $1$.

In \refeqn{general_point_on_h0} we found a parametrisation of $\mathpzc{h}_0$ by $\alpha = a+bi \in \C$ or $(a,b) \in \R^2$. Applying $\i$ yields a parametrisation of $\mathpzc{h}'_0$,
\begin{equation}
\label{Eqn:parametrisation_of_10_horosphere_in_disc}
\i \left( 1+ \frac{|\alpha|^2}{2},a, b, \frac{|\alpha|^2}{2} \right)
=
\frac{2}{4+a^2 + b^2} \left( a, b, \frac{a^2 + b^2}{2} \right).
\end{equation}
One can verify explicitly that this parametrises a Euclidean sphere in $\Disc$, tangent to $\partial \Disc$ at $(0,0,1)$ and passing through the origin (except for the point of tangency).

In \refeg{horosphere_of_10_generally} we found the oriented tangent line field $L^O$ on $\mathpzc{h}_0$ given by $\H \circ \G \circ \F(\kappa_0)$ explicitly: at the point $q$ parametrised by $(a,b)$, $L^O_q$ is spanned and oriented by $(b, 0, 1, b)$, which is the direction of constant $a$ and increasing $b$. Applying $\I$ we obtain a decoration on $\mathpzc{h}'_0$. This amounts to applying the derivative of $\i$ in the appropriate direction, which is just the partial derivative of $\i$ with respect to $b$. We find that the corresponding oriented line field on $\mathpzc{h}'_0$ is  spanned and oriented by 
\begin{equation}
\label{Eqn:decoration_on_10_horosphere_disc}
\frac{2}{(4+a^2+b^2)^2} \left( -2ab, 4+a^2-b^2,4b \right).
\end{equation}
This gives an explicit description of $\I \circ \H \circ \G \circ \F(\kappa_0)$. In particular, at the origin $(a,b)=(0,0)$, the decoration points in the direction $(0,1,0)$.
\end{eg}

For a general spin vector $\kappa$, we can  explicitly compute the centre of the corresponding horosphere in $\Disc$.
\begin{lem}
For $\kappa = (a+bi, c+di) \in \C^2_\times$ with $a,b,c,d \in \R$, we have
\[
\i \circ \h_\partial \circ \g \circ \f (\kappa) = 
\frac{1}{a^2+b^2+c^2+d^2} \left( 2(ac+bd), 2(bc-ad), a^2 + b^2 - c^2 - d^2 \right).
\]
\end{lem}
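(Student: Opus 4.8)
The plan is to obtain the formula by simply composing the three maps, each of which has already been computed or described explicitly earlier, so that the statement reduces to a single substitution.

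First I would recall from \reflem{spin_vector_to_TXYZ} that, writing $\g \circ \f(\kappa) = (T,X,Y,Z)$, we have
\[
T = a^2+b^2+c^2+d^2, \quad X = 2(ac+bd), \quad Y = 2(bc-ad), \quad Z = a^2+b^2-c^2-d^2.
\]
Since $\kappa \in \C^2_\times$, the trace $T = |\xi|^2 + |\eta|^2$ is strictly positive, so $\g \circ \f(\kappa)$ is a genuine point of $L^+$ and every map applied below is defined; in particular the denominator $a^2+b^2+c^2+d^2$ in the claimed expression is nonzero.

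Next I would apply $\h_\partial$. By \refdef{h_partial_light_cone_to_hyp}, $\h_\partial$ sends $p \in L^+$ to the centre of the horosphere $\h(p)$, and by the remark following \refdef{h} (together with \reflem{horosphere_centre_exists}) this centre is the ray of $L^+$ through $p$; i.e. $\h_\partial$ is just the projectivisation map $L^+ \To \S^+ = \partial \hyp$. Hence $\h_\partial \circ \g \circ \f(\kappa)$ is the ray through $(T,X,Y,Z)$. Finally, the extension of $\i$ to the boundary, given in \refdef{isometry_hyp_disc} by $(T,X,Y,Z) \mapsto (X/T, Y/T, Z/T)$, is well defined on rays (rescaling $(T,X,Y,Z)$ does not change the output), so it is compatible with the previous step, and substituting the values of $T,X,Y,Z$ above yields exactly the stated vector $\frac{1}{a^2+b^2+c^2+d^2}\bigl(2(ac+bd),\,2(bc-ad),\,a^2+b^2-c^2-d^2\bigr)$.

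There is no real obstacle here: the only verification needed is $T \neq 0$, which is immediate, and then the result is a direct substitution. As a consistency check one may evaluate at $\kappa_0 = (1,0)$, obtaining $(T,X,Y,Z) = (1,0,0,1)$ and hence the north pole $(0,0,1) \in \partial \Disc$, in agreement with \reflem{i_facts}(iii) and \refeg{decorated_horosphere_of_10_Disc}; one could equally note that \reflem{gof_celestial_sphere} already records the point $(1, X/T, Y/T, Z/T)$ of $\S^+$, on which $\i$ acts by deleting the leading coordinate.
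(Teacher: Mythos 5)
Your proof is correct and follows essentially the same route as the paper: quote $\g \circ \f(\kappa) = (T,X,Y,Z)$ from \reflem{spin_vector_to_TXYZ}, note that $\h_\partial$ is projectivisation, and apply the boundary formula for $\i$ (equivalently via the point on $\S^+$ recorded in \reflem{gof_celestial_sphere}). Your extra remarks on $T \neq 0$ and the check at $\kappa_0 = (1,0)$ are harmless additions but not needed.
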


\begin{proof}
In \refsec{light_cone_to_horosphere} we observed that $\h_\partial$ is just the projectivisation map $L^+ \To \S^+$. So $\h_\partial \circ \g \circ \f (\kappa)$ is the point on $\partial \hyp$ given by the ray through $\g \circ \f (\kappa)$, calculated in \reflem{spin_vector_to_TXYZ}. Applying $\i$ to a point on that ray, such as the point calculated in \reflem{gof_celestial_sphere}, we obtain the result.
\end{proof}

A few further remarks:
\begin{itemize}
\item
In \refsec{calculating_flags_Minkowski} we considered $\g \circ D_\kappa \f (\ZZ(\kappa))$, which is involved in defining the flag $\G \circ \F (\kappa)$. Explicit calculation (\reflem{null_flag_tricky_vector}) showed $\g \circ D_\kappa \f (\ZZ(\kappa))$ has no $T$-component. It thus defines a tangent vector to the $S^2$ given by intersecting $L^+$ with any slice of constant positive $T$. The map from this $S^2$ to $\partial \Disc$ is just a dilation from the origin, and so we immediately obtain these flag directions on $\partial \Disc$. From \reflem{null_flag_tricky_vector} we find that when $\kappa = (a+bi, c+di)$ with $a,b,c,d \in \R$, the direction is
\begin{equation}
\label{Eqn:flag_direction_disc}
\left( 2(cd-ab), a^2-b^2+c^2-d^2,2(ad+bc) \right).
\end{equation}
\item
More generally, in \refsec{rotating_flags} we found an orthogonal basis $e_1 (\kappa), e_2(\kappa), e_3 (\kappa)$ for $\R^3$, obtained by projecting to the $XYZ$ 3-plane the point $p = \g \circ \f (\kappa)$, and derivatives of $\g \circ \f$ in the directions $\ZZ(\kappa)$ and $i \ZZ(\kappa)$. As discussed there, this basis yields an explicit picture of the flag of $\kappa$ in the 3-plane $T=r^2$, on which the light cone appears as a 2-sphere of radius $r^2$. Projection to the $XYZ$ 3-plane, and rescaling to the unit sphere, then gives a description of the flag on $\partial \Disc$. So \reffig{flag_intersect_T_r_squared} can be regarded also as a picture of a flag in $\Disc$.
\item
With this in mind, return to the decorated horosphere $\horo'_0$ of \refeg{decorated_horosphere_of_10_Disc}: described by $\kappa_0 = (1,0)$, it has centre $(0,0,1)$, Euclidean diameter 1, parametrisation \refeqn{parametrisation_of_10_horosphere_in_disc}, and decoration \refeqn{decoration_on_10_horosphere_disc}. From \refeqn{flag_direction_disc}, the flag direction at $(0,0,1)$ is (setting $\kappa = \kappa_0$) is $(0,1,0)$.

Now consider what happens as a point $q$ in the horosphere approaches $(0,0,1) \in \partial \Disc$ along the line field. This corresponds to holding $a$ constant and letting $b \rightarrow \pm \infty$. One can check that the oriented line field on $\mathpzc{h}'_0$ approaches $(0,-1,0)$. This is the negative of the flag direction at $(0,0,1)$ calculated above, and we appear to have a ``mismatch" of decorations at infinity. See \reffig{5}.

This is worth noting, to avoid future confusion, but not particularly surprising: in Minkowski space, the flag direction along $L^+$ and the oriented line field on a horosphere come from intersections with different, parallel 3-planes. Also note that, approaching the centre of the horosphere from other directions on the horosphere, the oriented line field can approach any arbitrary direction.
\end{itemize}

\begin{center}
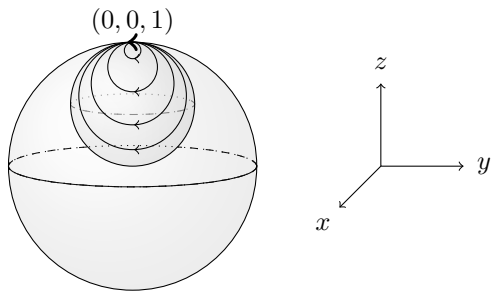

    \begin{tikzpicture}[scale=1.1]
    \draw (0,0) ellipse (1.5cm and 0.25cm);
    \fill[white] (-1.45,-0)--(1.45,-0)--(1.45,0.3)--(-1.45,0.3);
    \draw[dashed] (0,0) ellipse (1.5cm and 0.25cm);
    \fill[white] (0,0.75) circle (0.75cm);
    \draw[gray, dashed] (0,0.75) ellipse (0.75cm and 0.125cm);
    \fill[white] (-0.7,0.75)--(0.7,0.75)--(0.7,0.9)--(-0.7,0.9);
    \draw[gray, dotted] (0,0.75) ellipse (0.75cm and 0.125cm);
    \shade[ball color = gray!40, opacity = 0.1] (0,0) circle (1.5cm);
    \draw (0,0) circle (1.5cm);
    \shade[ball color = gray!40, opacity = 0.1] (0,0.75) circle (0.75cm);
    \draw (0,0.75) circle (0.75cm);
    \draw[dotted] (0,0) ellipse (1.5cm and 0.25cm);
    \draw[<->] (3,1)--(3,0)--(4,0);
    \draw[->] (3,0)--(2.5,-0.5);
    \node at (3,1.25){$z$};
    \node at (2.3,-0.7){$x$}; 
    \node at (4.25,0){$y$};
    \node at (0,1.75){$(0,0,1)$};
    \draw (0,0.85) circle (0.65cm);
    \draw (0,1) circle (0.5cm);
    \draw (0,1.2) circle (0.3cm);
    \draw (0,1.4) circle (0.1cm);
    \draw[<-] (0.02,1.3)--(0.04,1.3);
    \draw[<-] (0.02,0.9)--(0.04,0.9);
    \draw[<-] (0.02,0.5)--(0.04,0.5);
    \draw[<-] (0.02,0.2)--(0.04,0.2);
    \draw[line width=0.5mm, ->] (-0.04,1.5)--(-0.06,1.5);
    \end{tikzpicture}
    \captionof{figure}{Decoration ``mismatch" at $\infty$.}
\label{Fig:5}
\end{center}

\subsection{From the disc model to the upper half space model}
\label{Sec:Disc_to_U}

Finally, in our fifth step, we pass to the upper half space model $\U$, via the maps $\j$ (and $\J$) sending horospheres (with decorations) from $\Disc$ to $\U$. We have already discussed $\U$ to some extent in the introduction. The map $\Disc \To \U$ is another standard isometry and we discuss it briefly. We introduce $\U$, $\j$ and $\J$ in \refsec{U_horospheres_decorations} and prove their $SL(2,\C)$ equivariance in \refsec{SL2C_on_U}.

\subsubsection{The upper half space model, horospheres, and decorations}
\label{Sec:U_horospheres_decorations}

As discussed in introductory \refsec{intro_horospheres_decorations}, we may denote points in $\U$ by Cartesian coordinates $(x,y,z)$ with $z>0$, or combine $x$ and $y$ into a complex number $x+yi$, writing points of $\U$ as $(x+yi,h) \in \C \times \R^+$. Regarding $\C$ as $\C \times \{0\}$, the boundary at infinity is $\partial \U = \C \cup \{\infty\} = \CP^1$.

Stereographic projection $S^2 \To \CP^1$ (the inverse of the map in \refdef{stereographic_projection}) yields the map $\partial \Disc \To \partial \U$. 
\begin{defn}
\label{Def:isometry_D_U}
The isometry $\j$ from the disc model $\Disc$ to the upper half space model $\U$ is induced by its map on spheres at infinity,
\[
\j = \Stereo^{-1} \colon \partial \Disc = S^2 \To \partial \U = \C \cup \{\infty\}, \quad
\j(x,y,z) = \frac{x+iy}{1-z}.
\]
This map extends uniquely to an isometry $\j \colon \Disc \To \U$ and then restricts to a map on horospheres, which we also denote $\j$,
\[
\j \colon \mathfrak{H}(\Disc) \To \mathfrak{H}(\U).
\]
\end{defn}
As with $\i$ and $\I$, the derivative of the isometry $\j$ can be used to transport a decoration on a horosphere from $\Disc$ to $\U$. 
\begin{defn}
\label{Def:J}
The map
\[
\J \colon \mathfrak{H_D}(\Disc) \To \mathfrak{H_D}(\U)
\]
is given by $\j \colon \Disc \To \U$ and its derivative.
\end{defn}
Clearly $\j$ (in all its forms) and $\J$ are bijections.

We have discussed horospheres and decorations in $\U$ in introductory \refsec{intro_horospheres_decorations}; we now elaborate. A horosphere $\horo \in \horos(\U)$ centred at $\infty$ appears in $\U$ as a horizontal Euclidean plane. The group of parabolic isometries fixing $\mathpzc{h}$ appear in $\U$ as horizontal translations. An oriented tangent line field on $\horo$ is then parallel if and only if it appears \emph{constant}. So to describe a decoration on $\mathpzc{h}$, we only need to specify a direction at one point; the decoration points in the same direction at all other points. Since $\horo$ appears in $\U$ as a plane parallel to the complex plane, we can describe a decoration by a complex number. Since it is an oriented line field, that complex number is only well defined up to multiplication by positive reals. See \reffig{decorated_horospheres}(b).

On the other hand, if a horosphere $\mathpzc{h} \in \horos(\U)$ is not entered at $\infty$, then it appears in $\U$ as a Euclidean sphere tangent to $\C$. As discussed in \refsec{parallel_line_fields}, to specify a decoration, it suffices to specify an oriented tangent line at any point of $\horo$; the oriented line field then propagates over the rest of $\horo$ by parallel translation. The point at which it is most convenient to specify a decoration is at the point which appears highest in $\U$, which we call the \emph{north pole} of $\horo$. The tangent space to $\horo$ at its north pole is parallel to $\C$, and so a decoration there can be specified by a complex number (again, up to multiplication by positive reals). Precisely, at the north pole, a tangent vector $(a,b,0)$ in Cartesian coordinates corresponds to the complex number $a+bi$. See \reffig{upper_half_space_decorated_horosphere}.

\begin{defn}
\label{Def:decoration_specification}
Let $(\horo, L_P^O) \in \mathfrak{H_D}(\U)$, where $\horo$ is a horosphere and $L_P^O$ a parallel oriented line field.
\begin{enumerate}
\item
If the centre of $\horo$ is $\infty$, then a \emph{specification} of $L_P^O$ is a complex number directing $L_P^O$ at any point of $\horo$, identifying each tangent space of $\horo$ with $\C$.
\item
If the centre of $\horo$ is not $\infty$, then a \emph{north-pole specification}, or just \emph{specification}, of $L_P^O$ is a complex number directing $L_P^O$ at the north pole $n$ of $\horo$, identifying $T_n \horo$ with $\C$.
\end{enumerate}
\end{defn}
Thus any decorated horosphere in $\U$ has a specification, but it is not unique: if $\alpha \in \C$ is a specification for $\horo$, then so is $c \alpha$ for any $c > 0$.

\subsubsection{$SL(2,\C)$ action on the upper half space model}
\label{Sec:SL2C_on_U}

The $SL(2,\C)$ actions on various aspects of $\U$ are similar to previous models of $\hyp^3$, using actions defined previously.
\begin{defn} 
\label{Def:SL2C_action_upper_half_space_model}
\label{Def:SL2C_action_UODHOR_U}
The action of $A \in SL(2,\C)$ on
\begin{enumerate}
\item
$\U$ sends each $x \in \U$ to  $A\cdot x = \j \left( A\cdot  \left( \j^{-1} x \right) \right)$.
\item
$\partial \U$ sends each $x \in \partial \U$ to $A\cdot x = \j \left( A\cdot  \left( \j^{-1} x \right) \right)$.
\item
$\mathfrak{H}(\U)$ in induced by the action on $\U$, which sends $\horos(\U)$ to $\horos(\U)$.
\item 
$\mathfrak{H_D}(\U)$ is induced by its action on $\horos(\U)$ and its derivative.
\end{enumerate}
\end{defn}
As with the disc model, the actions on $\U$ and $\partial \U$ are  defined to be equivariant, and as the action on $\horos(\U)$ is induced pointwise by the action on $\U$, we immediately have the following.
\begin{lem}
\label{Lem:D_U_actions_equivariant}
The actions of $SL(2,\C)$ on 
\[
\text{(i) } \Disc \text{ and } \U, \quad
\text{(ii) } \partial \Disc \text{ and } \partial \U, \quad
\text{(iii) } \mathfrak{H}(\Disc) \text{ and } \mathfrak{H}(\U)
\]
are equivariant with respect to $\j$.
\qed
\end{lem}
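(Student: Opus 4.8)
The plan is to mirror the argument used for \reflem{SL2C_actions_on_Hyp_Disc_equivariant}, since the actions of $SL(2,\C)$ on $\U$ and $\partial \U$ in \refdef{SL2C_action_upper_half_space_model} were defined precisely by conjugating the corresponding actions on $\Disc$ and $\partial \Disc$ (\refdef{SL2C_action_disc_model}) by the isometry $\j$ of \refdef{isometry_D_U}. So the content of the lemma is really that these conjugation-defined actions do what their definition forces them to do.

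First I would handle parts (i) and (ii) simultaneously. Given $x \in \Disc$ (respectively $x \in \partial \Disc$), its image $\j(x)$ lies in $\U$ (respectively $\partial \U$), and directly from the definition of the action on $\U$ (respectively $\partial \U$),
\[
A \cdot \j(x) = \j\left( A \cdot \left( \j^{-1}(\j(x)) \right) \right) = \j(A \cdot x),
\]
where on the left $A$ acts on $\U$ (respectively $\partial \U$) and on the right $A$ acts on $\Disc$ (respectively $\partial \Disc$). This is exactly the asserted intertwining $A \circ \j = \j \circ A$.

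For part (iii), I would recall that the action of $SL(2,\C)$ on $\mathfrak{H}(\U)$ is induced pointwise from its action on $\U$, and that $\j$ carries a horosphere $\horo \in \mathfrak{H}(\Disc)$ to the image set $\{ \j(x) \mid x \in \horo \}$. Applying the pointwise equivariance just proved to each point of $\horo$ then gives
\[
A \cdot \j(\horo) = \{ A \cdot \j(x) \mid x \in \horo \} = \{ \j(A \cdot x) \mid x \in \horo \} = \j(A \cdot \horo),
\]
which completes the argument.

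There is essentially no obstacle here; the only care required is bookkeeping — keeping track of which of the several $SL(2,\C)$ actions (on $\Disc$, $\partial \Disc$, $\U$, $\partial \U$, and on the two horosphere sets) is invoked at each stage — exactly as in the hyperboloid-to-disc case. This is why the statement can reasonably be recorded without a separate written proof.
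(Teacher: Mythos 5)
Your proposal is correct and is essentially the paper's own (unwritten) argument: the paper notes that the actions on $\U$ and $\partial \U$ are \emph{defined} to be equivariant via conjugation by $\j$, and that the action on $\mathfrak{H}(\U)$ is induced pointwise, which is exactly your computation for (i)--(ii) and your pointwise argument for (iii). Nothing is missing.
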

Similarly, both $\J$ and $A \in SL(2,\C)$ transport line fields using the derivative, giving the following.
\begin{lem} \
\label{Lem:J_equivariant}
The actions of $SL(2,\C)$ on $\mathfrak{H_D}(\Disc)$ and $\mathfrak{H_D}(\U)$ are equivariant with respect to $\J$.
\qed
\end{lem}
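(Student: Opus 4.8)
The plan is to run the same argument that proved the corresponding statement for $\i$ and $\I$ in \reflem{I_equivariant}, since $\j,\J$ are set up in exact parallel with $\i,\I$. First I would dispatch the undecorated part. By construction $\j\colon\Disc\To\U$ is an isometry, and by \refdef{SL2C_action_disc_model} and \refdef{SL2C_action_upper_half_space_model} the actions of $SL(2,\C)$ on $\Disc,\partial\Disc,\mathfrak{H}(\Disc)$ and on $\U,\partial\U,\mathfrak{H}(\U)$ were defined precisely so that $\j$ is equivariant; this is recorded in \reflem{D_U_actions_equivariant}. In particular, for $A\in SL(2,\C)$ and $\horo\in\mathfrak{H}(\Disc)$ we already have $\j(A\cdot\horo)=A\cdot\j(\horo)$, so there is nothing left to check at the level of horospheres.

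Next I would handle the decorations. A point of $\mathfrak{H_D}(\Disc)$ is a pair $(\horo,L_P^O)$ with $L_P^O$ a parallel oriented line field, and by \refdef{J} the map $\J$ sends it to $(\j(\horo),\,d\j(L_P^O))$, where $d\j$ means: apply the derivative of the isometry $\j$ fibrewise to tangent lines, carrying orientations along. On the other hand, by \refdef{SL2C_action_UODHOR_Disc} and \refdef{SL2C_action_UODHOR_U}, the element $A$ acts on a decorated horosphere in either model by applying the realising isometry (of $\Disc$, resp.\ of $\U$) to the horosphere and the derivative of that isometry to the line field. Writing $\phi$ and $\phi'$ for the isometries of $\Disc$ and $\U$ realising the action of $A$, equivariance at the level of the manifolds is the identity $\j\circ\phi=\phi'\circ\j$; differentiating gives $d\j\circ d\phi=d\phi'\circ d\j$, which is exactly the statement that $\J$ intertwines the line-field parts of the two actions. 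Combining this with the horosphere-level equivariance already noted yields $\J(A\cdot(\horo,L_P^O))=A\cdot\J(\horo,L_P^O)$.

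Finally I would note the two bookkeeping points that keep the argument honest, neither of which is a genuine obstacle. First, the derivative of an isometry sends \emph{parallel} oriented line fields to parallel ones, so everything stays inside $\mathfrak{H_D}$ rather than merely $\mathfrak{H_D^O}$: this is because an isometry conjugates the group of parabolic isometries fixing $\horo$ to the group fixing the image horosphere, exactly the conjugation observation used in the proof of \reflem{image_of_H_parallel}. Second, orientations are transported unambiguously because $\j$ and the elements of $SO(1,3)^+$ realising the $SL(2,\C)$-action are orientation-preserving diffeomorphisms. The only thing requiring care is keeping the domains of the actions straight — in \refdef{SL2C_action_upper_half_space_model}(i) the point $\j^{-1}x$ lies in $\Disc$ and so uses the $\Disc$-action, and so on — exactly as flagged in the remark following \refdef{SL2C_action_disc_model}. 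So the whole lemma is a one-line consequence of the chain rule together with the already-established equivariance of $\j$ at the level of $\Disc$, $\U$, and their horospheres.
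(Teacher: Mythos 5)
Your argument is exactly the paper's: the paper proves this lemma by the one-line remark preceding it — equivariance of $\j$ on horospheres is already \reflem{D_U_actions_equivariant}, and since both $\J$ and the $SL(2,\C)$-action transport line fields via derivatives, they commute — which mirrors the proof of \reflem{I_equivariant}. Your additional bookkeeping about parallelness and orientations is fine but not needed beyond what the paper records.
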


\subsection{Putting the maps together}
\label{Sec:putting_maps_together}

We now have two sequences of maps, $\f,\g,\h,\i,\j$ and $\F,\G,\H,\I,\J$, as discussed in the introduction. We now consider their compositions.

In \refsec{boundary_points_isometries} we consider the effect of these maps on points at infinity, and show that the action of $SL(2,\C)$ on $\partial \U$ yields the standard description of isometries via M\"{o}bius transformation. In \refsec{fghij_2}, we calculate the compositions of $\f, \g, \h, \i, \j$ and $\F,\G,\H,\I,\J$. 

\subsubsection{Boundary points and isometries}
\label{Sec:boundary_points_isometries}

Before considering the composition of $\f,\g,\h,\i,\j$, we consider the composition
\[
\C_\times^2 \stackrel{\f}{\To} \HH_0^+
\stackrel{\g}{\To} L^+ 
\stackrel{\h_\partial}{\To} \partial \hyp
\stackrel{\i}{\To} \partial \Disc
\stackrel{\j}{\To} \partial \U.
\]
These map to the points of $\partial\hyp, \partial\Disc, \partial\U$ which are the centres of the horospheres produced by $\h, \i, \j$. For convenience, we abbreviate the composition to
\[
\k_\partial = \j \circ \i \circ \h_\partial \circ \g \circ \f
\]

There are $SL(2,\C)$ actions on all these spaces. A matrix $A \in SL(2,\C)$ acts on $\C_\times^2$ via matrix-vector multiplication (\refdef{SL2C_action_on_C2}); on $S \in \HH_0^+$, $A$ acts as $A\cdot S = ASA^*$  (\reflem{restricted_actions_on_H}); on $L^+ \subset \R^{1,3}$, $A$ essentially has the same action, which via $\g$ becomes a linear map in $SO(1,3)^+$ (\refdef{SL2C_on_R31}); for $x \in \partial \hyp$, $A \in SL(2,\C)$ acts similarly (\refdef{SL2C_action_on_hyperboloid_model}); the action is then transferred to the other models using the isometries $\i$ and $\j$ (\refdef{SL2C_action_disc_model}, \refdef{SL2C_action_upper_half_space_model}).

We have seen that these actions are all equivariant with respect to these maps: $\f$ \reflem{restricted_actions_on_H}, $\g$ (remark after \refdef{SL2C_on_R31}), $\h_\partial$ (\reflem{h_equivariance}), $\i$ (\reflem{SL2C_actions_on_Hyp_Disc_equivariant}), and $\j$ (\reflem{D_U_actions_equivariant}). Thus, $\k_\partial$ is also $SL(2,\C)$-equivariant.

Let us now compute the composition $\k_\partial$!
\begin{prop}
\label{Prop:explicit_fghij}
The composition $\k_\partial = \j \circ \i \circ \h_\partial \circ \g \circ \f \colon \C_\times^2 \To \partial \U = \C \cup \{\infty\}$ is given by
\[
\k_\partial  (\xi, \eta) = \frac{\xi}{\eta}.
\]
\end{prop}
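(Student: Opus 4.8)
The plan is to compute the composition $\k_\partial = \j \circ \i \circ \h_\partial \circ \g \circ \f$ step by step, simply tracking the image of a general spinor $\kappa = (\xi,\eta) \in \C^2_\times$ through each of the five maps, using the explicit formulas already established. Since $\h_\partial$ is the projectivisation map $L^+ \To \S^+$ (as noted after \refdef{h_partial_light_cone_to_hyp}), and $\i$ and $\j$ are given explicitly (\refdef{isometry_hyp_disc}, \refdef{isometry_D_U}), the only substantive input is \reflem{spin_vector_to_TXYZ}, which gives $\g \circ \f (\xi,\eta) = (T,X,Y,Z)$ with $T = |\xi|^2+|\eta|^2$, $X = 2\Re(\xi\overline\eta)$, $Y = 2\Im(\xi\overline\eta)$, $Z = |\xi|^2 - |\eta|^2$.

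First I would apply $\i$ to a representative of the ray $\h_\partial \circ \g \circ \f(\kappa)$; using the boundary form of $\i$ from \refdef{isometry_hyp_disc}, the image point in $\partial\Disc$ is $(X/T, Y/T, Z/T)$. Then I would feed this into $\j = \Stereo^{-1}$ from \refdef{isometry_D_U}, obtaining
\[
\k_\partial(\xi,\eta) = \frac{(X/T) + i(Y/T)}{1 - (Z/T)} = \frac{X + iY}{T - Z}.
\]
Substituting the values from \reflem{spin_vector_to_TXYZ} gives numerator $X + iY = 2\Re(\xi\overline\eta) + 2i\,\Im(\xi\overline\eta) = 2\xi\overline\eta$ and denominator $T - Z = (|\xi|^2+|\eta|^2) - (|\xi|^2 - |\eta|^2) = 2|\eta|^2 = 2\eta\overline\eta$, so that
\[
\k_\partial(\xi,\eta) = \frac{2\xi\overline\eta}{2\eta\overline\eta} = \frac{\xi}{\eta},
\]
when $\eta \neq 0$. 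The case $\eta = 0$ must be handled separately: then $T = Z = |\xi|^2$, so the image point in $\partial\Disc$ is the north pole $(0,0,1)$, which $\j$ sends to $\infty = \xi/\eta$ in $\CP^1$, consistent with the convention for division in $\CP^1$ (cf. \reflem{i_facts}(iii)). This covers all of $\C^2_\times$.

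I do not expect any serious obstacle here; the result is essentially a bookkeeping exercise, and indeed it recovers the familiar statement that $\g \circ \f$ composed with stereographic projection is the Hopf map (\reflem{gof_Hopf}), now seen through the chain of model isometries. The one point requiring a little care is the $\eta = 0$ edge case and the identification of the north pole of $\partial\Disc$ with $\infty \in \partial\U$, but this is immediate from the explicit formula for $\j$ and the conventions for $\CP^1$. One could alternatively give a coordinate-free argument: $\k_\partial$ is $SL(2,\C)$-equivariant (as noted in the text, being a composition of equivariant maps), both the source action on $\C^2_\times$ (projectivised) and the target action on $\partial\U = \CP^1$ are the standard ones, and $\k_\partial(1,0) = \infty$ by \reflem{i_facts}(iii) together with $\g\circ\f(1,0) = p_0$; transitivity then pins down $\k_\partial$ as the projectivisation $(\xi,\eta)\mapsto \xi/\eta$. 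I would present the direct computation as the main proof since it is shortest and self-contained.
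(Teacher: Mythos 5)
Your proposal is correct and follows essentially the same route as the paper's explicit computation (its ``Proof 2''): track $(\xi,\eta)$ through \reflem{spin_vector_to_TXYZ}, $\i$, and $\j$, arriving at $(X+iY)/(T-Z)=\xi/\eta$; your sketched alternative via equivariance/Hopf matches the paper's conceptual ``Proof 1''. Your explicit treatment of the $\eta=0$ case is a small improvement, since the paper's computation tacitly divides by $c^2+d^2$.
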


We give two proofs of this result. This first is more conceptual, using our previous observations about the Hopf fibration and stereographic projection. The second is explicitly computational.
\begin{lem}
\label{Lem:Stereo_Hopf_p}
Let $\p \colon \C^2_\times \To S^3$ be the map that collapses each real ray from the origin to its intersection with the unit 3-sphere. Then
\[
\Stereo \circ \Hopf \circ \, \p = \i \circ \h_\partial \circ \g \circ \f
\]
In other words, the following diagram commutes. 

\begin{center}
    \begin{tikzpicture}
        \node (a) at (0,0){$\C^2_\times$};
        \node (b) at (2,1){$S^3$};
        \node (c) at (4,1){$\CP^1$};
        \node (d) at (6,0){$S^2=\partial\Disc$};
        \node (e) at (1,-1){$\HH_0^+$};
        \node (f) at (3,-1){$L^+$};
        \node (g) at (5,-1){$\partial\hyp$};
        \draw[->] (a) -- (b) node [pos=0.5,above] {$\p$};
        \draw[->] (b) -- (c) node [pos=0.5,above] {$\Hopf$};
        \draw[->] (c) -- (d);
        \node at (5.5,0.8) {$\Stereo$};
        \draw[->] (a) -- (e) node [pos=0.75,above] {$\f$};
        \draw[->] (e) -- (f) node [pos=0.5,above] {$\g$};
        \draw[->] (f) -- (g) node [pos=0.5,above] {$\h_\partial$};
        \draw[->] (g) -- (d) node [pos=0.25,above] {$\i$};
    \end{tikzpicture}
\end{center}
\end{lem}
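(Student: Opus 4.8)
The plan is to prove Lemma~\ref{Lem:Stereo_Hopf_p} by chasing an arbitrary point $\kappa=(\xi,\eta)\in\C^2_\times$ around the diagram, using the explicit formulas we already have for each arrow. Both composites are maps $\C^2_\times\To S^2=\partial\Disc$, so it suffices to check they agree on every $\kappa$, and since every arrow is known explicitly this is a finite computation rather than an abstract argument.

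First I would compute the bottom-then-right route. Write $\kappa=(a+bi,c+di)$. By \reflem{spin_vector_to_TXYZ} we have
\[
\g\circ\f(\kappa)=\bigl(a^2+b^2+c^2+d^2,\;2(ac+bd),\;2(bc-ad),\;a^2+b^2-c^2-d^2\bigr)\in L^+.
\]
The map $\h_\partial\colon L^+\To\partial\hyp=\S^+$ is just projectivisation (the remark after \refdef{h_partial_light_cone_to_hyp}), so $\h_\partial\circ\g\circ\f(\kappa)$ is the ray through this point; by \reflem{gof_celestial_sphere} its representative at $T=1$ is obtained by dividing the last three coordinates by $a^2+b^2+c^2+d^2$. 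Then \refdef{isometry_hyp_disc} gives $\i(T,X,Y,Z)=(X/T,Y/T,Z/T)$, so
\[
\i\circ\h_\partial\circ\g\circ\f(\kappa)=\frac{1}{a^2+b^2+c^2+d^2}\bigl(2(ac+bd),\;2(bc-ad),\;a^2+b^2-c^2-d^2\bigr)\in\partial\Disc.
\]

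Second I would compute the top route. The map $\p$ rescales $\kappa$ to the unit sphere, but $\Hopf(\xi,\eta)=\xi/\eta$ is invariant under positive (indeed all nonzero) rescaling, so $\Hopf\circ\p(\kappa)=\xi/\eta=\frac{ac+bd}{c^2+d^2}+i\frac{bc-ad}{c^2+d^2}$. Applying $\Stereo$ from \refdef{stereographic_projection} to this complex number and simplifying is exactly the computation already carried out in the proof of \reflem{gof_Hopf}: it yields $\frac{1}{a^2+b^2+c^2+d^2}\bigl(2(ac+bd),2(bc-ad),a^2+b^2-c^2-d^2\bigr)$. (Strictly, \reflem{gof_Hopf} does this computation on $S^3$, where $a^2+b^2+c^2+d^2=1$, but the same algebra works verbatim for a general point of $\C^2_\times$ since $\Stereo\circ\Hopf$ depends only on the ratio $\xi/\eta$; alternatively, invoke \reflem{gof_Hopf} after precomposing with $\p$.) Comparing the two displays, the composites agree, proving the diagram commutes. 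One should also note the edge case $\eta=0$: then $\kappa=(a+bi,0)$, $\Hopf\circ\p(\kappa)=\infty\mapsto(0,0,1)$ under $\Stereo$, while the bottom route gives $\frac{1}{a^2+b^2}(0,0,a^2+b^2)=(0,0,1)$, so agreement holds there too.

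I do not expect a serious obstacle here; the only mild care needed is the bookkeeping of which representative of a ray one uses (the formulas for $\h_\partial$, $\i$ are rescaling-invariant in the right way, so the choice is harmless), and the observation that $\Stereo\circ\Hopf$ factors through the ratio $\xi/\eta$, which lets us reuse the simplification from \reflem{gof_Hopf} rather than redo it. The substantive content — that the $X,Y,Z$-coordinates of $\g\circ\f$ are stereographic projection of the Hopf map — was already established in \reflem{gof_Hopf}; this lemma merely repackages it with the isometry $\i$ and the projectivisation $\h_\partial$ folded in.
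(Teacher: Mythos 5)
Your proposal is correct and is essentially the paper's argument: both reduce the identity to the computation behind \reflem{gof_Hopf} together with the scaling-invariance of the maps involved, the only organisational difference being that the paper first proves the identity on $S^3$ (where $T=1$) and then handles general $\kappa$ by the homogeneity $\f(r\kappa')=r^2\f(\kappa')$ and the ray-collapsing of $\h_\partial$, whereas you verify the formulas directly on all of $\C^2_\times$ using \reflem{spin_vector_to_TXYZ} and \reflem{gof_celestial_sphere}. Your explicit check of the $\eta=0$ case is a small bonus the paper leaves implicit.
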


\begin{proof}
We already saw in \reflem{gof_Hopf} that, for $\kappa = (\xi, \eta) \in S^3$, the $XYZ$ coordinates of $\g \circ \f (\kappa)$ are precisely $\Stereo \circ \Hopf (\kappa)$. In this case  (\reflem{spin_vector_to_TXYZ}), the $T$ coordinate of $\g \circ \f (\kappa)$ is $1$. Now the map $\h_\partial$ (\refdef{h_partial_light_cone_to_hyp}) projectivises the light cone, and then $\i$  (\refdef{isometry_D_U}) maps it to the unit Euclidean sphere in such a way that the ray through  $(1,X,Y,Z)$ maps to $(X,Y,Z)$.  Hence we have
\begin{equation}
\label{Eqn:hgf=stereohopf_in_S3}
\i \circ \h_\partial \circ \g \circ \f (\kappa) = \Stereo \circ \Hopf (\kappa) \quad \text{for $\kappa \in S^3$}
\end{equation}

Now for general $\kappa \in \C^2_\times$, let $\kappa = r\kappa'$ where $r>0$ and $\kappa'  \in S^3$. Then $\p(\kappa) = \kappa'$ and $\i \circ \h_\partial \circ \g \circ \f (\kappa') = \Stereo \circ \Hopf (\kappa')$. Applying $\f$ we have $\f(\kappa) = \f(r \kappa') = (r \kappa')(r \kappa')^* = r^2 \kappa' \kappa'^*= r^2 \f(\kappa')$. Applying the linear map $\g$ we then have $\g \circ \f (\kappa) = r^2 \g \circ \f (\kappa')$; then $\h_\partial$ then collapses rays to a point, so $\h_\partial \circ \g \circ \f (\kappa) = \h_\partial \circ \g \circ \f (\kappa')$. Putting this together we obtain the result:
\[
\i \circ \h_\partial \circ \g \circ \f (\kappa)
= \i \circ \h_\partial \circ \g \circ \f (\kappa')
= \Stereo \circ \Hopf (\kappa')
= \Stereo \circ \Hopf \circ \, \p (\kappa).
\]
\end{proof}

\begin{proof}[Proof 1 of \refprop{explicit_fghij}]
From the preceding lemma, we may replace $\i \circ \h_\partial \circ \g \circ \f$ with $\Stereo \circ \Hopf \circ \p$. The final map $\j$ (\refdef{isometry_D_U}) is the inverse of $\Stereo$ (\refdef{stereographic_projection}). Thus
\[
\k(\xi, \eta)
= \j \circ \i \circ \h_\partial \circ \g \circ \f (\xi,\eta)
= \Stereo^{-1} \circ \Stereo \circ \Hopf \circ \, \p (\xi, \eta)
= \Hopf \circ \, \p (\xi, \eta).
\]
Writing $(\xi, \eta) = r(\xi',\eta')$ where $r>0$ and $(\xi', \eta') \in S^3$, we have $\p (\xi, \eta) = (\xi', \eta')$ and 
\[
\Hopf \circ \, \p (\xi, \eta) 
= \Hopf (\xi', \eta')
= \frac{\xi'}{\eta'}
= \frac{\xi}{\eta}.
\]
\end{proof}

\begin{proof}[Proof 2 of \refprop{explicit_fghij}]
Let $\xi = a+bi$ and $\eta = c+di$ where $a,b,c,d \in \R$. In \reflem{spin_vector_to_TXYZ} we computed
\[
\g \circ \f (\xi, \eta) = \left( a^2+b^2+c^2+d^2, 2(ac+bd), 2(bc-ad), a^2+b^2-c^2-d^2 \right) \in L^+.
\]
The map $\h_\partial$ then projectivises, and $\i$ (\refdef{isometry_hyp_disc}) then maps  $(T,X,Y,Z) \mapsto (X/T,Y/T,Z/T)$, so we have
\[
\i \circ \h_\partial \circ \g \circ \f (\xi, \eta) = \left( \frac{2(ac+bd)}{a^2+b^2+c^2+d^2}, \frac{2(bc-ad)}{a^2+b^2+c^2+d^2}, \frac{a^2+b^2-c^2-d^2}{a^2+b^2+c^2+d^2} \right).
\]
(This may also be obtained from \reflem{gof_celestial_sphere}).
Finally, applying $\j$ (\refdef{isometry_D_U}) we have
\begin{align*}
\k_\partial (\xi, \eta) = \j \circ \i \circ \h_\partial \circ \g \circ \f (\xi, \eta) 
&=
\frac{ \frac{2(ac+bd)}{a^2+b^2+c^2+d^2} + i \frac{2(bc-ad)}{a^2+b^2+c^2+d^2} }{1 - \frac{a^2+b^2-c^2-d^2}{a^2+b^2+c^2+d^2} }
= \frac{ (ac+bd) + i(bc-ad) }{ c^2+d^2 } \\
&= \frac{(a+bi)(c-di)}{(c+di)(c-di)} = \frac{a+bi}{c+di} = \frac{\xi}{\eta}.
\end{align*}
\end{proof}

\begin{lem}
An $A \in SL(2,\C)$ acts on $\partial \U = \C \cup \{\infty\} = \CP^1$ by M\"{o}bius transformations:
\[
\text{if} \quad
A = \begin{pmatrix} \alpha & \beta \\ \gamma & \delta \end{pmatrix}
\quad \text{and} \quad
z \in \C \cup \{\infty\}
\quad \text{then} \quad
A\cdot z = \frac{\alpha z + \beta}{\gamma z + \delta}.
\]
\end{lem}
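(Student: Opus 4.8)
The goal is to show that the $SL(2,\C)$ action on $\partial\U = \C\cup\{\infty\}$, which was defined in \refdef{SL2C_action_upper_half_space_model} as $A\cdot z = \j(A\cdot(\j^{-1}z))$ by transporting the action on $\partial\hyp$ through the isometries $\i$ and $\j$, agrees with the M\"obius action. The strategy is to exploit \refprop{explicit_fghij}, which tells us that the composition $\k_\partial = \j\circ\i\circ\h_\partial\circ\g\circ\f \colon \C^2_\times \To \partial\U$ is simply $(\xi,\eta)\mapsto \xi/\eta$, together with the equivariance of $\k_\partial$ noted in the discussion preceding that proposition.

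First I would recall that $\k_\partial$ is $SL(2,\C)$-equivariant: for any $A\in SL(2,\C)$ and $\kappa\in\C^2_\times$,
\[
\k_\partial(A\kappa) = A\cdot \k_\partial(\kappa),
\]
where on the left $A$ acts on $\C^2_\times$ by matrix--vector multiplication (\refdef{SL2C_action_on_C2}) and on the right $A$ acts on $\partial\U$ as in \refdef{SL2C_action_upper_half_space_model}. This follows by composing the equivariances of $\f$ (\reflem{restricted_actions_on_H}), $\g$, $\h_\partial$ (\reflem{h_equivariance}), $\i$ (\reflem{SL2C_actions_on_Hyp_Disc_equivariant}) and $\j$ (\reflem{D_U_actions_equivariant}), exactly as remarked in \refsec{boundary_points_isometries}. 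Next, I would observe that $\k_\partial$ is surjective onto $\partial\U$: indeed $\g\circ\f$ surjects onto $L^+$ (\reflem{gof_properties}), $\h_\partial$ is the projectivisation $L^+\To\S^+$, and $\i,\j$ are bijections on boundaries; alternatively this is immediate from the explicit formula $\k_\partial(\xi,\eta)=\xi/\eta$, whose image is all of $\C\cup\{\infty\}$ (take $\eta=1$ for finite values, $\eta=0$ for $\infty$).

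Now given $z\in\partial\U=\C\cup\{\infty\}$, pick $\kappa=(\xi,\eta)\in\C^2_\times$ with $\k_\partial(\kappa)=\xi/\eta = z$; concretely take $\kappa=(z,1)$ if $z\neq\infty$ and $\kappa=(1,0)$ if $z=\infty$. For $A=\begin{pmatrix}\alpha&\beta\\\gamma&\delta\end{pmatrix}\in SL(2,\C)$, equivariance gives
\[
A\cdot z = A\cdot \k_\partial(\kappa) = \k_\partial(A\kappa).
\]
But $A\kappa = \begin{pmatrix}\alpha&\beta\\\gamma&\delta\end{pmatrix}\begin{pmatrix}\xi\\\eta\end{pmatrix} = \begin{pmatrix}\alpha\xi+\beta\eta\\\gamma\xi+\delta\eta\end{pmatrix}$, so by \refprop{explicit_fghij},
\[
\k_\partial(A\kappa) = \frac{\alpha\xi+\beta\eta}{\gamma\xi+\delta\eta} = \frac{\alpha(\xi/\eta)+\beta}{\gamma(\xi/\eta)+\delta} = \frac{\alpha z+\beta}{\gamma z+\delta},
\]
interpreting the fractions in the usual way on $\CP^1$ when $\eta=0$ or a denominator vanishes. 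This is the claimed M\"obius formula, so $A\cdot z = \frac{\alpha z+\beta}{\gamma z+\delta}$.

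**Main obstacle.** There is no real obstacle here — the substance of the result is already packaged in \refprop{explicit_fghij} and the accumulated equivariance lemmas. The only points requiring a little care are bookkeeping ones: checking that the identification $A\kappa = (\alpha\xi+\beta\eta,\ \gamma\xi+\delta\eta)$ lands in $\C^2_\times$ (it does, since $A$ is invertible and $\kappa\neq 0$), and handling the point $z=\infty$ and the vanishing-denominator cases so that the equality holds as an identity of maps on all of $\CP^1$, not just on $\C$. These are routine continuity/projective-coordinate arguments, or can be dispatched by noting that both sides are continuous (indeed rational) maps $\CP^1\To\CP^1$ agreeing on the dense open set where everything is finite and nonzero.
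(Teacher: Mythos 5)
Your proposal is correct and follows essentially the same route as the paper: both use the $SL(2,\C)$-equivariance of $\k_\partial$ together with the formula $\k_\partial(\xi,\eta)=\xi/\eta$ from \refprop{explicit_fghij}, lift $z$ to a spinor, and compute $\k_\partial(A\kappa)$ to read off the M\"obius formula. Your extra care with the $z=\infty$ and vanishing-denominator cases is a minor refinement of the same argument.
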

Note that when $A$ is the negative identity matrix, the corresponding M\"{o}bius transformation is just the identity. Thus the above action of $SL(2,\C)$ descends to an action of $PSL(2,\C)$. It is a standard fact that a M\"{o}bius transformation on $\partial \U$ extends to an orientation-preserving isometry of $\U$. In fact, the orientation preserving isometry group of $\U$ is $PSL(2,\C)$, acting in this way.

\begin{proof}
We use the equivariance of $\k_\partial \colon \C_\times^2 \To \partial \U = \C \cup \{\infty\}$. Starting from $\kappa = (\xi, \eta) \in \C_\times^2$ we have
\[
A\cdot\kappa = \begin{pmatrix} \alpha & \beta \\ \gamma & \delta \end{pmatrix} \begin{pmatrix} \xi \\ \eta \end{pmatrix} = \begin{pmatrix} \alpha \xi + \beta \eta \\ \gamma \xi + \delta \eta \end{pmatrix}.
\]
On the other hand we just computed $\k_\partial (\kappa) = \xi/\eta$. Thus the action of $A$ on this point of $\C \cup \{\infty\}$ is given by
\[
A\cdot \k_\partial (\kappa) = \k_\partial (A\cdot\kappa) = \k_\partial  \begin{pmatrix} \alpha \xi + \beta \eta \\ \gamma \xi + \delta \eta \end{pmatrix} = \frac{\alpha \xi + \beta \eta}{\gamma \xi + \delta \eta}
\]
which is precisely the action of the claimed M\"{o}bius transformation on $\xi/\eta$. Every point of $\C \cup \{\infty\}$ can be written as $\xi/\eta$ for some such $(\xi, \eta)$, and hence the action on $\C \cup \{\infty\}$ is as claimed. Even better, we can regard $\CP^1$ and its points as $[\xi:\eta]$, and then $A$ simply acts linearly.
\end{proof}

\subsubsection{Maps to horospheres and decorations}
\label{Sec:fghij_2}
\label{Sec:FGHIJ}

Consider now the following compositions, which map to horospheres and decorated horospheres. 
\begin{gather*}
\C_\times^2 \stackrel{\f}{\To} \HH_0^+
\stackrel{\g}{\To} L^+ 
\stackrel{\h}{\To} \mathfrak{H}(\hyp)
\stackrel{\i}{\To} \mathfrak{H}(\Disc)
\stackrel{\j}{\To} \mathfrak{H}(\U), \\
\C_\times^2 \stackrel{\F}{\To} \mathcal{F_P^O}(\HH) \stackrel{\G}{\To} \mathcal{F_P^O} (\R^{1,3}) \stackrel{\H}{\To} \mathfrak{H_D}(\hyp) \stackrel{\I}{\To} \mathfrak{H_D}(\Disc) \stackrel{\J}{\To} \mathfrak{H_D}(\U).
\end{gather*}
We abbreviate the compositions to
\[
\k = \j \circ \i \circ \h \circ \g \circ \f.
\quad \text{and} \quad
\K = \J \circ \I \circ \H \circ \G \circ \F.
\]
Again, $SL(2,\C)$ acts on all these spaces; additionally to those seen in \refsec{boundary_points_isometries}, $A \in SL(2,\C)$ acts on horospheres $\horos(\hyp)$ via  its action on $\R^{1,3}$ (\refdef{SL2C_action_on_hyperboloid_model}), and on horospheres in other models by using the isometries between the models (\refdef{SL2C_action_disc_model}, \refdef{SL2C_action_upper_half_space_model}). We have seen these actions are all equivariant with respect to $\h$ (\reflem{h_equivariance}), $\i$ (\reflem{SL2C_actions_on_Hyp_Disc_equivariant}), and $\j$ (\reflem{D_U_actions_equivariant}). 
Further, $A \in SL(2,\C)$ acts on a flag $(p,V,o) \in \mathcal{F_P^O}(\HH)$  via its action on $\HH$ (\refdef{matrix_on_PONF}); on a flag in $\R^{1,3}$ via the isomorphism $\g$ (\refdef{SL2C_on_PONF_R31}); on a decorated horosphere in $\hyp$ via its action on $\hyp$ (and its derivative)  (\refdef{SL2C_action_UODHOR_hyp}); and on decorated horospheres in other models by the using isometries between the models (\refdef{SL2C_action_UODHOR_Disc}, \refdef{SL2C_action_UODHOR_U}).
Moreover, all the maps are equivariant: 
$\F$ (\refprop{SL2C_spinors_PNF_H_equivariant}),
$\G$ (\refprop{FG_equivariant}),
$\H$ (\reflem{H_equivariant}),
$\I$ (\reflem{I_equivariant}), and 
$\J$ (\reflem{J_equivariant}). 

Thus, the compositions $\k$ and $\K$ are $SL(2,\C)$-equivariant.

It is worth pointing out that this composition $\K$ is \emph{almost} a bijection. Only $\F$ is not a bijection, but we have seen that it is surjective and 2--1, with $\F(\kappa) =\F(\kappa')$ iff $\kappa = \pm \kappa'$ (\reflem{F_G_2-1}). We have seen that  $\G,\H,\I,\J$ are bijections (\reflem{G_bijection}, \reflem{H_bijection}, remark after \refdef{I}, remark after \refdef{J}). Indeed, it is not hard to see that $\G,\H,\I,\J$ are all smooth and have smooth inverses, so we in fact have diffeomorphisms between these spaces. We will see how to produce a complete bijection in \refsec{lifts_of_maps_spaces}.

We now compute the compositions. The following proposition includes a precise statement of \refthm{explicit_spinor_horosphere_decoration}, for (non-spin-)decorated horospheres.
\begin{prop}
\label{Prop:JIHGF_general_spin_vector}
\label{Prop:U_horosphere_general}
For $(\xi, \eta) \in \C_\times^2$ the decorated horosphere $\K(\xi, \eta) \in \mathfrak{H_D}(\U)$ is centred at $\xi/\eta$ and
\begin{enumerate}
\item
is a sphere with Euclidean diameter $|\eta|^{-2}$ and decoration north-pole specified by $i \eta^{-2}$, if $\eta \neq 0$;
\item
is a horizontal plane at Euclidean height $|\xi|^2$ and decoration specified by $i \xi^2$, if $\eta = 0$.
\end{enumerate}
The horosphere $\k(\xi, \eta) \in \horos(\U)$ is the horosphere of $\K(\xi, \eta)$, without the decoration.
\end{prop}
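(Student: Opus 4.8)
The plan is to prove \refprop{JIHGF_general_spin_vector} by reducing to the single spinor $\kappa_0 = (1,0)$, whose image under $\K$ we have computed in detail in \refsec{examples_from_10} through \refsec{examples_computations_disc_model}, and then using equivariance and transitivity to transport this to an arbitrary spinor. This is the standard pattern used elsewhere in the paper (e.g. \reflem{image_of_H_parallel}).

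First I would handle the centre: the centre of $\k(\xi,\eta)$, and hence of $\K(\xi,\eta)$, is the image of $(\xi,\eta)$ under $\k_\partial = \j \circ \i \circ \h_\partial \circ \g \circ \f$, since $\h$ produces a horosphere whose centre is the ray through the light-cone point (remark after \refdef{h}), and $\i,\j$ carry centres to centres. By \refprop{explicit_fghij} this is $\xi/\eta$, giving the first claim immediately. Next, I would compute $\K(\kappa_0)$ explicitly. From \refeg{decorated_horosphere_of_10_Disc} we know $\I\circ\H\circ\G\circ\F(\kappa_0)$ is the sphere in $\Disc$ of Euclidean diameter $1$, tangent to $\partial\Disc$ at $(0,0,1)$, through the origin, with decoration at the origin in the direction $(0,1,0)$. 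Applying $\j = \Stereo^{-1}$ (\refdef{isometry_D_U}) sends this to a horosphere in $\U$ centred at $\j(0,0,1) = \infty$, i.e. a horizontal plane; I would check (by tracking the origin of $\Disc$, which maps to a point at height $1$ in $\U$, or by a direct parametrisation computation analogous to \refeqn{parametrisation_of_10_horosphere_in_disc}) that this plane is at height $|\xi|^2 = 1$, and that the decoration direction $(0,1,0)$ transports under $D\j$ to the complex number $i = i\xi^2$. This establishes case (ii) for $\kappa_0$.

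For a general spinor $\kappa = (\xi,\eta)$, I would pick $A \in SL(2,\C)$ with $A\cdot\kappa_0 = \kappa$ (\reflem{SL2C_on_C2_transitive}); explicitly one of the matrices $A_\kappa$ from the proof of \reflem{SL2C_on_C2_transitive} works. By equivariance of $\K$ (established at the start of \refsec{fghij_2}), $\K(\kappa) = A\cdot\K(\kappa_0)$, so $\K(\kappa)$ is obtained by applying the isometry of $\U$ corresponding to $A$ — a M\"obius transformation with its canonical extension — to the height-$1$ plane decorated by $i$. I would then compute this M\"obius image directly: the M\"obius transformation $z \mapsto (\alpha z + \beta)/(\gamma z + \delta)$ sends $\infty$ to $\alpha/\gamma = \xi/\eta$, consistent with the centre computation, and its extension to $\U$ sends the plane at height $1$ to a horosphere whose Euclidean diameter (when $\eta\neq 0$, so $\gamma = \eta \neq 0$) is $|\gamma|^{-2}\cdot(\text{original height})^{-1} = |\eta|^{-2}$; a standard formula for how M\"obius transformations of $\U$ act on horospheres gives this, and the decoration direction at the north pole picks up the derivative factor, turning the specification $i$ into $i\eta^{-2}$ up to positive real multiples. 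When $\eta = 0$, $A$ is upper triangular and fixes $\infty$, acting on the plane at height $1$ by scaling to height $|\xi|^2$ and on the decoration by $i \mapsto i\xi^2$; this is essentially the computation already carried out in the worked Example in \refsec{intro_horospheres_decorations} where multiplying $(1,0)$ by $re^{i\theta}$ moves the plane to height $r^2$ and rotates the decoration by $2\theta$.

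The main obstacle I anticipate is the bookkeeping for the decoration specification under the M\"obius/isometry action — in particular verifying the exact factor $i\eta^{-2}$ (rather than, say, $i\bar\eta^{-2}$ or $i|\eta|^{-2}$ or a sign), and confirming the north-pole normalization of \refdef{decoration_specification} is respected when the horosphere changes from a plane (centre $\infty$) to a sphere (centre $\xi/\eta$). This requires carefully computing the derivative of the extended M\"obius transformation at the relevant boundary/north-pole point and identifying the induced map on tangent complex lines; the cleanest route is probably to parametrise $\K(\kappa)$ directly by composing the explicit parametrisation $\refeqn{general_point_on_h0}$ of $\horo_0$ in $\hyp$ with the explicit formulas for $\i$ (\refdef{isometry_hyp_disc}), $\j$ (\refdef{isometry_D_U}), and $A$, then reading off the diameter and the north-pole tangent direction, rather than invoking abstract M\"obius-on-horosphere formulas. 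Everything else is routine given the equivariance already proven and the $\kappa_0$ computations already in hand.
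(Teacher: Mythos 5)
Your overall strategy --- compute $\K$ explicitly at a base spinor and transport the answer by $SL(2,\C)$-equivariance --- is exactly the paper's, and your handling of the centre, of $\K(1,0)$, and of the case $\eta=0$ coincides with the paper's argument. The difference is in the case $\eta\neq 0$. The paper inserts an intermediate lemma computing $\K(0,1)$ via the matrix $\begin{pmatrix} 0 & -1 \\ 1 & 0 \end{pmatrix}$, whose isometry $z\mapsto -1/z$ is a half-turn about the geodesic from $-i$ to $i$, fixing both the tangency point $(0,0,1)$ and the tangent vector $(0,1,0)$ there; it then reaches $(\xi,\eta)$ from $(0,1)$ by the upper-triangular matrix $\begin{pmatrix} \eta^{-1} & \xi \\ 0 & \eta \end{pmatrix}$, whose isometry of $\U$ is a Euclidean similarity and therefore acts on the diameter and on the north-pole specification completely transparently (multiplication by $\eta^{-2}$). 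You instead go directly from $(1,0)$ to $(\xi,\eta)$ with a matrix that does not fix $\infty$, and this is precisely where your ``picks up the derivative factor'' step is delicate, as you suspect: the relevant map is the derivative of the extended isometry at the interior point of the height-$1$ plane that is sent to the north pole, restricted to the two horizontal tangent planes, and because the outward normal flips from upward (plane centred at $\infty$) to downward (sphere centred in $\C$), that restriction is \emph{anti}-conformal --- already for $z\mapsto -1/z$ it is $v\mapsto -\overline{v}$ at $(0,0,1)$ --- so a naive application of $m'$ risks producing $i\overline{\eta}^{-2}$ or a wrong sign. Your fallback of explicit parametrisation through \refeqn{general_point_on_h0}, $\i$, $\j$ and $A$ would certainly close this, but it is exactly the computation the paper's detour through $(0,1)$ is engineered to avoid. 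In short: same method, correct in outline, with one step that is genuinely harder on your route and that you have correctly identified but not discharged.
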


Specifications here are in the sense of \refdef{decoration_specification}. As in \refsec{fghij_2}, the strategy is to prove the proposition for $(1,0)$ and build to the general case by equivariance.

The strategy is to first prove the proposition for $\kappa = (1,0)$, then use equivariance to prove it for $(0,1)$, then general $\kappa$. We have studied the horosphere of $(1,0)$ extensively; we now just need to map it to $\U$ via $\j$.

\begin{lem}
\label{Lem:j_facts}
The map $\j$ has the following properties, illustrated in \reffig{D_to_U}.
\begin{enumerate}
\item
It maps the following points $\partial \Disc \To \partial \U \cong \C \cup \{\infty\}$:
\[
\begin{array}{ccc}
\j(-1,0,0) = -1, & 
\j(0,-1,0) = -i, & 
\j(0,0,-1) = 0, \\
\j(1,0,0) = 1, & 
\j(0,1,0) = i, &
\j(0,0,1)= \infty. 
\end{array}
\]
\item
Denoting by $[p \rightarrow q]$ the oriented geodesic from a  point at infinity $p \in \partial \Disc$ or $\partial \U$ to $q$, we have
\[
\j\left[ (-1,0,0) \rightarrow (1,0,0) \right] = \left[ -1 \rightarrow 1 \right] 
\quad \text{and} \quad
\j\left[ (0,-1,0) \rightarrow (0,1,0) \right] = \left[ -i \rightarrow i \right].
\]
\item
$\j$ maps $(0,0,0) \in \Disc$ to $(0,0,1) \in \U$, and at this point the derivative maps $(0,1,0)$ to $(0,1,0)$.
\end{enumerate}
\end{lem}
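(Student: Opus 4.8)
The plan is to verify the three parts directly from the defining formula $\j(x,y,z) = \frac{x+iy}{1-z}$ on $\partial\Disc$ (\refdef{isometry_D_U}), together with the facts that $\j$ extends to an isometry $\Disc \to \U$ and that isometries carry geodesics to geodesics and ideal endpoints to ideal endpoints.

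For (i) I would simply substitute the six coordinate vectors into $\frac{x+iy}{1-z}$: the first five give $-1,-i,0,1,i$ at once, while the sixth, $(0,0,1)$, is the centre of the stereographic projection (the point deleted from $S^2\setminus\{(0,0,1)\}\to\C$), hence maps to $\infty$. For (ii), the geodesic $[(-1,0,0)\rightarrow(1,0,0)]$ is the diameter of $\Disc$ along the $X$-axis; as $\j$ is an isometry, its image is the geodesic of $\U$ with ideal endpoints $\j(-1,0,0)=-1$ and $\j(1,0,0)=1$, with the orientation from $(-1,0,0)$ to $(1,0,0)$ carried to the orientation from $-1$ to $1$, i.e.\ $[-1\rightarrow1]$. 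The same argument applied to the $Y$-axis diameter, using part (i), gives $\j[(0,-1,0)\rightarrow(0,1,0)]=[-i\rightarrow i]$.

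For (iii) I would first pin down $\j(0,0,0)$ and then the derivative there. The origin of $\Disc$ is the unique common point of the two diameters $[(-1,0,0)\rightarrow(1,0,0)]$ and $[(0,-1,0)\rightarrow(0,1,0)]$; since $\j$ is injective and (by part (ii)) sends these to the geodesics $[-1\rightarrow1]$ and $[-i\rightarrow i]$ of $\U$, the image $\j(0,0,0)$ must be the unique common point of the latter two. These are the unit Euclidean semicircles about the origin of $\C$ lying in the planes $y=0$ and $x=0$; they meet only at the apex $(0,0,1)$, so $\j(0,0,0)=(0,0,1)$. For the derivative, the oriented diameter along the $Y$-axis has velocity $(0,1,0)$ at the origin (pointing toward the ideal endpoint $(0,1,0)$, which $\j$ sends to $i$), and its image $[-i\rightarrow i]$ is the semicircle $\{(0,y,z):y^2+z^2=1,\ z>0\}$, whose oriented tangent at $(0,0,1)$ points in the $+y$-direction. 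Hence $D_{(0,0,0)}\j$ sends the oriented line spanned by $(0,1,0)$ to the oriented line spanned by $(0,1,0)$ — which is the content of (iii) for the purpose of decorations, these being oriented line fields (\refdef{decoration_specification}). If one wants the derivative on the nose, one can instead write $\j = R\circ\Phi$, where $\Phi$ is inversion in the sphere of radius $\sqrt2$ centred at $(0,0,1)$ (which restricts to stereographic projection on $\partial\Disc$ and carries the ball to the \emph{lower} half-space) and $R(x,y,z)=(x,y,-z)$; differentiating at the origin gives $D_{(0,0,0)}\j = 2\,\mathrm{id}_{\R^3}$, so $(0,1,0)\mapsto(0,2,0)$, spanning the same oriented line.

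The only genuine subtlety, which I would be careful to address, lies in (iii): since the hyperbolic metric on $\Disc$ at the origin is four times the Euclidean one while that of $\U$ at height $1$ is Euclidean, the derivative of the isometry must scale Euclidean lengths by $2$ there, so it cannot literally fix $(0,1,0)$ — it fixes the oriented direction, and that is all that is used downstream in \refprop{JIHGF_general_spin_vector}. Everything else is routine substitution into the explicit formula together with the standard pictures of geodesics in $\Disc$ and $\U$.
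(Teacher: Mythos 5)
Your proof is correct and follows essentially the same route as the paper: direct substitution for (i), preservation of geodesics and ideal endpoints under the isometry for (ii), and identification of $\j(0,0,0)$ as the intersection point of the two image geodesics with the tangent direction read off from the oriented geodesic $[-i\to i]$ for (iii). Your observation that $D_{(0,0,0)}\j$ is actually $2\,\mathrm{id}$ (so that $(0,1,0)\mapsto(0,2,0)$, and the statement of (iii) holds only at the level of oriented directions) is a correct refinement of a point the paper's proof elides, and as you note it is all that is needed for the decoration arguments downstream.
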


\begin{figure}
\begin{center}
\begin{tikzpicture}
\tikzset{
    partial ellipse/.style args={#1:#2:#3}{
        insert path={+ (#1:#3) arc (#1:#2:#3)}
    }
}
    \shade[ball color = green!40, opacity = 0.2] (0,0) circle (2cm);
    \shade[ball color = green!40, opacity = 0.2] (0,0) circle (2cm);
    \draw[green] (0,0) circle (2cm);
    \draw[green] (0,0) ellipse (2cm and 0.4cm);
    \draw[red] (0,1) circle (1cm);
    \shade[ball color = red!80, opacity = 0.1] (0,1) circle (1cm);    
    \draw[red] (0,1) ellipse (1cm and 0.2cm);
    \draw[>=latex, thick, ->>>] (0,-2) -- (0,2);
    \draw[>=latex, thick, ->>] (-2,0) -- (2,0);
    \draw[>=latex, thick, ->] (-0.3,-0.3)--(0.3,0.3);
    \node[black] at (-2.8,0) {$(-1,0,0)$};
    \node[black] at (2.8,0) {$(1,0,0)$};
    \node[black] at (0,-2.5) {$(0,0,-1)$};
    \node[black] at (0,2.5) {$(0,0,1)$};
    \node[black] at (-0.7,-0.6) {$(0,-1,0)$};
    \node[black] at (0.6,0.6) {$(0,1,0)$};
	\node[black] at (1.8,-1.8) {$\partial \Disc$};
    \node[black] at (-0.4,1.4) {$\horo$};
    \node at (4.5,0){$\stackrel{\j}{\To}$};
\begin{scope}[xshift = 1cm]
    \draw[green] (5,-2)--(9,-2)--(10,-1)--(6,-1)--(5,-2);
    \shade[color = green, opacity=0.2] (5,-2)--(9,-2)--(10,-1)--(6,-1)--(5,-2);
    \draw[>=latex, thick, ->>>] (7.5,-1.5) -- (7.5,2);
    \draw[>=latex, thick, ->>] (5.5,-1.5) arc[start angle=180, end angle=0,radius=2cm];
    \draw[>=latex, thick, ->] (7.5,-1.5) [partial ellipse=190:10:0.5cm and 2cm];
    \draw[red] (5,0)--(9,0)--(10,1)--(6,1)--(5,0);
    \shade[color = red, opacity=0.2] (5,0)--(9,0)--(10,1)--(6,1)--(5,0);
    \node[black] at (5,-1.5) {$-1$};
    \node[black] at (10,-1.5) {$1$};
    \node[black] at (7,-2.3) {$-i$};
    \node[black] at (8.3,-0.7) {$i$};
    \node[black] at (9,0.5) {$\horo$};
    \node[black] at (9,-1.5) {$\C$};
    \node[black] at (10,0) {$\U$};
\end{scope}
\end{tikzpicture}
\caption{The map $\j$, showing various boundary points, geodesics, and horospheres.}
\label{Fig:D_to_U}
\end{center}
\end{figure}

\begin{proof}
Applying \refdef{isometry_D_U} immediately gives (i). Since $\j$ is an isometry $\Disc \To \U$, it must preserve geodesics and their endpoints at infinity, so (ii) follows. Finally, the origin in $\Disc$ is the intersection point of the two geodesics in $\Disc$ specified in (ii), so maps to the intersection of the two corresponding geodesics in $\U$. The intersection point in $\U$ of the geodesics $\left[ -1 \rightarrow 1 \right]$ and $\left[ -i \rightarrow i \right]$ is $(0,0,1)$. The specified tangent direction at the origin in $\Disc$ is the direction of the latter geodesic, thus it maps to the claimed tangent direction at $(0,0,1) \in \U$.
\end{proof}

\begin{lem}
\label{Lem:U_horosphere_10}
\label{Lem:JIHGF10}
$\k (1,0)\in\mathfrak{H}(\U)$ is centred at $\infty$ at (Euclidean) height $1$. $\K (1,0) \in \mathfrak{H_D}(\U)$ is the same horosphere, with decoration specified by $i$.
\end{lem}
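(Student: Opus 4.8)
The plan is to assemble everything we have proved about the spinor $(1,0)$ in the hyperboloid and disc models, and simply transport it along the isometry $\j$ to the upper half space model. Concretely, from \refeg{decorated_horosphere_of_10_Disc} we know that $\I \circ \H \circ \G \circ \F (1,0)$ is the horosphere $\horo'_0 = \i(\horo_0)$ in $\Disc$, which is the Euclidean sphere of diameter $1$ tangent to $\partial \Disc$ at $(0,0,1)$ and passing through the origin, with parallel oriented line field whose value at the origin points in the direction $(0,1,0)$.

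First I would determine the centre and position of $\k(1,0) = \j(\horo'_0)$. Since $\horo'_0$ is centred at $(0,0,1) \in \partial \Disc$, and $\j(0,0,1) = \infty$ by \reflem{j_facts}(i), the image $\j(\horo'_0)$ is a horosphere in $\U$ centred at $\infty$, hence a horizontal Euclidean plane. To pin down its height, I use that $\horo'_0$ passes through the origin of $\Disc$, and by \reflem{j_facts}(iii) the origin of $\Disc$ maps to $(0,0,1) \in \U$; so the horizontal plane $\k(1,0)$ passes through $(0,0,1)$, i.e. has Euclidean height $1$. This also matches $|\xi|^2 = 1$ as required by \refprop{U_horosphere_general}(ii) for $(\xi,\eta) = (1,0)$, and is consistent with $\k_\partial(1,0) = 1/0 = \infty$ from \refprop{explicit_fghij}.

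Next I would compute the decoration. The decoration on $\horo'_0$ at the origin of $\Disc$ points in the direction $(0,1,0)$, and by \reflem{j_facts}(iii) the derivative of $\j$ at the origin sends $(0,1,0) \mapsto (0,1,0) \in T_{(0,0,1)}\U$. Under the identification of \refdef{decoration_specification}(i) — since the horosphere is centred at $\infty$, a tangent vector $(a,b,0)$ corresponds to $a+bi$ — the vector $(0,1,0)$ corresponds to the complex number $i$. Because the line field is parallel and the horosphere appears as a horizontal plane, the decoration is constant, so $i$ specifies it at every point; this gives the claimed specification $i = i\xi^2$ for $\xi = 1$. I expect the only mild subtlety to be bookkeeping about which real coordinate directions in $\U$ correspond to the real and imaginary parts of the complex coordinate on $\partial\U$, and checking that the derivative statement in \reflem{j_facts}(iii) is exactly the one we need — but this is routine once the conventions of \refdef{isometry_D_U} and \refdef{decoration_specification} are in hand, so there is no serious obstacle here.
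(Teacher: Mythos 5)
Your proof is correct and follows essentially the same route as the paper's: it transports the explicit description of $\I \circ \H \circ \G \circ \F(1,0)$ from \refeg{decorated_horosphere_of_10_Disc} to $\U$ using the three parts of \reflem{j_facts}, identifying the centre $\infty$, the height via the image of the origin, and the decoration $i$ via the derivative at the origin. The extra consistency checks against \refprop{U_horosphere_general} and \refprop{explicit_fghij} are harmless but not needed.
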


\begin{proof}
In \refeg{decorated_horosphere_of_10_Disc} we described explicitly the decorated horosphere in $\Disc$ given by $(1,0)$, i.e. $\I\circ \H \circ \G \circ \F (1,0)$. It is the horosphere in $\Disc$ centred at $(0,0,1)$, passing through the origin $(0,0,0)$. At the origin, the decoration points in the direction of $(0,1,0)$. Forgetting the decoration yields $\i \circ \h \circ \g \circ \f (1,0)$.

Applying $\j$, \reflem{j_facts} shows that the horosphere centre $(0,0,1)$ maps to $\infty$, the origin of $\Disc$ maps to  $(0,0,1) \in \U$, and the direction $(0,1,0)$ at the origin maps to to the direction $(0,1,0)$ at $(0,0,1) \in \U$. 

Thus $\k(1,0)$ is centred at $\infty$ and passes through $(0,0,1)$, hence lies at Euclidean height 1. The decoration $(0,1,0)$ there is the $i$ direction, so the decoration on $\K(1,0)$ is specified by $i$. See \reffig{D_to_U}
\end{proof}

\begin{lem}
\label{Lem:U_horosphere_01}
\label{Lem:JIHG010}
$\k(0,1)\in\mathfrak{H}(\U)$ is centred at $0$ and has Euclidean diameter $1$. $\K (0,1)\in\mathfrak{H_D}(\U)$ is the same horosphere, with decoration north-pole specified by $i$.
\end{lem}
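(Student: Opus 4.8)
The plan is to deduce the statement for $\kappa_0' = (0,1)$ from the already-established statement for $\kappa_0 = (1,0)$ (\reflem{U_horosphere_10}) by exploiting the $SL(2,\C)$-equivariance of $\k$ and $\K$. First I would exhibit an explicit matrix $A \in SL(2,\C)$ with $A \cdot (1,0) = (0,1)$; the obvious choice is the ``inversion'' matrix
\[
A = \begin{pmatrix} 0 & -1 \\ 1 & 0 \end{pmatrix},
\]
which acts on $\partial \U = \C \cup \{\infty\}$ by the M\"{o}bius transformation $z \mapsto -1/z$, so it sends $\infty$ to $0$ (consistent with the centre moving from $\infty = 1/0$ to $0 = 0/1$, which also follows directly from \refprop{explicit_fghij}). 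By equivariance of $\k$ and $\K$ (established in \refsec{fghij_2}), we have $\k(0,1) = A \cdot \k(1,0)$ and $\K(0,1) = A \cdot \K(1,0)$, so it remains to compute the image under the isometry $A$ of the explicit decorated horosphere of $\reflem{U_horosphere_10}$: the horizontal plane at height $1$ decorated by $i$.

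The main computational step is therefore to understand the action of $A$ as an isometry of $\U$ near the relevant points. I would proceed as follows: the M\"{o}bius transformation $z \mapsto -1/z$ extends to an orientation-preserving isometry of $\U$; it sends the horosphere ``plane at height $1$'' (centred at $\infty$) to a horosphere centred at $-1/\infty = 0$, hence a Euclidean sphere tangent to $\C$ at $0$. To pin down its Euclidean diameter, I would track a convenient point: the point $(0,0,1) \in \U$ lies on the plane $\k(1,0)$, and one computes its image under the extended isometry $z \mapsto -1/z$ (acting on $\U \subset \C \times \R^+$ by the standard formula for the Poincar\'e extension of a M\"{o}bius map) to be $(0,0,1)$ again --- it is the north pole of the image sphere --- so the sphere has its north pole at height $1$, i.e. Euclidean diameter $1$. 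For the decoration, the derivative of the isometry at $(0,0,1)$ carries the tangent vector specifying the decoration $i$ on $\k(1,0)$ to a tangent vector at the north pole of $\k(0,1)$; one checks that the Poincar\'e extension of $z \mapsto -1/z$ has derivative at $(0,0,1)$ acting on horizontal tangent vectors as multiplication by $-1/z^2 = -1/(i)^2\cdot(\text{sign})$... more carefully, as the complex derivative of $z \mapsto -1/z$ at the boundary point $z = \infty$ read in the chart $w = 1/z$, which works out so that the direction $i$ is preserved (giving north-pole specification $i$). Alternatively, and more cleanly, I would instead verify the decoration directly via \refprop{explicit_fghij}-style bookkeeping or by appealing to $\refeqn{decoration_on_10_horosphere_disc}$ transported through $\j$.

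The step I expect to be the main obstacle is the decoration computation: one must be careful about how the derivative of a M\"{o}bius transformation acts on tangent directions at a point of tangency with $\partial \U$, since the ``north-pole specification'' involves identifying $T_n\horo$ with $\C$ and the isometry does not act there as a na\"ive complex multiplication. The safest route is probably to avoid the chain-rule subtlety altogether and instead recompute $\K(0,1)$ from scratch along the lines of the $(1,0)$ examples: apply $\g \circ \f$ to $(0,1)$ (getting, by \reflem{spin_vector_to_TXYZ}, the point $(1,0,0,-1) \in L^+$, whose ray meets $\partial\Disc$ at $(0,0,-1)$, i.e. the south pole), apply $\h$ to get the horosphere cut out by $T+Z = 1$, transport to $\Disc$ via $\i$ to get the sphere tangent at $(0,0,-1)$ through the origin, then apply $\j$ using \reflem{j_facts}(i), noting $\j(0,0,-1) = 0$, so the image is a sphere centred at $0$ through $\j$(origin)$ = (0,0,1)$, hence of diameter $1$; then carry the flag direction through, using $\refeqn{flag_direction_disc}$ with $(a,b,c,d) = (0,0,1,0)$ to get the flag direction $(0,-1,0)$ on $\partial\Disc$ and tracking the induced parallel line field through $\i$ and $\j$ to read off the north-pole specification $i$. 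Either way the computation is routine once the equivariance reduction is in place; I would present the equivariance argument as the main idea and relegate the coordinate check to a short verification.
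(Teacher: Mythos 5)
Your reduction via equivariance and the matrix $A = \begin{pmatrix} 0 & -1 \\ 1 & 0 \end{pmatrix}$ is exactly the paper's strategy, and your treatment of the underlying horosphere (tracking the fixed point $(0,0,1)$ to get centre $0$ and Euclidean diameter $1$) matches the paper's. The gap is precisely where you say it is: the decoration. You never establish that the direction $i$ at $(0,0,1)$ is preserved; you gesture at a complex-derivative computation ``at the boundary point $z=\infty$ read in the chart $w=1/z$'' and assert it works out, which is not a proof --- and the formula $m'(z) = (\gamma z+\delta)^{-2}$ governs tangent directions at points of $\partial\U$, not at the interior point $(0,0,1)$ where the north-pole specification lives. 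The paper closes this with a geometric observation you are missing: the isometry of $\U$ induced by $z \mapsto -1/z$ is a half-turn about the geodesic from $-i$ to $i$; the point $(0,0,1)$ lies on that geodesic and the vector $(0,1,0)$ there is tangent to the axis of rotation, so both are fixed outright, with no derivative computation needed. That one sentence is the entire content of the decoration step.

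Your proposed fallback (recomputing from scratch through $\Disc$) also does not close the gap as written. First, there is a sign error: \refeqn{flag_direction_disc} with $(a,b,c,d)=(0,0,1,0)$ gives $(0,1,0)$, not $(0,-1,0)$. Second, and more importantly, the flag direction at the ideal point of $\partial\Disc$ is not the same datum as the decoration at a point of the horosphere --- the paper's remark on the decoration ``mismatch'' at infinity (\reffig{5}) warns exactly against conflating these. To carry out the direct route honestly you would need to redo the computation of \refsec{examples_from_10} for the point $(1,0,0,-1) \in L^+$ and the lightlike plane $T+Z=1$, computing $V \cap T_q \horo$ at the point of the horosphere that maps to the north pole, and then push that line through $\i$ and $\j$. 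Either fix is routine, but as it stands neither of your two routes actually pins down the north-pole specification.
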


\begin{proof}
We use the previous lemma and equivariance. Note
\[
\begin{pmatrix} 0 \\ 1 \end{pmatrix} = A \begin{pmatrix} 1 \\ 0 \end{pmatrix}
\quad \text{where} \quad
A = \begin{pmatrix} 0 & -1 \\ 1 & 0 \end{pmatrix} \in SL(2,\C),
\]
so
\[
\K \begin{pmatrix} 0 \\ 1 \end{pmatrix}
= \K  \left( A \begin{pmatrix} 1 \\ 0 \end{pmatrix} \right)
= A \cdot \left( \K  \begin{pmatrix} 1 \\ 0 \end{pmatrix} \right),
\]
and similarly for $\k$. Thus $\K  (0,1)$ is obtained from $\K(1,0)$ of \reflem{U_horosphere_10} by applying $A$, and similarly for $\k$.

On $\U$, $A$ acts by the M\"{o}bius transformation $z \mapsto -1/z$, which is an involution sending $\infty \leftrightarrow 0$. It yields an isometry of $\U$ which is a half turn about the geodesic between $-i$ and $i$. As the point $(0,0,1)$ lies on this geodesic, it is fixed by the action of $A$. The vector $(0,1,0)$ at $(0,0,1)$ is tangent to the geodesic, so is also preserved by the half turn.

Since $\k(1,0)$ has centre $\infty$ and passes through $(0,0,1)$, then $A \cdot \k(1,0)$ has centre $0$ and also passes through $(0,0,1)$.  Hence $\k(0,1)$  has centre $0$ and Euclidean diameter $1$. The decoration of $\K(1,0)$ is directed by $(0,1,0)$ at $(0,0,1)$, and this vector is preserved by $A$. Hence this vector also directs the oriented parallel line field of $\K (0,1)$, which is thus north pole specified by $(0,1,0)$, corresponding to the complex number $i$. See \reffig{K10_to_K01}.
\end{proof}

\begin{figure}
\begin{center}
\begin{tikzpicture}[scale=1.2]
\tikzset{
    partial ellipse/.style args={#1:#2:#3}{
        insert path={+ (#1:#3) arc (#1:#2:#3)}
    }
}
    \draw[green!50!black] (4,-2)--(10,-2)--(11,-1)--(5,-1)--(4,-2);
    \shade[ball color = red, opacity = 0.2] (7.5,-0.5) circle (1cm);
    \draw[thick] (7.5,-1.5) [partial ellipse=190:170:0.5cm and 2cm];
    \draw[>=latex, thick, ->] (7.5,-1.5) [partial ellipse=167:10:0.5cm and 2cm];    
    \draw[red] (4,0)--(10,0)--(11,1)--(5,1)--(4,0);
    \shade[color = red, opacity=0.2] (4,0)--(10,0)--(11,1)--(5,1)--(4,0);
    \draw[red, fill=red] (7.5,0.5) circle (0.05cm);
    \draw[red, thick, -latex] (7.5,0.5)--(8,1);
    \node[red] at (7.9,1.3) {$i$};
    \draw[black, fill=black] (7,-1.8) circle (0.05cm);
    \draw[black, fill=black] (8,-1.2) circle (0.05cm);
    \node[black] at (7,-2.3) {$-i$};
    \node[black] at (8.3,-0.7) {$i$};
    \node[black] at (10,0.7) {$\K(1,0)$};
    \node[black] at (5.9,-0.3) {$\K(0,1)$};
    \node[black] at (9,-1.5) {$\C$};
    \node[black] at (10,-0.5) {$\U$};
    \draw[thick, ->] (6.875,-1.5) arc (225:-45: 0.25cm);
    \draw[black, fill=black] (7.5,-1.5) circle (0.05cm);
    \node[black] at (7.7,-1.7) {$0$};
    \node[black] at (5.9,-1.4) {$z \mapsto -1/z$};
\end{tikzpicture}
\caption{The decorated horospheres $\K(1,0)$ and $\K(0,1)$ are related by the M\"{o}bius transformation $z \mapsto -1/z$.}
\label{Fig:K10_to_K01}
\end{center}
\end{figure}

\begin{proof}[Proof of \refprop{U_horosphere_general}]
We use the previous two lemmas and $SL(2,\C)$-equivariance. Observe that
\[
\begin{pmatrix} \xi \\ 0 \end{pmatrix}
= \begin{pmatrix} \xi & 0 \\ 0 & \xi^{-1} \end{pmatrix}
\begin{pmatrix} 1 \\ 0 \end{pmatrix}
\quad \text{and} \quad
\begin{pmatrix} \xi \\ \eta \end{pmatrix}
= \begin{pmatrix} \eta^{-1} & \xi \\ 0 & \eta \end{pmatrix}
\begin{pmatrix} 0 \\ 1 \end{pmatrix}.
\]
If $\eta = 0$, then
we have
\[
\K \begin{pmatrix} \xi \\ 0 \end{pmatrix}
= \K \left( \begin{pmatrix} \xi & 0 \\ 0 & \xi^{-1} \end{pmatrix} \cdot \begin{pmatrix} 1 \\ 0 \end{pmatrix} \right)
= \begin{pmatrix} \xi & 0 \\ 0 & \xi^{-1} \end{pmatrix} \cdot \left( \K \begin{pmatrix} 1 \\ 0 \end{pmatrix} \right),
\]
and similarly for $\k$. The matrix $A \in SL(2,\C)$ involved corresponds to the isometry of $\U$ described by the M\"{o}bius transformation $z \mapsto \xi^2 z$. Thus $\K(\xi,0)$ is the image of $\K(1,0)$ under this isometry. By \reflem{JIHGF10}, $\K(1,0)$ is the horosphere centred at $\infty$ at Euclidean height $1$ with decoration specified by $i$. In $\U$, the isometry appears as a Euclidean dilation from the origin by factor $|\xi|^2$, and a rotation about the $z$-axis by $2 \arg \xi$. The resulting horosphere is again centred at $\infty$, i.e. a plane, but now has height $|\xi|^2$, and parallel oriented line field directed by $i \xi^2$. Thus $\K(\xi,0)$ is as claimed, and forgetting the decoration, $\k(\xi,0)$ is as claimed.

If $\eta \neq 0$ then 
\[
\K \begin{pmatrix} \xi \\ \eta \end{pmatrix}
= \K \left( \begin{pmatrix} \eta^{-1} & \xi \\ 0 & \eta \end{pmatrix} \begin{pmatrix} 0 \\ 1 \end{pmatrix} \right)
= \begin{pmatrix} \eta^{-1} & \xi \\ 0 & \eta \end{pmatrix} \cdot \left( \K \begin{pmatrix} 0 \\ 1 \end{pmatrix} \right).
\]
The matrix $A \in SL(2,\C)$ involved corresponds to the M\"{o}bius transformation $z \mapsto z \eta^{-2} + \xi \eta^{-1}$. The desired decorated horosphere $\K(\xi, \eta)$ is the image under $A$ of $\K(0,1)$, i.e. (by \reflem{U_horosphere_01}) the decorated horosphere centred at $0$ of Euclidean diameter $1$ and north-pole specification $i$. In $\U$, the corresponding isometry appears as a dilation from the origin by factor $|\eta|^{-2}$, a rotation about the $z$-axis by $-2 \arg \eta$, and then a translation in the horizontal ($\C$) plane by $\xi/\eta$. The resulting decorated horosphere $\K(\xi, \eta)$ has Euclidean diameter $|\eta|^{-2}$, center $\xi/\eta$, and north-pole specification $i \eta^{-2}$, as claimed. Forgetting the decoration, $\k(\xi, \eta)$ is as claimed.
\end{proof}

{\flushleft \textbf{Remark.} } It is perhaps not so surprising that a pair of complex numbers $(\xi, \eta)$ should correspond to an object centred at $\xi/\eta \in \partial \U$, with a tangent decoration in the direction of $i/\eta^2$. These are precisely the type of things preserved by M\"{o}bius transformations. Indeed, a M\"{o}bius transformation
\[
m \colon \CP^1 \To \CP^1, \quad m(z) = \frac{\alpha z+ \beta}{\gamma z+\delta},
\quad \text{corresponding to }
\begin{pmatrix} \alpha & \beta \\ \gamma & \delta \end{pmatrix} \in SL(2,\C),
\]
sends
\[
\frac{\xi}{\eta} \mapsto 
\frac{ \alpha \frac{\xi}{\eta} + \beta }{ \gamma \frac{\xi}{\eta} + \delta}
= \frac{\alpha \xi + \beta \eta}{\gamma \xi + \delta \eta} = \frac{\xi'}{\eta'}
\]
where
\[
\xi' = \alpha \xi + \beta \eta 
\quad \text{and} \quad 
\eta' = \gamma \xi + \delta \eta,
\quad \text{i.e.} \begin{pmatrix} \xi' \\ \eta' \end{pmatrix}
= \begin{pmatrix} \alpha &  \beta \\ \gamma & \delta \end{pmatrix} \begin{pmatrix} \xi \\ \eta \end{pmatrix}.
\]
Its derivative is then
\[
m'(z) = \frac{1}{(\gamma z+\delta)^2},
\quad \text{so that} \quad
m' \left( \frac{\xi}{\eta} \right) = \frac{1}{ \left( \gamma \frac{\xi}{\eta} + \delta \right)^2 }
= \frac{\eta^2}{ \left( \gamma \xi + \delta \eta \right)^2 } = \frac{\eta^2}{\eta'^2}.
\]
When applied to a tangent vector $i/\eta^2$ at $\xi/\eta$, one obtains
\[
m' \left( \frac{\xi}{\eta} \right) \frac{i}{\eta^2}
= \frac{\eta^2}{\eta'^2} \frac{i}{\eta^2} = \frac{i}{\eta'^2}
\quad \text{at} \quad
m \left( \frac{\xi}{\eta} \right) = \frac{\xi'}{\eta'}.
\]
In other words, a tangent decoration $i/\eta^2$ at $\xi/\eta$ maps to a tangent decoration $i/\eta'^2$ and $\xi'/\eta'$. In this way, the $SL(2,\C)$ equivariance arises naturally and geometrically.

\section{Spin decorations and complex lambda lengths}
\label{Sec:spin}

Finally, we incorporate spin into our considerations.

\subsection{Spin-decorated horospheres}
\label{Sec:spin-decorated_horospheres}

We now define the requisite notions for spin decorations on horospheres. In section \refsec{frame_fields} we discuss how decorations on horospheres give rise to certain frame fields; then we can define spin frame and spin isometries (\refsec{spin_frames_isometries}), and then spin decorations (\refsec{spin_decorations}).

Throughout this section we consider hyperbolic 3-space $\hyp^3$ independent of model. We will use the cross product $\times$ of vectors in the elementary sense that if $v,w$ are tangent vectors to $\hyp^3$ at the same point $p \in \hyp^3$ making an angle of $\theta$, then $v \times w$ has length $|v| \, |w| \sin \theta$ and points in the direction perpendicular to $v$ and $w$ as determined by the right hand rule. 

We will make much use of frames. By \emph{frame} we mean right-handed orthonormal frame in $\hyp^3$. In other words, a frame is a triple $(f_1, f_2, f_3)$ where all $f_i$ are unit tangent vectors to $\hyp^3$ at the same point and $f_1 \times f_2 = f_3$.

\subsubsection{Frame fields of decorated horospheres}
\label{Sec:frame_fields}

Throughout this section, let $\horo$ be a horosphere in $\hyp^3$. As with any smooth surface in a 3-manifold, at any point of $\mathpzc{h}$ there are two normal directions. 
\begin{defn}  \
\label{Def:horosphere_normals}
\begin{enumerate}
\item
The \emph{outward} normal direction to $\mathpzc{h}$ is the normal direction towards its centre. The outward unit normal vector field to $\mathpzc{h}$ is denoted $N^{out}$.
\item
The \emph{inward} normal direction to $\mathpzc{h}$ is the normal direction away from its centre. The inward unit normal vector field to $\mathpzc{h}$ is denoted $N^{in}$.
\end{enumerate}
\end{defn}
Intuitively, ``inwards" means in towards the bulk of $\hyp^3$, and ``outwards" means out towards the boundary at infinity. (This means that the ``outwards" direction from a horosphere points into the horoball it bounds.)

We now associate \emph{frames} to horospheres equipped with certain vector fields. . 

\begin{defn}
\label{Def:inward_outward_frame_fields}
Let $\V$ be a unit parallel vector field on $\mathpzc{h}$.
\begin{enumerate}
\item
The \emph{outward frame field of $\V$} is the frame field on $\mathpzc{h}$ given by
\[
f^{out}(\V) = \left( N^{out}, \V, N^{out} \times \V \right).
\]
\item
The \emph{inward frame field of $\V$} is the frame field on $\mathpzc{h}$ given by
\[
f^{in}(\V) = \left( N^{in}, \V, N^{in} \times \V \right).
\]
\end{enumerate}
A frame field on $\horo$ is an \emph{outward} (resp. \emph{inward}) frame field if it is the outward (resp. inward) frame field of some unit parallel vector field on $\horo$.
\end{defn}

\begin{defn}
If $(\mathpzc{h}, L^O_P) \in\mathfrak{H_D}$ with oriented parallel line field $L^O_P$, the \emph{associated outward (resp.inward) frame field} on $\mathpzc{h}$ is the outward (resp. inward) frame field of $\V$, where $\V$ is the unit tangent vector field on $\mathpzc{h}$ directing $L^O$.
\end{defn}
A decoration on $\horo$ thus  determines an outward and an  inward frame field on $\mathpzc{h}$. See \reffig{frames_from_decoration}.

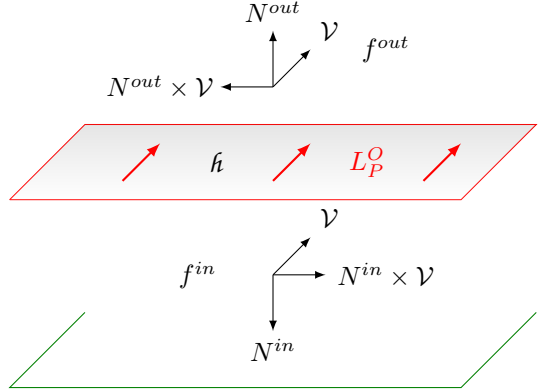
\begin{figure}
\begin{center}
\begin{tikzpicture}
    \draw[green!50!black] (5,-1.5)--(4,-2.5)--(10,-2.5)--(11,-1.5);
    \draw[red] (4,0)--(10,0)--(11,1)--(5,1)--(4,0);
    \shade[color = red, opacity=0.2] (4,0)--(10,0)--(11,1)--(5,1)--(4,0);
    \draw[red, thick, -latex] (5.5,0.25)--(6,0.75);
    \draw[red, thick, -latex] (7.5,0.25)--(8,0.75);
    \draw[red, thick, -latex] (9.5,0.25)--(10,0.75);
    \node[red] at (8.75,0.5) {$L_P^O$};
    \node[black] at (6.75,0.5) {$\horo$};
    \draw[black, -latex] (7.5,1.5)--(7.5,2.25);
    \node[black] at (7.5,2.5) {$N^{out}$};
    \draw[black, -latex] (7.5,1.5)--(8,2);
    \node[black] at (8.25,2.25) {$\V$};
    \draw[black, -latex] (7.5,1.5)--(6.8,1.5);
    \node[black] at (6,1.5) {$N^{out} \times \V$};
    \node[black] at (9,2) {$f^{out}$};
    \draw[black, -latex] (7.5,-1)--(7.5,-1.75);
    \node[black] at (7.5,-2) {$N^{in}$};
    \draw[black, -latex] (7.5,-1)--(8,-0.5);
    \node[black] at (8.25,-0.25) {$\V$};
    \draw[black, -latex] (7.5,-1)--(8.2,-1);
    \node[black] at (9,-1) {$N^{in} \times \V$};
    \node[black] at (6.5,-1) {$f^{in}$};
\end{tikzpicture}
\caption{A decoration $L^P_O$ on a horosphere $\horo$ determines inward and outward frame fields.}
\label{Fig:frames_from_decoration}
\end{center}
\end{figure}

\subsubsection{Spin frames and spin isometries}
\label{Sec:spin_frames_isometries}

The bundle of (right-handed orthonormal) frames over $\hyp^3$ is a principal $SO(3)$ bundle. As $\pi_1(SO(3)) \cong \Z/2\Z$, the double cover of $SO(3)$ is also its universal cover, and this is the spin group $\Spin(3)$.
\begin{defn}
\label{Def:Fr}
Denote by $\Fr \To \hyp^3$ the principal $SO(3)$ bundle of (right-handed orthonormal) frames over $\hyp^3$, and $\Spin \To \hyp^3$ its double cover, a principal $\Spin(3)$ bundle.
\end{defn}
A point of (the total space of) $\Fr$ consists of a point of $\hyp^3$ together with a frame there; similarly, a point of $\Spin$ consists of a point of $\hyp^3$ together with one of the two lifts of a frame there.
\begin{defn}
A point of the total space of $\Spin$ is called a \emph{spin frame}.
\end{defn}

The orientation preserving isometry group $\Isom^+ \hyp^3$ of $\hyp^3$ acts simply transitively on $\Fr$: there is a unique orientation-preserving isometry sending any frame at any point of $\hyp^3$ to any other frame at any other point. Using the isomorphism $\Isom^+(\hyp^3) \cong PSL(2,\C)$ yields a diffeomorphism
\begin{equation}
\label{Eqn:PSL2C_Fr}
PSL(2,\C) \cong \Fr.
\end{equation}

We can make this homeomorphism explicit by choosing a specific frame, a ``base frame" $f_0$. The identity $1 \in PSL(2,\C)$ corresponds to the frame $f_0$, and
then a general element $A \in PSL(2,\C) \cong \Isom^+ \hyp^3$ corresponds to the frame obtained by applying the  isometry $A$ (and its derivative) to $f_0$. In other words, he correspondence is given by $A \leftrightarrow A\cdot f_0$. The actions of $PSL(2,\C)$ on itself by multiplication, and on $\Fr$ by orientation-preserving isometries, are equivariant with respect to this correspondence; so we have an identification of $PSL(2,\C)$-spaces.

This identification then lifts to universal covers: a path in $PSL(2,\C)$ from $1$ to an element $A$ corresponds to a path in $\Fr$ from $f_0$ to $A \cdot f_0$. Recalling the definition of a universal cover, this gives an identification between points of the universal cover of $PSL(2,\C)$, and the universal cover of $\Fr$. These universal covers are $SL(2,\C)$, and the space of spin frames $\Spin$, respectively. So we  obtain a homeomorphism which identifies $SL(2,\C)$ with spin frames.
\begin{equation}
\label{Eqn:SL2C_Spin}
SL(2,\C) \cong \Spin
\end{equation}
Under this identification,  the two matrices $A,-A \in SL(2,\C)$ lifting $\pm A \in PSL(2,\C)$ correspond to the two spin frames above the frame $(\pm A).f_0$. The two spin frames lifting a common frame are related  by a $2\pi$ rotation about any axis at their common point.

Indeed, $SL(2,\C)$ acts freely and transitively on $\Spin$, whose elements are spin frames in $\hyp^3$.
\begin{defn}
A \emph{spin isometry} is an element of the universal cover of $\Isom^+ \hyp^3$.
\end{defn}
Thus, a spin isometry is just an element of $SL(2,\C)$, regarded as the double/universal cover of $PSL(2,\C) \cong \Isom^+ \hyp^3$. Each orientation-preserving isometry of $\hyp^3$ lifts to two spin isometries, which differ by a $2\pi$ rotation. Just as an orientation-preserving isometry sends frames to frames, a spin isometry sends spin frames to spin frames.

\subsubsection{Spin decorations}
\label{Sec:spin_decorations}

Let $\horo$ be a horosphere in $\hyp^3$. A frame field on $\mathpzc{h}$ is a continuous section of $\Fr$ along $\mathpzc{h}$, and such a frame field has two continuous lifts to $\Spin$.
\begin{defn} 
An \emph{outward (resp. inward) spin decoration} on $\mathpzc{h}$ is a continuous lift of an outward (resp. inward) frame field on $\mathpzc{h}$ from $\Fr$ to $\Spin$.
\end{defn}
In other words, an outward (resp. inward) spin decoration on $\mathpzc{h}$ is a choice of lift to $\Spin$ of a frame field of the form $f^{out}(\V)$ (resp. $f^{in}(\V)$), for some unit parallel vector field $\V$ on $\mathpzc{h}$.

Given an inward frame field $f^{in}(\V) = (N^{in}, \V, N^{in} \times \V)$ on $\mathpzc{h}$ corresponding to a unit parallel vector field $\V$, we can obtain $f^{out}(\V) = (N^{out}, \V, N^{out} \times \V)$ by rotating the frame at each point by an angle of $\pi$ about $\V$. This rotation preserves $\V$ and sends $N^{in}$ to $N^{out}$, hence sends one frame to the other, and a similar rotation sends $f^{out}(\V)$ back to $f^{in}(\V)$. Each rotation of angle $\pi$ can be done in either direction around $\V$. However, once we take spin lifts, rotations of angle $\pi$ clockwise or anticlockwise about $\V$ yield distinct results, since the results are related by a $2\pi$ rotation. Thus we make the following definition, where rotations about vectors are made in the usual right-handed way.
\begin{defn} \
\label{Def:associated_inward_outward_spindec}
\begin{enumerate}
\item
If $W^{out}$ is an outward spin decoration on $\mathpzc{h}$ lifting an outward frame field $(N^{out}, \V, N^{out} \times \V)$ for some unit parallel vector field $\V$, the \emph{associated inward spin decoration} is the inward spin decoration obtained by rotating $W^{out}$ by angle $\pi$ about $\V$ at each point of $\mathpzc{h}$.
\item
If $W^{in}$ is an inward spin decoration on $\mathpzc{h}$ lifting an inward frame field $(N^{in}, \V, N^{in} \times \V)$ for some unit parallel vector field $\V$, the \emph{associated outward spin decoration} is the outward spin decoration obtained by rotating $W^{in}$ by angle $-\pi$ about $\V$ at each point of $\mathpzc{h}$.
\end{enumerate}
\end{defn}
The choice of $\pi$ and $-\pi$ is somewhat arbitrary but is required for our main theorem to hold.

By construction, if $W^{out}$ (resp. $W^{in}$) is a lift of $f^{out}(\V)$ (resp. $f^{in}(\V)$), then the associated inward (resp. outward) spin decoration is a spin decoration lifting $f^{in}(\V)$ (resp. $f^{out}(\V)$). Moreover, these associations are inverses so we obtain pairs $(W^{in}, W^{out})$ where each is associated to the other. Given $\V$, the frame fields $f^{in}(\V)$ and $f^{out}(\V)$ are determined, and then there are two choices of lift for $W^{in}$ and two choices of lift for $W^{out}$. Each choice of $W^{in}$ has an associated $W^{out}$. Thus, the choice of $W^{in}$ determines the associated $W^{out}$ and vice versa. 

Later, in \refsec{complex_lambda_lengths}, inward and outward fields feature equally in the definition of a complex lambda length. So we prefer to use both of them, as a pair, in the following definition.
\begin{defn}
\label{Def:spin_decoration}
A \emph{spin decoration} on $\mathpzc{h}$ is a pair $W = (W^{in}, W^{out})$ where $W^{in}$ is an inward spin decoration on $\mathpzc{h}$, $W^{out}$ is an outward spin decoration on $\mathpzc{h}$, and each is associated to the other. The pair $(\horo, W)$ is called a \emph{spin-decorated horosphere}.
\end{defn}

{\flushleft \textbf{Remark.} } Under the identification $PSL(2,\C) \cong \Fr$, decorated horospheres correspond to certain cosets of $PSL(2,\C)$. Let us make the homeomorphism \refeqn{PSL2C_Fr} explicit by choosing the base frame $f_0$ to be the frame $(e_z, e_y, -e_x) \in \Fr$ at the point $p_0 = (0,0,1)$ in the upper half space model, where $e_x, e_y, e_z$ denote unit vectors in the $x,y,z$ directions. Then $1\in PSL(2,\C)$ corresponds to the base frame $f_0$ at $p_0$. This $f_0$ forms part of an outward frame field $f^{out}_0$ on the horosphere $\mathpzc{h}_0$ centred at $\infty$ passing through $p_0$. This outward frame field $f^{out}_0$ arises from the decoration on $\horo_0$ in the $y$-direction.

The frames of $f^{out}_0$ are obtained from $f_0$ by parabolic isometries which appear as horizontal translations in $\U$. These isometries form the subgroup of $PSL(2,\C)$ given by
\[
\underline{P} = \left\{ \pm \begin{pmatrix} 1 & \alpha \\ 0 & 1 \end{pmatrix} \mid \alpha \in \C \right\}.
\]
The cosets $g \underline{P}$, over $g \in PSL(2,\C)$, then yield the outward frame fields associated to oriented parallel line fields on horospheres, and we obtain a bijection
\begin{equation}
\label{Eqn:decorated_horospheres_cosets}
PSL(2,\C)/ \underline{P} \cong \mathfrak{H_D}.
\end{equation}

\begin{defn}
\label{Def:spin-decorated_horospheres}
The set of all spin-decorated horospheres is denoted $\mathfrak{H_D^S}$.
\end{defn}
There is a 2-1 projection map $\mathfrak{H_D^S} \To \mathfrak{H_D}$ given as follows. A spin decorated horosphere $(\horo, W)$ contains a pair $W = (W^{in}, W^{out})$ of associated inward and outward spin decorations on a horosphere $\mathpzc{h}$, which project down to  inward and outward frame fields on $\mathpzc{h}$. The inward frame is of the form $f^{in}(\V)$ for some unit parallel vector field $\V$ on $\mathpzc{h}$, and the outward frame is of the form $f^{out}(\V)$, for the same $\V$. This $\V$ directs an oriented parallel line field $L_P^O$ on $\horo$, i.e. a decoration on $\horo$. The spin decoration $W$ projects to the decoration $L_P^O$. There are two spin decorations on $\horo$ which project to this $L_P^O$, namely $W$, and the spin decoration $W' = (W'^{in}, W'^{out})$ obtained from rotating $W^{in}$ and $W^{out}$ through $2\pi$ at each point.

{\flushleft \textbf{Remark.} }Just as decorated horospheres correspond to certain cosets of $PSL(2,\C)$ \refeqn{decorated_horospheres_cosets}, spin-decorated horospheres correspond to certain cosets of $SL(2,\C)$. Starting from the identification $SL(2,\C) \cong \Spin$ \refeqn{SL2C_Spin}, we can make it explicit by choosing a base spin frame $\widetilde{f_0}$, a lift of the base frame $f_0$. An $A\in SL(2,\C)$, being a point of the universal cover of $PSL(2,\C) \cong \Isom^+(\hyp^3)$, can be regarded as a (homotopy class of a) path in $PSL(2,\C)$ from the identity to the element $\pm A$ of $PSL(2,\C)$. This can be regarded as a path of isometries starting at the identity, and its action on frames yields a path from $\widetilde{f_0}$ to the spin frame corresponding to $A$. On $\mathpzc{h}_0\in\mathfrak{H}$ centred at $\infty$ passing through $p_0$, the frame $f_0$ forms part of a unique outward frame field $f_0^{out}$. This outward frame field lifts to two distinct outward spin decorations on $\mathpzc{h}_0$. One of these contains $\widetilde{f_0}$, corresponding to the identity in $SL(2,\C)$, and the spin frames of this outward spin decoration correspond to the elements of $SL(2,\C)$ forming the parabolic subgroup
\[
P = \left\{ \begin{pmatrix} 1 & \alpha \\ 0 & 1 \end{pmatrix} \mid \alpha \in \C \right\}.
\]
The other lift of $f_0^{out}$ is the outward spin decoration on $\mathpzc{h}_0$ whose spin frames are obtained from those of the previous spin decoration by a $2\pi$ rotation; these correspond to the negative matrices in $SL(2,\C)$, and correspond to the coset
\[
-P = \begin{pmatrix} -1 & 0 \\ 0 & -1 \end{pmatrix} P.
\]
In general, cosets $gP$, over $g \in SL(2,\C)$, yield the outward spin decorations corresponding to spin decorations on horospheres, and we obtain a bijection
\begin{equation}
\label{Eqn:SL2C_mod_P}
SL(2,\C)/P \cong \mathfrak{H_D^S}.
\end{equation}

\subsection{Topology of spaces and maps}
\label{Sec:topology_of_spaces_and_maps}

We now consider the various spaces and maps in the composition $\K$:
\[
\C_\times^2 \stackrel{\F}{\To} \mathcal{F_P^O}(\HH) \stackrel{\G}{\To} \mathcal{F_P^O} (\R^{1,3}) \stackrel{\H}{\To} \mathfrak{H_D}(\hyp) \stackrel{\I}{\To} \mathfrak{H_D}(\Disc) \stackrel{\J}{\To} \mathfrak{H_D}(\U).
\]
In turn, we consider the topology of spaces (\refsec{topology_of_spaces}), the topology of the maps (\refsec{topology_of_maps}), then lift them to incorporate spin (\refsec{lifts_of_maps_spaces}).

\subsubsection{Topology of spaces}
\label{Sec:topology_of_spaces}

Topologically, $\C_\times^2 \cong \R^4 \setminus \{0\} \cong S^3 \times \R$, which is simply connected: $\pi_1 (\C^2_\times) \cong \pi_1 (S^3) \times \pi_1 (\R)$ is trivial.

The space of flags $\mathcal{F_P^O}(\R^{1,3})$ naturally has the topology of $UTS^2 \times \R$, where $UTS^2$ is the unit tangent bundle of $S^2$. A point of $UTS^2$ describes a point on the celestial sphere $\S^+ \cong S^2$, or equivalently a lightlike ray, together with a tangent direction to $\S^+$ at that point, which precisely provides a flag 2-plane containing that ray. There is also an $\R$ family of points on each lightlike ray. This provides an identification $\mathcal{F_P^O}(\R^{1,3}) \cong UTS^2 \times \R$ and we use it to provide a topology and smooth structure on $\mathcal{F_P^O}(\R^{1,3})$. Since $\g$ is a linear isomorphism between $\HH$ and $\R^{1,3}$, we can similarly identify $\mathcal{F_P^O}(\HH) \cong UTS^2 \times \R$ so that $\G$ is a diffeomorphism.

The space $UTS^2$ is not simply connected; it is diffeomorphic to $SO(3)$. One way to see this standard fact is to note that a point of $S^2$ yields a unit vector $v_1$ in $\R^3$; a unit tangent vector to $S^2$ at $v_1$ yields an orthonormal unit vector $v_2$; and then $v_1, v_2$ uniquely determines a right-handed orthonormal frame for $\R^3$. This gives a diffeomorphism between $UTS^2$ and the space of frames in $\R^3$, i.e. $UTS^2 \cong SO(3)$. Thus $\pi_1 (UTS^2) \cong \pi_1 (SO(3)) \cong \Z/2\Z$, and each space of flags has fundamental group $\pi_1 (UTS^2 \times \R) \cong \pi_1 (UTS^2) \times \pi_1 (\R) \cong \Z/2\Z$.

The spaces of decorated horospheres $\mathfrak{H_D}$ naturally have the topology of $UTS^2 \times \R$, with fundamental group $\Z/2\Z$. This is true for any model of $\hyp^3$. A point of $UTS^2$ describes the point at infinity in $\partial \hyp^3 \cong S^2$ of a horosphere, together with a parallel tangent field direction, and at each point at infinity there is an $\R$ family of horospheres. This provides an identification $\mathfrak{H_D} \cong UTS^2 \times \R$ and we use it to provide a topology and smooth structure on $\mathfrak{H_D}$. Since $\i,\j$ are isometries between different models of $\hyp^3$, $\I$ and $\J$ provide diffeomorphisms between $\mathfrak{H_D}(\hyp)$, $\mathfrak{H_D}(\Disc)$ and $\mathfrak{H_D}(\U)$.

\subsubsection{Topology of maps}
\label{Sec:topology_of_maps}

We saw above that $\G, \I, \J$ are diffeomorphisms, so it remains to consider the maps $\F$ and $\H$, which topologically are maps $S^3 \times \R \To UTS^2 \times \R$ and $UTS^2 \times \R \To UTS^2 \times \R$ respectively.

First, consider the map $\F$. Since $\G$ is a diffeomorphism, we may equivalently consider the map $\G \circ \F \colon S^3 \times \R \To UTS^2 \times \R$. Both $S^3 \times \R$ and $UTS^2 \times \R$ are both naturally $S^1$ bundles over $S^2 \times \R$, the former via the Hopf fibration, the latter as a unit tangent bundle. We saw in \reflem{C2_to_R31_Hopf_fibrations} that $\g \circ \f \colon S^3 \times \R \To L^+$, sends each 3-sphere $S^3_r$ of constant radius $r$, to the 2-sphere $L^+ \cap \{ T = r^2\}$, via a Hopf fibration. Since $L^+ \cong S^2 \times \R$, topologically $\g \circ \f \colon S^3 \times \R \To S^2 \times \R$ is the product of the Hopf fibration with the identity.

The map $\G \circ \F$ is then a map $S^3 \times \R \To UTS^2 \times \R$ which adds the data of a flag to the point on $L^+$ described by $\g \circ \f$. It thus projects to $\g \circ \f$ under the projection map $UTS^2 \times \R \To S^2 \times \R$. That is, the following diagram commutes. 
\begin{center}
    \begin{tikzpicture}
        \node (a) at (0,0){$S^3\times\R$};
        \node (b) at (3,0){$UTS^2\times\R$};
        \node (c) at (3,-1){$S^2\times\R$};
        \draw[->] (a) -- (b) node [pos=0.5,above] {$\G\circ\F$};
        \draw[->] (a) -- (c) node [pos=0.35,below] {$\g\circ\f$};
        \draw[->] (b) -- (c);
    \end{tikzpicture}
\end{center}
Another way of viewing this diagram is that $\G \circ \F$ is a map of $S^1$ bundles over $S^2 \times \R$. Let us consider the fibres over a point $p \in S^2 \times \R \cong L^+$, which can equivalently be described by a pair $\underline{p} \in \S^+ \cong \CP^1$, and a length $r>0$ (or $T$-coordinate $T=r^2$). In $S^3 \times \R$, the fibre over $p \in \S^2 \times \R$ is the set of $(\xi, \eta)$ such that $|\xi|^2 + |\eta|^2 = r^2$ and $\xi/\eta = \underline{p}$. Given one point in the fibre $(\xi_0, \eta_0)$ over $p$, the other points in the fibre are of the form $e^{i\theta}(\xi_0, \eta_0)$, by \reflem{gof_properties}, and form an $S^1$. Under $\G \circ \F$, this fibre maps to the fibre of unit tangent directions to $S^2$ at $\underline{p}$, or equivalently, the fibre of flag directions over $\R p$.

Proceeding around an $S^1$ fibre in $\C_\times^2 \cong S^3 \times \R$ corresponds to a path $e^{i\theta}(\xi_0, \eta_0)$ for $\theta$ from $0$ to $2\pi$. Proceeding around the $S^1$ factor in a fibre in $\mathcal{F_P^O}(\R^{1,3})$ corresponds to rotating the 2-plane of a null flag through $2\pi$ about a fixed ray. As we saw in \refsec{rotating_flags}, and explicitly in \reflem{flag_basis_rotation}, as we move through the $S^1$ fibre above $p$ in $S^3 \times \R$, the point $e^{i\theta}(\xi_0, \eta_0)$ under $\G \circ \F$ produces a flag rotation of angle $-2\theta$. So $\G \circ \F$ is a smooth 2--1 map on each fibre. We discussed this explicitly in the proof of \refprop{F_G_surjective}.

The map $\G$ is also a bundle isomorphism: $\g$ is a linear isomorphism between $\HH$ and $\R^{1,3}$, and the diffeomorphism provided by $\G$ between $\mathcal{F_P^O}(\HH)$ and $\mathcal{F_P^O}(\R^{1,3})$, both diffeomorphic to $UTS^2 \times \R$, respects their structure as $S^1$ bundles over $S^2 \times \R$.

Thus, both $\F$ and $\G \circ \F$ are bundle maps $S^3 \times \R \To UTS^2 \times \R$ of $S^1$-bundles over $S^2 \times \R$, which are 2--1 on each fibre. They are also covering maps, since $UTS^2 \cong \RP^3$, so topologically both $\F$ and $\G \circ \F$ they are maps $S^3 \times \R \To \RP^3 \times \R$ which are topologically the product of the 2-fold covering map with the identity. 

We now turn to the map $\H \colon \mathcal{F_P^O}(\R^{1,3}) \To \mathfrak{H_D}(\hyp)$, which is topologically a map $UTS^2 \times \R \To UTS^2 \times \R$. Again, both spaces are $S^1$-bundles over $S^2 \times \R$. As discussed in \refsec{light_cone_to_horosphere}, the map $\h \colon L^+ \To \horos(\hyp)$ is a diffeomorphism, both spaces being diffeomorphic to $S^2 \times \R$. We have seen that $\mathcal{F_P^O}(\R^{1,3})$ is an $S^1$-bundle over $L^+ \cong \R^2 \times S^1$, with an $S^1$ worth of flag directions at each point of $L^+$. And $\mathfrak{H_D}(\hyp)$ is an $S^1$-bundle over $\horos(\hyp)$, with an $S^1$ of decorations over each horosphere. Thus we have a commutative diagram
\[
\begin{array}{ccc}
UTS^2 \times \R \cong \mathcal{F_P^O}(\R^{1,3}) & \stackrel{\H}{\To}&  \mathfrak{H_D}(\hyp) \cong UTS^2 \times \R \\
\downarrow  & & \downarrow \\
S^2 \times \R \cong L^+ & \stackrel{\h}{\To} & \horos(\hyp) \cong S^2 \times \R
\end{array}
\]
As argued in \reflem{H_bijection}, $\H$ maps the $S^1$ fibre of flags above a point $p \in L^+$, to the $S^1$ fibre of decorations on the horosphere $\h(p) \in \horos(\hyp)$, in bijective fashion. This map is in  fact smooth: as the 2-plane of the flag rotates, the same 2-plane rotates to provide different decorations on a horosphere, always intersecting the horosphere transversely. So $\H$ is a diffeomorphism and a bundle isomorphism.

Combining the above with \reflem{F_G_2-1}, we have now proved the following. This is the non-spin version of the main \refthm{spinors_to_horospheres}, using spinors up to sign.
\begin{prop}
\label{Prop:main_thm_up_to_sign}
The map $\K \colon \C^2_\times \To \mathfrak{H_D}(\U)$ is smooth, surjective, 2--1, and $SL(2,\C)$-equivariant. It yields a smooth, bijective, $SL(2,\C)$-equivariant map
\[
\frac{\C^2_\times}{ \{ \pm 1 \} } \To \mathfrak{H_D}(\U)
\]
between nonzero spin vectors up to sign, and decorated horospheres. The action of $SL(2,\C)$ on both $\C^2_\times/\{\pm 1\}$ and $\mathfrak{H_D}(\U)$ factors through $PSL(2,\C)$.
\qed
\end{prop}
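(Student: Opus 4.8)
The plan is to assemble \refprop{main_thm_up_to_sign} directly from the properties of the individual maps $\F,\G,\H,\I,\J$ established throughout \refsec{spin_vectors_to_decorated_horospheres}, since the composition $\K = \J \circ \I \circ \H \circ \G \circ \F$ inherits each relevant property from its factors. First I would recall that $\G$ (\reflem{G_bijection}), $\H$ (\reflem{H_bijection}), $\I$ (remark after \refdef{I}), and $\J$ (remark after \refdef{J}) are all bijections, indeed diffeomorphisms, as noted in \refsec{topology_of_maps}; so the only non-bijective factor is $\F$. By \reflem{F_G_2-1}, $\F$ (and hence $\G \circ \F$) is $2$--$1$ with $\F(\kappa) = \F(\kappa')$ iff $\kappa = \pm\kappa'$, and by \refprop{F_G_surjective} it is surjective. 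Composing with the bijections $\I,\J$ and the bijection $\G$ (already folded into $\G \circ \F$) preserves surjectivity and the fibre structure, so $\K$ is surjective and $2$--$1$ with $\K(\kappa) = \K(\kappa')$ iff $\kappa = \pm\kappa'$.

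Next I would address smoothness: each of $\F$ (explicit formula \refeqn{F_explicitly}), $\G$, $\H$, $\I$, $\J$ is smooth with respect to the smooth structures fixed in \refsec{topology_of_spaces}, so $\K$ is smooth as a composition of smooth maps. For equivariance, I would invoke the chain of equivariance statements already proved: $\F$ (\refprop{SL2C_spinors_PNF_H_equivariant}), $\G$ (\refprop{FG_equivariant}), $\H$ (\reflem{H_equivariant}), $\I$ (\reflem{I_equivariant}), $\J$ (\reflem{J_equivariant}); composing equivariant maps gives an equivariant map, so $\K$ is $SL(2,\C)$-equivariant. This is essentially the remark already made at the end of \refsec{FGHIJ}, so it requires only citation.

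Then I would pass to the quotient. Since $\K$ identifies exactly $\kappa$ with $-\kappa$, it factors through a well-defined bijection $\C^2_\times/\{\pm 1\} \To \mathfrak{H_D}(\U)$. Smoothness of the quotient map follows because $\{\pm 1\}$ acts freely and properly discontinuously on $\C^2_\times$ (the action has no fixed points since $\kappa \neq 0$), making $\C^2_\times \To \C^2_\times/\{\pm 1\}$ a smooth covering; $\K$ descends to a smooth map, and its inverse is smooth because $\K$ is a local diffeomorphism (being locally the composite of the smooth covering $\G \circ \F$, which is a local diffeomorphism as a $2$--$1$ covering map onto $UTS^2 \times \R$, with the diffeomorphisms $\I,\J$). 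Finally, since $-I \in SL(2,\C)$ acts trivially on $\C^2_\times/\{\pm 1\}$, the action factors through $PSL(2,\C)$; and on $\mathfrak{H_D}(\U)$ the $SL(2,\C)$ action is by orientation-preserving isometries of $\U$ together with their derivatives on decorations, and $-I$ induces the identity isometry (as noted when M\"obius transformations were discussed in \refsec{boundary_points_isometries}), so that action factors through $PSL(2,\C)$ as well; equivariance of the descended bijection with respect to these $PSL(2,\C)$ actions is immediate.

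I do not expect any serious obstacle here — the proposition is genuinely a corollary, and the real content has already been distributed across the preceding sections. If anything, the only point requiring a moment's care is confirming that the descended map has a \emph{smooth} inverse rather than merely being a continuous bijection; this is handled by observing $\K$ is a local diffeomorphism, which follows from the fibrewise description in \refsec{topology_of_maps} (each factor is either a diffeomorphism or a smooth covering map, and smooth covering maps are local diffeomorphisms). I would state this explicitly rather than leaving it implicit.
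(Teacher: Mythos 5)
Your proposal is correct and follows essentially the same route as the paper: the paper's proof is precisely the preceding topological analysis of \refsec{topology_of_spaces}--\refsec{topology_of_maps} (where $\G,\H,\I,\J$ are shown to be diffeomorphisms and $\G\circ\F$ a smooth 2--1 covering), combined with \reflem{F_G_2-1} and the equivariance already recorded at the end of \refsec{FGHIJ}. Your extra remarks on the free $\{\pm 1\}$ action and $\K$ being a local diffeomorphism are a harmless elaboration of what the paper leaves implicit.
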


\subsubsection{Spin lifts of maps and spaces}
\label{Sec:lifts_of_maps_spaces}
Let us now consider spin lifts, or universal covers, of the above spaces. 

We observe that the 2--1 projection $\mathfrak{H_D^S} \To \mathfrak{H_D}$ is a double cover. This can be seen directly, or via the identifications with $SL(2,\C)/P$ and $PSL(2,\C)/\underline{P}$ of \refeqn{SL2C_mod_P} and \refeqn{decorated_horospheres_cosets}.
Since $\mathfrak{H_D^S}$ is a double cover of $\mathfrak{H_D} \cong UTS^2 \times \R \cong SO(3) \times \R \cong \RP^3 \times \R$, we have $\mathfrak{H_D^S} \cong S^3 \times \R$, and $\mathfrak{H_D^S}$ is in fact the universal cover of $\mathfrak{H_D}$. We also have a commutative diagram
\[ \begin{array}{ccccc}
SL(2,\C) & \To & SL(2,\C)/P & \cong & \mathfrak{H_D^S} \\
\downarrow && \downarrow && \downarrow \\
PSL(2,\C) & \To & PSL(2,\C)/(\underline{P}) & \cong & \mathfrak{H_D}
\end{array} \]
where the vertical maps are double covers and universal covers.

Similarly, the spaces $\mathcal{F_P^O}$ are diffeomorphic to $\RP^3 \times \R$, so have double and universal covers diffeomorphic to $S^3 \times \R$, and these arise from bundle maps which are 2--1 on each fibre. In $\mathcal{F_P^O}$, a fibre is the $S^1$ family of flags with a given base point and flagpole. In the double cover, rotating a flag about its flagpole through $2\pi$ (and keeping the base point fixed) does not return to the same null flag, but a rotation of $4\pi$ does return to the same fixed point. 
\begin{defn}
\label{Def:covers_of_flags}
We denote by $\mathcal{SF_P^O}(\HH)$ and $\mathcal{SF_P^O}(\R^{1,3})$ the double (universal) covers of $\mathcal{F_P^O}(\HH)$ and $\mathcal{F_P^O}(\R^{1,3})$ respectively. We call an element of $\mathcal{SF_P^O}(\HH)$ or $\mathcal{SF_P^O}(\R^{1,3})$ a \emph{spin flag}.
\end{defn}
A spin flag in \cite{Penrose_Rindler84} is called a \emph{null flag}.

The maps $\G,\H,\I,\J$ are all diffeomorphisms, and these lift to diffeomorphisms of double covers of spaces $\mathfrak{H_D^S}$ and $\mathcal{SF_P^O}$. We denote these diffeomorphisms $\widetilde{\G}, \widetilde{\H}, \widetilde{\I}, \widetilde{\J}$.

Since $\C_\times^2$ is simply connected, we also obtain a lift $\widetilde{\F}$ of $\F$ from $\C^2_\times$ to $\mathcal{SF_P^O}(\HH)$. The result is a  sequence of diffeomorphisms lifting $\F, \G, \H, \I, \J$, between spaces all  diffeomorphic to $S^3 \times \R$; they are also isomorphisms of $S^1$ bundles over $S^2 \times \R$.
\begin{equation}
\label{Eqn:fghij_lifts}
\C_\times^2 \stackrel{\widetilde{\F}}{\To} \mathcal{SF_P^O}(\HH) \stackrel{\widetilde{\G}}{\To} \mathcal{SF_P^O} (\R^{1,3}) \stackrel{\widetilde{\H}}{\To} \mathfrak{H_D^S}(\hyp) \stackrel{\widetilde{\I}}{\To} \mathfrak{H_D^S}(\Disc) \stackrel{\widetilde{\J}}{\To} \mathfrak{H_D^S}(\U).
\end{equation}

We have already seen that $\F,\G,\H,\I,\J$ are all $SL(2,\C)$ equivariant; we now argue that their lifts are too. First, note that the actions of $SL(2,\C)$ on $\mathcal{F_P^O}(\HH)$, $\mathcal{F_P^O}(\R^{1,3})$ and  $\mathfrak{H_D}$ all factor through $PSL(2,\C)$. The action on $\mathcal{F_P^O}(\HH)$ derives from the action of $A \in SL(2,\C)$ on $S \in \HH$ as $S \mapsto ASA^*$, which when $A=-1$ is trivial. The same is true for the action on $\mathcal{F_P^O}(\R^{1,3})$, which is equivalent via the diffeomorphism $\G$. Similarly for the action on $\horos_D$, the action of $SL(2,\C)$ factors through $PSL(2,\C)$ since $PSL(2,\C) \cong \Isom^+ \hyp^3$.

As $SL(2,\C)$ is the universal cover of $PSL(2,\C)$, we may regard elements of $SL(2,\C)$ as homotopy classes of paths in $PSL(2,\C)$ starting from the identity, and the action of elements in such a path on $\C^2_\times$, $\mathcal{F_P^O}(\HH)$, $\mathcal{F_P^O}(\R^{1,3})$, or $\mathfrak{H_D}$ in any model of hyperbolic space, is equivariant. The resulting paths in $\mathcal{F_P^O}$ or $\mathfrak{H_D}$ lifts to paths in the universal covers $\mathcal{SF_P^O}$ or $\mathfrak{H_D^S}$, and so we obtain equivariant actions of $SL(2,\C)$ on the universal covers, proving the following proposition.
\begin{prop}
\label{Prop:spin_decoration_equivariance}
The maps $\widetilde{\F},\widetilde{\G},\widetilde{\H},\widetilde{\I},\widetilde{\J}$ are all diffeomorphisms, equivariant with respect to the actions of $SL(2,\C)$ on $\C_\times^2$, $\mathcal{SF_P^O}(\HH)$, $\mathcal{SF_P^O}(\R^{1,3})$, $\mathfrak{H_D^S}(\hyp)$, $\mathfrak{H_D^S}(\Disc)$ and $\mathfrak{H_D^S}(\U)$.
\qed
\end{prop}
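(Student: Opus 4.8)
The plan is to assemble the proposition from three ingredients, all of which are already in place: (1) the maps $\G,\H,\I,\J$ are diffeomorphisms between spaces diffeomorphic to $\RP^3\times\R$, and $\F$ is a $2$–$1$ covering map onto $\mathcal{F_P^O}(\HH)\cong\RP^3\times\R$ (by \reflem{F_G_2-1} and \refsec{topology_of_maps}), with simply connected domain $\C^2_\times$; (2) the $SL(2,\C)$-actions on $\mathcal{F_P^O}(\HH)$, $\mathcal{F_P^O}(\R^{1,3})$ and $\mathfrak{H_D}$ (in every model) factor through $PSL(2,\C)\cong\Isom^+\hyp^3$; and (3) equivariance of a lifted map is a formal consequence of equivariance downstairs plus uniqueness of lifts over connected spaces. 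From (1), functoriality of universal covers gives diffeomorphisms $\widetilde\G,\widetilde\H,\widetilde\I,\widetilde\J$ of the double (universal) covers $\mathcal{SF_P^O}$, $\mathfrak{H_D^S}$, since each of $\G,\H,\I,\J$ is a diffeomorphism and hence induces an isomorphism on $\pi_1$; and since $\C^2_\times$ is simply connected it is the universal cover of $\mathcal{F_P^O}(\HH)$, so the lift $\widetilde\F$ is a diffeomorphism onto $\mathcal{SF_P^O}(\HH)$.

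Next I would pin down the lifted $SL(2,\C)$-actions on $\mathcal{SF_P^O}(\HH)$, $\mathcal{SF_P^O}(\R^{1,3})$ and $\mathfrak{H_D^S}(\hyp),\mathfrak{H_D^S}(\Disc),\mathfrak{H_D^S}(\U)$ (the action on $\C^2_\times$ needs no lifting, being matrix--vector multiplication). Using (2), each downstairs action is an action of $PSL(2,\C)$, and $SL(2,\C)$ is its universal cover. For a space such as $\mathcal{F_P^O}(\HH)$ with universal cover $\mathcal{SF_P^O}(\HH)$, consider the composite $SL(2,\C)\times\mathcal{SF_P^O}(\HH)\to PSL(2,\C)\times\mathcal{F_P^O}(\HH)\to\mathcal{F_P^O}(\HH)$. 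Since $SL(2,\C)$ and $\mathcal{SF_P^O}(\HH)$ are both simply connected, the source has trivial $\pi_1$, so the lifting criterion produces a lift through the covering $\mathcal{SF_P^O}(\HH)\to\mathcal{F_P^O}(\HH)$; normalising so that $1\in SL(2,\C)$ acts trivially makes the lift unique. Concretely this is the usual path construction: regard $A\in SL(2,\C)$ as a homotopy class of a path from $1$ to $\pm A$ in $PSL(2,\C)$, act by this path on a point to get a path downstairs, lift it starting from a chosen lift, and take the endpoint. The group axioms $1\cdot\tilde x=\tilde x$ and $(AB)\cdot\tilde x=A\cdot(B\cdot\tilde x)$ then hold because, for fixed $\tilde x$, both sides of each identity are continuous lifts of the corresponding identity downstairs and agree at $A=B=1$, hence agree everywhere by uniqueness of lifts over the connected group $SL(2,\C)$; and by construction each lifted action is equivariant with respect to the covering projection $\mathfrak{H_D^S}\to\mathfrak{H_D}$ (and similarly for flags).

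Finally I would prove equivariance of the five lifted maps one at a time, always by the same device. For $\widetilde\F$: the two maps $A\mapsto\widetilde\F(A\cdot\kappa)$ and $A\mapsto A\cdot\widetilde\F(\kappa)$ from $SL(2,\C)$ to $\mathcal{SF_P^O}(\HH)$ are both continuous lifts of $A\mapsto\F(A\cdot\kappa)=A\cdot\F(\kappa)$ (using equivariance of $\F$, \refprop{SL2C_spinors_PNF_H_equivariant}, together with equivariance of the projection) through the covering $\mathcal{SF_P^O}(\HH)\to\mathcal{F_P^O}(\HH)$; they agree at $A=1$, hence coincide for all $A$, so $\widetilde\F(A\cdot\kappa)=A\cdot\widetilde\F(\kappa)$. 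Replacing \refprop{SL2C_spinors_PNF_H_equivariant} by \refprop{FG_equivariant}, \reflem{H_equivariant}, \reflem{I_equivariant}, \reflem{J_equivariant} respectively, and varying $A$ over the connected group $SL(2,\C)$ with the source point held fixed, gives the equivariance of $\widetilde\G,\widetilde\H,\widetilde\I,\widetilde\J$. The genuine content of the argument, and the only place needing care, is the bookkeeping in the middle step: verifying that the lifting-criterion construction really produces a well-defined group action on the universal covers and that it is compatible with the directly-defined matrix action on $\C^2_\times$; once this is settled, both the diffeomorphism statements and the equivariance statements follow formally from uniqueness of lifts over connected spaces.
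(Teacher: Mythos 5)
Your proposal is correct and follows essentially the same route as the paper: lift $\F,\G,\H,\I,\J$ to universal covers, define the $SL(2,\C)$-actions upstairs by lifting paths in $PSL(2,\C)$ (using that the downstairs actions factor through $PSL(2,\C)$), and deduce equivariance from equivariance downstairs together with uniqueness of lifts over the connected group $SL(2,\C)$. You supply rather more detail than the paper does on verifying the group-action axioms for the lifted actions, but the underlying argument is the same.
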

Abbreviating the composition to
\[
\widetilde{\K} = \widetilde{\J} \circ \widetilde{\I} \circ \widetilde{\H} \circ \widetilde{\G} \circ \widetilde{\F},
\]
and observing that $\widetilde{\K}$ projects to $\K$ upon forgetting spin, mapping spin-decorated horospheres to decorated horospheres,
we now have the following precise version of the main \refthm{spinors_to_horospheres} and \refthm{explicit_spinor_horosphere_decoration}.
\begin{theorem}
\label{Thm:main_thm_precise}
The map $\widetilde{\K} \colon \C^2_\times \To \mathfrak{H_D^S}(\U)$ is an $SL(2,\C)$-equivariant diffeomorphism. Under $\widetilde{\K}$, a nonzero spinor corresponds to a spin-decorated horosphere which projects to the decorated horosphere described in \refprop{JIHGF_general_spin_vector}.
\end{theorem}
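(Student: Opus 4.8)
The plan is to assemble the pieces already established. By \refsec{lifts_of_maps_spaces} the composition $\widetilde{\K} = \widetilde{\J} \circ \widetilde{\I} \circ \widetilde{\H} \circ \widetilde{\G} \circ \widetilde{\F}$ is defined, and $\widetilde{\G}, \widetilde{\H}, \widetilde{\I}, \widetilde{\J}$ are already diffeomorphisms, being the lifts to double (universal) covers of the diffeomorphisms $\G, \H, \I, \J$ of \reflem{G_bijection}, \reflem{H_bijection} and the remarks after \refdef{I} and \refdef{J}. Hence the only genuinely new point needed to see that $\widetilde{\K}$ is a diffeomorphism is that $\widetilde{\F} \colon \C^2_\times \To \mathcal{SF_P^O}(\HH)$ is a diffeomorphism.

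For this I would argue as follows. By \refsec{topology_of_maps}, $\F \colon \C^2_\times \To \mathcal{F_P^O}(\HH)$ is a smooth, 2--1 covering map; since $\C^2_\times \cong S^3 \times \R$ is simply connected (\refsec{topology_of_spaces}), $\F$ realises $\C^2_\times$ as the universal cover of $\mathcal{F_P^O}(\HH)$. By \refdef{covers_of_flags}, $\mathcal{SF_P^O}(\HH)$ is by definition the universal cover of $\mathcal{F_P^O}(\HH)$, via a degree $2$ covering map $q$ with $q \circ \widetilde{\F} = \F$. A lift of a covering map from a universal cover through another cover is again a covering map, so $\widetilde{\F}$ is a covering; and since both $\F$ and $q$ are 2--1, $\widetilde{\F}$ is 1--1, hence a homeomorphism. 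Being a lift of smooth covering maps between smooth manifolds it is a local diffeomorphism, hence a diffeomorphism. I expect this covering-space argument to be the main (if modest) obstacle: one must take care that $\widetilde{\F}$ is not merely continuous but a bijective local diffeomorphism.

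With $\widetilde{\F}$ a diffeomorphism, $\widetilde{\K}$ is a composition of diffeomorphisms, hence a diffeomorphism. For $SL(2,\C)$-equivariance I would simply invoke \refprop{spin_decoration_equivariance}, which asserts that each of $\widetilde{\F}, \widetilde{\G}, \widetilde{\H}, \widetilde{\I}, \widetilde{\J}$ is $SL(2,\C)$-equivariant with respect to the actions on $\C^2_\times$, $\mathcal{SF_P^O}(\HH)$, $\mathcal{SF_P^O}(\R^{1,3})$, $\mathfrak{H_D^S}(\hyp)$, $\mathfrak{H_D^S}(\Disc)$ and $\mathfrak{H_D^S}(\U)$; a composition of equivariant maps is equivariant.

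Finally, to identify the underlying decorated horosphere, I would note that each of $\widetilde{\F},\widetilde{\G},\widetilde{\H},\widetilde{\I},\widetilde{\J}$ projects, under the respective double covers of the spaces of (spin) flags and (spin-)decorated horospheres, to $\F,\G,\H,\I,\J$, so $\widetilde{\K}$ projects to $\K$ under the 2--1 map $\mathfrak{H_D^S}(\U) \To \mathfrak{H_D}(\U)$ forgetting spin. Applying \refprop{JIHGF_general_spin_vector} to $\K(\xi,\eta)$ then yields exactly the stated description of the decorated horosphere underlying $\widetilde{\K}(\xi,\eta)$, completing the proof.
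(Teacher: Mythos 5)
Your proposal is correct and follows essentially the same route as the paper: the paper also obtains $\widetilde{\F}$ as the lift of the 2--1 covering $\F$ from the simply connected $\C^2_\times$ to the universal cover $\mathcal{SF_P^O}(\HH)$, notes that $\widetilde{\G},\widetilde{\H},\widetilde{\I},\widetilde{\J}$ are lifted diffeomorphisms, invokes \refprop{spin_decoration_equivariance} for equivariance, and identifies the underlying decorated horosphere via the projection to $\K$ and \refprop{JIHGF_general_spin_vector}. Your extra detail on why the lift $\widetilde{\F}$ is a bijective local diffeomorphism simply makes explicit what the paper leaves as a brief covering-space remark.
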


\subsection{Complex lambda lengths}
\label{Sec:complex_lambda_lengths}

We define requisite notions for lambda lengths. In this section we consider $\hyp^3$ independent of model.

\begin{defn}
Let $q$ be a point on an oriented geodesic $\gamma$ in $\hyp^3$.
\begin{enumerate}
\item
Let $f = (f_1, f_2, f_3)$ be a (right-handed orthonormal) frame at $q$. We say $f$ is \emph{adapted to $\gamma$} if $f_1$ is positively tangent to $\gamma$.
\item
Let $\widetilde{f}$ be a spin frame at $q$. We say $\widetilde{f}$ is \emph{adapted to $\gamma$} if it is the lift of a frame  adapted to $\gamma$.
\end{enumerate}
\end{defn}

Suppose now that $\gamma$ is an oriented geodesic in $\hyp^3$, and $q_1, q_2$ are two points on this line (not necessarily distinct). Suppose we have a frame $f^i$ at $q_i$ adapted to $\gamma$, for $i=1,2$; let $f^i = (f^i_1, f^i_2, f^i_3)$. We can then consider parallel translation along $\gamma$ from $q_1$ to $q_2$; this translation is by some distance $\rho$, which we regard as positive or negative by reference to the orientation on $\gamma$. This parallel translation takes $f^1$ to a frame ${f^1}'$ at $q_2$. Since $f^1$ is adapted to $\gamma$, its first vector points positively along $\gamma$, and since ${f^1}'$ is related to $f^1$ by parallel translation along $\gamma$, ${f^1}'$ is also adapted to $\gamma$.  Thus ${f^1}'$ and $f^2$ lie at the same point $q_2$ and have the same first vector. A further rotation of same angle $\theta$ about $\gamma$ (signed using the orientation of $\gamma$, using the standard right-handed convention) then takes ${f^1}'$ to $f^2$. We regard $\rho + i\theta$ as a complex length from $f^1$ to $f^2$, which we also denote by $d$. Note that $\theta$ is only well defined modulo $2\pi$.

If the frames $f^1, f^2$ are lifted to spin frames, the same applies, except that $\theta$ is then well defined modulo $4\pi$. We summarise in the following definition.
\begin{defn}
\label{Def:complex_distance}
Let $f^1, f^2$ be frames, or spin frames, at points $q_1, q_2$ on an oriented geodesic $\gamma$, adapted to $\gamma$. The \emph{complex translation distance}, or just \emph{complex distance} from $f^1$ to $f^2$ is $d = \rho+i\theta$, where a translation along $\gamma$ of signed distance $\rho$, followed by a rotation about $\gamma$ of angle $\theta$, takes $f^1$ to $f^2$.
\end{defn}

Two arbitrarily chosen frames, or spin frames, will usually not be adapted to any single oriented geodesic. If they are both adapted to a single oriented geodesic, then that geodesic is unique. So we may simply speak of the complex distance from $f^1$ to $f^2$, when it exists, without reference to any geodesic.

The complex distance  between two frames adapted to a common geodesic is well defined modulo $2\pi i$. The complex distance  between two spin frames adapted to a common geodesic is well defined modulo $4\pi i$.

Suppose now that we have two horospheres. We first consider decorations on them, then lift to spin decorations. 

So, let $(\mathpzc{h}_i, L^O_i)\in\mathfrak{H_D}$, for $i=1,2$, with $\mathpzc{h}_i\in\mathfrak{H}$ and $L^O_i$ an oriented parallel line field on $\horo_i$. Let $p_i \in \partial \hyp^3$ be the centre of $\mathpzc{h}_i$, and assume $p_1 \neq p_2$. Let $\gamma_{12}$ be the oriented geodesic from $p_1$ to $p_2$. Let $q_i = \gamma_{12} \cap \mathpzc{h}_i$. So if $\horo_1, \horo_2$ are disjoint then $q_1$ is the closest point on $\mathpzc{h}_1$ to $\mathpzc{h}_2$, $q_2$ is the closest point on $\mathpzc{h}_2$ to $\mathpzc{h}_1$, and $\gamma_{12}$ is the unique common perpendicular geodesic to $\mathpzc{h}_1$ and $\mathpzc{h}_2$, oriented from $p_1$ to $p_2$. However, these constructions apply even if $\horo_1, \horo_2$ are tangent or overlap. The oriented parallel line field $L^O_i$ on $\mathpzc{h}_i$ determines an associated outward frame field $f_i^{out}$, and inward frame field $f_i^{in}$, on $\mathpzc{h}_i$. Note that $f_1^{in}(q_1)$ and $f_2^{out}(q_2)$ are both adapted to $\gamma_{12}$, while $f_1^{out}(q_1)$ and $f_2^{in}(q_2)$ are not; rather $f_1^{out}(q_1)$ and $f_2^{in}(q_2)$ are both adapted to the oriented geodesic $\gamma_{21}$ from $p_2$ to $p_1$.

If we instead have spin decorations $(\mathpzc{h}_i, W_i)\in\mathfrak{H_D^S}$, then each $\mathpzc{h}_i\in\mathfrak{H}$ has a spin decoration $W_i$, from which we obtain an outward spin decoration $W_i^{out}$ and an inward spin decoration $W_i^{in}$ on each $\mathpzc{h}_i$. Note that $W_i^{out}$ and $W_i^{in}$ here project to $f_i^{out}$ and $f_i^{in}$ as in  the previous paragraph. So $W_1^{in}(q_1)$ and $W_2^{out}(q_2)$ are adapted to $\gamma_{12}$, and $W_1^{out}(q_1)$ and $W_2^{in}(q_2)$ are adapted to $\gamma_{21}$.
\begin{center}
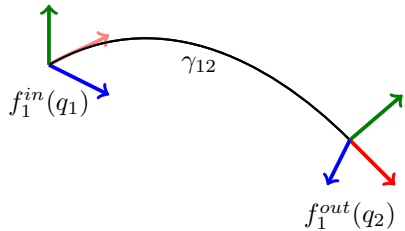

\begin{tikzpicture}
    \draw[thick] (0,2) to[in=135,out=30](4,1);
    \draw[red!50, ->, line width=0.5mm](0,2) to [out=30,in=210] (0.8,2.4);
    \draw[green!50!black, ->, line width=0.5mm](0,2)--(0,2.8);
    \draw[blue, ->, line width=0.5mm](0,2)--(0.8,1.6);
    \draw[thick] (0,2) to[in=135,out=30](4,1);
    \draw[red, ->, line width=0.5mm](4,1) to [out=315,in=135] (4.6,0.4);
    \draw[green!50!black, ->, line width=0.5mm](4,1)--(4.7,1.6);
    \draw[blue, ->, line width=0.5mm](4,1)--(3.7,0.4);
    \node at (0,1.5){$f_1^{in}(q_1)$};
    \node at (4,0){$f_1^{out}(q_2)$};
    \node at (2,2){$\gamma_{12}$};
\end{tikzpicture}
\captionof{figure}{Complex Translation Distance between $f^{in}$ and $f^{out}$}.
\label{Fig:6}
\end{center}
\begin{defn} \
\label{Def:complex_lambda_length}
\begin{enumerate}
\item
If $(\mathpzc{h}_1, L^O_1),(\mathpzc{h}_2, L^O_2)\in\mathfrak{H_D}$ have distinct centres, the \emph{complex lambda length} from $(\mathpzc{h}_1, L^O_1)$ to $(\mathpzc{h}_2, L^O_2)$ is
\[
\lambda_{12} = \exp \left( \frac{d}{2} \right),
\]
where $d$ is the complex distance from $f_1^{in}(q_1)$ to $f_2^{out}(q_2)$. 
\item
If $(\mathpzc{h}_1, W_1),(\mathpzc{h}_2, W_2)\in\mathfrak{H_D^S}$ have distinct centres, the \emph{complex lambda length} from $(\mathpzc{h}_1, W_1)$ to $(\mathpzc{h}_2, W_2)$ is
\[
\lambda_{12} = \exp \left( \frac{d}{2} \right),
\]
where $d$ is the complex distance from $W_1^{in}(q_1)$ to $W_2^{out}(q_2)$.
\end{enumerate}
If $\horo_1, \horo_2$ have common centre then in both cases $\lambda_{12} = 0$.
\end{defn}
See \reffig{6}. We abbreviate complex lambda length to \emph{lambda length}.

In the decorated case, $d$ is well defined modulo $2\pi i$, so $\lambda_{12}$ is a well defined complex number up to sign. In the spin-decorated case, $\lambda_{12}$ is a well defined complex number. In either case $|\lambda_{12}|$ is well defined.

Assume $\horo_1, \horo_2$ have distinct centres, so the geodesic $\gamma$ and the points $q_1, q_2$ exist. Writing the complex distance $d$ from $f_1^{in}(q_1)$ to $f_2^{out}(q_2)$ or $W_1^{in}(q_1)$ to $W_2^{out}(q_2)$ as $d = \rho + i \theta$ with $\rho, \theta \in \R$, then $\rho$ is the signed distance from  $q_1$ to $q_2$ along the oriented geodesic $\gamma_{12}$. When $\horo_1, \horo_2$ are disjoint, then $\rho$ is positive, and  gives the shortest distance between $\horo_1$ and $\horo_2$. When $\horo_1, \horo_2$ are tangent, $\rho=0$. When $\horo_1, \horo_2$ overlap, $\rho$ is negative. 

Setting $\lambda_{12} = 0$ when $\horo_1$ and $\horo_2$ have the same centre extends $\lambda$ to a continuous function $\mathfrak{H_D^S} \times \mathfrak{H_D^S} \To \C$, since when two horospheres (of fixed size, say, as they appear in the disc model) approach each other, their common perpendicular geodesic moves out to infinity and the length of the interval lying in the intersection of the horoballs becomes arbitrarily large, so that $\rho \rightarrow -\infty$ and hence $\lambda \rightarrow 0$.

These observations show that $\rho$ agrees with the signed undirected distance of \refdef{signed_undirected_distance}. Although $d$ is defined in a ``directed" way from $\horo_1$ to $\horo_2$, its real part $\rho$ does not depend on the direction. Its imaginary part, the angle $\theta$, is also undirected in the decorated case, but in the spin-decorated case $\theta$ does depend on the direction, as we see below in \reflem{lambda_antisymmetric}.

Taking moduli of both sides of the equations in \refdef{complex_lambda_length}, we obtain
\[
\left| \lambda_{12} \right| = \exp \left( \frac{\rho}{2} \right).
\]
which by \refeqn{horosphere_distance_from_Minkowski_inner_product} and \refeqn{horosphere_distance_from_spinor_inner_product} implies
\[
\left| \lambda_{12} \right|^2
= \frac{1}{2} \left\langle \h^{-1}(\horo_1), \h^{-1}(\horo_2) \right\rangle
= \left| \left\{ \kappa_1, \kappa_2 \right\} \right|^2
\]
where $\h^{-1}(\horo_i) \in L^+$ is the point on the light cone corresponding to the horosphere $\horo_i$ under $\h$, and $\kappa_i$ is a spinor corresponding to the horosphere $\horo_i$, i.e. such that $\h \circ \g \circ \f (\kappa_i) = \horo_i$. These equations include the modulus of the equation in \refthm{main_thm}.

We now show that lambda length is antisymmetric, in the sense that if we measure it between spin-decorated horospheres in reverse order, it changes by a sign. This is necessary for \refthm{main_thm}, since the spinor inner product $\{ \cdot, \cdot \}$ of \refdef{bilinear_form_defn} is also antisymmetric.
\begin{lem}
\label{Lem:lambda_antisymmetric}
Let $(\mathpzc{h}_i, W_i)\in\mathfrak{H_D^S}$, for $i=1,2$. Let $d_{ij}$ be the complex distance from $W_i^{in}(q_i)$ to $W_j^{out}(q_j)$, so that  $\lambda_{ij} = \exp \left( d_{ij}/2 \right)$ is the lambda length from $(\mathpzc{h}_i, W_i)$ to $(\mathpzc{h}_j, W_j)$. Then 
\[
d_{ij} = d_{ji} + 2 \pi i \quad \text{mod} \quad 4\pi i
\quad \text{and} \quad
\lambda_{ij} = -\lambda_{ji}.
\]
\end{lem}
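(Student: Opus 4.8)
The plan is to analyse the relationship between the frame data at $q_1$ and $q_2$ directly, unwinding the definitions of inward and outward spin decorations from \refdef{associated_inward_outward_spindec} and \refdef{spin_decoration}. Fix the two spin-decorated horospheres $(\mathpzc{h}_i, W_i)$ with distinct centres $p_i$, so that the geodesics $\gamma_{12}$ (from $p_1$ to $p_2$) and $\gamma_{21}$ (from $p_2$ to $p_1$) are defined, along with the intersection points $q_i = \gamma_{12} \cap \mathpzc{h}_i = \gamma_{21} \cap \mathpzc{h}_i$. Recall from the discussion preceding \refdef{complex_lambda_length} that $W_1^{in}(q_1)$ and $W_2^{out}(q_2)$ are adapted to $\gamma_{12}$, while $W_1^{out}(q_1)$ and $W_2^{in}(q_2)$ are adapted to $\gamma_{21}$; so $d_{12}$ is measured along $\gamma_{12}$ and $d_{21}$ along $\gamma_{21}$, and the two geodesics are the same underlying geodesic with opposite orientations.

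The core of the argument is to compare, at a single point $q_i$, the two spin frames $W_i^{in}(q_i)$ and $W_i^{out}(q_i)$. By \refdef{associated_inward_outward_spindec}, $W_i^{in}$ and $W_i^{out}$ project to $f^{in}(\V_i)$ and $f^{out}(\V_i)$ for a common unit parallel vector field $\V_i$, and the spin frames are related by a rotation of angle $\pi$ (in one fixed direction) about $\V_i$. I would next express $d_{21}$ in terms of $d_{12}$ in two stages: first relate a translation--rotation along $\gamma_{21}$ to one along $\gamma_{12}$ (reversing orientation sends the positive tangent $f_1$ to $-f_1$, so an adapted frame for $\gamma_{12}$ rotated by $\pi$ about its second or third vector becomes adapted for $\gamma_{21}$, and the translation distance $\rho$ is unchanged while the rotation angle picks up a controlled shift); and second, account for the $\pi$-rotations converting $W_i^{out}$ to $W_i^{in}$ and vice versa at each endpoint. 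Carefully bookkeeping these rotations — two $\pi$-rotations about the decoration vectors at $q_1$ and $q_2$, together with the orientation reversal of $\gamma$ — should yield a net rotation of $2\pi$, giving $d_{21} = d_{12} + 2\pi i \pmod{4\pi i}$; equivalently $d_{12} = d_{21} + 2\pi i$, which is the stated identity since $2\pi i = -2\pi i \pmod{4\pi i}$. The formula $\lambda_{ij} = -\lambda_{ji}$ then follows immediately: $\lambda_{ij} = \exp(d_{ij}/2) = \exp(d_{ji}/2 + \pi i) = -\exp(d_{ji}/2) = -\lambda_{ji}$, using that $\exp(d/2)$ is well defined precisely because $d$ is defined modulo $4\pi i$.

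The main obstacle is getting the bookkeeping of the $\pi$-rotations exactly right — in particular confirming that the asymmetry built into \refdef{associated_inward_outward_spindec} (rotating by $+\pi$ in one case and $-\pi$ in the other) conspires with the orientation reversal of the geodesic to produce a net $2\pi$ rather than $0$ or $4\pi$. This is where the careful choice of signs in \refdef{associated_inward_outward_spindec} (noted there as ``somewhat arbitrary but required for our main theorem'') does its work, so the computation must be done in $\Spin$, not in $\Fr$, keeping track of lifts rather than just frames. A clean way to organise this is to reduce to the standard configuration using $SL(2,\C)$-equivariance (\refprop{spin_decoration_equivariance}): move $\mathpzc{h}_1$ to a standard spin-decorated horosphere, so that $\gamma_{12}$ becomes a fixed vertical geodesic in $\U$, and then the rotations and translation can all be computed explicitly as elements of $SL(2,\C)$ acting on spin frames, where a $2\pi$ rotation is exactly the element $-I$. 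Once the standard case is verified, equivariance and the fact that isometries preserve complex distances between (spin) frames give the general statement.
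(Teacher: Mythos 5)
Your overall strategy is viable, and you have correctly isolated where the content lies (the $\pm\pi$ conventions of \refdef{associated_inward_outward_spindec} plus the orientation reversal of $\gamma$, tracked in $\Spin$ rather than $\Fr$), but your route is genuinely different from the paper's, and the decisive step is asserted rather than established. The paper needs no normalisation at all: it introduces the auxiliary spin frame $Y_1^{out}$, obtained from $W_1^{in}$ by rotating $+\pi$ about $\V_1$, which projects to $f_1^{out}$ but differs from $W_1^{out}$ by a $2\pi$ rotation. The spin screw motion along $\gamma_{12}$ realising $d_{12}$ carries $\V_1$ to $\V_2$, hence carries $Y_1^{out}(q_1)$ to $W_2^{in}(q_2)$; and since reversing the orientation of the geodesic flips the sign of both the signed translation and the signed rotation, the same motion read along $\gamma_{21}$ shows the complex distance from $W_2^{in}(q_2)$ to $Y_1^{out}(q_1)$ is again $d_{12}$. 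The entire sign bookkeeping then collapses to the single $2\pi$ discrepancy between $Y_1^{out}$ and $W_1^{out}$, giving $d_{12}=d_{21}+2\pi i$ at once. Your approach instead buys concreteness: everything becomes an explicit matrix computation where a $2\pi$ rotation is $-I$.

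Two things must be supplied before your version is a proof. First, the claim that the rotations ``should yield a net rotation of $2\pi$'' is exactly the statement being proved, so it cannot be left at the level of bookkeeping-to-be-done; it has to be carried out. Second, your reduction is imprecise: moving $(\mathpzc{h}_1,W_1)$ to the standard spin-decorated horosphere does not make $\gamma_{12}$ vertical. You must normalise both centres (send $p_1\mapsto\infty$, $p_2\mapsto 0$), after which the second spinor is $(0,D)$ with $D\in\C_\times$ arbitrary, and no further normalisation of $W_2$ is possible without disturbing $W_1$; so the explicit computation must be done for general $D$, in both directions (essentially redoing \reflem{main_thm_for_10_and_0D} for $d_{12}$ and its analogue for $d_{21}$), not just for a single standard pair. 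Finally, record the trivial case of common centres, where $\lambda_{12}=\lambda_{21}=0$ by definition. With these points filled in, your argument does close, and the final deduction $\lambda_{ij}=\exp(d_{ji}/2+\pi i)=-\lambda_{ji}$ is correct as you state it.
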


\begin{proof}
First, if the horospheres have common centre then $\lambda_{ij} = \lambda_{ji} = 0$, by definition. So we may assume they have distinct centres. Then $\lambda_{ij} = \exp(d_{ij}/2)$, where $d_{ij}$ is the complex distance from $W_i^{in}$ to $W_j^{out}$ along $\gamma_{ij}$, the oriented geodesic from the centre of $\horo_i$ to the centre of $\horo_j$. Let $W_i^{in}, W_j^{out}$ project to the frames $f_i^{in}(\V_i), f_j^{out}(\V_j)$ of unit parallel vector fields $\V_i, \V_j$ on $\mathpzc{h}_i, \horo_j$.

Recall that $W_2^{in}$ is obtained from $W_2^{out}$ by a rotation of $\pi$ about $\V_2$, and $W_1^{out}$ is obtained from $W_1^{in}$ by a rotation of $-\pi$ about $\V_1$ (\refdef{associated_inward_outward_spindec}). Let $Y_1^{out}$ be obtained from $W_1^{in}$ by a rotation of $\pi$ about $\V_1$, so $Y_1^{out}$ and $W_1^{out}$ both project to $f_1^{out}$, but differ by a $2\pi$ rotation. 

Now the spin isometry which takes $W_1^{in}(p_1)$ to $W_2^{out}(p_2)$ also takes $Y_1^{out}(p_1)$ to $W_2^{in}(p_2)$, since the latter pair are obtained from the former pair by rotations of $\pi$ about $\V_1, \V_2$ respectively. So the complex distance from $W_1^{in}(p_1)$ to $W_2^{out}(p_2)$ along $\gamma_{12}$ is equal to the complex  distance from $W_2^{in}(p_2)$ to $Y_1^{out}(p_1)$ along $\gamma_{21}$. But this latter complex distance is equal to $d_{21} + 2\pi i$ (mod $4\pi i$), since $Y_1^{out}(p_1)$ and $W_1^{out}(p_1)$ differ by a $2\pi$ rotation. Thus we obtain $d_{12} = d_{21} + 2 \pi i$ mod $4\pi i$, hence $\lambda_{12} = - \lambda_{21}$ as desired.
\end{proof}

\subsection{Proof of \refthm{main_thm_2}}
\label{Sec:proof_main_thm}

The strategy of the proof of \refthm{main_thm_2} is to first prove it in simple cases, and then extend to the general case by equivariance. Before doing so, however, we first  establish how lambda lengths are invariant under $SL(2,\C)$.

\begin{lem}
\label{Lem:lambda_length_invariant_under_isometry}
Let $(\mathpzc{h}_i, W_i)\in\mathfrak{H_D^S}$ for $i=1,2$ and let $A \in SL(2,\C)$. Let $\lambda_{12}$ be the complex lambda length from $(\mathpzc{h}_1, W_1)$ to $(\mathpzc{h}_2, W_2)$, and let $\lambda_{A1,A2}$ be the complex lambda length from $A\cdot (\mathpzc{h}_1, W_1)$ to $A\cdot (\mathpzc{h}_2, W_2)$. Then $\lambda_{12} = \lambda_{A1,A2}$.
\end{lem}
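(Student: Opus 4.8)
The plan is to deduce this from the fact that $A \in SL(2,\C)$ acts as a \emph{spin isometry} of $\hyp^3$, together with the fact that complex distance between spin frames is defined entirely in terms of parallel translation and rotation along geodesics, which are isometric invariants.

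First I would unwind \refdef{complex_lambda_length}: since $A$ acts on $\mathfrak{H}(\hyp^3)$ by an orientation-preserving isometry (the underlying element $\pm A \in PSL(2,\C) \cong \Isom^+ \hyp^3$), it sends the centre $p_i \in \partial \hyp^3$ of $\horo_i$ to the centre of $A \cdot \horo_i$, so $A$ sends distinct centres to distinct centres and common centres to common centres. In the common-centre case both $\lambda_{12}$ and $\lambda_{A1,A2}$ are $0$ by definition and we are done, so assume distinct centres. Then $A$ carries the oriented geodesic $\gamma_{12}$ from $p_1$ to $p_2$ to the oriented geodesic from $A\cdot p_1$ to $A \cdot p_2$, and carries the intersection points $q_i = \gamma_{12} \cap \horo_i$ to $A \cdot q_i = (A\cdot \gamma_{12}) \cap (A \cdot \horo_i)$.

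Next I would track the spin decorations. By \refsec{spin_frames_isometries}, a spin isometry sends spin frames to spin frames, commuting with the covering $\Spin \To \Fr$; and since $A$ acts on decorations via its derivative (a linear isometry on each tangent space), it sends the unit parallel vector field $\V_i$ directing $W_i$ to the unit parallel vector field directing $A \cdot W_i$, hence sends the inward/outward frame fields $f_i^{in}, f_i^{out}$ (\refdef{inward_outward_frame_fields}) to those of $A \cdot (\horo_i, W_i)$, and likewise for the spin lifts $W_i^{in}, W_i^{out}$. In particular $A$ sends the adapted spin frame $W_1^{in}(q_1)$ to $(A\cdot W_1)^{in}(A\cdot q_1)$ and $W_2^{out}(q_2)$ to $(A\cdot W_2)^{out}(A\cdot q_2)$. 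Now the complex distance $d$ from $W_1^{in}(q_1)$ to $W_2^{out}(q_2)$ is characterized (\refdef{complex_distance}) as the unique $d = \rho + i\theta$ (mod $4\pi i$) such that translating by signed distance $\rho$ along $\gamma_{12}$ and then rotating by $\theta$ about $\gamma_{12}$ carries the first spin frame to the second; applying the spin isometry $A$ to this entire configuration — geodesic, translation, rotation, both spin frames — produces exactly the configuration computing the complex distance from $(A\cdot W_1)^{in}(A\cdot q_1)$ to $(A\cdot W_2)^{out}(A\cdot q_2)$ along $A\cdot\gamma_{12}$, with the \emph{same} $\rho$ and $\theta$. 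Hence the complex distances agree, and so $\lambda_{A1,A2} = \exp(d/2) = \lambda_{12}$.

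The main obstacle is the bookkeeping of the spin lifts: one must be careful that $A$, regarded as a path in $PSL(2,\C)$ from the identity (as in \refsec{lifts_of_maps_spaces}), acts compatibly on $\Spin$ so that it genuinely sends $W_i^{in}(q_i)$ to $(A\cdot W_i)^{in}(A\cdot q_i)$ rather than to its $2\pi$-rotate — i.e. that the association of inward/outward spin decorations to a spin decoration (\refdef{associated_inward_outward_spindec}) is equivariant under spin isometries. This follows because the defining rotations by $\pi$ and $-\pi$ about $\V_i$ are geometric operations that a spin isometry commutes with, but it deserves an explicit sentence. Everything else is a direct translation of the isometry-invariance of parallel transport and rotation through the definitions.
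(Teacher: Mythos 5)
Your proof is correct and is in essence the same as the paper's: both reduce the claim to the invariance of the complex distance between the adapted spin frames $W_1^{in}(q_1)$ and $W_2^{out}(q_2)$ under the spin isometry $A$. The only presentational difference is how the mod-$4\pi i$ ambiguity of the spin lifts is handled — the paper represents $A$ by a path of isometries $M_t$ starting at the identity and observes that the complex distance stays constant along the path, whereas you argue directly that the spin isometry commutes with parallel translation, with rotation about the geodesic, and with the inward/outward association of \refdef{associated_inward_outward_spindec}; both mechanisms are valid.
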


\begin{proof}
As $A \in SL(2,\C)$, the universal cover of $\Isom^+ \hyp^3 \cong PSL(2,\C)$, $A$ is represented by a path of isometries $M_t \in PSL(2,\C)$, where $M_0$ is the identity and $M_1 = \pm A$. As in the definition of complex lambda length, let $\gamma_{12}$ be the oriented geodesic from the centre of $\horo_1$ to the centre of $\horo_2$, and let $q_i = \gamma_{12} \cap \horo_i$. Then the spin frames $W_1^{in} (q_1)$ and $W_2^{out} (q_2)$ are adapted to $\gamma_{12}$ and their complex distance $d$ satisfies $\lambda_{12} = \exp(d/2)$.

As each $M_t$ is an isometry, applying $M_t$ to the horospheres and spin frames involved yields a 1-parameter family of horospheres $M_t \cdot \horo_1, M_t \cdot \horo_2$ for $t \in [0,1]$, with mutually perpendicular geodesic $M_t \cdot \gamma_{12}$, intersecting the horospheres at points $q_1^t = M_t \cdot q_1$ and $q_2^t = M_t \cdot q_2$, at which there are spin frames $M_t \cdot W_1^{in} (q_1^t), M_t \cdot W_2^{out} (q_2^t)$ adapted to $M_t \cdot \gamma_{12}$. As $M_t$ is an isometry, the complex distance $d$ between the spin frames $M_t \cdot W_1^{in} (q_1^t)$ and $M_t \cdot W_2^{out} (q_2^t)$ remains constant.  Hence the lambda length $\lambda_{12} = \exp(d/2)$ also remains constant.

At time $t=1$, we arrive at the decorated horospheres $A \cdot (\horo_1, W_1)$ and $A \cdot (\horo_2, W_2)$. Their complex distance remains $d$, and their lambda length $\lambda_{A1,A2}$ remains equal to $\lambda = e^{d/2}$.
\end{proof}

\begin{lem}
\label{Lem:main_thm_for_10_and_01}
Let $\kappa_1 = (1,0)$ and $\kappa_2 = (0,1)$, and let $(\horo_1, W_1), (\horo_2, W_2) \in \mathfrak{H_D^S}(\U)$ be the corresponding spin-decorated horospheres under $\widetilde{\K}$. Then the lambda length from $(\horo_1, W_1)$ to $(\horo_2, W_2)$ is $1$.
\end{lem}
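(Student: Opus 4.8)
The plan is to compute everything explicitly for the two simplest spinors $\kappa_1 = (1,0)$ and $\kappa_2 = (0,1)$, using the descriptions of their (decorated) horospheres already established in \reflem{JIHGF10} and \reflem{JIHG010}, and then to carefully track the spin framings. First I would recall from \reflem{U_horosphere_10} that $\horo_1 = \k(1,0)$ is the horizontal plane at height $1$ in $\U$ centred at $\infty$, with decoration in the $i$-direction (the $e_y$-direction at each point), and from \reflem{U_horosphere_01} that $\horo_2 = \k(0,1)$ is the Euclidean sphere of diameter $1$ tangent to $\C$ at $0$, with north-pole specification $i$. These two horospheres are tangent at the point $p_0 = (0,0,1) \in \U$, and the common perpendicular geodesic $\gamma_{12}$ from $\infty$ to $0$ is the vertical line through the origin, with $q_1 = q_2 = p_0$. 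Hence the real part of the complex distance is $\rho = 0$.

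Next I would identify the relevant frames at $p_0$. The decoration $\V_1$ on $\horo_1$ at $p_0$ is $e_y$, and the inward normal $N^{in}_1$ (pointing into the bulk of $\hyp^3$, i.e. away from the centre $\infty$, hence downward) is $-e_z$; so $f_1^{in}(p_0) = (N^{in}_1, \V_1, N^{in}_1 \times \V_1) = (-e_z, e_y, (-e_z)\times e_y) = (-e_z, e_y, e_x)$. The decoration $\V_2$ on $\horo_2$ at its north pole $p_0$ is also $e_y$ (north-pole specification $i$), and the outward normal $N^{out}_2$ points toward the centre $0$ of $\horo_2$, hence downward, so $N^{out}_2 = -e_z$; thus $f_2^{out}(p_0) = (-e_z, e_y, e_x)$ as well. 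Therefore the two underlying frames $f_1^{in}(q_1)$ and $f_2^{out}(q_2)$ literally coincide at $p_0$, so the underlying frame rotation angle is $0 \bmod 2\pi$, giving $\theta \equiv 0 \bmod 2\pi$ and $\lambda_{12} = \pm 1$ at the decorated (non-spin) level.

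The substantive part — and the main obstacle — is to pin down the sign, i.e. to show $\theta = 0 \bmod 4\pi$ rather than $\theta = 2\pi$, at the level of spin frames. For this I would use the coset description \refeqn{SL2C_mod_P} of $\mathfrak{H_D^S}$ and the identification \refeqn{SL2C_Spin} of $SL(2,\C)$ with spin frames, with base spin frame $\widetilde{f_0}$ lying over $f_0 = (e_z, e_y, -e_x)$ at $p_0$. The spin decoration $W_1$ on $\horo_1$ corresponds to $\widetilde{\K}(1,0)$, which by construction of $\widetilde{\K}$ as the lift through $\widetilde{\J}\circ\widetilde{\I}\circ\widetilde{\H}\circ\widetilde{\G}\circ\widetilde{\F}$ and the choice of base frame is precisely the coset $P$, i.e. the outward spin decoration through $\widetilde{f_0}$; its associated \emph{inward} spin decoration $W_1^{in}$ is obtained from $W_1^{out}$ by a rotation of $-\pi$ about $\V_1 = e_y$ (\refdef{associated_inward_outward_spindec}(ii)). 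Meanwhile $W_2 = \widetilde{\K}(0,1) = A\cdot\widetilde{\K}(1,0)$ where $A = \begin{pmatrix} 0 & -1 \\ 1 & 0\end{pmatrix}$, which is the spin isometry lifting the Möbius half-turn $z\mapsto -1/z$ about the geodesic $[-i\to i]$ along the specific path of isometries represented by $A$; I would need to verify that this lift carries $\widetilde{f_0}$ to the spin frame at $p_0$ underlying $W_2^{out}$ exactly (not off by $2\pi$), then compare $W_1^{in}(p_0)$ to $W_2^{out}(p_0)$ by composing the $-\pi$ rotation about $e_y$ with the half-turn. The net rotation taking $W_1^{in}(p_0)$ to $W_2^{out}(p_0)$ about $\gamma_{12}$ should come out to be $0$ exactly, giving $d = 0$ and $\lambda_{12} = \exp(0) = 1$. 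The delicate bookkeeping is entirely in checking that the two $2\pi$-ambiguities — one from the $\pm\pi$ convention in \refdef{associated_inward_outward_spindec}, one from the chosen lift of the half-turn $A$ — cancel rather than add; I would do this by exhibiting an explicit homotopy of frames along $\gamma_{12}$ from $W_1^{in}(p_0)$ to $W_2^{out}(p_0)$ (for instance the constant path, since the underlying frames agree and one checks the spin lifts agree), concluding $\theta = 0$ and hence $\lambda_{12} = 1$ exactly, as claimed; this will also be consistent with the Example in the introduction, where $\{\kappa_1,\kappa_2\} = \det I = 1$.
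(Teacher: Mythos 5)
Your setup is right and matches the paper's: both underlying frames at $q=(0,0,1)$ are $(-e_z,e_y,e_x)$, so $\rho=0$ and the whole content of the lemma is whether $\theta$ is $0$ or $2\pi$ modulo $4\pi$; you also reach for the right tool, namely $A=\begin{pmatrix}0&-1\\1&0\end{pmatrix}$ with $A\cdot\kappa_1=\kappa_2$, viewed as a spin isometry via a path of rotations about the geodesic from $-i$ to $i$. But there is a genuine gap: the decisive spin computation is never performed. You say you ``would need to verify'' that the lift of $A$ carries $\widetilde{f_0}$ to the spin frame underlying $W_2^{out}$ not off by $2\pi$, and you propose to finish by exhibiting the constant homotopy ``since the underlying frames agree and one checks the spin lifts agree'' --- but checking that the spin lifts agree \emph{is} the lemma, so as written the argument is circular exactly where all the content lies. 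Moreover, your route leans on identifying $W_1=\widetilde{\K}(1,0)$ with the specific coset $P$ of \refeqn{SL2C_mod_P}; this is neither established in the paper nor needed, since $\widetilde{\K}$ is only defined as a lift and the lambda length is insensitive to which of the two lifts is taken (both spin decorations change by $2\pi$ together). The paper's proof avoids any absolute identification and argues purely relatively: by equivariance (\refthm{main_thm_precise}) $A\cdot W_1^{in}=W_2^{in}$, and since $A$ is represented by the path $M_t=\pm\begin{pmatrix}\cos t&-\sin t\\ \sin t&\cos t\end{pmatrix}$, $t\in[0,\pi/2]$, which rotates by angle $2t$ about the oriented geodesic $\delta$ from $-i$ to $i$ fixing $q$, the spin frame $W_2^{in}(q)$ is obtained from $W_1^{in}(q)$ by a rotation of exactly $\pi$ about $e_y$; then $W_2^{out}(q)$ is obtained from $W_2^{in}(q)$ by a rotation of $-\pi$ about $e_y$ (\refdef{associated_inward_outward_spindec}), so $W_1^{in}(q)=W_2^{out}(q)$, giving $d=0$ and $\lambda=1$.

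A second, related problem: you assert that $W_1^{in}$ is obtained from $W_1^{out}$ by a rotation of $-\pi$ about $\V_1$, citing \refdef{associated_inward_outward_spindec}(ii); by part (i) of that definition the correct angle is $+\pi$ (the $-\pi$ goes the other way, inward to outward). At the level of decorations this is harmless, but at the spin level $+\pi$ and $-\pi$ differ by precisely $2\pi$, i.e. by the sign of $\lambda$: if you run your coset-based bookkeeping with that sign you obtain $\theta=2\pi$ and $\lambda=-1$. This is exactly the kind of discrepancy your deferred verification is supposed to rule out. The repair is to drop the absolute coset identification and make the relative argument above precise, which is what the paper does.
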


\begin{proof}
By \refprop{JIHGF_general_spin_vector}, $\mathpzc{h}_1$ is centred at $\infty$, at Euclidean height $1$, with spin decoration $W_1$ projecting to the decoration specified by $i$. Similarly, $\mathpzc{h}_2$ is centred at $0$, with Euclidean diameter $1$, and spin decoration $W_2$ projecting to the decoration north-pole specified by $i$. These two horospheres are tangent at $q = (0,0,1)$, and both spin decorations  $W_1^{in}$ and $W_2^{out}$ both project to the same frame at $q$, namely  $(-e_z,e_y,e_x)$. So the complex distance from $W_1^{in}(q)$ to $W_2^{out}(q)$ is $d = i\theta$, where the rotation angle $\theta$ is $0$ or $2\pi$ mod $4\pi$; we claim it is in fact $0$ mod $4\pi$.

To see this, consider the following path in $PSL(2,\C) \cong \Isom^+ \U$:
\[
M_t = \pm \begin{pmatrix} \cos t & -\sin t \\ \sin t & \cos t \end{pmatrix} \in PSL(2,\C), \quad \text{from} \quad t=0 \quad \text{to} \quad t=\frac{\pi}{2}.
\]
As an isometry of $\U$, each $M_t$ is a rotation by angle $2t$ about the oriented geodesic $\delta$ from $-i$ to $i$. Hence $M_t$ preserves each point on $\delta$, including $q$. Thus $M_t$ rotates $\horo_1$ about $\delta$ through to the horosphere $M_{\pi/2} \horo_1$, which is centred at $M_{\pi/2} (0) = \infty$ and passes through $q$, hence is $\horo_2$. Throughout this family of rotations, the point $q$ is preserved, as is the tangent vector at $q$ in the $y$-direction, which is positively tangent to $\delta$. In particular, over $t \in [0, \pi/2]$, the family of rotations $M_t$ rotates the frame of $W_1^{in}$ to the frame of $W_2^{in}$.

In fact, the path $M_t$ rotates the \emph{spin} frame of $W_1^{in}$ to the spin frame $W_2^{in}$. The path $M_t$ is a path in $PSL(2,\C)$ starting at the identity, and 
lifts to a unique path in $SL(2,\C)$ starting at the identity
\[
\widetilde{M_t} = \begin{pmatrix} \cos t & - \sin t \\ \sin t & \cos t \end{pmatrix}
\quad \text{from} \quad
\widetilde{M_0} = \begin{pmatrix} 1 & 0 \\ 0 & 1 \end{pmatrix}
\quad \text{to} \quad
A = \widetilde{M_{\frac{\pi}{2}}} = \begin{pmatrix} 0 & -1 \\ 1 & 0 \end{pmatrix}.
\]
Regarding $SL(2,\C)$ as a universal cover of $PSL(2,\C)$, $M_t$ is a path representing the spin isometry $A$. Note that $A \cdot (0,1) = (1,0)$, i.e. $A \cdot \kappa_1 = \kappa_2$. So by $SL(2,\C)$-equivariance (\refthm{main_thm_precise}), we have $A \cdot (\mathpzc{h}_1, W_1) = (\mathpzc{h}_2, W_2)$, and hence $A \cdot W_1^{in} = W_2^{in}$. 

Thus on the one hand $A \cdot W_1^{in} = W_2^{in}$. But on the other hand, $A$ is represented by the path $M_t$,  which rotates about the geodesic $\delta$ by an angle of $2t$,  for $t \in [0, \pi/2]$. Therefore $W_2^{in}(q)$ is obtained from $W_1^{in}(q)$ by a rotation of angle $\pi$ about $e_y$, the vector pointing along $\delta$.

Then, by \refdef{associated_inward_outward_spindec}, $W_2^{out}(q)$ is obtained from $W_2^{in}(q)$ by a rotation of angle $-\pi$ about $e_y$, i.e. by $-\pi$ about the oriented geodesic $\delta$.

Thus, from $W_1^{in}(q)$, we obtain $W_2^{in}(q)$ by a rotation of $\pi$ about $\delta$; and then obtain $W_2^{out}(q)$ by a rotation of $-\pi$ about $\delta$. So $W_1^{in}(q) = W_2^{out}(q)$, and the rotation angle $\theta$ is $0$ mod $4\pi$ as claimed. Then $d=0$ and $\lambda = \exp(d/2) = 1$.
\end{proof}

\begin{lem}
\label{Lem:main_thm_for_10_and_0D}
Let $0 \neq D \in \C$, and let $\kappa_1 = (1,0)$ and $\kappa_2 = (0,D)$. Let $(\horo_1, W_1), (\horo_2, W_2) \in \mathfrak{H_D^S}(\U)$ be the corresponding spin-decorated horospheres under $\widetilde{\K}$. Then the lambda length from $(\horo_1, W_1)$ to $(\horo_2, W_2)$ is $D$.
\end{lem}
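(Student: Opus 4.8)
The plan is to reduce to the previous lemma, \reflem{main_thm_for_10_and_01}, using $SL(2,\C)$-equivariance together with the invariance of lambda lengths under the $SL(2,\C)$ action established in \reflem{lambda_length_invariant_under_isometry}. The key observation is that $(0,D)$ is obtained from $(0,1)$ by the action of a diagonal matrix in $SL(2,\C)$ that also fixes $(1,0)$ up to scaling in a controlled way.

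First I would write down the matrix
\[
A = \begin{pmatrix} D^{-1} & 0 \\ 0 & D \end{pmatrix} \in SL(2,\C),
\]
and observe that $A \cdot (0,1) = (0,D) = \kappa_2$, while $A \cdot (D^{-1}, 0) = (D^{-2}, 0)$. Hmm — this does not directly fix $(1,0)$, so I would instead choose $A$ so that $A \cdot (1,0) = (1,0)$ and $A \cdot (0,1) = (0,D)$; but a diagonal $SL(2,\C)$ matrix fixing $(1,0)$ must have the form $\mathrm{diag}(1,1)$, so no single diagonal matrix does both. The correct route is therefore: apply $A = \mathrm{diag}(D^{-1}, D)$, which sends $\kappa_1 = (1,0) \mapsto (D^{-1},0)$ and $\kappa_2 = (0,1) \mapsto (0,D)$. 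By \reflem{lambda_length_invariant_under_isometry}, the lambda length from $\widetilde{\K}(1,0)$ to $\widetilde{\K}(0,1)$ equals the lambda length from $\widetilde{\K}(D^{-1},0)$ to $\widetilde{\K}(0,D)$, which by \reflem{main_thm_for_10_and_01} is $1$. So it remains to relate the lambda length from $\widetilde{\K}(D^{-1},0)$ to $\widetilde{\K}(0,D)$ with the one from $\widetilde{\K}(1,0)$ to $\widetilde{\K}(0,D)$, i.e. to understand the effect of rescaling the \emph{first} spinor by $D^{-1}$.

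The substantive step, then, is to compute how the lambda length changes when one spinor is multiplied by a complex scalar $\mu$ with the other held fixed. Using \refprop{JIHGF_general_spin_vector}: $\widetilde{\K}(1,0)$ projects to the horizontal plane at height $1$ with decoration $i$, while $\widetilde{\K}(D^{-1},0) = \widetilde{\K}(\mu,0)$ with $\mu = D^{-1}$ projects to the horizontal plane at height $|\mu|^2 = |D|^{-2}$ with decoration $i\mu^2 = i D^{-2}$. Moving from height $1$ to height $|D|^{-2}$ is a translation along the common geodesic $\gamma_{12}$ (from $p_1 = \infty$ to $p_2 = 0$) — actually moving \emph{downward} toward $\horo_2$ — by signed distance $\rho$ with $e^{\rho/2}$ accounting for the $\sqrt{|D|^{-2}} = |D|^{-1}$ factor, and the decoration rotates by $2\arg(D^{-1}) = -2\arg D$, contributing $e^{i\theta/2}$ with $\theta$ accounting for $\arg(D^{-2})$. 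Carefully tracking signs (the geodesic $\gamma_{12}$ is oriented from $\infty$ to $0$, so lowering the plane increases the translation distance in the positive direction) and using that the spin lift is determined continuously by scaling $\mu$ along a path from $1$, I would conclude that the lambda length from $\widetilde{\K}(\mu,0)$ to $\widetilde{\K}(0,D)$ equals $\mu^{-1}$ times the lambda length from $\widetilde{\K}(1,0)$ to $\widetilde{\K}(0,D)$. Setting $\mu = D^{-1}$ gives: (lambda length $\widetilde{\K}(1,0) \to \widetilde{\K}(0,D)$) $= \mu \cdot 1 = D^{-1} \cdot \dots$ — so I must be careful about which direction the scalar factors out; the conclusion should come out to $D$, consistent with the statement and with the bilinearity of $\{\cdot,\cdot\}$, since $\{(1,0),(0,D)\} = D$.

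The main obstacle I expect is the bookkeeping of spin: ensuring that the $2\pi$-versus-$4\pi$ ambiguity is resolved correctly, i.e. that when we scale $(1,0)$ by $\mu = re^{i\phi}$ along the straight path $t \mapsto (1 + t(\mu-1), 0)$ (or a path in $\C_\times$ from $1$ to $\mu$), the induced path of spin-decorated horospheres rotates the spin frame by exactly $2\phi$ and not $2\phi + 2\pi$. This requires invoking the $SL(2,\C)$-equivariance of $\widetilde{\K}$ (\refthm{main_thm_precise}) applied to the path $\widetilde{M_t} = \mathrm{diag}(\mu_t^{-1}, \mu_t)$ lifting the corresponding path in $PSL(2,\C)$, exactly as in the proof of \reflem{main_thm_for_10_and_01}, so that the spin rotation is read off from the lift of the isometry path rather than guessed. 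Once the scalar-rescaling formula is pinned down with correct spin, the result follows immediately by combining it with \reflem{main_thm_for_10_and_01} and \reflem{lambda_length_invariant_under_isometry}; alternatively, one can bypass the rescaling computation entirely by directly applying the matrix $\begin{pmatrix} 1 & 0 \\ 0 & D \end{pmatrix}$ — wait, that has determinant $D \neq 1$ in general, so it is not in $SL(2,\C)$ — hence the two-step reduction through $\mathrm{diag}(D^{-1},D)$ and a scalar rescaling of the first coordinate really is needed.
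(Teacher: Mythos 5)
Your overall strategy can be made to work, but as written it has a genuine gap at exactly the point where the content of the lemma lives. After using $A=\begin{pmatrix} D^{-1} & 0 \\ 0 & D\end{pmatrix}$, equivariance, \reflem{lambda_length_invariant_under_isometry} and \reflem{main_thm_for_10_and_01} to conclude that the lambda length from $\widetilde{\K}(D^{-1},0)$ to $\widetilde{\K}(0,D)$ is $1$, everything hinges on your ``rescaling law'' for $\widetilde{\K}(\mu,0)$ versus $\widetilde{\K}(1,0)$ with the second spin-decorated horosphere held fixed --- and you never establish it. You first assert the factor is $\mu^{-1}$, then write $\mu$, and finally settle the ambiguity by saying the answer ``should come out to $D$, consistent with the statement and with the bilinearity of $\{\cdot,\cdot\}$''. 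That is circular: this lemma is one of the two base cases from which \refthm{main_thm_2_precise} is proved, so the factor and the spin sign must be derived, not read off from the bracket. Note also that the rescaling step cannot be offloaded to equivariance, since no element of $SL(2,\C)$ fixes $(0,D)$ while scaling $(1,0)$ by $\mu\neq 1$; a direct, spin-tracked complex-distance computation is unavoidable. The path you name for that purpose is also the wrong one: $\widetilde{M_t}=\begin{pmatrix}\mu_t^{-1} & 0\\ 0 & \mu_t\end{pmatrix}$ sends $(1,0)$ to $(\mu_t^{-1},0)$, so it tracks $\widetilde{\K}(\mu_t^{-1},0)$ rather than $\widetilde{\K}(\mu_t,0)$; you need $\begin{pmatrix}\mu_t & 0\\ 0 & \mu_t^{-1}\end{pmatrix}$. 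Carrying that computation out (the lifted path is loxodromic with axis $\gamma_{12}$, translating along $\gamma_{12}$ by $-2\log|\mu|$ and rotating by $-2\arg\mu$, so $W^{in}$ on $\widetilde{\K}(\mu,0)$ is displaced from $W^{in}$ on $\widetilde{\K}(1,0)$ by complex distance $-2\log|\mu|-2i\arg\mu$) gives $d(\mu)=d(1)+2\log|\mu|+2i\arg\mu$, i.e.\ the factor is $\mu$, not $\mu^{-1}$; then $1=D^{-1}\lambda_{12}$ yields $\lambda_{12}=D$. So the route is salvageable, but the decisive computation, with its sign and its spin lift, is precisely the part left out.

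For comparison, the paper's proof avoids both the invariance lemma and any rescaling statement: by \reflem{main_thm_for_10_and_01} the spin frames $W_1^{in}(q_1)$ and $W_{2,D=1}^{out}(q_1)$ coincide, and the lifted path for $A=\begin{pmatrix} D^{-1} & 0\\ 0 & D\end{pmatrix}$, applied to the second horosphere only, is loxodromic along $\gamma_{12}$ with complex translation exactly $2\log|D|+2i\arg D$; hence the complex distance from $W_1^{in}(q_1)$ to $W_2^{out}(q_2)$ is read off directly, with the $4\pi$ ambiguity resolved by the lift, giving $\lambda_{12}=\exp\left(\log|D|+i\arg D\right)=D$. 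Your detour through \reflem{lambda_length_invariant_under_isometry} merely relocates the same loxodromic-path-with-spin computation from $\horo_2$ to $\horo_1$, and that computation is the one you still owe.
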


\begin{proof}
The previous \reflem{main_thm_for_10_and_01} verified this statement when $D=1$. As there, $\horo_1$ is centred at $\infty$, of height $1$, with spin decoration $W_1$ projecting to the decoration specified by $i$. By \refprop{JIHGF_general_spin_vector}, $\horo_2$ is centred at $0$, with Euclidean height $|D|^{-2}$, and spin decoration $W_2$ projecting to the decoration north-pole specified by $i D^{-2}$. The common perpendicular geodesic $\gamma_{12}$ is the vertical line in $\U$ from $\infty$ to $0$, which intersects $\mathpzc{h}_1$ at $q_1 = (0,0,1)$ and $\mathpzc{h}_2$ at $q_2 = (0,0,|D|^{-2})$. Thus the signed distance from $q_1$ to $q_2$ along $\gamma$ is $\rho = 2 \log |D|$. The  rotation angle $\theta$ between decorations, measured with respect to $\gamma_{12}$ is $2 \arg D$, modulo $2\pi$. We will show that $\theta$  is in fact $2 \arg D$ modulo $4\pi$.

From \reflem{main_thm_for_10_and_01}, we know that when $D=1$, the points $q_1, q_2$ coincide, and the frames $W_1^{in}$ and $W_2^{out}$ coincide at this point. Denote the spin-decorated horosphere $\widetilde{\K} (0,1)$ by $(\horo_{2,{D=1}}, W_{2,{D=1}})$. We consider a spin isometry taking the $D=1$ case to the general $D$ case. 

Consider the following path $M_t$ in $PSL(2,\C)$ for $t \in [0,1]$, representing the spin isometry $A$:
\[
A = \begin{pmatrix} D^{-1} & 0 \\ 0 & D \end{pmatrix}
, \quad
M_t = \pm \begin{pmatrix} e^{-t \left( \log |D| + i \arg D \right)} & 0 \\ 0 & e^{t \left( \log |D| + i \arg D \right)} \end{pmatrix}
\]
Note $M_t$ effectively has diagonal entries $D^{-t}$ and $D^t$, we just make them precise using logarithm and argument. We can take, for instance, $\arg D \in [0, 2\pi)$. The path $M_t$ lifts to a path in $SL(2,\C)$ beginning at the identity and ending at $A$, so indeed $M_t$ represents $A$.

On the one hand, $A \cdot (0,1) = (0,D)$, so by equivariance (\refthm{main_thm_precise}), when applied to the corresponding horospheres, $A \cdot (\horo_{2,{D=1}}, W_{2,{D=1}}) = (\horo_2, W_2)$. 

On the other hand, each $M_t$ is a loxodromic isometry of $\U$, which translates along  $\gamma_{12}$ by signed distance $2t \log |D|$, and rotates around the oriented geodesic $\gamma_{12}$ by angle $2t \arg D$, for $t \in [0,1]$. So $A \cdot (\horo_{2,{D=1}}, W_{2,{D=1}}) = (\horo_2, W_2)$ is obtained from $(\horo_{2,{D=1}}, W_{2,{D=1}})$ by a translation along $\gamma_{12}$ of distance $2 \log |D|$, and rotation around $\gamma_{12}$ of angle $2 \arg D$.

Now from \reflem{main_thm_for_10_and_01}, the spin frames $W_1^{in} (q_1)$ and $W_{2,{D=1}}^{out} (q_1)$ coincide. From above,  $W_2^{out} (q_2)$ is obtained from $W_{2,{D=1}}^{out} (q_1)$ by a complex translation of $d = 2 \log |D| + 2 i \arg D$. Thus the  lambda length from $(\horo_1, W_1)$ to $(\horo_2, W_2)$ is  
\[
\lambda_{12} = e^{d/2} = \exp \left( \log |D| + i \arg(D) \right) = D.
\]
\end{proof}

We now state and prove a precise version of \refthm{main_thm_2}.
\begin{theorem}
\label{Thm:main_thm_2_precise}
Let $\kappa_1, \kappa_2 \in \C_\times^2$, and let $\widetilde{\K}(\kappa_1)= (\mathpzc{h}_1, W_1)$ and $\widetilde{\K}(\kappa_2)=(\mathpzc{h}_2, W_2)$ be the corresponding spin-decorated horospheres. Then the lambda length $\lambda_{12}$  from $(\mathpzc{h}_1, W_1)$ to $(\mathpzc{h}_2, W_2)$ is given by
\[
\lambda_{12} = \{\kappa_1, \kappa_2 \}.
\]
\end{theorem}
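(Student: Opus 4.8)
The plan is to prove \refthm{main_thm_2_precise} by reducing the general case to the special cases already established in \reflem{main_thm_for_10_and_0D}, using the transitivity of the $SL(2,\C)$-action and the equivariance/invariance results. First I would observe that by \reflem{SL2C_on_C2_transitive}, the action of $SL(2,\C)$ on $\C^2_\times$ is transitive, so there exists $A \in SL(2,\C)$ with $A \cdot \kappa_1 = (1,0)$. Set $A \cdot \kappa_2 = \kappa_2' = (\xi, \eta)$. By \reflem{SL2C_by_symplectomorphisms}, the spinor inner product is $SL(2,\C)$-invariant, so $\{\kappa_1, \kappa_2\} = \{A \cdot \kappa_1, A \cdot \kappa_2\} = \{(1,0), (\xi, \eta)\} = \eta$. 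On the geometric side, \reflem{lambda_length_invariant_under_isometry} gives that the lambda length between the spin-decorated horospheres $\widetilde{\K}(\kappa_1), \widetilde{\K}(\kappa_2)$ equals the lambda length between $A \cdot \widetilde{\K}(\kappa_1) = \widetilde{\K}(A \cdot \kappa_1)$ and $A \cdot \widetilde{\K}(\kappa_2) = \widetilde{\K}(A \cdot \kappa_2)$, where the equalities use the $SL(2,\C)$-equivariance of $\widetilde{\K}$ from \refthm{main_thm_precise}. So it suffices to show the lambda length from $\widetilde{\K}(1,0)$ to $\widetilde{\K}(\xi, \eta)$ equals $\eta$.

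Next I would dispose of the degenerate case: if $\eta = 0$, then $(\xi, 0)$ and $(1,0)$ are proportional, so by \refprop{JIHGF_general_spin_vector} the horospheres $\widetilde{\K}(1,0)$ and $\widetilde{\K}(\xi,0)$ have the same centre $\infty$, hence $\lambda_{12} = 0 = \eta$ by the convention in \refdef{complex_lambda_length}. So assume $\eta \neq 0$. Now I would choose a further matrix to normalise: let
\[
B = \begin{pmatrix} 1 & -\xi \eta^{-1} \\ 0 & 1 \end{pmatrix} \in SL(2,\C),
\]
which is parabolic and fixes $(1,0)$, and compute $B \cdot (\xi, \eta) = (0, \eta)$. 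By \reflem{lambda_length_invariant_under_isometry} and equivariance of $\widetilde{\K}$ again, the lambda length from $\widetilde{\K}(1,0)$ to $\widetilde{\K}(\xi, \eta)$ equals the lambda length from $\widetilde{\K}(B \cdot (1,0)) = \widetilde{\K}(1,0)$ to $\widetilde{\K}(B \cdot (\xi, \eta)) = \widetilde{\K}(0, \eta)$. By \reflem{main_thm_for_10_and_0D} (with $D = \eta$), this lambda length is exactly $\eta$. Chaining the equalities gives $\lambda_{12} = \eta = \{\kappa_1, \kappa_2\}$, completing the proof.

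I expect the main subtlety — though it has essentially been handled already in the lemmas — is not in this reduction argument itself, which is a routine chase through invariance and equivariance, but rather in making sure the \emph{spin} bookkeeping is consistent: that the conventions of $\pm\pi$ rotations in \refdef{associated_inward_outward_spindec}, the lifts of paths in $PSL(2,\C)$ to $SL(2,\C)$, and the factor of $2$ in $\lambda = \exp(d/2)$ all line up so that the answer is $\eta$ on the nose and not $-\eta$. This is precisely what \reflem{main_thm_for_10_and_01} and \reflem{main_thm_for_10_and_0D} verify by tracking an explicit path of rotations, and \reflem{lambda_antisymmetric} confirms that the antisymmetry of $\lambda$ matches that of $\{\cdot,\cdot\}$, so there is nothing further to check here; the only care needed in writing up is to cite these at the right moments and to note that $B$ is parabolic so that \reflem{lambda_length_invariant_under_isometry} (which applies to any $A \in SL(2,\C)$) can be invoked without worrying about which lift of the M\"obius transformation we take. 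One should also remark that since both $\{\cdot,\cdot\}$ and $\lambda$ vanish exactly when the two horospheres share a centre, the degenerate case is genuinely covered and the identity holds on all of $\C^2_\times \times \C^2_\times$.
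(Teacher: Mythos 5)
Your proposal is correct and follows essentially the same strategy as the paper: reduce by $SL(2,\C)$-equivariance of $\widetilde{\K}$, invariance of $\{\cdot,\cdot\}$, and invariance of lambda lengths to the normalised case handled by \reflem{main_thm_for_10_and_0D}. The only (cosmetic) difference is that the paper uses a single matrix sending $\kappa_1 \mapsto (1,0)$, $\kappa_2 \mapsto (0,D)$ and identifies $D = \{\kappa_1,\kappa_2\}$ via a determinant computation, whereas you normalise in two steps (transitivity, then a parabolic fixing $(1,0)$) and handle the coincident-centre case as $\eta = 0$ rather than as linear dependence — both are fine.
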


\begin{proof}
If $\kappa_1, \kappa_2$ are linearly dependent then one is a complex multiple of the other, and the two horospheres $\mathpzc{h}_1, \mathpzc{h}_2$ have the same centre. Then $\{\kappa_1, \kappa_2\} = \lambda_{12} = 0$. We can thus assume $\kappa_1, \kappa_2$ are linearly independent.

By \refthm{main_thm_precise}, $\widetilde{\K}$ is $SL(2,\C)$-equivariant. By \reflem{SL2C_by_symplectomorphisms}, the bilinear form $\{\cdot, \cdot \}$ is invariant under applying $A \in SL(2,\C)$ to spin vectors. By \reflem{lambda_length_invariant_under_isometry}, complex lambda length is invariant under applying $A \in SL(2,\C)$ to spin-decorated horospheres. So it suffices to show the desired equality after applying an element $A$ of $SL(2,\C)$ to both $\kappa_1, \kappa_2$ and $(\mathpzc{h}_1, W_1), (\mathpzc{h}_2, W_2)$. 

Since $\kappa_1, \kappa_2$ are linearly independent, we take $A$ to be the unique matrix in $SL(2,\C)$ such that $A\cdot\kappa_1 = (1,0)$ and $A\cdot\kappa_2 = (0,D)$ for some $D$. In fact then $D = \{ \kappa_1, \kappa_2\}$. To see this, note that $A$ is the inverse of the matrix with columns $\kappa_1$ and $\kappa_2/D$, with $D$ chosen so that $\det A = 1$. By definition of the bilinear form $\{ \cdot, \cdot \}$, we have $1 = \det A = \{ \kappa_1, \kappa_2/D \} = \frac{1}{D} \{\kappa_1, \kappa_2 \}$. Thus $D = \{ \kappa_1, \kappa_2\}$.

Thus, it suffices to prove the result when $\kappa_1 = (1,0)$ and $\kappa_2 = (0,D)$, i.e. that in this case the lambda length is $\{\kappa_1, \kappa_2\} = D$. This is precisely the result of \reflem{main_thm_for_10_and_0D}.
\end{proof}

\section{Applications}

\label{Sec:applications}

\subsection{Three-dimensional hyperbolic geometry}
\label{Sec:3d_hyp_geom}
\subsubsection{Ptolemy equation for spin-decorated ideal tetrahedra}

We now prove \refthm{main_thm_Ptolemy}. In fact, we prove the following slightly stronger theorem.
\begin{theorem}
Let $(\mathpzc{h}_i, W_i)\in\mathfrak{H_D^S}$ for $i=0,1,2,3$ be four spin-decorated horospheres in $\hyp^3$, and let $\lambda_{ij}$ be the lambda length from $(\mathpzc{h}_i, W_i)$ to $(\mathpzc{h}_j, W_j)$. Then
\[
\lambda_{01} \lambda_{23} + \lambda_{03} \lambda_{12} = \lambda_{02} \lambda_{13}.
\]
\end{theorem}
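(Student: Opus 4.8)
The plan is to reduce the Ptolemy equation for lambda lengths directly to the Pl\"{u}cker relation \refeqn{Plucker_24} via the spinor--horosphere correspondence. By \refthm{spinors_to_horospheres} (in the precise form \refthm{main_thm_precise}), each spin-decorated horosphere $(\mathpzc{h}_i, W_i)$ is $\widetilde{\K}(\kappa_i)$ for a unique nonzero spinor $\kappa_i = (\xi_i, \eta_i) \in \C^2_\times$. By \refthm{main_thm_2_precise}, the lambda length is then simply the spinor inner product:
\[
\lambda_{ij} = \{ \kappa_i, \kappa_j \} = \xi_i \eta_j - \xi_j \eta_i.
\]
So the statement to prove becomes the purely algebraic identity
\[
\{\kappa_0, \kappa_1\} \{\kappa_2, \kappa_3\} + \{\kappa_0, \kappa_3\} \{\kappa_1, \kappa_2\} = \{\kappa_0, \kappa_2\} \{\kappa_1, \kappa_3\}
\]
for any four vectors $\kappa_0, \kappa_1, \kappa_2, \kappa_3 \in \C^2$.

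First I would set up this reduction explicitly, invoking \refthm{main_thm_precise} to pass from horospheres to spinors and \refthm{main_thm_2_precise} to rewrite each $\lambda_{ij}$ as $\{\kappa_i, \kappa_j\}$. Then the core is to verify the four-term quadratic identity in the $2\times 2$ minors $\{\kappa_i, \kappa_j\} = \det(\kappa_i, \kappa_j)$. This is a classical Pl\"{u}cker relation for the Grassmannian $\Gr(2,4)$: arranging $\kappa_0, \kappa_1, \kappa_2, \kappa_3$ as the columns of a $2 \times 4$ complex matrix and writing $p_{ij}$ for the minor on columns $i,j$, one has $p_{01}p_{23} - p_{02}p_{13} + p_{03}p_{12} = 0$, which is exactly the claimed equation after sign bookkeeping. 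One can either cite this (e.g. \cite[ch. 1.5]{Griffiths_Harris94}, as already done for \refeqn{Plucker_24}) or check it by direct expansion in the coordinates $\xi_i, \eta_i$; the expansion is short since each product is a polynomial of degree four and all terms cancel in pairs. I would also remark that this is the same relation already used in the proof of \refprop{complex_Minkowski_inner_products}, merely with four indices instead of the two-row incarnation \refeqn{Plucker_24}.

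Finally I would specialize to obtain \refthm{main_thm_Ptolemy} as stated: when the four spin-decorated horospheres sit at the vertices of an ideal tetrahedron, the centres $\xi_i/\eta_i$ are the four distinct ideal vertices, so the spinors are pairwise linearly independent and every $\lambda_{ij} \neq 0$; dividing the identity through (or just reading it off) gives $\lambda_{02}\lambda_{13} = \lambda_{01}\lambda_{23} + \lambda_{12}\lambda_{03}$, matching \refeqn{ptolemy}. I do not expect a genuine obstacle here: the only subtlety is the sign convention in \reflem{lambda_antisymmetric}, ensuring that the antisymmetry $\lambda_{ij} = -\lambda_{ji}$ of lambda lengths matches the antisymmetry of $\{\cdot,\cdot\}$, so that the Pl\"{u}cker signs come out consistently regardless of how the indices are ordered. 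Once that is noted, the proof is essentially a one-line consequence of \refthm{main_thm_2_precise} plus the Pl\"{u}cker relation.
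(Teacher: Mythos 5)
Your proposal is correct and follows essentially the same route as the paper: invoke \refthm{main_thm_precise} to replace each spin-decorated horosphere by a spinor $\kappa_i$, use \refthm{main_thm_2_precise} to identify each $\lambda_{ij}$ with the minor $\{\kappa_i,\kappa_j\}$ of the $2\times 4$ matrix of spinors, and conclude by the Pl\"{u}cker relation \refeqn{Plucker_24}. The remarks on antisymmetry and on the tetrahedron specialization are consistent with the paper's treatment and add nothing that would change the argument.
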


\begin{proof}
By \refthm{main_thm_precise}, each $(\mathpzc{h}_i, W_i)$ corresponds via $\widetilde{\K}$ to a unique $\kappa_i = (\xi_i, \eta_i) \in \C_\times^2$. Let $M\in\mathcal{M}_{2 \times 4}(\C)$ be the matrix whose $j^{\text{th}}$ column is $\kappa_j$. For  $i,j \in \{0,1,2,3\}$, let $M_{ij}\in\mathcal{M}_{2 \times 2}(\C)$ be the submatrix whose columns are $\kappa_i$ and $\kappa_j$ in order. By definition $\det M_{ij} = \{ \kappa_i, \kappa_j \}$ and by \refthm{main_thm_2_precise} this is also equal to $\lambda_{ij}$. Thus the claimed equation can be rewritten as
\[
\det M_{01} \det M_{23} + \det M_{03} \det M_{12} = \det M_{02} \det M_{12}
\]
which is a well known Pl\"{u}cker relation, as seen previously in \refeqn{Plucker_24}.
\end{proof}

As mentioned in introductory \refsec{Ptolemy_matrices}, this theorem  generalises Penner's Ptolemy equation for decorated ideal triangles in $\hyp^2$. When the four horosphere centres lie in a plane, Penner defines each $\lambda_{ij}$ to be our $\exp \left( \rho_{ij}/2 \right)$, by reference to distances only, without angles or decorations. As we will see in \refsec{spin_coherent_positivity} below, decorations can be chosen to obtain the same $\lambda_{ij}$.

Note that multiplying any $\kappa_i$, corresponding to $(\mathpzc{h}_i,W_i)$, with a scalar $\alpha\in\C$ also scales each term of \refeqn{ptolemy} containing index $i$. For instance, multiplying $\kappa_1$ by $\alpha$ scales $\lambda_{01},\lambda_{12}$ and $\lambda_{13}$. In this case, the choice of decorated horosphere at each vertex is like a choice of ``gauge".

\subsubsection{Shape parameters and Ptolemy equations}
\label{Sec:shape_parameters}

The following definition is standard in hyperbolic geometry; shape parameters for instance form the variables in Thurston's gluing equations \cite{Thurston_notes}.
\begin{defn}
\label{Def:shape_parameter}
Let $e$ be an edge of an ideal hyperbolic tetrahedron $\Delta$. Suppose the endpoints of $e$ are placed at $0$ and $\infty$ in $\U$, and the remaining ideal vertices are placed at $1$ and $z_e \in \C$ such that $\Im z_e > 0$. Then $z_e$ is the \emph{shape parameter} of $\Delta$ along $e$.
\end{defn}

As is well known, the shape parameters at opposite edges of an ideal tetrahedron are equal. And if one shape parameter $z$ is known, then the others can be denoted $z, z', z''$ such that
\[
z' = \frac{1}{1-z} \quad \text{and} \quad z'' =\frac{z-1}{z}.
\]
In particular, we have the equation
\begin{equation}
\label{Eqn:shapeparamter}
    z + z'^{-1} = 1,
\end{equation}
which also holds under cyclic permutations $(z, z', z'')\mapsto (z', z'', z)$.

\begin{theorem}
    Numbering the ideal vertices of $\Delta$ by $0, 1, 2, 3$ as in \reffig{7}, let the shape parameter of edge $ij$ by $z_{ij}$. Choose $(\mathpzc{h}_i,W_i)\in\mathfrak{H_D^S}$ at ideal vertex $i$ and let $\lambda_{ij}$ be the complex lambda length from $(\mathpzc{h}_i,W_i)$ to $(\mathpzc{h}_j,W_j)$. Then
\begin{equation}z_{01}=z_{23}=\frac{\lambda_{02}\lambda_{13}}{\lambda_{03}\lambda_{12}},\ \ z_{02}=z_{13}=-\frac{\lambda_{03}\lambda_{12}}{\lambda_{01}\lambda_{23}},\ \ z_{03}=z_{12}=\frac{\lambda_{01}\lambda_{23}}{\lambda_{02}\lambda_{13}}.\label{Eqn:sp}\end{equation}
\end{theorem}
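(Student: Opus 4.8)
The plan is to translate everything into the spinor picture via $\widetilde{\K}$, reduce the shape parameters to cross-ratios of the four ideal vertices, and recognise the right-hand sides of \refeqn{sp} as those cross-ratios once the centres and lambda lengths are written in terms of $2\times 2$ determinants. First I would invoke \refthm{main_thm_precise} to obtain spinors $\kappa_i = (\xi_i, \eta_i) \in \C^2_\times$ with $\widetilde{\K}(\kappa_i) = (\mathpzc{h}_i, W_i)$. By \refprop{JIHGF_general_spin_vector} the ideal vertex of $\Delta$ carrying $\mathpzc{h}_i$ is its centre $v_i = \xi_i/\eta_i \in \partial\U = \C\cup\{\infty\}$, and by \refthm{main_thm_2_precise} we have $\lambda_{ij} = \{\kappa_i,\kappa_j\} = \xi_i\eta_j - \xi_j\eta_i$. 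Shape parameters are cross-ratios, hence invariant under M\"obius transformations, and lambda lengths are invariant under $SL(2,\C)$ by \reflem{lambda_length_invariant_under_isometry}; since both sides of \refeqn{sp} are unchanged when one applies a common $A \in SL(2,\C)$ to all four spinors, I may first normalise so that $\eta_i \neq 0$ for all $i$, i.e. no vertex lies at $\infty$ (possible as the $v_i$ are distinct).

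Next I would use the elementary identity
\[
v_i - v_j = \frac{\xi_i}{\eta_i} - \frac{\xi_j}{\eta_j} = \frac{\xi_i\eta_j - \xi_j\eta_i}{\eta_i\eta_j} = \frac{\lambda_{ij}}{\eta_i\eta_j}.
\]
By \refdef{shape_parameter}, the shape parameter $z_{ij}$ along an edge $ij$ is the image of the fourth vertex under the M\"obius map sending the remaining three vertices to $0,\infty,1$ in the order dictated by the orientation and \reffig{7}; this image is a cross-ratio of the four $v_k$, and upon substituting the identity above every factor of $\eta_k$ cancels, leaving a ratio of $\lambda_{ij}$'s. The antisymmetry $\lambda_{ij} = -\lambda_{ji}$ of \reflem{lambda_antisymmetric} then fixes the signs, yielding (for the appropriate convention in \reffig{7}) $z_{01} = \lambda_{02}\lambda_{13}/(\lambda_{03}\lambda_{12})$, and similarly for the other two pairs of opposite edges. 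The equalities $z_{01}=z_{23}$, $z_{02}=z_{13}$, $z_{03}=z_{12}$ are the standard fact that opposite edges of an ideal tetrahedron carry equal shape parameters. One can also see the whole of \refeqn{sp} is internally consistent: denoting the three right-hand sides $z,z',z''$, one has $zz'z'' = -1$, and using the Ptolemy relation $\lambda_{02}\lambda_{13} = \lambda_{01}\lambda_{23} + \lambda_{12}\lambda_{03}$ of \refthm{main_thm_Ptolemy} one checks $z' = 1/(1-z)$, $z'' = (z-1)/z$, i.e. exactly the relations around \refeqn{shapeparamter}; so once one of the three values is computed as a cross-ratio, the remaining two follow from these standard identities.

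The main obstacle is purely bookkeeping: one must verify that the assignment of the three non-fourth vertices to $0,\infty,1$ forced by \reffig{7} together with the requirement $\Im z_{ij} > 0$ (equivalently, the chosen orientation of $\Delta$) produces the cross-ratio in precisely the form appearing in \refeqn{sp} for each edge --- in particular reproducing the lone minus sign in $z_{02}=z_{13}$ and the correct choice of reciprocal. Once the convention in \reffig{7} is pinned down, this is a finite check; the genuine mathematical content is only the dictionary $v_i = \xi_i/\eta_i$, $\lambda_{ij} = \det(\kappa_i,\kappa_j)$, the cancellation of the $\eta_k$'s, and the Ptolemy relation confirming consistency.
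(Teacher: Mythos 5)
Your proposal is correct, and it rests on the same dictionary as the paper ($v_i = \xi_i/\eta_i$, $\lambda_{ij} = \{\kappa_i,\kappa_j\}$ via \refthm{main_thm_precise} and \refthm{main_thm_2_precise}, plus antisymmetry), but the execution differs. The paper uses spin-isometry invariance and homogeneity in each $\kappa_i$ to reduce to one normalised configuration — vertices at $0,\infty,z,1$ with the explicit spinors $(0,1),(1,0),(z,1),(1,1)$ — computes all six lambda lengths by determinants, verifies the first identity directly, and gets the other two by permuting labels and antisymmetry. You instead keep general spinors, write each shape parameter as a cross-ratio of the $v_i$, and let the identity $v_i - v_j = \lambda_{ij}/(\eta_i\eta_j)$ (which the paper also uses, in \refeqn{pi-pj}, in a different context) cancel all the $\eta_k$'s; this has the mild advantage of handling arbitrary decorations without invoking gauge homogeneity separately, since the $\eta$'s drop out on their own. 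The one place where your argument is genuinely deferred rather than done is exactly where the paper's normalisation earns its keep: for, say, $z_{02}$ the orientation of \reffig{7} dictates which of the vertices $1,3$ is sent to $1$, and the two choices give $-\lambda_{03}\lambda_{12}/(\lambda_{01}\lambda_{23})$ versus $1-z$-type expressions, so the lone minus sign in \refeqn{sp} hinges on that convention; your "finite check" is real and easiest to discharge by evaluating in the paper's normalised configuration (or by combining your computation of $z_{01}$ with the standard cyclic relations $z'=1/(1-z)$, $z''=(z-1)/z$ and the known cyclic order of $z,z',z''$ around a vertex, which again is an orientation convention). With that check made explicit, your route is a complete and slightly more intrinsic proof; your closing observation that Ptolemy makes the three formulas mutually consistent mirrors the paper's remark that \refeqn{shapeparamter} is the Ptolemy equation in disguise, but consistency alone does not replace fixing the convention for at least one edge beyond $z_{01}$.
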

\begin{center}
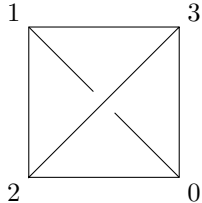

\begin{tikzpicture}[scale=2]
\draw (1,0)--(0,1);
\fill[white] (0.5,0.5) circle (0.1 cm);
    \draw (0,0)--(1,0)--(1,1)--(0,1)--(0,0)--(1,1);
    \node at (-0.1,-0.1){2};
    \node at (-0.1,1.1){1};
    \node at (1.1,-0.1){0};
    \node at (1.1,1.1){3};
\end{tikzpicture}
\captionof{figure}{Tetrahedron with vertices labeled $0,1,2,3$}.
\label{Fig:7}
\end{center}
\begin{proof}
Transforming a spin-decorated ideal tetrahedron by a spin isometry leaves all shape parameters and complex lambda lengths invariant. Noting the orientation of \reffig{7}, we may thus place the ideal vertices $0, 1, 2, 3$ at $0, \infty, z, 1\in\partial\hyp^3$ respectively, so that $z = z_{01} = z_{23}$, $z' = z_{02} = z_{13}$ and $z'' = z_{03} = z_{12}$. Then multiplying a spinor $\kappa_i$ corresponding to $(\mathpzc{h}_i,W_i)$ by a complex scalar $\alpha$, the homogeneous expressions in lambda lengths in \refeqn{sp} are invariant. Thus it suffices to prove the claim for any choice of spin decoration, or spinor, at each vertex. By \refthm{main_thm_precise}, each of the following spinors $\kappa_j$ corresponds under $\widetilde{\K}$ to a spin-decorated horosphere at ideal vertex $j$:
\[
\kappa_0 = (0, 1), \quad 
\kappa_1 = (1, 0), \quad
\kappa_2 = (z, 1), \quad
\kappa_3 = (1, 1).
\]
By \refthm{main_thm} we calculate the complex lambda lengths to be 
\[
\lambda_{01} =-1, \quad 
\lambda_{02} =-z, \quad
\lambda_{03} =-1, \quad
\lambda_{12} =1, \quad
\lambda_{13} =1, \quad
\lambda_{23} =z-1.
\]
This immediately gives the first equation. Permuting labels $(0,1,2,3)\mapsto (0,2,3,1)$ on $\Delta$ and similarly the indices on each $\lambda$, and using the antisymmetry of $\lambda$, gives the subsequent equations.
\end{proof}

Substituting $z = z_{01} = z_{23}$ and $z' = z_{02} = z_{13}$ with the corresponding expression in the $\lambda_{ij}$ from \refeqn{sp}, the equation \refeqn{shapeparamter} relating shape parameters becomes precisely the Ptolemy equation \refeqn{ptolemy}. From this perspective, arguably the shape parameter equation is a Ptolemy equation in disguise!

\subsection{Real spinors and 2-dimensional hyperbolic geometry}
\label{Sec:real_spinors_H2}

When a spinor $(\xi, \eta)$ has real coordinates, the corresponding horosphere has centre $\xi/\eta$ in $\R \cup \{\infty\}$, which forms the boundary of a copy of the upper half-plane model of 2-dimensional hyperbolic space, inside the upper half-space model $\U$ of 3-dimensional hyperbolic space. With this observation, we can use real spinors to describe 2-dimensional hyperbolic geometry. Accordingly, we regard $\hyp^2$ as the upper half plane embedded in $\U = \{(x,y,z) \mid z>0 \}$ at  $y=0$.

In this section we explore some of this lower-dimensional geometry. In \refsec{horocycles_decorations} we adapt spin decorations to 2 dimensions, and show they correspond to real spinors. In \refsec{spin_coherent_positivity} we discuss the relationship between ordering points around the boundary circle of $\hyp^2$, and positive lambda lengths. In \refsec{triangulations} we briefly mention triangulations of polygons, and in \refsec{Ford} we discuss how Ford circles arise from integer spinors.

\subsubsection{Horocycles and their decorations}
\label{Sec:horocycles_decorations}

Any horocycle $\mathpzc{h}$ in $\hyp^2$ extends to a unique horosphere $\widetilde{\mathpzc{h}}$ in $\hyp^3$, with centre in $\R \cup \{\infty\}$, and $\widetilde{\horo}$ can be given a spin decoration as usual.
\begin{defn}
\label{Def:planar_spin_decoration}
Let $\mathpzc{h}$ be a horocycle in $\hyp^2$. A \emph{planar spin decoration} on $\mathpzc{h}$ is a spin decoration $(W^{in}, W^{out})$ on the corresponding $\widetilde{\mathpzc{h}} \in \mathfrak{H}(\hyp^3)$ whose inward and outward spin frames $W^{in}, W^{out}$ project to frames specified by $i$.
\end{defn}
The requirement that frames be specified by $i$ determines a unique decoration on $\widetilde{\horo}$, but not a unique spin decoration. There are precisely two planar spin decorations on $\horo$, corresponding to two spinors $\pm \kappa$.

\begin{lem}
\label{Lem:real_spinor_planar_decoration}
Let $\kappa = (\xi,\eta) \in \C_\times^2$ be a spin vector corresponding to $(\mathpzc{h}, W)\in\mathfrak{H_D^S}$. Then $(\mathpzc{h},W)$ arises from a planar spin decoration if and only if $(\xi,\eta) \in \R_\times^2$.
\end{lem}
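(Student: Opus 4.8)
\textbf{Proof plan for \reflem{real_spinor_planar_decoration}.}

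The plan is to use equivariance to reduce to a single convenient case, and then to check the planar condition directly. First I would observe that the statement has two directions to prove: if $(\xi,\eta) \in \R_\times^2$ then the associated decoration is planar, and conversely. Both directions should follow from the same explicit description together with $SL(2,\C)$-equivariance of $\widetilde{\K}$ from \refthm{main_thm_precise}. The key structural fact I would exploit is that the subgroup $SL(2,\R) \subset SL(2,\C)$ acts on the copy of $\hyp^2 = \{(x,0,z) \mid z > 0\}$ embedded in $\U$: matrices with real entries act by real M\"obius transformations on $\R \cup \{\infty\} = \partial \hyp^2$, hence preserve $\hyp^2$, and their derivatives preserve the family of frames specified by $i$ (in the sense of \refdef{decoration_specification}), since such a frame at the north pole of a horocycle in the vertical plane $y=0$ is exactly the data of a planar spin decoration. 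So $SL(2,\R)$ preserves the set of spin-decorated horospheres arising from planar spin decorations.

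Next I would pin down the base case. The spinor $\kappa_0 = (1,0)$ corresponds, by \refprop{JIHGF_general_spin_vector}, to the horosphere centred at $\infty$ at height $1$ with decoration specified by $i$; this horosphere restricts to a horocycle in the plane $y=0$, and by \refthm{main_thm_precise} the spin decoration $\widetilde{\K}(1,0)$ lifts the outward frame field $f_0^{out}$ based on the base frame $f_0 = (e_z, e_y, -e_x)$ at $p_0 = (0,0,1)$, which is precisely specified by $i$; hence $\widetilde{\K}(1,0)$ is planar. Now for any $(\xi,\eta) \in \R_\times^2$, by \reflem{SL2C_on_C2_transitive} (or more concretely the explicit matrices $A_\kappa$ there) there is $A \in SL(2,\R)$ with $A \cdot (1,0) = (\xi,\eta)$ --- indeed the matrices exhibited in the proof of \reflem{SL2C_on_C2_transitive} have real entries when $\xi,\eta$ are real. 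By equivariance $\widetilde{\K}(\xi,\eta) = A \cdot \widetilde{\K}(1,0)$, and since $A \in SL(2,\R)$ preserves planarity, $\widetilde{\K}(\xi,\eta)$ is planar. This gives the forward direction.

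For the converse, suppose $(\mathpzc{h}, W) = \widetilde{\K}(\xi,\eta)$ is planar. Then in particular $\mathpzc{h}$ is a horosphere whose centre $\xi/\eta \in \partial \U$ lies on $\partial \hyp^2 = \R \cup \{\infty\}$, so $\xi/\eta$ is real (interpreting $\infty$ as the case $\eta = 0$, $\xi \ne 0$, which is real). This already forces $\xi$ and $\eta$ to have the same argument mod $\pi$, i.e. $(\xi,\eta) = r(\xi',\eta')$ with $(\xi',\eta') \in \R^2_\times$ and $r = e^{i\psi}$. It remains to rule out a nontrivial phase $\psi$. Here I would use the decoration data: by \refprop{JIHGF_general_spin_vector}, the decoration of $\widetilde{\K}(\xi,\eta)$ is specified (at the north pole, or everywhere if $\eta = 0$) by $i\eta^{-2}$ (resp.\ $i\xi^2$), and planarity demands this specification be a positive real multiple of $i$, i.e.\ $\eta^{-2}$ (resp.\ $\xi^2$) is a positive real --- which forces $\eta$ (resp.\ $\xi$) to be real or purely imaginary. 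Combined with $\xi/\eta$ real, this leaves only $(\xi,\eta) \in \R^2_\times$ or $(\xi,\eta) \in i\R^2_\times$; and these two correspond to spinors differing by multiplication by $i$, which by \refthm{main_thm_precise} and \reflem{flag_basis_rotation} (multiplication by $e^{i\theta}$ rotates the flag by $-2\theta$) yield spin decorations differing by a $\pi$ rotation of the frame --- not a planar one. Hence $(\xi,\eta) \in \R^2_\times$.

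The main obstacle I anticipate is the converse direction, specifically the bookkeeping needed to distinguish $(\xi,\eta) \in \R^2_\times$ from $(\xi,\eta) \in i\R^2_\times$: both give the same underlying decorated horosphere (decoration specified by $i$ times a real), so one genuinely needs the \emph{spin} refinement to separate them, and care is required to confirm that the $i\R^2_\times$ case produces the frame specified by $-i$ (equivalently, rotated by $\pi$) rather than by $i$. I would handle this cleanly by again reducing via $SL(2,\R)$-equivariance to comparing $\widetilde{\K}(1,0)$ with $\widetilde{\K}(i,0) = \widetilde{\K}(i \cdot (1,0))$, and invoking the rotation-by-$-2\theta$ computation of \reflem{flag_basis_rotation} (with $\theta = \pi/2$) together with the definition of planar spin decoration in \refdef{planar_spin_decoration}.
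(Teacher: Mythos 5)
Your proposal reaches the right conclusion, but it takes a much longer route than the paper, and one of its guiding ideas is off. The paper's proof is a two-line direct computation from \refprop{JIHGF_general_spin_vector}: planarity means exactly that the centre $\xi/\eta$ lies in $\R \cup \{\infty\}$ and the decoration is specified by $i$, i.e.\ $i\eta^{-2}$ (resp.\ $i\xi^2$ when $\eta=0$) is a \emph{positive} real multiple of $i$; the latter says $\eta^2>0$ (resp.\ $\xi^2>0$), i.e.\ $\eta$ (resp.\ $\xi$) is real and nonzero, and together with $\xi/\eta$ real this is equivalent to $(\xi,\eta)\in\R^2_\times$. Both directions fall out of this single equivalence, so no equivariance reduction to $(1,0)$, no $SL(2,\R)$-invariance of planarity, and no appeal to \reflem{flag_basis_rotation} is needed. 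Your forward direction via $SL(2,\R)$-equivariance is valid (and the invariance of planarity under real M\"obius maps can be justified by the derivative computation in the remark at the end of \refsec{FGHIJ}), but it is doing work the explicit formula already does.

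The caveat concerns your converse. You claim $(\xi,\eta)\in\R^2_\times$ and $(\xi,\eta)\in i\R^2_\times$ ``give the same underlying decorated horosphere'' so that the spin refinement is needed to separate them. This is not so: multiplying by $i$ sends the specification $i\eta^{-2}$ to $-i\eta'^{-2}$, i.e.\ the oriented line field is reversed, so the two decorated horospheres are already different, and indeed \refdef{planar_spin_decoration} constrains only the \emph{projected} frames --- both spin lifts of a planar decoration are planar, so spin plays no role in this lemma at all. Relatedly, your deduction ``$\eta^{-2}$ positive real forces $\eta$ real or purely imaginary'' gives away information: positivity of $\eta^{-2}$ forces $\eta$ real outright, so the purely imaginary case you then labour to exclude never arises. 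Your exclusion of it is nevertheless correct (it yields specification $-i$, not $i$), so there is no gap in the end, only redundancy built on that misreading of where the orientation data lives.
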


\begin{proof}
A spin decoration on a horosphere is planar if and only if its centre is real and its decoration is specified by $i$. Since the centre is $\xi/\eta$ and its frames are (north-pole) specified by $i/\eta^2$ (if $\eta \neq 0$) or $i \xi^2$ (if $\eta = 0$), the decoration is planar iff $\xi/\eta$  is real and $\eta^2$ is positive, or $\xi/\eta = \infty$ (i.e. $\eta = 0)$ and $\xi^2$ is positive; i.e. iff $\xi/\eta$  is real and $\eta$ is real, or $\eta = 0$ and $\xi$ is nonzero real. This is precisely the case when $\xi$ and $\eta$ are both real and not both zero.
\end{proof}

Suppose we have two horocycles $\mathpzc{h}_1, \mathpzc{h}_2$ in $\hyp^2$ with planar spin decorations. Then the complex distance $d_{12}$ from $W_1^{in}$ to $W_2^{out}$ will be of the form $\rho_{12} + i \theta_{12}$ where $\theta_{12}$ is $0$ or $2\pi$ mod $4\pi$. The lambda length $\lambda_{12}$ from $(\mathpzc{h}_1, W_1)$ to $(\mathpzc{h}_2, W_2)$ will be positive or negative accordingly.

Since lambda lengths are antisymmetric (\reflem{lambda_antisymmetric}), $\lambda_{21} = - \lambda_{12}$, the two complex distances $d_{12}, d_{21}$ satisfy $d_{21} = d_{12} + 2\pi i$ mod $4\pi i$. So in particular, $\theta_{12}$ is $0$ mod $4\pi$ precisely when $\theta_{21}$ is $2\pi$ mod $4\pi$, and vice versa.

\subsubsection{Ordering and positivity in the hyperbolic plane}
\label{Sec:spin_coherent_positivity}

We will use a specific orientation of $\hyp^2$. Since we regard  $\hyp^2$ is a surface embedded in $\U$, each point of $\hyp^2$ has two normal directions $\pm e_y$. By \refdef{planar_spin_decoration}, the positive $y$-direction which corresponds to real spinors, and so we make the following definition.
\begin{defn}
We orient $\hyp^2$ by the normal direction $e_y$. We orient $\partial \hyp^2$ as the boundary of $\hyp^2$.
\end{defn}
Note that this is the opposite of the usual orientation on the upper half plane, and in particular $\partial \hyp^2 = \R \cup \{\infty\}$ is oriented along $\R$ in the negative direction.

\begin{defn}
\label{Def:in_order}
A collection of distinct points $z_1, \ldots, z_d \in \partial \hyp^2$ are \emph{in order} around $\partial \hyp^2$ if they lie in cyclic order on the oriented circle $\partial \hyp^2$. 
\end{defn}
Since $\partial \hyp^2$ is oriented in the negative direction on $\R$, ``in order" here roughly means ``in decreasing order", but as $\partial \hyp^2$ is a circle, we allow for cyclic permutation. Thus $z_1, \ldots, z_d \in \R$ are in order if $z_1 > z_2 > \cdots > z_d$. But they are also in order if, for instance, 
$\infty = z_k > k_{k+1} > \cdots > z_d > z_1 > z_2 > \cdots > z_{k-1}$ for some $k$.

We will define, as usual, an ideal $d$-gon in the hyperbolic plane to have vertices at infinity, but we prefer the vertices to be in order around the circle at infinity. we allow various relaxations of usual assumptions, or degenerations, of those vertices, as follows.

\begin{defn} \
\label{Def:d-gons}
\begin{enumerate}
\item
An \emph{ideal $d$-gon} is a collection of distinct points $z_1, \ldots, z_d$ in $\partial \hyp^2$, labelled in order around $\partial \hyp^2$.
\item
A \emph{generalised ideal $d$-gon} is a collection of points $z_1, \ldots, z_d$ in $\partial \hyp^2$. 
\item
A generalised ideal $d$-gon is \emph{degenerate} if all its vertices coincide. Otherwise it is \emph{non-degenerate}.
\end{enumerate}
\end{defn}
Thus, a generalised ideal $d$-gon may not have its vertices labelled in order around $\partial \hyp^2$, and some or even all vertices may coincide. When all of them coincide, it is degenerate.

For the rest of this section, let $\horo_1, \ldots, \horo_d$ be horocycles in $\hyp^2$, with centres $p_1, \ldots, p_d \in \partial \hyp^2 = \R \cup \{\infty\}$, and let $\gamma_{ij}$ be the oriented geodesic from $p_i$ to $p_j$. Let $q_{i,j}$ be the intersection of $\gamma_{ij}$ with $\horo_i$.

It turns out that when the $p_i$ are distinct and in order as per \refdef{in_order}, one can choose planar spin decorations on the $\horo_i$ in a ``coherent" way.
\begin{prop}
\label{Prop:distinct_vertices_coherent_decorations}
Suppose $p_1, \ldots, p_d \in \partial \hyp^2$ are distinct and in order around $\partial \hyp^2$. Then there exist planar spin decorations $W_1, \ldots, W_d$ on $\horo_1, \ldots, \horo_d$ such that the lambda lengths $\lambda_{ij}$ from $(\horo_i, W_i)$ to $(\horo_j, W_j)$ satisfy
\[
\lambda_{ij} > 0 \quad \text{when} \quad i<j
\quad \text{and} \quad
\lambda_{ij} < 0 \quad \text{when} \quad i>j.
\]
\end{prop}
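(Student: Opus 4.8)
The plan is to reduce the statement to elementary linear algebra of real spinors, using the correspondences already established. By \reflem{real_spinor_planar_decoration}, the planar spin decorations on $\horo_i$ are exactly the two spin-decorated horospheres $\widetilde{\K}(\pm\kappa_i)$, where $\kappa_i = (\xi_i,\eta_i) \in \R_\times^2$ is a real spinor with $\xi_i/\eta_i = p_i$, its modulus $|\eta_i|$ being pinned down by the Euclidean size of $\horo_i$ via \refprop{JIHGF_general_spin_vector}. By \refthm{main_thm_2_precise}, the lambda length between $\widetilde{\K}(\kappa)$ and $\widetilde{\K}(\kappa')$ for real spinors is the \emph{real} number $\{\kappa,\kappa'\} = \xi\eta' - \xi'\eta$. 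So choosing planar spin decorations $W_i$ is the same as choosing signs $\epsilon_i \in \{\pm 1\}$, and we must arrange $\epsilon_i\epsilon_j\{\kappa_i,\kappa_j\} > 0$ for $i < j$; the inequalities for $i > j$ then follow automatically from antisymmetry of $\{\cdot,\cdot\}$ (equivalently \reflem{lambda_antisymmetric}).

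First I would dispose of the case $p_i = \infty$: since $SL(2,\R) \subset SL(2,\C)$ acts on $\U$ through $PSL(2,\R) = \Isom^+\hyp^2$, preserving $\partial\hyp^2$ and its orientation, preserving the class of real spinors (hence of planar spin decorations), and leaving lambda lengths invariant by \reflem{lambda_length_invariant_under_isometry}, one may apply a suitable element of $SL(2,\R)$ and assume every $p_i \in \R$. Then $\xi_i = p_i\eta_i$, so $\{\kappa_i,\kappa_j\} = \eta_i\eta_j(p_i - p_j)$. Put $s_{ij} = \operatorname{sign}\{\kappa_i,\kappa_j\} = \operatorname{sign}(\eta_i\eta_j)\operatorname{sign}(p_i - p_j)$ for $i < j$.

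The next step is a standard coherence argument: signs $\epsilon_i$ with $\epsilon_i\epsilon_j = s_{ij}$ for all $i < j$ exist if and only if $s_{ij}s_{jk} = s_{ik}$ for all $i < j < k$ (necessity is immediate; for sufficiency set $\epsilon_1 = 1$ and $\epsilon_j = s_{1j}$, then check using the triple $(1,i,j)$). Since the $\eta$-signs cancel in pairs, the condition $s_{ij}s_{jk} = s_{ik}$ is equivalent to $\operatorname{sign}\bigl((p_i - p_j)(p_j - p_k)(p_i - p_k)\bigr) = +1$ for every $i < j < k$. This sign equals $+1$ exactly when the triple $(p_i,p_j,p_k)$ appears in decreasing cyclic order along $\R$, which, by the definition of ``in order around $\partial\hyp^2$'' in \refsec{spin_coherent_positivity} (recall $\partial\hyp^2$ is oriented negatively along $\R$), is precisely what the hypothesis asserts for the triple $i < j < k$. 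Hence the required $\epsilon_i$ exist; the resulting planar spin decorations satisfy $\lambda_{ij} = \epsilon_i\epsilon_j\{\kappa_i,\kappa_j\} = |\{\kappa_i,\kappa_j\}| > 0$ for $i < j$ (nonzero since $p_i \neq p_j$ forces $\kappa_i,\kappa_j$ independent), and $\lambda_{ij} = -\lambda_{ji} < 0$ for $i > j$.

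The bulk of this is routine bookkeeping; the one place demanding care is matching the orientation conventions — confirming that ``in order around $\partial\hyp^2$'', defined in \refsec{spin_coherent_positivity} via the normal $e_y$ that \reflem{real_spinor_planar_decoration} attaches to real spinors, really does translate into $\operatorname{sign}\bigl((p_i-p_j)(p_j-p_k)(p_i-p_k)\bigr) = +1$ for ascending triples. I would settle this once by tracking the ray $\R^+(p,1)\subset\R^2$ as $p$ decreases through $\R$ and comparing with the orientation of $\R^2$ built into the determinant $\{\cdot,\cdot\}$, so that the cyclic-order translation is unambiguous for the remainder of \refsec{real_spinors_H2}.
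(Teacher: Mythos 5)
Your proposal is correct and is essentially the paper's own second (algebraic) proof: setting $\epsilon_1 = 1$ and $\epsilon_j = s_{1j}$ is exactly the paper's choice of decorations making $\lambda_{1j}>0$, and your triple condition $\operatorname{sign}\bigl((p_i-p_j)(p_j-p_k)(p_i-p_k)\bigr)=+1$ is the same computation via $\{\kappa_i,\kappa_j\}=\eta_i\eta_j(p_i-p_j)$ that the paper carries out by cases on the ordering of $p_1,p_i,p_j$. The only differences are organisational: you package the sign choices as a cocycle condition and dispose of the $p_i=\infty$ case by an $SL(2,\R)$ reduction using \reflem{lambda_length_invariant_under_isometry}, a case the paper's algebraic argument leaves to the reader (the paper also offers an alternative geometric proof by frame transport, which your proposal does not need).
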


\begin{proof}
Recall there are two choices for the spin decoration on each horocycle. Choose the spin decoration on $\horo_1$ arbitrarily, and then choose the spin decoration on each other $\horo_j$ so that $\lambda_{1j} >0$. This means that each complex distance $d_{1j} = \rho_{1j} + i \theta_{1j}$ is chosen to be real, with each $\theta_{1j} = 0$ mod $4\pi$, and the spin frame $W_j$ on each $\widetilde{\horo_j}$ is chosen so that $W_1^{in} (p_{1,j})$ and $W_j^{out} (p_{j,1})$ are related by parallel translation  along $\gamma_{1j}$; there is no rotation, as $\theta_{1j} = 0$ mod $4\pi$. 

It suffices then to prove that for any $i<j$ we have $\lambda_{ij} > 0$, or equivalently, $\theta_{ij} = 0$ mod $4\pi$. We give two proofs of this claim, geometric then algebraic.

For the first proof, consider the oriented geodesics $\gamma_{1i}, \gamma_{1j}, \gamma_{ij}$. By \refdef{associated_inward_outward_spindec}, $W_i^{out}$ is obtained from $W_i^{in}$ at any point on $\horo_i$ by a rotation of $-\pi$ about the decoration on $W_i$, which points in $y$-direction. Moreover, as decorations arise from parallel line fields, $W_i^{in}$ is obtained at any point of $\horo_i$ from any other by parallel translation on $\horo_i$; similarly $W_j^{in}$ is obtained at any point of $\horo_j$ from any other by parallel translation on $\horo_j$.

Thus, we may proceed from $W_i^{in}(p_{i,j})$ to $W_j^{out}(p_{j,i})$ in several steps, as follows:
\begin{enumerate}
\item Parallel translation along $\horo_i$ from $p_{i,j}$ to $p_{i,1}$ takes $W_i^{in}(p_{i,j})$ to $W_i^{in}(p_{i,1})$;
\item Rotation by $-\pi$ about the $y$-direction takes $W_i^{in}(p_{i,1})$ to $W_i^{out}(p_{i,1})$;
\item Parallel translation along $\gamma_{1i}$ from $p_{i,1}$ to $p_{1,i}$ takes $W_i^{out}(p_{i,1})$ to $W_1^{in}(p_{1,i})$;
\item Parallel translation along $\horo_1$ from $p_{1,i}$ to $p_{1,j}$ takes $W_1^{in}(p_{1,i})$ to $W_1^{in}(p_{1,j})$;
\item Parallel translation along $\gamma_{1j}$ from $p_{1,j}$ to $p_{j,1}$ takes $W_1^{in}(p_{1,j})$ to $W_j^{out}(p_{j,1})$;
\item Parallel translation along $\horo_j$ from $p_{j,1}$ to $p_{j,i}$ takes $W_j^{out}(p_{j,1})$ to $W_j^{out}(p_{j,i})$.
\end{enumerate}
Now, we note that the result of these translations must be the same, or differ by $2\pi$, from the parallel translation along $\gamma_{ij}$, which takes $W_i^{in}(p_{i,j})$ to $W_j^{out}(p_{j,i})$. By tracking the rotation of the frame (which may be done by hand, or by measuring angles), we see that the results are the same. See \reffig{spin_coherence}. Thus $\theta_{ij} = 0$ mod $4\pi$, and $\lambda_{ij} > 0$.

\begin{figure}
\begin{center}
 \begin{tikzpicture}[scale=1.8]
    \draw[black] (0,0) circle (2cm);
    \draw[black] (0,1.5) circle (0.5cm);
    \draw[black] (-1.732, -1) arc (210:-150:0.5cm);
    \draw[black] (1.732,-1) arc (-30:330:0.7 cm);
\begin{scope}[decoration={markings,mark=at position 0.5 with {\arrow{latex}}}] 
    \draw[black, postaction={decorate}] (0,2) arc (0:-60:3.464 cm);
    \draw[black, postaction={decorate}] (0,2) arc (180:240:3.464 cm);
    \draw[black, postaction={decorate}] (1.732,-1) arc (60:120:3.464cm);
\end{scope}
    \node[black] at (0,2.2) {$z_1$};
    \node[black] at (-1.9,-1.1) {$z_j$};
    \node[black] at (1.9,-1.1) {$z_i$};
    \fill[black] (-0.14,1.03) circle (0.05 cm);
    \node[black] at (-0.4,0.9) {$p_{1,j}$};
    \fill[black] (0.14,1.03) circle (0.05 cm);
    \node[black] at (0.4,0.9) {$p_{1,i}$};
    \fill[black] (-0.96,-0.38) circle (0.05 cm);
    \node[black] at (-1,-0.2) {$p_{j,1}$};
    \fill[black] (-0.81,-0.63) circle (0.05 cm);
    \node[black] at (-0.6,-0.8) {$p_{j,i}$};
    \fill[black] (0.7, -0.1) circle (0.05 cm);
    \node[black] at (1.1,-0.2) {$p_{i,1}$};
    \fill[black] (0.43,-0.56) circle (0.05 cm);
    \node[black] at (0.6,-0.7) {$p_{i,j}$};
    \draw[-latex, ultra thick, green!50!black] (0.35,-0.45) arc (165:135:0.8 cm);
    \draw[-latex, ultra thick, green!50!black] (0.7, -0.3) arc (-90:90:0.2 cm);
    \draw[-latex, ultra thick, green!50!black] (0.55, 0) arc (215:200:3.6 cm);
    \draw[-latex, ultra thick, green!50!black] (0.15, 0.9) arc (-72:-108:0.5 cm);
    \draw[-latex, ultra thick, green!50!black] (-0.14,0.8) arc (-20:-40:3.6 cm);
    \draw[-latex, ultra thick, green!50!black] (-0.85, -0.35) arc (40:15:0.5 cm);
    \node[black] at (0,-0.7) {$\gamma_{i,j}$};
    \node[black] at (0.7,0.4) {$\gamma_{1,i}$};
    \node[black] at (-0.7,0.4) {$\gamma_{1,j}$};
    \node[black] at (-0.7,1.6) {$\horo_1$};
    \node[black] at (1.6, 0.1) {$\horo_i$};
    \node[black] at (-1.1, -1.4) {$\horo_j$};
\end{tikzpicture} 
\caption{Constructing spin-coherent planar spin decorations.}
\label{Fig:spin_coherence}
\end{center}
\end{figure}
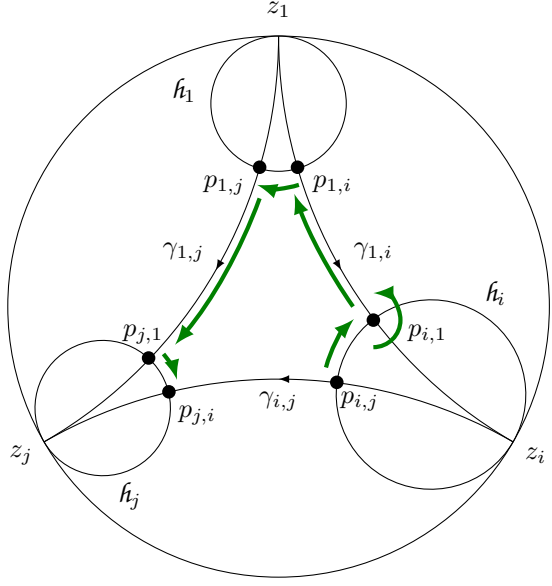

We give the second proof in the case where none of the $p_i$ is $\infty$, leaving the rest to the reader. Let $\kappa_j = (\xi_j, \eta_j)$ be spinors corresponding to the $(\horo_j, W_j)$. Then we have $p_j = \xi_j/\eta_j$ so
\begin{equation}
\label{Eqn:pi-pj}
p_i - p_j 
= \frac{\xi_i}{\eta_i} - \frac{\xi_j}{\eta_j}
= \frac{\xi_i \eta_j - \xi_j \eta_i}{\eta_i \eta_j}
= \frac{ \{\kappa_i, \kappa_j \} }{ \eta_i \eta_j}
= \frac{\lambda_{ij}}{\eta_i \eta_j}
\end{equation}
and similarly
\begin{equation}
\label{Eqn:p1-pi}
p_1 - p_i = \frac{\lambda_{1i}}{\eta_1 \eta_i}, \quad
p_1 - p_j = \frac{\lambda_{1i}}{\eta_1 \eta_j}.
\end{equation}
Recall $i<j$ and the $p_i$ are in order. We consider two cases, $p_i < p_j$ and $p_j < p_i$. 

If $p_i < p_j$ then since $p_1, p_i, p_j$ are in order we have $p_i < p_1 < p_j$. So $p_1 - p_i$ and $p_1 - p_j$ have opposite signs. Hence by equation \refeqn{p1-pi}, $\eta_i$ and $\eta_j$ have opposite signs. Hence in equation \refeqn{pi-pj}, the left hand side is negative and the right hand side denominator is negative. Thus the numerator is positive, $\lambda_{ij} > 0$.

If $p_i > p_j$ then the ordering of $p_1, p_i, p_j$ tells us that $p_1 < p_j < p_i$ or $p_j < p_i < p_1$. Either way, $p_1 - p_i$ and $p_1 - p_j$ have the same sign, so by \refeqn{p1-pi}, $\eta_i$ and $\eta_j$ have the same sign. Hence in \refeqn{pi-pj}, the left hand side is positive and the right hand side denominator is positive. Thus $\lambda_{ij} > 0$ again.
\end{proof}

We formalise this ``coherence" of spin decorations, or positivity of lambda lengths, in the following definitions.
\begin{defn}
\label{Def:spin-coherent}
Let $\horo_1, \ldots, \horo_d$ be horocycles with distinct centres $p_1, \ldots, p_d$. A set of planar spin decorations on the $\horo_i$ is \emph{spin-coherent} if
\[
\left\{
\begin{array}{rcl}
\theta_{ij} &=& 0\mod4\pi \text{ when } i<j \\
\theta_{ij} &=& 2\pi\mod4\pi \text{ when } i>j,
\end{array}
\right.
\quad \text{or equivalently,} \quad
\left\{
\begin{array}{rcl}
\lambda_{ij} &>& 0 \text{ when } i<j \\
\lambda_{ij} &<& 0 \text{ when } i>j.
\end{array}
\right.
\]
\end{defn}

\begin{defn}
A collection of spinors $\kappa_1, \ldots, \kappa_d$ is \emph{totally positive} if they are all real, and satisfy
\begin{equation}
\label{Eqn:total_positivity}
\{ \kappa_i, \kappa_j \} > 0 \quad \text{when} \quad i<j
\quad \text{and} \quad
\{ \kappa_i, \kappa_j \} < 0 \quad \text{when} \quad i>j.
\end{equation}
\end{defn}
This notion of \emph{total positivity} arises in other contexts in mathematics and physics. See, for example, \cite{ABCGPT16, Lusztig94, Postnikov06, Williams21}).

\begin{lem}
\label{Lem:positive_spin-coherent}
A collection of spinors $\kappa_1, \ldots, \kappa_d$ is totally positive and if and only if the corresponding spin-decorated horospheres have planar spin decorations and are spin-coherent.
\end{lem}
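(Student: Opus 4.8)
The plan is to chain together the characterisation of planar spin decorations from \reflem{real_spinor_planar_decoration} with the identification of lambda lengths and spinor inner products from \refthm{main_thm_2_precise}, and then invoke \refprop{distinct_vertices_coherent_decorations} together with its converse. First I would establish the forward direction. Suppose $\kappa_1, \ldots, \kappa_d$ are totally positive. Then in particular each $\kappa_i \in \R^2_\times$, so by \reflem{real_spinor_planar_decoration} the corresponding spin-decorated horosphere $(\horo_i, W_i)$ arises from a planar spin decoration. By \refthm{main_thm_2_precise}, $\lambda_{ij} = \{\kappa_i, \kappa_j\}$, so the inequalities defining total positivity, $\{\kappa_i,\kappa_j\}>0$ for $i<j$ and $\{\kappa_i,\kappa_j\}<0$ for $i>j$, translate directly into $\lambda_{ij}>0$ for $i<j$ and $\lambda_{ij}<0$ for $i>j$. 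By \refdef{spin-coherent} this says precisely that the planar spin decorations are spin-coherent. (One should note in passing that total positivity forces the centres $p_i = \xi_i/\eta_i$ to be distinct, since $\{\kappa_i,\kappa_j\}\neq 0$ means $\kappa_i,\kappa_j$ are linearly independent, so the $\horo_i$ genuinely have distinct centres and \refdef{spin-coherent} applies.)

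For the converse, suppose the $(\horo_i, W_i)$ have planar spin decorations and are spin-coherent. By \reflem{real_spinor_planar_decoration} again, each corresponding spinor $\kappa_i$ lies in $\R^2_\times$, so the $\kappa_i$ are all real. Spin-coherence gives $\lambda_{ij}>0$ for $i<j$ and $\lambda_{ij}<0$ for $i>j$, and applying \refthm{main_thm_2_precise} once more, $\{\kappa_i,\kappa_j\}=\lambda_{ij}$ satisfies exactly the inequalities \refeqn{total_positivity}. Hence the $\kappa_i$ are totally positive. In fact both directions are essentially an application of the two cited results, so the lemma is really a dictionary between the two languages.

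The one genuine subtlety — and the step I expect to require the most care — is checking that \refdef{spin-coherent} is well-posed in the situation at hand, i.e. that spin-coherence is only asserted when the centres are distinct, and confirming that "planar spin decoration" in the hypothesis of the converse really does pin down a spinor in $\R^2_\times$ (rather than only up to the sign ambiguity $\pm\kappa$, which is harmless here since both $\kappa$ and $-\kappa$ are real and $\{-\kappa_i,-\kappa_j\}=\{\kappa_i,\kappa_j\}$, while changing the sign of a single $\kappa_i$ flips the signs of all $\lambda_{ij}$ involving $i$ and so is incompatible with spin-coherence unless the original choice already was). Beyond noting these points, the proof is short: it is the composition of \reflem{real_spinor_planar_decoration} (realness $\Leftrightarrow$ planar) with \refthm{main_thm_2_precise} ($\lambda_{ij} = \{\kappa_i,\kappa_j\}$), matching the sign conditions of \refdef{spin-coherent} against those of \refeqn{total_positivity} term by term.
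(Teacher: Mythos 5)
Your proof is correct and follows essentially the same route as the paper: combine \reflem{real_spinor_planar_decoration} (real $\Leftrightarrow$ planar) with \refthm{main_thm_2_precise} ($\lambda_{ij} = \{\kappa_i,\kappa_j\}$) and match the sign conditions. Your additional remarks on distinctness of centres and the $\pm\kappa$ ambiguity are sensible but not needed beyond what the paper records.
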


\begin{proof}
By \reflem{real_spinor_planar_decoration}, the $\kappa_j$ are real if and only if the $\horo_j$ have planar spin decorations. By \refthm{main_thm}, the $\{\kappa_i, \kappa_j\}$ satisfy the total positivity condition \refeqn{total_positivity} if and only if the $\lambda_{ij}$ satisfy the spin-coherence condition of \refdef{spin-coherent}.
\end{proof}

Note that in \refdef{spin-coherent} of spin coherence, the centres $p_i$ are distinct, but there is no requirement that the vertices lie in order around $\partial \hyp^2$. However, it turns out that the spin coherence condition \emph{implies} that the $p_i$ are in order around $\partial \hyp^2$. The following converse to \refprop{distinct_vertices_coherent_decorations} is proved in \cite{Mathews_Spinors_horospheres}.

\begin{prop}
\label{Prop:spin-coherent_in_order}
If $\horo_1, \ldots, \horo_d$ have spin-coherent planar spin decorations, then their centres $p_1, \ldots, p_d$ are in order around $\partial \hyp^2$. In other words, $p_1, \ldots, p_d$ form an ideal $d$-gon as in \refdef{d-gons}.
\end{prop}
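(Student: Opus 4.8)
\textbf{Proof proposal for \refprop{spin-coherent_in_order}.}

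The plan is to prove the contrapositive-flavoured statement directly by leveraging the sign computation already built into the second proof of \refprop{distinct_vertices_coherent_decorations}. Specifically, suppose $\horo_1, \ldots, \horo_d$ have spin-coherent planar spin decorations, with corresponding real spinors $\kappa_1, \ldots, \kappa_d$ (real by \reflem{real_spinor_planar_decoration}). By \reflem{positive_spin-coherent} the collection $\kappa_1, \ldots, \kappa_d$ is totally positive, i.e. $\{\kappa_i, \kappa_j\} > 0$ for $i < j$ and $\{\kappa_i, \kappa_j\} < 0$ for $i > j$. I want to conclude that the centres $p_i$ are in order around $\partial \hyp^2$. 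The key algebraic identity is \refeqn{pi-pj}: when $\eta_i, \eta_j \neq 0$,
\[
p_i - p_j = \frac{\{\kappa_i, \kappa_j\}}{\eta_i \eta_j}.
\]
First I would handle the generic case where all $\eta_i \neq 0$, so no $p_i$ is $\infty$.

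In that case, consider the real numbers $\eta_1, \ldots, \eta_d$, each nonzero, and look at their signs. The sign of $p_i - p_j$ for $i < j$ equals the sign of $\eta_i \eta_j$ (since $\{\kappa_i, \kappa_j\} > 0$). I would first record the elementary observation that the orientation of $\partial\hyp^2 = \R \cup \{\infty\}$ is the \emph{negative} direction on $\R$ (as noted after the orientation definition in \refsec{spin_coherent_positivity}), so ``in order'' for finite points means cyclically decreasing. Now partition $\{1, \ldots, d\}$ into $I^+ = \{i : \eta_i > 0\}$ and $I^- = \{i : \eta_i < 0\}$. For two indices $i < j$ in the same class, $\eta_i\eta_j > 0$, so $p_i > p_j$; hence within each class the $p_i$ are strictly decreasing. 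For $i < j$ in opposite classes, $\eta_i \eta_j < 0$, so $p_i < p_j$. From these sign rules one checks that, reading the indices $1, 2, \ldots, d$ in order, the points $p_1, \ldots, p_d$ traverse $\partial\hyp^2$ monotonically (in the negative $\R$ direction) exactly once: the indices in $I^+$ occupy one arc and those in $I^-$ the complementary arc, and the cross-class inequalities force the two decreasing runs to concatenate into a single cyclic run. Making this precise is a short but slightly fiddly combinatorial argument: I would phrase it as showing that the cyclic sequence $(p_1, \ldots, p_d)$ has no ``descent followed by a larger descent'' pattern that would require wrapping around twice, i.e. that there is exactly one cyclic index $k$ with $p_k < p_{k+1}$ (indices mod $d$), which is precisely the condition for being in cyclic order.

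Next I would dispose of the case where some $\eta_i = 0$, i.e. $p_i = \infty$. At most one $\eta_i$ can vanish: if $\eta_i = \eta_j = 0$ with $i \neq j$ then $\{\kappa_i, \kappa_j\} = \xi_i \eta_j - \xi_j \eta_i = 0$, contradicting total positivity (the $\kappa$'s would be proportional, indeed here parallel, forcing equal centres $\infty = \infty$ but zero lambda length, which violates the strict sign requirement — actually more simply, $\{\kappa_i,\kappa_j\}=0$ is neither positive nor negative). So exactly one index, say $i_0$, has $p_{i_0} = \infty$. For $j \neq i_0$ with $i_0 < j$, the relation $\{\kappa_{i_0}, \kappa_j\} = \xi_{i_0}\eta_j - \xi_j \eta_{i_0} = \xi_{i_0}\eta_j > 0$ tells me the sign of $\eta_j$ (it equals the sign of $\xi_{i_0}$, which is nonzero since $\kappa_{i_0}\neq 0$ and $\eta_{i_0}=0$); and for $j < i_0$, the sign of $\eta_j$ is the opposite. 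Combined with the finite-centre analysis of the remaining $p_j$'s among themselves (same argument as above), and the fact that $\infty$ sits between the largest and smallest finite values on the oriented circle, one checks the full cyclic sequence is again in order.

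The main obstacle I anticipate is the combinatorial bookkeeping in the first part — turning the local sign data ``$p_i - p_j$ has the sign of $\eta_i\eta_j$'' into the global statement ``the $p_i$ are cyclically ordered''. It is genuinely elementary, but it requires care to state cleanly, especially to handle the wrap-around at $\infty$ and to avoid case explosion. A clean way to package it: define $\epsilon_i = \operatorname{sgn}(\eta_i) \in \{+1, -1\}$ (with the $\eta=0$ index treated separately as above), and observe that $i \mapsto p_i$ followed by $i \mapsto \epsilon_i$ lets us reconstruct the cyclic position: the points with $\epsilon_i = +1$ appear in decreasing order of $p$ along one arc, those with $\epsilon_i = -1$ in decreasing order along the other arc, and the indices $1, \ldots, d$ visit first $\{i : \epsilon_i = +1\}$ then $\{i: \epsilon_i = -1\}$ (or vice versa) in a way that makes the concatenation monotone around the circle. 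Since \cite{Mathews_Spinors_horospheres} is cited as the source, I would keep this proof reasonably brief, emphasising the identity \refeqn{pi-pj} and the sign partition, and deferring the most tedious verifications with a remark that they are routine.
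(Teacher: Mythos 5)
First, a point of context: the paper itself does not prove \refprop{spin-coherent_in_order} — it states the result and explicitly defers the proof to \cite{Mathews_Spinors_horospheres} — so there is no in-paper argument to compare yours against, and your proposal has to be judged as a self-contained proof.

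On those terms it is essentially correct. The reduction via \reflem{real_spinor_planar_decoration} and \reflem{positive_spin-coherent} to total positivity, the use of \refeqn{pi-pj}, the observation that ``in order'' means cyclically \emph{decreasing} for the orientation fixed in \refsec{spin_coherent_positivity}, and your handling of the case $p_{i_0}=\infty$ (at most one $\eta_i$ can vanish, and $\operatorname{sgn}(\eta_j)$ for $j$ on either side of $i_0$ is determined by $\operatorname{sgn}(\xi_{i_0})$) are all sound. The only place you wave your hands — that the indices with $\eta_i>0$ and those with $\eta_i<0$ form two contiguous blocks, so the two decreasing runs concatenate into a single cyclic run — is genuinely the crux, and I would not leave it as ``routine'', but it closes in three lines: if $i<j<k$ had $\operatorname{sgn}(\eta_i)=\operatorname{sgn}(\eta_k)\neq\operatorname{sgn}(\eta_j)$, then \refeqn{pi-pj} with $\{\kappa_i,\kappa_j\},\{\kappa_j,\kappa_k\},\{\kappa_i,\kappa_k\}>0$ forces $p_i<p_j<p_k$ and simultaneously $p_i>p_k$, a contradiction; hence the sign sequence $\operatorname{sgn}(\eta_1),\dots,\operatorname{sgn}(\eta_d)$ changes at most once. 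Your ascent count then works out in both cases: with no sign change the unique cyclic ascent is at the wrap from $d$ to $1$, with one sign change it is at the change, so there is exactly one cyclic ascent and the $p_i$ are a rotation of a decreasing sequence, i.e.\ in order. With that short argument inserted (and the corresponding one-line check in the $\infty$ case, where the block structure is automatic), your proof is complete.
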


\subsubsection{Triangulations of polygons}
\label{Sec:triangulations}

Let $d \geq 3$ be an integer and consider an ideal $d$-gon with a triangulation $T$. This $T$ consists of $d-3$ diagonals, so the polygon and the triangulation together contain $2d-3$ edges.

If each ideal vertex of the polygon has a planar spin decoration, we then have a lambda length associated to each edge of the polygon and the triangulation. Labelling the vertices of the $d$-gon from $1$ to $d$, we write $\lambda_{ij}$ for the lambda length from vertex $i$ to vertex $j$.

A natural operation on a triangulation is a diagonal flip, which replaces two adjacent triangles with two different triangles. These two adjacent triangles share an edge and together they form an ideal $4$-gon.  If the four vertices involved in a diagonal flip have labels $i,j,k,l$ in order, in the diagonal flip, lambda length $\lambda_{ik}$ is replaced with $\lambda_{jl}$ or vice versa. The variables are related by the Ptolemy equation
\[
\lambda_{ij} \lambda_{kl} + \lambda_{il} \lambda_{jk} = \lambda_{ik} \lambda_{jl}.
\]
Considering such variables and equations leads to the notion of a cluster algebra: see e.g. \cite{Fomin_Shapiro_Thurston08, Fomin_Thurston18, Williams14} for details.

If we allow the ideal vertices to vary along $\partial \hyp^2$ and the horospheres to also vary, the space of such choices gives a description of what Penner calls the \emph{decorated Teichm\"{u}ller space} $\widetilde{T}$. Then each $\lambda_{ij}$ can be regarded as a function $\widetilde{T} \To \R^{+}$ and indeed the collection of $\lambda_{ij}$ from the edges of the polygon and the diagonals of a triangulation yields a system of coordinates on $\widetilde{T}$ and a homeomorphism $\widetilde{T} \To \R^{2d-3}$. See e.g. \cite{Penner87, Penner12} for further details.

\subsubsection{Ford circles}
\label{Sec:Ford}

Suppose we have a spinor $\kappa = (\xi, \eta)$ whose coordinates $\xi, \eta$ are \emph{integers}. As a real spinor, by \reflem{real_spinor_planar_decoration}, $\kappa$ corresponds to a horocycle with a planar spin decoration. Its centre is at the \emph{rational} number $\xi/\eta$, and its Euclidean diameter is $1/\eta^2$. If we require that $(\xi, \eta) \in \Z^2_\times$ be \emph{relatively prime}, then we obtain a family of horocycles with precisely one horocycle centred at each point of $\Q \cup \{\infty\}$. They are known as \emph{Ford circles} \cite{Ford38}. See \reffig{Ford_farey}.

A standard fact about Ford circles is that the circles at $a/b$ and $c/d$, written as fractions in simplest form, are tangent if and only if $|ad-bc| = 1$. We see this immediately from the spinor-horosphere correspondence, since the corresponding real spinors $\kappa_1 = (a,b)$ and $\kappa_2 = (c,d)$ satisfy $\{\kappa_1, \kappa_2\} = ad-bc = \pm 1$ so the corresponding spin-decorated horospheres have lambda length $\lambda = \pm 1$. Thus the horocycles are tangent.

\begin{figure}
\begin{center}
\begin{tikzpicture}[scale=10]
    \draw[black, thick] (0,0) -- (1,0);
    \draw[black] (0,0) arc (-90:0:0.5 cm);
    \draw[black] (0,-0.02) -- (0,0);
    \node at (0,-0.05) {$\frac{0}{1}$};
    \draw[black] (1,0) arc (-90:-180:0.5 cm);
    \draw[black] (1,-0.02) -- (1,0);
    \node at (1,-0.05) {$\frac{1}{1}$};
    \draw[black] (0.5,0) arc (-90:270:0.125 cm);
    \draw[black] (0.5,-0.02) -- (0.5,0);
    \node at (0.5,-0.05) {$\frac{1}{2}$};
    \draw[black] ( {1/3} ,0) arc (-90:270: { 1/18 });
    \draw[black] ( {1/3} ,-0.02) -- ( {1/3} ,0);
    \node at ( {1/3} ,-0.05) {$\frac{1}{3}$};
    \draw[black] ( {2/3}, 0) arc (-90:270: {1/18} );
    \draw[black] ( {2/3} ,-0.02) -- ( {2/3} ,0);
    \node at ( {2/3} ,-0.05) {$\frac{2}{3}$};
    \draw[black] ( {1/4}, 0) arc (-90:270: {1/32} );
    \draw[black] ( {1/4} ,-0.02) -- ( {1/4} ,0);
    \node at ( {1/4} ,-0.05) {$\frac{1}{4}$};
    \draw[black] ( {3/4}, 0) arc (-90:270: {1/32} );
    \draw[black] ( {3/4} ,-0.02) -- ( {3/4} ,0);
    \node at ( {3/4} ,-0.05) {$\frac{3}{4}$};
    \draw[black] ( {1/5}, 0) arc (-90:270: {1/50} );
    \draw[black] ( {1/5} ,-0.02) -- ( {1/5} ,0);
    \node at ( {1/5} ,-0.05) {$\frac{1}{5}$};
    \draw[black] ( {2/5}, 0) arc (-90:270: {1/50} );
    \draw[black] ( {2/5} ,-0.02) -- ( {2/5} ,0);
    \node at ( {2/5} ,-0.05) {$\frac{2}{5}$};
    \draw[black] ( {3/5}, 0) arc (-90:270: {1/50} );
    \draw[black] ( {3/5} ,-0.02) -- ( {3/5} ,0);
    \node at ( {3/5} ,-0.05) {$\frac{3}{5}$};
    \draw[black] ( {4/5}, 0) arc (-90:270: {1/50} );
    \draw[black] ( {4/5} ,-0.02) -- ( {4/5} ,0);
    \node at ( {4/5} ,-0.05) {$\frac{4}{5}$};
    \draw[black] ( {1/6}, 0) arc (-90:270: {1/72} );
    \draw[black] ( {1/6} ,-0.02) -- ( {1/6} ,0);
    \node at ( {1/6} ,-0.05) {$\frac{1}{6}$};
    \draw[black] ( {5/6}, 0) arc (-90:270: {1/72} );
    \draw[black] ( {5/6} ,-0.02) -- ( {5/6} ,0);
    \node at ( {5/6} ,-0.05) {$\frac{5}{6}$};
\end{tikzpicture}
\caption{Ford circles and Farey fractions.}
\label{Fig:Ford_farey}
\end{center}
\end{figure}
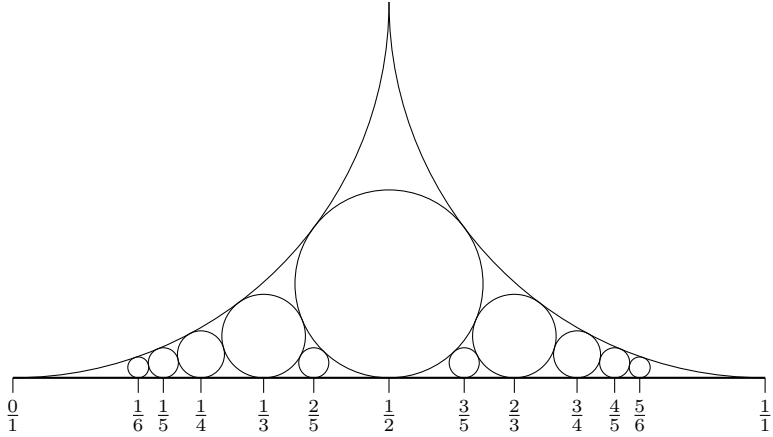

More generally, the hyperbolic distance $\rho$ between Ford circles based at $a/b$ and $c/d$, again given as fractions in simplest form, is given from $e^\rho = |\lambda|^2$ as
\[
\rho = 2 \log |\lambda| = 2 \log \left| \{ \kappa_1, \kappa_2 \} \right|
= 2 \log \left| ad - bc \right|.
\]
This was observed by McShane and Sergiescu in \cite{McShane_Eisenstein, McShane_Sergiescu}.

If we have two tangent Ford circles, based at fractions in simplest form $a/b$ and $c/d$, then there is a unique Ford circle between them, tangent to both. This Ford circle is based at the \emph{mediant} or \emph{Farey sum} of $a/b$ and $c/d$, namely
\[
\frac{a+c}{b+d},
\]
which has corresponding spinor $\kappa_1 + \kappa_2$. Its tangency with the circles corresponding to $\kappa_1, \kappa_2$ then follows from its lambda length, or spinor inner product, with $\kappa_1$ and $\kappa_2$ being $\pm 1$:
\[
\{\kappa_1 + \kappa_2, \kappa_1 \} = \{ \kappa_2, \kappa_1 \} = \pm 1, \quad
\{\kappa_1 + \kappa_2, \kappa_2 \} = \{ \kappa_1, \kappa_2 \} = \pm 1.
\]
Here we  used the antisymmetry of the spinor inner product, and the tangency of horospheres corresponding to $\kappa_1, \kappa_2$.

\subsection{Polygons, polyhedra and matrices}
\label{Sec:polygons_polyhedra_matrices}

\subsubsection{Matrices and ideal hyperbolic polygons}

If we have an ideal $d$-gon in $\hyp^2$ (\refdef{d-gons}), with horospheres and planar spin decorations on each ideal vertex (\refdef{planar_spin_decoration}), then we have a real spinor associated to each vertex (\reflem{real_spinor_planar_decoration}). We can arrange these spinors as the columns of a matrix. Various properties of this matrix correspond to properties of corresponding ideal polygon.

\begin{lem}
\label{Lem:ideal_polygons_matrices}
The map which takes planar spin-decorated horocycles $(\horo_1, W_1), \ldots, (\horo_d, W_d)$ in $\hyp^2$ to matrices whose columns are the corresponding collection of spin vectors $\kappa_1, \ldots, \kappa_d \in \R^2_\times$, induces the following correspondences:
\begin{gather*}
\left\{ \begin{array}{c}
\text{Generalised ideal $d$-gons in $\hyp^2$} \\
\text{with planar spin decorations}
\end{array} \right\}
\cong 
\left\{ \begin{array}{c}
\text{$2 \times d$ real matrices} \\
\text{with no zero columns}
\end{array} \right\} \\
\left\{ \begin{array}{c}
\text{Nondegenerate generalised ideal $d$-gons in $\hyp^2$} \\
\text{with planar spin decorations}
\end{array} \right\}
\cong 
\left\{ \begin{array}{c}
\text{$2 \times d$ real matrices of rank $2$} \\
\text{with no zero columns}
\end{array} \right\} \\
\left\{ \begin{array}{c}
\text{Spin-coherent ideal $d$-gons in $\hyp^2$} 
\end{array} \right\}
\cong 
\left\{ \begin{array}{c}
\text{$2 \times d$ real matrices} \\
\text{where all $2 \times 2$ submatrices} \\
\text{have positive determinant}
\end{array} \right\} 
\end{gather*}
\end{lem}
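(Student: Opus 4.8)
The plan is to prove the three correspondences of \reflem{ideal_polygons_matrices} in turn, building on the earlier results that relate real spinors to planar spin-decorated horocycles. The map in question sends a tuple of planar spin-decorated horocycles $(\horo_1,W_1),\ldots,(\horo_d,W_d)$ to the $2\times d$ matrix whose $j$-th column is $\kappa_j$, where $\kappa_j\in\R^2_\times$ is the real spinor corresponding to $(\horo_j,W_j)$ under $\widetilde{\K}$. The existence of this $\kappa_j$, and the fact that it is real and nonzero, is exactly \reflem{real_spinor_planar_decoration}; and conversely \reflem{real_spinor_planar_decoration} (together with \refthm{main_thm_precise}, which makes $\widetilde{\K}$ a bijection) tells us that every nonzero real spinor arises from a unique planar spin-decorated horocycle. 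Thus the assignment $(\horo_j,W_j)\mapsto\kappa_j$ is a bijection between planar spin-decorated horocycles in $\hyp^2$ and elements of $\R^2_\times$, and the map on $d$-tuples is the induced bijection between $d$-tuples of planar spin-decorated horocycles and $d$-tuples of elements of $\R^2_\times$, i.e.\ $2\times d$ real matrices with no zero column. A generalised ideal $d$-gon is by \refdef{d-gons} precisely a $d$-tuple of points of $\partial\hyp^2$, equivalently a $d$-tuple of horocycles (one centred at each point) — but actually a generalised ideal $d$-gon with planar spin decorations \emph{is} a $d$-tuple of planar spin-decorated horocycles. This gives the first correspondence directly.

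For the second correspondence I would characterise nondegeneracy. A generalised ideal $d$-gon is degenerate exactly when all its vertices coincide. The vertex associated to $\kappa_j=(\xi_j,\eta_j)$ is $\xi_j/\eta_j\in\R\cup\{\infty\}$. Two columns $\kappa_i,\kappa_j$ give the same vertex iff $\xi_i/\eta_i=\xi_j/\eta_j$ iff $\{\kappa_i,\kappa_j\}=\xi_i\eta_j-\xi_j\eta_i=0$ iff $\kappa_i,\kappa_j$ are linearly dependent. Hence all vertices coincide iff all columns are pairwise parallel iff the matrix has rank $1$ (it cannot have rank $0$ since no column is zero). So nondegenerate corresponds to rank $2$, giving the second correspondence. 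The only minor care needed is the case where some vertices are at $\infty$: there $\eta_j=0$, and the condition $\{\kappa_i,\kappa_j\}=0$ still correctly detects when two such vertices coincide (both at $\infty$) or when a finite vertex equals $\infty$ (impossible, as $\{\kappa_i,\kappa_j\}=\xi_i\cdot 0-\xi_j\eta_i=-\xi_j\eta_i$ which is nonzero when $\eta_i\neq 0$ and $\xi_j\neq 0$).

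For the third correspondence I would use \reflem{positive_spin-coherent} together with \refprop{spin-coherent_in_order}. A collection of spinors $\kappa_1,\ldots,\kappa_d$ is totally positive iff all $\kappa_j$ are real and $\{\kappa_i,\kappa_j\}>0$ for $i<j$; but $\{\kappa_i,\kappa_j\}=\det M_{ij}$ where $M_{ij}$ is the submatrix with columns $i<j$ (in order), so total positivity of the $\kappa_j$ is precisely the condition that all $2\times 2$ submatrices of $M$ (with columns taken in increasing index order, which is the canonical way to form a submatrix) have positive determinant. By \reflem{positive_spin-coherent}, the $\kappa_j$ are totally positive iff the corresponding spin-decorated horospheres have planar spin decorations and are spin-coherent. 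And by \refprop{spin-coherent_in_order}, spin-coherence of planar spin-decorated horocycles forces their centres to be distinct and in order around $\partial\hyp^2$, hence to form an ideal $d$-gon in the sense of \refdef{d-gons}; conversely, by \refprop{distinct_vertices_coherent_decorations}, any ideal $d$-gon admits spin-coherent planar spin decorations, and by \refdef{spin-coherent} these are exactly the ones making all $\lambda_{ij}$ positive for $i<j$, i.e.\ making the $\kappa_j$ totally positive. Assembling these equivalences yields the third correspondence.

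I do not expect a serious obstacle here: the heavy lifting is done by \reflem{real_spinor_planar_decoration}, \reflem{positive_spin-coherent}, \refprop{distinct_vertices_coherent_decorations}, \refprop{spin-coherent_in_order} and the identification $\{\kappa_i,\kappa_j\}=\det M_{ij}$. The one point requiring a little attention, and the likeliest source of a gap, is being precise about the $\infty$ case in the second correspondence and about the convention that ``$2\times 2$ submatrix'' means columns taken in increasing index order — without that convention the sign statement in the third correspondence would be ambiguous. I would state this convention explicitly at the start of the proof. A secondary subtlety is that the ``spin-coherent ideal $d$-gons'' on the left of the third correspondence should be read as ideal $d$-gons \emph{equipped with} a choice of spin-coherent planar spin decorations; the proof should make clear that the matrix remembers this decoration data, and that \refprop{spin-coherent_in_order} is what guarantees the underlying $d$-gon is genuinely an ideal (ordered) $d$-gon rather than merely a generalised one.
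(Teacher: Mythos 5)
Your proposal is correct and follows essentially the same route as the paper's own proof: the first correspondence via \reflem{real_spinor_planar_decoration} and the bijectivity of $\widetilde{\K}$, the second by identifying nondegeneracy with the existence of two linearly independent columns (rank $2$), and the third via \reflem{positive_spin-coherent} together with \refprop{distinct_vertices_coherent_decorations} and \refprop{spin-coherent_in_order}. The only difference is that you spell out the $\infty$ case and the column-ordering convention explicitly, which the paper leaves implicit.
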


\begin{proof}
In a generalised ideal $d$-gon (\refdef{d-gons}(ii)) there is no restriction on where the vertices lie, so any collection of $\kappa_i \in \R^2_\times$ provide a matrix. Generalised ideal $d$-gons with planar spin decorations are thus bijective with $2 \times d$ real matrices, none of whose columns consist entirely of zeroes.

Adding the requirement of nondegeneracy (\refdef{d-gons}(iii)) means that two of the $\xi_i/\eta_i$ are distinct, so that there are two linearly independent columns in the matrix, so it has rank 2. 

Adding the requirement that all vertices are  distinct means that all $\xi_i/\eta_i$ are distinct, so all $2 \times 2$ submatrices of $M$ have rank 2.

The requirement of spin-coherence corresponds to all $2 \times 2$ matrices have positive determinant (\reflem{positive_spin-coherent}), and in this case the ideal vertices are in order around $\partial \hyp^2$, so we have an ideal $d$-gon (\refprop{spin-coherent_in_order}).
\end{proof}

\subsubsection{Matrices and ideal hyperbolic polyhedra}

We can find similar correspondences between complex matrices and polyhedra in $3$ dimensions. This notion of ideal polyhedron is somewhat unsatisfactory, forgetting all its structure except for listing out its vertices. Nonetheless it is analogous to \refdef{d-gons} and allows us to produce corresponding matrices. However, since $\partial \hyp^3 \cong S^2$, there is no natural way to order vertices.

\begin{defn} \
\begin{enumerate}
\item
An \emph{ideal $d$-hedron} is a collection of distinct points $p_1, \ldots, p_d$ in $\partial \hyp^3$.
\item
A \emph{generalised ideal $d$-hedron} is a collection of points $p_1, \ldots, p_d$ in $\partial \hyp^3$.
\item
A generalised ideal $d$-hedron is \emph{degenerate} if all of its vertices coincide. Otherwise it is \emph{non-degenerate}.
\end{enumerate}
\end{defn}

\begin{lem}
The map which takes spin-decorated horospheres $(\horo_1, W_1), \ldots, (\horo_d, W_d)$ in $\hyp^3$ to matrices whose columns are the corresponding collection of spin vectors $\kappa_1, \ldots, \kappa_d \in \C^2_\times$, induces the following correspondences:
\begin{gather*}
\left\{ \begin{array}{c}
\text{Generalised ideal $d$-hedra in $\hyp^3$} \\
\text{with spin decorations}
\end{array} \right\}
\cong
\left\{ \begin{array}{c}
\text{$2 \times d$ complex matrices} \\
\text{with no zero columns}
\end{array} \right\} \\
\left\{ \begin{array}{c}
\text{Nondegenerate generalised ideal $d$-hedra in $\hyp^3$} \\
\text{with spin decorations}
\end{array} \right\}
\cong 
\left\{ \begin{array}{c}
\text{$2 \times d$ complex matrices} \\
\text{of rank $2$} \\
\text{with no zero columns}
\end{array} \right\} \\
\left\{ \begin{array}{c}
\text{Ideal $d$-hedra in $\hyp^3$} \\
\text{with spin decorations}
\end{array} \right\}
\cong
\left\{ \begin{array}{c}
\text{$2 \times d$ complex matrices} \\
\text{where all $2 \times 2$ submatrices} \\
\text{have nonzero determinant}
\end{array} \right\}
\end{gather*}
\end{lem}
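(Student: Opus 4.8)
The plan is to establish the three correspondences in parallel, since each is a refinement of the previous one, in exactly the same way as the two-dimensional analogue \reflem{ideal_polygons_matrices}. The underlying bijection is the map $(\mathpzc{h}_1, W_1), \ldots, (\mathpzc{h}_d, W_d) \mapsto M$, where $M \in \M_{2 \times d}(\C)$ has $j$-th column $\kappa_j$, and $\kappa_j \in \C^2_\times$ is the spinor corresponding to $(\mathpzc{h}_j, W_j)$ under $\widetilde{\K}$. By \refthm{main_thm_precise}, $\widetilde{\K}$ is a bijection between $\C^2_\times$ and $\mathfrak{H_D^S}(\U)$, so this map is a bijection between ordered $d$-tuples of spin-decorated horospheres and $d$-tuples of nonzero spinors, i.e. $2 \times d$ complex matrices with no zero column. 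This is precisely the first correspondence: a generalised ideal $d$-hedron with spin decorations is, by definition, just a $d$-tuple of points $p_1,\dots,p_d \in \partial \hyp^3$ together with a choice of spin-decorated horosphere at each, and the horosphere $\mathpzc{h}_j$ determines (and is determined together with its decoration by) the spinor $\kappa_j$, whose centre $\xi_j/\eta_j$ recovers $p_j$.

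Next I would impose the nondegeneracy condition. A generalised ideal $d$-hedron is degenerate iff all $p_j$ coincide, i.e. all $\xi_j/\eta_j \in \CP^1$ are equal. Since $\kappa_j \in \C^2_\times$, the ratio $\xi_j/\eta_j$ determines the line $\C \kappa_j \subset \C^2$, so all $p_j$ coincide iff all columns of $M$ are complex scalar multiples of one another, iff $M$ has rank $1$ (it cannot have rank $0$ as no column is zero). Hence nondegeneracy corresponds exactly to $M$ having rank $2$, with no zero columns, giving the second correspondence. Then, for the third, I would observe that the $p_j$ are pairwise distinct iff for all $i \neq j$ the lines $\C\kappa_i$ and $\C\kappa_j$ are distinct, iff $\kappa_i, \kappa_j$ are linearly independent over $\C$, iff the $2 \times 2$ submatrix $M_{ij}$ with columns $\kappa_i, \kappa_j$ has nonzero determinant. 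Since $\det M_{ij} = \{\kappa_i, \kappa_j\}$ by \refdef{bilinear_form_defn}, this is the condition that all $2\times 2$ submatrices of $M$ have nonzero determinant; and an ideal $d$-hedron is precisely a nondegenerate generalised one with distinct vertices.

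The proof is therefore a short chain of elementary linear-algebra observations layered on top of the bijectivity of $\widetilde{\K}$, exactly mirroring \reflem{ideal_polygons_matrices}. The one genuinely new point — and the only place any care is needed — is the absence of an ordering: whereas in $\hyp^2$ the boundary circle gives a cyclic order and one can talk about spin-coherence and all $2\times2$ determinants being \emph{positive}, in $\hyp^3$ the boundary is $S^2$ and there is no such structure, so the sharpest condition available is merely that all $2 \times 2$ submatrices be invertible. I expect the main (minor) obstacle to be making sure the identification of $p_j \in \partial \hyp^3 \cong \CP^1$ with the complex line $\C\kappa_j$ is stated cleanly, using that the centre of the horosphere $\widetilde{\K}(\kappa_j)$ is $\xi_j/\eta_j$ (\refprop{JIHGF_general_spin_vector}) and that this is exactly the image of $\kappa_j$ under the projectivisation $\C^2_\times \to \CP^1$; once that is in place the rest is immediate.

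\begin{proof}
By \refthm{main_thm_precise}, $\widetilde{\K} \colon \C^2_\times \To \mathfrak{H_D^S}(\U)$ is a bijection, so sending an ordered $d$-tuple $(\mathpzc{h}_1, W_1), \ldots, (\mathpzc{h}_d, W_d)$ of spin-decorated horospheres to the matrix $M \in \M_{2 \times d}(\C)$ whose $j$-th column is the spinor $\kappa_j = (\xi_j, \eta_j)$ with $\widetilde{\K}(\kappa_j) = (\mathpzc{h}_j, W_j)$ is a bijection from $d$-tuples of spin-decorated horospheres to $d$-tuples of nonzero spinors, i.e. to $2 \times d$ complex matrices with no zero column. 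By \refprop{JIHGF_general_spin_vector}, the centre of $\mathpzc{h}_j$ is $p_j = \xi_j / \eta_j \in \CP^1 = \partial \hyp^3$, which is precisely the image of $\kappa_j$ under the projectivisation $\C^2_\times \To \CP^1$; thus $p_j$ determines and is determined by the complex line $\C \kappa_j \subset \C^2$. Since a generalised ideal $d$-hedron with spin decorations is exactly the data of the $p_j$ together with a spin-decorated horosphere at each, the first correspondence follows.

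For the second, a generalised ideal $d$-hedron is degenerate iff all $p_j$ coincide, iff all lines $\C \kappa_j$ coincide, iff every column of $M$ is a complex multiple of every other. As no column is zero, this holds iff $M$ has rank $1$; equivalently, the $d$-hedron is nondegenerate iff $M$ has rank $2$. Together with the first correspondence, this gives the second.

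For the third, the $p_j$ are pairwise distinct iff for all $i \neq j$ the lines $\C \kappa_i$ and $\C \kappa_j$ are distinct, iff $\kappa_i$ and $\kappa_j$ are linearly independent over $\C$, iff the submatrix $M_{ij}$ with columns $\kappa_i, \kappa_j$ satisfies $\det M_{ij} \neq 0$. By \refdef{bilinear_form_defn}, $\det M_{ij} = \{\kappa_i, \kappa_j\}$. An ideal $d$-hedron with spin decorations is precisely a nondegenerate generalised one whose vertices are pairwise distinct; note that pairwise distinctness of all $d \geq 2$ vertices already forces rank $2$, so the nondegeneracy condition is automatic. Hence ideal $d$-hedra with spin decorations correspond to $2 \times d$ complex matrices all of whose $2 \times 2$ submatrices have nonzero determinant, which is the third correspondence.
\end{proof}
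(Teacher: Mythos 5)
Your proposal is correct and follows essentially the same route as the paper: use the bijection $\widetilde{\K}$ between nonzero spinors and spin-decorated horospheres, identify the vertex with the projectivisation $\xi_j/\eta_j$ of the column $\kappa_j$, and translate nondegeneracy into rank $2$ and pairwise-distinct vertices into nonvanishing $2\times 2$ minors. The paper's proof is just a terser version of this argument (explicitly mirroring the two-dimensional \reflem{ideal_polygons_matrices}), and your added remark that pairwise distinctness already forces rank $2$ is a harmless extra detail.
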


\begin{proof}
A generalised ideal $d$-hedron yields a matrix with all columns in $\C_\times^2$, without further restriction. Just as in two dimensions, adding the requirement of nondegeneracy corresponds to the matrix having rank 2. All vertices being distinct corresponds to all $2 \times 2$ submatrices having nonzero determinant.
\end{proof}

\subsubsection{Ideal hyperbolic polygons up to isometry}

After characterising various spaces of hyperbolic ideal polygons as spaces of matrices in \reflem{ideal_polygons_matrices}, we now consider such objects \emph{up to isometry}.

Just as $SL(2,\C)$ is the double and universal cover of $PSL(2,\C)$, $SL(2,\R)$ is the double and universal cover of $PSL(2,\R)$. The latter is the orientation-preserving isometry group of $\hyp^2$. Since the polygons we have considered all have various types of spin decorations, we consider them under the action of spin isometries preserving $\hyp^2$, i.e. of $SL(2,\R)$. 

\begin{defn} 
A \emph{spin isometry class} of a set $X$ of generalised ideal $d$-gons with planar spin decorations is an orbit of the $SL(2,\R)$ action on $X$.
\end{defn}
In this definition, $X$ could for example be any of the sets of $d$-gons considered in \reflem{ideal_polygons_matrices}.

The correspondence between spin vectors and spin-decorated horospheres is $SL(2,\C)$-equivariant, and $SL(2,\R)$ is the subgroup of $SL(2,\C)$ which preserves real spin vectors, and also preserves $\hyp^2$. Given a collection of real spin vectors $\kappa_1, \ldots, \kappa_d$ forming a $2 \times d$ matrix $M$, an $A \in SL(2,\C)$ acts simultaneously on all the $\kappa_j$ in the matrix multiplication $AM$. The correspondences of \reflem{ideal_polygons_matrices} then yield correspondences between  spin isometry classes of various types of decorated $d$-gons, and the orbit spaces of the $SL(2,\R)$ action on various sets of matrices. For instance, we have the following.
\begin{gather}
\label{Eqn:corresondences_orbit_spaces_first}
\left\{ \begin{array}{c}
\text{Spin isometry classes of} \\
\text{generalised ideal $d$-gons in $\hyp^2$} \\
\text{with planar spin decorations}
\end{array} \right\}
\cong 
SL(2,\R) \backslash
\left\{ \begin{array}{c}
\text{$2 \times d$ real matrices} \\
\text{with no zero columns}
\end{array} \right\} \\
\left\{ \begin{array}{c}
\text{Spin isometry classes of} \\
\text{nondegenerate generalised ideal $d$-gons in $\hyp^2$} \\
\text{with planar spin decorations}
\end{array} \right\}
\cong 
SL(2,\R) \backslash
\left\{ \begin{array}{c}
\text{$2 \times d$ real matrices} \\
\text{of rank $2$} \\
\text{with no zero columns}
\end{array} \right\} \\
\left\{ \begin{array}{c}
\text{Spin isometry classes of} \\
\text{spin-coherent ideal $d$-gons in $\hyp^2$} 
\end{array} \right\}
\cong 
SL(2,\R) \backslash
\left\{ \begin{array}{c}
\text{$2 \times d$ real matrices} \\
\text{where all $2 \times 2$ submatrices} \\
\text{have positive determinant}
\end{array} \right\} 
\label{Eqn:corresondences_orbit_spaces_last}
\end{gather}

In the case of spin-coherent ideal $d$-gons, the spin isometry classes reduce to isometry classes in the standard sense.
\begin{prop}
\label{Prop:spin_isometry_spin-coherent_ideal_d-gons}
Let $d \geq 3$. 
There is a bijection between 
spin isometry classes of spin-coherent ideal $d$-gons in $\hyp^2$
and
isometry classes of ideal $d$-gons in $\hyp^2$, decorated with a horocycle at each vertex.
\end{prop}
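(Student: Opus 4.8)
The plan is to build the bijection from the ``forget spin'' map, using \refprop{distinct_vertices_coherent_decorations} for surjectivity and a rigidity argument for the fibre count. Given a spin-coherent ideal $d$-gon in $\hyp^2$, i.e. horocycles $\horo_1,\ldots,\horo_d$ carrying spin-coherent planar spin decorations, projecting each planar spin decoration to its underlying horocycle yields an ideal $d$-gon decorated with a horocycle at each vertex; by \refprop{spin-coherent_in_order} the centres are automatically distinct and in order around $\partial\hyp^2$, so this genuinely lands among horocycle-decorated ideal $d$-gons. Call this map $\Phi$. Since forgetting the spin lift commutes with the isometry actions, $\Phi$ is equivariant with respect to the double cover $SL(2,\R)\To PSL(2,\R)$ (the former acting on spin-coherent ideal $d$-gons, the latter $\cong\Isom^+\hyp^2$ acting on horocycle-decorated ideal $d$-gons), and hence descends to a map $\overline{\Phi}$ on the respective equivalence classes. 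The goal is to show $\overline{\Phi}$ is a bijection.

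Next I would analyze the fibres of $\Phi$. Surjectivity is exactly \refprop{distinct_vertices_coherent_decorations}: horocycles with distinct centres in order around $\partial\hyp^2$ admit spin-coherent planar spin decorations. For the fibre over a fixed horocycle-decorated ideal $d$-gon, there are two planar spin decorations on each $\horo_i$, corresponding to the real spinors $\pm\kappa_i$, so $2^d$ planar-spin-decorated lifts in all; but spin-coherence demands $\lambda_{1j}>0$ for $j>1$, which by \refthm{main_thm_2_precise} and \reflem{positive_spin-coherent} determines the choice on each $\horo_j$ once the choice on $\horo_1$ is fixed. Hence at most two spin-coherent lifts. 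Negating all spinors preserves every sign $\{\kappa_i,\kappa_j\}$ and hence preserves spin-coherence (per \refdef{spin-coherent}), and $\kappa_i\neq-\kappa_i$, so there are exactly two: $(\kappa_1,\ldots,\kappa_d)$ and $(-\kappa_1,\ldots,-\kappa_d)$, which differ precisely by the action of $-I\in SL(2,\R)$.

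Finally I would assemble the bijection. Surjectivity of $\overline{\Phi}$ follows at once from surjectivity of $\Phi$. For injectivity, suppose spin-coherent ideal $d$-gons $C,C'$ satisfy $\Phi(C')=\overline{A}\cdot\Phi(C)$ for some $\overline{A}\in PSL(2,\R)$; lift $\overline{A}$ to $A\in SL(2,\R)$. Then $A\cdot C$ is again spin-coherent, since the spinor inner product is $SL(2,\R)$-invariant by \reflem{SL2C_by_symplectomorphisms} (so all $2\times2$ determinant signs are preserved) and the image of $A$ in $\Isom^+\hyp^2$ preserves cyclic order on $\partial\hyp^2$; moreover $\Phi(A\cdot C)=\overline{A}\cdot\Phi(C)=\Phi(C')$. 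By the fibre analysis, either $A\cdot C=C'$ or $(-I)\cdot(A\cdot C)=C'$, and in both cases $C'$ lies in the $SL(2,\R)$-orbit of $C$; hence $C$ and $C'$ give the same spin isometry class, and $\overline{\Phi}$ is injective. The main obstacle is the fibre-count step of the middle paragraph --- establishing that exactly two spin-coherent lifts exist and that they are $\{\pm I\}$-related --- since everything else is routine bookkeeping with the double cover $SL(2,\R)\To PSL(2,\R)$; it is there that \refprop{distinct_vertices_coherent_decorations} together with the sign rigidity of spin-coherence does the real work.
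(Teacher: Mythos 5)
Your proposal is correct and follows essentially the same route as the paper: the forgetful map, surjectivity via \refprop{distinct_vertices_coherent_decorations}, and the key count that a horocycle-decorated ideal $d$-gon carries exactly two spin-coherent planar spin decorations (determined by the choice at one vertex), which are exchanged by $-I \in SL(2,\R)$, giving a single spin isometry class over each isometry class. Your write-up merely makes the equivariance and the lift-of-isometry step in the injectivity argument more explicit than the paper does.
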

That is,
\begin{gather}
\label{Eqn:spin_isometry_isometry}
\left\{ \begin{array}{c}
\text{Spin isometry classes of} \\
\text{$d$-gons in $\hyp^2$} \\
\text{with spin-coherent decorations}
\end{array} \right\}
\cong
\left\{ \begin{array}{c}
\text{Isometry classes of} \\
\text{ideal $d$-gons in $\hyp^2$} \\
\text{decorated with horocycles} 
\end{array} \right\}
\end{gather}
The latter is Penner's decorated Teichm\"{u}ller space of a disc with $d$ boundary punctures.

\begin{proof}
As discussed in \refsec{horocycles_decorations}, an ideal $d$-gon in $\hyp^2$, decorated with horocycles, has two possible planar spin decorations at each ideal vertex, for a total of $2^d$ choices. However, as discussed in the proof of \refprop{distinct_vertices_coherent_decorations}), if the decorations are spin-coherent, then once we choose one planar spin decoration at one ideal vertex, the rest are determined. Thus there are precisely $2$ spin-coherent decorations. Under the action of $SL(2,\R)$ the negative identity will identify planar spin decorations in pairs. So there will be $2^{d-1}$ spin isometry classes of planar spin decorations, but only one spin isometry class of spin-coherent decorations.

If two spin-coherent ideal $d$-gons are related by a spin isometry, then the underlying isometry of $\hyp^2$ maps the underlying ideal $d$-gons and horocycles to each other. So we have a map from the left to right set of \refeqn{spin_isometry_isometry}. By \refprop{distinct_vertices_coherent_decorations}, we can always find spin-coherent decorations on an ideal $d$-gon, so this map is surjective. As argued above, there is only one spin isometry class of spin-coherent decorations, so the map is injective. 
\end{proof}

Orbit spaces of $SL(2,\R)$ action on sets of matrices are quite similar to \emph{Grassmannians}. The real Grassmannian $\Gr_\R (2,d)$ is the space of all real $2$-planes in $\R^d$, and it can be described as
\begin{equation}
\label{Eqn:Grassmannian_description}
\Gr_\R (2,d) \cong GL(2,\R) \backslash \left\{ \begin{array}{c}
\text{$2 \times d$ real matrices} \\
\text{of rank $2$}
\end{array} \right\}.
\end{equation}
To see this, we associate to a $2 \times d$ matrix its row space, a subspace of $\R^d$. If $M$ has rank 2 then this row space is a 2-plane in $\R^d$, hence an element of $\Gr_\R (2,d)$. This gives a map from matrices to $\Gr_\R (2,d)$. But multiplying a $2 \times d$ matrix on the left by a matrix in $GL(2,\R)$ preserves its row space, and indeed the $GL(2,\R)$-orbits of matrices correspond to points in $\Gr_\R (2,d)$. 

The correspondences in \refeqn{corresondences_orbit_spaces_first}--\refeqn{corresondences_orbit_spaces_last} describe various classes of spin isometry classes of ideal $d$-gons in similar terms to the Grassmannian in \refeqn{Grassmannian_description}. Various statements along these lines are made in \cite{Mathews_Spinors_horospheres}, including with complex matrices and polyhedra. It is also worth pointing out that determinants of $2 \times 2$ submatrices of the $2 \times d$ matrix, which in our context are lambda lengths, correspond to \emph{Pl\"{u}cker coordinates} on the Grassmannian.

Combining \refeqn{corresondences_orbit_spaces_last} and \refeqn{spin_isometry_isometry} gives an algebraic description of the decorated Teichm\"{u}ller space of a punctured disc
\[
SL(2,\R) \backslash
\left\{ \begin{array}{c}
\text{$2 \times d$ real matrices} \\
\text{where all $2 \times 2$ submatrices} \\
\text{have positive determinant}
\end{array} \right\} 
\cong
\left\{ \begin{array}{c}
\text{Isometry classes of} \\
\text{ideal $d$-gons in $\hyp^2$} \\
\text{decorated with horocycles} 
\end{array} \right\},
\]
and in fact, as shown in \cite{Mathews_Spinors_horospheres}, this is the \emph{affine cone} on the \emph{positive Grassmannian} $\Gr^+(2,d)$.

\newpage

\appendix

\section{Notation}
\label{Sec:Notation}

\begin{tabular}{ll}
\toprule
\multicolumn{2}{l}{\textbf{General}} \\
$D_p f (v)$ & Derivative of function $f$ at point $p$ in direction $v$ \\
$T_p M$ & Tangent space to manifold $M$ at point $p$ \\
$\f,\g,\h,\i,\j$ & Maps from spinors, to Hermitian matrices, to light cone, to horospheres \\
$\h_\partial$ & Simplification of $\h$ mapping to $\partial \hyp$ \\
$\F,\G,\H,\I,\J$ & Maps from spinors, to flags, to decorated horospheres \\
$\widetilde{\mathbf{F}}, \widetilde{\mathbf{G}}, \widetilde{\mathbf{H}}, \widetilde{\mathbf{I}}, \widetilde{\mathbf{J}}$ & Spin lifts of $\mathbf{F},\mathbf{G},\mathbf{H},\mathbf{I},\mathbf{J}$, maps from spinors, to spin flags, to spin-decorated horospheres\\
$\k_\partial, \k, \K, \widetilde{\K}$ & Compositions of $\f,\g,\h_\partial,\i,\j$, and $\f,\g,\h,\i,\j$, and $\F,\G,\H,\I,\J$, and $\widetilde{\mathbf{F}}, \widetilde{\mathbf{G}}, \widetilde{\mathbf{H}}, \widetilde{\mathbf{I}}, \widetilde{\mathbf{J}}$. \\
\midrule
\multicolumn{2}{l}{\textbf{Algebra, matrices}} \\
$\R, \R^{0+}, \R^+$ & Reals, non-negative reals, positive reals \\
$i$ & Square root of $-1$ \\
$\alpha, \beta$ & General complex numbers \\
$a,b,c,d$ & General real numbers \\
$A, A'$ & Matrices in $SL(2,\C)$ \\
$M,N$ & General matrices \\
$\mathcal{M}_{m\times n}(\mathbb{F})$ & $m \times n$ matrices with entries in field $\mathbb{F}$  \\
$\mathbb{F}_\times$ & Nonzero elements of field / ring / vector space $\mathbb{F}$  \\
$\HH$ & $2 \times 2$ Hermitian matrices \\
$\HH_0$ & $2 \times 2$ Hermitian matrices with determinant $0$ \\
$\HH_0^{0+}$ & $2 \times 2$ Hermitian matrices with determinant $0$ and trace $\geq 0$ \\
$\HH_0^+$ & $2 \times 2$ Hermitian matrices with determinant $0$ and trace $>0$ \\
$S$ & Hermitian matrix \\
$\underline{S}$ & Image of Hermitian matrix in quotient space \\
$\underline{B}$ & Basis of quotient space of Hermitian matrices \\
\midrule
\multicolumn{2}{l}{\textbf{Spin vectors}} \\
$\kappa$ & Spin vector  \\
$\xi, \eta$ & Coordinates of spin vector (elements of $\C$)  \\
$\nu$ & Tangent vector to $\C^2$ \\
$\{ \cdot, \cdot \}$ & Inner product of spin vectors \\
$\ZZ$ & Map $\C^2 \To \C^2$ in definition of $\F$ giving direction of flag \\
$J$ & Complex linear map in definition of $\ZZ$ \\
$\p$ & Projection $\C^2_\times \To S^3$ (\reflem{Stereo_Hopf_p}) \\
\midrule
\multicolumn{2}{l}{\textbf{Minkowski geometry}} \\
$L$ & Light cone (\refdef{light_cones}) \\
$L^{0+}$ & Non-negative light cone (\refdef{light_cones}) \\
$L^+$ & Positive light cone  (\refdef{light_cones}) \\
$p = (T,X,Y,Z)$ & Point in Minkowski space $\R^{1,3}$ \\
$\langle \cdot, \cdot \rangle$ & Inner product (signature $+---$) on Minkowski space $\R^{1,3}$ \\
$\pi_{XYZ}$ & Projection to $XYZ$ 3-plane \\
$V,W$ & Subspaces of $\R^{1,3}$ \\
$\S^+$ & Future celestial sphere (\refdef{celestial_sphere}) \\
$\Pi$ & Lightlike $3$-plane in $\R^{1,3}$ \\
$e_1, e_2, e_3$ & Orthonormal basis determined by spinor (\reflem{orthonormal_basis_from_spinor}) \\
\bottomrule
\end{tabular}

\begin{tabular}{ll}
\toprule
\multicolumn{2}{l}{\textbf{Hyperbolic geometry}} \\
$\hyp^n$ & Hyperbolic $n$-space (model-independent) \\
$\partial \hyp^n$ & Boundary at infinity of hyperbolic $n$-space  \\
$\Disc$, $\partial \Disc$ & Conformal ball model of $\hyp^3$, boundary at infinity \\
$\U, \partial \U$ & Upper half space model of $\hyp^3$, boundary at infinity \\
$x,y,z$ & Coordinates in upper half space model \\
$\hyp$ & Hyperboloid model of $\hyp^3$ \\
$\mathpzc{h}$ & Horosphere \\
$d = \rho + i \theta$ & Complex distance between horospheres \\
$\rho$ & Signed distance between horospheres \\
$\theta$ & Angular distance between horosphere decorations \\
$\lambda$, $\lambda_{ij}$  & Lambda-length, between horospheres indexed by $i,j$ \\
$\mathfrak{H}, \mathfrak{H}(\hyp), \mathfrak{H}(\Disc)$ & Set of all horospheres in $\hyp^3, \hyp, \Disc$ (\refdef{set_of_horospheres}) \\
$\Delta$ & Ideal tetrahedron in $\hyp^3$ \\
$\zeta_i$ & Ideal points, ideal vertices \\
$E$ & Edge of tetrahedron \\
$z_e$ & Shape parameter of tetrahedron along edge $e$ (\refdef{shape_parameter}) \\
$P_\alpha$ & Parabolic matrix in $SL(2,\C)$ \refeqn{P} \\
$P$ & parabolic subgroup of $SL(2,\C)$ or $PSL(2,\C)$ \\
\midrule
\multicolumn{2}{l}{\textbf{Flags}} \\
$\mathcal{F_P}$; $\mathcal{F_P}(\HH), \mathcal{F_P}(\R^{1,3})$ & Set of pointed null flags;  in $\HH$, in $\R^{1,3}$ \\
$(p,V), [[p,v]]$ & Pointed null flag in $\HH$ or $\R^{1,3}$ (\refdef{pointed_null_flag}, \refdef{null_flag_in_Minkowski}) \\
$\mathcal{F_P^O}$; $\mathcal{F_P^O}(\HH), \mathcal{F_P^O}(\R^{1,3})$ & Set of pointed oriented null flags; in $\HH$, in $\R^{1,3}$ \\
$(p,V,o), [[p,v]]$ & Pointed oriented null flag (\refdef{pointed_oriented_null_flag}, \refdef{pv_notation_PONF}) \\
$\mathcal{SF_P^O}(\HH), \mathcal{SF_P^O}(\R^{1,3})$
& Spin flags (\refdef{covers_of_flags}) \\
\midrule
\multicolumn{2}{l}{\textbf{Frames, decorations}} \\
$\Fr$ & Frame bundle over $\hyp^3$ (\refdef{Fr}) \\
$\Spin$ & Spin frame bundle over $\hyp^3$ (\refdef{Fr}) \\
$f = (f_1, f_2, f_3)$ & Frame  \\
$\widetilde{f}$ & Spin frame  \\
$L^O$ & Oriented line field on horosphere \\
$\mathfrak{H_D^O}$ & Set of overly decorated horospheres (\refdef{overly_decorated_horosphere}) \\
$L^O_P$ & Oriented parallel line field on horosphere \\
$\mathfrak{H_D}$ & Set of decorated horospheres (\refdef{decorated_horosphere}) \\
$N^{in}, N^{out}$ & Inward, outward normal vector fields to horosphere (\refdef{horosphere_normals}) \\
$\V$ & Unit parallel vector field on horosphere \\
$f^{in}(\V), f^{out}(V)$ & Inward, outward frame fields of vector field $V$ (\refdef{inward_outward_frame_fields}) \\
$W^{in}, W^{out}$ & Inward, outward spin decoration (\refdef{associated_inward_outward_spindec}) \\
$\mathfrak{H_D^S}$ & Set of spin-decorated horospheres (\refdef{spin-decorated_horospheres}) \\
\bottomrule
\end{tabular}

\small

\bibliography{spinref}
\bibliographystyle{amsplain}

\end{document}